\theoremstyle{plain}
\newtheorem{thm}[equation]{Theorem}
\newtheorem{cor}[equation]{Corollary}
\newtheorem{prop}[equation]{Proposition}
\newtheorem{lem}[equation]{Lemma}
\theoremstyle{definition}
\newtheorem{defn}[equation]{Definition}
\newtheorem{claim}[equation]{Claim}
\newtheorem{conv}[equation]{Convention}
\theoremstyle{remark}
\newtheorem{examp}[equation]{Example}
\newtheorem{rem}[equation]{Remark}
\renewcommand{\subsection}{\@startsection{subsection}{2}{0pt}{-3ex
plus -1ex minus -0.2ex}{-2mm plus -0pt minus
-2pt}{\normalfont\bfseries}} \makeatother
\numberwithin{equation}{subsection}
\newcommand{\Lmod}[1]{#1\text{-}{\mathsf{mod}}}
\newcommand{\LMod}[1]{#1\text{-}{\mathsf{Mod}}}
\newcommand{\mon}[1]{(#1)\text{-}{\mathsf{mon}}}
\newcommand{\idot}{{\:\raisebox{2pt}{\text{\circle*{1.5}}}}}
\newcommand{\mof}{{(\htrig_\kappa:\C[\TSL]^W)\text{-}{\mathsf{mod}}}}
\DeclareMathOperator{\Sp}{{\mathrm{Sp}}}
\DeclareMathOperator{\Ext}{\mathrm{Ext}}
\DeclareMathOperator{\sym}{\mathrm{Sym}}
\DeclareMathOperator{\im}{\mathrm{Im}}
\DeclareMathOperator{\supp}{\mathrm{Supp}}
\DeclareMathOperator{\Ker}{\mathrm{Ker}}
\DeclareMathOperator{\End}{\mathrm{End}}
\renewcommand{\sp}{{\operatorname{sp}}}
\DeclareMathOperator{\gr}{\mathrm{gr}}
\renewcommand{\SS}{{\operatorname{SS}}}
\DeclareMathOperator{\Hamp}{{^{\dagger}\mathbb{H}}}
\DeclareMathOperator{\Hamr}{{\mathbb{H}^{\dagger}}}
\DeclareMathOperator{\Ham}{\mathbb{H}}
\DeclareMathOperator{\Lie}{\mathrm{Lie}}
\DeclareMathOperator{\Tr}{\mathrm{Tr}}
\DeclareMathOperator{\Supp}{\mathrm{Supp}}
\DeclareMathOperator{\Char}{\mathrm{SS}}
\DeclareMathOperator{\Ad}{\mathrm{Ad}}
\DeclareMathOperator{\ad}{\mathrm{ad}}
\newcommand{\dis}{\displaystyle}
\newcommand{\oper}{\operatorname }
\newcommand{\beq}{\begin{equation}\label}
\newcommand{\eeq}{\end{equation}}
\DeclareMathOperator{\Spec}{\mathrm{Spec}}
\DeclareMathOperator{\pr}{pr}
\newcommand{\iso}{{\;\stackrel{_\sim}{\to}\;}}
\newcommand{\cd}{\!\cdot\!}
\DeclareMathOperator{\Hom}{\mathrm{Hom}}
\def\ccirc{{{}_{\,{}^{^\circ}}}}
\newcommand{\mgg}{\TSL^{\circ}}
\newcommand{\ak}{ V' }
\newcommand{\zero}{{\boldsymbol{0}_{Z(\LSL)^{\circ}}}}
\newcommand{\ck}{{\mathcal K}}
\newcommand{\FT}{{\mathfrak{T}}}
\newcommand{\sll}{{\mathfrak{sl}}}
\newcommand{\bg}{{\mathfrak{G}}}
\newcommand{\XX}{X }
\newcommand{\bo}{\mbox{$\bigotimes$}}
\renewcommand{\o}{\otimes }
\newcommand{\bad}{{\mathsf{BAD}}}
\newcommand{\sh}{{\mathbb{S}}}
\newcommand{\E}{{\mathsf{eu}_\X}}
\newcommand{\bplus}{\mbox{$\bigoplus$}}
\newcommand{\cc}{{\wt{\scr C}}}
\renewcommand{\ss}{{\scr C}}
\newcommand{\kap}{{\kappa}}
\newcommand{\si}{\sigma }
\newcommand{\OO}{\mc{O}}
\renewcommand{\t}{{\mathfrak t}}
\newcommand{\td}{{T_\Delta}}
\newcommand{\TT}{{\mathbb T}}
\renewcommand{\si}{\sigma }
\newcommand{\one}{{\boldsymbol{1}}}
\newcommand{\fz}{{\mathfrak z}}
\newcommand{\wt}{\widetilde }
\newcommand{\eu}{{\operatorname{\mathsf{eu}}}}
\newcommand{\fin}{\mathrm{fin}}
\newcommand{\Id}{{\operatorname{Id}}}
\newcommand{\fl}{{\mathfrak l}}
\newcommand{\fc}{{\TSL_L / N_L}}
\newcommand{\horo}{{\mathscr{Y}}}
\newcommand{\reg}{{\operatorname{reg}}}
\newcommand{\htrig}{{\mathsf{U}}}
\newcommand{\scr}[1]{\mathscr{#1}}
\newcommand{\bi}{{\mathbf{i}}}
\newcommand{\bj}{{\mathbf{j}}}
\def\ccirc{{{}_{^{\,^\circ}}}}
\newcommand{\la}{\lambda}
\newcommand{\dd}{{\mathscr{D}}}
\newcommand{\ddd}{{\mathcal{D}}}
\newcommand{\oo}{{\mathcal{O}}}
\newcommand{\wh}{\widehat }
\newcommand{\U}{{\mathcal{U}}}
\newcommand{\h}{{{\mathfrak h}}}
\newcommand{\bnu}{\overline{\nu}}
\newcommand{\mggW}{{\mgg/W}}
\newcommand{\pa}{\partial }
\newcommand{\NN}{{\mathsf N}}
\newcommand{\ny}{\NN_{X/Y}}
\newcommand{\arr}{\overset{{\,}_\to}}
\newcommand{\mc}{\mathcal}
\newcommand{\C}{\mathbb{C}}
\newcommand{\g}{\mathfrak{g}}
\renewcommand{\b}{\mathfrak{b}}
\newcommand{\La}{\Lambda }
\newcommand{\op}{{\operatorname{op}}}
\newcommand{\inv}{^{-1}}
\newcommand{\Z}{{\mathbb Z}}
\newcommand{\en}{{\enspace}}
\newcommand{\st}{{\operatorname{unst}}}
\newcommand{\sset}{\subset}
\newcommand{\sminus}{\smallsetminus}
\newcommand{\into}{\,\hookrightarrow\,}
\newcommand{\too}{\,\longrightarrow\,}
\newcommand{\mto}{\mapsto}
\newcommand{\onto}{\,\twoheadrightarrow\,}
\newcommand{\N}{{\mathcal{N}}}
\newcommand{\Ga}{\Gamma }
\newcommand{\nV}{{\mathcal V}}
\DeclareMathOperator{\Irr}{\mathrm{Irr}}
\newcommand{\ds}{{\dots}}
\newcommand{\s}{\mathfrak{S}}
\newcommand{\id}{\mathrm{id}}
\newcommand{\Q}{\mathbb{Q}}
\newcommand{\Cs}{\C^\times}
\newcommand{\Ind}{\mathrm{Ind}}
\newcommand{\mf}{\mathfrak}
\newcommand{\Stab}{\mathrm{Stab}}
\newcommand{\CG}{\mathcal{G}}
\newcommand{\FZ}{{\mathfrak{Z}}}
\newcommand{\tit}{\textit}
\newcommand{\X}{\mathfrak{X}}
\newcommand{\nc}{\newcommand}
\nc{\BM}{\mathbf{M}} 
\nc{\BG}{\mathbf{G}}
\nc{\BQ}{\mathbf{Q}}
\nc{\BL}{\mathbf{L}}
\nc{\BT}{\mathbf{T}}
\nc{\BH}{\mathbf{H}}
\nc{\LL}{{L'}}
\nc{\LSL}{\mathrm{L}}
\nc{\MSL}{\mathrm{M}}
\nc{\MM}{{M'}}
\nc{\tSL}{\mf{t}}
\nc{\TSL}{\mathrm{T}}
\nc{\To}{{T}^{\circ}}
\renewcommand{\u}{\mc{U}}
\newcommand{\xcyc}{\X^{\oper{cyc}} }
\nc{\tcL}{\t_{\LGL}} 
\nc{\tsL}{\t^{\symL}} 
\nc{\tsLo}{(\t^{\symL})^{\circ}}
\nc{\tLo}{\t^{\circ}} 
\nc{\Xo}{\X^{\circ}}
\nc{\Lo}{L^{\circ}}
\nc{\Mo}{M^{\circ}}
\nc{\ZMo}{Z(M)^{\circ}}
\nc{\paX}{\partial \X}
\newcommand{\rightsim}{\stackrel{\sim}{\longrightarrow}}
\newcommand{\ms}{\mathscr}
\newcommand{\Res}{\mathrm{Res}}
\newcommand{\Red}{\mathrm{Red}}
\nc{\twist}{\mathrm{twist}}
\nc{\Span}{\mathrm{Span}}
\nc{\Ps}{\mathbb{P}}
\renewcommand{\H}{\mathsf{H}}
\nc{\Mod}{\mathrm{Mod} \,}
\nc{\ras}{\Lambda}
\nc{\Loc}{\mathsf{Loc}}
\newcommand{\mn}{{\mathbf{U}}}
\newcommand{\Om}{\Omega }
\newcommand{\vv}{{v}}
\newcommand{\Nnil}{{\mathbb{M}}_{\mathsf{nil}}}
\newcommand{\brho}{{\bar\rho}}
\newcommand{\bN}{{\mathbf U}}
\newcommand{\sL}{{\mathsf L}}
\newcommand{\vi}{${\sf {(i)}}\;$}
\newcommand{\vii}{${\sf {(ii)}}\;$}
\newcommand{\viii}{${\sf {(iii)}}\;$}
\newcommand{\trig}{{\operatorname{trig}}}
\renewcommand{\H}{\mathsf{H}}
\nc{\by}{\mathbf{y}}
\nc{\bx}{\mathbf{x}} 
\nc{\bz}{\mathbf{z}} 
\nc{\Y}{\mathcal{Y}}
\nc{\bY}{\overline{\Y}}
\nc{\IC}{\mathrm{IC}}
\nc{\Thetasph}{\Theta^{\mathrm{spher}}}
\nc{\Orb}{\mathbb{O}}
\nc{\bs}{\mathbf{s}}
\nc{\bp}{\mathbf{p}}
\nc{\bq}{\mathbf{q}}
\nc{\bc}{\mathbf{c}}
\nc{\br}{\mathbf{r}}
\nc{\Sing}{\mathrm{Sing}}
\nc{\cyc}{\mathrm{cyc}}
\nc{\ol}{\overline}
\nc{\abT}{\mathbb{T}}
\nc{\bw}{\mathbf{w}}
\nc{\mm}{\ms{M}}
\nc{\Cas}{\Pi}
\nc{\Jac}{\mathbb{J}}
\nc{\SSL}{{\operatorname{SL}}}
\begin{document}

\title{{\textbf{Hamiltonian reduction and nearby cycles  for\\
Mirabolic $\pmb{\dd}$-modules}}}

\author{Gwyn Bellamy}
\address{School of Mathematics and Statistics, University of Glasgow, University Gardens, Glasgow G12 8QW}
\email{gwyn.bellamy@glasgow.ac.uk}

\author{Victor Ginzburg}\address{
Department of Mathematics, University of Chicago,  Chicago, IL 
60637, USA.}
\email{ginzburg@math.uchicago.edu}

\begin{abstract}
We study holonomic $\dd$-modules on
$SL_n(\C)\times \C^n$, called {\em mirabolic  modules}, analogous to Lusztig's character sheaves. We describe the supports of simple mirabolic modules. We show that a mirabolic module is killed  by the  functor of  Hamiltonian reduction from  the category of mirabolic modules to the category of representations of the trigonometric Cherednik algebra if and only if the characteristic variety of the module is contained in the unstable locus.

We introduce an analogue of Verdier's specialization functor for representations of Cherednik algebras which agrees, on category $\mathcal O$, with the restriction functor of Bezrukavnikov and Etingof. In type $A$, we also consider a Verdier specialization functor on mirabolic $\dd$-modules. We show that Hamiltonian reduction intertwines specialization functors on mirabolic $\dd$-modules with the corresponding functors on representations of the Cherednik algebra. This allows us to apply known Hodge-theoretic purity results for nearby cycles in the setting considered by  Bezrukavnikov and Etingof.
\end{abstract}



\maketitle

\centerline{\it To Boris Feigin, on the occasion of his 60th Birthday.}

{\small
\tableofcontents
}

\section{Introduction}\label{sec:intro}

\subsection{} In this paper, we  study mirabolic
$\dd$-modules,
following earlier works \cite{GG}, \cite{CherednikCurves}, and 
\cite{MirabolicCharacter}.
Mirabolic $\dd$-modules form an interesting category of
regular holonomic $\dd$-modules on the variety $SL_n(\C)\times\C^n$.
This category has a ``classical'' counterpart,
a certain category of {\em admissible $\dd$-modules} on
an arbitrary
complex reductive group, which was studied in \cite{AdmissibleModules}.  
Similar to that  ``classical'' case, there are two different definitions
of mirabolic  $\dd$-modules. The first definition involves
characteristic varieties, while the second definition
involves an action
of the enveloping algebra.
The first definition is more geometric and it can be used
to establish a connection with perverse sheaves via
the Riemann-Hilbert correspondence. The resulting perverse 
sheaves are ``mirabolic analogues'' of Lusztig's {\em character
sheaves} on a reductive group. Furthermore, a conjectural
classification of simple mirabolic  $\dd$-modules,
modeled on Lusztig's classification of character sheaves,
was suggested in \cite{FGT}.

The second definition of mirabolic  $\dd$-modules is more
algebraic and it is better adapted, in a sense,
for applications to Cherednik algebras, see below.
In particular, there is an important mirabolic  $\dd$-module,
the {\em mirabolic Harish-Chandra $\dd$-module}. This 
 $\dd$-module has a very natural algebraic definition,
while its geometric definition is not completely understood
so far, see however \cite{MirabolicCharacter}, Theorem 5.1.2.

Our first result establishes an equivalence of the
two definitions of mirabolic  $\dd$-modules.
Although this result was, in a way, implicit in  \cite{MirabolicCharacter},
its actual proof turns out to be more complicated than 
one could have expected. The idea of the proof is
 similar to the one used in  \cite{AdmissibleModules}
in the ``classical'' case. However, certain steps of the argument
do not work in the mirabolic setting and require a different
approach.

\subsection{}\label{setup} In order to state our results in more detail,
we introduce some  notation.  Throughout the paper, we work over $\C$,
the field of complex numbers. We write $\dd_X$ for the sheaf of
algebraic differential operators on a smooth algebraic 
variety $X$ and let $\dd(X)=\Ga(X, \dd_X)$ denote
the algebra of global sections. We write $T^*X$ for the cotangent bundle
of $X$ and
$\Char (\ms{M})\sset T^*X$ for the characteristic variety of a coherent
$\dd_X$-module
$\mm$.

\begin{conv}\label{convent} 
Throughout the paper, we let $V=\C^n$ be an $n$-dimensional vector space, and write $\SSL:=SL(V)=SL_n$.  Thus, $\sll=\sll(V)=\mf{sl}_n$ is the Lie algebra of the group $\SSL$.

We will explicitly write $SL_m$, resp. $\sll_m$, in all cases where $m\neq n$.
\end{conv}

Let $\mc{N} \subset \sll $ be the nilpotent cone. Let $\FZ$ be the algebra of bi-invariant differential operators on the
group $\SSL$. This algebra is nothing but the centre of $\U(\sll)$,
the enveloping algebra of the Lie algebra $\sll $.

We set $\X = \SSL \times V$, and put $\ddd  := \dd(\X)= \dd(\SSL) \otimes \dd(V)$.
We view a bi-invariant differential operator $z$ on $\SSL$ as an element $z \o1 \in \dd(\SSL) \otimes \dd(V)$. 
We identify the cotangent bundle $T^* \X$ with $\SSL \times \sll  \times V \times V^*$, where we have used the trace
pairing to identify the vector space $\sll$ with its dual.

We also consider the group $G = GL(V)$ with Lie algebra
$\g = \mf{gl}(V)$.
The group $G$ acts naturally on $V$ and
acts on $\SSL$ by conjugation (this action clearly factors
through an action of $PGL(V)$). 
We let $G$ act diagonally on $\X$. The $G$-action on $\X$ induces a morphism of Lie algebras $\mu: \g\to\ddd$. The $G$-action on $\X$ also
induces a
 Hamiltonian action of $G$ on the
cotangent bundle $T^* \X$ with moment map $\mu_\X:\ (g,Y,i,j) \mto g Y
g^{-1} - Y + i \o j$.
Following \cite{GG}, we define the following subvariety of $T^*\X$:
\begin{equation}\label{eq:defineNnilSL}
\Nnil(\SSL) = \{ (g,Y,i,j) \in T^*\X=\SSL \times \mc{N} \times V \times
V^* \ | \ g Y g^{-1} - Y + i \o j = 0 \}.
\end{equation}

We recall the following, c.f. \cite[Definition 4.5.2]{CherednikCurves}:
\begin{defn}\label{eq:defineNnilG}
A coherent $\dd_{\X}$-module $\ms{M}$ is called a {\em mirabolic
  $\dd$-module} if 
one has $\Char (\ms{M}) \subseteq \Nnil(\SSL)$ and,
moreover, $\mm$ has regular singularities. 
\end{defn}

It was shown in \cite{GG} that $\Nnil(\SSL)$ is a Lagrangian subvariety
of
$T^*\X$. Hence
the inclusion $\Char (\ms{M}) \subseteq \Nnil(\SSL)$
ensures that $\mm$ is holonomic, so that the condition
that $\mm$ has regular singularities makes sense.

The following result, which was  implicit in \cite{MirabolicCharacter},
will be proved  in section \ref{complete} below.

\begin{thm}\label{thm:admissible2}
For a  coherent $\dd_{\X}$-module $\ms{M}$, the following are equivalent:

\vi $\ms{M}$ is a mirabolic $\dd$-module.

\vii Both the $\FZ$-action and the $\mu(\g)$-action on $\Gamma(\X,\ms{M})$ are locally finite.
\end{thm}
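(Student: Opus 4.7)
The strategy follows Ginzburg's treatment of admissible $\dd$-modules on reductive groups in \cite{AdmissibleModules}, modified to accommodate the extra vector factor $V$.

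\emph{Direction \vi$\Rightarrow$\vii.} Choose a good filtration $F_\bu\ms{M}$, so that $\gr\ms{M}$ is a coherent $\OO_{T^*\X}$-module supported on $\Char(\ms{M})\subseteq\Nnil(\SSL)$. For $z\in\FZ$, the principal symbol $\sigma(z)$ lies in the augmentation ideal of $\C[\sll]^{\SSL}$ and hence, by Kostant, vanishes on $\mc{N}\subset\sll$; since $\Nnil(\SSL)$ projects into $\mc{N}$, this gives $\sigma(z)|_{\Char(\ms{M})}=0$. For $\xi\in\g$, the symbol $\sigma(\mu(\xi))$ is the function $(g,Y,i,j)\mapsto\langle\xi,gYg^{-1}-Y+i\otimes j\rangle$, which vanishes identically on $\Nnil(\SSL)$ by \eqref{eq:defineNnilSL}. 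A Noetherian argument then shows some power of each such symbol annihilates $\gr\ms{M}$, which forces both $\FZ$ and $\mu(\g)$ to act locally finitely on $\Gamma(\X,\ms{M})$.

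\emph{Direction \vii$\Rightarrow$\vi, characteristic variety.} Assume \vii. For a local section $m$ and $\xi\in\g$, local finiteness on the finite-dimensional $\g$-stable subspace through $m$ provides a monic polynomial relation $p_\xi(\mu(\xi))\cdot m=0$ with scalar coefficients; passing to principal symbols gives $\sigma(\mu(\xi))^{\deg p_\xi}\equiv 0$ on the characteristic variety of the cyclic $\dd$-submodule generated by $m$. Letting $m$ vary over a generating set yields $\sigma(\mu(\xi))|_{\Char(\ms{M})}=0$ for all $\xi\in\g$, i.e.\ $\mu_\X$ vanishes on $\Char(\ms{M})$. The analogous argument applied to $\FZ$ shows that every positive-degree $\SSL$-invariant polynomial on $\sll$ vanishes on the $\sll$-projection of $\Char(\ms{M})$, so by Kostant this projection lies in $\mc{N}$. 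Combining the two inclusions gives $\Char(\ms{M})\subseteq\Nnil(\SSL)$, and $\ms{M}$ is automatically holonomic because $\Nnil(\SSL)$ is Lagrangian.

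\emph{Regular singularities --- the main obstacle.} Upgrading holonomicity to regular singularities is the hardest step. My plan is to exploit the $(G,\FZ)$-equivariant structure: local finiteness of $\mu(\g)$ with algebraicity lets one integrate each finite-dimensional $\g$-submodule to a rational $G$-action, so that $\ms{M}$ becomes weakly $G$-equivariant along the diagonal $G$-action on $\X$. Luna's slice theorem then reduces regularity to finitely many slice situations around points of $\X$ with closed $G$-orbit; on each slice the combined system imposed by $\FZ$ and $\mu(\g)$ cuts $\ms{M}$ along the Lagrangian $\Nnil(\SSL)$, and one verifies regularity of the induced integrable connection on the smooth open stratum directly, then extends across the strata by the Deligne--Kashiwara criterion. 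An alternative route, parallel to the classical case \cite{AdmissibleModules}, is to realise $\ms{M}$ as a subquotient of an already-regular Harish-Chandra-type module built from $\FZ$ and $\mu(\g)$ (in the spirit of the mirabolic Harish-Chandra $\dd$-module of the introduction and \cite[Thm.~5.1.2]{MirabolicCharacter}), using that regularity is closed under subquotients. The main obstacle is that Harish-Chandra's radial-parts argument, on which \cite{AdmissibleModules} relies, does not apply verbatim to operators acting along the $V$-factor; adapting it is precisely what requires the mirabolic Hamiltonian-reduction structure developed in the rest of the paper, and is the reason the theorem, while implicit in \cite{MirabolicCharacter}, is not entirely immediate from it.
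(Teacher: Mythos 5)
Your proposal correctly identifies the two non-trivial halves of the argument (local finiteness of $\FZ$ in (i)$\Rightarrow$(ii), and regularity in (ii)$\Rightarrow$(i)) but contains a genuine gap in each, and neither follows the route the paper actually takes.

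\emph{The $\FZ$ step in (i)$\Rightarrow$(ii) is a real gap.} You argue that because $\sigma(z)$ vanishes on $\Nnil(\SSL)$ for $z\in\FZ_+$, a Noetherian argument gives a power of the symbol annihilating $\gr\ms{M}$, hence local finiteness. This symbol argument works for $\mu(\g)$ precisely because the moment-map operators have order $1$, so (once one has passed to a projective completion and taken the Kashiwara--Kawai good filtration with reduced associated graded, which requires the regular-singularities hypothesis) one gets $\mu(\xi)\cdot F_k\subset F_k$ and finite-dimensionality of each $F_k$. It fails for $\FZ$: a bi-invariant operator $z$ has order $d\ge 2$, so even with the reduced filtration the best one gets is $z\cdot F_k\subset F_{k+d-1}$; since $d-1\ge 1$ the filtration degree still grows unboundedly under iteration, and no local finiteness follows. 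The paper's actual argument is entirely different in spirit: it transports $\ms{M}$ by the horocycle transform $q_!p^*$ to $\horo\times V$, uses Lemma \ref{lem:keylemma} to see the resulting modules are $\TT$-monodromic, applies a Beilinson--Bernstein localization statement (Theorem \ref{BBthm}(i)) to produce a nonzero $(\theta,-\theta-2\rho)$-weight space, translates back to $\X$ via the isomorphism \eqref{fg} coming from \cite[4.5.2]{MirabolicCharacter} to get a nonzero $\FZ$-generalized eigenspace, and only then, for a simple $\ms{M}$, invokes the structure $\dd(\BG)\cong\C[\BG]\otimes\U(\bg)$ together with Kostant's theorem on tensor products to propagate local finiteness to the whole module. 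None of this is a symbol argument.

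\emph{The regularity step in (ii)$\Rightarrow$(i) is also a gap, and your diagnosis of the obstruction is off.} Your two sketched routes (Luna slices; subquotients of a regular Harish--Chandra-type module) are not carried out, and neither is what the paper does. The paper again applies the horocycle transform: it pulls $q_!p^*\ms{M}$ back to $Y=\SSL/U\times\SSL/U\times V$, shows the cohomology sheaves are simultaneously $\SSL$-equivariant and have locally finite $\FT$-action (via Beilinson--Bernstein, Theorem \ref{BBthm}(ii)), and then uses the crucial fact from \cite{MWZ} that $Y$ has \emph{finitely many} $\BG\times T\times T$-orbits, so that Lemma \ref{lem:monoimpliesq}(ii) (monodromic plus finitely many orbits implies regular singularities) applies. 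Since regularity of $q_!p^*\ms{M}$ implies regularity of $\ms{M}$ by \cite[Prop.~4.3.2(ii)]{MirabolicCharacter}, this closes the argument. Your closing speculation that the extension across the $V$-factor "requires the mirabolic Hamiltonian-reduction structure developed in the rest of the paper" is incorrect: Hamiltonian reduction plays no role in the proof of Theorem \ref{thm:admissible2}; the needed input is the finite-orbit statement for the horocycle variety, not $\Ham_c$. The part of your direction (ii)$\Rightarrow$(i) argument that derives $\Char(\ms{M})\subseteq\Nnil(\SSL)$ and holonomicity is fine and essentially the same as Proposition~\ref{prop:monodromicfinite}(iii).
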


Let $\one\in\g=\mf{gl}(V)$ denote the identity map. Thanks to the above theorem,  the action of the
element $\one$ on any mirabolic $\dd(\X)$-module $M$
is locally finite. Thus, one has a vector space
decomposition  $ M=\oplus_{c\in\C}\  M^{(c)}$, into  generalized
eigenspaces of $\one$,
where 
$$
 M^{(c)}:=\{m\in  M\mid (\mu(\one) - c)^\ell(m)=0\en\text{for  some}\en \ell=\ell(m)\gg0\}.$$ 

Let  $\cc$ be the  category of all mirabolic $\dd$-modules
and, for each $q\in\C^*$, let $\cc_q$ be the full subcategory of $\cc$
formed by the mirabolic $\dd$-modules $ M$ such that one has
$$
 M=\bigoplus_{ \exp(2 \pi \sqrt{-1} c) = q} \  M^{(c)}.
$$
The objects of the category  $\cc_q$ may be viewed as 
having `monodromy $q$' along the $\C^*$-orbits of the dilation action on $V$,
the second factor in $\X=\SSL\times V$. 
The category $\cc_q$ is a Serre subcategory of $\cc$.

Let $\ss_q$ be the full subcategory of $\cc_q$ formed by the mirabolic $\dd$-modules $ M$
such that the $\mu(\one)$-action on $ M$ is semisimple.

\subsection{The functor of Hamiltonian reduction}\label{sec:unstable}

Let $W$ denote the symmetric group on $n$ letters and write  $e\in\C[W]$ for the averaging idempotent in the group algebra of $W$. Let $\TSL$ be  the standard maximal
torus of $\SSL$ formed by diagonal matrices,
and write $\t=\Lie \TSL$.
The group $W$ acts naturally
on $\TSL$ and on $\t$ by permutation of coordinates.

Associated with a complex number  $\kap\in\C$,
there is an algebra  $\mathsf{H}_{\kappa}^{\mathrm{trig}} (\SSL)$ ,
the trigonometric Cherednik algebra of type $\SSL$ at parameter
$\kappa$, see \S\ref{app:shift1} for a precise
definition.
Let $\htrig_{\kappa} :=e \mathsf{H}_{\kappa}^{\mathrm{trig}} (\SSL) e$
denote the  spherical subalgebra of
$\mathsf{H}_{\kappa}^{\mathrm{trig}} (\SSL)$.
The algebra $\htrig_{\kappa}$ contains the algebra $(\sym\t)^W\cong \C[\t^*]^W$ as a commutative subalgebra.

 An important role in the representation theory
of  Cherednik algebras is played by category $\OO_\kappa$.
By definition, this is the category of finitely generated left $\htrig_{\kappa}$-modules
which are locally finite as $(\sym \t)^W$-modules, see \S\ref{ham_sec}.
The link between  mirabolic $\dd$-modules and  representations
of the Cherednik algebra is provided by  the functor
 of  Hamiltonian
reduction introduced in \cite{GG}.
We recall the construction of this functor.

First, one associates with $\kappa\in \C$ two other complex parameters, $q$ and $\kap$,
defined by the formulas:
\beq{kap}
\kappa = 1 - c, \qquad q =\exp(2 \pi \sqrt{-1} c).
\eeq
 Let $\g_{c}$ be the Lie subalgebra of $\ddd$ defined as the image of
the map $\g\to\ddd,\ a\mto \mu(a)- c \Tr(a)$, which is a Lie algebra homomorphism.
For any $\dd_\X$-module $\mm$, we put
\[
\Ham_{c}(\ms{M})\  :=\  \Gamma(\X,\ms{M}) ^{\g_c}\ =\ \{ m \in \Gamma(\X,\ms{M}) \ | \ \g_{c} \cd m = 0 \}.
\]

Further,  let   $\OO_\kappa$
be  the category of finitely generated left $\htrig_{\kappa}$-modules
which are locally finite as $(\sym \t)^W$-modules, see \S\ref{ham_sec}.
Then, it was shown in \cite[sectcion 6]{GG}
that the functor $\mm\ \mapsto\ \Ham_{c}(\ms{M})$ restricts to 
an exact functor $\Ham_{c}:\ \ms{C}_q \rightarrow \OO_\kap$ and, moreover,
this yields an equivalence $\ms{C}_q/\Ker(\Ham_c)\ \iso\ \OO_\kap$.

In this paper, we give a description of the kernel of the functor $\Ham_c$.
Our description  involves a stability condition in the
sense of Geometric Invariant Theory.
To formulate the  stability condition, 
one equips the trivial line bundle over $T^* \X$
with  a $G$-equivariant structure using the determinant character.
Explicitly, we let $G$ act on $T^*\X\times \C$ by
$g\cdot (x,t) = (g \cdot x,\, \det (g)^{-1} t)$ for all $x \in T^* \X, g \in
G$ and $t \in \C$. Write $(T^* \X)^{\st,+}$ for the set of unstable
points with respect to this line bundle
and  $(T^* \X)^{\st,-}$ for the set of unstable points with
respect to the inverse line bundle,
which corresponds to the character $\det^{-1}$.

To state our result, we also need
the following
\begin{defn}\label{bad} Put
$\bad = \{ \frac{a}{b} \ | \ a,b \in \Z, \ 1 \le b \le n \}$. We say
that the parameter $c\in \C$ is \textit{good} if $c \notin \bad \cap (0,1)$. We say that $c$ is  \textit{admissible} if $c \notin \bad
\cap \Q_{> 0}$ and, additionally, if $n = 2$ then $c \notin \bad$. 
\end{defn}

\begin{thm}\label{cor:unstable} 
Assume $c \in \C$ is admissible and let $\ms{M}\in\cc_q$ be a mirabolic module on $\X$. Then, we have $\Char (\ms{M}) \subset (T^*\X)^{\st,+}$ if and only if $\Ham_{c}(\ms{M}) = 0$. 
\end{thm}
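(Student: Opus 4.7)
The plan is to compare the quantum Hamiltonian reduction functor $\Ham_c$ with the classical GIT quotient with respect to the $\det$-character on $T^*\X$, handling the two implications separately. For the ``if'' direction, suppose $\Char(\ms{M})\sset (T^*\X)^{\st,+}$. I would equip $\ms{M}$ with a good $G$-equivariant filtration, so that $\gr\ms{M}$ is a $G$-equivariant coherent sheaf on $T^*\X$ supported on $\Char\ms{M}$. By the GIT definition of the unstable locus, every $G$-semi-invariant polynomial on $T^*\X$ of positive $\det$-weight vanishes on $\Char\ms{M}$ and hence acts nilpotently on $\gr\ms{M}$. A standard quantum-classical comparison then shows that the associated graded of $\Ham_c(\ms{M})$ with respect to the induced filtration embeds into the classical Hamiltonian reduction
\[
\bigl(\pi_*(\gr\ms{M}\,|_{\mu_\X^{-1}(0)\cap (T^*\X)^{\st,-}})\bigr)^{G,\det^c},
\]
where $\pi$ is the GIT quotient map. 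Since $\Char\ms{M}$ misses the semistable locus, this target vanishes and hence $\Ham_c(\ms{M})=0$.

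For the ``only if'' direction, I would use exactness of $\Ham_c$ to reduce to the case of a simple $L\in\cc_q$ and invoke the equivalence $\cc_q/\Ker(\Ham_c)\iso\OO_\kap$ of \cite{GG}. The condition $\Ham_c(L)\neq 0$ then translates to $L$ corresponding to a simple object of $\OO_\kap$. For admissible $c$, simples of $\OO_\kap$ are classified by $\Irr(W)$ via the $\KZ$ functor, while by the classification of supports of simple mirabolic modules (established earlier in the paper), $\Char L$ is an irreducible component of $\Nnil(\SSL)$ indexed by comparable combinatorial data. Matching these two classifications should identify the non-kernel simples with precisely those $L$ whose characteristic variety meets the semistable locus $(T^*\X)^{\st,-}$, yielding the converse.

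I expect the main obstacle to lie in verifying that this matching between simples of $\OO_\kap$ and simple mirabolic modules with semistable characteristic variety is genuinely a bijection, which is where the admissibility hypothesis really enters. For bad rational $c$ the category $\OO_\kap$ degenerates---simples can be killed by $\KZ$ or new ones can appear---and the correspondence between components of $\Nnil(\SSL)$ and simple $\OO_\kap$-modules breaks down. The condition $c\notin\bad\cap\Q_{>0}$ (together with $c\notin\bad$ when $n=2$) is tailored precisely to rule out these pathologies. A subsidiary technical point for the ``if'' direction is to upgrade the embedding of associated gradeds into an actual vanishing statement for $\Ham_c(\ms{M})$; here again admissibility is needed to ensure that the quantum and classical filtrations are compatible enough to conclude.
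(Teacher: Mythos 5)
Both directions of your outline have genuine gaps, and the paper takes a substantially different route that you have not anticipated.

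\paragraph{The ``if'' direction.} Your associated-graded argument does not quite work as stated. Choosing a good $G$-equivariant filtration, the induced filtration on $\Ham_c(\ms{M})$ gives $\gr\Ham_c(\ms{M})\hookrightarrow(\gr\ms{M})^G$ (the symbol of $\mu(u)-c\Tr(u)$ is just the symbol of $\mu(u)$, so the character $c\Tr$ drops out at the classical level). But $(\gr\ms{M})^G$ is a sheaf over the \emph{affine} GIT quotient $\mu_\X^{-1}(0)/\!/G$, which does \emph{not} ignore the unstable locus --- invariants generally do not vanish on unstable points, only semi-invariants of strictly positive $\det$-weight do. Having $\Char(\ms{M})\subset(T^*\X)^{\st,+}$ therefore gives you nilpotence of the positive-weight semi-invariants, which constrains $\oplus_{m\ge 0}\Gamma(T^*\X,\mc{O}(mn)\otimes\gr\ms{M})^G$, but does not make $(\gr\ms{M})^G$ itself vanish. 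To exploit the unstable hypothesis one must twist by high powers of $\mc{O}(n)$ and compare with the \emph{projective} GIT quotient. This is precisely what the paper's Lemma \ref{lem:GIT} and Lemma \ref{twist} do, and transporting the conclusion from $\ms{M}(kn)$ for $k\gg 0$ back to $\ms{M}$ requires the shift-functor equivalence $\sh:\oo_\kappa\iso\oo_{\kappa+1}$ and the bimodule identity ${}_{\kappa+1}\mathsf{Q}_\kappa=(\,{}_{c-1}\dd_c/\,{}_{c-1}\dd_c\g_c)^{\SSL}$ from \cite{GGS}. Your ``standard quantum-classical comparison'' hides exactly this nontrivial input.

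\paragraph{The ``only if'' direction.} The proposed classification step is incorrect: simples of $\OO_\kappa$ for the spherical trigonometric Cherednik algebra are not labelled by $\Irr(W)$, and $\KZ$ need not be a bijection on simples (it kills aspherical simples). The paper's argument instead rests on the explicit combinatorial classification of supports of simple mirabolic modules (Propositions \ref{thm:irrsupport} and \ref{prop:Hamkill}), which pins down for which admissible $c$ a simple supported on a given stratum closure can survive $\Ham_c$, and on the mechanism of Theorem \ref{cor:unstable2}: starting from a simple $\ms{M}$ whose characteristic variety meets $(T^*X)^{\mathrm{ss}}$, one twists by $\mc{O}(kn)$, uses $\sh^{k-\ell}$ being an equivalence to see that $\Ham(\ms{M}(kn))$ is simple, and then produces an explicit isomorphism $\ms{M}\cong M_{!*}$ for a simple $M\in\oo_\kappa$, forcing $\Ham(\ms{M})=M\neq 0$. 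None of this is captured by ``matching two classifications.'' You are also writing $(T^*\X)^{\st,-}$ where you presumably mean the semistable locus $(T^*\X)^{\mathrm{ss},+}$ --- a small but confusing sign error given that the two stability conditions behave very differently in this context (cf.\ the remark after Theorem \ref{cor:unstable}).

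\paragraph{Where admissibility actually enters.} In the paper it is used to (a) make Beilinson-Bernstein localization on $\mathbb{P}(V)$ work, so that the passage $\ss_c(\X)/\ss_c(\X)_0\iso\ss_c(X)$ via $F_c$ and the identity $\Ham_c=\Ham\circ F_c$ hold; (b) guarantee that $\sh$ is an equivalence via Theorem \ref{thm:projbi}; and (c) rule out, via Proposition \ref{prop:Hamkill}, simples supported off $\X^{\cyc}$ with nonzero Hamiltonian reduction. None of these roles is the degeneracy of $\OO_\kappa$ at bad parameters that your proposal points to, though that phenomenon is certainly in the background.

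Overall, the strategy ``associated-graded embedding plus classification matching'' is the natural first guess, but it elides the two genuinely hard steps --- accessing the projective GIT quotient via shift functors, and the $M_{!*}$ identification --- which is where the entire content of the paper's proof lies.
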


The above result, to be proved in \S\ref{ham_shift}, was anticipated,
in one form or the other, by 
a number of people. A similar result is expected  to
hold in the much more general framework of noncommutative algebras
obtained from an algebra of differential operators via
quantum Hamiltonian reduction (e.g. quantizations of
Nakajima's quiver varieties). Most of the results of \S\ref{sec:bimodules}, that
are used
in the proof of Theorem \ref{cor:unstable},  can
 be extended to this more general setting without much
difficulty. However, one of the obstacles to 
finding a generalization of our theorem is lack
of a plausible  general condition that
should replace 
the admissibility assumption on the parameter $c$. The latter  assumption is quite essential
for the statement of Theorem  \ref{cor:unstable} to be true.

An interesting special case where a replacement of the admissibility condition has been found and an analogue
of the above theorem has been established is the case of hypertoric varieties studied in the recent preprint by K. McGerty and T. Nevins \cite{McN}.

Theorem \ref{cor:unstable}, 
together with Proposition \ref{thm:irrsupport} below imply that:

\begin{cor}
For any $c \in \C\sminus\bad$, the functor $\Ham_{c} : \ss_q \rightarrow \oo_{\kappa}$ is an equivalence. 
\end{cor}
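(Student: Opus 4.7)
The plan is to deduce the corollary by combining the Gan--Ginzburg equivalence $\Ham_c : \cc_q/\Ker(\Ham_c) \iso \oo_\kappa$ with Theorem~\ref{cor:unstable} and Proposition~\ref{thm:irrsupport}. First, I would observe that $c \in \C \setminus \bad$ is strictly stronger than admissibility (both clauses of Definition~\ref{bad} are satisfied vacuously), so Theorem~\ref{cor:unstable} applies and identifies $\Ker(\Ham_c) \cap \cc_q$ with the Serre subcategory of mirabolic modules $\ms{M} \in \cc_q$ satisfying $\Char(\ms{M}) \sset (T^*\X)^{\st,+}$.

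Next, I would invoke Proposition~\ref{thm:irrsupport} (the classification of supports of simple mirabolic modules) to establish that $\ss_q \cap \Ker(\Ham_c) = 0$. Since this intersection is closed under subquotients, it is enough to rule out simple objects of $\ss_q$ whose characteristic variety is contained in $(T^*\X)^{\st,+}$; the support description in Proposition~\ref{thm:irrsupport} excludes this precisely when $c \notin \bad$. This yields faithfulness of $\Ham_c|_{\ss_q}$, and fullness then follows from the universal property of the Serre quotient once essential surjectivity of the composition $\ss_q \hookrightarrow \cc_q \twoheadrightarrow \cc_q/\Ker(\Ham_c)$ is established.

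For essential surjectivity, I would argue that the failure of $\mu(\one)$ to act semisimply on an object $M \in \cc_q$ is detected entirely on the unstable locus: taking the $\mu(\one)$-semisimplification of $M$ produces an object in $\ss_q$, and the kernel and cokernel of the comparison map are successive extensions of simples that either already lie in $\ss_q$ or, being obstructions to semisimplicity, have characteristic variety in $(T^*\X)^{\st,+}$ and are thus in $\Ker(\Ham_c)$. Proposition~\ref{thm:irrsupport} is used here in the converse direction: every simple mirabolic module in $\cc_q$ whose support meets the stable locus already lies in $\ss_q$ when $c \notin \bad$. Hence every $M \in \cc_q$ has an $\ss_q$-representative in its $\Ker(\Ham_c)$-equivalence class, completing the equivalence.

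The main obstacle is this final semisimplification step, namely verifying that the obstruction to $\mu(\one)$-semisimplicity sits inside $\Ker(\Ham_c)$ for $c \notin \bad$. This is precisely where the stronger hypothesis $c \notin \bad$ (as opposed to mere admissibility) is essential: the rationals excluded by $\bad$ are exactly those where $\mu(\one)$-eigenvalues on distinct simple mirabolic modules with stable support can agree modulo $\Z$, producing non-split Jordan extensions that do \emph{not} live on the unstable locus and hence survive $\Ham_c$. Ruling this out requires reading off from Proposition~\ref{thm:irrsupport} that, outside $\bad$, the $\mu(\one)$-eigenvalues attached to the various simple supports are pairwise distinct modulo integer shifts.
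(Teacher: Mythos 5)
Your identification of the trivial-kernel step is essentially correct: $c\notin\bad$ gives admissibility, Theorem~\ref{cor:unstable} characterizes $\Ker(\Ham_c)$ via the unstable locus, Proposition~\ref{thm:irrsupport} forces $\Supp\ms{M}=\X$ for every simple $\ms{M}\in\ss_q$ when $q$ is not a root of unity of order at most $n$, and since the conormal to $\X^{\reg}$ lies in $(T^*\X)^{\mathrm{ss},+}$ this gives $\Char(\ms{M})\not\subset(T^*\X)^{\st,+}$, hence $\Ham_c(\ms{M})\neq 0$. Combined with exactness of $\Ham_c$, this yields $\ss_q\cap\Ker(\Ham_c)=0$. (Do spell out the step ``$\Supp\ms{M}=\X$ implies the characteristic variety meets the stable locus''; your proposal only gestures at it.) Also, fullness does not wait on essential surjectivity as you claim: it is immediate from the Serre-quotient formalism once you note that $\ss_q$ is closed under subquotients in $\cc_q$ and that $\ss_q\cap\Ker(\Ham_c)=0$, because then for $M,N\in\ss_q$ no nonzero subobject of $N$ nor any nonzero quotient of $M$ lies in $\Ker(\Ham_c)$, so $\Hom_{\cc_q/\Ker(\Ham_c)}(M,N)=\Hom_{\cc_q}(M,N)$.

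The essential surjectivity step is where the proposal goes wrong. There is no ``$\mu(\one)$-semisimplification with a comparison map'': the generalized $\mu(\one)$-eigenspaces $M^{(c)}$ are not $\dd(\X)$-submodules (the adjoint action of $\mu(\one)$ on $\dd(\X)$ is nontrivial), so there is no natural $\dd$-module map from $M$ to any semisimplified object, and the claim that the ``obstruction to semisimplicity'' has characteristic variety in the unstable locus has no support in the paper. The correct and far shorter argument uses the adjunction machinery that you did not invoke: Proposition~\ref{many} asserts that $\Hamp$ maps $\oo_\kappa$ into $\ss_q$ (not merely into $\cc_q$), and Lemma~\ref{adjoint}(i) gives the canonical isomorphism $\Ham_c\circ\Hamp\cong\id_{\oo_\kappa}$. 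This alone gives essential surjectivity. In fact, once you know $\ss_q\cap\Ker(\Ham_c)=0$, the counit $\Hamp\Ham_c(\mm)\to\mm$ has kernel and cokernel in $\ss_q\cap\Ker(\Ham_c)$ (using exactness of $\Ham_c$, the triangle identities, and closure of $\ss_q$ under subquotients), hence is an isomorphism, so $\Hamp$ and $\Ham_c$ are mutually inverse on $\ss_q$.

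Your closing paragraph on the role of $\bad$ is also incorrect. All eigenvalues of $\mu(\one)$ on a module in $\cc_q$ are congruent modulo $\Z$ by definition of $\cc_q$, so ``eigenvalues agreeing modulo $\Z$'' is not what $\bad$ controls, and no claim about Jordan extensions ``living off the unstable locus'' appears in the paper. The actual role of $\bad$ is the one already operative in your first paragraph: $c\in\bad$ is equivalent to $q$ being a primitive $m$-th root of unity with $m\le n$, which by Proposition~\ref{thm:irrsupport} is exactly the condition under which $\ss_q$ contains simple objects with support a proper subvariety of $\X$; such objects can have characteristic variety entirely inside the unstable locus and hence be killed by $\Ham_c$. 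Excluding $\bad$ rules out those small-support simples, nothing more.
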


\begin{rem}
One can ask if a result similar to Theorem \ref{cor:unstable} holds for the negative stability condition. In order to prove such a result, one must consider $c \in \Q_{>0}$, and in particular values of $c$ which are not admissible. The arguments used in the proof of Theorem \ref{cor:unstable} are not applicable because of the failure of localization for $\dd$-modules on $\SSL \times \mathbb{P}(V)$. However, Lemma \ref{gwyn} implies: if $c > 0$ then  $\Char (\ms{M}) \subset (T^* \X)^{\st,-}$ implies that $\Ham_{c}(\ms{M}) = 0$.
\end{rem}

Our next result says that the functor of Hamiltonian reduction commutes
with shift functors. When studying  shift functors, it is more convenient to work
with {\em twisted} $\dd_c$-modules  on $X = \SSL  \times \mathbb{P}(V)$
rather than ordinary  $\dd$-modules  on $\X = \SSL  \times V$,
 see section \ref{ham_shift}. An advantage of working with twisted
$\dd_c$-modules is that the category $\ss_q$, of mirabolic $\dd$-modules on $\X$,
gets replaced by a category $\ss_c$, of mirabolic twisted
$\dd_c$-modules on $X$.

Tensoring with the line bundle $\oo(n)$, on $\mathbb{P}(V)$, gives a natural geometric shift functor $\ss_{c} \rightarrow \ss_{c-1},\ \mm \mto \mm(n)$. On the other hand, based on Opdam's theory of shift operators,
there is a shift functor $\sh : \Lmod{\htrig_{\kappa}} \rightarrow
\Lmod{\htrig_{\kappa+1}}$. 

In \S\ref{twist_pf}, we prove the  following strengthening of
an earlier result established in \cite{GGS}.

\begin{thm}\label{twistshift} 
Assume that $c$ is admissible. Then, the following diagram commutes
$$
\xymatrix{
\ss_{c}\ \ar[rr]^<>(0.5){\mm \mto \mm(n)}_<>(0.5){\sim}
\ar[d]_<>(0.5){\Ham}&&\ \ss_{c-1}\ \ar[d]_<>(0.5){\Ham}\\
\oo_{\kappa} \ \ar[rr]^<>(0.5){\sh}_<>(0.5){\sim}&&\ \oo_{\kappa+1}
}
$$
\end{thm}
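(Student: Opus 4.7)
The plan is to construct a natural transformation between the two composites in the diagram and to show it is an isomorphism by leveraging the equivalences furnished by the Corollary to Theorem \ref{cor:unstable}. The key idea is to realise both shifts, geometric and algebraic, as tensor products against bimodules, and then to identify the bimodule on the Cherednik side with the Hamiltonian reduction of the bimodule on the geometric side. This follows the bimodule strategy of \cite{GGS}, but the admissibility condition from Theorem \ref{cor:unstable} is precisely what permits the extension beyond the generic parameter range handled there.

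The first step is to rephrase the functors on both sides as tensor products: $\sh(N) = \Schur_\kappa \otimes_{\htrig_\kappa} N$ for Opdam's $(\htrig_{\kappa+1}, \htrig_\kappa)$-shift bimodule $\Schur_\kappa$, and $\mm(n) = \mm \otimes_{\oo_X} \oo_X(n)$, where $\oo_X(n)$ is regarded as a $G$-equivariant $(\dd_{c-1}, \dd_c)$-bimodule. Associativity of Hamiltonian reduction for bimodules then produces a canonical morphism
\[
\Ham(\oo_X(n)) \otimes_{\htrig_\kappa} \Ham_c(\mm)\ \to\ \Ham_{c-1}(\mm(n)),
\]
and combining this with a natural comparison $\Schur_\kappa \to \Ham(\oo_X(n))$ yields the desired natural transformation $\sh \circ \Ham_c \to \Ham_{c-1} \circ (-)(n)$. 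To verify that the transformation is an isomorphism, I would use that both vertical functors are equivalences on the semisimple subcategories (by the Corollary) and that both horizontal functors are equivalences; it then suffices to check the comparison on a generating object, which can be done by an explicit computation on a polynomial representation or on the mirabolic Harish-Chandra module.

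The main obstacle is showing that $\Schur_\kappa \to \Ham(\oo_X(n))$ is itself an isomorphism of bimodules throughout the full admissible range. For generic $c$ this is part of the framework of \cite{GGS}, where flatness and rigidity arguments suffice. At non-generic admissible values, however, the comparison map could a priori have a kernel or cokernel concentrated along the unstable locus. This is exactly where Theorem \ref{cor:unstable} enters decisively: the kernel and cokernel, interpreted as mirabolic modules, are supported in $(T^*\X)^{\st,+}$ and are therefore killed by $\Ham$, so the induced map on Hamiltonian reductions is an isomorphism. Combining this observation with the Heckman--Opdam theorem that $\sh$ is an equivalence in the admissible range completes the proof.
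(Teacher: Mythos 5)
Your overall bimodule strategy mirrors the framework of \cite{GGS} that the paper also builds on, and you correctly sense that Theorem \ref{cor:unstable} is the new input that pushes the result beyond the generic range. But you have located the difficulty in the wrong place, and the argument you give at your purported crux does not go through. The identification you present as the ``main obstacle'' — in your notation, $\Schur_\kappa \iso \Ham(\oo_X(n))$ — is not an obstacle at all: it is precisely the statement ${}_{\kappa+1}\mathsf{Q}_\kappa = ({}_{c-1}\dd_c / {}_{c-1}\dd_c\,\g_c)^{\SSL}$, which is Lemma 6.7(ii) of \cite{GGS} and is cited in the proof of Proposition \ref{ggs}. The real content lies in showing that the canonical comparison $\psi_c : {}_{\kappa+1}\mathsf{Q}_\kappa \otimes_{\htrig_\kappa} \Ham(\mm) \to \Ham({}_{c-1}\dd_c \otimes_{\dd_c} \mm)$ is an isomorphism for every $\mm\in\ss_c$, i.e.\ that taking $\SSL$-invariants commutes with the two tensor products. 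Your proposed argument — that the kernel and cokernel, ``interpreted as mirabolic modules, are supported in $(T^*\X)^{\st,+}$ and are therefore killed by $\Ham$'' — cannot be made to work: the kernel and cokernel of $\psi_c$ live in $\oo_{\kappa+1}$, they are modules over the Cherednik algebra and not $\dd$-modules on $X$, so there is no way to apply Theorem \ref{cor:unstable} to them. Your appeal to ``both vertical functors being equivalences on the semisimple subcategories (by the Corollary)'' is also off: that Corollary requires the stronger hypothesis $c\notin\bad$ rather than mere admissibility, and in any case $\Ham$ is only a quotient functor, so this reasoning would not establish that $\psi_c$ is an isomorphism even on semisimple objects.

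What is actually needed, and what the paper supplies, is the following. Since all functors involved are exact, one reduces to simple $\mm$. If $\SS(\mm)$ contains no semistable points, then $\Ham(\mm) = \Ham(\mm(n)) = 0$ by Theorem \ref{cor:unstable2}, and there is nothing to prove. Otherwise both $\Ham(\mm)$ and $\Ham(\mm(n))$ are nonzero simple objects, so it suffices to show that the map $\psi_c$ is \emph{nonzero}. Proposition \ref{ggs}(ii), from \cite{GGS}, already gives that $\psi_c$ is an isomorphism on objects of the form $\Hamp(M)$, but extending this to all simple $\mm$ requires the factorization of the isomorphism $\sh^{\ell}(\Ham(\mm)) \iso \Ham(\mm(\ell n))$ established inside the proof of Theorem \ref{cor:unstable2} (via Lemma \ref{twist}) as a composition of the maps $\sh^{\ell-1-i}(\psi_{c-i})$; since the composite is an isomorphism, the first factor $\sh^{\ell-1}(\psi_c)$, and hence $\psi_c$ itself, must be nonzero. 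This bootstrapping from nonvanishing of the composite is the missing ingredient in your proposal.
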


\subsection{The support of simple mirabolic $\dd$-modules}
The space $\X$ has a partition into a finite union of
smooth locally-closed strata, see section \ref{strat} for details. 
Every irreducible component of the Lagrangian variety 
$\Nnil(\SSL)$ is the closure of the conormal bundle
to a certain stratum.
The strata which arise in this way are called \textit{relevant}. These
relevant strata, $\X(\lambda,\mu)$, are labeled by the bi-partitions
$(\lambda,\mu)$ of $n$.
The support of any  simple
 mirabolic module is the closure of a relevant stratum.

Let $\xcyc \sset \X$ be the open subset formed by the pairs $(g,v)$ such that the vector $v$ is {\em cyclic} for $g$, i.e. such that we have
$\C[g] \cd v = V$. Further, let  $\X^{\reg} \sset \xcyc$ be the open
subset formed by the pairs $(g,v) \in \xcyc$, such that $g \in \SSL$, is
a regular semi-simple element - that is, a matrix with determinant one, and with $n$ pairwise distinct eigenvalues.
The set $\X^\cyc$ is a union of relevant strata and the set $\X^\reg$ is the
unique open stratum in $\X$.

The following result provides some information about the support of simple mirabolic modules. This information is an important ingredient in the proof of Theorem \ref{cor:unstable}, as well as in the analysis of the mirabolic Harish-Chandra $\ddd$-module, see Theorem \ref{main}.

\begin{prop}\label{thm:irrsupport}
Let $\ms{M} \in \ss_{q}$ be a simple mirabolic module. If $q$ is a primitive $m$-th root of unity, where $1 \le m \le n$, then 
$$
\Supp \ms{M} = \ol{\X((m^{v},1^w),(m^{u}))}
$$
for some $u,v,w \in \mathbb{N}$ such that $n = (u+v) m + w$. Otherwise, $\Supp \ms{M} = \X$.   
\end{prop}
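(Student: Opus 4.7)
The plan is a stabilizer-monodromy analysis on the relevant stratum whose closure carries $\ms{M}$. By the correspondence between irreducible components of $\Nnil(\SSL)$ and conormal bundle closures of relevant strata recalled above, $\Supp\ms{M} = \ol{\X(\lambda,\mu)}$ for a unique bipartition $(\lambda,\mu)$ of $n$, and $\ms{M}$ is the minimal extension of an irreducible local system $\CL$ on the smooth part of $\X(\lambda,\mu)$. The task is then to show that membership of $\ms{M}$ in $\ss_q$ forces $(\lambda,\mu) = ((m^v,1^w),(m^u))$ when $q$ is primitive of order $m \le n$, and $\lambda = (1^n),\ \mu = \emptyset$ (equivalently $\Supp\ms{M} = \X$) otherwise.

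I would first fix a generic point $(g_0,v_0) \in \X(\lambda,\mu)$: the Jordan decomposition of $g_0$ splits $V$ into blocks whose sizes comprise the parts of $\lambda$ (on which $v_0$ has a cyclic component) and the parts of $\mu$ (on which $v_0$ projects to zero). Since $\X(\lambda,\mu)$ is a single $G$-orbit, where $G = GL(V)$, the fibre of $\CL$ at $(g_0,v_0)$ carries a representation of the isotropy $H = \Stab_G(g_0,v_0)$. For each non-cyclic block of size $k$, the block-scalar subgroup $\C^{\times} \subset GL(V)$ lies in $H$ and contributes a character $\lambda \mapsto \lambda^{n_j}$ on the fibre, whose infinitesimal generator is the block-projector $P_j \in \g$ of trace $k$. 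The $\ss_q$-decomposition $\mu(\one) = \sum_j \mu(P_j)$, combined with $\exp(2\pi\sqrt{-1}\,c) = q$ on the $\mu(\one)$-spectrum, then forces $q^k = 1$ for every non-cyclic block size $k$, hence $m \mid k$. A parallel but more delicate analysis for cyclic blocks of size $k > 1$---in which the $\det g = 1$ condition couples the rescaling of the block's eigenvalue with the dilation of $v_0$ along the cyclic direction---yields the same constraint $q^k = 1$, while cyclic blocks of size $k = 1$ remain unconstrained because the corresponding loop is absorbed by the ambient $\C^{\times}$-dilation of $V$ already accounted for by $q$. Combined with $k \le n$, this pins $k$ to $m$ in all non-$1$ cases.

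Assembling these block-by-block conclusions, $\mu = (m^u)$ and $\lambda = (m^v, 1^w)$, and $|\lambda| + |\mu| = n$ rewrites as $n = (u+v)m + w$. If $q$ is not a primitive root of unity of order $\le n$, no block of size $>1$ can occur, forcing $\lambda = (1^n),\ \mu = \emptyset$, whose closure is all of $\X$ because this is the open stratum. The principal technical obstacle is the block-by-block monodromy computation, and especially the $\det g = 1$ coupling that turns $GL(V)$-centralizer characters into constraints in the $SL(V)$-setting; this should reduce, on each block, to an explicit rank-one calculation of the action of the block-scalar $\C^{\times}$ on the fibre of $\CL$.
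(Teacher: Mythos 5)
Your overall plan---analyze the local system on the relevant stratum carrying $\ms{M}$ and read off constraints on the block sizes from the monodromy data---is in the right spirit, but there are two problems, one fixable and one that is a genuine gap.

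First, the relevant stratum $\X(\lambda,\mu)$ is \emph{not} a single $G$-orbit, as you assert. By definition it is the $G$-saturation of $\{z\cd u\}\times\{v\}$ as $z$ ranges over the dense open subset $Z(\MSL)^{\circ}$ of the center of the Levi, so it is a one-parameter (or higher) family of $G$-orbits. The paper's proof deals with this by restricting via $\Upsilon^*$ to $\Xo_{\LSL}=\LSL^\circ\times V$ and then invoking Lemma~\ref{lem:locallyfactorL} to split the resulting simple local system as $\mathsf{L}\boxtimes\ms{N}'$, where $\mathsf{L}$ is a local system on $Z(\LSL)^{\circ}$ and $\ms{N}'$ is an $\LSL$-cuspidal module supported on $\bN^{\LL}\times V$. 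Without such a factorization you cannot simply read off the $H$-representation on a single fibre.

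Second, and more seriously, your monodromy constraint alone does not close the argument. You correctly derive, for each block of size $k>1$, that $q^k=1$, i.e.\ $m\mid k$. But you then claim ``Combined with $k\le n$, this pins $k$ to $m$ in all non-$1$ cases'' --- this is false: if $m=2$ and $n=8$ one could a priori have $k=4$ or $k=6$. The missing input is the characteristic-variety constraint $\Char(\ms{M})\subseteq\Nnil(\SSL)$. In the paper's proof (Proposition~\ref{prop:classification1}) this is what forces the restricted local system on each size-$k$ block to be a \emph{cuspidal} mirabolic module on $SL_k$. By the cleanness result for cuspidal local systems on the regular unipotent class (cf.\ \S\ref{sec:monodromydefinition} and Lusztig \cite{CharSheaves5}), that happens if and only if the monodromy is a \emph{primitive} $k$-th root of unity, which, combined with $m\mid k$ and the fact that the character factors through $\Z_m$, gives $k=m$. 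Your $\ss_q$-decomposition argument only captures the ``$q^k=1$'' part of the constraint; it cannot see the primitivity, which is a statement about the singular support and not merely about the $\mu(\one)$-spectrum. As written, your argument would conclude only that all blocks of size $>1$ have size a multiple of $m$, which is strictly weaker than the proposition.
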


Independent of Theorem \ref{cor:unstable} above, one can ask whether a
simple mirabolic $\dd$-module supported on the closure of a given
stratum is killed by the functor of Hamiltonian reduction. The following
proposition gives a partial answer to that question.  
First, introduce the following sets of rational numbers:
$$
\Sing_- = \left\{ \frac{r}{m} \in \Q_{\le 0} \ | \ 2 \le m \le n, \ (r,m) = 1 \right\}, \quad \Sing_+ = \{ - c + 1 \ | \ c \in \Sing_- \},
$$
where $(r,m)$ denotes the highest common factor of $r$ and $m$, and we set $\Sing_0 = \Z$. Let $\Sing \subset \C$ be the union of these three sets.

\begin{prop}\label{prop:Hamkill}
Let $c \in \C$ and set $q = \exp (2 \pi \sqrt{-1} c)$. Let $\ms{M} \in \ss_{q}$ be a simple mirabolic module such that $\Supp \ms{M} \subset \X \sminus \X^{\mathrm{reg}}$. 

\begin{enumerate}
\item If $\Supp \ms{M} = \ol{\X((m^{v},1^w),\emptyset)}$ for some $v,w
  \in \mathbb{N}$ such that $n = v m + w$ then\newline
  $\Ham_{c}(\ms{M}) \neq 0$ implies that $c = \frac{r}{m}_{_{}} \in \Sing_-$.

\item If $\Supp \ms{M} = \ol{\X((1^w),(m^u))}$ for some $u,w \in \mathbb{N}$ such that $n = u m + w$ then\newline $\Ham_{c}(\ms{M}) \neq 0$ implies that $c = \frac{r}{m}_{_{}} \in \Sing_+$.  

\item If $\Supp \ms{M} = \ol{\X(\emptyset,(1^n))}$ then $\Ham_{c}(\ms{M}) \neq 0$ implies that $c \in \Z_{>0}$. 
\end{enumerate}
For all other $\ms{M}$, one has $\Ham_{c}(\ms{M}) = 0$ for all $c$.
\end{prop}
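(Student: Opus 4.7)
The plan is to combine the support classification from Proposition~\ref{thm:irrsupport} with a transversal-slice argument that reduces the computation of $\Ham_c(\mm)$ to the same problem on a mirabolic variety of smaller rank. By Proposition~\ref{thm:irrsupport}, the hypothesis $\Supp \mm \subset \X \sminus \X^{\reg}$ forces $q$ to be a primitive $m$-th root of unity for some $1 \le m \le n$ and $\Supp \mm = \ol{\X((m^v, 1^w), (m^u))}$ with $n = (u+v)m + w$; in particular $c = r/m$ for some integer $r$ with $(r,m) = 1$. The three cases in the statement correspond respectively to $u = 0$, to $v = 0$ with $m \ge 2$, and to the degenerate case $m = 1$ with $\lambda = \emptyset$; the only remaining possibility is a ``mixed'' stratum with both $u, v > 0$ and $m \ge 2$. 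The task therefore splits into (a) extracting, for each of the three listed supports, the necessary condition on $c$ implied by $\Ham_c(\mm) \neq 0$, and (b) showing that on mixed strata one has $\Ham_c(\mm) = 0$ for every $c$.

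The main technical device is a slice argument. At a generic point of the stratum $\X(\lambda,\mu)$ one chooses a transversal slice to the $G$-orbit; up to equivariance, the restriction of $\mm$ along this slice is a full-support mirabolic module on a variety of the form $\SSL_{m} \times \C^{m}$, with parameter shifted by an integer determined by the partition data. Combining this restriction with the expected compatibility between $\Ham_c$ and specialization --- a main theme of the paper --- the problem reduces to asking: for which shifted parameters $c'$ is the Hamiltonian reduction of a full-support simple mirabolic module on the smaller $\X'$ nonzero? For case (1) the slice contains the image of $v \mapsto 0$ and therefore sits inside the positive unstable locus of $T^* \X'$; Theorem~\ref{cor:unstable} then forces $c'$ to be non-admissible, and an explicit rank-$m$ calculation yields $c = r/m \in \Sing_-$. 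Case (2) is symmetric, with the slice meeting the negative unstable locus, and gives $c \in \Sing_+$. Case (3) is the degenerate $m = 1$ case lying outside the scope of Theorem~\ref{cor:unstable}; here $\mm$ is supported on the locus where $g$ acts as a scalar, and a direct analysis using the geometric shift functor $\mm \mapsto \mm(n)$ of Theorem~\ref{twistshift}, together with its Cherednik counterpart $\sh$, rules out $c \in \Z_{\le 0}$.

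For mixed strata, the slice decomposes as a product meeting both positive and negative unstable loci simultaneously, and no simple in category $\OO$ of the rank-$m$ Cherednik algebra can survive Hamiltonian reduction in both directions for the same $c$; alternatively, an involution on $T^*\X$ exchanging the two stability conditions forces any surviving simple to violate Theorem~\ref{cor:unstable}. The main obstacle will be making the slice/restriction operation on mirabolic modules precise and verifying that it intertwines $\Ham_c$ with the Bezrukavnikov--Etingof restriction on the Cherednik side, together with the correct integer shift in $c$. The Cherednik side of this intertwining is standard, but the mirabolic counterpart is precisely the Verdier specialization functor constructed in the second half of the paper; once this intertwining is in place, each of the three cases reduces to a finite check in a rank-$m$ Cherednik algebra, and the sets $\Sing_\pm$ and $\Z_{>0}$ emerge as the corresponding reducibility loci.
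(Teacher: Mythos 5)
Your proposal is genuinely different in strategy from the paper's argument, but it has a fundamental logical problem: it is circular.

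You invoke Theorem~\ref{cor:unstable}, Theorem~\ref{twistshift}, and Theorem~\ref{thm:admissiblecommute} to prove Proposition~\ref{prop:Hamkill}. However, each of these depends logically on Proposition~\ref{prop:Hamkill} itself. The proof of Theorem~\ref{cor:unstable} at the end of \S\ref{twist_pf} explicitly cites Proposition~\ref{prop:Hamkill} (to dispose of modules supported on $\SSL\times\{0\}$), and both Theorem~\ref{cor:unstable} and Theorem~\ref{twistshift} rest on Lemma~\ref{gwyn}, whose proof in \S\ref{sec:stabilty2} begins ``By Proposition~\ref{prop:Hamkill}, $\ms{M}$ must be supported on \dots''. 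Theorem~\ref{thm:admissiblecommute} in turn uses Theorem~\ref{cor:unstable} in its proof. So the machinery you are reaching for has not been established at the point where Proposition~\ref{prop:Hamkill} must be proved.

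The paper's route is more elementary and self-contained. It uses the \emph{non-characteristic} restriction $\Upsilon^*:\ss_c\to\ss_{\X_{\LSL}^\circ,c}$ along $\X_{\LSL}^\circ=\LSL^\circ\times V$ (Corollary~\ref{prop:restrictionfunctor}), not Verdier specialization; this is a transversal slice in the sense of Lemma~\ref{trans}, but the output lands in the classified category of $\LSL$-cuspidal mirabolic modules (Propositions~\ref{prop:classification1}, \ref{lem:cuspidalkillT}) rather than a smaller-rank mirabolic category. The crucial embedding $\Ham_c(\ms{M})\hookrightarrow \Ham_L(\Upsilon^*\ms{M})$ (Corollary~\ref{cor:keyfact}, via the locally strongly \'etale map of Lemma~\ref{lem:strongetale}) replaces your intertwining-with-specialization step and avoids any admissibility hypothesis. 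Then Propositions~\ref{prop:cuspidalkill} and \ref{prop:supportT} compute directly, using the \cite{CEE} Theorem~9.8 calculation and the $\ms{E}_{-r/m}$ and $\delta_V$ factors, exactly which cuspidals survive $\Ham_c$ and for which $c$; the sets $\Sing_\pm$ and $\Z_{>0}$ fall out of these computations, not out of a stability criterion.

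There is also a sign/geometry slip in your sketch: for case (1), $\mu=\emptyset$, and by the argument in Lemma~\ref{gwyn} the conormal $T^*_{\X((m^v,1^w),\emptyset)}\X$ is contained in the \emph{semistable} locus $(T^*\X)^{\mathrm{ss},+}$, not the unstable locus. So even ignoring circularity, Theorem~\ref{cor:unstable} in the form you cite would not ``force $c'$ to be non-admissible'' for this stratum. The selection of $\Sing_-$ for case (1) really does come from the explicit Dunkl-eigenfunction calculation on $\ms{E}_{-r/m}$ and $\ms{L}_r\boxtimes\C[V_m]$, not from a GIT dichotomy.
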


The proof of Propositions \ref{thm:irrsupport} and
\ref{prop:Hamkill} are given in section \ref{sec:proofs}. We note that
Proposition \ref{prop:Hamkill} can be used to classify the possible
supports of simple modules in category $\mc{O}_{\kappa}$
for the algebra $\htrig_{\kappa}$.  

\subsection{The mirabolic Harish-Chandra $\dd$-module}
In the seminal paper \cite{HottaKashiwara}, Hotta and Kashiwara
have defined, for any complex semisimple group $\BG$,
 a holonomic $\dd(\BG)$-module
that they called
{\em the Harish-Chandra $\dd$-module}.
This  $\dd$-module is 
 important, for instance, because of its  close relation
to the system of partial differential
equations on the group $\BG$ introduced by Harish-Chandra around 1960 in his
study of irreducible characters of infinite dimensional
representations of the group $\BG$.

The definition of the Harish-Chandra $\dd$-module 
involves a choice of ``central character'',
a closed point $\la\in \Spec \FZ$,
where $\FZ$ is the algebra of
bi-invariant differential operators on $\BG$.
Write $\FZ_\la$ for the
corresponding maximal ideal in $\FZ$
and put ${\mathfrak G}:=\Lie \BG$.
The adjoint action of the group $\BG$ on itself
 induces a Lie algebra
map $\ad:\ {\mathfrak G} \to \dd(\BG)$, c.f.  \S\ref{setup}.
Then, following  Hotta and Kashiwara
in \cite{HottaKashiwara2},
the  Harish-Chandra $\dd$-module at parameter $\la$
is defined to be
\beq{J}
{\mathcal J}_\la\ :=\ \dd(\BG)/(\dd(\BG)\cdot\ad({\mathfrak G}) + \dd(\BG)\cdot\FZ_\la ).
\eeq

It is not difficult to show that ${\mathcal J}_\la|_{\BG^\reg}$,
the restriction of ${\mathcal J}_\la$ to the open set  $\BG^\reg$ of regular, semi-simple elements,
is  a local system of rank $|W|$, the order of the Weyl
group of $\BG$. Furthermore, one of
the main results proved by Kashiwara, \cite{KashInvHolo}, using a famous theorem of Harish-Chandra on regularity of invariant eigen-distributions says
that one has ${\mathcal J}_\lambda \simeq j_{!*} ({\mathcal J}_\lambda |_{\BG^\reg})$, i.e. ${\mathcal J}_\lambda$ is the minimal extension
with respect to  the natural open embedding $j : \BG^\reg \into \BG$
of the local system ${\mathcal J}_\lambda |_{\BG^\reg}$.

We now return to the setting of \S\ref{setup}
and  let $\BG=\SSL=SL(V) $. We identify $\Spec \FZ$ with $\tSL/W$ via the Harish-Chandra homomorphism. Motivated by formula \eqref{J},
in \cite{GG}, the authors  introduced, for each 
 pair $(\la,c) \in \tSL^* /W \times \C$,
the following $\dd$-module on the space $\X=\SSL \times V$
$$
\CG_{\lambda,c} := \ddd  / (\ddd  \cd \g_c + \ddd 
\cd \FZ_\lambda),
$$
called the
 {\em mirabolic Harish-Chandra $\dd$-module} with parameters $(\la,c)$.

The mirabolic Harish-Chandra $\dd$-module is an
example of a mirabolic $\dd$-module and
it  was further studied in \cite{MirabolicCharacter}.
It is known, in particular, that 
the restriction of $\CG_{\lambda,c}$ to $\X^{\reg}$ is 
 a local system of rank $n!$.

One of the motivations for the present work was
our desire to understand whether or not one
has an isomorphism
$\CG_{\lambda,c}\cong\jmath_{!*}(\CG_{\lambda,c}|_{\X^\reg})$,
where $\jmath: \X^\reg\into\X$ denotes the open embedding.  
It turns out that if $c$ is generic then the  isomorphism holds.
However, for non-generic values of 
the parameter $c$, the isomorphism may fail.
In other words, for certain
values of  $c$, the $\ddd$-module  $\CG_{\lambda,c}$ may have
either nonzero simple quotients or submodules (or both)
supported on $\X\sminus\X^\reg$.

Our main result about the
 mirabolic Harish-Chandra $\dd$-module  describes 
the possible supports of  simple quotients,
resp. submodules, of  $\CG_{\lambda,c}$ as follows.

\begin{thm}\label{main}
Let $c \in \C$. For any $\la \in \tSL^* /W$, the following holds:
\begin{enumerate}
\item If $c \notin \Sing$ then $\CG_{\lambda,c}$ has no submodules or quotients supported on $\X\sminus\X^{\reg}$. 

\item If $c = \frac{r}{m} \in \Sing_-$ then:\ The simple quotients of
  $\CG_{\lambda,c}$ are supported on the closure of the strata
$\X((m^v,1^w),\emptyset)$ where $v,w \in \mathbb{N}$ such that $n = v
m + w$.

\noindent
The simple submodules of $\CG_{\lambda,c}$ are supported on the closure
 of  the strata
$\X((1^w),(m^u))$, where $u,w \in \mathbb{N}$ such that $n = u m + w$. 

\item If $c = \frac{r}{m} \in \Sing_+$ then:\ The simple quotients of
  $\CG_{\lambda,c}$ are supported on the closure of  the strata
$\X((1^w),(m^u))$, where $u,w \in \mathbb{N}$ such that $n = u m + w$;

\noindent
The simple submodules of $\CG_{\lambda,c}$ are supported on the closure of the strata $\X((m^v,1^w),\emptyset)$, where $v,w \in \mathbb{N}$ such that $n = v m + w$. 
\end{enumerate}
\end{thm}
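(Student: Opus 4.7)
The plan is to handle simple quotients of $\CG_{\lambda,c}$ directly by a $\Hom$-computation, and to deduce the submodule statements via holonomic duality; the substantive input is Propositions~\ref{thm:irrsupport} and~\ref{prop:Hamkill}, and the task is only to connect them to the specific presentation of $\CG_{\lambda,c}$. I would begin with the following Hom-formula: since $\CG_{\lambda,c}=\ddd/(\ddd\cdot\g_c+\ddd\cdot\FZ_\lambda)$ is cyclic on a generator annihilated by $\g_c$ and by the maximal ideal $\FZ_\lambda\subset\FZ$, for any mirabolic $\ddd$-module $\ms M$ one has
\[
\Hom_{\ddd}\bigl(\CG_{\lambda,c},\ms M\bigr)=\{\,m\in\Gamma(\X,\ms M)\mid \g_c\cdot m=0,\ \FZ_\lambda\cdot m=0\,\}.
\]
Bi-invariant differential operators on $\SSL$ commute with $\mu(\g)$ inside $\ddd$, so $\FZ$ preserves $\Ham_{c}(\ms M)=\Gamma(\X,\ms M)^{\g_c}$, and the above Hom-space is precisely the $\lambda$-character eigenspace of $\FZ$ acting on $\Ham_c(\ms M)$. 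In particular, every simple quotient $\ms M$ of $\CG_{\lambda,c}$ satisfies $\Ham_c(\ms M)\neq 0$.

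Next I would apply Propositions~\ref{thm:irrsupport} and~\ref{prop:Hamkill} to a simple quotient $\ms M$ of $\CG_{\lambda,c}$ with $\Supp\ms M\subsetneq\X$. By Proposition~\ref{thm:irrsupport}, $\Supp\ms M=\ol{\X((m^v,1^w),(m^u))}$ for some bi-partition of $n$; combined with the non-vanishing of $\Ham_c(\ms M)$, Proposition~\ref{prop:Hamkill} constrains the possibilities. If $c\notin\Sing$, none of the three cases of Proposition~\ref{prop:Hamkill} applies, yielding the contradiction $\Ham_c(\ms M)=0$; this proves (1) for quotients. If $c=r/m\in\Sing_-$, only case~(1) of Proposition~\ref{prop:Hamkill} is compatible with $\Ham_c(\ms M)\neq0$, since the other two cases would require $c\in\Sing_+$ or $c\in\mathbb Z_{>0}$, both of which are incompatible with $\Sing_-\subset(-\infty,0]\setminus\mathbb Z$. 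Hence $u=0$ and $\Supp\ms M=\ol{\X((m^v,1^w),\emptyset)}$, which establishes (2) for quotients. Symmetrically, for $c=r/m\in\Sing_+$, only case~(2) of Proposition~\ref{prop:Hamkill} survives, forcing $v=0$ and $\Supp\ms M=\ol{\X((1^w),(m^u))}$, which is (3) for quotients.

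For the submodule statements I would invoke holonomic duality $\mathbb{D}$, which interchanges simple submodules with simple quotients and preserves supports. A direct computation starting from the presentation of $\CG_{\lambda,c}$ as a quotient of $\ddd$ by a left ideal generated by Lie algebra elements identifies $\mathbb{D}(\CG_{\lambda,c})$ with a twist of $\CG_{\lambda^{\ast},1-c}$, where $\lambda\mapsto\lambda^{\ast}$ is the involution of $\Spec\FZ$ induced by the antipode of $\U(\sll)$. The involution $c\mapsto 1-c$ fixes $\Sing$ setwise and interchanges $\Sing_-$ with $\Sing_+$, so applying the quotient analysis of the previous paragraph to $\CG_{\lambda^{\ast},1-c}$ transfers cases (1)--(3) into the corresponding statements for simple submodules of $\CG_{\lambda,c}$.

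The main subtlety is the duality isomorphism $\mathbb{D}(\CG_{\lambda,c})\cong\CG_{\lambda^{\ast},1-c}$ used in the last step: one must track carefully the canonical-bundle twist on $\X$ and the effect of the antipode on the central character, so that the parameters of the dual really are $(\lambda^\ast,1-c)$. A route that avoids duality altogether is to establish a right-handed analogue of the Hom-formula for $\Hom_\ddd(\ms M,\CG_{\lambda,c})$ in terms of a right-module version of $\Ham_c$, and to combine it with the obvious right-handed version of Proposition~\ref{prop:Hamkill}; this would replace the duality step by a direct computation on the same footing as the quotient case.
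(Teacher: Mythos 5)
Your proposal reproduces the paper's argument almost exactly: simple quotients are shown to have nonzero Hamiltonian reduction via the presentation of $\CG_{\lambda,c}$ by generators and relations, then Propositions~\ref{thm:irrsupport} and~\ref{prop:Hamkill} pin down the allowed supports, and simple submodules are handled by applying the duality isomorphism $\mathbb{D}(\CG_{\lambda,c})\simeq\CG_{\tau(\lambda),-c+1}$ and the observation that $c\mapsto 1-c$ interchanges $\Sing_-$ and $\Sing_+$ while $\mathbb{D}$ preserves supports. The only divergence is at the duality isomorphism itself: you propose a direct computation tracking the canonical twist and the antipode, whereas the paper simply cites \cite[Proposition~6.2.1]{MirabolicCharacter}; your alternative right-handed reduction route would be a genuinely different way to avoid that input, but as sketched it remains a remark rather than a worked argument.
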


\begin{rem}
The behaviour of $\CG_{\lambda,c}$, when $c \in \Z = \Sing_0$, is quite different from
(and, in a sense, less interesting than) the case where $c \notin \Z$ and it
will not be considered in this article. When $c \in \Sing$, it is likely that $\CG_{\lambda,c}$ will also have subquotients supported on the closure of the strata $\X((m^v,1^w),(m^u))$, where $n = (v + u)m + w$. 
\end{rem}

\begin{cor}\label{cor:main}
Let $c \in \C$ and $\la \in \tSL^* /W$.
\begin{enumerate}
\item If $c \notin \Sing$ then the Harish-Chandra $\dd$-module is the minimal extension of its restriction to $\X^\reg$. 

\item If $c \in \Sing_-$, resp. $c \in \Sing_+$, then $\CG_{\la,c}$ has no quotient modules, resp. submodules, supported on $\X^\cyc\sminus\X^\reg$.
\end{enumerate}
\end{cor}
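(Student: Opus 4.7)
\emph{Proof plan.} My plan is to derive both parts of the corollary directly from Theorem \ref{main}, by translating its information about the possible supports of simple sub-objects and simple quotients of $\CG_{\lambda,c}$ into the language of minimal extensions and the cyclic locus.

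For part (1) I would invoke the standard characterization of the minimal extension: among regular holonomic extensions of the local system $\CG_{\lambda,c}|_{\X^{\reg}}$ from $\X^{\reg}$ to $\X$, the minimal extension $\jmath_{!*}\bigl(\CG_{\lambda,c}|_{\X^{\reg}}\bigr)$ is the unique one admitting no nonzero simple sub-object and no nonzero simple quotient supported on $\X\sminus\X^{\reg}$. Since Theorem \ref{main}(1) provides precisely this property for $\CG_{\lambda,c}$ whenever $c\notin\Sing$, the isomorphism $\CG_{\lambda,c}\cong\jmath_{!*}\bigl(\CG_{\lambda,c}|_{\X^{\reg}}\bigr)$ is forced.

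For part (2), by Theorem \ref{main}(2) (respectively (3)) every simple quotient (respectively simple submodule) of $\CG_{\lambda,c}$ is supported on a closure of the form $\ol{\X((m^v,1^w),\emptyset)}$ with $v\geq 1$ and $m\geq 2$. It therefore suffices to check that no such closure is contained in $\X^{\cyc}\sminus\X^{\reg}$. My plan is to extract from the stratification in Section \ref{strat} the following geometric fact: the stratum $\X((m^v,1^w),\emptyset)$ consists of pairs $(g,w)\in\SSL\times V$ in which $w$ is a nonzero cyclic vector for $g$, and each stratum is stable under the $\Cs$-action dilating the $V$-factor. Letting the dilation parameter tend to $0$ then exhibits boundary points $(g,0)\in \ol{\X((m^v,1^w),\emptyset)}$ that lie outside $\X^{\cyc}$, so the closure fails to be contained in $\X^{\cyc}$, and a fortiori is not contained in $\X^{\cyc}\sminus\X^{\reg}$.

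The step I expect to require most care is the geometric dictionary just alluded to: verifying, against the precise conventions of Section \ref{strat}, that the label $\emptyset$ in the second component of the bi-partition really does force $w$ to be nonzero and cyclic for $g$, so that the dilation limit $w\to 0$ genuinely leaves the cyclic locus. Once this identification is in hand, both parts of the corollary reduce to immediate bookkeeping with the output of Theorem \ref{main}.
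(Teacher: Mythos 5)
Your proof is correct and follows essentially the same route as the paper: part (1) is a direct application of the characterization of minimal extensions together with Theorem \ref{main}(1), and part (2) reduces to checking that the relevant support closures are not contained in $\X^{\cyc}\sminus\X^{\reg}$. Where the paper compresses part (2) into the one-line observation that $\X(\mu,\nu)\subset\X^{\cyc}$ iff $\nu=\emptyset$, you usefully make explicit the missing step — that the \emph{closure} $\ol{\X((m^v,1^w),\emptyset)}$ leaves $\X^{\cyc}$, via the $\Cs$-dilation limit $(g,t\vv)\to(g,0)$ — which is exactly what is needed to conclude that no quotient (resp.\ submodule) has support contained in $\X^{\cyc}\sminus\X^{\reg}$.
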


\subsection{Specialization}
P. Etingof introduced certain  `sheafified' versions  of Cherednik
algebras.
Specifically, 
associated to any smooth quasi-projective variety $X$ and finite group
$W$ of automorphisms of $X$,
Etingof \cite{ChereSheaf} defines  a sheaf   $\mc{H}_{\kappa}(X,W)$ of
associative algebras on the quotient $X/W$. Here $\kappa$ is a (multi-) parameter
and the family of the algebras
$\mc{H}_{\kappa}(X,W)$ is
 a flat deformation of  the algebra $\dd_X \rtimes W$. 
In this paper, we restrict ourselves to the (most interesting) case where the action
of $W$ on $X$ is `generated by pseudo-reflections' in the sense
of \S\ref{sec:Cheredefn} below.

A  subgroup of $W$ is called a {\em
  parabolic subgroup}
if it is equal to the stabilizer of some point $x\in X$.
Fix a  parabolic  subgroup   $W' \subset W$ and a
 connected component $Y$ of the set of points in $X$ with stabilizer
 $W'$. Let  $\NN_{X/Y}$ denote  the normal bundle to $Y$ in $X$ and $W_Y := \{ w
 \in W \ | \ w(Y) = Y \}$. 
Mimicing a construction of Kashiwara, we introduce
a canonical  $\Z$-filtration  on $\mc{H}_{\kappa}(X,W)$,
which we call  the \textit{$V$-filtration}.
We show that the associated graded of $\mc{H}_{\kappa}(X,W)$ with
respect to the $V$-filtration is isomorphic to
$\mc{H}_{\kappa'}(\NN_{X/Y},W_Y)$, the Cherednik algebra associated
to  the variety
 $\NN_{X/Y}$ and the group $W_Y$
that acts on $\NN_{X/Y}$ by vector bundle automorphisms.
This allows one to define
a functor
$$
\Sp_{X/WY} : \Lmod{\mc{H}_{\kappa}(X/WY)} \to \Lmod{\mc{H}_{\kappa'}(\NN_{X/Y},W_Y)},
$$
on an appropriate  category of specializable
$\mc{H}_{\kappa}(X,W)$-modules.
This  is a Cherednik algebra analogue of 
 Verdier specialization of $\dd$-modules, as defined by Kashiwara.
Our functor $\Sp_{X/WY}$ 
enjoys the expected properties of specialization,
in particular, it is an exact functor and it comes equipped with
a canonical monodromy automorphism.

Next, we return to the case of trigonometric 
Cherednik algebras of type $A$. Thus,
let  $\TSL$ be the maximal torus of $\SSL$. Given a Levi subgroup $\LSL \subset \SSL$, let
$W_L\sset W=S_n$ be the Weyl group of $L$ and
 $Y:=\TSL^{W_L}_{\circ}$  the set of points in $\TSL$ with stabilizer $W_L$. The group $W$ acts on the Lie algebra $\mf{t}$ of $\TSL$ as a reflection group. Bezrukavnikov and Etingof, \cite{BE}, defined for each $b \in \mf{t}$ such that $W_b = W_L$, a restriction functor $\Res_b : \mc{O}(W) \rightarrow \mc{O}(W_L)$ from category $\mc{O}$ for the \textit{rational} Cherednik algebra $\mc{H}_{\kappa}(\mf{t},W)$ to category $\mc{O}$ for $\mc{H}_{\kappa}(\mf{t}_L,W_L)$. We construct an analogous restriction functor $\Res_Y : \Lmod{\mc{H}_{\kappa}(\TSL,W)} \to \Lmod{\mc{H}_{\kappa'}(Y \times \mf{t}_L,W_Y)}$. We show that, on the category of $\mc{H}_{\kappa}(\TSL,W)$-modules that are
coherent over $\mc{O}_{\TSL/W}$, the restriction functor and specialization functors agree: 

\begin{thm}\label{prop:specialcoherent}
Let $\ms{M}$ be a ${\mathcal H}_{\kappa}(\TSL,W)$-module, coherent as an $\mc{O}_{\TSL/W}$-module. Then, $\ms{M}$ is specializable along $W Y$ and $\Sp_{\TSL/WY}(\ms{M}) = \Res_Y(\ms{M})$. 
\end{thm}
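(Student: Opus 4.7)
My plan is to reduce the theorem to an explicit computation in the formal neighborhood of an arbitrary point $y\in Y$. Both $\Sp_{\TSL/WY}$ and $\Res_Y$ are local on $\TSL/W$ near the image of $WY$, and under the isomorphism $\gr^V\mc{H}_\kappa(\TSL,W)\cong \mc{H}_{\kappa'}(\NN_{X/Y},W_Y)$ established in the preceding sections, both functors naturally take values in $\Lmod{\mc{H}_{\kappa'}(Y\times \mf{t}_L,W_Y)}$. The hypothesis that $\ms{M}$ is coherent over $\mc{O}_{\TSL/W}$ makes its formal completion along $WY$ isomorphic to $\ms{M}|_{WY}\otimes_{\mc{O}_{WY}}\wh{\sym}(\NN_{X/Y}^{*})$, which reduces the entire problem to finitely generated locally free sheaves endowed with a $W_Y$-equivariant structure and a compatible polynomial normal action.

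My first step is to verify that $\ms{M}$ is specializable and to identify $\Sp_{\TSL/WY}(\ms{M})$ explicitly. The $I_{WY}$-adic filtration $F_k\ms{M}:=I_{WY}^{-k}\ms{M}$ is a good $V$-filtration: the compatibility $V_i\mc{H}_\kappa\cdot F_k\ms{M}\subseteq F_{i+k}\ms{M}$ is built into the definition of the $V$-filtration on $\mc{H}_\kappa(\TSL,W)$; coherence of $\ms{M}$ forces $\gr^V\ms{M}$ to be finitely generated over $\gr^V\mc{H}_\kappa$; and the $V$-grading element acts nilpotently on each graded piece because $\ms{M}|_Y$ carries no non-trivial normal structure beyond the $\mc{O}_Y$-module one. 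This should identify
\[
\Sp_{\TSL/WY}(\ms{M})\ \cong\ \sym_{\mc{O}_Y}(\NN_{X/Y}^{*})\otimes_{\mc{O}_Y}\ms{M}|_Y
\]
together with its induced $\mc{H}_{\kappa'}(Y\times \mf{t}_L,W_Y)$-module structure.

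My second step is to compute $\Res_Y(\ms{M})$ directly. Following the Bezrukavnikov--Etingof construction adapted to the trigonometric setting, one completes $\mc{H}_\kappa(\TSL,W)$ along $WY$, applies the induction equivalence trivializing the step from $W_Y$ to $W$, and extracts the $Y\times\mf{t}_L$ summand. For an $\mc{O}$-coherent module this procedure collapses to the same tensor-product description as above, because the Dunkl--Cherednik generators act through their principal $V$-symbols whenever they are tested against $\mc{O}$-coherent data. The resulting $\mc{H}_{\kappa'}(Y\times\mf{t}_L,W_Y)$-action is then compared to the one obtained from the specialization functor.

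The step I anticipate will be the main technical obstacle is matching the Dunkl-operator actions on the nose. One must verify that a Dunkl operator associated to a reflection $s\in W_L$ has $V$-symbol equal to the corresponding Dunkl operator of $\mc{H}_{\kappa'}(Y\times\mf{t}_L,W_Y)$, while reflections $s\in W\setminus W_L$ lie in $V_{-1}\mc{H}_\kappa(\TSL,W)$ and hence act trivially on the associated graded. Both claims should follow from a direct calculation in coordinates adapted to the Levi decomposition, using the explicit formulas for the trigonometric Dunkl operators, combined with a $W_Y$-equivariance argument to render the comparison canonical rather than dependent on the choice of coordinates.
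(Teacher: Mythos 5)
Your proposal hinges on the claim that the $\mc{I}_{WY}$-adic filtration on $\ms{M}$ is itself the $V$-filtration and that, consequently, $\Sp_{\TSL/WY}(\ms{M})\cong\mathrm{Sym}_{\mc{O}_Y}(\NN^*_{X/Y})\otimes_{\mc{O}_Y}\ms{M}|_Y$. Both assertions are false in general, and they hide exactly the content of the theorem. The $V$-filtration on a specializable module is uniquely pinned down by the requirement (Proposition \ref{prop:uniquefiltration}) that the $b$-function roots on each graded piece lie in a fixed set $C$ of coset representatives of $\Z$ in $\C$; for a Cherednik module, the Euler element $\eu$ acts on $\mc{I}^k\ms{M}/\mc{I}^{k+1}\ms{M}$ with eigenvalues shifted by ``residue'' data of the Dunkl connection, not just by the integer $k$, so the $\mc{I}$-adic filtration does not satisfy the normalization. (Already for a rank-one connection on $\C$ specialized at $0$ the $V$-filtration is the $\mc{I}$-adic one shifted by the residue.) Your justification --- ``$\ms{M}|_Y$ carries no non-trivial normal structure beyond the $\mc{O}_Y$-module one'' --- begs the question: the Euler element lives in the Cherednik algebra, and bounding its spectrum on $\ms{M}/\mc{K}^{\ell}\ms{M}$ is precisely the nontrivial step. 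The paper's proof handles this with its key claim, that $\eu$ defines an element of the \emph{finite-dimensional} algebra $\End_{\dd\rtimes N_L}(e_d N_\ell)$ because $e_d N_\ell$ is a holonomic $\dd(\TSL_L)$-module finitely generated over $\C[\TSL_L]^{N_L}$; your proposal has no substitute for this holonomicity argument, and without it neither specializability nor comparability of the two completions can be established.

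Relatedly, the step you flag as ``the main technical obstacle'' --- matching $V$-symbols of Dunkl operators --- is not part of this theorem at all: it is the content of the preceding Theorem \ref{conj:Vfilt}, which the paper proves separately and then \emph{invokes} via the isomorphism \eqref{eq:BEiso}. The actual crux of the present theorem is the comparison between the $V$-adic and $\mc{K}$-adic topologies on $\ms{M}$ (so that $\ms{M}^V = \widehat{\ms{M}}_L$, and therefore $\Sp = \Res$). The paper establishes this by working with the good filtration $\mc{F}_k\ms{M} := (V^{W\TSL_L}_k\mc{H}_\kappa(\TSL,W))\cdot\ms{M}$ --- not the $\mc{I}$-adic one --- and proving both inclusions $\mc{K}^m\ms{M}\subset V_{N-m}\ms{M}$ and $V_{k(\ell)}\ms{M}\subset\mc{K}^\ell\ms{M}$, the latter again resting on the finiteness of the minimal polynomial $d_\ell(\eu)$. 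You would need to supply this two-sided comparison for any correct proof.
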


Our general construction of $V$-filtration associates
to the data $(W_L,\,Y)$ the canonical $V$-filtration on 
the algebra  $\mc{H}_{\kappa}(\TSL,W)$. The latter induces, by restriction,
 a similar filtration on $\htrig_{\kappa}=e \mc{H}_{\kappa}(\TSL,W) e$,
 the spherical subalgebra.
On the other hand, 
we consider a certain Zariski open subset  $Z(\LSL)^{\circ}$  of
$Z(\LSL)$ and put $\mc{Y}:=\SSL\cdot(Z(\LSL)^{\circ}\times\{0\})$,
the $\SSL$-saturation of the set
$Z(\LSL)^{\circ}\times\{0\}\sset\SSL\times V$.
Then, $\mc{Y}$ is a $G$-stable locally-closed subvariety of
$\X$. Therefore, there is an associated Kashiwara 
$V$-filtration on the algebra $\dd(\X)$.
We show that  the Kashiwara filtration on  $\dd(\X)$ 
goes, via quantum
Hamiltonian reduction, to our $V$-filtration on $\htrig_{\kappa}$.
In more detail, the quantum
Hamiltonian reduction construction provides an isomorphism
of   $\htrig_{\kappa}$ with a quotient
of the algebra $\dd(\X)^G$. The 
$V$-filtration on  $\dd(\X)$ gives, by restriction,
a filtration on  $\dd(\X)^G$ and the latter induces
a quotient filtration on the Hamiltonian reduction.
We prove that the resulting filtration on  $\htrig_{\kappa}$
equals the filtration obtained by restricting
the $V$-filtration on $\mc{H}_{\kappa}(\TSL,W)$ to
 the spherical subalgebra.

There is also a parallel story for module categories over
the algebras in question. To explain this, 
let  $\mf{l}'$ be the Lie algebra of $\LL := [L,L]$ and let
$\NN_L := Z(\LSL)^{\circ} \times \mf{l}'
\times V$ be a trivial  vector bundle over $Z(\LSL)^{\circ}$, which plays the role of 
a normal bundle. 
In section \ref{sec:nearby}, we use the standard
specialization functor on $\dd$-modules
to construct
an exact functor $\sp_L : \ss_c \rightarrow \ss_{\NN_L,c}$, that takes mirabolic modules
 on $\X$ to mirabolic modules on $\NN_L$.
The main result of section \ref{10} says that
the functor of Hamiltonian reduction intertwines the specialization
functor for mirabolic modules with the specialization functor for
modules in category $\mc{O}$ for the trigonometric Cherednik algebra, 
that is, we have

\begin{thm}\label{thm:admissiblecommute}
Assume that $c$ is admissible. Then, the following diagram commutes
$$
\xymatrix{
\ss_c \ar[rr]^{\Ham_c} \ar[d]_{\sp_L} & & \mc{O}_{\kappa} \ar[d]^{\Sp_{W_L}} \\
\ss_{\NN_L,c} \ar[rr]_{\Ham_L} & & \mc{O}_{\kappa}(W_L).
}
$$
\end{thm}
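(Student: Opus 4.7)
The plan is to prove commutativity by showing that both composites agree, after passing to associated graded, with a single natural filtration on $\Ham_c(\ms{M})$. The key input is the compatibility, established earlier in the paper, between the Kashiwara $V$-filtration on $\dd(\X)$ along $\mc{Y} = \SSL \cdot (Z(\LSL)^{\circ}\times\{0\})$ and the $V$-filtration on $\htrig_{\kappa}$ along $WY$, together with the exactness of $\Ham_c$ on $\ss_c$ established in \cite{GG}.

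Given $\ms{M}\in\ss_c$, I would first equip it with its canonical Kashiwara $V$-filtration $V^{\bullet}\ms{M}$ along $\mc{Y}$, so that by definition $\sp_L(\ms{M}) = \gr_V \ms{M}$ as an object of $\ss_{\NN_L,c}$. Applying $\Ham_c$ termwise yields a filtration $V^{\bullet}\Ham_c(\ms{M})$ of $\Ham_c(\ms{M})$. Because $\Ham_c$ is exact on $\ss_c$, the short exact sequences $0\to V^{k+1}\ms{M}\to V^k\ms{M}\to \gr_V^k\ms{M}\to 0$ remain exact after $\Ham_c$, giving a natural identification
$$
\gr_V^k \Ham_c(\ms{M}) \;\cong\; \Ham_c(\gr_V^k\ms{M}) \;\cong\; \Ham_L(\gr_V^k\ms{M}) \;=\; \Ham_L(\sp_L(\ms{M}))_k,
$$
where the second isomorphism uses that under the $V$-filtration the moment map ideal $\dd(\X)\cdot\g_c$ degenerates to the ideal on $\dd(\NN_L)$ defining $\Ham_L$; this is a direct computation using that the $V$-filtration is $G$-equivariant and that $\NN_L$ is the normal bundle to the $G$-stable locally closed subvariety $\mc{Y}$.

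The next step is to identify the filtration $V^{\bullet}\Ham_c(\ms{M})$ thus obtained with the canonical $V$-filtration on the $\htrig_{\kappa}$-module $\Ham_c(\ms{M})$, so that $\gr_V \Ham_c(\ms{M}) = \Sp_{W_L}(\Ham_c(\ms{M}))$ by definition. By Theorem \ref{prop:specialcoherent} and the uniqueness of $V$-filtrations on specializable modules, it suffices to verify the monodromy / eigenvalue condition on each $\gr_V^k\Ham_c(\ms{M})$. The compatibility of $V$-filtrations between $\dd(\X)$ and $\htrig_{\kappa}$ implies that the canonical Euler grading operator on $\gr_V\htrig_{\kappa}$ is induced by its analogue on $\gr_V\dd(\X)$, so the required condition is inherited from the corresponding property of $V^{\bullet}\ms{M}$.

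The main obstacle is precisely this identification step, and it is where admissibility of $c$ is essential. Without admissibility one cannot rule out that $\ms{M}$ has sub- or quotient objects whose characteristic variety lies in the unstable locus $(T^*\X)^{\st,+}$; such objects would contribute spuriously to the termwise-constructed filtration and spoil its extremality, so the uniqueness argument above would fail. Theorem \ref{cor:unstable} is tailor-made for this: under admissibility, $\Ham_c$ kills precisely those modules, and a standard filtered five-lemma argument then shows that $V^{\bullet}\Ham_c(\ms{M})$ inherits the correct extremal properties from $V^{\bullet}\ms{M}$. Combining with the associated graded computation of the previous paragraph yields the commutativity of the diagram.
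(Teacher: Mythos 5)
Your proposal takes a genuinely different route from the paper, and it contains a real gap at the step you yourself flag as ``the main obstacle.''

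The paper's proof has a clean two-step structure. First, it proves Proposition \ref{cor:defXi}, namely that the diagram commutes when $\Ham_c$ is replaced by its left adjoint $\Hamp$: one computes $\Ham_L\circ\sp_L\circ\Hamp(E)$ using the comparison of $V$-filtrations (Theorem \ref{thm:filtrationsagree}) applied to the specific module $\Hamp(E)$, which has the crucial generation property that makes the pushed-down filtration $(\varpi_\idot V_k\Hamp(E))^G$ automatically a $W\TSL_L$-good filtration. Second, it observes that the adjunction $\mathrm{id}\to\Hamp\circ\Ham_c$ becomes an isomorphism on $\ss_c/\Ker(\Ham_c)$, so the theorem reduces to showing that $\Ham_L\circ\sp_L$ annihilates $\Ker(\Ham_c)$. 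This last claim is where admissibility enters, through Theorem \ref{cor:unstable}: under admissibility, $\Ker(\Ham_c)$ is exactly the set of modules with characteristic variety in $(T^*\X)^{\mathrm{unst}}$, and the paper then runs an explicit normal-cone computation (using formula \eqref{eq:SSformula} and the semi-invariant argument with $\gr_V$ of the ideal defining the unstable locus) to show that $\sp_L$ maps $(T^*\X)^{\mathrm{unst}}$ into $(T^*\NN_L)^{\mathrm{unst}}$.

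Your proposal instead tries to transfer the $V$-filtration directly from $\ms{M}$ to $\Ham_c(\ms{M})$ for arbitrary $\ms{M}\in\ss_c$. The difficulty is that $V^k\ms{M}$ is only a $V_0\dd_{\X^\circ}$-submodule, not a $\dd_\X$-module, so ``applying $\Ham_c$ termwise'' to the short exact sequences $0\to V^{k-1}\ms{M}\to V^k\ms{M}\to\gr_V^k\ms{M}\to 0$ is not covered by the exactness of $\Ham_c$ on $\ss_c$ established in \cite{GG}; that result concerns exact sequences of mirabolic $\dd_\X$-modules. One can try to replace it by exactness of $G$-invariants on rational $G$-modules, but then you must also verify that the induced filtration on $\Ham_c(\ms{M})$ is a $W\TSL_L$-good filtration in the sense of Proposition \ref{prop:uniquefiltration}, and for general $\ms{M}$ (unlike $\Hamp(E)$) this finite-generation/stabilization property does not come for free. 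You point to uniqueness of the $V$-filtration, but uniqueness is only among good filtrations, so this is exactly where the argument is incomplete.

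Moreover, your account of why admissibility is needed is not the one the paper uses and is somewhat misleading. Admissibility does not prevent $\ms{M}$ from having sub- or quotient objects supported in the unstable locus; such objects exist for all $\ms{M}$ with $\Ker(\Ham_c)$-subquotients regardless of $c$. What admissibility buys, via Theorem \ref{cor:unstable}, is the exact characterization of $\Ker(\Ham_c)$ by unstable characteristic variety, and the proof then needs the separate geometric input that $\varphi^*C_\Lambda$ carries unstable characteristic cycles on $T^*\X^\circ_L$ into unstable loci of $T^*\NN_L$. The phrase ``a standard filtered five-lemma argument'' does not substitute for this computation; it is precisely the content that distinguishes the theorem from a formal consequence of the filtration comparison. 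If the termwise-exactness argument actually went through for all $\ms{M}$, it would yield $\Ham_L\circ\sp_L$ killing $\Ker(\Ham_c)$ without any admissibility hypothesis, which is not believed to be true and which the paper is careful not to claim.

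To repair the proposal you would either need to establish directly that $(V^\bullet\ms{M})^{\g_c}$ is a $W\TSL_L$-good filtration on $\Ham_c(\ms{M})$ for general $\ms{M}$ (which seems at least as hard as the paper's route), or follow the paper's reduction: first do the $\Hamp(E)$ case where the filtration is manifestly good, then use the quotient category trick together with the characteristic-variety computation showing $\sp_L$ preserves instability.
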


As a consequence of Theorem \ref{thm:admissiblecommute}, one can exploit
Saito's theory of mixed Hodge modules to prove non-trivial results about the {\mbox Bezrukavnikov-Etingof} restriction functor, c.f. Corollary \ref{cor:mixedhodge}. 

\subsection{Outline of the article}

In section \ref{sec:mirabolic}, we first recall the definition of
$G$-monodromic $\dd$-modules and describe some of their basic
properties. Then, we give two definitions of mirabolic modules and prove
Theorem \ref{thm:admissible2} from the introduction, which claims that
these two definitions are equivalent. We use this theorem  to construct the spectral decomposition of the category of mirabolic modules. 

In section \ref{strat}, we introduce a  stratification of the space $\X$ and remind the reader of the definition of the \textit{relevant} strata in this stratification. Then, cuspidal mirabolic modules on Levi subgroups of $\SSL$ are studied in section \ref{chetyre}. We explicitly describe all cuspidal modules. 

In order to be able to use this classification of cuspidal modules, we study the restriction of mirabolic modules to a relevant stratum in section \ref{sec:reduction}. This produces, for each simple mirabolic module supported on the closure of a given relevant stratum $\X(L,\Omega)$, a cuspidal module associated to $L$. 

In section \ref{ham_sec}, we study the functor $\Ham$ of Hamiltonian
reduction, which is one of the main objects of interest in the
article. We show that $\Ham$ possess both a left and right
adjoint. Moreover, $\Ham$ is shown to be compatible with the reduction
functor, in an obvious sense. Combining this with
 an explicit description of the cuspidal mirabolic modules, we are able
 to deduce considerable information about the kernel of the functor $\Ham$. 

The proof of the main results, Theorems \ref{cor:unstable} and \ref{twistshift}, are contained in section \ref{ham_shift}. Here, we study the compatibility of $\Ham$ with shift functors on $\ss_c$ and $\oo_{\kappa}$. 

Then, in section \ref{jj}, we define the $V$-filtration on sheaves of Cherednik algebras. Using this definition, we construct the corresponding specialization functor for $\mc{H}_{\kappa}(X,W)$-modules. It is show that the specialization functor agrees with the restriction functor defined by Bezrukavnikov-Etingof when restricted to sheaves coherent over $\mc{O}_{X/W}$. 

Similarly, in section \ref{sec:nearby}, a specialization functor $\sp_L$ is defined on the category of mirabolic modules. We give a formula for the characteristic variety of $\sp_L(\mm)$ in terms of the characteristic variety of $\mm$. It is show in the final section that the functor of Hamiltonian reduction intertwines the specialization functor on category $\OO$ for the trigonometric Cherednik algebra of section \ref{jj} with the specialization functor for mirabolic modules of section \ref{sec:nearby}.

The appendix contains some details on the radial parts map and summarizes the results of \cite{GGS} (in the trigonometric case).  

\subsection{Acknowledgments}

The authors would like to thank Iain Gordon for stimulating discussions.
We are grateful to Kevin McGerty and Tom Nevins for
showing us a draft of their preprint \cite{McN} before it was made public.
 The first author is supported by the EPSRC grant EP-H028153. The research of the second author was supported in part by the NSF award DMS-1001677.

\section{Mirabolic modules}\label{sec:mirabolic}

In this section we give two definitions of mirabolic $\dd$-modules. Our main result, Theorem \ref{thm:admissible2}, says that these definitions are equivalent. Before we do that, we recall some of the basic properties of monodromic $\dd$-modules. 

\subsection{Monodromic $\dd$-modules} 
Let $ G$ be a connected linear algebraic group with Lie algebra
$\mf{g}$.  We fix a smooth $G$-variety $X$ and let $\mu : T^* X
\rightarrow \g^*$ be the moment map for the induced action on the
cotangent bundle of $X$. By a local system 
 we always mean an algebraic vector bundle equipped with an integrable connection that has {\textbf{regular singularities}}.

\begin{defn} A coherent $\dd_X$-module $\ms{M}$ on $X$ is said to be $G$-\textit{monodromic} if $\Char(\ms{M}) \subset \mu^{-1}(0)$.
Let  $\mon{\dd_X,G}$ denote the category of $G$-monodromic  $\dd_X$-modules.
\end{defn}

We begin with a general result, which is probably
well-known. We provide a proof  since we were unable
to find the statement in the existing literature.

\begin{prop}\label{prop:monodromicfinite}
Let $X$ and $Y$ be smooth quasi-projective $G$-varieties.

\vi For any $G$-equivariant morphism $f: X\to Y$ and $k\in\Z$, the functors
$f_*,\,f_!,\, f^*,f^!$ (more precisely, the  cohomology in every degree
of the derived functor in question), as well as Verdier duality,
 preserve the categories of 
$G$-monodromic, regular
holonomic $\dd$-~modules.

\vii Let $\ms{M}$ be a $G$-monodromic,
regular holonomic $\dd$-module on $X$. Then the action of $\g$ on $H^k(X,\ms{M})$ is
locally finite. 

\viii Conversely, let $\ms{M}$ be a coherent $\dd_X$-module generated by a finite dimensional $\g$-stable subspace of $\Ga(X,\mm)$. Then $\ms{M}$ is $G$-monodromic. 
\end{prop}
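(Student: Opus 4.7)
The plan is to prove (iii) first as a direct computation, then deduce (i) from a characterization of $G$-monodromicity in terms of weak equivariance, and finally obtain (ii) by passing to simples and using an equivariant good filtration.

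For (iii), I would equip $\ms{M}$ with the good filtration $F_n \ms{M} := F_n(\dd_X) \cd V$, where $F_\bu(\dd_X)$ is the order filtration. The hypothesis $\g \cd V \sset V$, combined with the commutator inclusion $[\mu(\xi), F_n(\dd_X)] \sset F_n(\dd_X)$ for every $\xi \in \g$, yields $\mu(\xi) \cd F_n \ms{M} \sset F_n \ms{M}$ by induction on $n$. The symbol $\sigma_1(\mu(\xi)) = \mu^*(\xi) \in \oo(T^*X)$ therefore annihilates $\gr^F \ms{M}$, hence vanishes on $\supp(\gr^F \ms{M}) = \Char(\ms{M})$, which is exactly the condition $\Char(\ms{M}) \sset \mu^{-1}(0)$.

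For (i), I would invoke the characterization (due to Bernstein--Lunts, transferred via Riemann--Hilbert) that a regular holonomic $\dd_X$-module is $G$-monodromic if and only if it underlies a weakly $G$-equivariant regular holonomic $\dd_X$-module. Weak equivariance is preserved under the six functors by standard base change applied to the Cartesian squares relating the action and projection maps $G \times X \rightrightarrows X$ and $G \times Y \rightrightarrows Y$ via $\id_G \times f$; forgetting the equivariant data descends this stability to ordinary $\dd$-modules. Preservation of regular holonomicity by these six functors is classical. For the reader who prefers a direct argument, one can also appeal to the compatibility of moment maps $\mu_X = \mu_Y \circ d f^*$ along the correspondence $T^*X \leftarrow X \times_Y T^*Y \to T^*Y$ induced by a $G$-equivariant $f$, handling pullback directly and reducing pushforward to the cases of an open embedding and a projective morphism.

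For (ii), part (i) implies that the category of $G$-monodromic regular holonomic modules is a Serre subcategory, so by d\'evissage it suffices to treat a simple $\ms{M} = j_{!*} \mc{L}$ with $\mc{L}$ a simple local system on a $G$-orbit $O \cong G/H$. The weak equivariant structure supplied by (i) provides a rational, hence locally finite, $G$-action on each $H^k(X, \ms{M})$ whose derivative recovers the vector-field $\g$-action coming from $\mu$. The key technical point, which I expect to be the main obstacle, is making this rational $G$-structure on the a priori infinite-dimensional cohomology concrete; I would handle it by choosing a $G$-equivariant good filtration of $\ms{M}$, which exists for reductive $G$ and quasi-projective $X$ by Sumihiro's theorem, and which is $\g$-stable by the very argument of (iii). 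Each $H^k(X, F_n\ms{M})$ is then rational as the cohomology of a $G$-equivariant coherent sheaf on a quasi-projective $G$-variety, and taking the colimit over $n$, which commutes with cohomology on quasi-compact quasi-separated schemes, concludes.
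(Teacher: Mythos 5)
Your argument for part (iii) is correct and matches the paper's: the order filtration generated by a finite-dimensional $\g$-stable subspace is $\mu(\g)$-stable, so each symbol $\mu^*(\xi)$ annihilates the associated graded and hence vanishes on $\Char(\mm)$.

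Your route to (i) is circular. The characterization you invoke --- that a regular holonomic $\dd_X$-module is $G$-monodromic precisely when it underlies a weakly $G$-equivariant one --- has a nontrivial forward direction: to build the rational $G$-structure from the singular-support condition, one must first know that the $\mu(\g)$-action on sections over $G$-stable affines is locally finite, which is essentially the content of part (ii). But your proof of (ii) relies on (i) supplying the weak-equivariant structure. The paper instead proves (i) by direct characteristic-variety estimates; Verdier duality and smooth or proper $f$ are standard, and the real content is the affine open embedding: one produces a $G$-semi-invariant section $s$ (of $\mu$-weight $\chi$) of a $G$-equivariant line bundle on $Y$ cutting out the complement, and uses $\SS(f_*\mm) = \lim_{t\to 0}[\SS(\mm) + t\,d\log s]\subset\lim_{t\to 0}\mu^{-1}(t\chi)=\mu^{-1}(0)$. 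General $f$ are then factored through the graph, a smooth $G$-equivariant compactification (Sumihiro plus equivariant resolution), and a projection. Your ``direct alternative'' names the factorization but passes over exactly the open-embedding step, which is where the work lies.

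Part (ii) has two further gaps. First, the reduction to $\mm = j_{!*}\mc{L}$ with $\mc{L}$ a local system on a $G$-orbit is false in general: the support of a simple $G$-monodromic module is a $G$-stable irreducible subvariety, which is not a $G$-orbit closure unless $G$ acts on $X$ with finitely many orbits. The paper's Lemma \ref{lem:monoimpliesq}(i) works with the $G$-stable dense smooth open locus of the support, typically a union of infinitely many orbits. Second, and decisively: on a non-proper quasi-projective $X$, the cohomology $H^k(X, F_n\mm)$ of a filtration piece is in general infinite-dimensional, and the rational $G$-action you propose to exploit differentiates to the weak-equivariant $\g$-action, which differs from $\mu(\g)$ by an $\oo_X$-linear operator; local finiteness of the one does not transfer to the other. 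The paper sidesteps both problems at once: push $\mm$ forward along the affine open embedding $j : X\into \bar X$ into a projective $G$-equivariant compactification, take the $\g$-stable Kashiwara--Kawai good filtration $\mc{F}_\bu$ on $\ms{N}=j_*\mm$, and observe that each $H^k(\bar X,\mc{F}_i\ms{N})$ is finite-dimensional because $\bar X$ is projective, so the $\mu(\g)$-action on the direct limit $H^k(X,\mm)$ is automatically locally finite --- no rationality is needed, and the only piece of (i) used is the open-embedding case. Your instinct to use a $\g$-stable good filtration is the right one, but without the compactification the finite-dimensionality that makes the argument close is simply absent.
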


Part (ii) of the Proposition was also stated
(without proof) as Proposition B1.2 in
\cite{AdmissibleModules}. 

\begin{proof} 
The proof of (i) mimics the standard argument \cite[Theorem 6.1.5]{HTT}
in the non-monodromic case. First of all, Verdier duality doesn't
affect the characteristic variety, so it takes $G$-monodromic modules
into $G$-monodromic modules. Next, 
given a map $f: X\to Y$,
one has the standard diagram, c.f. \cite[\S 2.4]{HTT}:
$$
\xymatrix{
T^* X \ && \ X \times_Y T^* Y\ \ar[ll]_<>(0.5){\rho_f}
\ar[rr]^<>(0.5){\omega_f} &&\  T^* Y.
}
$$ 

Let $\mm$ be a $G$-monodromic $\dd_X$-module.
In the special case where $f$ is a proper morphism we have $f_*=f_!$
and, thanks to a result of Kashiwara, \cite[Remark 2.4.8]{HTT}, one has
$\SS(R^kf_*\mm)\sset \omega_f(\rho_f\inv(\SS \mm))$. This yields the
statement of part (i) involving the functors $f_*,\,f_!$. A similar
estimate, \cite[Theorem 2.4.6]{HTT}, yields the statement 
involving the functors $f^!$ and $f^*$ in the case where $f$ is a smooth morphism.

Next, let $f$ be an {\em affine} open embedding. Then, all four functors  $f_*,\,f_!,\,f^*,\,f^!$
are exact. The statement of part (i)
involving  pull-back functors is clear. 
Thus, thanks to duality, it remains to 
consider the functor $f_*$. To this end, we put
$Z:=Y\sminus f(X)$. Since $f$ is assumed to be affine, $Z$ is a $G$-stable divisor in $Y$. Then, one shows that
there exists a $G$-equivariant line bundle  on $Y$,
a section $s$ of that line bundle, and  a character $\chi\in \g^*$ such
that $\mu(a)(s)=\chi(a)\cdot s$ holds for all $a\in\g$
(i.e. $s$ is a semi-invariant section) and, moreover, set-theoretically
one has
$Z=s\inv(0)$, see \cite{MirabolicCharacter}, Lemma 2.2.1. 
The semi-invariance property implies 
that for any $t\in\C$, we have the equality $\mu(\mu\inv(0)+t\cdot d\log s)=t\cdot \chi$ in $\g^*$, c.f. \cite[\S2]{MirabolicCharacter} for a discussion of the relevant geometry.

Now, let $\mm$ be a $G$-monodromic,
holonomic $\dd_X$-module with regular singularities.
Then, using \cite[Theorem
6.3]{GinzburgBiChern} in the first equality below, we find
$$\SS(f_*\mm)=\underset{^{t\to0}}\lim\
[\SS(\mm) + t \cdot d\log s]\ \sset\
\underset{^{t\to0}}\lim\ [\mu\inv(0) + t \cdot d\log s]\ 
\sset \underset{^{t\to0}}\lim\ \mu\inv(t\cdot
\chi) =\mu\inv(0).$$
We conclude that  $f_*\mm$ is a
$G$-monodromic $\dd_Y$-module. 

Finally, let $f: X\to Y$ be an arbitrary $G$-equivariant morphism. Then,  by Sumihiro's Theorem, \cite[Theorem 5.1.25]{CG}, the variety
$X$ has  a $G$-equivariant completion, i.e., there exists a projective $G$-variety ${\bar X}$ and a $G$-equivariant open embedding $j: X\into {\bar X}$, with dense image. Applying, if necessary, the equivariant version of Hironaka's Resolution of Singularities Theorem, \cite{BierstoneMilman}, we may ensure that the variety ${\bar X}$ is smooth and  that ${\bar X}\sminus X$ is a divisor in ${\bar X}$. The morphism $f$ factors as a composition
\beq{graph}
\xymatrix{
X\ \ar[rr]^<>(0.5){\text{graph}(f)}&&
\ X\times Y\
 \ar[rr]^<>(0.5){j\boxtimes\id_Y} &&\ {\bar X} \times Y \
\ar[r]^<>(0.5){pr_Y}&\ Y},
\eeq
where the first map is a closed embedding via the graph of $f$.

The statement of part (i) involving direct image functors holds for
each of the three maps in \eqref{graph},
thanks to
the special cases 
considered above. Therefore,
it holds for the map $f$ itself.
A similar argument yields the  statement of part (i) involving 
pull-back functors provided the statement holds for the first map
in \eqref{graph}.
More generally, let $i: X \into Y$ be  an arbitrary closed embedding
and let  $\ms{N}$ be  a
$G$-monodromic, regular holonomic $\dd_Y$-module.
Put
$U:=Y\sminus i(X)$; let $j:\ U\into Y$ be the natural open
embedding and let $\mm:=j^*\ms{N}$.
Then, there is a canonical
exact triangle $i_!i^!\ms{N}\to \ms{N}\to j_*\mm$.  We know that both $\ms{N}$
and $j_*\mm$ are $G$-monodromic and regular holonomic. 
It follows that $i^!\ms{N}$ is also
$G$-monodromic. This completes the proof of part (i).
 
To prove (ii), we choose a $G$-equivariant completion
$j: X\into {\bar X}$, as above, and let $\ms{N}:=j_*\mm$.
Thus, $\ms{N}$ is a $G$-monodromic,  regular holonomic
$\dd$-module.
Such a $\dd$-module has a canonical good filtration $\mc{F}_i \ms{N}$
such that the support of the associated graded sheaf is reduced,
\cite[Corollary 5.1.11]{KashiwaraKawai}. We know that the
characteristic variety of $\ms{N}$ is contained in $\mu^{-1}(0)$, where
$\mu$ is the moment map for the action of $G$ on $T^*{\bar X}$. This means
that if $\sigma_a $ is the vector field on ${\bar X}$ corresponding to $a
\in \g$ then multiplication by the symbol of $\sigma_a$ is the zero map
$\mc{F}_i \ms{N} / \mc{F}_{i-1} \ms{N} \rightarrow \mc{F}_{i+1} \ms{N} /
\mc{F}_{i} \ms{N}$. Hence the $\g$-action
respects  the filtration. 
Note that, for any $k$ and $i$,  the space 
$H^k({\bar X}, \mc{F}_{i} \ms{N})$ is finite dimensional,
since $\mc{F}_{i} \ms{N}$ is a coherent sheaf on
${\bar X}$ and ${\bar X}$ is a projective variety.

Finally, since $j$ is an affine embedding, we get
$$
H^k(X,\mm)\ =\ H^k({\bar X}, j_*\mm)\ =\ H^k({\bar X}, \ms{N})\ =\  \underset{^{\stackrel{\too}i}}\lim\
H^k({\bar X}, \mc{F}_{i} \ms{N}).$$
We deduce that the $\g$-action on $H^k(X,\mm)$ is locally
finite.

\viii Let $M_0 \subset \Gamma(X, \ \mm)$ be a finite dimensional, $\g$-stable subspace that generates $\mm$ as a $\dd$-module. If $\dd_{X, \le i}$ for $i \in \Z$ denotes the order filtration on $\dd_X$, then $\mc{F}_i := \dd_{X, \le i } \cdot M_0$ defines a $\g$-stable, good filtration on $\mm$. Since $\mu(\g) \subset \dd_{X, \le 1} \sminus \dd_{X, \le 0}$, this implies that $\Char (\mm) \subseteq \mu^{-1}(0)$ i.e. $\mm$ is $G$-monodromic. 
\end{proof}

\subsection{$(G, \mathbf{q})$-monodromic $\dd$-modules}\label{BQ}
From now on, we assume that $G$ is a {\em reductive}, connected  group
with Lie algebra $\g$. Let $\Hom(G,\Cs)$ be the character lattice of $G$.
Taking the differential of a  character at the identity element of 
the group $G$
yields an embedding of the
character lattice into the vector space $(\g / [\g,\g])^*$,
of Lie algebra characters.
We put $\BQ(G) = (\mf{g}/[\mf{g},\mf{g}])^*/ \Hom(G,\Cs)$.

Given an element $\chi \in (\mf{g}/[\mf{g},\mf{g}])^*$, let
$\mc{O}_{ G}^{\chi} := \dd( G)/ \dd(G)\cdot\{x-\chi(x),\ x\in\g\}$. 
This  $\dd$-module  is a rank one
local system on $G$. It is easy to show that every rank one local system on $G$ is isomorphic to
$\mc{O}_{ G}^{\chi}$, for some $\chi \in (\mf{g}/[\mf{g},\mf{g}])^*$. 
Any two local systems $\mc{O}_{ G}^{\chi}$ and $\mc{O}_{ G}^{\psi}$ are isomorphic if and
only if the image of $\chi$ and $\psi$ in $\BQ(G)$ are the
same. Therefore, given $\bq \in\BQ(G)$, the local system
$\mc{O}_{ G}^{\bq}$ is well-defined up to isomorphism.

Next, fix a smooth $ G$-variety $X$ and write $a :  G \times  X \rightarrow  X$ for the action map.

\begin{defn}
Let $\bq \in \BQ(G)$. A coherent $\dd$-module on $X$ is said to be $(G,\bq)$-\textit{monodromic} if there is an isomorphism $a^* \ms{M} \stackrel{\sim}{\rightarrow} \mc{O}_{ G}^{\bq} \boxtimes \ms{M}$ of $\dd$-modules on $G \times X$.
\end{defn}

It is clear that for $\bq = 1$ we have $\mc{O}_{ G}^{\bq} =\mc{O}_{G}$. Thus, since we have assumed that $G$ is connected, being
$(G,1)$-monodromic is the same thing as being $ G$-equivariant,
c.f. \cite[\S 1.9]{CharSheaves5}. If $G$ is connected and semisimple
then the only linear character of $\g$ is $\chi = 0$, so any
$(G,\bq)$-monodromic $\dd$-module is $G$-equivariant. In general, any
$(G,\bq)$-monodromic $\dd$-module is a  $G$-equivariant,
quasi-coherent
$\mc{O}_X$-module.  

Further, any $(G,\bq)$-monodromic $\dd$-module is clearly $G$-monodromic. 
An extension of two  $(G,\bq)$-monodromic
$\dd$-modules is a $G$-monodromic $\dd$-module which is not necessarily
$(G,\bq)$-monodromic, in general.

Let  $\mon{\dd_X, G,\bq}$
be the full subcategory of the
category $\mon{\dd_X, G}$, of
$G$-monodromic  $\dd_X$-modules, whose objects are
 $(G,\bq)$-monodromic $\dd_X$-modules.
It is clear that $\mon{\dd_X, G}$  is an abelian category
and $\mon{\dd_X, G,\bq}$ is an abelian subcategory (which is not, however, a Serre subcategory of $\mon{\dd_X, G}$, in
general).

The above is complemented by the following result.

\begin{lem}\label{lem:monoimpliesq}
\vi Let $\ms{M}$ be a simple, holonomic $G$-monodromic $\dd$-module with regular singularities. Then, there exists some $\bq \in \BQ(G)$ such that $\ms{M}$ is $(G,\bq)$-monodromic.

\vii Assume there exists some $\bq \in \BQ(G)$ such that $\ms{M}$ is
$(G,\bq)$-monodromic and moreover $G$ acts on $X$ with finitely many orbits. Then $\ms{M}$ has regular singularities. 

\viii Let $\mm$ be a coherent $\dd$-module, generated by a $\mu(\g)$-semi-invariant global section  of weight $\chi\in(\g/[\g,\g])^*$. Then $\mm$ is $(G,\bq)$-monodromic, where $\bq$ is the image
of $\chi$ in $\BQ(G)$.
 \end{lem}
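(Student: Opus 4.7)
The plan is to prove (i) by extracting the character $\bq$ from the global $\mu(\g)$-action on $\ms{M}$ guaranteed by Proposition \ref{prop:monodromicfinite}(ii); to prove (ii) by induction on the number of $G$-orbits in the support; and to prove (iii) by a direct construction of the trivializing isomorphism. The main obstacle will be part (i), where one must descend the scalar ambiguity in ``$g^\ast\ms{M}\cong\ms{M}$ for all $g\in G$'' to a genuine character $\bq\in\BQ(G)$.

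For part (i), by Kashiwara's structure theorem for simple regular holonomic $\dd$-modules, $\ms{M}=j_{!*}\mc{L}$, where $j:U\hookrightarrow X$ is the inclusion of the smooth locus of $\supp\ms{M}$ and $\mc{L}$ is a simple local system on $U$. Since $\supp\ms{M}$ is irreducible and $G$ is connected, $\supp\ms{M}$ is $G$-stable and one may take $U$ to be $G$-stable. I would then consider the two canonical pullbacks $a^\ast\ms{M}$ and $p^\ast\ms{M}$ on $G\times X$, where $a:G\times X\to X$ is the action map and $p$ the projection; both are regular holonomic by Proposition \ref{prop:monodromicfinite}(i) and both restrict to $\ms{M}$ along $\{e\}\times X$. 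Using $\End(\ms{M})=\C$ (from simplicity) and a Schur-type argument applied fiberwise, one extracts a rank-one local system $\mc{L}'$ on $G$ together with an isomorphism $a^\ast\ms{M}\cong\mc{L}'\boxtimes\ms{M}$. The cocycle identity coming from associativity of the $G$-action then forces $\mc{L}'$ to be multiplicative on $G$, so $\mc{L}'\cong\oo_G^\bq$ for a unique $\bq\in\BQ(G)$.

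For part (ii), I first observe that $\ms{M}$ is holonomic: with finitely many $G$-orbits, $\mu^{-1}(0)$ is a finite union of conormal bundles and hence Lagrangian, and $\Char(\ms{M})\subset\mu^{-1}(0)$ is therefore Lagrangian. For regularity I will induct on the number of $G$-orbits in $\supp\ms{M}$. Let $\mathcal{O}\subset\supp\ms{M}$ be the open dense $G$-orbit; the restriction $\ms{M}|_\mathcal{O}$ is a $(G,\bq)$-monodromic $\oo$-coherent $\dd$-module on the homogeneous space $\mathcal{O}$, determined by a finite-dimensional representation of the component group of the stabilizer twisted by $\bq$, and is therefore regular. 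On the closed complement, $\ms{M}$ restricts to a $(G,\bq)$-monodromic module supported on strictly fewer orbits, hence regular by induction. Assembling via the standard excision triangle together with the regularity-preservation statement of Proposition \ref{prop:monodromicfinite}(i) yields regularity of $\ms{M}$ on $X$.

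For part (iii), let $m_0\in\Gamma(X,\ms{M})$ be the cyclic $\mu$-semi-invariant generator of weight $\chi$, and let $v\in\Gamma(G,\oo_G^\bq)$ be the canonical generator, which satisfies $\xi\cdot v=\chi(\xi)v$ for $\xi\in\g$ viewed in $\dd(G)$ via the defining embedding. I will construct a $\dd_{G\times X}$-module map $\phi:a^\ast\ms{M}\to\oo_G^\bq\boxtimes\ms{M}$ by sending $a^\ast m_0$ to $v\boxtimes m_0$. Well-definedness amounts to the containment $\ann_{\dd_{G\times X}}(a^\ast m_0)\subseteq\ann_{\dd_{G\times X}}(v\boxtimes m_0)$, which reduces, via the identity relating $a^\ast\mu(\xi)$ to the natural image of $\xi$ in $\dd(G)\otimes 1$, to the matching semi-invariance properties of $m_0$ and of $v$. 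Surjectivity is immediate since the image contains a generator of the target, and injectivity follows by comparing restrictions to $\{e\}\times X$, where both sides recover $\ms{M}$.
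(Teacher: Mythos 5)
Your outline for part (i) is essentially the paper's approach: restrict $\ms{M}$ to the open smooth locus $U$ of its support, observe that $\mathsf{L}=\ms{M}|_U$ is a simple local system with $\ms{M}=j_{!*}\mathsf{L}$, and study $a^*\mathsf{L}$ on $G\times U$. However, your ``Schur-type argument applied fiberwise'' is vague and does not quite capture the actual mechanism. The correct reason $\mathsf{L}_1$ is rank one is that $\pi_1(G)$ is \emph{abelian}, so after applying the Riemann--Hilbert correspondence and the K\"unneth decomposition $\pi_1(G\times U)=\pi_1(G)\times\pi_1(U)$, any simple local system on $G\times U$ splits as $\mathsf{L}_1\boxtimes\mathsf{L}_2$ with $\mathsf{L}_1$ of rank one; and then every rank-one local system (with regular singularities) on a connected group $G$ is already of the form $\mc{O}_G^{\chi}$ by the classification stated just before the lemma, so no cocycle argument is needed. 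For part (ii) the paper simply cites \cite[Lemma~2.5.1]{MirabolicCharacter}; your induction on orbit strata is a genuinely different, self-contained route and is plausible, though you should justify why the restriction of a $(G,\bq)$-monodromic module to a single $G$-orbit has regular singularities (finite monodromy from the component group of the stabilizer, twisted by a character coming from $\pi_1(G)$), and be careful that the ``closed complement piece'' means $i^!\ms{M}$, which again is $(G,\bq)$-monodromic by $G$-equivariance of $i$.

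Part (iii) has two concrete gaps. First, you construct the map in the direction $a^*\ms{M}\to\mc{O}_G^{\bq}\boxtimes\ms{M}$, whose well-definedness requires $\ann(a^*m_0)\subseteq\ann(v\boxtimes m_0)$. That containment does \emph{not} ``reduce to matching semi-invariance'': the semi-invariance relations only yield that the generators of $\ann(\mathbf{1}_G\boxtimes u)$ kill $1\otimes u$, i.e.\ they give the map $\mc{O}_G^{\chi}\boxtimes\ms{M}\to a^*\ms{M}$ in the \emph{opposite} direction, which is exactly what the paper does (using the identity $\mu_G(A)\otimes m=1\otimes\mu_X(A)\cdot m$ in $\dd_{G\times X\to X}\otimes_{a^{-1}\dd_X}a^{-1}\mm$). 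Going your way, there is no a priori reason the full annihilator of $a^*m_0$ is contained in that of $v\boxtimes m_0$, and justifying it would essentially amount to already knowing the two modules are isomorphic. Second, your injectivity argument is insufficient: a nonzero coherent submodule $\ms{K}\subset a^*\ms{M}$ can perfectly well be supported away from $\{e\}\times X$, so $j_e^*\ms{K}=0$ does not force $\ms{K}=0$. The paper shows $j_g^*\ms{K}=0$ for \emph{every} $g\in G$, using that $\ms{K}$ is non-characteristic for each $j_g$, and only then concludes $\ms{K}=0$. You would either need to run the restriction argument for all $g$, or establish that the kernel is equivariant under the left-$G$-action on $G\times X$ and invoke $G$-translation from $\{e\}$; as written the step is a genuine gap.
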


\begin{proof}

\vi We may view $\supp\mm$, the support of $\mm$, as a closed subvariety of the zero section of the cotangent bundle $T^*X$ (that is, if $\Supp \mm$ is not dense in $\X$, then we identify $\Supp \mm$ with the closure of the conormal to $(\Supp \mm)_{\reg}$ in $T^* \X$). Thus, $\supp\mm$ is an irreducible component of $\SS(\mm)$, a $G$-stable Lagrangian subvariety in $T^*X$. Let $\SS'(\mm)$ be the union of all the irreducible components of $\SS(\mm)$ but  $\supp\mm$. Thus, $\SS'(\mm)$ is a $G$-stable closed subvariety of $T^*X$. Hence, $U:=\supp\mm\sminus \SS'(\mm)$ is a  $G$-stable open  subvariety of $\supp\mm$. Replacing $U$ by its smooth locus if necessary, we may assume in addition that $U$ is smooth.
Let $j: U\into X$ denote the (locally closed) embedding
and let $\mathsf{L}=j^!\mm$. Applying Kashiwara's theorem, c.f. \cite[Theorem 1.6.1]{HTT}, we find that the characteristic variety of $\mathsf{L}$ equals the zero section of $T^*U$. It follows, since $\mm$ is assumed to have regular singularities, that $\mathsf{L}$ is a local system on $U$. Further, since $\mm$ is simple, we deduce that the canonical morphism $\mm\to j_*j^!\mm$ yields an isomorphism $\mm\iso  j_{!*} \mathsf{L}$. Moreover, the local system  $\mathsf{L}$ must be irreducible.

Let $a :  G \times U \rightarrow U$ be the action map. Since
$\mathsf{L}$ is simple, $a^* \mathsf{L}$ is a simple local system on $ G
\times U$ and thus, thanks to the Riemann-Hilbert correspondence
and an isomorphism $\pi_1(G\times U)
=\pi_1(G)\times \pi_1(U)$, equals $\mathsf{L}_1 \boxtimes \mathsf{L}_2$
for some simple local systems $\mathsf{L}_1$ on $ G$ and $\mathsf{L}_2$
on $U$. If $i : U \rightarrow  G \times U$, $x \mapsto (e,x)$, then $i
\circ a = \mathrm{Id}_U$ implies that $\mathsf{L}_2 \simeq i^*
(\mathsf{L}_1 \boxtimes \mathsf{L}_2) = \mathsf{L}$ (here we have used
the fact that the rank of $\mathsf{L}_1$ is one). Hence, by the
functorality of minimal extensions, $a^* \ms{M} \simeq \mathsf{L}_1
\boxtimes \ms{M}$, and (i) follows.
Part \vii is \cite[Lemma 2.5.1]{MirabolicCharacter}.

\viii Let $u\in\Ga(X,\mm)$ be a  semi-invariant
section of weight $\chi \in (\g / [ \g, \g])^*$ 
that generates $\mm$. Let $\mu_X : \g \rightarrow \dd_X$ be the differential of the action map $a : G \times X \rightarrow X$ and $\mu_G : \g \rightarrow \dd_{G}$ the embedding as \textit{right invariant} vector fields (i.e. the map obtained by differentiating the action of left multiplication by $G$ on itself). Then $a$ is $G$-equivariant, where $G$ acts on $G \times X$ by acting on $G$ by left multiplication and acts trivially on $X$. This implies that 
$$
\mu_G(A) \o m = 1 \o \mu_X(A) \cdot m \in \dd_{G \times X \rightarrow X} \o_{a^{-1} \dd_X} a^{-1} \mm \quad \forall \ A \in \g, \ m \in \mm.
$$
Hence, there is a nonzero homomorphism $\phi : \mc{O}_{}^{\chi} \boxtimes \mm \rightarrow a^* \mm$, given by $\one_{G} \boxtimes u \mapsto 1 \o
u$. This is surjective since $a^* \mm$ is generated by $1 \o u$. Therefore, we have a short exact sequence
\[
\xymatrix{
0\ar[r]& \ms{K} \ar[r] &\mc{O}_G^{\chi} \boxtimes \mm \ar[r]^<>(0.5){\phi} & a^* \mm \ar[r] & 0.}
\]
For each $g \in G$, let $j_g : X \hookrightarrow G \times X$ be given by $j_g(x) = (g,x)$. The module $\mc{O}_G^{\chi} \boxtimes \mm$ is non-characteristic for $j_g$, hence so too are $\ms{K}$ and $a^* \mm$. Thus, the sequence 
\[
\xymatrix{
0\ar[r]&j^*_g \ms{K} \ar[r] &\mm \ar[rr]^<>(0.5){j^*_g(\phi)} && j^*_g a^* \mm \ar[r] & 0}
\]
is exact. Since $j^*_g (\phi) = \mathrm{Id}_{\mm}$, 
we have $j^*_g \ms{K} = 0$. The fact that $\ms{K}$ is non-characteristic for $j_g$ and $j_g^* \ms{K} = 0$ for all $g \in G$ implies that $\ms{K} = 0$.  
\end{proof}

The special case $G=GL(V)$ will be most important for  us. In that case, we have $\g={\mathfrak{gl}}_n$ so the vector space $(\g/[\g,\g])^*$ is one dimensional with $\Tr$, the trace function, being the natural base element. Similarly, the character lattice  $\Hom(G,\Cs)$ is, in this case, a free abelian group with generator $\det$, the determinant character. The above mentioned canonical embedding $\Hom(G,\Cs)\into (\g / [\g,\g])^*$ sends  $\det$ to $\Tr$. Thus, we have $\BQ(G) \cong \Cs$. Explicitly,
one has $\C \iso (\g / [\g,\g])^*$ by $c \mapsto c \Tr$, and hence  
\beq{eq:indentify}
\C \iso (\g /[\g,\g])^* \rightarrow \BQ(G) \iso \Cs, \quad c \mapsto q := \exp ( 2 \pi \sqrt{-1} c ).  
\eeq

\subsection{$(H, \chi)$-monodromic $\dd$-modules}

Later, we will need to consider non-connected subgroups $H$ of $GL(V)$. In this case, we use the notion of $(H,\chi)$-monodromicity. Therefore, let $G$ be a connected, reductive group and $H$ a Levi subgroup. Recall that $a : G \times X \to X$ is the action map and $m : G \times G \to G$ the multiplication map. Let $s : X \to G \times X$, $s(x) = (e,x)$.    

Choose $\chi \in (\g / [\g,\g])^*$ and let $\bq$ be its image in $\BQ(G)$. Recall that a coherent $\dd$-module $\mm$ is said to be $(G,\bq)$-monodromic if there exists an isomorphism $\theta : \mc{O}_G^{\chi} \boxtimes \mm \iso a^* \mm$. We say that the $(G,\bq)$-monodromic module $\mm$ satisfies the \textit{cocycle} condition if $s^* \theta = \id_{\mm}$ and $\theta$ can be chosen so that the following diagram is commutative:
\beq{eq:cocycle}
\xymatrix{
\mc{O}^{\chi}_G \boxtimes \mc{O}^{\chi}_G \boxtimes \mm \ar[rr]^{\id_{G} \times \theta} \ar[d]_{=} &&  \mc{O}^{\chi}_G \boxtimes a^* \mm \ar[d]^{=} \\
(m \times \id)^* (\mc{O}_G^{\chi} \boxtimes \mm) \ar[d]_{(m \times \id_X)^* \theta\en} &&  (\id_G \times a)^*(\mc{O}_G^{\chi} \boxtimes \mm) \ar[d]^{\en(\id_G \times a)^* \theta} \\
(m \times \id_X)^* a^* \mm \ar[rr]^= && (\id_G \times a)^*a^* \mm  
}
\eeq

The cocyle condition allows us to extend the definition of monodromicity to non-connected groups. 

\begin{defn}\label{defn:strongmonodormic}
Let $H$ be a (not necessarily connected) reductive group acting on $X$. Then, a coherent $\dd$-module is said to be $(H,\chi)$-monodromic if there is a \textit{fixed} isomorphism $\theta : \mc{O}_H^{\chi} \boxtimes \mm \to a^* \mm$, satisfying the cocycle condition.
\end{defn}

The category of all $(H,\chi)$-monodromic modules on $X$ is denoted $\mon{\dd_X, H,\chi}$. The following proposition is well-known in the equivariant case, and the proof in the mondromic is identical.   

\begin{prop}\label{prop:monoimpliesweakG}
Let $\mm$ a $(G,\bq)$-monodromic, regular holonomic $\dd$-module. Then $\mm$ can be endowed with a (non-unique) $(G,\chi)$-monodromic structure. 
\end{prop}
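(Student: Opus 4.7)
The plan is to start from an arbitrary isomorphism witnessing $(G,\bq)$-monodromicity, normalize it so that the pullback along the unit section is the identity, and then show that after this normalization the cocycle condition is automatic because the relevant space of endomorphisms is ``constant in $G\times G$.''

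First I would fix a lift $\chi\in(\g/[\g,\g])^*$ of $\bq$ and pick any isomorphism $\theta_0\colon \mc{O}_G^{\chi}\boxtimes\mm\iso a^*\mm$ (such a $\theta_0$ exists by the assumption of $(G,\bq)$-monodromicity, combined with the trivial observation that changing the representative $\chi$ of $\bq$ only twists $\mc{O}_G^\chi$ by the $\dd$-module pullback of a line bundle from $G$). Put $\phi:=s^*\theta_0$; since $a\circ s=\id_X$ and $\mc{O}_G^\chi|_e$ is canonically $\C$, we have $\phi\in\Aut_{\dd_X}(\mm)$. Replace $\theta_0$ by
\[
\theta\ :=\ \theta_0\circ(\id_{\mc{O}_G^\chi}\boxtimes\phi^{-1})\colon\ \mc{O}_G^\chi\boxtimes\mm\ \iso\ a^*\mm,
\]
so that $s^*\theta=\phi\circ\phi^{-1}=\id_\mm$.

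Next I would check the cocycle condition \eqref{eq:cocycle} for this $\theta$. Since both compositions in the diagram are isomorphisms between the same pair of $\dd_{G\times G\times X}$-modules, their discrepancy
\[
\tau\ :=\ \bigl[(\id_G\times a)^*\theta\circ(\id_{\mc{O}_G^\chi}\boxtimes\theta)\bigr]^{-1}\circ (m\times\id_X)^*\theta
\]
lies in $\Aut_{\dd_{G\times G\times X}}(\mc{O}_G^\chi\boxtimes\mc{O}_G^\chi\boxtimes\mm)$. The key observation is that, because $G$ is connected and $\mc{O}_G^\chi$ is an irreducible rank-one local system (so that $\End_{\dd_G}(\mc{O}_G^\chi)=\C$), the standard K\"unneth-type argument identifies
\[
\End_{\dd_{G\times G\times X}}(\mc{O}_G^\chi\boxtimes\mc{O}_G^\chi\boxtimes\mm)\ =\ \End_{\dd_X}(\mm);
\]
in other words any such endomorphism is of the form $\id\boxtimes\id\boxtimes\psi$ for a unique $\psi\in\End_{\dd_X}(\mm)$.

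Granting that identification, $\tau$ is ``constant on $G\times G$''. Pulling the cocycle diagram back along $(e,e)\times\id_X$ and using the normalization $s^*\theta=\id_\mm$ collapses both sides to $\id_\mm$, so the corresponding $\psi$ equals $\id_\mm$; hence $\tau=\id$ and $\theta$ satisfies \eqref{eq:cocycle}, giving the desired $(G,\chi)$-monodromic structure. The main technical point to be careful with is the endomorphism identification in the previous display: it is the standard fact that endomorphisms of $\mc{L}\boxtimes\mm$, for $\mc{L}$ an irreducible local system on a connected base, are given by the flat endomorphisms of $\mc{L}$ tensored with $\End_{\dd_X}(\mm)$, but one must apply it in the context of the rank-one local system $\mc{O}_G^\chi\boxtimes\mc{O}_G^\chi$ on $G\times G$ and verify that there are no hidden contributions from non-scalar flat sections.
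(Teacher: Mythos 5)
Your argument is correct and is precisely the normalization-plus-rigidity proof that the paper appeals to when it declares the monodromic case ``identical'' to the well-known equivariant case. The K\"unneth identification you flag does hold, and uses only coherence of $\mm$ over $\dd_X$ and connectedness of $G$ (regular holonomicity plays no role here): twisting by the rank-one local system $\mc{O}_G^\chi\boxtimes\mc{O}_G^\chi$ does not change the endomorphism ring since the twist cancels, and a $\dd_{G\times G\times X}$-endomorphism of $\mc{O}_{G\times G}\boxtimes\mm$ must commute with the coordinate vector fields on $G\times G$, so it has vanishing $G\times G$-derivatives and is locally (hence, by connectedness, globally) of the form $\id\boxtimes\id\boxtimes\psi$ for a unique $\psi\in\End_{\dd_X}(\mm)$.
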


\begin{rem}
The proposition implies that every regular holonomic $(G,\bq)$-monodromic module is weakly $G$-equivariant, and hence the cohomology groups $H^i (X, \mm)$ are rational $G$-modules. It also implies that the forgetful functor $\mon{\dd_X, G,\chi}_{\reg} \to \mon{\dd_X, G,\bq}_{\reg}$ is essentially surjective. 
\end{rem}

It is still useful to work with the weaker notion of $(G,\bq)$-monodromicity because we will encounter local systems which are $(G,\bq)$-monodromic, but have \textit{no} canonical $(G,\chi)$-monodromic structure. The strong notion of monodromicity given in Definition \ref{defn:strongmonodormic} agrees with the definition commonly found in the literature c.f. \cite[\S 2.5]{MirabolicCharacter} and \cite[\S 4]{KasAsterisque}. 

\begin{examp}\label{examp:one}
There are three natural left actions of $G$ on itself, multiplication on the left, multiplication on the right and the adjoint action. The corresponding action maps are denoted $a_L,a_R,a_{\mathrm{Ad}} : G \times G \to G$. If we stipulate that the generator $\mathbf{1}$ of $\mc{O}_G^{\chi}$ is $G$-invariant, then there are \textit{canonical} isomorphisms
$$\mc{O}_G^{\chi} \boxtimes \mc{O}_G^{\chi}  \iso a_L^* \mc{O}_G^{\chi},\qquad
\mc{O}_G^{-\chi} \boxtimes \mc{O}_G^{\chi}  \iso a_R^* \mc{O}_G^{\chi},\qquad
\mc{O}_G \boxtimes \mc{O}_G^{\chi}  \iso a_{\mathrm{Ad}}^* \mc{O}_G^{\chi}.
$$
In particular, $\mc{O}_G^{\chi}$ is $G$-equivariant for the adjoint action of $G$ on itself. 
\end{examp}
    
\begin{cor}
Let $H \subset G$ be a (\textit{not} necessarily connected) closed subgroup of $G$ and $\mm$ a regular holonomic, $(G,\bq)$-monodromic module. Then $\mm$ can be endowed with a $(H,\chi|_{\h})$-monodromic structure. 
\end{cor}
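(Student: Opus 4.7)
The plan is to reduce to the connected case handled by Proposition \ref{prop:monoimpliesweakG} and then carry out a naive restriction along the inclusion $H\hookrightarrow G$, with the only work consisting of checking that all the structures pull back coherently. In particular, the proof splits into two steps: (a) upgrade the weak $(G,\bq)$-monodromic structure on $\mm$ to a strong $(G,\chi)$-monodromic structure, and (b) pull the resulting isomorphism back along the inclusion $\iota:H\hookrightarrow G$.

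For step (a), apply Proposition \ref{prop:monoimpliesweakG} to fix an isomorphism $\theta:\mc{O}_G^{\chi}\boxtimes\mm\iso a^*\mm$ satisfying the cocycle condition \eqref{eq:cocycle} and $s^*\theta=\id_{\mm}$. For step (b), one first observes the canonical identification $\iota^*\mc{O}_G^{\chi}\cong\mc{O}_H^{\chi|_\h}$: indeed, the left-hand side is a rank one local system on $H$ whose flat generator satisfies $(x-\chi(x))\cdot\one=0$ for all $x\in\h\subset\g$, and this is precisely the defining relation of the right-hand side on each connected component of $H$. Now form the base-change of $\theta$ along $\iota\times\id_X:H\times X\hookrightarrow G\times X$; since pullback of $\dd$-modules along a closed embedding is a well-defined functor and $\mc{O}_G^{\chi}\boxtimes\mm$ and $a^*\mm$ are both non-characteristic for $\iota\times\id_X$ (they are $\oo$-coherent in the $G$-direction), this produces an isomorphism $\theta_H:\mc{O}_H^{\chi|_\h}\boxtimes\mm\iso a_H^*\mm$, where $a_H=a\circ(\iota\times\id_X)$ is the $H$-action map.

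It remains to verify that $\theta_H$ satisfies the cocycle condition and the unit normalization. For the cocycle condition, pull back the commutative diagram \eqref{eq:cocycle} for $\theta$ along the closed embedding $\iota\times\iota\times\id_X:H\times H\times X\hookrightarrow G\times G\times X$; since the multiplication map $m:G\times G\to G$ restricts to multiplication on $H$ and the action map restricts to $a_H$, every arrow in the diagram restricts to the corresponding arrow for $\theta_H$, so the restricted diagram commutes. For the unit, the identity section $s_H:X\to H\times X$ is the restriction of $s:X\to G\times X$, hence $s_H^*\theta_H=\iota^*(s^*\theta)=\iota^*\id_{\mm}=\id_{\mm}$.

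The only genuine point to be careful about is the canonical identification $\iota^*\mc{O}_G^{\chi}\cong\mc{O}_H^{\chi|_\h}$, especially when $H$ is disconnected: a priori $\mc{O}_H^{\chi|_\h}$ is defined component by component via the same formula $\dd(H)/\dd(H)\cdot\{x-\chi(x)\mid x\in\h\}$, and one must check that $\iota^*$ of the canonical generator $\one_G$ of $\mc{O}_G^\chi$ restricts to a generator on each connected component of $H$ satisfying the same relation. This is immediate because the restriction of a flat nowhere-vanishing section is a flat nowhere-vanishing section, and the defining differential equation only involves values of $\chi$ on $\h$. I expect this identification, together with the compatibility of the cocycle diagram under closed pullback, to be the only point that requires any thought; everything else is formal.
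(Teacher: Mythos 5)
Your proposal is correct and matches the paper's (one-line) proof: the paper also invokes Proposition \ref{prop:monoimpliesweakG} to fix a strong $(G,\chi)$-monodromic structure satisfying the cocycle condition, and then applies $(i_H\times i_H\times\id_X)^*$ to the cocycle diagram. You have merely spelled out the details the paper leaves implicit (the identification $\iota^*\mc{O}_G^{\chi}\cong\mc{O}_H^{\chi|_{\h}}$, the non-characteristicity needed for the underived pullback, and the unit check).
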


\begin{proof}
Let $i_H : H \hookrightarrow G$ be the inclusion. Apply $(i_H \times i_H \times \id_X)^*$ to the cocycle diagram for $\mm$. 
\end{proof}


\subsection{} Fix $T \subset B \subset G$, a maximal torus contained
inside a Borel subgroup of $G$, and let $U$ be the unipotent radical of
$B$. 
Write $\t\sset\b\sset\g$ and $\u$,
for the corresponding Lie algebras.
The group $T$ acts freely on the affine base space $G / U$ by multiplication on
the right, and $\mc{B}:=G/B=(G / U ) / T$ is the flag manifold. 
Let $\td\sset T\times T$ be the diagonal and $\TT:=(T\times T)/\td$.
Let $\horo = (G / U \times G / U) /\td$,  the horocycle space,
be the quotient of $G / U \times G / U$ by the
$T$-diagonal action  on the right.
Thus, the  $(T\times T)$-action on $G / U \times G / U$ on the right
 descends to a free $\TT$-action on $\horo$.

Consider the diagram 
\beq{diag1}
\xymatrix{
 & G \times \mathcal{B} \ar[dl]_{\mathsf{p}} \ar[dr]^{\mathsf{q}} & \\
G &  & \horo,
}
\eeq
where $\mathsf{p}$ is projection along $\mc{B}$ and $\mathsf{q}(g,F) = (F,g(F))$.

Given a holonomic  $\dd$-module $\mm$ on $G$ and $i\in\Z$, let $\ms{H}^i(\mathsf{q}_! \mathsf{p}^*\ms{M})$ denote the $i$-th cohomology $\dd_\horo$-module of the complex $\mathsf{q}_! \mathsf{p}^*\ms{M}$.

\begin{lem}\label{lem:keylemma}
Let $\ms{M}$ be a holonomic $\dd$-module on $G$. Then, for any $i\in\Z$,
one has

\vi If  $\Char (\ms{M}) \subseteq G \times \mc{N}$ 
and $\mm$ has regular singularities, then 
  $\ms{H}^i(\mathsf{q}_! \mathsf{p}^*\ms{M})$ is 
a $\TT$-monodromic $\dd$-module.

\vii Let $H$ be a subgroup of $G$ and let $\mm$ be  $H$-monodromic
with respect to the adjoint action of $H$ on $G$. If
 $\mm$ has  regular singularities, then
 $\ms{H}^i(\mathsf{q}_! \mathsf{p}^*\ms{M})$ is an $H$-monodromic $\dd$-module
with respect to the $H$-diagonal action on $\horo$ on the left.

\viii If the $\FZ$-action on $\Ga(G,\mm)$ is locally finite then, for
any 
$T$-stable open subset $U\sset \horo$, the  $\U(\Lie \TT)$-action on $\Gamma(U, \ \ms{H}^i(\mathsf{q}_! \mathsf{p}^*\ms{M}))$ is locally finite. 
\end{lem}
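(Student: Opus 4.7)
My plan is to tackle parts (i), (ii), (iii) in turn, with (i) carrying most of the geometric content and (ii), (iii) following relatively formally.

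For (i), first I would apply the standard characteristic variety calculus. Since $\mathsf{p}$ is a smooth projection, $\Char(\mathsf{p}^*\ms{M})\sset\Char(\ms{M})\times\mc{B}$ (the second factor being the zero section of $T^*\mc{B}$). Combining this with the direct image bound $\SS(\mc{H}^i\mathsf{q}_!\mathsf{p}^*\ms{M})\sset\omega_{\mathsf{q}}(\rho_{\mathsf{q}}^{-1}\Char(\mathsf{p}^*\ms{M}))$ of \cite[Remark 2.4.8]{HTT}, the issue reduces to a symplectic geometry check. Using the identification $T^*\mc{B}\cong\wt{\mc N}$ (the Grothendieck--Springer resolution) and the description of $T^*\horo$ as a Hamiltonian reduction of $T^*(G/U\times G/U)$ along the $T_\Delta$-moment map, I would verify that the composition $\omega_{\mathsf{q}}\circ\rho_{\mathsf{q}}^{-1}$ sends $G\times\mc{N}\times\mc{B}$ into the zero locus of the $\TT$-moment map on $T^*\horo$. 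This last point rests on the fact that the natural projection $\g^*\to\t^*$ annihilates $\mc{N}$, and it delivers $\TT$-monodromicity. Regular singularities are preserved throughout by Proposition \ref{prop:monodromicfinite}(i), so $\mc{H}^i(\mathsf{q}_!\mathsf{p}^*\ms{M})$ is a $\TT$-monodromic regular holonomic $\dd$-module.

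For (ii), the argument is formal: diagram \eqref{diag1} is $H$-equivariant once $G\times\mc{B}$ is equipped with the diagonal $H$-action (adjoint on $G$, induced on $\mc{B}$) and $\horo$ with the left diagonal $H$-action. Applying Proposition \ref{prop:monodromicfinite}(i) to $\mathsf{p}^*$ and then $\mathsf{q}_!$ in each cohomological degree transports $H$-monodromicity from $\ms{M}$ to $\mc{H}^i(\mathsf{q}_!\mathsf{p}^*\ms{M})$.

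For (iii), I would invoke the Harish-Chandra homomorphism $\FZ\iso(\sym\t)^W\cong\U(\Lie\TT)^W$. The horocycle transform $\mathsf{q}_!\mathsf{p}^*$ intertwines the $\FZ$-action on $\ms{M}$ with the $\U(\Lie\TT)^W$-subalgebra of the $\U(\Lie\TT)$-action on the transform coming from the $\TT$-action on $\horo$; this is the $\dd$-module incarnation of the classical fact that bi-invariant operators on $G$ become $W$-invariant radial operators on horocycles. Hence local finiteness of $\FZ$ on $\Gamma(G,\ms{M})$ gives local finiteness of $\U(\Lie\TT)^W$ on $\Gamma(U,\mc{H}^i(\mathsf{q}_!\mathsf{p}^*\ms{M}))$ for any $T$-stable open $U\sset\horo$, and because $W$ is finite, $\U(\Lie\TT)$ is a finitely generated module over $\U(\Lie\TT)^W$, so the local finiteness lifts to all of $\U(\Lie\TT)$.

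The hard part will be the cotangent/moment-map computation in (i): one must carefully identify $T^*\horo$ as a reduction and track how the nilpotent condition on $T^*G$ maps, through the Springer geometry of $T^*\mc{B}$, to the vanishing of the $\TT$-moment map. Once this symplectic input is secured, (ii) follows from equivariance of the diagram and (iii) from the standard Harish-Chandra compatibility, so both are routine by comparison.
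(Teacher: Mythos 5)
Your parts (i) and (ii) follow the same route as the paper, but a couple of points in (i) need sharpening, and part (iii) has a real gap.

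On (i): the map $\mathsf{q}$ is \emph{not} proper (its fibers are isomorphic to $h_2Uh_1^{-1}$, a coset of a unipotent group), so the estimate in \cite[Remark 2.4.8]{HTT}, which is for proper direct images, does not directly apply. The paper instead uses that $\mathsf{q}$ is a Zariski-locally trivial fibration and invokes the estimate of \cite[Proposition B.2]{MirkovicVilonen} (built on \cite[Theorem 3.2]{CharCycles}), valid for regular holonomic modules under such fibrations — which is exactly where the regularity hypothesis enters part (i). You should also be careful with the phrase ``the natural projection $\g^*\to\t^*$ annihilates $\mc{N}$'': what is true, and what the paper uses, is that under the identification $T^*(G/U)\cong G\times_U\mf{b}$ a covector in $T^*\horo$ is given by a pair $(X,Y)$ with $X,Y\in\mf{b}$ satisfying $X+Y\equiv 0\ \mathrm{mod}\ \mf{u}$, and the nilpotency of (a conjugate of) $X$, together with the fact that the Lie-algebra surjection $\mf{b}\to\mf{b}/\mf{u}\cong\mf{t}$ sends nilpotents to $0$, forces $X\in\mf{u}$; then the relation $X+Y\equiv0$ mod $\mf{u}$ forces $Y\in\mf{u}$ too. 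You need to push the nilpotency through \emph{both} coordinates this way, not just once.

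On (iii): this is the substantive gap. You assert, as the crux, that the horocycle transform ``intertwines the $\FZ$-action with the $\U(\Lie\TT)^W$-subalgebra of the $\U(\Lie\TT)$-action,'' and then call this routine. That intertwining statement (essentially the $\dd$-module form of Harish-Chandra's radial-part theorem, with its attendant $\rho$-shift) is precisely the content that needs proving; it is not formal, and you give no argument for it. The paper does not prove it either — it appeals to \cite[Proposition 8.7.1]{AdmissibleModules}, proved in \cite[p.~158]{GinzburgIndRes}, and then observes that the argument there applies verbatim to an arbitrary $T$-stable open $U\sset\horo$. Your final reduction step — that $\U(\Lie\TT)$ is module-finite over $\U(\Lie\TT)^W$, so local finiteness propagates — is fine, but without the intertwining lemma it has nothing to stand on. Either cite the reference, as the paper does, or actually supply the compatibility computation between $\FZ$ and the $\TT$-action through the correspondence \eqref{diag1}.
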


\begin{proof}

\vi Since $\mathsf{p}$ is smooth, the characteristic variety of $\mathsf{p}^* \ms{M}$ will be contained in $G \times \mc{N} \times \mc{B} \times \{ 0 \}$. The map $\mathsf{q}$ is also smooth and the fiber of $\mathsf{q}$ over $(h_1 U , h_2 U)$ can be identified with $h_2 U h_1^{-1}$. Moreover, one can show that $\mathsf{q}$ makes $G \times \mc{B}$ a locally trivial (in the Zariski topology) fiber bundle over $\horo$. Therefore the result \cite[Proposition B.2]{MirkovicVilonen}, based on \cite[Theorem 3.2]{CharCycles}, shows that 
\begin{equation}\label{eq:charinc}
\Char (\mathsf{q}_! \mathsf{p}^*\ms{M}) \subseteq \overline{\omega_{\mathsf{q}} (\rho_{\mathsf{q}}^{-1}(G \times \mc{N} \times \mc{B} \times \{ 0 \}))}.
\end{equation}
We identify $\g$ with left invariant vector fields on $G$. Then $T^* (G / U)$ is naturally identified with $G \times_U \mf{b}$. The differential of $\mathsf{q}$ is given by 
$$
(d_{(g,hU)} \mathsf{q})(A,B) = (B, B - \Ad_h(A)), \quad \forall \ A \in \g, \ B \in \g / \mf{b}.
$$
The moment map for the right $T \times T$-action on $T^*_{(hU,h'U)}( G
/ U \times G/U )$ is the map 
$$
(G \times_U \mf{b}) \times (G \times_U \mf{b}) \rightarrow \mf{t} \oplus \mf{t}, \quad (g_1,X,g_2,Y)\mapsto (X \,\operatorname{mod}\, \mc{U}, Y  \,\operatorname{mod}\,\mc{U}).
$$
Therefore, if $(X,Y) \in T^*_{(g_1 U,g_2 U)} \horo$ then $(X + Y) \equiv 0  \,\operatorname{mod}\, \mc{U}$. We have  

$$
\rho_{\mathsf{q}}((g,hU),(X,Y)) = (g,- \Ad_{h^{-1}}(X),hU,X + Y) 
$$
Hence, if $(h U,X,ghU,Y)$ is contained in the right-hand side of
(\ref{eq:charinc}), then $- \Ad_{h^{-1}}(X) \in \mc{N}$ implies that $X$
is nilpotent. But then $(X + Y) \equiv 0 \,\operatorname{mod}\, \mc{U}$ implies that
$Y$ is nilpotent too.
 This implies that $(X,Y) \in \mc{U} \oplus \mc{U} = \mu^{-1}_{T \times
   T}(0)_{(hU,ghU)}$. Thus, since $\mu_{T \times T}^{-1}(0)$ is closed
 in $T^* \horo$, the right-hand side of (\ref{eq:charinc}) is contained
 in $\mu_{T\times T}^{-1}(0)$.
Part (i) follows.

\vii Define a $G$-action on $G\times \mathcal{B}$ by
$h: (g_1, g_2B)\mto (hg_1h\inv, hg_2B)$. With respect to this action,
each of the maps $\mathsf p$ and $\mathsf q$ is 
$G$-equivariant. In particular, these maps are $H$-equivariant,
for any subgroup $H\sset G$.
Therefore, part (ii) follows from  Proposition \ref{prop:monodromicfinite}(i).

\viii For $U=\horo$, this is stated in  \cite[Proposition 8.7.1]{AdmissibleModules}
and proved in \cite[page 158]{GinzburgIndRes}.
The proof given in {\em loc. cit.} works in the more general case
of an arbitrary $T$-stable open subset $U\sset \horo$ as well.
\end{proof}

\subsection{}\label{ck} Given an action of the abelian Lie algebra $\t$ on a vector space $E$ and an element 
$\theta\in \t^*$,
let $E^{(\theta)}$ denote the corresponding generalized
weight space - the subspace of $E$ formed by all elements annihilated
by a sufficiently high power of the maximal ideal $\Ker \theta$ of the algebra $\U(\t)$.
Similarly, given 
an action of the Lie algebra
$\t\oplus\t$ on $E$, and an element
$(\theta_1,\theta_2)\in \t^*\oplus\t^*$, we write $E^{(\theta_1,\theta_2)}$ for the corresponding generalized
weight space.

The projection $\gamma: G/U \to (G/U)/T=G/B$, by the $T$-action on the right, 
is a principal $T$-bundle, in particular, it is an
affine morphism. Let $\t\to \dd(G/U)$ be the Lie algebra
embedding induced by the $T$-action.
Let $\rho\in\t^*$ be the half-sum of positive roots.

Later on, we will use the following consequence of the
Localization Theorem  of Beilinson and Bernstein.
\begin{thm}\label{BBthm}
Let $\ck$ be an object of the bounded derived category
of coherent
$\dd$-modules on $G/U$ such that, for every $j\in\Z$,  the $\t$-action on 
the sheaf $\gamma_\idot\ms{H}^j(\ck)$ is locally finite.
Let $\theta\in\t^*$ be such that
$(\gamma_\idot\ms{H}^k(\ck))^{(\theta)}\neq0$
for some $k\in \Z$ and let  $\nu\in\t^*$
be such that $\nu-\theta\in\Hom(T,\C^*)$
and  $\nu+\rho$ is a regular element in $\t^*$.
Then, one has

\vi There exists $\ell\in\Z$
such that $R^\ell\Ga(G/U, \ck)^{(\nu)}\neq0$.

\vii Assume in addition that the object $\ck$ is concentrated in degree zero,
i.e. is a $\dd$-module, and the weight
 $\nu + \rho \in\t^*$ is dominant. Then, one has $\Ga(G/U, \ck)^{(\nu)}\neq0$.
\end{thm}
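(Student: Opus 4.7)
The plan is to reduce the statement to Beilinson--Bernstein localization on the flag variety $G/B$, using the affine projection $\gamma: G/U \to G/B$ to pass from weight components of $\gamma_\bullet$-pushforwards to twisted $\dd$-modules on $G/B$. Since $\gamma$ is affine, $\gamma_\bullet$ is exact, so $R^\ell \Gamma(G/U, \ck) \simeq R^\ell \Gamma(G/B, \gamma_\bullet \ck)$, and the local finiteness hypothesis yields a generalized weight decomposition $\gamma_\bullet \mathcal{H}^j \ck = \bigoplus_\mu (\gamma_\bullet \mathcal{H}^j \ck)^{(\mu)}$, where each summand is naturally a module over the sheaf $\widetilde{\dd}_\mu$ of $\mu$-twisted differential operators on $G/B$. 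Consequently $R^\ell \Gamma(G/U, \ck)$ decomposes into $\t$-weight spaces, and it suffices to show that $R^\ell \Gamma(G/B, (\gamma_\bullet \ck)^{(\nu)}) \neq 0$ for some $\ell$.

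The key step is to deduce nonvanishing of the $\nu$-weight space from that of the $\theta$-weight space. Working locally on $G/B$ by trivializing the $T$-bundle $\gamma$, one verifies the canonical identification
\[
(\gamma_\bullet \mathcal{H}^j \ck)^{(\mu + \lambda)} \;\simeq\; (\gamma_\bullet \mathcal{H}^j \ck)^{(\mu)} \otimes_{\mathcal{O}_{G/B}} \mathcal{L}_\lambda
\]
for any $\lambda \in \Hom(T, \C^*)$, where $\mathcal{L}_\lambda$ is the line bundle on $G/B$ attached to $\lambda$; this identification intertwines the $\widetilde{\dd}_\mu$- and $\widetilde{\dd}_{\mu+\lambda}$-module structures via the standard $\mathcal{L}_\lambda$-twist equivalence of the two categories. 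Setting $\mu = \theta$ and $\lambda = \nu - \theta$ (which is integral by hypothesis) gives $(\gamma_\bullet \mathcal{H}^k \ck)^{(\nu)} \neq 0$, so the complex $(\gamma_\bullet \ck)^{(\nu)} \in D^b(\widetilde{\dd}_\nu\text{-mod})$ is nonzero.

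Finally, apply Beilinson--Bernstein localization: since $\nu + \rho$ is regular, $R\Gamma(G/B, -)$ is an equivalence on $D^b(\widetilde{\dd}_\nu\text{-mod})$ and sends the nonzero complex $(\gamma_\bullet \ck)^{(\nu)}$ to a nonzero complex, proving (i). For (ii), when $\nu + \rho$ is additionally dominant, $\Gamma(G/B, -)$ is exact and is an equivalence on the hearts, so $\Gamma(G/B, (\gamma_\bullet \ck)^{(\nu)}) \neq 0$ directly. The main technical obstacle is the weight-shift identification displayed above; this is ultimately a local calculation on a trivialization of the $T$-bundle $\gamma$, but requires careful handling of $\rho$-shift conventions and the normalization of the $T$-action, as well as a check that local finiteness for $\gamma_\bullet \mathcal{H}^j(\ck)$ (rather than for $\ck$ itself) suffices to set up the weight decomposition compatibly with the TDO structure.
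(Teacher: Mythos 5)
The paper does not actually prove Theorem \ref{BBthm}; it simply records that part (ii) is the Beilinson--Bernstein localization theorem and that part (i) ``is a reformulation of a result from'' the Beilinson--Bernstein paper generalizing Casselman's submodule theorem. Your proposal supplies the argument the paper outsources to those references, and the route you take---push down along the affine $T$-torsor $\gamma$, decompose $\gamma_\idot\ms{H}^j(\ck)$ into generalized $\t$-weight spaces, use the weight-shift $(\gamma_\idot\ms{H}^j\ck)^{(\mu+\lambda)}\simeq(\gamma_\idot\ms{H}^j\ck)^{(\mu)}\otimes\mathcal{L}_\lambda$ for integral $\lambda$ to transfer nonvanishing from $\theta$ to $\nu$, and then invoke conservativity of $R\Gamma$ on (monodromic) twisted $\dd$-modules at regular parameter $\nu+\rho$---is exactly the mechanism underlying those citations. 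Your weight-shift identification is correct: locally $\gamma^{-1}(V)\cong V\times T$, the generalized weight spaces are $\C[T]$-modules with $e^\lambda$ invertible, so multiplication by a local trivialization of $\mathcal{L}_\lambda$ is an isomorphism intertwining the $\widetilde{\dd}_\mu$- and $\widetilde{\dd}_{\mu+\lambda}$-structures by conjugation. The one point you should make fully explicit is the passage from generalized weight spaces to honest twisted $\dd$-modules: $(\gamma_\idot\ck)^{(\nu)}$ is a complex of modules over the completion of $\widetilde{\dd}$ at $\nu$, not over $\widetilde{\dd}_\nu$ itself, so one either cites the monodromic form of Beilinson--Bernstein (as in Kashiwara's Ast\'erisque article or the Beilinson--Bernstein Casselman paper) or argues by d\'evissage along the finite $\mathfrak{m}_\nu$-adic filtration, using that a shift of a heart object cannot lie in the heart; either way, conservativity for regular $\nu+\rho$ persists and the proof goes through. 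With that caveat filled in, your argument is complete and correct, and part (ii) then follows exactly as you say by exactness and essential surjectivity under dominance.
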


Here, part (ii) follows from \cite{BB1} and part (i)
is a reformulation of a result from \cite{Casselman}.

\subsection{}\label{complete} From now on, we use the setup of \S\ref{setup}, see Convention \ref{convent}. Thus, $V=\C^n$ and $G=GL(V)$. 
The group $G$ acts on $\SSL=SL(V)$ by conjugation and acts naturally on $V$. We equip the space $\X:=\SSL\times V$ with the $G$-diagonal action.

Let $\si:\ \SSL\times
V\to \SSL\times V$ be the map given by the
assignment $(g,v)\mto (g, g(v))$. It is immediate to check 
that $\si$ is a $G$-equivariant automorphism of $\X$.

\begin{lem}\label{si} The pull-back functors
$\mm\mto\si^{*}\mm, \si^! \mm$ and the push-forward functors $\mm \mto \si_! \mm, \si_* \mm$ give auto-equivalences of the category $\ms{C}$.
\end{lem}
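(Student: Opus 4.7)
The plan is to reduce the claim to a direct computation on characteristic varieties. Since $\sigma$ is an isomorphism of the smooth variety $\X$, one has natural identifications $\sigma_! = \sigma_*$ and $\sigma^! = \sigma^*$ (no shift is needed, as $\sigma$ has relative dimension zero), and $\sigma^*$ is the two-sided inverse of $\sigma_*$. All four functors preserve regular holonomicity, so it remains only to check that the condition $\Char(\mm) \subseteq \Nnil(\SSL)$ is preserved. Writing $\phi_\sigma \colon T^*\X \to T^*\X$ for the symplectic automorphism covering $\sigma^{-1}$ (so that $\Char(\sigma^*\mm) = \phi_\sigma(\Char \mm)$), this amounts to showing $\phi_\sigma(\Nnil(\SSL)) = \Nnil(\SSL)$.

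To compute $\phi_\sigma$, I would use the left trivialization $T\SSL \cong \SSL \times \sll$, under which $d\sigma_{(g,v)}(gZ, u) = (gZ,\; gZv + gu)$; dualizing via the trace pairing $\sll^* \cong \sll$ yields
$$
\phi_\sigma(g, Y, i, j) \;=\; \bigl(g,\; Y + [g^{-1}i \otimes g^T j]_0,\; g^{-1}i,\; g^T j\bigr),
$$
where $[M]_0 := M - \tfrac{\Tr M}{n} I$ denotes the traceless part of $M \in \mathfrak{gl}(V)$. On $\Nnil(\SSL)$, taking the trace of the moment-map equation $g Y g^{-1} - Y + i \otimes j = 0$ together with the relation $Y \in \sll$ forces $j(i) = 0$, so the trace correction vanishes. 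Conjugating the same equation by $g^{-1}$, and using the identity $g^{-1}(i\otimes j) g = g^{-1}i \otimes g^T j$, yields $Y + g^{-1}i \otimes g^T j = g^{-1} Y g$. Hence on $\Nnil$ the formula collapses to $\phi_\sigma(g, Y, i, j) = (g,\; g^{-1}Yg,\; g^{-1}i,\; g^T j)$, which manifestly lies in $\Nnil$: the first component is conjugate to $Y$ and therefore nilpotent, and substituting into $\mu_\X$ gives $g^{-1}\bigl(gYg^{-1} - Y + i\otimes j\bigr)g = 0$. The reverse inclusion follows by applying the same argument to $\sigma^{-1}$, whose induced cotangent map is $\phi_\sigma^{-1}$.

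The main technical obstacle will be the cotangent-lift calculation itself, in particular the bookkeeping required to project $\mathfrak{gl}_n \twoheadrightarrow \sll_n$ via the trace pairing. The saving grace is that the offending trace-correction term vanishes automatically on $\Nnil(\SSL)$, precisely because $\mu_\X$ already takes values in $\sll \subset \mathfrak{gl}$; once this observation is in hand, everything else is elementary linear algebra.
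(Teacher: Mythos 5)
Your proof is correct and follows essentially the same strategy as the paper's: reduce to checking that the induced symplectic automorphism of $T^*\X$ preserves $\Nnil(\SSL)$, compute that automorphism explicitly, and then use the moment-map equation. The main differences are cosmetic or computational. You work with the cotangent lift covering $\sigma^{-1}$, while the paper uses the one covering $\sigma$ (and writes it as $(g,Y,i,j)\mapsto (g,\,Y-i\otimes j,\,g(i),\,g^*(j))$); since $\Nnil$ must be shown stable under the lift \emph{and} its inverse anyway, this choice is immaterial. For the moment-map condition the paper simply invokes $G$-equivariance of $\sigma$ to get $d^*\sigma(\mu_\X^{-1}(0))\subseteq\mu_\X^{-1}(0)$ "on general grounds," whereas you verify it by direct substitution; both are fine. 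You are also more scrupulous than the paper about the traceless projection $[\,\cdot\,]_0$ needed to land in $\sll\cong\sll^*$, and you correctly observe that this correction term is controlled by $\Tr(i\otimes j)=j(i)$, which vanishes on $\Nnil$ by taking the trace of the moment-map equation — a point the paper elides. One small observation your computation makes almost explicit but does not state: on $\Nnil$ your map collapses to $(g,Y,i,j)\mapsto(g,\,g^{-1}Yg,\,g^{-1}i,\,g^Tj)$, which is precisely the action of $g^{-1}\in G$ on the fiber of $T^*\X$; since $\Nnil$ is $G$-stable by construction, invariance then becomes transparent without any further substitution. This gives a particularly clean way to phrase the conclusion.
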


\begin{proof} 
Let $d^* \si$ be the automorphism of $T^*\X$ induced by $\si$. In both cases, proving the claim amounts to showing that $d^* \si(\Nnil)\sset\Nnil$. The map $\si$ being $G$-equivariant, it follows, on general grounds, that one has $d^* \si(\mu_\X\inv(0))\sset \mu_\X\inv(0)$. It remains to show that if  $(g,Y,i,j)\in \Nnil\sset T^*\X=\SSL\times\sll\times V\times V^*$
and $d^* \si(g,Y,i,j)=(g',Y',i',j')$, then $Y'$ is a nilpotent matrix. To this end, one first calculates that $d_{(g,i)} \sigma (X,v) = (X, g X (i) + g(v))$. Hence the map $d^* \si$ is given by the following explicit formula
$$
d^* \si:\ (g,\,Y,\,i,\,j) \mto (g,\, Y-i\o j,\, g(i),\, g^*(j)).
$$
Hence, for $(g,Y,i,j)\in \Nnil$, from the equation in the right hand side of \eqref{eq:defineNnilSL}, we deduce that $Y-i\o j=-gYg\inv$. Thus,  $Y\in \mc{N}$ implies that $Y-i\o j\in\mc{N}$, and we are done.
\end{proof}

It will be convenient for the proof of Theorem 
\ref{thm:admissible2} to consider the group $\BG := \C^\times\times \SSL$. We make $\X$ into a $\BG$-variety by letting the group $\SSL$ act through its embedding into $G$ and letting $\C^\times$ act on $\X = \SSL\times V$ via the natural action by dilations on the second factor. It is clear that a $\dd(\X)$-module is $G$-monodromic iff it is $\BG$-monodromic. The latter condition is also equivalent to being $\SSL$-equivariant and $\C^\times$-monodromic. Further, let $\pr :\ \C^\times \times \X \to \X$ denote the projection map. We have an identification $\C^\times\times\X=\C^\times\times \SSL\times V=\BG\times V$. Replacing $\ms{M}$ by $\pr^* \ms{M}$, $\X$ by $\BG \times V$ and $\g$ by $\mf{G} := \Lie (\BG)$, it suffices to prove the analogue of Theorem \ref{thm:admissible2} in this setting.

We let the group $\SSL$  act
diagonally on the variety $\horo\times V$,
resp.  on the variety $\SSL\times (\SSL/U) \times V$. We let $\C^\times$ act on $\horo
\times V$,
 resp.  on  $\SSL\times (\SSL/U) \times V$,
via its action on $V$, the last factor. Thus, we obtain a
$\BG$-action on $\horo\times V$,
 resp.  on  $\SSL\times (\SSL/U) \times V$. Further, let the torus $\TT$ act on
$\horo\times V$ through its action on $\horo$, the first factor.
The actions of $\BG$ and $\TT$ commute, making $\horo\times V$ a
$\BG\times \TT$-variety. It will be crucial for us that the
number of $\BG\times \TT$-orbits on $\horo \times V$ is known to be
finite, \cite{MWZ}.

Our arguments below involve the following analogue of diagram \eqref{diag1}:
\beq{diag2}
\xymatrix{
 & \BG \times \mathcal{B} \times V\ar[dl]_{p} \ar[dr]^{q} &\\
\BG \times V&  & \horo\times V
}
\eeq
where $p$ is projection along $\mc{B}$ and $q(\lambda,g,F,v) = (F,g(F),v)$ for all $\lambda \in \Cs$, $g \in \SSL$, $F \in \SSL/U$, and $v \in V$.
Each of the maps $p$ and $q$ is easily seen to be $\BG$-equivariant.

Fix $\ms{M}$, a $\dd$-module on $\BG\times V$ with regular
singularities such that $\Char (\ms{M})
\subseteq \Nnil(\BG)$. It is $\BG$-monodromic and holonomic. Put $M :=
q_!p^*\ms{M}$ and, for each integer $k$, let $\ms{H}^k(M)$ denote the
$k$-th cohomology $\dd$-module of $M$. The characteristic variety of
$p^* \ms{M}$ is contained in $\Nnil( \BG ) \times T^*_{\mc{B}}
\mc{B}$. Therefore it follows from Lemma \ref{lem:keylemma}(i) that
$\ms{H}^k(M)$ is a $\TT$-monodromic $\dd$-module on $\horo\times V$. 
Furthermore, the  maps $p$ and $q$ being 
 $\BG$-equivariant, part (ii) of Lemma \ref{lem:keylemma}
ensures that
 $\ms{H}^k(M)$ is a $\BG$-monodromic $\dd$-module.

\begin{rem} The following diagram, similar to \eqref{diag2}, has  been
considered in \cite[\S4.3]{MirabolicCharacter}:
$$
\xymatrix{\SSL  \times{\mathbb P}(V)\  &
\  \SSL \times \mathcal{B} \times{\mathbb P}(V)\
\ar[l]_<>(0.5){\wt p} 
\ar[r]^<>(0.5){\wt q} &\ \horo\times {\mathbb P}(V),}$$
Here, the map $\wt p$
is the projection along $\mathcal{B}$ and
the  map ${\wt q}$ is given
by the formula
${\wt q}(g,F,v) = (F,g(F),  g(v))$. Let $\widehat{q} : \BG \times \mc{B} \times V \to \horo \times V$ be similarly defined by $\widehat{q}(g,F,v) = (F, g(F), g(v))$. The relation between these two settings is provided,
essentially, by the
automorphism $\si$ of Lemma \ref{si}.
Specifically, write $\varpi:\ V\sminus\{0\}\to {\mathbb P}(V)$
for the canonical projection and
$\bar q: \
\SSL \times \mathcal{B} \times (V\sminus\{0\}) \to \horo
\times(V\sminus\{0\})$
for the map induced by the map $q$ in diagram  \eqref{diag2}.
Then, one
clearly has: 
$$
\widehat{q} = q \ccirc (\si \times \mathrm{Id}_{\mc{B}}) \quad \textrm{and} \quad {\wt q} \ccirc (\Id_{\SSL \times  \mathcal{B}}\times\varpi) =(\Id_{\horo} \times \varpi)\ccirc \bar q\ccirc (\si\times  \Id_{\mathcal{B}}).
$$
In \textit{loc. cit.} the pair of adjoint functors (between the appropriate derived categories) $\mathsf{hc}( - ) = {\wt q}_! \ccirc {\wt p}^* ( - )[\dim \mc{B}]$ and $\mathsf{ch}( - ) = {\wt p}_* \ccirc {\wt q}^! ( - )[-\dim \mc{B}]$ were studied. Analogously, we define the functors  $\mathrm{hc}( - ) = {q}_! \ccirc {p}^* ( - )[\dim \mc{B}]$ and $\mathrm{ch}( - ) = {p}_* {q}^! \ccirc ( - )[-\dim \mc{B}]$. Then, 
$$
\mathrm{hc} = q_! \ccirc p^* \ccirc \si_! ( - )[\dim \mc{B}], \quad \mathrm{ch} = \si^! \ccirc p_* \ccirc q^! ( - )[-\dim \mc{B}].
$$
Moreover, the results of \S \ref{ham_shift_sec} imply that $\mathsf{hc} \ccirc F_c = {\bar F}_c \ccirc \mathrm{hc}$ and $\mathsf{ch} \ccirc F_c = {\bar F}_c \ccirc \mathrm{ch}$, where ${\bar F}_c : \Lmod{\dd_{\horo \times V}} \to \Lmod{\dd_{\horo \times \mathbb{P}(V)}}$ is defined by ${\bar F}_c(\mm) = \Ker (\mathsf{eu}_V - c; (\mathrm{Id}_{\horo} \times \varpi)_{\idot} j^* \mm)$ and $j$ is the inclusion $\horo \times (V \sminus \{ 0 \}) \hookrightarrow \horo \times V$. 
\end{rem}

\subsection{Proof of the implication (i) $\Rightarrow$ (ii) of  Theorem \ref{thm:admissible2}}\label{sec:onetwo}

Let $\mm$ be a $G$-monodromic, holonomic
$\dd_\X$-module with regular
singularities. Proposition \ref{prop:monodromicfinite} 
 implies that the action of
$\mu(\mf{G})$ on $\Gamma(\BG \times V,\ms{M})$ is locally finite.

Thus, we must show that the $\FZ$-action on $\Gamma(\BG \times
V,\ms{M})$ is also locally finite. To this end, we will need to apply 
the  results  of section
\ref{ck}
in a slightly different setting. In more detail, recall the
notation 
$\horo=(\SSL/ U\, \times\, \SSL / U)/\td$.
We consider the following diagram:
\beq{Y} Y:=\SSL/ U \times \SSL / U \times V
\ \stackrel{\varpi}\too \
\horo\times V\ 
\stackrel{\pi}\too \
\mc{B} \times \mc{B}\times V,
\eeq
where the first map is a torsor of the group $\td$
and the second map is  a torsor of the group $(T_n\times T_n)/\td$.
We put $\ms{N}=\varpi^*(q_!p^*\ms{M})$ and write
$\ms{N}^j:=\ms{H}^j(\ms{N})$ for the $j$-th cohomology $\dd$-module
of $\ms{N}$, an object of some derived category.

The group  $T_n\times T_n$ acts along the fibers 
of the map $\pi\ccirc\varpi$.
We claim first that the induced action of the Lie algebra
$\t_n\oplus\t_n$ on $(\pi\ccirc\varpi)_\idot\ms{N}^j$,
is locally finite, for any $j\in\Z$.
To prove this, let  $\ms{V}\sset \mc{B} \times \mc{B}\times
V$ be an open subset.
The morphism $\pi\ccirc\varpi$ being affine,
we have $\Ga((\pi\ccirc\varpi)\inv(\ms{V}),\
\ms{N}^j)=
\Ga(\ms{V}, (\pi\ccirc\varpi)_\idot\ms{N}^j)$.
Observe that $\ms{N}^j$
is a $T_n\times T_n$-monodromic $\dd$-module,
by Proposition \ref{prop:monodromicfinite}(i).
Therefore, part (ii) of the same proposition implies that
the $(\t_n\oplus\t_n)$-action on
the space $\Ga((\pi\ccirc\varpi)\inv(\ms{V}),\
\ms{N}^j)$ is locally finite.
Hence, the $(\t_n\oplus\t_n)$-action on
$\Ga(\ms{V}, (\pi\ccirc\varpi)_\idot\ms{N}^j)$
is also locally finite, and our  claim follows.

We conclude  that 
there exists an integer $j\in\Z$ and 
an element $(\theta_1,\theta_2)\in \t_n^*\oplus\t_n^*$
such that we have
$((\pi\ccirc\varpi)_\idot\ms{N}^j)^{(\theta_1,\theta_2)}\neq0$.
Observe  that, the morphism $\vartheta$ being
smooth and affine, the projection formula yields
$\varpi_\idot \ms{N}=\varpi_\idot\varpi^*(q_!p^*\ms{M})=
\varpi_\idot\oo_Y\o q_!p^*\ms{M}$,
where the tensor product is taken over
$\oo_{\horo\times V}$.
It follows, that the action 
on $(\pi\ccirc\varpi)_\idot\ms{N}^j$
of
the Lie subalgebra  $\Lie\td\sset \t_n\oplus\t_n$
can be exponentiated to an action of the torus
$\td$. Therefore, we must have
$\theta_1+\theta_2\in \Hom(T_n,\C^*)$.
This implies that there exists $\theta\in\t_n^*$
such that $\theta_1-\theta,\ \theta_2+\theta+2\rho\in
\Hom(T_n,\C^*)$ and, moreover,
the weights $\theta+\rho$ and $-(\theta+2\rho)+\rho$
are both regular.

Next, we are going to  apply the  results  of section
\ref{ck}
in a slightly different setting, where
the variety $G/U$ is replaced by
$Y$, resp. $\mc{B}$ is
 replaced by $\mc{B} \times \mc{B}\times V$,
and the map $\gamma$ is
 replaced by the map $\pi\ccirc\varpi$.
Theorem \ref{BBthm} has an obvious analogue in this setting. Furthermore, we have
shown  that the object $\ms{N}$ satisfies the
assumptions of that analogue of Theorem \ref{BBthm}.
Thus, applying the corresponding version of
part (i) of the
theorem, we deduce that 
there exists an integer
 $k\in\Z$ such that one has
$R^k\Ga(Y, \ms{N})^{(\theta,-\theta-2\rho)}\neq0$.

Fix  $\theta$ as above and let $\la=\la(\theta)$ be the image of 
 $\theta$ in $\t_n^*/ W$. We may (and
will) identify $\la(\theta)$ with a point in $\Spec\FZ$ via the
Harish-Chandra isomorphism. Then, according to \cite[formula
(4.5.2)]{MirabolicCharacter}, which is based on \cite[Theorem
1]{HottaKashiwara2}, one has a canonical isomorphism
\beq{fg}
[R^k\Gamma(Y,\ \ms{N})]^{(\theta,- \theta - 2 \rho)}\ \cong\ [R^k\Gamma(\BG\times V, \ms{M})]^{(\la(\theta))}.
\eeq
We know, by our choice of $\theta$, that the left hand side of this formula is nonzero.
On the other hand, the variety $\BG\times V$ being affine,
the right  hand side of \eqref{fg} vanishes for any $k\neq 0$.
Thus, 
we conclude that  $k$ must be equal to zero and, then, we get
$[\Gamma(\BG\times V, \ms{M})]^{(\la(\theta))}
\neq0$.
\smallskip

To complete the proof of the implication (i) $\Rightarrow$ (ii),
it suffices to consider
the case where  $\ms{M}$ is a nonzero simple $\dd$-module.
 Assuming this, we will use the above  to prove that the action of $\FZ
 \subset \dd(\BG \times V)$ on $\Gamma(\BG \times V,\ms{M})$ is locally
 finite. 
Recall that $\bg = \Lie \BG$ and identify $\U(\bg)$ with the algebra of
left-invariant differential operators on $\BG$. Then, one has a
$\bg$-module isomorphism $\dd(\BG)=\C[\BG]\otimes \U(\bg)$. Therefore,
we get a $\bg$-module isomorphism $\dd(\BG \times V)=\dd(\BG)\otimes
\dd(V)=\C[\BG]\otimes (\U(\bg) \otimes \dd(V))$,
where the $\bg$-action on the tensor factor $\dd(V)$ is taken to be the
trivial action. Thus, since $\ms{M}$ is simple, for any nonzero element
$m\in \Gamma(\BG\times V,\ms{M})$ we have 
$$
\Gamma(\BG\times V,\ms{M})=\dd(\BG \times V) \cdot m= \big[\C[\BG]\otimes (\U(\bg) \otimes \dd(V))\big] \cdot m.
$$
We let $E:=(\U(\bg) \otimes \dd(V)) \cdot m$, which is a $\bg$-stable
subspace. We conclude that $\Gamma(\BG\times V,\ms{M})$ is isomorphic,
as a $\bg$-module, to a quotient of  $\C[\BG] \otimes E$.

We now use the fact that there exists $\la \in \Spec \FZ$ such that
$\Gamma(\BG\times V,\ms{M})^{(\la)} \neq 0$. Thus, we may (and will)
choose  our nonzero element $m$ so that $(\FZ_\la)^k m=0$, for some $k >
0$. Then, since $\FZ\otimes 1$ is a central subalgebra of the algebra
$U(\bg) \otimes \dd(V)$, we deduce that the ideal $(\FZ_\la)^k$
annihilates the $\bg$-module $E$. On the other hand, the $\bg$-action on
$\C[\BG]$ is clearly locally finite. It follows, by a result due to
Kostant \cite{Ko}, that the $\FZ$-action on $\Gamma(\BG\times V,\ms{M})=
\C[\BG] \otimes E$ is
locally finite.

\subsection{Proof of the implication
(ii) $\Rightarrow$ (i) of  Theorem \ref{thm:admissible2}}
Fix $\ms{M}$ such that both the $\FZ$-action and the $\mu(\mf{G})$-action
on $\Gamma(\BG \times V,\ms{M})$ are locally finite. 
This implies that $\Char (\ms{M}) \subset \Nnil(\BG)$ and hence $\ms{M}$
is a holonomic $\dd$-module. Moreover, it is
$\BG$-monodromic by the last statement of Proposition
\ref{prop:monodromicfinite}(ii). Therefore, it remains to show that
$\ms{M}$ has regular singularities. By \cite[Proposition
4.3.2(ii)]{MirabolicCharacter}, it suffices to show that $q_!p^*\ms{M}$
has regular singularities. 
This amounts to proving that, for any $j\in\Z$,
the $\dd_Y$-module $\ms{N}^j=\ms{H}^j(\varpi^*q_!p^*\ms{M})$
has regular singularities, c.f. \eqref{Y}.

To this end, we put  $Y:=\SSL/U \times \SSL/U\times V$.
We equip $Y$ with the $\SSL$-diagonal action ``on the left''
and with a $(T_n\times T_n)$-action induced by the
$(T_n\times T_n)$-action on $\SSL/U \times \SSL/U$ on the right.
 Also, put
 $\FT:= (\Lie \C^\times) \oplus \mathfrak t\oplus
\mathfrak t$.

\begin{claim}\label{claim_fin2} 
For any $k\in\Z$, the $\dd$-module $\ms{N}^k$ is
$\SSL$-equivariant. 
Moreover, the $\FT$-action on $(\pi\ccirc\varpi)_\idot\ms{N}^k$ is locally finite. 
\end{claim}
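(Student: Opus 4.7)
The plan has two parts corresponding to the two assertions of the claim.

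For the $\SSL$-equivariance, I would start by upgrading $\ms{M}$ itself to an $\SSL$-equivariant $\dd_{\BG \times V}$-module for the diagonal action (trivial on $\C^\times$, by conjugation on $\SSL$, and standard on $V$). The hypothesis gives local finiteness of $\mu(\bg)$ on $\Gamma(\BG \times V, \ms{M})$, and hence of $\mu(\sll)$ since $\sll \subset \bg$. Because $\SSL_n$ is simply connected and $\sll_n$ is semisimple, and the $\sll$-action arises by differentiating the geometric $\SSL$-action on $\BG \times V$, this locally finite infinitesimal action integrates to a rational $\SSL$-action on $\Gamma(\BG \times V, \ms{M})$ and sheafifies to an $\SSL$-equivariant structure on $\ms{M}$ compatible with the $\dd$-module structure. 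Since the maps $p$ and $q$ in diagram \eqref{diag2} and $\varpi$ in diagram \eqref{Y} are all $\SSL$-equivariant for the natural diagonal actions, it follows formally that $\ms{N} = \varpi^* q_! p^* \ms{M}$ carries an $\SSL$-equivariant structure in the derived category, and hence so does each cohomology module $\ms{N}^k$.

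For the local finiteness of the $\FT$-action on $(\pi \ccirc \varpi)_\idot \ms{N}^k$, I would decompose $\FT = \Lie \C^\times \oplus \t \oplus \t$ and analyze each summand separately. The $\Lie \C^\times$ summand acts via dilation on $V$: the $\BG$-monodromicity of $\ms{M}$ (which follows from the hypothesis together with Proposition \ref{prop:monodromicfinite}(iii)) provides local finiteness of the $V$-dilation on sections of $\ms{M}$, and this property is preserved by the pullbacks and proper pushforward entering the construction of $\ms{N}^k$. For the $\t \oplus \t$ summand, I split it as $\Lie \td \oplus \Lie \TT$. The $\Lie \td$ action is locally finite essentially for free, since $\varpi$ is a principal $\td$-torsor and by the projection formula one has $\varpi_\idot \ms{N}^k \cong \varpi_\idot \mc{O}_Y \otimes \ms{H}^k(q_! p^* \ms{M})$, while the $\td$-weight decomposition of $\varpi_\idot \mc{O}_Y$ is visibly locally finite. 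For the quotient $\Lie \TT$, I would apply the direct analogue of Lemma \ref{lem:keylemma}(iii) in our setting, where the base $G$ is replaced by $\BG \times V$ and the horocycle space $\horo$ by $\horo \times V$: the hypothesis of local finiteness of $\FZ$ on $\Gamma(\BG \times V, \ms{M})$ carries through the same proof to yield local finiteness of $\U(\Lie \TT)$ on sections of $\ms{H}^k(q_! p^* \ms{M})$ over any $T$-stable open of $\horo \times V$, which then transfers to the required local finiteness statement on $(\pi \ccirc \varpi)_\idot \ms{N}^k$.

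The main obstacle I anticipate is in the first part: rigorously verifying that the locally finite $\mu(\sll)$-action integrates to a rational $\SSL$-action that respects the $\dd$-module structure. This uses simple-connectedness of $\SSL_n$ together with the fact that the weights of the $\sll$-action lie in the weight lattice (because the action is generated by a moment map). Once $\ms{M}$ itself is $\SSL$-equivariant, the equivariance of each $\ms{N}^k$ is a formal consequence of the equivariance of $p$, $q$, and $\varpi$. The second part, while requiring some bookkeeping with the torsor structures of $\pi$ and $\varpi$, is essentially a direct transcription of the already-proved Lemma \ref{lem:keylemma}(iii).
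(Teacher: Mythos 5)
Your first part is essentially identical to the paper's argument: use the hypothesis that $\mu(\mf{G})$ acts locally finitely (hence so does $\mu(\sll)$) and the simple-connectedness of $\SSL$ to exponentiate to an algebraic $\SSL$-action on $\Gamma(\BG\times V,\ms{M})$, making $\ms{M}$ $\SSL$-equivariant, and then transport this through the $\SSL$-equivariant maps $p$, $q$, $\varpi$. (Your aside about weights lying in the weight lattice ``because the action is generated by a moment map'' is redundant — simple-connectedness of $\SSL$ already guarantees that every finite-dimensional $\sll$-module integrates — but it is harmless.)

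For the second part, the paper itself is quite terse: it simply says the local finiteness of the $\FT$-action ``follows from Lemma~\ref{lem:keylemma}(iii)'', which as literally stated covers only the $\U(\Lie\TT)$-direction. Your decomposition $\FT=(\Lie\C^\times)\oplus\t\oplus\t$, with the further split of $\t\oplus\t$ into $\Lie\td$ (handled via the projection formula $\varpi_\idot\ms{N}^k\cong\varpi_\idot\oo_Y\otimes\ms{H}^k(q_!p^*\ms{M})$ and the $\td$-torsor structure) and $\Lie\TT$ (handled by the cited lemma), is a reasonable elaboration of what the paper leaves implicit. However, your treatment of the $\Lie\C^\times$-summand has a real gap. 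You appeal to $\BG$-monodromicity of $\ms{M}$ and claim that ``this property is preserved by the pullbacks and proper pushforward.'' The property that is preserved by the functors (via Proposition~\ref{prop:monodromicfinite}(i)) is the characteristic-variety condition; but the step from monodromicity to local finiteness of the infinitesimal action on sections is Proposition~\ref{prop:monodromicfinite}(ii), which \emph{presupposes regular singularities} — exactly the conclusion that this whole implication is trying to establish, so the argument would be circular. The local finiteness of $\mu(\Lie\C^\times)$ on $\Gamma(\BG\times V,\ms{M})$ is in fact immediate from the hypothesis (no monodromicity needed), but carrying it through $q_!p^*$ to sections of $\ms{N}^k$ over arbitrary $T$-stable opens is not a formal ``preservation'' statement; it requires an argument of the same type as the one for $\Lie\TT$ in \cite[p.~158]{GinzburgIndRes}, i.e.\ a genuine extension of the Lemma~\ref{lem:keylemma}(iii) mechanism to include the Euler direction on $V$, rather than an appeal to monodromicity.
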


\begin{proof} Observe first that the group $\SSL$ is simply
  connected. Hence the locally finite $\sll$-action on $\Gamma(\BG
  \times V, \ \ms{M})$ can be exponentiated to an algebraic
  $\SSL$-action. It follows that $\ms{M}$ is an $\SSL$-equivariant
  $\dd$-module. This implies, by functoriality, that
  $\ms{H}^k(\varpi^*q_!p^*\ms{M})$ is a an $\SSL$-equivariant,
 $\dd$-module.

The second statement of the claim follows  from Lemma \ref{lem:keylemma}(iii). 
\end{proof}

The proof of the implication (ii) $\Rightarrow$ (i) is now completed by the following claim.

\begin{claim}\label{claim_fin3} 
Let $\ms{H}$ be an $\SSL$-equivariant, holonomic 
$\dd$-module on $Y$ such that  the $\FT$-action  on $(\pi\ccirc\varpi)_\idot\ms{H}$
is locally finite. Then,  $\ms{H}$ has regular singularities.
\end{claim}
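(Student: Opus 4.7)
The plan is to apply Lemma \ref{lem:monoimpliesq}(ii) after enlarging the acting group. Set $G' := \SSL \times \TT \times \C^\times$, where $\SSL$ acts diagonally on the left of $Y = \SSL/U \times \SSL/U \times V$, $\TT = T_n \times T_n$ acts freely on the right on the two $\SSL/U$ factors (so that $\pi \ccirc \varpi$ is precisely the quotient morphism for the right $\TT$-action), and $\C^\times$ dilates $V$. Thus $\FT = \Lie(\TT \times \C^\times)$, and the hypothesis of the claim becomes: $(\pi\ccirc\varpi)_\idot \ms{H}$ carries a locally finite action of the Lie algebra of $\TT \times \C^\times$, while the $\SSL$-equivariance is given outright.

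For the finiteness of orbits: since $\pi \ccirc \varpi$ is a principal $\TT$-bundle, $G'$-orbits on $Y$ biject with $\BG$-orbits on $\mc{B} \times \mc{B} \times V$ (where $\BG = \SSL \times \C^\times$), and the latter space is a further quotient of $\horo \times V$, which has finitely many $\BG$-orbits by the theorem of Magyar--Weyman--Zelevinsky \cite{MWZ} already invoked above. So $G'$ acts on $Y$ with finitely many orbits, and it suffices to establish $(G',\bq)$-monodromicity of $\ms{H}$ for an appropriate $\bq$.

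For monodromicity, I would first decompose $(\pi\ccirc\varpi)_\idot \ms{H}$ into its generalized $\FT$-weight subsheaves, possible by the local finiteness hypothesis. Since $\pi \ccirc \varpi$ is a smooth affine morphism with $\TT$-torsor structure and is $\C^\times$-equivariant, this decomposition pulls back to a decomposition of $\ms{H}$ into summands of the form $\ms{H}^{(\theta)}$, each with a well-defined generalized weight $\theta \in \FT^*$. On any $G'$-stable affine open $U \subset Y$, the sections $\Gamma(U, \ms{H}^{(\theta)})$ are locally finite under $\FT$ (directly from the pushforward), so coherence of $\ms{H}^{(\theta)}$ yields a $\dd$-module generating set lying in a finite-dimensional $\FT$-stable subspace. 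Proposition \ref{prop:monodromicfinite}(iii), applied to the group $\TT \times \C^\times$, then shows each $\ms{H}^{(\theta)}$ is $(\TT \times \C^\times)$-monodromic with character class determined by $\theta$. Combined with the given $\SSL$-equivariance, this produces $(G', \bq)$-monodromicity of each $\ms{H}^{(\theta)}$. Lemma \ref{lem:monoimpliesq}(ii) then gives regular singularities of each $\ms{H}^{(\theta)}$, and hence of $\ms{H}$.

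The main obstacle is the passage from local finiteness of $\FT$ on the pushforward $(\pi\ccirc\varpi)_\idot \ms{H}$ to the statement that $\ms{H}$ itself is generated, on sufficiently many $G'$-stable affine opens, by finite-dimensional $\FT$-stable subspaces of sections. The $\TT$-part of this is clean because $\pi\ccirc\varpi$ is a $\TT$-torsor and the decomposition into generalized weight spaces is forced by the vanishing of the $\TT$-moment map on the characteristic variety in the vertical directions. The $\C^\times$-part is subtler since $\C^\times$ acts nontrivially both on $Y$ and on the base $\mc{B} \times \mc{B} \times V$; handling it cleanly requires that the weight decomposition just described be refined by generalized $\Lie \C^\times$-weights, for which one uses that $\pi \ccirc \varpi$ is $\C^\times$-equivariant and affine, so $\Gamma(\pi\ccirc\varpi)^{-1}(U), -)$ respects the $\Lie \C^\times$-eigensheaf decomposition on any $\C^\times$-stable affine $U$.
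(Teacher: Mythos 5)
Your strategy of enlarging the acting group to $G' = \SSL\times\TT\times\C^\times$, appealing to finiteness of $G'$-orbits on $Y$, and closing with Lemma~\ref{lem:monoimpliesq}(ii) is exactly the paper's endgame. However, the middle of your argument has two serious gaps.

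First, the proposed decomposition of $\ms{H}$ into ``generalized $\FT$-weight subsheaves $\ms{H}^{(\theta)}$'' is not a decomposition into $\dd_Y$-submodules. The $\FT$-action on $(\pi\ccirc\varpi)_\idot\ms{H}$ is via the fundamental vector fields of the $\TT\times\C^\times$-action, which lie inside $\dd_Y$; they commute with the image of $\dd_{\mc{B}\times\mc{B}\times V}$ in $(\pi\ccirc\varpi)_\idot\dd_Y$ (tangency to the fibres), but certainly not with all of $\dd_Y$. Hence the generalized weight subsheaves are $\dd_{\mc{B}\times\mc{B}\times V}$-submodules of the pushforward but are \emph{not} $\dd_Y$-stable. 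You therefore cannot treat each $\ms{H}^{(\theta)}$ as a coherent, holonomic $\dd_Y$-module and apply the monodromicity machinery to it. A one-dimensional example makes this transparent: for $Y=\C^\times$ with $\TT=\C^\times$ acting by multiplication, the generalized eigenspaces of $x\partial_x$ on a holonomic $\dd_{\C^\times}$-module are not $\dd_{\C^\times}$-submodules.

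Second, even granting some version of the decomposition, Proposition~\ref{prop:monodromicfinite}(iii) gives you only that $\ms{H}^{(\theta)}$ is $G'$\emph{-monodromic} in the weak sense $\Char\subset\mu^{-1}(0)$; it does \emph{not} yield $(G',\bq)$-monodromicity, i.e. an isomorphism $a^*\ms{H}^{(\theta)}\cong\mc{O}^{\bq}_{G'}\boxtimes\ms{H}^{(\theta)}$, which is precisely the hypothesis of Lemma~\ref{lem:monoimpliesq}(ii). The passage from ``generated by a finite-dimensional $\mf{G}'$-stable subspace'' to ``$(G',\bq)$-monodromic'' genuinely requires a single $\mu(\mf{G}')$\emph{-semi-invariant} generator, which is what Lemma~\ref{lem:monoimpliesq}(iii) demands. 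The paper obtains such a generator by first reducing to the case of a \emph{simple} module (using exactness of the affine pushforward and finite length), and then invoking (a version of) the Beilinson--Bernstein theorem, Theorem~\ref{BBthm}(ii), to guarantee that $\Gamma(Y,\ms{H})^{(\Theta)}\neq 0$ for some $\Theta\in\FT^*$ — a nontrivial step, since $Y=\SSL/U\times\SSL/U\times V$ is only quasi-affine and global sections could a priori vanish. Simplicity then lets one take any nonzero element of that weight space as a generator, apply Lemma~\ref{lem:monoimpliesq}(iii), merge with the $\SSL$-equivariance, and finally use finiteness of orbits and Lemma~\ref{lem:monoimpliesq}(ii). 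Your proposal omits both the reduction to the simple case and the appeal to the localization theorem, and these are not optional: they are precisely what produces the semi-invariant generator your argument implicitly assumes.
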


\begin{proof} 
Using that any holonomic module has finite length and that the direct image functor $(\pi\ccirc\varpi)_\idot$ is exact
on the category $\dd$-modules,
one reduces the claim to the special case of simple $\dd$-modules.
Hence, we will assume (as we may)
that $\ms{H}$ is simple. 
Thus $\ms{H}$ is a holonomic, 
$\SSL$-equivariant 
 simple 
$\dd_Y$-module such that   the action of the Lie algebra $\FT$
on $(\pi\ccirc\varpi)_\idot\ms{H}$ is locally finite.
Therefore, applying an appropriate version of
 the Beilinson-Bernstein Theorem \ref{BBthm}(ii),
we deduce
that there exists $\Theta\in\FT^*$
such that $\Gamma(Y, \ms{H})^{(\Theta)}\neq0$.
It follows that one can find a nonzero element $u\in \Gamma(Y, \ms{H})$ such that one has $a(u)=\Theta(a)\cdot u$ for all $a\in\FT^*$.
The element $u$ generates $\ms{H}$ since  $\ms{H}$
is a simple module. Therefore we may apply Lemma \ref{lem:monoimpliesq}(iii) and conclude that $\ms{H}$ is $(\C^\times\times T_n\times T_n,{\mathbf q})$-monodromic where ${\mathbf q}$ is the image of $\Theta$. 

Next, we may extend $\Theta$ to a linear function on $(\Lie\BG)
\times\mathfrak t_n\times\mathfrak t_n$
 that vanishes on the subalgebra $\sll \sset  \Lie\BG$.
Abusing notation, we write ${\mathbf q}$ for the analogue of the corresponding element for the
group $\BG\times  T_n\times T_n$. 
Combining the $\SSL$-equivariant and
$(\C^\times\times T_n\times T_n,{\mathbf q})$-monodromic structures on
 $\ms{H}$ together makes $\ms{H}$  a $(\BG\times 
T_n\times T_n,{\mathbf q})$-monodromic $\dd_Y$-module. The number of
$\BG\times T_n\times T_n$-orbits on $Y$ being finite,
we conclude that  $\ms{H}$ has regular singularities,
thanks to Lemma \ref{lem:monoimpliesq}(ii). 
\end{proof}

\begin{rem}
The proof of the analogue of Theorem \ref{thm:admissible2} for mirabolic modules on $\mf{sl} \times V$ is more straight-forward due to the fact that one can make use of the Fourier transform, see \cite[Proposition 5.3.2]{GG}. 
\end{rem}

\begin{cor}\label{HCmir} 
The mirabolic Harish-Chandra $\ddd$-module $\CG_{\lambda,c}$ is a mirabolic $\dd$-module.
\end{cor}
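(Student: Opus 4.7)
The plan is to apply Theorem~\ref{thm:admissible2}: it suffices to verify that both the $\FZ$-action and the $\mu(\g)$-action on
$$
\Gamma(\X,\CG_{\lambda,c}) \;=\; \ddd\big/\bigl(\ddd\cdot\g_c + \ddd\cdot\FZ_\lambda\bigr)
$$
are locally finite. The image $[1]$ of $1\in\ddd$ generates the module and by construction satisfies $\mu(a)[1] = c\,\Tr(a)\,[1]$ for all $a\in\g$ and $z[1] = \lambda(z)\,[1]$ for all $z\in\FZ$, so both actions are scalar on the generator; the task is to propagate this finiteness to the whole module.

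For the $\mu(\g)$-action, I will use that the adjoint action of $G$ on $\ddd$ (by algebra automorphisms induced from the $G$-action on $\X$) is locally finite, and that the left ideal $\ddd\cdot\g_c + \ddd\cdot\FZ_\lambda$ is $G$-stable. Thus $G$ acts algebraically, hence locally finitely, on $\Gamma(\X,\CG_{\lambda,c})$; the infinitesimal action is $\Ad(a)\colon P[1]\mapsto[\mu(a),P][1]$. The identity
$$
\mu(a)\cdot P[1] \;=\; \bigl(P\mu(a) + [\mu(a),P]\bigr)[1] \;=\; c\,\Tr(a)\,P[1] + \Ad(a)(P[1])
$$
shows that the $\mu(\g)$-action differs from the $\Ad$-action by a scalar operator, so local finiteness of the former follows from that of the latter.

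For the $\FZ$-action, which I expect to be the main obstacle since $\FZ$ is not central in all of $\ddd$, I will adapt the Kostant-style argument from \S\ref{sec:onetwo}. Via the left-invariant embedding $\U(\sll)\hookrightarrow\dd(\SSL)$, write $\ddd\cong\C[\SSL]\otimes\bigl(\U(\sll)\otimes\dd(V)\bigr)$. Because $\FZ$ is central in $\U(\sll)$ and trivially commutes with $\dd(V)$, it lies in the center of the subalgebra $\U(\sll)\otimes\dd(V)$. Hence the subspace $E := \bigl(\U(\sll)\otimes\dd(V)\bigr)\cdot[1] \subseteq \Gamma(\X,\CG_{\lambda,c})$ is $\FZ$-stable and $\FZ_\lambda$ annihilates it. The multiplication map
$$
\C[\SSL]\otimes E \;\twoheadrightarrow\; \Gamma(\X,\CG_{\lambda,c})
$$
is then surjective and $\sll$-equivariant for the diagonal $\sll$-action on the source, with $\sll$ acting on $\C[\SSL]$ through adjoint vector fields and on $E$ via the inherited $\mu$-action (a short Leibniz-type check, using $[\mu(a),f] = \mu_{\SSL}(a)(f)$ for $f\in\C[\SSL]$, verifies equivariance). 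Kostant's theorem \cite{Ko} now yields that $\FZ$ acts locally finitely on $\C[\SSL]\otimes E$, hence on its quotient.

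With both conditions of Theorem~\ref{thm:admissible2}(ii) verified, the corollary follows.
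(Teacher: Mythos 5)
Your treatment of the $\mu(\g)$-action is correct and matches the paper's one-line argument, with some helpful extra detail. Your overall strategy for the $\FZ$-action (reduce to the tensor-product decomposition $\ddd = \C[\SSL]\otimes(\U(\sll)\otimes\dd(V))$, set $E := (\U(\sll)\otimes\dd(V))\cdot[1]$, invoke Kostant) is also the same as the paper's — the paper simply says ``repeat the argument of \S\ref{sec:onetwo}'' — but there is a substantive gap in the choice of $\sll$-module structure on which you invoke Kostant's theorem.

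You put the $\sll$-module structure on $\C[\SSL]\otimes E$ via $\mu$: $\sll$ acts on $\C[\SSL]$ by adjoint vector fields and on $E$ by left multiplication by $\mu(\sll)$. With this choice, Kostant's theorem only tells you that the \emph{center} $Z(\U(\sll))$, acting \emph{through this module structure}, is locally finite. But the image of $z\in Z(\U(\sll))$ under that module structure is the operator $\mu(z)$ (the extension of $\mu$ to $\U(\sll)$), not left multiplication by the bi-invariant differential operator $z$. Concretely, for $a\in\sll$ we have $\mu(a)=L(a)-R(a)+a_V$, so $\mu(\Omega)$ for the Casimir $\Omega$ has cross terms like $L(h)R(h)$ and is nothing like the bi-invariant operator $L(\Omega)$. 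So your argument proves local finiteness of $\mu(\FZ)$, which is anyway automatic once you know the $\mu(\sll)$-action is locally finite, and says nothing about the $\FZ$-action that Theorem~\ref{thm:admissible2}(ii) actually requires.

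The argument in \S\ref{sec:onetwo}, which you are adapting, uses instead the \emph{left-invariant} embedding $L:\sll\hookrightarrow\U(\sll)\subset\dd(\SSL)$, i.e., the $\sll$-action on $\Gamma(\X,\CG_{\lambda,c})$ by left multiplication by left-invariant vector fields. With this choice the multiplication map $\C[\SSL]\otimes E\twoheadrightarrow\Gamma(\X,\CG_{\lambda,c})$ is still $\sll$-equivariant (now via $[L(a),f]=L(a)(f)$, the right regular action on $\C[\SSL]$, which is again locally finite); $E$ is still $L(\sll)$-stable, trivially; and, crucially, the action of $Z(\U(\sll))$ on $\Gamma(\X,\CG_{\lambda,c})$ induced by this module structure is exactly left multiplication by the bi-invariant operators $\FZ=L(Z(\U(\sll)))$. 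Replacing $\mu$ by $L$ in your last paragraph repairs the proof and recovers the paper's argument.
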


\begin{proof} 
By Theorem \ref{thm:admissible2}, it suffices to show that the action of
$\FZ$ and $\mu(\g)$ on $\Gamma(\X,\CG_{\lambda,c})$ are both locally
finite. The adjoint action of $\mu(\g)$ on $\dd(\X)$ is locally finite,
therefore it is clear that $\mu(\g)$ acts locally finitely on
$\Gamma(\X,\CG_{\lambda,c})$.

To show that the action of $\FZ$ is
locally finite on $\Gamma(\X,\CG_{\lambda,c})$, we 
let $\mathbf{m}$ be the canonical generator of $\CG_{\lambda,c}$. By definition,
$\FZ_{\lambda} \cdot \mathbf{m}=0$. Now one just repeats the argument in
the proof of the implication (i) $\Rightarrow$ (ii) of  Theorem \ref{thm:admissible2}, section \ref{sec:onetwo}, based on Kostant's result \cite{Ko} (with the section $m$ replaced by $\mathbf{m}$).
\end{proof}

\subsection{Spectral decomposition}\label{sec:spectral} 

Theorem \ref{thm:admissible2} implies, as in \cite{AdmissibleModules}
and \cite{MirabolicCharacter}, that there is a \textit{spectral
  decomposition} of $\cc$. Let $P_0 \subset \tSL^*$ be the root lattice
of $\TSL$ (thought of as the abstract torus associated with $\SSL$). We
write $W_{\mathrm{aff}} = P_0 \rtimes W$ for the affine Weyl group. The
weights of $\dd(\X)$ under the adjoint action of $\mu(\tSL)$ are
contained in $P_0$. Let $\mm \in \cc$ and $\mm = \oplus_{\lambda \in
  \tSL^*/W} \mm^{(\lambda)}$ the decomposition of $\mm$ into generalized
eigenspaces with respect to the action of $\FZ$. 

For any $\Theta \in \tSL^* / W_{\mathrm{aff}}$, we put
$\mm \langle \Theta \rangle:=\bigoplus_{\lambda \in \Theta}
\mm^{(\lambda)}$.
This is a $\ddd$-submodule of $\mm$. Let $\cc_q\langle \Theta \rangle$
be the full subcategory of the category $\cc_q$ formed
by the mirabolic modules $\mm$ such that one has
$\mm\langle \Theta \rangle=\mm$.

\begin{rem}
Note that $\CG_{\lambda,c}$, the mirabolic Harish-Chandra $\ddd$-module,
is an object of category
$ \cc_q \langle \Theta \rangle$, where
$\Theta$ is the image of $\lambda$ in $\tSL^* / W_{\mathrm{aff}}$ and $q
= \exp (2 \pi \sqrt{-1} c)$. 
\end{rem}

One has the following simple result whose proof is left to
the reader.

\begin{prop} We have $\cc_q=\oplus_{\Theta \in \tSL^* /
    W_{\mathrm{aff}}}\
\cc_q\langle \Theta \rangle$, i.e. any object $\mm\in\cc_q$
has a canonical $\ddd$-module direct sum  decomposition 
$$
\mm = \bigoplus_{\Theta \in \tSL^* / W_{\mathrm{aff}}} \mm \langle
\Theta \rangle,
\qquad \mm \langle
\Theta \rangle\in \cc_q\langle \Theta \rangle.
$$

Moreover, for any pair $\Theta,\Theta'\in \tSL^* / W_{\mathrm{aff}}$
such that $\Theta\neq\Theta'$, and any mirabolic modules
$\mm\in \cc_q\langle \Theta \rangle$ and
$\mm'\in \cc_q\langle \Theta' \rangle$, one has
$\Hom_\ddd(\mm,\mm')=0$.
\end{prop}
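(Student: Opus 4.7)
The plan is to combine Theorem \ref{thm:admissible2} with the observation that elements of $\ddd$ shift generalized $\FZ$-eigenvalues only by weights lying in $P_0$. By Theorem \ref{thm:admissible2}, for any $\mm \in \cc_q$ the $\FZ$-action on $\Gamma(\X,\mm)$ is locally finite, yielding a generalized eigenspace decomposition $\mm = \bigoplus_{\lambda \in \tSL^*/W} \mm^{(\lambda)}$. It suffices to show that each $\mm\langle \Theta \rangle = \bigoplus_{\lambda \in \Theta} \mm^{(\lambda)}$ is a $\ddd$-submodule; the displayed direct sum decomposition then follows immediately, and the Hom vanishing will then be a formal consequence.

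To verify the submodule property, I would decompose $\ddd = \dd(\SSL) \otimes \dd(V)$. Since $\FZ$ sits inside the first tensor factor, $\dd(V)$ commutes with $\FZ$ and hence preserves each eigenspace $\mm^{(\lambda)}$. For the $\dd(\SSL)$-factor, decompose with respect to the adjoint action of $\mu(\tSL)$: $\dd(\SSL) = \bigoplus_\alpha \dd(\SSL)_\alpha$. Using the identification $\dd(\SSL) \cong \C[\SSL] \otimes \U(\sll)$ as left $\C[\SSL]$-modules, both tensor factors only carry $P_0$-weights under the adjoint action: $\C[\SSL] = \bigoplus_\lambda V(\lambda)^* \otimes V(\lambda)$ by Peter--Weyl, and the adjoint $\TSL$-weights appearing in any single isotypic piece $V(\lambda)^* \otimes V(\lambda)$ are differences of weights of $V(\lambda)$, hence lie in $P_0$; while $\U(\sll)$ is built from the adjoint representation $\sll$. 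Thus the weights $\alpha$ that occur all lie in $P_0$. The crucial step is then to check that if $d \in \dd(\SSL)_\alpha$ and $m \in \mm^{(\lambda)}$, then $dm \in \mm^{(\lambda+\alpha)}$. This is a standard application of Kostant's analysis of central characters under tensoring with finite-dimensional $\sll$-modules, combined with the Harish-Chandra identification $\FZ \cong \C[\tSL^*]^W$ and the commutation $[\mu(t), d] = \alpha(t)\, d$ for $t \in \tSL$. Because $W_{\mathrm{aff}} = P_0 \rtimes W$, the orbit $\Theta$ is stable under translation by $\alpha \in P_0$, so $\mm\langle \Theta \rangle$ is stable under $\ddd$.

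For the Hom vanishing, any morphism $\phi \in \Hom_{\ddd}(\mm, \mm')$ is automatically $\FZ$-linear, hence sends $\mm^{(\lambda)}$ into $(\mm')^{(\lambda)}$; if $\mm \in \cc_q\langle\Theta\rangle$ and $\mm' \in \cc_q\langle\Theta'\rangle$ with $\Theta \ne \Theta'$, the sets of $\FZ$-eigenvalues appearing in $\mm$ and in $\mm'$ are disjoint subsets of $\tSL^*/W$, which forces $\phi = 0$. The main technical obstacle will be the precise verification of the eigenvalue-shift property: carefully matching the $\mu(\tSL)$-weight grading on $\dd(\SSL)$ with the Harish-Chandra parametrization of $\Spec \FZ$, so that an operator of adjoint weight $\alpha$ actually shifts the central character by $\alpha$ rather than by some other $W$-translate. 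The rest of the argument is formal manipulation of generalized eigenspaces.
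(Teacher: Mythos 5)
Your Step 2(b), which you correctly flag as the crux, has a genuine gap, and the route you envisage for closing it cannot work. Kostant's theorem controls the shift of the central character of $dm$ in terms of the weights of a finite-dimensional $\sll$-module for the $\g$-action that makes the multiplication map $\C[\SSL]\otimes\mm\to\mm$ equivariant with $\FZ$ acting through its coproduct; this is the \emph{left-invariant} action $f\mapsto L(x)f$, whose $\tSL$-weights on $\C[\SSL]$ are, by Peter--Weyl, the full weight lattice $P$, not the root lattice $P_0$. The $\ad\mu(\tSL)=[L(\tSL)-R(\tSL),\,-\,]$-weight $\alpha$ of an element is a \emph{difference} of an $L$-weight and an $R$-weight inside one isotypic piece $V(\nu)^*\otimes V(\nu)$ — this is precisely why it lies in $P_0$ — but it is one of those $L$-weights, congruent to $\nu$ modulo $P_0$, that governs the Kostant shift. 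Concretely, already for $n=2$ the matrix coefficient $g_{11}\in\C[\SSL]$ has $\ad\mu(\tSL)$-weight $0$, yet the $L(\sll)$-submodule it generates is the standard $2$-dimensional representation, and multiplication by $g_{11}$ shifts the Harish--Chandra parameter by a \emph{fundamental} weight, which is not in $P_0$. So there is no identity of the form $d\cdot\mm^{(\lambda)}\subseteq\mm^{(\lambda+\alpha)}$ coming from the $\ad\mu(\tSL)$-grading alone (and, as you already observe, $\lambda+\alpha$ is not even well-defined in $\tSL^*/W$).

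Pushing the same computation a little further exposes what the argument really proves. By Peter--Weyl the $\FZ$-eigenvalues on the simple mirabolic module $\mc{O}_{\X}$ (which lies in $\cc_1$) are $\{\chi_\nu:\nu\ \text{dominant}\}$, and for $n\ge 2$ these hit every one of the $|P/P_0|=n$ cosets, hence more than one $W_{\mathrm{aff}}$-orbit; so a decomposition indexed by $W_{\mathrm{aff}}=P_0\rtimes W$-orbits cannot split a simple object. What your outline does establish, once you grade $\dd(\SSL)$ by $L(\tSL)$-weights (in $P$) instead of $\ad\mu(\tSL)$-weights (in $P_0$), is the spectral decomposition indexed by orbits of the \emph{extended} affine Weyl group $P\rtimes W$: $\dd(V)$ and the left-invariant $\U(\sll)\subset\dd(\SSL)$ commute with $\FZ$ and produce no shift, multiplication by $\C[\SSL]$ shifts the central character within a single $P$-coset by Kostant, and the Hom-vanishing is then formal from $\FZ$-linearity, exactly as you wrote. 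So the ingredients you identified are the right ones; it is the choice of grading on $\dd(\SSL)$ — and, correspondingly, the lattice used to form the affine Weyl group — that needs to change.
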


\begin{rem} Using arguments analogous to those in \cite[Remark 1.3.3]{AdmissibleModules}, the last statement of the above proposition
may be strengthened by showing that, with the same assumptions as above, one
has $\Ext_{\ddd}^k(\mm,\mm')=0$ for all $k\geq0$.
\end{rem}

\section{A stratification}\label{strat}

\subsection{}\label{sec:defineMcirc} The following notation will be
fixed, without further mention, \textit{throughout} the article. Given a group $H$,
we write $Z(H)$ for the center of $H$ and $Z_H(h)$
for the centralizer of   an element $h\in H$. Given a subgroup $K\sset H$,
we let $N_H(K)$ denote the normalizer of $K$ in $H$.
Let $\TSL$ be a maximal torus in $\SSL$ and $T$ 
 the corresponding torus in $G$. The symmetric group on $n$ letters is denoted $W$.

Finally,
let $z_1, z_2 \in \SSL$ be semi-simple elements and put $\LSL =Z_{\SSL}(z_1), \MSL = Z_{\SSL}(z_2)$,
resp.  $L = Z_{G}(z_1), M = Z_{G}(z_2)$.
These are Levi subgroups of $\SSL$ and $G$ respectively, with corresponding Lie algebras $\mf{l}, \mf{m}, \mf{l}(\g)$ and $\mf{m}(\g)$. The derived subgroups of $\LSL$ and $\MSL$ are denoted $\LL$ and $\MM$. By $\TSL$ we mean a
maximal torus in $\SSL$ and $T$ will denote the corresponding torus in $G$. The symmetric group on $n$ letters is denoted $W$.

Given $g \in \MSL$, write  $g_s$ for the semi-simple part of  $g$. Then,
 $Z_{\SSL}(g_s)$ is a connected group. Let $\MSL^{\circ}$ denote the Zariski open
subset of $\MSL$ formed by the elements $g \in \MSL$, such that the
group $Z_{\SSL}(g_s)$ is contained in $\MSL$. Let $Z(\MSL)^{\circ} = \MSL^{\circ} \cap
Z(\MSL)$. If $\MSL$ is a proper subgroup of $\SSL$ then $Z(\MSL)$ is a
connected torus. The set $Z(\MSL)^{\circ}$ is a dense open subset of $Z(\MSL)$. Note that $\SSL^\circ = \SSL$. Let $\mn\sset \SSL$ be the unipotent variety, and let $\mn^{\MSL}:=\mn\cap \MSL$  be the unipotent variety of the Levi
subgroup $\MSL$. The group $M$ acts diagonally on $\mn^{\MSL} \times{V}$
with finitely many orbits.

\subsection{}\label{stratdef} In the paper \cite{CherednikCurves} a partition of the variety $\X$ is given, based on a stratification of $\sll 
\times V$ given in \cite[\S 4.2]{GG}. We recall this partition now. The strata of the partition are labeled by elements of the
{\em finite} set of $G$-conjugacy classes of pairs $(M,\Om)$, where $M \sset G$ is a Levi subgroup and $\Om$ is an $M$-diagonal
orbit in $\mn^{\MSL}\times{V}$. Given such a pair $(M,\Om)$, we
define 
$$
\X (M,\Om):=\big\{(g, \vv)\in \SSL \times{V} \mid (g,\vv) \in G (z
\cd u,\vv),\enspace \text{for some} \enspace z\in
Z(\MSL)^\circ,\,(u,\vv)\in\Om\big\}.
$$
Here and below, we  write $G (g,\vv)$ for the $G$-diagonal orbit of an
element $(g,\vv)\in {\SSL \times V}$;
 we also use similar notation for $M$-diagonal orbits. In the extreme case $\MSL = \SSL$, we have $Z(\SSL)^\circ = Z(\SSL)$, is a finite set. In that case, the connected components of a stratum $\X(\SSL,\Om)$ are labeled by the elements of $Z(\SSL)$. Abusing notation, we regard each connected component as a separate stratum. This way, any stratum becomes a smooth, connected, locally-closed $G$-stable subvariety of $\X$. 

\begin{defn}\label{distinguished}
A pair $(g, \vv)\in \X$, resp. the stratum $\X(M,\Om)\sset{X}$ that contains a pair $(g, \vv),$ is said to be 
{\em  relevant} if $\dim Z_{\SSL}(g)=n-1$ (i.e. $g$ is regular) and, moreover, the subspace $\C[g]\vv\sset V$ has a $g$-stable complement.
\end{defn}

As shown in \cite[Lemma 4.2.1]{McGertyKZ}, there is a natural
parameterization of the relevant strata in $\X$ by bi-partitions of
$n$. It is described as follows. Let $g \in \SSL$. The action of $g_s$
on $V$ defines a decomposition $V = V_{1} \oplus \ds \oplus V_{k}$
where, after reordering summands if necessary, $\nu = (\dim V_1 \ge \ds \ge \dim V_k)$ is a partition of $n$. If
$(g,\vv)$ is a relevant pair then $\vv = \vv_1 + \ds + \vv_k$ with
$\vv_i \in V_{i}$ and either $\vv_i = 0$ or $\C[g]\vv_i = V_i$. This
dichotomy defines a bi-partition $(\lambda,\mu)$ of $n$ such that
$\lambda + \mu = \nu$. Under this parameterization, the Levi
subgroup $M_n$ is specified up to conjugacy by the partition $\lambda +
\mu$. The stratum labeled by the pair $(\lambda,\mu)$ is denoted
$\X(\lambda,\mu)$.


\begin{prop}[\cite{GG}, Theorem 4.3]\label{relevant} 
The conormal bundle to a stratum $\X(M,\Om)$ is contained in $\Nnil(\SSL)$ if and only if $\X(M,\Om)$ is a relevant stratum.
\qed
\end{prop}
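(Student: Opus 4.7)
The plan is to analyze the fiber of the conormal bundle $T^{*}_{\X(M,\Om)}\X$ at a generic point $(g,v) = (zu,v)$, with $z \in Z(\MSL)^{\circ}$ and $(u,v) \in \Om$, and to determine exactly when every cotangent vector in that fiber has nilpotent $\sll$-component $Y$. Since $Z_{\SSL}(z) = \MSL$ for generic $z$, and the tangent vectors obtained by moving $(u,v)$ inside the $M$-orbit $\Om$ are already contained in the image of the $G$-action map (because $\m \subset \g$), the tangent space to $\X(M,\Om)$ at $(g,v)$ is spanned by $([X,g], Xv)$ for $X \in \g$ together with $(Zg,0)$ for $Z \in \mathfrak{z}(\m)$. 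Dualizing via the trace pairing, the conormal fiber inside $\sll \times V^{*}$ is cut out by the moment-map equation $gYg^{-1} - Y + v \otimes j = 0$ together with the orthogonality condition $Y \perp \mathfrak{z}(\m)$.

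The crucial reduction is to decompose everything relative to the $z$-eigenspace decomposition $V = \bigoplus_{\alpha} V_{\alpha}$, which induces $\sll = \m \oplus \m^{\perp}$. Writing $Y = Y_M + Y^{\perp}$, the moment-map equation splits into the block-diagonal equations $(\Ad_{u_{\alpha}} - 1) Y_{M,\alpha} = -v_{\alpha} \otimes j_{\alpha}$, together with an off-diagonal equation in which $\Ad_z \Ad_u - 1$ is invertible, so $Y^{\perp}$ is uniquely determined by $(v \otimes j)^{\perp}$. Since the image of $\Ad_{u_{\alpha}} - 1$ on $\mathfrak{gl}(V_\alpha)$ equals $Z_{\mathfrak{gl}(V_{\alpha})}(u_{\alpha})^{\perp}$ and its kernel is $Z_{\mathfrak{gl}(V_{\alpha})}(u_{\alpha})$, the block-diagonal equation is solvable iff $j_{\alpha}$ annihilates $Z_{\mathfrak{gl}(V_{\alpha})}(u_{\alpha}) \cdot v_{\alpha}$, with solution $Y_{M,\alpha}$ unique modulo $Z_{\mathfrak{gl}(V_{\alpha})}(u_{\alpha})$; the condition $Y \perp \mathfrak{z}(\m)$ then reduces in each block to $\tr Y_{M,\alpha} = 0$.

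For the direction ``$\X(M,\Om)$ relevant $\Rightarrow T^{*}_{\X(M,\Om)}\X \subset \Nnil(\SSL)$'', I use the characterization of relevant strata recalled in \S\ref{stratdef}: each $u_{\alpha}$ is regular unipotent on $V_{\alpha}$ and each $v_{\alpha}$ is either zero or cyclic for $u_{\alpha}$. Consequently $Z_{\mathfrak{gl}(V_{\alpha})}(u_{\alpha}) = \C[u_{\alpha}]$, the solvability condition forces $j_{\alpha} = 0$ whenever $v_{\alpha} \neq 0$, and the right-hand side of every block-diagonal equation vanishes, pinning $Y_{M,\alpha}$ to the nilpotent subspace $(u_{\alpha} - I)\C[u_{\alpha}]$. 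A parallel inspection of the off-diagonal equation shows that $Y^{\perp}$ has nonzero components only in those $(\alpha,\beta)$-blocks with $v_{\alpha}$ cyclic and $v_{\beta} = 0$, so with respect to the splitting $V = \C[g]v \oplus W$, where $W = \bigoplus_{v_{\beta}=0} V_{\beta}$ is the desired $g$-stable complement, the operator $Y$ is block upper-triangular with nilpotent diagonal blocks and is therefore nilpotent.

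For the converse I exhibit a non-nilpotent $Y$ in the conormal fiber in each of the two ways the stratum can fail to be relevant. If $u$ is not regular in $\MSL$ then $Z_{\mathfrak{gl}(V_{\alpha})}(u_{\alpha}) \supsetneq \C[u_{\alpha}]$ on some block, and its traceless part contains non-nilpotent elements (e.g.\ the difference of suitably normalized projectors onto distinct Jordan blocks), so shifting any particular solution by such an element produces a non-nilpotent $Y_{M,\alpha}$. If $u$ is regular in $\MSL$ but some $v_{\alpha}$ lies in $\im(u_{\alpha} - I) \smallsetminus \{0\}$, a direct $2 \times 2$ model computation (taking $u_\alpha = I + n_\alpha$ with $n_\alpha$ a single nilpotent block, $v_{\alpha} = e_{1}$ and $j_{\alpha} = e_{2}^{*}$) produces a traceless solution $Y_{M,\alpha}$ with eigenvalues $\pm \tfrac{1}{2}$ up to a nilpotent shift, and the general regular-unipotent case reduces to this block by block. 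The main technical obstacle will be the bookkeeping in the forward direction---verifying that the uniquely determined off-diagonal piece $Y^{\perp}$ genuinely respects the splitting $V = \C[g]v \oplus W$ so that the block upper-triangular structure really holds.
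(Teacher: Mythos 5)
The paper does not give a proof of this statement; it is quoted from \cite{GG}, Theorem 4.3, with a \verb+\qed+, so a direct comparison with ``the paper's proof'' is not possible. Your approach — a pointwise analysis of the conormal fiber at a generic point $(g,v)=(zu,v)$, using the $z$-eigenspace decomposition to split the moment-map equation into diagonal and off-diagonal blocks — is a legitimate and fairly direct route. The forward direction is essentially complete, and the ``technical obstacle'' you flag (whether $Y^{\perp}$ respects the splitting $V=\C[g]v\oplus W$) actually does work out with no trouble: the block-diagonal analysis forces $j_\beta=0$ whenever $v_\beta\neq 0$, and then the off-diagonal equation kills $Y_{\alpha\beta}$ unless $v_\alpha\neq 0$ and $v_\beta=0$, so $Y$ is genuinely block-triangular with nilpotent diagonal pieces.

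The real gap is in the converse, case (b), where you claim the general regular-unipotent case ``reduces to [the $2\times 2$ model] block by block.'' Since $u_\alpha$ is a \emph{single} Jordan block on $V_\alpha$, there is no block decomposition within $V_\alpha$ to reduce along, and the phrase cannot be taken literally. More importantly, the choice of $j_\alpha$ is not innocuous. Take $m=3$ with $u_\alpha$ a single Jordan block $I+n$, $f_1$ a cyclic vector, $f_i=n^{i-1}f_1$, and $v_\alpha=f_3=n^2f_1$ (the deepest nonzero, non-cyclic possibility). Then $j_\alpha$ must annihilate $\langle f_3\rangle$; if one takes $j_\alpha=f_2^*$ (a seemingly natural choice), the moment-map equation forces the diagonal of $Y_{M,\alpha}$ to be \emph{constant}, so the traceless constraint kills the diagonal entirely and every solution is nilpotent — the converse fails for that $j_\alpha$. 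It is only with $j_\alpha = f_l^*$ (where $v_\alpha=n^l f_1$) that the forced diagonal of $Y_{M,\alpha}$ is non-constant, e.g. $(y-1,y,\ldots,y)$ up to ordering, which together with the traceless condition produces genuine nonzero eigenvalues; one can verify this for $m=2,3,4$ directly, but the pattern for general $m$ must be proved, not asserted via a ``block by block'' slogan. Your case (a) argument, by contrast, is clean: take $j=0$, let $Y_{M,\alpha}$ be the traceless non-nilpotent centralizer element $\pi_1/p_1-\pi_2/p_2$ built from Jordan-block projectors, and set all other blocks to zero; this lies in the conormal fiber and is non-nilpotent.
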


\subsection{} Let $\LSL$ be a Levi subgroup  of $\SSL$. We put 
\beq{not}
 \X_{\LSL} = \LSL \times V,\quad
\Xo_{\LSL} = \LSL^{\circ} \times V,\quad
 \X(\LSL) = \cup_{\Omega} \ \X(\LSL,\Omega),\quad
\paX(\LSL)=\ol{\X(\LSL)} \sminus \X(\LSL),
\eeq
 where in the third equation
the union is over all $L$-orbits $\Omega$ in $\mn^{\LSL} \times V$.

Later, we will use the following two lemmas.

\begin{lem}\label{trans}
For any Levi subgroup $\LSL \subseteq \SSL$, each stratum $\X(M,\Om) \subseteq \X$ meets the set $\Xo_{\LSL}$ transversely.
\end{lem}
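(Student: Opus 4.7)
The plan is a pointwise tangent-space computation at an arbitrary point of intersection, followed by a reduction to a root-theoretic property of elements of $\LSL^{\circ}$. Fix $(g,v)\in \X(M,\Om)\cap\Xo_\LSL$; I must verify $T_{(g,v)}\X(M,\Om)+T_{(g,v)}\Xo_\LSL=T_{(g,v)}\X$. Because transversality is intrinsic and the stratum depends only on the $G$-conjugacy class of $(M,\Om)$, after replacing $(M,\Om)$ by a $G$-conjugate (which does not alter $\X(M,\Om)$) I may assume $(g,v)=(zu,v_0)$ with $z\in Z(\MSL)^{\circ}$ and $(u,v_0)\in\Om$. The Jordan decomposition then gives $g_s=z$, and by the defining property of $Z(\MSL)^{\circ}$ we have $Z_\SSL(g_s)=\MSL$. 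Combined with the hypothesis $g\in\LSL^{\circ}$, i.e.\ $Z_\SSL(g_s)\subseteq\LSL$, this forces $\MSL\subseteq\LSL$, and hence $\mf{m}(\g)\subseteq\mf{l}(\g)$.

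Next I would compute the tangent space to the stratum explicitly. Trivializing $T_g\SSL\cong\sll$ by left translation gives $T_{(g,v)}\X\cong\sll\oplus V$ and $T_{(g,v)}\Xo_\LSL\cong\mf{l}\oplus V$. The stratum is smoothly covered by
$$
\Psi:G\times Z(\MSL)^{\circ}\times\Om\longrightarrow\X,\qquad (h,z',(u',v'))\longmapsto (hz'u'h^{-1},\,hv').
$$
Differentiating $\Psi$ at $(e,z,(u,v_0))$ in directions $X\in\g$, $X'\in\Lie Z(\MSL)$, $Y\in\mf{m}(\g)$, and using that $\Ad(z)$ fixes $\mf{m}(\g)$ pointwise, a direct computation produces the tangent vectors
$$
\big((1-\Ad(g))X+X'+(1-\Ad(u))Y,\ (X+Y)v\big)\in\sll\oplus V.
$$
Both $X'$ and $(1-\Ad(u))Y$ lie in $\mf{m}(\g)\subseteq\mf{l}(\g)$, so they contribute nothing modulo $\mf{l}(\g)$. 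Since $\g=\sll+\mf{l}(\g)$ induces an isomorphism $\sll/\mf{l}\cong\g/\mf{l}(\g)$, transversality becomes equivalent to surjectivity of the map $\g\to\g/\mf{l}(\g)$ given by $X\mapsto(1-\Ad(g))X\bmod\mf{l}(\g)$.

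The final step is to deduce this surjectivity from $g\in\LSL^{\circ}$. Since $g\in L$, $\Ad(g)$ preserves $\mf{l}(\g)$ and descends to an endomorphism of $\g/\mf{l}(\g)$; because the unipotent $\Ad(g_u)$ commutes with $\Ad(g_s)$, the spectrum of $\Ad(g)$ on $\g/\mf{l}(\g)$ coincides with that of $\Ad(g_s)$. Choosing a maximal torus of $\LSL$ through $g_s$, the space $\g/\mf{l}(\g)$ decomposes into weight spaces indexed by the roots $\alpha$ of $\g$ not lying in $\mf{l}(\g)$, and $\Ad(g_s)$ acts on the $\alpha$-weight space by $\alpha(g_s)$. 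The condition $Z_\SSL(g_s)\subseteq\LSL$ is precisely the assertion that $\alpha(g_s)\neq 1$ for every such root $\alpha$. Thus $1-\Ad(g)$ is invertible on $\g/\mf{l}(\g)$, the displayed map is surjective, and transversality follows.

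The main obstacle is organizational rather than substantive: one must parameterize $T_{(g,v)}\X(M,\Om)$ carefully enough to see that the $Z(\MSL)^{\circ}$ and $\Om$ infinitesimal directions are absorbed into $T_{(g,v)}\Xo_\LSL$ via the inclusion $\mf{m}(\g)\subseteq\mf{l}(\g)$, leaving only the $G$-conjugation term $(1-\Ad(g))\g$ to account for the transverse directions. Once this is isolated, the translation of $\LSL^{\circ}$ into the non-vanishing of root values at $g_s$ closes the argument at once.
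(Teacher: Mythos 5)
Your proof is correct, but it takes a genuinely different route from the paper's. The paper works on the dual side: it identifies the conormal space $\mathsf{N}_p$ of the stratum at $p=(g,v)$ (quoting the description $\mathsf{N}_p=(\fz_{\mf m}^\perp\times V^*)\cap\mu^{-1}(0)$ coming from \cite{GG}), identifies the conormal to $\Xo_\LSL$ as $\mf l^\perp\times\{0\}$, and reduces transversality to showing that a covector $(Y,0)$ with $Y\in\mf l^\perp$ and $gYg^{-1}=Y$ must vanish. The punchline there is a trace-form orthogonality trick: $\fz_\sll(g)\subseteq\fz_\sll(z)=\mf m\subseteq\mf l$ forces $Y\in\mf l^\perp\cap\mf m\subseteq\mf m^\perp\cap\mf m=0$. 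You instead work on the primal side: you exhibit enough tangent vectors to $\X(M,\Om)$ at $p$ via the parameterization $G\times Z(\MSL)^\circ\times\Om\to\X$, observe that the $Z(\MSL)^\circ$-directions and $\Om$-directions already land in $\mf l\oplus V$ once one knows $\mf m(\g)\subseteq\mf l(\g)$, and reduce to surjectivity of $X\mapsto(1-\Ad(g))X$ modulo $\mf l(\g)$. You then close with the root-theoretic characterization of $\LSL^\circ$: the condition $Z_\SSL(g_s)\subseteq\LSL$ is exactly $\alpha(g_s)\neq1$ for every root $\alpha\notin R_L$, and since $\Ad(g_u)$ is unipotent and commutes with $\Ad(g_s)$, the operator $1-\Ad(g)$ is invertible on $\g/\mf l(\g)$. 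Both arguments pivot on the same facts — $g_s=z\in Z(\MSL)^\circ\cap\LSL^\circ$ forces $\MSL\subseteq\LSL$, and $\LSL^\circ$-ness kills the fixed vectors of $\Ad(g_s)$ outside $\mf l$ — but the paper packages the second fact via the nondegenerate trace pairing on $\sll$, while you package it via weight spaces. The paper's version is more compact; yours is somewhat more self-contained, since it avoids the (implicitly cited) description of the conormal spaces to strata and replaces it with an elementary differential of the orbit map, needing only the containment $\operatorname{im}(d\Psi)\subseteq T_p\X(M,\Om)$ rather than equality, so no constant-rank considerations are needed.
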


\begin{proof} 
Pick a point $p:=(g,v)\in \Om \cap \Xo_{\LSL}$. An element of the vector space $T^*_p \X$ is a pair $(Y,w)\in \mf{sl} \times V^*$. Write $\mathsf{N}_p \sset \mf{sl} \times V^*$ for the conormal space at $p$ to the stratum $\X(M,\Om)$.

The conormal space to $\Xo_{\LSL}$ at $p$ equals $\mf{l}^\perp \times \{0\}\sset \mf{sl} \times V^*$. Thus, the statement of the lemma amounts to the claim that $\mathsf{N}_p \cap (\mf{l}^\perp\times \{0\})=0$. Equivalently, we must prove that the only element $Y \in \mf{l}^\perp$ such that $(Y,0)\in \mathsf{N}_p$, is the element $Y = 0$.

To prove the latter claim, let $\fz_{\mf{m}}$ be the centre of the Lie algebra $\mf{m}$. According to (\ref{eq:defineNnilSL}), we have $\mathsf{N}_p= (\fz_{\mf{m}}^\perp \times  V^*) \cap \mu\inv(0)$, where $\mu(Y,j)= g\cdot Y\cdot g\inv - Y + v\otimes w$. This way, we are reduced to showing that, for a point of the form $(Y,0)\in \mf{sl} \times \{0\}$, we have
\begin{equation}\label{yy}
Y \in \mf{l}^\perp\cap\fz_{\mf{m}}^\perp \quad \textrm{ and } \quad g\cd Y\cd g\inv-Y=0 \quad \Longrightarrow \quad Y=0.
\end{equation}

To see this, write $g=z\cdot u$, where $z\in Z(\MSL)^\circ$, and $u\in
\MSL$ is a unipotent element. Observe that $\fz_{\mf{sl}}(z)$, the centralizer of $z$ in $\mf{sl}$, equals $\mf{m}$. Further, since $g \in \LSL^{\circ}$, we also have $z\in \LSL^{\circ}$. We deduce $\mf{m} = \fz_{\mf{sl}}(z)\sset \mf{l}$. Hence, we have $Y \in \mf{l}^\perp \sset \mf{m}^\perp$. It is also clear that $Y \in \fz_{\mf{sl}}(g) \sset \fz_{\mf{sl}} (z) = \mf{m}$. Therefore, the conditions on $Y$ imposed on the left hand side of \eqref{yy} imply that $Y \in \mf{l}^\perp\cap \fz_{\mf{sl}}(g)\sset \mf{m}^\perp\cap \mf{m} = 0$, and \eqref{yy} is proved.
\end{proof}   

\begin{lem}\label{lem:stratainfo} For a 
Levi subgroup $\LSL$ of
  $\SSL$, one has
$\ol{\X(\LSL)} \cap \Xo_{\LSL} = Z(\LSL)^{\circ} \times \mn^{\LSL} \times V$.
\end{lem}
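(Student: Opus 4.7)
The equality will be established by proving two inclusions. For the easy direction ($\supseteq$), given $(z,u,v) \in Z(\LSL)^\circ \times \mn^\LSL \times V$, identified with $(zu,v) \in \LSL \times V$ via the commuting decomposition, I will observe that $zu \in \LSL^\circ$: indeed, since $z$ and $u$ commute with $u$ unipotent, $(zu)_s = z$, and by definition of $Z(\LSL)^\circ$ we have $Z_\SSL(z) = \LSL$. Taking $\Omega$ to be the $L$-orbit of $(u,v)$ in $\mn^\LSL \times V$ and $h = e$ in the definition of $\X(\LSL,\Omega)$ shows $(zu,v) \in \X(\LSL) \cap \Xo_\LSL \subseteq \ol{\X(\LSL)} \cap \Xo_\LSL$.

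For the reverse inclusion, fix $(g,v) \in \ol{\X(\LSL)} \cap \Xo_\LSL$, so $g_s \in \LSL$ and $Z_\SSL(g_s) \subseteq \LSL$. It suffices to show $g_s \in Z(\LSL)^\circ$, for then $g_u \in Z_\SSL(g_s) = \LSL$ is unipotent, hence lies in $\mn^\LSL$. The first main step will be to show that $g_s$ is $\SSL$-conjugate to an element $z \in Z(\LSL)$. Every element of $\X(\LSL)$ has the same characteristic polynomial as some $z' \in Z(\LSL)^\circ$, so the image of $\X(\LSL)$ under the Chevalley map $\chi: \SSL \to \SSL\sslash \SSL$ is contained in $\chi(Z(\LSL)^\circ) \subseteq \chi(Z(\LSL))$. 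Since $\chi(Z(\LSL))$ is closed in $\SSL \sslash \SSL$, the same inclusion persists for the closure $\ol{\X(\LSL)}$, so $g_s$ is conjugate in $\SSL$ to an element $z \in Z(\LSL)$.

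Writing $g_s = h z h^{-1}$, we have $Z_\SSL(g_s) = h Z_\SSL(z) h^{-1} \supseteq h \LSL h^{-1}$ since $z \in Z(\LSL)$ means $\LSL \subseteq Z_\SSL(z)$. Combined with $Z_\SSL(g_s) \subseteq \LSL$ and a dimension count (using that $\LSL$ is connected), this forces $h\LSL h^{-1} = \LSL$, i.e. $h \in N_\SSL(\LSL)$. Conjugation by $h$ then preserves $Z(\LSL)$, so $g_s = hzh^{-1} \in Z(\LSL)$, and the inclusion $\LSL \subseteq Z_\SSL(g_s)$ combined with $Z_\SSL(g_s) \subseteq \LSL$ gives $Z_\SSL(g_s) = \LSL$, i.e. $g_s \in Z(\LSL)^\circ$ as required.

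\textbf{Main obstacle.} The one non-formal step is the closedness of $\chi(Z(\LSL)) \subseteq \SSL\sslash\SSL$. I would justify this via the version of Chevalley's restriction theorem adapted to Levi subgroups: the restriction of $\chi$ to $Z(\LSL)$ factors through $Z(\LSL) \to Z(\LSL)/W_\LSL$, with $W_\LSL := N_\SSL(\LSL)/\LSL$, and the induced map $Z(\LSL)/W_\LSL \to \SSL\sslash\SSL$ is a finite morphism, hence closed. An alternative route, avoiding this appeal, is to argue on the level of characteristic polynomials directly: elements of $\X(\LSL)$ have characteristic polynomial of the form $\prod_i (t-a_i)^{n_i}$ with multiplicities $(n_i)$ dictated by the block structure of $\LSL$, and this constraint (equalities among multiplicities of roots) is cut out by closed conditions, hence passes to the limit.
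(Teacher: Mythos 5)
Your proof is correct, and it takes a genuinely different route from the paper's. The paper reduces the containment $\subseteq$ to the stratification of $\X$: it first invokes (without further justification, as a general property of the stratification) that $\ol{\X(\LSL)}$ is contained in the union of strata $\X(M,\Omega)$ with $M$ conjugate to a Levi containing $\LSL$, then argues that a point of such a stratum lying in $\Xo_{\LSL}$ forces $\MSL$ to be conjugate to $\LSL$, so $\ol{\X(\LSL)}\cap\Xo_{\LSL}=\X(\LSL)\cap\Xo_{\LSL}$. Your argument bypasses the stratification entirely: by pushing forward along the Chevalley quotient $\chi:\SSL\to\SSL\sslash\SSL$ and using that $\chi(Z(\LSL))$ is closed, you produce directly that $g_s$ is $\SSL$-conjugate to an element of $Z(\LSL)$ for any $(g,v)$ in the closure. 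From there, both proofs converge on the same normalizer argument (sandwiching $Z_\SSL(g_s)$ between a conjugate of $\LSL$ and $\LSL$ itself). The upshot is that your argument is more self-contained — it does not need the structural fact about which strata occur in the closure — at the cost of invoking geometric invariant theory (closedness of the semisimple-class image). One small remark on the ``main obstacle'' paragraph: the cleanest justification that $\chi(Z(\LSL))$ is closed is simply that $Z(\LSL)$ is a closed subvariety of $\TSL$ and $\TSL\to\TSL/W\cong\SSL\sslash\SSL$ is a finite, hence closed, morphism; the factorization through $Z(\LSL)/W_\LSL$ you propose is true but unnecessary and slightly harder to make airtight (one would need to check that map is finite, not merely quasi-finite). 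Your stated alternative via characteristic polynomials is heuristically right but would need more care: the locus of polynomials admitting a refinement into blocks of sizes $n_i$ is indeed closed, but this is most cleanly seen as exactly the closedness of the finite-morphism image you already invoked.
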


\begin{proof}
Let $Y$ be the union of all strata $\X(M,\Omega)$ such that $\MSL$ is conjugate to some Levi subgroup of $\SSL$ containing $\LSL$. The set $\ol{\X(\LSL)}$ is contained in $Y$. If $g \cdot (z \cdot u, v)$ belongs to the intersection $\X(M,\Omega) \cap \Xo_{\LSL}$ for some Levi subgroup $\MSL$ of $\SSL$, then $g z u g^{-1} \in \LSL^{\circ}$ and hence $zu \in (g^{-1} \cdot \LSL)^{\circ}$. By definition, this means that $Z_{SL}(z) \subseteq g^{-1} \cdot \LSL$. However, $z \in Z(\MSL)^{\circ}$ which means that $Z_{\SSL}(z) = \MSL$. Thus $g \cdot \MSL \subseteq \LSL$. Therefore we have shown that 
$$
\ol{\X(\LSL)} \cap \Xo_{\LSL} = \X_{\LSL} \cap \LSL^{\circ} \times V.
$$
Finally, let $g \cdot (zu,v) \in \X(\LSL,\Omega) \cap (\LSL^{\circ} \times V)$ for some $\Omega$. Since $gzug^{-1} \in \LSL^{\circ}$, the element $l := g z g^{-1}$ is contained in $\LSL^{\circ}$ too. Hence $z = g^{-1} \cdot l \in Z(\LSL)^{\circ}$. This implies that $Z_{\SSL}(g^{-1} \cdot l) = \LSL$ and hence $Z_{\SSL}(l) = g \cdot \LSL$. However, $Z_{\SSL}(l) = \LSL$ implies that $g \in N_{\SSL}(\LSL)$ and thus $g \cdot Z(\LSL) = Z(\LSL)$. Therefore $l \in Z(\LSL)^{\circ}$ as required. 
\end{proof}

\section{Cuspidal $\dd$-modules}\label{chetyre} 

In this section we define, and classify, the cuspidal mirabolic modules on $\X_{\LL}$ for $\LSL$ a Levi subgroup of $\SSL$. As in the classical case, \cite{LusztigInterCoh}, there are relatively few Levi subgroups of $\SSL$ that support cuspidal mirabolic modules.

\subsection{}\label{sec:monodromydefinition} The centre $Z(\SSL)$ of the
group $\SSL$ is a cyclic group, the group of scalar matrices of the form
$z\cdot\Id$, where $z\in\C$ is an $n$-th root of unity.
Let $\jmath: \bN^\reg\into\bN$ be the open embedding of the conjugacy class formed by the regular unipotent elements. The fundamental group of $\bN^\reg$ may be identified canonically with $Z(\SSL)$. For each integer $r = 0,1,\ldots,n-1,$ there is a group homomorphism $Z(\SSL) \to \C^\times$ given by $z \cdot \Id \mto z^r$, and a corresponding rank one $\SSL$-equivariant local system $\sL_r$ on $\bN^{\reg}$ with monodromy $\theta = \exp \left( \frac{2 \pi \sqrt{-1} r}{n} \right)$.

From now on, we assume that $(r,n) = 1$, ie. that $\theta$ is a primitive $n$-th root of unity. Then the local system $\sL_r$ is known to be {\em clean}, that is, for $\dd$-modules on $\SSL$, one has, c.f. \cite{CharSheaves5}:
$$
\jmath_!\sL_r \stackrel{\sim}{\rightarrow} \jmath_{!*}\sL_r \stackrel{\sim}{\rightarrow} \jmath_*\sL_r.
$$

Given a central element $z \in Z(\SSL)$, we have the conjugacy class  $z\bN^\reg$, and we let $z \ms{L}_r := z\jmath_!\sL_r$ denote the corresponding translated $\dd$-module supported on the closure of  $z\bN^\reg$. According to Lusztig \cite{CharSheaves5}, $z\jmath_!\sL_r$ is a cuspidal mirabolic $\dd$-module on $\SSL$.

\subsection{} Let $H$ be an arbitrary reductive subgroup of $G$
and $\mathfrak{h} = \Lie H$. By restricting the trace form on $\g$ to $\mathfrak{h}$, we identify $\mathfrak{h}$ with its dual. Write $\mu_H : T^* (H \times V) \rightarrow \mathfrak{h}$ for the moment map of the Hamiltonian action of $H$ on $T^*(H \times V)$. Then, the variety 
\begin{equation}\label{eq:nnilLSL}
\Nnil(H) := \{ x = (g,Y,i,j) \in H \times \mathfrak{h} \times V \times V^* \ | \ \mu(x) = 0 \ \textrm{ and $Y$ nilpotent.} \},
\end{equation}
is a Lagrangian subvariety in $H \times \mathfrak{h} \times V \times V^*$. Each Levi subgroup $\MSL$ of $\LL$ and $M$-orbit $\Omega$ in $\bN^{\MSL} \times V$ defines a stratum $\X_{\LL}(\MSL,\Omega)$ of $\X_{\LL}$. The set of all strata forms a partition of $\X_{\LL}$. Since $\LL$ is a product of special linear groups, \cite[Theorem 4.4.2]{GG} implies that 

\begin{lem}
We have 
$$
\Nnil(\LL) = \bigcup_{(\MSL,\Omega)} \overline{T^*_{\X_{\LL}(\MSL,\Omega)} \X_{\LL}},
$$
where the union is over all $\LSL$-conjugacy classes of Levi subgroup $\MSL$ of $\LSL$ and relevant $M$-orbits $\Omega$ in $\bN^{\MSL} \times V$. 
\end{lem}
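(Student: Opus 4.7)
The plan is to reduce the statement to the corresponding result for $\SSL$ already recorded as Proposition \ref{relevant} (which is \cite[Theorem 4.4.2]{GG}), by exploiting the product structure of $\LL$.

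First, recall that, since $\LSL = Z_{\SSL}(z_1)$ for a semisimple $z_1 \in \SSL$, we have a canonical decomposition $V = V_1 \oplus \cdots \oplus V_k$ into the $z_1$-eigenspaces, with $\dim V_i = n_i$, and $\LSL \cong S(GL(V_1) \times \cdots \times GL(V_k))$. Consequently, $\LL = SL(V_1) \times \cdots \times SL(V_k)$ acts on $V$ block-diagonally, so that
\[
\X_{\LL} = \LL \times V \;\cong\; \prod_{i=1}^k \big(SL(V_i) \times V_i\big),
\]
and this isomorphism is compatible with taking cotangent bundles. I would then unpack the definition \eqref{eq:nnilLSL} of $\Nnil(\LL)$: writing a point of $T^*\X_{\LL}$ as $(g,Y,\mathbf{i},\mathbf{j})$ with $g=(g_i)$, $Y=(Y_i)\in\mf{l}' = \bigoplus \mf{sl}(V_i)$, $\mathbf{i}=(i_i)\in\bigoplus V_i$, $\mathbf{j}=(j_i)\in\bigoplus V_i^*$, the moment-map condition and nilpotency split block-by-block, yielding
\[
\Nnil(\LL) \;=\; \prod_{i=1}^k \Nnil\big(SL(V_i)\big).
\]

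Next, I would invoke Proposition \ref{relevant} for each factor $SL(V_i)$, giving
\[
\Nnil\big(SL(V_i)\big) \;=\; \bigcup_{(M_i,\Omega_i)} \overline{T^*_{\X_{V_i}(M_i,\Omega_i)}\X_{V_i}},
\]
where the union runs over $GL(V_i)$-conjugacy classes of pairs consisting of a Levi $M_i \subseteq GL(V_i)$ and a relevant $M_i$-orbit $\Omega_i \subseteq \mathcal{N}^{M_i \cap SL(V_i)} \times V_i$. Taking products, using that a product of conormal bundles to smooth locally-closed subvarieties is the conormal bundle to the product, and that closure commutes with finite products, one obtains
\[
\Nnil(\LL) \;=\; \bigcup_{(M_i,\Omega_i)_{i=1}^k} \overline{T^*_{\prod_i \X_{V_i}(M_i,\Omega_i)}\, \X_{\LL}}.
\]

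To finish, I would match the indexing sets: every Levi $\MSL \subseteq \LSL$ is, up to $\LSL$-conjugacy, of the form $\MSL = S(\prod_i M_i)$ for Levis $M_i \subseteq GL(V_i)$, and every $M = \prod_i M_i$-diagonal orbit $\Omega$ on $\mathcal{N}^{\MSL}\times V$ factors uniquely as $\Omega = \prod_i \Omega_i$ with $\Omega_i$ an $M_i$-orbit on $\mathcal{N}^{M_i \cap SL(V_i)}\times V_i$; moreover, by Definition \ref{distinguished} (applied to each factor) relevance is tested factor-by-factor, because regularity of $g=(g_i)$ in $\LL$ amounts to regularity of each $g_i$, and the existence of a $g$-stable complement to $\C[g]\mathbf{i}$ in $V$ is equivalent to the existence of a $g_i$-stable complement to $\C[g_i]i_i$ in each $V_i$ (the decomposition $V=\bigoplus V_i$ is $g$-stable since $g\in\LL$). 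Under this bijection, $\X_{\LL}(\MSL,\Omega) = \prod_i \X_{V_i}(M_i,\Omega_i)$, which completes the proof.

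The only non-routine step is the matching of relevant pairs carried out last; in particular one must verify that the decomposition $V = \bigoplus V_i$ is automatically compatible with the $g$-stable complements appearing in the relevance condition, which follows because each $V_i$ is $g$-stable for $g \in \LL$.
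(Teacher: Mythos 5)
Your overall strategy is the same as the paper's: the paper disposes of this lemma in a single sentence, observing that $\LL$ is a product of special linear groups and invoking \cite[Theorem 4.4.2]{GG} factor by factor, and that is exactly what you are fleshing out. The identification $\Nnil(\LL)\cong\prod_i\Nnil\big(SL(V_i)\big)$ is correct (the $\LL$-moment map decomposes block-by-block, and nilpotency of $Y=(Y_i)$ is checked blockwise), as is the matching of $\LSL$-conjugacy classes of Levis with tuples of Levis of the $GL(V_i)$. One citation point: Proposition \ref{relevant} (attributed in the paper to [GG, Theorem 4.3]) only gives the ``iff'' criterion for a single conormal to lie in $\Nnil$; the statement that $\Nnil$ \emph{equals} the union of such conormals, which you use directly, is the content of [GG, Theorem 4.4.2], the reference the paper actually cites for this lemma.

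The one step that does not hold as you state it is the factor-by-factor reduction of the relevance condition. You claim that, since each $V_i$ is $g$-stable, $\C[g]\mathbf{i}$ has a $g$-stable complement in $V$ iff each $\C[g_i]i_i$ has a $g_i$-stable complement in $V_i$. This is false for a general block-diagonal $g$: take $V_1=V_2=\C^2$, $g_1=g_2$ a single regular unipotent Jordan block, $i_1=e_1$ (a fixed vector, so $\C[g_1]i_1=\C e_1$ has no $g_1$-stable complement) and $i_2=e_2$ (a cyclic vector). Then $\C[g]\mathbf{i}=\mathrm{span}\{(e_1,e_2),(0,e_1)\}$ has the $g$-stable complement $V_1\oplus 0$, even though the first factor fails relevance. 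The source of the failure is that $\C[g]\mathbf{i}\subsetneq\bigoplus_i\C[g_i]i_i$ whenever the minimal polynomials of the $g_i$ are not pairwise coprime, because $\C[g]$ is then a proper subalgebra of $\prod_i\C[g_i]$. What rescues the argument is that relevance of the stratum $\X_{\LL}(\MSL,\Omega)$ is tested at representatives $g=zu$ with $z\in Z(\MSL)^\circ$, for which the eigenvalue clusters of $z$ separate the blocks; this forces the characteristic polynomials of $g|_{V_i}$ to be pairwise coprime, hence $\C[g]=\prod_i\C[g|_{V_i}]$ and $\C[g]\mathbf{i}=\bigoplus_i\C[g_i]i_i$, and only then does the complement condition genuinely split over the factors. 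With that correction your reduction is sound and matches the paper's intent (visible also in its parameterization of relevant strata of $\X_{\LL}$ by tuples of bi-partitions immediately after the lemma), but ``each $V_i$ is $g$-stable'' alone is not enough to justify the equivalence.
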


\begin{defn}
An $\LSL$-equivariant, regular holonomic $\dd$-module on $\X_{\LL}$ with support in $\bN^{\LL} \times V$ and characteristic variety contained in $\Nnil(\LL)$ is called an $\LSL$-\textit{cuspidal} mirabolic module.
\end{defn}

\subsection{} The Levi subgroups of $\SSL$ are parameterized up to conjugacy by partitions of $n$ (we say that $\LSL$ is a Levi of \textit{type} $\lambda \vdash n$). Recall that a partition of $n$ is a multiset $\{ \{ \lambda_1, \ds, \lambda_s \} \}$ of positive integers such that $\sum_{i = 1}^s \lambda_i = n$. However, we will think of a partition as being an ordered tuple of positive integers $(\lambda_1,\ds, \lambda_s)$ such that $\lambda_1 \ge \cdots \ge \lambda_s > 0$. Clearly the two notions are equivalent but the second will be more convenient when dealing with signed partitions. The set of all relevant strata in $\X_{\LL}$ is labeled by the set of all bi-partitions 
$$
(\boldsymbol{\mu},\boldsymbol{\nu}) = ((\mu^{(1)},\nu^{(1)}), \ds,(\mu^{(s)},\nu^{(s)}))
$$
of $\lambda$, where each $(\mu^{(i)},\nu^{(i)})$ a bi-partition of $\lambda_i$. The relevant strata contained in $\bN^{\LL} \times V$ are labeled by those bi-partitions $(\boldsymbol{\mu},\boldsymbol{\nu})$ such that each $(\mu^{(i)},\nu^{(i)})$ is either of the form $((\lambda_i),\emptyset)$ or $(\emptyset,(\lambda_i))$. 

\begin{defn}
A \textit{signed partition} of $n$, $\lambda^{\pm} \vdash n$, is a partition $(\lambda_1, \ds, \lambda_s)$ of $n$ such that each $\lambda_i$ is assigned a sign $\mathrm{sgn}(\lambda_i) \in \{ +, - \}$. If $\lambda^{\pm}$ is a signed partition then $\lambda^+$ is the tuple $(\lambda_1^+, \ds, \lambda_s^+)$ such that $\lambda_i^+ = \lambda_i$ if $\mathrm{sgn}(\lambda_i) = +$ and $\lambda_i^+ = 0$ otherwise. The tuple $\lambda^-$ is defined similarly. We write $\lambda^{\pm} = (\lambda^+,\lambda^-)$. 
\end{defn}

Thus, the relevant strata contained in $\bN^{\LL} \times V$ can be labeled $\X_{\LL}(\lambda^{\pm})$, by signed partitions whose underlying partition is $\lambda$. The following signed partitions will play an important role in the classification of $\LSL$-cuspidal sheaves. Choose $2 \le m \le n$ and $u,v,w \in \mathbb{Z}_{\ge 0}$ such that $n = (u+v)m + w$. Let $\lambda$ be the partition $(m^{u+v}, 1^w)$ of $n$. We associate to $\lambda$ the signed partitions $\lambda^{\pm}(u;v,w) = (\lambda^+(v,w),\lambda^-(u))$, where $\lambda^+(v,w)$ is a tuple with $v$ entries equal to $m$, $w$ equal to $1$ and the other $u$ are $0$ (hence $\lambda^-(u)$ has $u$ entries equal to $m$ and all other entries are zero). There are ${ u + v \choose u }$ such signed partitions.  

\subsection{} Given $t \in \C / \Z$, define the simple $\dd_{\C}$-module $\ms{E}_t$ to be the minimal extension of the local system on $\Cs$ with monodromy $\theta = \exp (2 \pi \sqrt{-1} t)$. If $L$ is of type $(m^{u + v}, 1^w)$, then the center of $L$ defines a canonical decomposition $V = V^{\oplus (u+v)}_m \oplus \C^{w}$, where $V_m \subseteq V$ is $m$-dimensional. For each $0 \le r \le m-1$ with $(r,m) = 1$, we define the $\dd_{\X_{\LL}}$-module
\begin{equation}\label{eq:mruvw}
\ms{M}(r,u,v,w) := \left[(\ms{L}_{r} \boxtimes \C[V_{m}])^{\boxtimes v} \right] \boxtimes \left[ \boxtimes_{i = 1}^w \ms{E}_{-\frac{r}{m}} \right] \boxtimes \left[ (\ms{L}_{r} \boxtimes \delta_{V_{m}})^{\boxtimes u} \right]. 
\end{equation}
Its support is the closure of a relevant stratum labeled by a signed partition of the type $\lambda^{\pm}(u;v,w)$.  

\begin{prop}\label{prop:classification1}
Let $\LSL \neq \TSL$ be a Levi subgroup of $\SSL$ of type $\lambda$ ($\neq (1^n)$). 
\begin{enumerate}

\item If $\lambda = (m^{u + v},1^w)$ for some $2 \le m \le n$ and $\lambda^{\pm}(u;v,w)$ is some signing of $\lambda$, then, for each $0 \le r \le m - 1$ with $(r,m) = 1$, the $\dd$-module $\ms{M}(r,u,v,w)$ is a simple, $\LSL$-cuspidal mirabolic module whose support is the closure of $\X_{\LL}(\lambda^{\pm}(u;v,w))$. Up to isomorphism, these are all simple, $\LSL$-cuspidal mirabolic modules whose support is the closure of $\X_{\LL}(\lambda^{\pm}(u;v,w))$.
 
\item For any other signed partition $\lambda^{\pm}$, there are no simple, $\LSL$-cuspidal mirabolic modules whose support is the closure of $\X_{\LL}(\lambda^{\pm})$. 
\end{enumerate}
\end{prop}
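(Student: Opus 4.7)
The plan is to reduce the classification by a Künneth-type factorization over the product structure of $\LL$, classify cuspidal modules on each factor, and then impose the global $\LSL$-equivariance.

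Write $\lambda = (\lambda_1,\ldots,\lambda_s)$, so $\LL = \prod_i SL_{\lambda_i}$ (discarding the trivial factors), and the $Z(L)^{\circ}$-isotypic decomposition gives $V = \bigoplus_i V_{\lambda_i}$. Then $\X_{\LL} \cong \prod_i (SL_{\lambda_i}\times V_{\lambda_i})$, $\bN^{\LL}\times V \cong \prod_i(\bN^{SL_{\lambda_i}}\times V_{\lambda_i})$, and every irreducible component of $\Nnil(\LL)$ that is a conormal to a product of relevant strata is a product of components of the $\Nnil(SL_{\lambda_i})$. Hence, by the Künneth formula for simple holonomic $\dd$-modules on a product of smooth varieties, any simple $\LSL$-cuspidal mirabolic module on $\X_{\LL}$ factors as $\boxtimes_i \ms{M}_i$, where each $\ms{M}_i$ is a simple, $SL_{\lambda_i}$-equivariant, regular holonomic $\dd$-module on $SL_{\lambda_i}\times V_{\lambda_i}$ with support in $\bN^{SL_{\lambda_i}}\times V_{\lambda_i}$ and characteristic variety contained in $\Nnil(SL_{\lambda_i})$.

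Next, I would classify each factor. For $\lambda_i = m \geq 2$, $\Supp\ms{M}_i$ must lie in $\ol{\bN^{SL_m,\reg}\times V_m}$ (by \cite{CharSheaves5}, simple cuspidal local systems on unipotents in $SL_m$ only exist on the regular orbit, and cleanness gives that these are the $\ms{L}_r$ with $(r,m)=1$); the characteristic variety constraint in $T^*V_m$ then forces $\ms{M}_i = \ms{L}_r\boxtimes \C[V_m]$ or $\ms{L}_r\boxtimes \delta_{V_m}$. For $\lambda_i = 1$, one has $SL_1 = \{1\}$ and $V_1 = \C$; the constraint $\Char \ms{M}_i \subseteq \Nnil(\{1\}) = \{ij = 0\} \subset T^*\C$ leaves only $\ms{M}_i \in \{\delta_0\}\cup \{\ms{E}_t : t \in \C/\Z\}$. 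Assembling these under $Z(\LSL)^{\circ}$-equivariance forces a consistency of monodromies: the action of the finite subgroup $\prod_i Z(SL_{\lambda_i})\cap Z(\LSL)$ compatibly on the box product requires all $\ms{L}_r$-factors to share one value of $r$ and all sizes $\lambda_i \geq 2$ to coincide with a single $m$, whence $\lambda = (m^{u+v},1^w)$, proving part (2). A one-parameter subgroup $\Cs\subset Z(\LSL)^{\circ}$ that scales a single block, combined with the determinant-one condition forcing $Z(\LSL)\subset\SSL$, then pins down the monodromy of each $\ms{E}_t$-factor to $t = -r/m$ and excludes $\delta_0$ on the $1$-dimensional factors. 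The signs $\pm$ in $\lambda^{\pm}(u;v,w)$ finally encode whether each $m$-factor is of type $\ms{L}_r\boxtimes\C[V_m]$ (sign $+$) or $\ms{L}_r\boxtimes\delta_{V_m}$ (sign $-$), recovering $\ms{M}(r,u,v,w)$.

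The main obstacle is the last step: carefully tracking how the center $Z(\LSL)^{\circ}\subset\SSL$ enforces the uniform-$m$, uniform-$r$ conditions and derives the precise monodromy relation $t = -r/m$. This requires combining a Riemann--Hilbert computation of the monodromies of $\ms{L}_r$ under $Z(SL_m)$ with the determinant constraint arising from the embedding $\LSL\hookrightarrow \SSL$, together with showing that $\delta_0$ on a $1$-dimensional factor is incompatible with $\C^\times$-equivariance for the scaling action coming from the center.
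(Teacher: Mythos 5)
Your outline follows the same skeleton as the paper's proof (factor over the product structure of $\X_{\LL}$, classify factors, then impose $\LSL$-equivariance), but there is a genuine gap at the step where you classify the factors with $\lambda_i=m\geq 2$ whose support is all of $\bN^{SL_m}\times V_m$. Such an $\ms{M}_i$ is the minimal extension of a simple $SL_m$-equivariant local system $\theta_i$ on the open dense orbit $\Orb^{(0)}_m$ of cyclic pairs $(X,v)$. That orbit is a single $GL_m$-orbit, isomorphic to $GL_m$ itself (the stabilizer of a cyclic pair is trivial), so $\pi_1(\Orb^{(0)}_m)\cong\Z$ and the monodromy of $\theta_i$ is a priori an arbitrary scalar. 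In particular, $\ms{M}_i$ is \emph{not} automatically an exterior product of a module on $SL_m$ with one on $V_m$: the dense orbit does not decompose as a product, so no appeal to ``characteristic variety in $T^*V_m$'' suffices. What is actually needed (and what the paper supplies) is the observation that the codimension-one $GL_m$-orbit $\Orb^{(2)}_m$, consisting of pairs with $X$ regular unipotent and $v\in\Im(X-1)\sminus\Im(X-1)^2$, is \emph{not} a relevant stratum, so $\ol{T^*_{\Orb^{(2)}_m}(SL_m\times V_m)}$ is not a component of $\Nnil(SL_m)$. If $\theta_i$ had nontrivial monodromy around this divisor, that conormal closure would be a component of $\Char(\ms{M}_i)$, contradicting the cuspidality assumption. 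The triviality of this monodromy is precisely the statement that the $m$-th power of the generator of $\pi_1(\Orb^{(0)}_m)$ acts trivially, which is what forces $\theta_i$ to descend along the projection $\Orb^{(0)}_m\to\bN^{SL_m,\mathrm{reg}}$ and hence $\ms{M}_i\simeq\ms{L}_r\boxtimes\C[V_m]$. Without this argument the factorization, and thus the whole classification, is unestablished.

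A smaller but real issue is the final step enforcing the uniform $m$ and the monodromy $t=-r/m$ of the $1$-dimensional factors. ``A one-parameter subgroup $\Cs\subset Z(\LSL)^\circ$ that scales a single block'' does not exist inside $Z(\LSL)$ (the determinant-one condition forces any scaling of one block to be compensated in others), so the heuristic doesn't quite work as stated. The paper's argument is cleaner: a point $x$ in the ``normalized'' orbit $\widetilde{\Orb_{\lambda^\pm}}$ has $\Stab_{\LSL}(x)/\Stab_{\LSL}(x)^\circ\cong\Z_m$ where $m=\gcd(\lambda_1,\ds,\lambda_v,\mu_1,\ds,\mu_l)$, so any $\LSL$-equivariant local system comes from a $\Z_m$-representation; separately, Lusztig's cleanness forces each $\theta_{\lambda_i}$, $\theta_{\mu_j}$ to be a \emph{primitive} character of $\Z_{\lambda_i}$, $\Z_{\mu_j}$, hence each size must divide $m$; together with $m|\lambda_i$, $m|\mu_j$ this gives all sizes equal to $m$. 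This also rules out $\delta_0$ factors on size-$1$ blocks (which would drive the gcd down to $1$). Finally the monodromy $-r/m$ on the remaining $1$-blocks is computed from the composite $\pi_1(L)\to\pi_1(\widetilde{\Orb_{\lambda^\pm}})/\im\pi_1(\LSL)\cong\Z_m\to\C^\times$, as in the proof of Corollary \ref{cor:summarycuspidal}. Your sketch gestures at this but does not carry it through.
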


The classification when $\LSL = \TSL$ is a torus is slightly different. It is given in (\ref{sec:torus}). 

\begin{proof}
We begin by fixing some $G$-orbits in $\bN \times V$. Firstly, there is the unique open, dense orbit, which we denote $\Orb^{(0)}$. Then there is the orbit $\Orb^{(1)} = \bN^{\reg} \times \{ 0 \} \subset \bN \times \{ 0 \}$. Finally, we let $\Orb^{(2)}$ be the $G$-orbit consisting of all pairs $(X,v)$ such that $X \in \bN^{\reg}$ and $v \in \mathrm{Im} (X - 1) \sminus \mathrm{Im} (X-1)^2$. It is the unique codimension one $G$-orbit in $\bN \times V$. 

Fix a signing $(\lambda^+,\lambda^-)$ of $\lambda$. Without loss of generality we may assume that 
$$
\lambda^+ = (\lambda_1, \ds, \lambda_k,0,\ds,0), \quad \lambda^- = (0,\ds,0,\mu_1, \ds, \mu_l),
$$
where, for clarity, we have set $\mu_i = \lambda_{i + k}$. Let $k = v + w$ such that $\lambda_i = 1$ if and only if $v + 1 \le i \le k$ and $u \le l$ such that $\mu_j = 1$ if and only if $u + 1 \le i \le l$. 

The relevant stratum in $\bN^{\LL} \times V$ labeled by $\lambda^{\pm}$ is the $L$-orbit
$$
\Orb_{\lambda^{\pm}} = \Orb^{(0)}_{\lambda_1} \times \cdots \times \Orb^{(0)}_{\lambda_k} \times \Orb^{(1)}_{\mu_1} \times \cdots \times \Orb^{(1)}_{\mu_l}.
$$
If there exists a simple, $\LSL$-cuspidal mirabolic module $\ms{M}$
supported on the closure of the stratum
$\X_{\LL}(\lambda^{\pm})$ then, since the orbit $\Orb_{\lambda^{\pm} }$ is a dense, open subset of the smooth locus of this closure, $\ms{M}$ will be the minimal extension of some simple, $\LSL$-equivariant local system on $\Orb_{\lambda^{\pm}}$. Since the fundamental group of $\Orb^{(0)}$ is $\Z$ and the fundamental group of $\Orb^{(1)}$ is $\Z_n$, the fundamental group of $\Orb_{\lambda^{\pm}}$ is $\Z^k \times \prod_{i = 1}^l \Z_{\mu_i}$. For each $1 \le i \le v$, swapping the orbit $\Orb^{(0)}_{\lambda_i}$ for $\Orb^{(2)}_{\lambda_i}$ we get a $L$-orbit $\Orb_{\lambda^{\pm}}^{(i)}$. It is a codimension one orbit in $\overline{\Orb_{\lambda^{\pm}}}$. Let $\theta = (\theta_{\lambda},\phi_{\lambda},\theta_{\mu},\phi_{\mu})$ be our choice of local system on $\Orb_{\lambda^{\pm}}$, where $\theta_{\lambda}$ is the local system on $\Orb^{(0)}_{\lambda_1} \times \cdots \times \Orb^{(0)}_{\lambda_{v}}$, $\phi_{\lambda}$ the local system on $\Orb^{(0)}_{\lambda_{v + 1}} \times \cdots \times \Orb^{(0)}_{\lambda_k}$ and similarly for $\mu$. Note that $\Orb_{\mu_i}^{(1)} = \{ 0 \}$ for $i > u$ and hence $\phi_{\mu}$ is the trivial local system on a point. The Lagrangian $\ol{T^*_{\Orb_{\lambda^{\pm}}^{(i)}} \X_{\LL}}$ is not a component of $\Nnil(\LL)$ because $\Orb_{\lambda^{\pm}}^{(i)}$ is not a relevant stratum. Therefore the monodromy around the divisor $\Orb_{\lambda^{\pm}}^{(i)}$, which is given by $(\theta_{\lambda})_i^{\lambda_i}$, must be trivial i.e. $(\theta_{\lambda})_i^{\lambda_i} = 1$. Also, taking monodromy around lines in $V^{o}_{\lambda_i}$ is given by $(\theta_{\lambda})_i^{\lambda_i} = 1$. Therefore if we define 
$$
\widetilde{\Orb_{\lambda^{\pm}}} = \Orb^{(1)}_{\lambda_1} \times \cdots \times \Orb^{(1)}_{\lambda_{v}} \times \Orb^{(0)}_{\lambda_{v+1}} \times \cdots \times \Orb^{(0)}_{\lambda_{k}} \times \Orb^{(1)}_{\mu_1} \times \cdots \times \Orb^{(1)}_{\mu_l}.
$$
then we see that $\theta$ is the pullback, via the natural projection $\Orb_{\lambda^{\pm}} \rightarrow \widetilde{\Orb_{\lambda^{\pm}}}$ of a local system $\eta$ on $\widetilde{\Orb_{\lambda^{\pm}}}$. This implies that  
$$
\ms{M} \simeq \left[ \boxtimes_{i = 1}^{v} (\ms{L}_{\theta_{\lambda_i}} \boxtimes \C[V_{\lambda_i}]) \right] \boxtimes \left[ \boxtimes_{i = 1}^{w} \ms{E}_{\phi_{\lambda_i}} \right] \boxtimes \left[ \boxtimes_{j = 1}^{u} (\ms{L}_{\theta_{\mu_j}} \boxtimes \delta_{V_{\mu_j}}) \right] \boxtimes \left[\boxtimes_{j = 1}^{l-u} \delta_{V_{\mu_j}} \right].
$$
If $\ms{M}$ is a mirabolic $\dd$-module then it is $\LSL$-equivariant. Therefore $\eta$ must be an $\LSL$-equivariant local system. Choose some $x \in \widetilde{\Orb_{\lambda^{\pm}}}$. The local system $\eta$ must come from a representation of the group 
$$
\Stab_{\LSL}(x) / \Stab_{\LSL}(x)^{\circ} \simeq \Stab_{Z(\LSL)}(x) / \Stab_{Z(\LSL)}(x)^{\circ}  \simeq \Z_m,
$$
where $m$ is the greatest common divisor of $\lambda_1, \ds, \lambda_{v}, \mu_1, \ds, \mu_l$. However, for $\ms{M}$ to be a mirabolic module, it must have the correct characteristic variety. In particular, this implies that each $\ms{L}_{\theta_{\lambda_i}}$ and $\ms{L}_{\theta_{\mu_j}}$ must be a cuspidal mirabolic module for $SL(V_{\lambda_i})$ and $SL(V_{\mu_j})$ respectively. As explained in section \ref{sec:monodromydefinition}, this is equivalent to requiring that the restrictions $\theta_{\lambda_i}$ and $\theta_{\mu_j}$, of $\eta$, to the centers $\Z_{\lambda_i}$ of $SL(V_{\lambda_i})$ and $\Z_{\mu_j}$ of $SL(V_{\mu_j})$ must be primitive representations. Thus, $\lambda_i$ and $\mu_j$ divide $m$. Hence we find that $\lambda_i = \mu_j = m$ for all $1 \le i \le v$ and $1 \le j \le l$. Moreover, thinking of $\eta$ as an integer $0 \le \eta \le m-1$, it must be coprime to $m$. Note also that this shows that if $l > u$ then $m = 1$ and $\nu = (1^n)$, contradicting our initial assumptions. Thus $l = u$.

Finally, for $u+1 < i \le k$, we must calculate the monodromy of $\eta$ along a loop in $\Orb^{(0)}_{\lambda_i} \simeq \Cs$ in order to determine the monodromy $\phi_{\lambda_i}$. But it follows from the identification of $\pi_1(L) = \Z^{u + v}_m \times \Z^w$ that $\theta_{\lambda_i}$ is the representation $\Z \rightarrow \Z_m \rightarrow \Cs$ which, identifying $\Z_m$ with the $m$-th roots of unity in $\Cs$, is given by $z \mapsto z^r$. See the proof of Corollary \ref{cor:summarycuspidal} for details. 
\end{proof}

\subsection{}\label{sec:torus} When $\LSL = \TSL$ is a maximal torus
inside $\SSL$, 
we have $\bN^{\TSL} \times V = V$ and the relevant strata, again labeled by signings of $(1^n)$, are precisely the $T$-orbits in $V$. The $\TSL$-cuspidal mirabolic modules are those $\TSL$-equivariant, regular holonomic $\dd_V$-modules whose characteristic variety is contained in 
$$
\Nnil(T') = \{ (i,j) \in V \times V^* \ | \ i_t \cdot j_t = 0, \ \forall \ t = 1, \ds, n \}.
$$
Denote by $\mc{O}_{T}^{q}$ the simple local system on $T$ which is obtain from pulling back along the map $\det : T \rightarrow \Cs$ the rank one local system on $\Cs$ with monodromy $q$. The local systems $\mc{O}_T^q$ are precisely the $\TSL$-equivariant, simple local systems on $T$. There is a unique open $T$-orbit in $V$. It is a free $T$-orbit and choosing a base point in that orbit gives an open embedding $j : T \hookrightarrow V$. 

\begin{prop}\label{lem:cuspidalkillT}
The simple $\TSL$-cuspidal mirabolic modules on $V$ are:
$$\ms{M}(q) := j_{*!} \mc{O}_{T}^{q}, \en \forall \ q \in \Cs,
\quad\text{and also}\quad
\ms{M}(k) := \C[V_k] \boxtimes \delta_{V_{n-k}}, \quad \forall \ 0 \le k \le n - 1,
$$
where $\dim V_k = k$.
\end{prop}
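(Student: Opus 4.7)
The plan is to first reinterpret the cuspidality condition by observing that $\Nnil(T') = \mu_T^{-1}(0) \subset T^*V = V \times V^*$, where $\mu_T\colon (v,w) \mapsto (v_iw_i)_{i=1}^n$ is the moment map for the standard $T$-action on $V$. Being Lagrangian, the irreducible components of $\mu_T^{-1}(0)$ are exactly the closed conormal bundles $\overline{T^*_{\overline{V_S}} V}$ to the coordinate subspaces $\overline{V_S}$, indexed by subsets $S \subseteq \{1,\dots,n\}$, where $V_S = \{v \in V : v_i \neq 0 \iff i \in S\}$ is the unique open $T$-orbit inside $\overline{V_S}$. Hence for a simple $\TSL$-cuspidal $\ms{M}$ the support is irreducible and $T$-stable, and equals $\overline{V_S}$ for a unique $S$; applying Kashiwara's equivalence along the smooth closed embedding $\overline{V_S} \hookrightarrow V$ and then restricting to the dense open $V_S$ identifies $\ms{M}$ as the minimal extension of a simple $\TSL$-equivariant regular local system on $V_S$.

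I would then classify such local systems on each $V_S$. When $S = \{1,\dots,n\}$, the orbit $V_S = T$ is a free $T$-space on which $\TSL$ acts freely by translation with quotient $T/\TSL \cong \C^\times$ via the determinant, so simple $\TSL$-equivariant local systems on $T$ descend to simple rank-one local systems on $\C^\times$, parametrised by their monodromy $q \in \C^\times$; these are precisely the $\mc{O}_T^q = \det^* L^q$ of the statement. When $|S| = k < n$, the stabiliser in $\TSL$ of any point of $V_S$ is the connected subtorus $\{t \in \TSL : t_i = 1 \text{ for all } i \in S\}$ of dimension $n-k-1$, so $V_S$ is a single $\TSL$-orbit with connected isotropy, admitting only the trivial simple equivariant local system $\mc{O}_{V_S}$.

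The last step is to compute the minimal extensions. For $S$ full and $q \in \C^\times$, the minimal extension along the affine open embedding $j\colon T \hookrightarrow V$ is by definition $\ms{M}(q) := j_{*!}\mc{O}_T^q$. For $|S| = k < n$, writing $V = V_k \oplus V_{n-k}$ with $V_k = \overline{V_S}$ a coordinate subspace of which $V_S$ is the open $T$-orbit, the minimal extension of $\mc{O}_{V_S}$ to $V_k$ equals $\C[V_k]$, and its pushforward along the closed embedding $V_k \hookrightarrow V$ is $\ms{M}(k) := \C[V_k] \boxtimes \delta_{V_{n-k}}$. Conversely, one verifies that these are cuspidal: the characteristic variety of $\ms{M}(k)$ equals $V_k \times V_{n-k}^* \subseteq \Nnil(T')$, while $T$-monodromicity of $\mc{O}_T^q$ is preserved under $j_{*!}$ by Proposition \ref{prop:monodromicfinite}(i), giving $\Char \ms{M}(q) \subseteq \mu_T^{-1}(0) = \Nnil(T')$; regularity and $\TSL$-equivariance are preserved throughout.

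The one point I expect to require care is the parametrisation of local systems on the full orbit $V_S = T$: the nontrivial family $\{\mc{O}_T^q\}_{q \in \C^\times}$ arises precisely because $\TSL$, having codimension one in $T$, fails to act transitively on $T$ itself (in contrast to the smaller orbits where it acts transitively with connected stabilisers), and the quotient $T/\TSL \cong \C^\times$ is what supplies the continuous modular parameter $q$.
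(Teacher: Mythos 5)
Your proof is correct and takes essentially the same route as the paper's, though the paper states the proof more tersely by leaning on its earlier stratification results while you re-derive the needed facts directly. Concretely: the paper invokes the general fact (from its treatment of relevant strata) that a simple cuspidal module is the minimal extension of a simple $\TSL$-equivariant local system on a $T$-orbit in $V$, then simply observes that (a) the stabilizers in $\TSL$ of points on the non-open orbits are connected, leaving only the trivial local system, and (b) on the open orbit $T$ the $\TSL$-equivariant simple local systems are the $\mc{O}_T^q$. You instead decompose $\Nnil(T') = \mu_T^{-1}(0)$ explicitly into the closed conormal bundles of the coordinate subspaces, deduce the support statement from irreducibility, and compute the stabilizers by hand; your observation that $T/\TSL \cong \C^\times$ via $\det$ supplies the continuous parameter $q$ is exactly what underlies the paper's ``it is easy to see'' claim. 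Your closing verification of cuspidality of the candidate modules (characteristic variety containment via monodromicity and the explicit computation for $\ms{M}(k)$) is a useful addition that the paper leaves implicit. One cosmetic remark: your parametrisation naturally produces one cuspidal module $\C[\overline{V_S}] \boxtimes \delta$ per proper subset $S \subsetneq \{1,\dots,n\}$, and the proposition's list $\ms{M}(k)$, $0 \le k \le n-1$, should be read as picking one representative in each $W$-conjugacy class; this matches since all $V_S$ with $|S|=k$ are conjugate.
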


\begin{proof}
Let $\ms{M}$ be a simple mirabolic module supported on the closure of a stratum of type $\lambda^{\pm}(k;n-k)$, where $\lambda = ((1^k),(1^{n-k}))$ and $k \neq n$. Then, this stratum is a $\TSL$-orbit $\Orb \subset V$. The group $\TSL$ is connected and the stabilizer of a point $v \in \Orb$ is a connected subgroup of $\TSL$. Therefore the only simple $\TSL$-equivariant local system on $\Orb$ is the trivial local system. Its minimal extension to $V$ is $\ms{M}(k)$. This $\dd$-module is a mirabolic $\dd$-module. Now assume that $\Orb = T$ is the open stratum. It is easy to see that the $\TSL$-equivariant, simple local systems on $T$ are precisely the
modules $\mc{O}_{T}^{q},\ q \in \Cs$.    Therefore, we conclude  that $\ms{M} = j_{!*} \mc{O}_{T}^{q}$, for some $q \in \Cs$.   
\end{proof}

\subsection{} In all cases, the simple $\LSL$-cuspidal mirabolic sheaves supported on $\bN^{\LL} \times V$ have a natural $L$-monodromic structure. 

\begin{cor}\label{cor:summarycuspidal}

\vi The $\dd$-module $\ms{M}(r,u,v,w)$ defined in (\ref{eq:mruvw}) is $(L,q)$-monodromic, where $q = \exp( \frac{2 \pi \sqrt{-1} r}{m})$. 

\vii The $\dd_V$-modules $\ms{M}(k)$ are $T$-equivariant and $\ms{M}(q)$ is $(T,q)$-monodromic. 

\end{cor}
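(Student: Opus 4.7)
The strategy for both parts is the same: decompose the Levi into its simple factors, express each cuspidal sheaf as an exterior tensor product respecting this decomposition, and verify monodromicity on each factor separately using Lemma \ref{lem:monoimpliesq}(iii). Because box products of monodromic sheaves inherit a product monodromic structure, this is enough.

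For part (i), write $L = (GL_m)^{u+v} \times (GL_1)^w$, so $\X_{L'}= (SL_m)^{u+v}\times V$ and $\ms{M}(r,u,v,w)$ is an outer tensor product with one factor for each simple constituent of $L$. On each $GL_m$-factor, acting on $SL_m$ by conjugation and on $V_m$ naturally: the sheaf $\ms{L}_r$ is $GL_m$-equivariant, because the conjugation action factors through $PGL_m$ and the $GL_m$-centralizer of a regular unipotent still has component group $Z(SL_m)\cong\mu_m$, so that the defining character $z\mapsto z^r$ extends unambiguously; meanwhile $\C[V_m]$ is equivariant under the natural $GL_m$-action (with $1$ the invariant generator), and $\delta_{V_m}$ is equivariant because the origin is fixed (the generator $\delta_0$ is a semi-invariant of weight $-\Tr$, which maps to $1\in\BQ(GL_m)$). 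On each $GL_1 = \Cs$ factor, acting by scaling on $\C$, the canonical generator of $\ms{E}_{-r/m}$ is an Euler-eigenvector of weight $-\tfrac{r}{m}$, so Lemma \ref{lem:monoimpliesq}(iii) identifies $\ms{E}_{-r/m}$ as monodromic with image of $-\tfrac{r}{m}$ in $\BQ(\Cs)\cong\Cs$. Assembling the factors via box product gives the claimed $(L,q)$-monodromic structure, the scalar $q=\exp(2\pi\sqrt{-1}\,r/m)$ being interpreted as the image of a single trace character under restriction from $GL(V)$ to $L$.

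Part (ii) is straightforward. Each $\ms{M}(k)=\C[V_k]\boxtimes\delta_{V_{n-k}}$ is $T$-equivariant factor by factor: $\C[V_k]$ by the natural componentwise scaling, and $\delta_{V_{n-k}}$ because its support $\{0\}$ is $T$-fixed. For $\ms{M}(q)=j_{!*}\mc{O}_T^q$, the local system $\mc{O}_T^q$ is constructed as the pullback under $\det:T\to\Cs$ of the rank-one local system with monodromy $q$, so the translation $T$-action on itself realises $\mc{O}_T^q$ tautologically as $(T,q)$-monodromic in the sense of Example \ref{examp:one}; minimal extension along the open $T$-equivariant embedding $j:T\into V$ of the free $T$-orbit preserves this structure, giving the claim.

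The principal bookkeeping issue, and main obstacle, is reconciling the single scalar $q$ in the statement with the multi-factor group $\BQ(L)\cong(\Cs)^{u+v+w}$: one has to commit to the convention that $q\in\Cs\cong\BQ(GL(V))$ determines its distinguished image in $\BQ(L)$ via restriction of the character $c\cdot\Tr$ from $\mf{gl}(V)$ to $\mf{l}$, and then check that the weight computations on the three types of tensor factors ($\ms{L}_r\boxtimes\C[V_m]$, $\ms{E}_{-r/m}$, and $\ms{L}_r\boxtimes\delta_{V_m}$) combine consistently to this specified element.
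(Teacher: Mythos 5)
Your overall strategy (decompose along the simple factors of $L$ and verify monodromicity one box-factor at a time) is a legitimate alternative to the paper's argument, which instead works globally with the single orbit $\Orb=\widetilde{\Orb_{\lambda^\pm}}$ and computes the push-forward map $\pi_1(L)\to\pi_1(\Orb)/\mathrm{Im}\,\pi_1(\LSL)\cong\Z_m$ directly. Your treatment of part (ii), of $\C[V_m]$, of $\delta_{V_m}$, and of $\ms{E}_{-r/m}$ is all correct.

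However, there is a genuine error in your treatment of $\ms{L}_r$. You assert that $\ms{L}_r$ is $GL_m$-equivariant because ``the $GL_m$-centralizer of a regular unipotent still has component group $Z(SL_m)\cong\mu_m$.'' That component group computation is wrong. The $GL_m$-centralizer of a regular unipotent $x$ is $Z_{GL_m}(x)=Z(GL_m)\cdot U_x\cong\C^\times\times U_x$, where $U_x$ is the (connected, simply connected) unipotent part; this group is \emph{connected}, so $\pi_0\bigl(Z_{GL_m}(x)\bigr)$ is trivial. The finite component group $\mu_m$ that appears for $SL_m$ is swallowed by $Z(GL_m)\cong\C^\times$ once you pass to $GL_m$. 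Consequently the primitive character $z\mapsto z^r$ of $\mu_m$ does \emph{not} extend to a $GL_m$-equivariant structure for $r\not\equiv 0\bmod m$; rather it dies. What is actually true is that $\ms{L}_r$ is $(GL_m,\,\exp(2\pi\sqrt{-1}\,r/m))$-monodromic: by Lemma \ref{lem:monoimpliesq}(i), $a^*\ms{L}_r\cong\mathsf{L}_1\boxtimes\ms{L}_r$ with $\mathsf{L}_1=b^*\ms{L}_r$, and since $b:GL_m\to\bN^{\reg}$ factors through $PGL_m$ with $b_*:\pi_1(GL_m)=\Z\twoheadrightarrow\pi_1(\bN^{\reg})\cong\Z_m$ the reduction mod $m$, the rank-one local system $\mathsf{L}_1$ on $GL_m$ has monodromy $\exp(2\pi\sqrt{-1}\,r/m)$. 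This is precisely the $q$ needed.

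This matters for the ``bookkeeping'' consistency check you flag at the end. With your (incorrect) equivariance claim, the $GL_m$-factors of type $\ms{L}_r\boxtimes\C[V_m]$ and $\ms{L}_r\boxtimes\delta_{V_m}$ would contribute the trivial element $1\in\BQ(GL_m)$, while the $GL_1$-factors $\ms{E}_{-r/m}$ contribute $\exp(2\pi\sqrt{-1}\,r/m)$. But the image of $q\in\BQ(GL_n)$ under restriction to $\BQ(L)\cong(\C^\times)^{u+v+w}$ is the \emph{diagonal} element $(q,\dots,q)$, so the factors must all carry $q=\exp(2\pi\sqrt{-1}\,r/m)$. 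Once the monodromy of $\ms{L}_r$ as a $(GL_m,\cdot)$-monodromic sheaf is corrected, your factor-by-factor approach closes up and produces the stated answer; without the correction, your method would have detected its own inconsistency.
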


\begin{proof}
We explain the $L$-monodromic structure on the module $\ms{M}(r,u,v,w)$; statement \vii is straight-forward. Recall that $\ms{M}(r,u,v,w)$ is the minimal extension of a local system $\mathsf{L}_{\theta}$ defined on the orbit $\Orb_{\lambda^{\pm}}$. This local system is, in turn, the pull-back of a local system $\mathsf{L}_{\eta}$ on $\Orb := \widetilde{\Orb_{\lambda^{\pm}}}$. The spaces $\Orb_{\lambda^{\pm}}$ and $\Orb$ are $L$-orbits such that the projection $\Orb_{\lambda^{\pm}} \twoheadrightarrow \Orb$ is equivariant. Therefore, if $\mathsf{L}_{\eta}$ has the structure of a $(L,q)$-monodromic $\dd$-module for some $q$ it follows by functorality, Proposition \ref{prop:monodromicfinite}, that both $\mathsf{L}_{\theta}$ and $\ms{M}(r,u,v,w)$ are also $(L,q)$-monodromic. 

Let $a : L \times \Orb \rightarrow \Orb$ be the action map. Choose $x \in \Orb$ and let $b : L \rightarrow \Orb$ be defined by $g \mapsto g \cdot x$. This induces a group homomorphism $b_* : \pi_1(L) \rightarrow \pi_1(\Orb)$. Since $\mathsf{L}_{\eta}$ is a simple local system and $L$ is connected, $a^* \mathsf{L}_{\eta}$ is a simple local system on $L \times \Orb$. Since the rank of $\mathsf{L}_{\eta}$ is one, the local system $a^* \mathsf{L}_{\eta}$ is isomorphic to $\mathsf{L}' \boxtimes \mathsf{L}_{\eta}$, where $\mathsf{L}' \simeq b^* \mathsf{L}_{\eta}$. 

Let $p = u + v + w$. We have 
$$
\pi_1(\LSL) = \left\{ (a_i) \in \Z^p \ \big| \ \sum\nolimits_i\ a_i = 0 \right\}, \quad \pi_1(L) = \Z^p, \quad \pi_1(\Orb) = \Z_m^{u + v} \times \Z^w.
$$
The image of $\pi_1(\LSL)$ in $\pi_1(\Orb)$ equals $\{ (a_i) \in \Z_m^{u + v} \times \Z^w \ | \ \sum_i a_i = 0 \textrm{ in } \Z_m \}$ so that the quotient of $\pi_1(\Orb)$ by $\mathrm{Im} \ (\pi_1(\LSL))$ is isomorphic to $\Z_m$. Recall that $\eta$ is the representation of $\Z_m$ defined by $z \cdot \mathrm{Id} \mapsto z^r$. Therefore the representation of $\pi_1(L)$ corresponding to the local system $\mathsf{L}_{\eta}$ is
$$
\pi_1(L) \longrightarrow \pi_1(\Orb) / \mathrm{Im} \ (\pi_1(\LSL)) \simeq \Z_m \stackrel{\eta}{\longrightarrow} \Cs
$$
which, under the convention (\ref{eq:indentify}), is given by $\exp( \frac{2 \pi \sqrt{-1} r}{m})$.
\end{proof}

\section{Reduction to a Levi}\label{sec:reduction}

In this section we study the restriction of mirabolic modules to the locally closed subsets $\X_{\LSL}^{\circ} = \LSL^{\circ} \times V$.  

\subsection{} Let $\Upsilon$ denote the locally closed embedding $\Xo_{\LSL} \hookrightarrow \X$. 

\begin{prop}\label{prop:wonderfulinclusion}
We have $\rho_{\Upsilon}(\omega_{\Upsilon}^{-1}(\Nnil(\SSL))) \subseteq \Nnil(\LSL^{\circ})$. 
\end{prop}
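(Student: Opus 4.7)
My plan is to reduce the statement, via the stratified description of $\Nnil(\SSL)$, to a geometric claim about the intersections of relevant strata with $\Xo_\LSL$. The first step is to recall from Proposition \ref{relevant}, combined with the Lagrangian property of $\Nnil(\SSL)$ established in \cite{GG}, that
\[
\Nnil(\SSL)=\bigcup_{(M,\Omega)}\overline{T^{*}_{\X(M,\Omega)}\X},
\]
the union being taken over the relevant strata. Hence it suffices to prove the inclusion one relevant stratum at a time, i.e.\ to show that $\rho_{\Upsilon}(\omega_{\Upsilon}^{-1}(T^{*}_{\X(M,\Omega)}\X))\subseteq \Nnil(\LSL^{\circ})$ for each relevant pair $(M,\Omega)$.

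The second step invokes Lemma \ref{trans}: each stratum $\X(M,\Omega)$ meets $\Xo_\LSL$ transversely. A standard transversality argument using the diagram $T^{*}\Xo_\LSL\leftarrow \Xo_\LSL\times_{\X}T^{*}\X\to T^{*}\X$ then yields
\[
\rho_{\Upsilon}(\omega_{\Upsilon}^{-1}(T^{*}_{\X(M,\Omega)}\X))=T^{*}_{\X(M,\Omega)\cap \Xo_\LSL}\Xo_\LSL.
\]
Under the natural identifications $T^{*}\X\cong \SSL\times\sll\times V\times V^{*}$ and $T^{*}\Xo_\LSL\cong \LSL^{\circ}\times \mf{l}\times V\times V^{*}$, the map $\rho_\Upsilon$ is given by orthogonal projection $\sll\to \mf{l}$ via the trace pairing on the middle factor, which is a useful concrete sanity check when verifying the computation.

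The crux of the argument is the third step: to show that $\X(M,\Omega)\cap \Xo_\LSL$ is a union of relevant strata of $\Xo_\LSL$. Given $(g,v)$ in the intersection, the relevance of the stratum $\X(M,\Omega)$ in $\X$ implies that $g$ is regular in $\SSL$ and that $\C[g]v$ admits a $g$-stable complement in $V$. Because $g\in \LSL^{\circ}$ forces $Z_\SSL(g_s)\subseteq \LSL$ and hence $Z_\LSL(g)=Z_\SSL(g)$, and because $\LSL$ and $\SSL$ have equal rank, the element $g$ is also regular as an element of $\LSL$. The second condition on $\C[g]v$ is unchanged, so $(g,v)$ is a relevant pair for the $L$-action on $\Xo_\LSL$ and therefore lies in some relevant stratum of $\Xo_\LSL$. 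Applying the analogue of Proposition \ref{relevant} for $L$ acting on $\X_\LSL$ (which follows by applying the decomposition of \cite{GG} to each factor of the product decomposition of $\LL$), the conormal of each relevant stratum of $\Xo_\LSL$ sits inside $\Nnil(\LSL^{\circ})$, and the proposition follows.

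The main obstacle I anticipate is the third step: verifying that the intersections really are unions of \emph{relevant} strata for $\LSL$, not merely arbitrary $L$-orbits. My argument transfers the two defining conditions of relevance (regularity of $g$ and existence of a $g$-stable complement) from the $\SSL$-setting to the $\LSL$-setting, and it is here that both the equal-rank property of Levi subgroups and the defining condition $Z_\SSL(g_s)\subseteq \LSL$ of $\LSL^{\circ}$ play an essential role; removing the superscript ${}^{\circ}$ would destroy the argument, which is consistent with the fact that the statement of the proposition explicitly uses $\Nnil(\LSL^{\circ})$ rather than $\Nnil(\LSL)$.
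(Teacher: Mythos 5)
Your approach is genuinely different from the paper's. The paper proves the inclusion by direct computation with the defining equations of $\Nnil(\SSL)$ and $\Nnil(\LSL^\circ)$: given $(g,Y,i,j)$ with $g\in\LSL^\circ$ satisfying the moment map equation and $Y$ nilpotent, one verifies the $L$-moment map equation by projection and deduces nilpotency of $Y'$ by producing a complete flag stabilized by $g$ and $Y$ (via \cite[Lemma 12.7]{EG}), using the Jordan decomposition of $g$ and the condition $Z_{\SSL}(g_s)\subseteq\LSL$ to show the flag is compatible with the $Z(\LSL)$-weight decomposition of $V$, and then exploiting root-space considerations in the resulting Borel. You instead work with the Lagrangian decomposition of $\Nnil(\SSL)$ into closures of conormals and use transversality (Lemma \ref{trans}) and transference of the relevance condition. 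Your argument that regularity and the complement condition transfer from $\SSL$ to $\LSL$ when $g\in\LSL^\circ$ is correct, and it is a nice observation that this is exactly where $\LSL^\circ$ (rather than $\LSL$) is forced.

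However, there is a genuine gap at the reduction step. You write $\Nnil(\SSL)=\bigcup_{(M,\Omega)}\overline{T^{*}_{\X(M,\Omega)}\X}$ and then claim it suffices to verify $\rho_{\Upsilon}(\omega_{\Upsilon}^{-1}(T^{*}_{\X(M,\Omega)}\X))\subseteq\Nnil(\LSL^{\circ})$ for the \emph{open} conormal bundles alone. But the points of $\overline{T^{*}_{\X(M,\Omega)}\X}$ lying over $\Xo_{\LSL}$ need not lie in the open conormal of any relevant stratum: the boundary of a relevant conormal closure can live over non-relevant boundary strata of $\X(M,\Omega)$, and $\omega_{\Upsilon}^{-1}(\overline{S})$ is in general strictly larger than $\overline{\omega_{\Upsilon}^{-1}(S)}$ for a locally closed embedding $\Upsilon$. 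To close the gap one would have to show, e.g., that $\omega_{\Upsilon}^{-1}(T^{*}_{\X(M,\Omega)}\X)$ is dense in $\omega_{\Upsilon}^{-1}(\overline{T^{*}_{\X(M,\Omega)}\X})$, which is a nontrivial Whitney-type compatibility between $\Xo_{\LSL}$ and the stratification, not supplied by Lemma \ref{trans} as stated. A second, smaller issue is that your step 3 needs more than "each point of $\X(M,\Omega)\cap\Xo_{\LSL}$ is a relevant pair for $L$": you also need $\X(M,\Omega)\cap\Xo_{\LSL}$ to actually be a union of strata of $\Xo_{\LSL}$ so that its conormal is a union of conormals to $L$-strata, and you need the $L$-analogue of Proposition \ref{relevant} for $\X_{\LSL}$ (not merely for $\X_{\LL}$); this is plausible but is asserted rather than established. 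The paper's direct argument avoids both issues entirely, which is why it is the cleaner route here.
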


\begin{proof}
Firstly, $\omega_{\Upsilon}^{-1}(\Nnil(\SSL)) = \Nnil(\SSL) \cap T^* \X^{\circ}_{\LSL}$. Write $x=(g,i) \in \X^{\circ}_{\LSL} = \LSL^{\circ} \times V$ and let $(Y,j)$ be a covector in $\mc{N}_x$. We may identify: $T^*_x \X = \mf{sl} \times V^*$ and $T^*_x \X^{\circ}_{\LSL} = \mf{l} \times V^*$. Further, we have a decomposition $\mf{sl} = \mf{l} \oplus \mf{l}^\perp$, with respect to the trace pairing on $\mf{sl}$. For any $Y \in \mf{sl}$, we thus have $Y=Y' + Y''$, the corresponding decomposition. The map $p$ is then given by $(Y,j) \mapsto (Y',j)$. Now, the moment map equation on $(g,Y,i,j)$ is $g Y g^{-1} - Y + i \circ j = 0$. Let $V = V_1 \oplus \cdots \oplus V_m$ be the decomposition of $V$ with respect to the action of $Z(\LSL)$. Since $g \in \LSL$, we have $g Y' g^{-1} - Y' + \sum_{k = 1}^m i_k \circ j_k = 0$, where $i_k$ is the component of $i$ in $V_k$ and similarly for $j$. Since $(g,Y,i,j) \in \Nnil(\SSL)$, there exists a complete flag $\mc{F}_{\idot} = (0 = \mc{F}_0 \subset \cdots \cdot \mc{F}_n = V)$ in $V$ that is stabilized by $g$ and $Y$, \cite[Lemma 12.7]{EG}. Therefore, writing $g= s.u$ for the Jordan decomposition of $g$, we deduce that $\mc{F}$ is also $s$-stable. Let $\mf{b} = \Stab_{\mf{sl}}(\mc{F})$ be the Borel in $\mf{sl}$ corresponding to the flag $\mc{F}$. Thus, $Y,s \in \mf{b}$. 

Since $g\in \LSL^{\circ}$, we deduce that the centralizer $\mf{m}$ of $s$ in $\mf{sl}$ is contained in $\mf{l}$. It follows that $\mf{z}_{\mf{l}} \subset \mf{z}_{\mf{m}}$. But the centre of $\mf{m}$ is contained in $\mf{b}$ (to see this, we may diagonalize $s$ so that $\mf{b}$ are the upper triangular matrices in $\mf{sl}$). Thus, $\mf{z}_{\mf{l}} \subset \mf{b}$ and hence $\mf{z}_{\mf{l}}$ stabilizes $\mc{F}$. This implies that the flag $\mc{F}$ is compatible with the decomposition $V = V_1 \oplus \cdots \oplus V_m$ in the sense that $\mc{F}_i = \oplus_{k = 1}^m \mc{F}_i^{(k)}$ with $\mc{F}_i^{(k)} \subset V_k$. The set of all flags admitting such a decomposition can be identified with the flag variety $\mf{l} / \mf{b}'$ for some Borel $\mf{b}'$ in $\mf{l}$. This Borel is 
$$
\Stab_{\mf{l}}(\mc{F}) =  \Stab_{\mf{sl}}(\mc{F}) \cap \mf{l} = \mf{b} \cap \mf{l}.
$$
It follows that $\mf{l} \cap \mf{b}$ is a Borel in $\mf{l}$. Let $\mf{t} \subset (\mf{l} \cap \mf{b})$ be a Cartan subalgebra. Then, $\mf{b} = \mf{t} \oplus \mf{n}$, and $\mf{l} \cap \mf{b} = \mf{t} \oplus \mf{n}_0$ for some $\mf{t}$-stable subspaces $\mf{n}_0 \subset \mf{n}$. The space $\mf{n}$ has a \textit{unique} decomposition $\mf{n} = \mf{n}_0 \oplus \mf{n}_1$ as $\mf{t}$-modules. Since the decomposition $\mf{sl} = \mf{l} \oplus \mf{l}^{\perp}$ is $\mf{t}$-stable, the uniqueness of the above decomposition implies that $\mf{l}^{\perp} \cap \mf{b} = \mf{n}_1$. Thus, the assumption that $Y$ is nilpotent in $\mf{sl}$ implies that $Y \in \mf{n}$ and the decomposition $Y = Y'+Y''$ corresponds to the image $Y$ in $\mf{n}_0$ and $\mf{n}_1$ respectively, under the projections from $\mf{n}$. In particular, $Y' \in \mf{n}_0 \subset \mf{n}$ implies that $Y'$ is nilpotent. 
\end{proof}

A regular holonomic $(N_G(L),c)$-monodromic $\dd$-module on $\X_{\LSL}^{\circ}$, whose singular support is contained in $\Nnil(L^{\circ})$, is called a mirabolic module. The category of all mirabolic modules on $\X_{\LSL}^{\circ}$ is denoted $\ss_{\X_{\LSL}^{\circ},c}$. Associated with the embedding $\Upsilon$, there is a (right exact) \textit{underived}
restriction functor
$$
\Upsilon^* : \Lmod{\dd_{\X}} \rightarrow \Lmod{\dd_{\Xo_{\LSL}} }. 
$$

\begin{cor}\label{prop:restrictionfunctor}
The image of $\ss_c$ under $\Upsilon^*$ is contained in $\ss_{\X_{\LSL}^{\circ},c}$ and $\Upsilon^* : \ \ss_c \rightarrow \ss_{\X_{\LSL}^{\circ},c}$ is exact, commutes with Verdier duality and preserves semi-simplicity.
\end{cor}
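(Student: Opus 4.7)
The plan is to show that $\Upsilon$ is non-characteristic with respect to every object of $\ss_c$, from which all four assertions follow by standard machinery.

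First I would verify non-characteristicity. By Lemma \ref{trans}, every stratum $\X(M,\Omega)$ meets $\X_{\LSL}^{\circ}$ transversely; in particular, the conormal bundles of the strata pull back cleanly. Since the characteristic variety of any $\mm\in\ss_c$ is contained in $\Nnil(\SSL) = \bigcup \overline{T^*_{\X(M,\Omega)}\X}$ by Proposition \ref{relevant}, transversality gives $\omega_{\Upsilon}^{-1}(\Char\mm)\cap T^{*}_{\X_{\LSL}^{\circ}}\X \subseteq \X_{\LSL}^{\circ}\times\{0\}$, which is exactly the non-characteristic condition for the locally closed embedding $\Upsilon$.  From the standard theory of non-characteristic inverse images, the underived functor $\Upsilon^*$ is then exact, preserves regular holonomicity, and satisfies the estimate $\Char(\Upsilon^*\mm)\subseteq \rho_{\Upsilon}(\omega_{\Upsilon}^{-1}(\Char\mm))$. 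Combined with Proposition \ref{prop:wonderfulinclusion}, this gives $\Char(\Upsilon^*\mm)\subseteq \Nnil(L^\circ)$.

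Next I would pin down the monodromic structure.  The subvariety $\X_{\LSL}^{\circ}$ is stable under $N_G(\LSL)$, so $\Upsilon$ is $N_G(\LSL)$-equivariant. By Proposition \ref{prop:monoimpliesweakG}, after choosing a $(G,\chi)$-monodromic structure on $\mm$ (with $\chi$ corresponding to $c$ via \eqref{eq:indentify}), we may apply $(i_{N_G(\LSL)}\times \Upsilon)^*$ to the cocycle diagram \eqref{eq:cocycle} to endow $\Upsilon^*\mm$ with an $(N_G(\LSL),c)$-monodromic structure. Together with the previous paragraph, this shows $\Upsilon^*\mm\in \ss_{\X_{\LSL}^{\circ},c}$.

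The commutation with Verdier duality is a standard consequence of non-characteristicity: for any non-characteristic locally closed embedding $i$ of codimension $c$, one has $i^!\cong i^*[-c]$ on the relevant subcategory, and $\mathbb{D}_{\X_{\LSL}^{\circ}}\ccirc i^*\cong i^!\ccirc \mathbb{D}_{\X}[c]\cong i^*\ccirc\mathbb{D}_\X$.  Thus, up to the identifications of the above shifts, $\Upsilon^*$ intertwines $\mathbb{D}_{\X}$ and $\mathbb{D}_{\X_{\LSL}^{\circ}}$.

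Finally, for preservation of semisimplicity — which I expect to be the most delicate point — it suffices to treat a simple object $\mm\in\ss_c$. Any such $\mm$ is the minimal extension $j_{!*}\mathsf{L}$ of an irreducible local system $\mathsf{L}$ on an open subset $U$ of a relevant stratum $\X(M,\Omega)$.  By Lemma \ref{trans}, the intersection $U\cap \X_{\LSL}^{\circ}$ is either empty or is a smooth locally closed subvariety of $\X_{\LSL}^{\circ}$ which meets the stratification of $\X_{\LSL}^{\circ}$ in its relevant strata transversely; the restriction $\Upsilon^*\mathsf{L}$ is thus a (possibly zero or reducible) local system on this intersection, whose isotypic decomposition is a direct sum of irreducible local systems on its connected components. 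Since non-characteristic pullback commutes with minimal extension along a transverse slice (for instance, because $j_{!*}$ is characterized by the vanishing of composition factors supported on $\overline{U}\sminus U$, which is preserved by $\Upsilon^*$ thanks to Proposition \ref{prop:wonderfulinclusion}), we conclude that $\Upsilon^*(j_{!*}\mathsf{L})$ is a finite direct sum of simple mirabolic modules on $\X_{\LSL}^{\circ}$. Exactness of $\Upsilon^*$ then propagates semisimplicity from simple modules to arbitrary semisimple ones.
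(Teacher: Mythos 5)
Your argument follows essentially the same route as the paper's proof: Lemma \ref{trans} gives non-characteristicity for every object of $\ss_c$, from which exactness, commutation with Verdier duality, and the characteristic-variety estimate (via Proposition \ref{prop:wonderfulinclusion}) all follow by the standard [HTT, Theorem 2.7.1] machinery. You usefully make explicit the $(N_G(\LSL),c)$-monodromic structure on $\Upsilon^*\mm$ obtained by restricting the cocycle datum, a step the paper's proof leaves implicit even though the target category $\ss_{\X_{\LSL}^{\circ},c}$ requires it. The only point that deserves a warning — and this is shared by the paper — is the assertion that $\Upsilon^*\mathsf{L}$ decomposes as a direct sum of irreducible local systems on the transverse slice: restriction of a simple local system to a transverse slice is not semisimple in general, and the needed input here is precisely the $(N_G(\LSL),c)$-monodromicity constraint, which forces the monodromy representation to factor, up to a central scalar, through a finite component group and hence to be completely reducible.
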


\begin{proof}
Lemma \ref{trans} implies that every mirabolic module on $\X$ is non-characteristic for $\Upsilon$ and hence $\Upsilon^* : \ss_c \rightarrow \Lmod{\dd_{\Xo_{\LSL}}}$ is exact.  It also implies that $\Upsilon^*$ commutes with Verdier duality, \cite[Theorem 2.7.1]{HTT}, and  preserves semi-simplicity. Proposition \ref{prop:wonderfulinclusion}, together with \cite[Theorem 2.7.1]{HTT}, implies that $\Char(\Upsilon^* (\ms{M}))$ is contained in $\Nnil(\LSL^{\circ})$ (as defined in (\ref{eq:nnilLSL})), for all $\ms{M} \in \ss_c$. 
\end{proof}

\subsection{} We write $\zero$ for the zero section of $T^*(Z(\LSL)^{\circ})$. We denote by $\FZ_{\LL}$ the subalgebra of $L'$ bi-invariant differential operators in $\dd(\LL)$. Let $\varsigma : \dd(\LL) \rightarrow \dd(Z(\LSL)^{\circ} \times \X_{\LL})$ be the natural embedding.

\begin{lem}\label{lem:locallyfactorL}
Let $\ms{M}$ be a simple, regular holonomic, $(L,q)$-monodromic $\dd$-module on $Z(\LSL)^{\circ} \times \X_{\LL}$. We assume that the support of $\mm$ is contained in $Z(\LSL)^{\circ} \times \mn^{\LL}
\times V$ and its characteristic variety is contained in
$\zero \times \Nnil(\LL)$.

\vi There exists a local system $\mathsf{L}$ on $Z(\LSL)^{\circ}$ and simple $\dd$-module $\ms{N}$ on $\X_{\LL}$ such that $\ms{M} \simeq \mathsf{L} \boxtimes \ms{N}$. 

\vii The action of $\varsigma(\FZ_{\LL})$ on $\Gamma(Z(\LSL)^{\circ} \times \X_{\LL},\ms{M})$ is locally finite.
\end{lem}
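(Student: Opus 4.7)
The plan is to deduce both assertions from the structural fact that $\mm$, being regular holonomic with characteristic variety having trivial projection to $T^{*}Z(\LSL)^{\circ}$, is isomorphic to an external tensor product $\mathsf{L}\boxtimes\ms{N}$, where $\mathsf{L}$ is a rank-one local system on $Z(\LSL)^{\circ}$ and $\ms{N}$ is a simple $\dd$-module on $\X_{\LL}$. Part (ii) will then reduce to the analog of Theorem \ref{thm:admissible2} applied to the Levi $\LL$.

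For part (i), write $p_{1}$ and $p_{2}$ for the two projections from $Z(\LSL)^{\circ}\times\X_{\LL}$. The inclusion $\Char(\mm)\subset\zero\times\Nnil(\LL)$ implies that $\mm$ is non-characteristic with respect to the embedding $i_{z}:\{z\}\times\X_{\LL}\hookrightarrow Z(\LSL)^{\circ}\times\X_{\LL}$ for every $z\in Z(\LSL)^{\circ}$, so $\ms{N}_{z}:=i_{z}^{*}\mm$ is exact in $\mm$. The absence of characteristic covectors along $Z(\LSL)^{\circ}$ furnishes a flat connection on $\mm$ relative to $p_{2}$; using this connection, any proper sub-$\dd_{\X_{\LL}}$-module of some $\ms{N}_{z_{0}}$ can be parallel-transported to a proper sub-$\dd$-module of $\mm$ over a neighborhood of $z_{0}$ and then extended globally by analytic continuation, contradicting the simplicity of $\mm$. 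Hence each $\ms{N}_{z}$ is simple, and since $Z(\LSL)^{\circ}$ is connected, all the $\ms{N}_{z}$ are abstractly isomorphic to a fixed simple object $\ms{N}$ on $\X_{\LL}$.

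Form the sheaf $\mathsf{L}:=\hhom_{p_{2}^{-1}\dd_{\X_{\LL}}}(p_{2}^{-1}\ms{N},\mm)$ on $Z(\LSL)^{\circ}$. Schur's lemma applied fiber-by-fiber shows that $\mathsf{L}$ is a rank-one $\oo_{Z(\LSL)^{\circ}}$-module, and the $\dd_{Z(\LSL)^{\circ}}$-action on $\mm$ endows $\mathsf{L}$ with an integrable connection whose regular singularities are inherited from those of $\mm$; hence $\mathsf{L}$ is a rank-one local system on $Z(\LSL)^{\circ}$. The canonical evaluation map $\mathsf{L}\boxtimes\ms{N}\to\mm$ restricts on each fiber $\{z\}\times\X_{\LL}$ to the natural isomorphism $\mathsf{L}|_{z}\otimes\ms{N}\iso\ms{N}_{z}$, so it is a nonzero morphism between simple holonomic $\dd$-modules and therefore an isomorphism.

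For part (ii), the K\"unneth formula applied to $\mm\simeq\mathsf{L}\boxtimes\ms{N}$ gives $\Char(\ms{N})\subset\Nnil(\LL)$; combined with the regular holonomicity and the support condition inherited from $\mm$, this shows $\ms{N}$ is a mirabolic module for $\LL$. The group $\LL$ is a product of special linear groups, with $\X_{\LL}$ and $\Nnil(\LL)$ decomposing as products accordingly, so the external-tensor-product extension of Theorem \ref{thm:admissible2} to $\LL$ yields local finiteness of the $\FZ_{\LL}$-action on $\Gamma(\X_{\LL},\ms{N})$. Since global sections split as $\Gamma(Z(\LSL)^{\circ}\times\X_{\LL},\mm)=\Gamma(Z(\LSL)^{\circ},\mathsf{L})\otimes_{\C}\Gamma(\X_{\LL},\ms{N})$, and $\varsigma(\FZ_{\LL})$ acts only on the second tensor factor, local finiteness of the action on the full space of global sections follows. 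The main obstacle, in my view, is the careful verification that $\mathsf{L}$ is a coherent rank-one object with \emph{regular} integrable connection, rather than merely a fiberwise one-dimensional relative local system with possible irregularities; the K\"unneth extension of Theorem \ref{thm:admissible2} to the product group $\LL$ is routine by comparison.
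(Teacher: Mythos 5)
Your proof of part (i) takes a genuinely different route from the paper's, and it has two genuine gaps; part (ii) is essentially the same as the paper's once (i) is in hand.

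The paper's proof of (i) proceeds via the minimal-extension description: it first observes that the support of $\mm$ must be of the product form $Z(\LSL)^{\circ}\times\overline{\Orb}$ for a relevant stratum $\Orb$ (forced by the characteristic-variety condition), so $\mm=\IC(U,\mathsf{M})$ for a simple local system $\mathsf{M}$ on an open dense $U\subset Z(\LSL)^{\circ}\times\Orb$. Crucially, the $(L,q)$-monodromic hypothesis makes $U$ $L$-stable, and since $\Orb$ is a single $L$-orbit this forces $U$ to contain a product $U_x\times\Orb$. Restricting $\mathsf{M}$ there, the factorization follows from the elementary fact that an irreducible $\C$-linear representation of a product group $\pi_1(U_x)\times\pi_1(\Orb)$ is an external tensor product; taking $\IC$ of each factor completes the proof, and the characteristic-variety constraint forces $\IC(U_x,\mathsf{L}')$ to be a local system. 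Your route, by contrast, works directly with fiberwise restrictions $\ms{N}_z=i_z^{*}\mm$ and a Hom-sheaf $\mathsf{L}$, and does not invoke the $L$-monodromic hypothesis at all.

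The first gap in your argument is the claim that each $\ms{N}_z$ is simple: you justify this by ``parallel transport'' and ``analytic continuation,'' which are not well-defined operations on algebraic $\dd$-modules in the Zariski topology, and over the non-simply-connected base $Z(\LSL)^{\circ}$ the monodromy obstruction to globalizing a local sub-object is exactly the sort of thing that needs to be controlled, not waved away. The second gap you acknowledge yourself: you assert but do not prove that $\mathsf{L}=\hhom_{p_2^{-1}\dd_{\X_{\LL}}}(p_2^{-1}\ms{N},\mm)$ is a coherent $\oo_{Z(\LSL)^{\circ}}$-module of rank one carrying a \emph{regular} integrable connection. This is precisely where the paper's $\IC$-based argument does real work (the local system $\mathsf{L}$ arises as the restriction of the already-regular $\mathsf{M}$, and the characteristic-variety bound rules out any non-lisse behavior), whereas in your setup it is an open verification. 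Note also that by bypassing the $L$-stability argument you lose the product form $U_x\times\Orb$ of the lisse locus, which is what makes the paper's Künneth-for-$\pi_1$ step clean; without it you would have to worry about the open dense set on which $\mm$ is a local system not being of product type. For part (ii), both you and the paper reduce via (i) to $\LL$ being a product of $SL_m$'s and then invoke (the obvious extension of) Theorem~\ref{thm:admissible2}; your tensor decomposition of global sections is the same observation made slightly more explicitly.
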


\begin{proof}
\vi The support of $\ms{M}$ equals $Z(\LSL)^{\circ} \times \overline{\Orb}$,
where $\Orb$ is a relevant stratum in $\mn^{\LL} \times
V$. This relevant stratum $\Orb$ is an $L$-orbit. Since $L$ is
connected, $\Orb$ is irreducible. There is some open subset $U$ of
$Z(\LSL)^{\circ} \times \Orb$ and simple, $\LSL$-equivariant local
system $\mathsf{M}$ on $U$ such that $\ms{M} = \IC(U,\mathsf{M})$. The
set $U$ is necessarily $L$-stable. Choose some $(z,x) \in U$. Then
$Z(\LSL)^{\circ} \times \{ x \} \cap U$ is open and non-empty, hence
dense, in $Z(\LSL)^{\circ}$ and thus equals $U_x \times \{x \}$ for some
open subset $U_x$ of $Z(\LSL)^{\circ}$. This implies that 

$$
L \cdot (U_x \times \{ x \}) = U_x \times \Orb
$$
is contained in $U$. The closure of $U_x \times \Orb$ equals $Z(\LSL)^{\circ} \times S$, hence, if $\mathsf{M}'$ is the restriction of $\mathsf{M}$ to $U_x \times \Orb$, one has $\ms{M} = \IC(U_x \times \Orb, \mathsf{M}')$. The local system $\mathsf{M}'$ is an irreducible representation of $\pi_1 (U_x) \times \pi_1(\Orb)$ and hence is isomorphic to $\mathsf{L}' \boxtimes \mathsf{N}$. Thus
$$
\ms{M} = \IC (U_x \times \Orb,\mathsf{M}') \simeq \IC (U_x,\mathsf{L}') \boxtimes \IC (\Orb,\mathsf{N}). 
$$
The constraints on the characteristic variety of $\ms{M}$ imply that $\mathsf{L} = \IC (U_x,\mathsf{L}')$ is a local system.

\vii The module $\ms{M}$ has finite length. Therefore, by induction on length, it suffices to assume that $\ms{M}$ is simple. By part \vi, such a simple module is isomorphic to $\mathsf{L} \boxtimes \ms{N}$, for some local system $\mathsf{L}$ on $Z(\LSL)^{\circ}$ and simple $\dd$-module $\ms{N}$. Then, since $\LL$ is a product of $SL_m$'s, Theorem \ref{thm:admissible2} implies that $\FZ_{\LL}$ acts locally finitely on $\Gamma(\X_{\LL},\ms{N})$.  
\end{proof}

We are now in a position to prove Proposition \ref{thm:irrsupport}. 

\begin{proof}[Proof of Proposition \ref{thm:irrsupport}]
Assume that we are given a simple mirabolic module $\ms{M} \in \ss_{q}$ such that $\Supp \ms{M} \cap \X^{\reg} = \emptyset$. Let $c \in \C$ such that $\exp(2 \pi \sqrt{-1} c) = q$. By Proposition \ref{prop:monoimpliesweakG}, we can endow $\mm$ with the structure of a $(G,c)$-monodromic module i.e. $\mm \in \ss_c$. Choose a proper Levi subgroup $\LSL$ of $\SSL$ and relevant stratum $\X(\LSL,\Omega)$, whose closure is $\Supp \ms{M}$. 

The variety $Y := Z(\LSL)^{\circ} \times \mn^{\LSL} \times V$ is a closed subspace of both $\Xo_{\LSL}$ and $Z(\LSL)^{\circ} \times \X_{\LL}$, which are open subsets of $\X_{\LSL}$. Therefore, Kashiwara's Theorem implies that the category of coherent $\dd_{\Xo_{\LSL}}$-modules supported on $Y$ can be canonically identified with the category of coherent $\dd_{Z(\LSL)^{\circ} \times \X_{\LL}}$-modules supported on $Y$. Lemma \ref{lem:stratainfo} implies that $\Upsilon^* \ms{M}$ is supported on $Y$. The fact that $\mm$ is non-characteristic for $\Upsilon$ and $\Supp \mm \cap \Xo_{\LSL} \neq \emptyset$ implies that $\Upsilon^* \ms{M}$ is non-zero $\dd$-module on $Z(\LSL)^{\circ} \times \X_{\LL}$. 

Let $\ms{N}$ be a simple, $(L,q)$-monodromic submodule of $\Upsilon^* \ms{M}$. By Lemma \ref{lem:locallyfactorL}, $\ms{N}$ is isomorphic to $\mathsf{L} \boxtimes \ms{N}'$ for some simple local system $\mathsf{L}$ on $Z(\LSL)^{\circ}$ and $(L,q)$-monodromic, $\LSL$-cuspidal mirabolic module on $\bN^{\LL} \times V$. Corollary \ref{cor:summarycuspidal} implies that $\LSL,\Omega$ and $q$ are as in the statement of Proposition \ref{thm:irrsupport}. 
\end{proof}

\subsection{} In order to use the restriction functor to study the Harish-Chandra $\dd$-module, we need to understand how it relates to taking invariants.

\begin{prop}\label{prop:keyfact}
Let $\ms{M}$ be a simple mirabolic module whose support is
$\ol{\X(L,\Omega)}$. The natural map
\begin{equation}\label{eq:keyfact}
\Gamma(\X,\ms{M})^{\SSL} \rightarrow \Gamma(\Xo_{\LSL},\Upsilon^* \ms{M})^{N_{\SSL}(\LSL)}
\end{equation}
is an embedding.
\end{prop}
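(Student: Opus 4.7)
The plan is to show injectivity: if $m \in \Gamma(\X,\ms{M})^{\SSL}$ satisfies $\Upsilon^* m = 0$, then $m = 0$. The core idea is to combine $\SSL$-invariance with a geometric saturation statement that pushes the vanishing from $\Xo_{\LSL}$ onto the full stratum $\X(L,\Omega)$, and then to use the simplicity of $\ms{M}$ to conclude.

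The key geometric input is the inclusion $\X(L,\Omega) \subseteq \SSL\cdot(Z(\LSL)^{\circ}\times\Omega)$. Indeed, any $(g,v) \in \X(L,\Omega)=G\cdot(Z(\LSL)^{\circ}\times\Omega)$ can be written as $h\cdot(zu,v_0)$ for some $h\in G$, $z\in Z(\LSL)^{\circ}$, $(u,v_0)\in\Omega$. Decomposing $h=\lambda s$ with $\lambda\in Z(G)=\Cs$ (scalar matrices) and $s\in\SSL$, the scalar $\lambda$ acts trivially by conjugation on $\SSL$ and by dilation on $V$; since $Z(G)\subseteq L$, one has $(u,\lambda v_0)\in\Omega$, so $(g,v)=s\cdot(zu,\lambda v_0)$ with $(zu,\lambda v_0)\in Z(\LSL)^{\circ}\times\Omega\subseteq\X(L,\Omega)\cap\Xo_{\LSL}$.

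Next I invoke simplicity. Let $U := \X \setminus (\ol{\X(L,\Omega)}\setminus \X(L,\Omega))$, an open subset of $\X$ with $U\cap\Supp\ms{M}=\X(L,\Omega)$, a smooth closed subvariety of $U$ (the boundary of a locally-closed set is closed). The kernel of the restriction $\Gamma(\X,\ms{M})\to\Gamma(U,\ms{M})$ consists of sections supported on the closed set $\X\setminus U$; this is a $\dd$-submodule of $\ms{M}$, which must vanish by simplicity of $\ms{M}$ (whose support is not contained in $\X\setminus U$). By Kashiwara's equivalence, $\Gamma(U,\ms{M})=\Gamma(\X(L,\Omega),\ms{N})$, where $\ms{N}:=\ms{M}|_{\X(L,\Omega)}$ is a simple $\SSL$-equivariant local system on the stratum.

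Combining these ingredients: the hypothesis $\Upsilon^* m = 0$ forces the image $\bar m\in\Gamma(\X(L,\Omega),\ms{N})$ to vanish on $\X(L,\Omega)\cap\Xo_{\LSL}=Z(\LSL)^{\circ}\times\Omega$. The $\SSL$-invariance of $\bar m$ together with the first step then propagates this vanishing across every $\SSL$-orbit, giving $\bar m = 0$ on all of $\X(L,\Omega)$. Hence $\bar m = 0$, and the injectivity from the previous paragraph yields $m = 0$. The main subtlety I expect is the bookkeeping between the set-theoretic $\SSL$-saturation and the $\dd$-module restriction functors: $\Xo_{\LSL}$ itself is only $N_{\SSL}(\LSL)$-stable, so one cannot invoke $\SSL$-invariance directly on it. The remedy is to pass through the open subset $U$ and use Kashiwara's equivalence to move to the $\SSL$-stable stratum $\X(L,\Omega)$, where the argument becomes transparent.
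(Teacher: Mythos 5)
Your geometric intuition is right, and two of your steps are sound: the saturation statement $\X(L,\Omega)\subseteq\SSL\cdot(Z(\LSL)^{\circ}\times\Omega)$ (using $G=Z(G)\cdot\SSL$ and $Z(G)\subset L$) is correct, and so is the injectivity of $\Gamma(\X,\ms{M})\to\Gamma(U,\ms{M})$ by simplicity of $\ms{M}$. But there is a genuine gap at the step where you invoke Kashiwara's equivalence. Kashiwara's theorem gives an \emph{equivalence of categories}, $\ms{M}|_U\cong i_*\ms{N}$ for $i:\X(L,\Omega)\hookrightarrow U$ the closed embedding, but it does \emph{not} give $\Gamma(U,\ms{M})=\Gamma(\X(L,\Omega),\ms{N})$. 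The $\dd$-module pushforward $i_*\ms{N}$ involves the transfer bimodule, so locally $\Gamma(U,i_*\ms{N})\cong\Gamma(\X(L,\Omega),\ms{N})\otimes\C[\pa_1,\dots,\pa_d]$ with $d=\operatorname{codim}_U\X(L,\Omega)>0$. In particular, $m$ restricted to $U$ is a combination $\sum_k D_k\otimes n_k$ with transverse derivatives $D_k$, not merely a section of the local system; there is no well-defined ``value'' $\bar m\in\Gamma(\X(L,\Omega),\ms{N})$ whose pointwise vanishing controls $m$. The identification $\Upsilon^*m=0\ \Leftrightarrow\ \bar m|_{Z(\LSL)^{\circ}\times\Omega}=0$ is thus not literally meaningful, and even after fixing it the vanishing of the zeroth-order part would not force the higher-order terms of $m$ to vanish. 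Separately, the propagation step also has a difficulty: $\SSL$-invariance of $m$ would let you transport vanishing along an $\SSL$-orbit, but the translates $s\cdot\Xo_{\LSL}$ are locally closed, not open, so their union covering $\X(L,\Omega)$ does not immediately yield an open cover of $\X(L,\Omega)$ on which $\bar m$ vanishes.

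The paper's proof goes along a genuinely different route that sidesteps both problems. It factors $\Upsilon$ through the étale map $p:Z:=\SSL\times_{N_{\SSL}(\LSL)}\Xo_{\LSL}\to\Xo\subset\X$ (this is your $\SSL$-saturation, but packaged as an induced space). The key technical input is Lemma~\ref{lem:strongetale}: $p$ is a \emph{locally strongly \'etale} finite morphism. Simplicity of $\ms{M}$ gives that $\Gamma(\X,\ms{M})\hookrightarrow\Gamma(\Xo,\ms{M})$ and that any nonzero section locally generates $\ms{M}$; local freeness of $\C[U_i]$ over $\C[\Xo_i]$ coming from strong \'etale-ness then shows $p^*m\neq 0$ whenever $m\neq 0$, without ever descending to the stratum or touching transfer modules. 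Finally Proposition~\ref{prop:monoequiv}(ii) converts $\Gamma(Z,p^*\ms{M})^{\SSL}$ into $\Gamma(\Xo_{\LSL},\Upsilon^*\ms{M})^{N_{\SSL}(\LSL)}$. If you wish to rescue your approach, the missing ingredient is some analogue of the local strong \'etale structure to relate sections on $\X$ to sections on the induced space $Z$; restricting to the stratum via Kashiwara does not supply it.
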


\begin{proof}
Consider the standard diagram 
$$
\Xo_{\LSL} \ \stackrel{j}{\longrightarrow} \ \SSL \times \Xo_{\LSL} \ \stackrel{\pi}{\longrightarrow}  \ Z := \SSL \times_{N_{\SSL}(\LSL)} \Xo_{\LSL}\  \stackrel{p}{\longrightarrow} \  \X.
$$
where $p([g,x]) = g \cdot x$ and $j(v) = (e,v)$. The image of $p$ is a (non-affine) open subset $\Xo$ of $\X$. Since $\ms{M}$ is simple, $\Supp m = \Supp \ms{M}$ for all non-zero sections $m \in \Gamma(\X,\ms{M})$. Therefore, since $\Xo \cap \overline{\X(L,\Omega)} \neq \emptyset$, the natural map $\Gamma(\X,\ms{M}) \rightarrow \Gamma(\Xo,\ms{M})$ is an embedding (its kernel consists of all section with support in the complement of $\Xo$). Hence $\Gamma(\X,\ms{M})^{\SSL} \hookrightarrow \Gamma(\Xo,\ms{M})^{\SSL}$. Now take a non-zero section $m \in \Gamma(\Xo,\ms{M})^{\SSL}$. We wish to show that $p^* m$ is a non-zero section of $\Gamma(Z,p^* \ms{M}))$. 

Although the map $p$ is not finite, it is locally strongly \'etale,
see  Lemma \ref{lem:strongetale} below. It follows that
there exists an affine open, $\SSL$-saturated, covering $\{ \Xo_i \}_{i \in I}$ of $\Xo$ such that $\{ U_i = p^{-1}(\Xo_i) \}_{i \in I}$ is an affine open, $\SSL$-saturated, covering of $Z$ and $U_i \simeq U_i /\!/ \SSL \times_{\Xo_i /\!/ \SSL} \Xo_i$ for all $i$. The section $m$ may be written $m = (m_1, \ds, m_k)$, where $m_i$ is the image of $m$ in $\Gamma(\Xo_i,\ms{M})$. Since each $\Xo_i$ is open in $\X$, either the restriction of $\ms{M}$ to $\Xo_i$ is simple or is zero. Either way, we may assume that $m_i$ generates $\ms{M} |_{\Xo_i}$ for all $i$. Hence $p^* \ms{M} |_{\Xo_i} = \dd_{U_i} \cdot p^* m_i$. The restriction of $p$ to each $U_i$ is a finite morphism. Therefore $\C[U_i]$ is a locally free $\C[\Xo_i]$-module and $\ms{M} |_{\Xo_i} \neq 0$ implies that $p^* \ms{M} |_{\Xo_i} \neq 0$. Hence $m_i \neq 0$ implies that $p^* m_i \neq 0$ and we have shown that $p^* m$ is a non-zero section of $p^* \ms{M}$ (which actually locally generates the module). 

Therefore, $\Gamma(\X,\ms{M})^{\SSL}$ embeds inside $\Gamma(Z,p^* \ms{M})^{\SSL}$. Since $\Upsilon = p \circ \pi \circ j$, Proposition \ref{prop:monoequiv} \vii implies that $\Gamma(Z,p^* \ms{M})^{\SSL} \iso \Gamma(\Xo_{\LSL},p^* \ms{M})^{N_{\SSL}(\LSL)}$ as required. 
\end{proof}

 A $G$-equivariant morphism $f : X \rightarrow Y$ between affine varieties $X$ and $Y$ is said to be \textit{strongly \'etale} if the induced map $f' : X/\!/G \rightarrow Y/\!/G$ is \'etale and $f'$ induces an isomorphism $X \simeq Y \times_{Y/\!/G} X/\!/G$ such that $f$ corresponds to projection onto $Y$; see \cite{Luna}. By \textit{locally strongly \'etale}, we mean that for each point $x$ in $Z$, there exists an affine, $\SSL$-saturated open neighborhood of $x$ in such that the restriction of $p$ to this neighborhood is strongly \'etale.

\begin{lem}\label{lem:strongetale}
The morphism $p :\ Z \rightarrow \Xo:=p(\X)$
 is locally strongly \'etale.
\end{lem}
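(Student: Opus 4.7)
My approach is to invoke Luna's fundamental lemma \cite{Luna}: once we know that $p$ is $\SSL$-equivariant, étale, and induces an isomorphism of stabilizers at every point, that lemma produces, for each closed $\SSL$-orbit in $Z$, an affine $\SSL$-saturated open neighborhood on which $p$ is strongly étale. Since every $\SSL$-orbit in $Z$ contains a unique closed orbit in its closure (with reductive stabilizer by Matsushima's theorem), and an $\SSL$-saturated open neighborhood of that closed orbit automatically contains every orbit whose closure meets it, such neighborhoods cover all of $Z$, and local strong étaleness follows.

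For the stabilizer equality, by equivariance it suffices to treat a point $x = [e,(l,v)] \in Z$; its $\SSL$-stabilizer is $\Stab_{N_{\SSL}(\LSL)}(l,v)$ whereas its image has $\SSL$-stabilizer $\Stab_{\SSL}(l,v)$, and the containment $\Stab_{N_{\SSL}(\LSL)}(l,v) \subseteq \Stab_{\SSL}(l,v)$ is automatic. For the reverse inclusion, any $h \in \Stab_{\SSL}(l,v)$ commutes with $l$, hence with $l_s$, so $h \in Z_{\SSL}(l_s)$; by the defining property of $\LSL^{\circ}$ we have $Z_{\SSL}(l_s) \subseteq \LSL \subseteq N_{\SSL}(\LSL)$, whence $h \in N_{\SSL}(\LSL)$ as required.

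To show $p$ is étale, the $N_{\SSL}(\LSL)$-action on $\SSL\times\Xo_{\LSL}$ by $(g,(l,v))\cdot n = (gn, n\inv(l,v))$ is free, so $Z$ is smooth with $\dim Z = \dim\SSL + \dim\LSL + n - \dim N_{\SSL}(\LSL) = \dim\SSL + n = \dim\X$, and it is enough to show that the differential $d_x p$ is surjective. At $[e,(l,v)]$, the differential is induced by the map $(A,B,w) \mapsto ([A,l] + B,\, Av + w)$ from $\mf{sl} \oplus \mf{l} \oplus V$ to $\mf{sl} \oplus V$ (one checks it vanishes on the vertical tangent space $\{(X,-[X,l],-Xv)\mid X\in\Lie N_{\SSL}(\LSL)\}$, so descends to $Z$). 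Surjectivity reduces to the identity $[\mf{sl},l] + \mf{l} = \mf{sl}$. Using the trace-pairing duality $[\mf{sl},l] = Z_{\mf{sl}}(l)^{\perp}$, together with $Z_{\mf{sl}}(l) \subseteq \mf{l}$ (which holds at the Lie algebra level because $Z_{\SSL}(l) \subseteq Z_{\SSL}(l_s) \subseteq \LSL$), we get $[\mf{sl},l] \supseteq \mf{l}^{\perp}$ and hence the desired equality.

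The main obstacle I anticipate is precisely ensuring that this transversality holds at \emph{every} point of $\Xo_{\LSL}$ and not merely on relevant strata (where it is covered by Lemma~\ref{trans}); what makes this possible is the full force of the condition $l \in \LSL^{\circ}$, which forces $Z_{\SSL}(l_s) \subseteq \LSL$ even when $l$ is far from semisimple regular. Once étaleness and stabilizer equality are in hand, Luna's fundamental lemma finishes the proof.
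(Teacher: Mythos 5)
Your strategy mirrors the paper's: establish that $p$ is $\SSL$-equivariant and \'etale, verify the orbit-injectivity hypothesis, and invoke Luna's Lemme Fondamental at closed orbits. Your \'etaleness argument (dimension count plus surjectivity of the differential, reduced via the trace pairing to $Z_{\mf{sl}}(l)\subseteq\mf{l}$, which is exactly what $l\in\LSL^\circ$ buys) is sound and matches the paper's computation, and recasting injectivity on orbits as an equality of stabilizers is a clean and correct reformulation.

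There is, however, a genuine gap. Luna's fundamental lemma requires one more hypothesis at each closed orbit $\SSL\cdot x$ in $Z$: that the image $p(\SSL\cdot x)$ be \emph{closed} in the target. This does not follow from \'etaleness together with the stabilizer isomorphism. Your parenthetical appeal to Matsushima only gives that the stabilizer of $p(x)$ is reductive (being equal to the reductive group $\Stab_\SSL(x)$), but reductivity of a stabilizer does not force the orbit to be closed --- the open $\mathbb{G}_m$-orbit in $\mathbb{A}^1$ has trivial, hence reductive, stabilizer --- and $\Xo$ is not even affine, so Matsushima-type reasoning is doubly unavailable. The paper closes this gap directly: it identifies the closed $\SSL$-orbits in $Z$ as those through points $[1,z,0]$ with $z\in\LSL^\circ$ semisimple, and observes that the image $\SSL\cdot(z,0)$ is closed in $\SSL\times V$ because conjugacy classes of semisimple elements are closed. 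An alternative route would be to first show that $p$ is a finite morphism (as the paper also does, via quasi-finiteness and Zariski's Main Theorem), since finite morphisms send closed sets to closed sets; but finiteness is likewise not delivered by your argument. You need to supply one of these two verifications before Luna's lemma applies.
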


\begin{proof}
We begin by showing that $p$ is quasi-finite. Let $(g_1,(x_1,v_1))$ and $(g_2, (x_2,v_2))$ be points of $Z$ such that $(g_1 \cdot x_1,g_1(v_1)) = (g_2 \cdot x_2,g_2(v_2))$ in $\X$. If the Jordan decomposition of $x_i$ is $s_i u_i$ with $s_i$ semi-simple, then $s_i \in \LSL^{\circ}$ and $g_1 \cdot s_1 = g_2 \cdot s_2$. Thus, $s_1$ is conjugate in $\SSL$ to $s_2$. Since we only want to show the map is quasi-finite, using the fact that the intersection $\SSL \cdot s_1 \cap \LSL$ consists of finitely many $\LSL$-orbits, we may assume that $s_1 = s_2$. Thus, we are reduced to showing that number of cosets of $Z_{\SSL}(s_1) \cap N_{\SSL}(\LSL)$ in $Z_{\SSL}(s_1)$ is finite. Since $s_1 \in \LSL^{\circ}$, we actually have $Z_{\SSL}(s_1) \subset \LSL$, hence $Z_{\SSL}(s_1) \cap N_{\SSL}(\LSL) = Z_{\SSL}(s_1)$. 

Since $p$ surjects onto $\Xo$ and is quasi-finite, it is proper. Therefore Zariski's main theorem implies that it is finite. The map $p$ will be \'etale if its differential is everywhere surjective. It suffices to show that the differential of $q = p \circ \pi : \SSL \times \Xo_{\LSL} \rightarrow \X$ is surjective. If we identify $\g$ with \textit{left} invariant vector fields on $G$, then for $g \in \SSL, l \in \Lo_n$ and $v \in V$, 
$$
d_{(g,l,v)} q : (X,y,w) \mapsto (gyg^{-1} + g[X ,l]g^{-1}, g(w) + X(v)).
$$
This is surjective: one can check this at $g = e$. 

Next we show that it is locally strongly \'etale.  Since both domain and image of $p$ are smooth varieties, \cite[Lemme Fondamental]{Luna} says that it suffices to check that, for each $x \in Z$ such that the orbit $\SSL \cdot x$ is closed, we have $p(\SSL \cdot x)$ is closed in $\X$ and $p |_{\SSL \cdot x}$ is injective. The closed orbits of $\SSL$ in $Z$ are all of the form $\SSL \cdot (1,z,0)$, where $z \in \Lo_n \cap \SSL^{ss}$ (here $\SSL^{ss}$ is the set of all semi-simple elements in $\SSL$). Since the orbit of $(g \cdot z,0)$ is closed in $\X$ for any $g \in \SSL$, $p(\SSL \cdot x)$ is closed in $\X$. Now assume that $(g,n \cdot z,0)$ is mapped to $(z,0)$ under $p$. Then $(gn) \cdot z = z$ and hence $gn \in Z_{\SSL}(z)$. Since $z \in \Lo_n$ this implies that $gn \in \LSL$ and hence $(g,n \cdot z,0) = (1, gn \cdot z, 0 ) = (1, z,0)$. Thus $p |_{\SSL \cdot x}$ is injective. 
\end{proof}

\section{The functor of Hamiltonian reduction}\label{ham_sec}

\subsection{}\label{sec:qhr}
Recall that $\mathsf{H}_{\kappa}^{\mathrm{trig}} (\SSL)$ denotes the
trigonometric Cherednik algebra of type $\SSL$, as defined in \S\ref{app:shift1}, and $\htrig_{\kappa}=e \mathsf{H}_{\kappa}^{\mathrm{trig}} (\SSL) e$
is the corresponding spherical subalgebra. As in \S\ref{app:shift1}, by using the Dunkl embedding, we think of $\htrig_{\kappa}$ as subalgebra of the simple ring $\htrig^{\reg} = \dd(\TSL^{\reg})^{W}$. The algebra $\htrig_{\kappa}$ has two obvious commutative subalgebras: the subalgebra $\C[\TSL]^W$ of $W$-invariant regular functions on $\TSL$, and the subalgebra $(\sym \tSL)^W$.

The algebra $\htrig_{\kappa}$ is noetherian and we let $\Lmod{\htrig_{\kappa}}$ denote
the abelian category  of {\em finitely generated}
left $\htrig_{\kappa}$-modules. 
In this paper, we will also consider the following full subcategories
of the category $\Lmod{\htrig_{\kappa}}$:
\begin{itemize}

\item Category $\mof$, whose objects are $\htrig_{\kappa}$-modules
which are finitely generated over the subalgebra $\C[\TSL]^W\sset
\htrig_{\kappa}$.

\item Category $\OO_\kappa$, whose objects are finitely generated $\htrig_{\kappa}$-modules, such that the action of $\C[\tSL]^W$ is locally finite. 
\end{itemize}

The fact that the natural map $(T^* \TSL) / W \twoheadrightarrow \TSL / W \times \tSL / W$ is a finite morphism implies that $\OO_\kappa$ is a full subcategory of $\mof$. As in (\ref{sec:spectral}), we also have a spectral decomposition, c.f. ~\cite{VVQuantumAffineSchur},
$$
\OO_{\kappa} = \bigoplus_{\Theta \in \tSL^* / W_{\mathrm{aff}}} \OO_{\kappa} \langle \Theta \rangle.
$$
Recall from section \ref{sec:intro} that $\g_{c} = (\mu - c \Tr)(\g)$, $\ddd  = \dd(\X)$ and $\kappa = -c + 1$ . We consider the left $\ddd $-module $\ddd  / \ddd  \cdot \g_{c}$, resp. the right $\ddd $-module $\ddd  / \g_{c} \ddd $. According to \cite{GG}, the space $\ddd  / \ddd  \cdot \g_{c}$ has the natural structure of a (weakly) $G$-equivariant $(\ddd ,\ \htrig_\kappa)$-bimodule. Similarly, the space $\ddd  / \g_{c} \ddd $ has the natural structure of a (weakly) $G$-equivariant $(\htrig_\kappa,\ \ddd )$-bimodule. One has an (infinite) direct sum decomposition
\begin{equation}\label{eq:Gdecomp}
\ddd  / \g_{c} \ddd  =\bplus_{\sigma\in\Irr G}\ (\ddd  / \g_{c} \ddd )^{(\sigma)}
\end{equation}
into $G$-isotypic components. The left $\htrig_\kappa$-action on $\ddd /\g_c\ddd $ commutes with the $G$-action and $\gr(\ddd /\g_c\ddd) $, the associated graded of $\ddd /\g_c\ddd $ with respect to the order filtration, is a finitely generated $\C[T^* \X]$-module. Hence, a well-known 
result of  Hilbert, \cite[Zusatz 3.2]{Kraft}, to be referred to as
`{\em Hilbert's Theorem}' in the future,
implies that  each isotypic component of $\gr(\ddd /\g_c\ddd)$
is a finitely generated module over the subalgebra of $G$-invariants, 
that is, $$
\gr [(\ddd  / \g_{c} \ddd )^{(\sigma)}] = [\gr (\ddd  / \g_{c} \ddd )]^{(\sigma)}
$$
is a finitely generated $\gr [(\ddd  / \g_{c} \ddd )^G] = \gr \htrig_\kappa$-module.

\subsection{} The functor of Hamiltonian reduction $\Ham_{c} : \mon{\ddd ,G,q} \rightarrow \Lmod{\htrig_\kappa}$ is defined by 
$$
\Ham_{c}(\ms{M}) = \{ m \in \Gamma(\X,\ms{M}) \ | \ \arr{u} \cdot m = c\Tr(u) m, \ \forall \ u \in \g \}
$$

Next, using the fact that the adjoint action of $\mu(\g)$ on $\ddd /\ddd \g_c$ and $\ddd /\g_c\ddd$ is locally finite, we introduce a pair of functors $\Lmod{\htrig_\kappa} \to \mon{\ddd ,G}$ as follows
\begin{align*}
\Hamp(E)&:=\ddd /\ddd \g_c\,\ \bo_{\htrig_\kappa}\,\ E,\\
\Hamr(E)&:=\bplus_{\sigma\in\Irr G}\
\Hom_{\htrig_\kappa}((\ddd /\g_c\ddd )^{(\sigma)},\ E).
\end{align*}
It is clear that the functor $\Hamp$ is right exact
and the functor $\Hamr$ is left exact. 

\begin{lem}\label{adjoint} 

\vi The functor $\Hamp$ is a left adjoint, resp. $\Hamr$ is a right adjoint,
of the functor $\Ham_{c}$. Each of the canonical adjunctions 
$\Ham_{c} \circ \Hamp (E) \to E$ and $E\to \Ham_{c} \ccirc \Hamr(E)$ is an isomorphisms.

\vii For any $E\in\Lmod{\htrig_{\kappa}}$, the module
 $\Hamp(E)$ has no quotient modules, resp. 
 $\Hamr$  has no submodules, annihilated by $\Ham_{c}$.
\end{lem}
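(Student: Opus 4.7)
The plan is to establish both adjunctions as instances of standard tensor-hom adjunction for bimodules, then deduce (ii) as a formal consequence. By the quantum Hamiltonian reduction recalled in \S\ref{sec:qhr}, $\ddd/\ddd\g_c$ carries a canonical $(\ddd,\htrig_\kappa)$-bimodule structure and $\ddd/\g_c\ddd$ carries a canonical $(\htrig_\kappa,\ddd)$-bimodule structure, the $\htrig_\kappa$-actions coming from the natural isomorphism $\htrig_\kappa \cong \ddd^G/(\ddd\g_c \cap \ddd^G) \cong (\ddd/\ddd\g_c)^G \cong (\ddd/\g_c\ddd)^G$. I will begin with the tautological identification $\Ham_c(\ms{M}) = \Hom_{\ddd}(\ddd/\ddd\g_c,\Gamma(\X,\ms{M}))$ of left $\htrig_\kappa$-modules, since a $\ddd$-linear map from $\ddd/\ddd\g_c$ is determined by the image of the class of $1$, which must be annihilated by $\g_c$.

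The left adjunction $\Hom_\ddd(\Hamp(E),\Gamma(\X,\ms{M}))\cong\Hom_{\htrig_\kappa}(E,\Ham_c(\ms{M}))$ will then follow immediately from the standard bimodule tensor-hom adjunction applied to $\ddd/\ddd\g_c$. For the right adjunction I plan to recognize $\Hamr(E) = \bigoplus_\sigma \Hom_{\htrig_\kappa}((\ddd/\g_c\ddd)^{(\sigma)},E)$ as the $G$-finite part of $\Hom_{\htrig_\kappa}(\ddd/\g_c\ddd, E)$. Since Proposition~\ref{prop:monodromicfinite} guarantees that $G$ acts locally finitely on any object of $\mon{\ddd,G}$, every $\ddd$-linear map from such a module to $\Hom_{\htrig_\kappa}(\ddd/\g_c\ddd, E)$ automatically lands in this $G$-finite subspace, and the analogous bimodule tensor-hom adjunction then yields the desired natural isomorphism $\Hom_\ddd(\ms{M},\Hamr(E)) \cong \Hom_{\htrig_\kappa}(\Ham_c(\ms{M}),E)$.

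To verify the canonical maps $\Ham_c\circ\Hamp(E)\to E$ and $E\to\Ham_c\circ\Hamr(E)$ are isomorphisms, I will exploit the $G$-isotypic decomposition $\ddd/\ddd\g_c=\bigoplus_\sigma(\ddd/\ddd\g_c)^{(\sigma)}$ together with $(\ddd/\ddd\g_c)^G = \htrig_\kappa$. Since $E$ carries trivial $G$-action, the trivial isotype of $\Hamp(E)$ equals $\htrig_\kappa\otimes_{\htrig_\kappa}E = E$, and Hilbert's theorem on the exactness of $G$-invariants for reductive $G$, combined with the compatibility between the $\g_c$-action and the differential of the $G$-equivariant structure on $\Hamp(E)$, identifies this trivial isotype with $\Ham_c(\Hamp(E))$; a symmetric argument via $\ddd/\g_c\ddd$ handles the unit for $\Hamr$. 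Part (ii) is then a formal consequence of (i): if $N$ is a quotient of $\Hamp(E)$ with $\Ham_c(N)=0$, the left adjunction gives $\Hom_\ddd(\Hamp(E),N) \cong \Hom_{\htrig_\kappa}(E,\Ham_c(N)) = 0$, forcing the defining surjection to be zero and hence $N = 0$; the submodule statement for $\Hamr$ follows dually from the right adjunction.

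The main technical obstacle will lie in the counit/unit verification, since on general objects of $\cc_q$ (as opposed to the semisimple subcategory $\ss_q$) the action of $\mu(\one)$ is only locally finite and not semisimple, so one must combine the complete reducibility of the $\mathfrak{sl}_n\subset\mu(\g)$-action with careful treatment of the nilpotent part of $\mu(\one)-cn$ provided by Theorem~\ref{thm:admissible2} to conclude that $\g_c$-invariants coincide with the expected $G$-equivariant invariants in the tensor and Hom expressions.
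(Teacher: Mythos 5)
Your plan is correct and matches the paper's proof in all essentials: both use the tensor-hom adjunction for the bimodules $\ddd/\ddd\g_c$ and $\ddd/\g_c\ddd$, both observe that local finiteness of the $G$-action forces any map into $\Hom_{\htrig_\kappa}(\ddd/\g_c\ddd,E)$ to land in the $G$-finite part $\Hamr(E)$, both compute the unit/counit via the $G$-isotypic decomposition and the identification $(\ddd/\g_c\ddd)^G=\htrig_\kappa$, and both deduce (ii) formally from adjunction. The only minor point is that the technical worry in your last paragraph resolves more easily than you anticipate: on the domain $\mon{\ddd,G,q}$ of $\Ham_c$, Proposition~\ref{prop:monoimpliesweakG} endows any object with a $(G,\chi)$-monodromic structure, so the $\g_c$-action is the differential of a rational $G$-action and hence semisimple, giving $\mm^{\g_c}\cong\mm/\g_c\mm$ canonically without any separate treatment of a nilpotent part of $\mu(\one)-cn$.
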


\begin{proof} For any $(G,q)$-monodromic $\ddd $-module $\mm$, Proposition \ref{prop:monodromicfinite} implies that one has a canonical isomorphism $\mm^{\g_c}$ $\iso \mm/\g_c\mm$. Therefore, for each $\mm \in \mon{\ddd ,G,q}$, we obtain
\begin{multline*}
\Hom_{\htrig_\kappa}(\Ham_{c}(\mm), E)=\Hom_{\htrig_\kappa}(\mm/\g_c\mm, E)\\
=\Hom_{{\htrig_\kappa}}(\ddd /\g_c\ddd \o_{\ddd }\mm, E)
=\Hom_{\ddd }(\mm,\ \Hom_{{\htrig_\kappa}}(\ddd /\g_c\ddd , E)).
\end{multline*}
Let $f: \mm\to \Hom_{{\htrig_\kappa}}(\ddd /\g_c\ddd , E)$
be a $\dd$-module morphism.
The $G$-action on $\mm$ being locally finite,
any element $m\in\mm$ is contained in a 
{\em finite} sum of $G$-isotypic components.
Hence, $f(m)$ is also  contained in a
finite sum of $G$-isotypic components.
It follows that the morphism
$f$ factors through a map
$\mm\to \bplus_{\sigma\in\Irr G}\
\Hom_{\htrig_\kappa}((\ddd /\g_c\ddd )^{(\sigma)},\ E)$.
Thus, $\Hamr$ is a right adjoint of $\Ham_{c}$.
Furthermore,
we clearly have
\begin{multline*}
\Ham_{c}(\Hamr(E))=\big[\bplus_{\sigma\in\Irr G}\
\Hom_{\htrig_\kappa}((\ddd /\g_c\ddd )^{(\sigma)},\ E)\big]^G\\
=
\Hom_{\htrig_\kappa}((\ddd /\g_c\ddd )^G,\ E)
=\Hom_{\htrig_\kappa}({\htrig_\kappa}, E)=E.
\end{multline*}
This proves the statements in (i) concerning
the functor $\Hamr$.

Now, let $\mm$ be a module annihilated by $\Ham_{c}$. We get
$$
\Hom_\ddd (\mm,\Hamr(E))=\Hom_{\htrig_\kappa}(\Ham_{c}(\mm),E) =0.
$$
We deduce that $\Hamr(E)$ can not have a submodule annihilated by $\Ham_{c}$. This proves (ii) for ~$\Hamr$.

The statements concerning the functor $\Hamp$ are proved similarly.
\end{proof}

Recall from (\ref{eq:defineNnilSL}) the Lagrangian subvariety $\Nnil$ of $T^* \X$. To state our next result, we need to introduce the category $\Lmod{(\ddd,\Nnil)}$, whose objects are coherent $\ddd$-modules $\mm$ such that $\SS(\mm)\sset \Nnil$. Any object
$\mm\in \Lmod{(\ddd,\Nnil)}$ is a $G$-monodromic,
holonomic $\dd_\X$-module. Such a module is
a mirabolic $\dd$-module if and only if it has
regular singularities. 
Thus, one has a strict inclusion $\cc_q \sset \Lmod{(\ddd,\Nnil)}$.

\begin{prop}\label{many} The  functors $(\Hamp, \Ham)$ induce the following pairs of adjoint functors
$$
\xymatrix{
{\mathsf{(1)}}\en\Lmod{(\ddd,\Nnil)}\  \ar@<0.5ex>[r]^<>(0.5){\Ham_c}&
\ \mof \ar@<0.5ex>[l]^<>(0.5){\Hamp}& 
 {\mathsf{(2)}}\en \ss_q \langle \Theta \rangle \ \ar@<0.5ex>[r]^<>(0.5){\Ham_c}&
\ \OO_\kappa \langle \Theta \rangle,\en \forall \ \Theta \in \tSL^* /
W_{\mathrm{aff}}. 
\ar@<0.5ex>[l]^<>(0.5){\Hamp}
}
$$
\end{prop}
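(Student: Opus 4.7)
The plan is to leverage Lemma~\ref{adjoint}, which already establishes that $(\Hamp, \Ham_c)$ is an adjoint pair between $\mon{\ddd,G,q}$ and $\Lmod{\htrig_\kappa}$. Since the adjunction between restricted functors is formal once the full adjunction is in hand, both parts (1) and (2) reduce to verifying that each functor sends the indicated subcategory into the indicated subcategory. The geometric input underlying both restrictions is the Hamiltonian-reduction picture for the trigonometric Cherednik algebra: the order filtration identifies $\Spec \gr \htrig_\kappa$ with $T^*\TSL/W$ (up to reducedness), and the classical reduction map $\phi \colon \mu_\X^{-1}(0) \to T^*\TSL/W$ exchanges $\Nnil$ with the zero section $\TSL/W$ in a sense that makes $\Nnil/\!/G \to \TSL/W$ a finite surjection; these facts are essentially contained in \cite{GG}.

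For part (1), I would argue by good $G$-stable filtrations. Given $\mm \in \Lmod{(\ddd,\Nnil)}$, pick such a filtration $F_\bullet\mm$; local finiteness of the $\mu(\g)$-action (Proposition~\ref{prop:monodromicfinite}(ii)) combined with reductivity of $G$ makes the functor of $\g_c$-invariants exact on this class, and $\gr \Ham_c(\mm)$ is then identified with the $\chi = c\Tr$-isotypic component of $\gr\mm$, a coherent $G$-sheaf on $\Nnil$. By Hilbert's theorem (Kraft, Zusatz 3.2) this isotypic component is finitely generated over $\C[\Nnil]^G$, which in turn is finite over $\C[\TSL]^W = \C[\SSL]^G$ via the finiteness of $\Nnil/\!/G \to \TSL/W$ (this is forced by a dimension count: $\dim \Nnil /\!/G = (n^2+n-1)-n^2 = n-1 = \dim \TSL/W$, and surjectivity is witnessed by the section $g \mapsto (g,0,0,0) \in \Nnil$). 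Hence $\Ham_c(\mm) \in \mof$. Conversely, given $E \in \mof$, choose a good filtration on $E$ compatible with the order filtration on $\htrig_\kappa$; then $\gr \Hamp(E) \cong \C[\mu_\X^{-1}(0)] \otimes_{\gr\htrig_\kappa} \gr E$, and the same reduction correspondence (now read in the opposite direction) places its support inside $\Nnil$.

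For part (2), the inclusion $\Ham_c(\ss_q) \subseteq \OO_\kappa$ was established in \cite[\S 6]{GG}, and the preservation of the $\Theta$-decomposition is a direct consequence of the identification of the $\FZ$-action on $\mm$ with the $(\sym\t)^W$-action on $\Ham_c(\mm)$ under the Harish-Chandra isomorphism. For the reverse direction, given $E \in \OO_\kappa\langle\Theta\rangle$, part~(1) already places $\Hamp(E) \in \Lmod{(\ddd,\Nnil)}$. To upgrade this to $\ss_q\langle\Theta\rangle$ I would invoke Theorem~\ref{thm:admissible2}: the $\mu(\g)$-action is automatically locally finite since $\Hamp(E)$ is $(G,q)$-monodromic (the canonical generators $1 \otimes e$ are $\g_c$-semi-invariants of weight $c \cdot \Tr$, so Lemma~\ref{lem:monoimpliesq}(iii) supplies the $(G,q)$-structure), while the $\FZ$-action on $\Hamp(E)$ factors through the image of $\FZ \hookrightarrow \ddd^G$ in $\htrig_\kappa$, which lies in the Harish-Chandra image of $(\sym\t)^W$; local finiteness of the latter on $E$ therefore forces local finiteness of $\FZ$ on $\Hamp(E)$. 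Theorem~\ref{thm:admissible2} then gives $\Hamp(E) \in \cc_q$, and the Harish-Chandra compatibility pins down the $\Theta$-component.

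The main obstacle I anticipate is the precise geometric bookkeeping relating the characteristic variety of a $\ddd$-module with $\SS \subset \Nnil$ to the support of its image under Hamiltonian reduction in $T^*\TSL/W$, and in particular verifying cleanly which subvarieties of $T^*\TSL/W$ pull back to subvarieties of $\Nnil$. Once this dictionary is in place (essentially via \cite{GG}), the remainder of the proof consists of formal manipulations with good filtrations, Hilbert's theorem on invariants, and the characterization of mirabolic modules supplied by Theorem~\ref{thm:admissible2}.
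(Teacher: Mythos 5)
Your proof takes a genuinely different route from the paper, but there is a real gap in the key step of part (2).

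For part (1), the paper simply cites \cite{CherednikCurves}, Proposition 4.6.2; your good-filtration argument using Hilbert's theorem and the identification of $\C[\Nnil]^G$ as a finite $\C[\TSL]^W$-algebra is in the right spirit but, as you acknowledge, the geometric dictionary linking supports in $\Nnil$ to supports in $T^*\TSL/W$ would need to be supplied rather than invoked heuristically.

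For part (2), the direction $\Ham_c(\ss_q\langle\Theta\rangle) \subseteq \OO_\kappa\langle\Theta\rangle$ matches the paper. The problematic step is the other direction. You claim that the left $\FZ$-action on $\Hamp(E) = \ddd/\ddd\g_c \otimes_{\htrig_\kappa} E$ ``factors through'' the image $\mathfrak{R}(\FZ) \subseteq (\sym\tSL)^W \subseteq \htrig_\kappa$ acting on $E$, so that local finiteness on $E$ transfers. This is false as stated: $\FZ$ is not central in $\ddd$ (the Casimir does not commute with $\C[\SSL]$), so for $z\in\FZ$ and $d\in\ddd$ one has $zd = dz + [z,d]$ with a non-trivial commutator, whence
$$
z\cdot(\bar d\otimes e)\ =\ \bar d\otimes\mathfrak{R}(z)e\ +\ \overline{[z,d]}\otimes e,
$$
and the second term is not controlled by the $(\sym\tSL)^W$-action on $E$. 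What you can conclude cheaply is only that $\FZ$ acts locally finitely on the subspace spanned by the generators $1\otimes m_i$. Propagating local finiteness to all of $\Hamp(E)$ is precisely the content of Kostant's theorem on tensoring with locally finite modules, which the paper invokes by exhibiting each $\ddd\cdot(1\otimes m_i)$ as a quotient of a generalized mirabolic Harish-Chandra module $\CG^{(N)}_{\lambda_i,c}$ and appealing to Corollary \ref{HCmir} together with the fact that $\ss_q$ is closed under subquotients. A second, smaller gap: you conclude only that $\Hamp(E)\in\cc_q$, whereas the proposition requires $\ss_q$, i.e., semisimplicity of $\mu(\one)$; this can be repaired by observing that $\mu(\one)$ acts on $\Hamp(E)$ as $\ad(\mu(\one)) + cn$, hence semisimply, but the argument must be stated. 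So the key missing ingredient is Kostant's finiteness theorem, and the generalized Harish-Chandra modules the paper introduces are exactly the vehicle for applying it.
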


\begin{proof} For the left pair above, the  result was proved in the course of the proof of \cite{CherednikCurves}, Proposition 4.6.2 (although  part (ii) of that proposition, claiming that the functor $\Hamp$ sends category $\oo({\mathsf A}_{\kappa,\psi})$ to $\ss_{\psi,c}$, is incorrect, as stated). 

Recall next that the radial parts homomorphism $\mathfrak{R} : \ddd^{G}\to \htrig_\kappa$ of Theorem \ref{thm:radialparts} yields an algebra isomorphism $\FZ\iso (\sym \tSL)^W$. This result, combined with  Theorem \ref{thm:admissible2}, implies that the functor $\Ham_c$ sends the category $\cc_q \langle \Theta \rangle$ to category $\OO_\kappa \langle \Theta \rangle$.

For each $N \in \mathbb{N}$, let $\CG_{\lambda,c}^{(N)}$ be a \textit{generalized Harish-Chandra} $\dd$-module, where we quotient out
by $\FZ_{\lambda}^N$ instead of $\FZ_{\lambda}$. The module
$\CG_{\lambda,c}^{(N)}$ has a finite filtration whose subquotients are
quotients of $\CG_{\lambda,c}$. Thus, Corollary \ref{HCmir} implies that
$\CG_{\lambda,c}^{(N)} \in \cc_q \langle \Theta \rangle$, where $\Theta$
is the image of $\lambda$ in $\tSL^* / W_{\mathrm{aff}}$. Since the adjoint action of $\mu(\g)$ on $\dd(\X)$ is semi-simple, $\CG_{\lambda,c}^{(N)}$ actually belongs to the full subcategory $\ss_q \langle \Theta \rangle$ of $\cc_q \langle \Theta \rangle$. Let $m_1, \ds, m_k$ be generators of $M \in \OO_\kappa$. Then $\Hamp(M)$ is generated by $1 \o m_1, \ds, 1 \o m_k$. For each $i$, arguing as in the proof of Corollary \ref{HCmir}, one can find $\lambda_i \in \tSL^*$ and $N \gg 0$ such that $\CG_{\lambda,c}^{(N)} \twoheadrightarrow \ddd \cdot (1 \o m_i)$. Thus, $\Hamp(M)$ belongs to
$\ss_q$, since the latter category is closed under submodules and quotients. Moreover, if $M \in \OO_{\kappa} \langle \Theta \rangle$, then
 the image of $\lambda_i$ in $\tSL^* / W_{\mathrm{aff}}$ will equal $\Theta$ for all $i$. This proves the second statement. 
\end{proof}

\begin{rem} It is straight-forward to see that,
for  $E \in \oo_\kappa$, the $\FZ$-action on $\Hamr(E)$ is 
also locally finite.
\end{rem}

\subsection{} For any $E\in \Lmod{\htrig_\kappa}$, there is a canonical vector space 
embedding 
\[\imath:\ E=\Hom_{\htrig_\kappa}({\htrig_\kappa}, E)=
\Hom_{\htrig_\kappa}((\ddd /\g_c\ddd )^{\text{triv}},\ E)
\ \into\ \Hamr(E).\]
The  map $\imath$ induces a $ \ddd $-module 
morphism $\Hamp(E)=\ddd /\ddd \g_c\o_{\htrig_\kappa} E \to\Hamr(E)$. The latter morphism corresponds to the identity via the isomorphisms
$$
\id\in \Hom(E, E) = \Hom(E, \Ham_{c}(\Hamr(E))=\Hom(\Hamp(E), \Hamr(E)).
$$
We put $E_{!*}:=\im[\Hamp(E) \to\Hamr(E)]$. 
Equivalently, $E_{!*}$ is the $\ddd $-submodule
of $\Hamr(E)$ generated by the subspace $\imath(E)$.

It is clear from definitions
 that  $\Ham_{c}(E_{!*})=E$ and that the object $E_{!*}$ has neither
 quotient nor sub objects annihilated by $\Ham_{c}$.
This implies the following

\begin{cor}\label{top} 

For any simple object $E\in\OO_{\kappa}$, one has

\vi The mirabolic module $\Hamp(E)$ has a simple top.

\vii 
The top of  $\Hamp(E)$ is  $E_{!*}$ and  $\Ham_{c}(E_{!*})=E$.
\end{cor}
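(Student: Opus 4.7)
The plan is to establish Corollary \ref{top} by showing that $K := \ker[\Hamp(E) \to \Hamr(E)]$ is the largest proper submodule of $\Hamp(E)$; parts (i) and (ii) will then follow at once, since $\Hamp(E)/K = E_{!*}$ is automatically simple once $K$ is the unique maximal proper submodule, and the identity $\Ham_c(E_{!*}) = E$ is already recorded in the remark preceding the corollary.

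First I would observe that $K$ is proper: $\Hamp(E)/K = E_{!*}$ satisfies $\Ham_c(E_{!*}) = E \neq 0$, so $E_{!*} \neq 0$. The main step is then to show that \emph{every} proper submodule $N \subsetneq \Hamp(E)$ is contained in $K$, which I would carry out in two stages. In the first stage, I would deduce $\Ham_c(N) = 0$: by Proposition \ref{many}(2), $\Ham_c : \ss_q \to \OO_{\kappa}$ is exact and $\Hamp(E) \in \ss_q$, so applying $\Ham_c$ to
\[
0 \to N \to \Hamp(E) \to \Hamp(E)/N \to 0
\]
yields the exact sequence
\[
0 \to \Ham_c(N) \to E \to \Ham_c(\Hamp(E)/N) \to 0.
\]
Since $\Hamp(E)/N$ is a nonzero quotient of $\Hamp(E)$, Lemma \ref{adjoint}(ii) gives $\Ham_c(\Hamp(E)/N) \neq 0$; since $E$ is simple, this forces $\Ham_c(N) = 0$.

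In the second stage I would pass from $\Ham_c(N) = 0$ to $N \subseteq K$ via the adjunction $\Ham_c \dashv \Hamr$ of Lemma \ref{adjoint}(i). The composition $N \hookrightarrow \Hamp(E) \to \Hamr(E)$ corresponds under this adjunction to the map $\Ham_c(N) \to \Ham_c(\Hamr(E)) = E$, which is zero. Hence $N$ maps to zero in $\Hamr(E)$, i.e., $N \subseteq K$, as desired. Consequently $K$ is the unique maximal proper submodule, and $\Hamp(E)/K = E_{!*}$ is the simple top, proving (i). Part (ii) is then the combination of this identification with the already-known equality $\Ham_c(E_{!*}) = E$.

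I do not anticipate any serious obstacle: the argument is a formal consequence of the biadjunction $\Hamp \dashv \Ham_c \dashv \Hamr$ from Lemma \ref{adjoint} and the exactness of $\Ham_c$ on $\ss_q$ from Proposition \ref{many}(2). The only mildly delicate point is that one genuinely needs exactness (and not merely right or left exactness) of $\Ham_c$ to extract the conclusion $\Ham_c(N) = 0$ from the three-term sequence above, but Proposition \ref{many}(2) supplies precisely that.
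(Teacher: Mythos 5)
Your proof is correct, and it takes a somewhat different route from the paper's. The paper argues via a composition-factor count: since $\Ham_c$ is an exact quotient functor and $\Ham_c(\Hamp(E))=E$ is simple, $\Hamp(E)$ has exactly one composition factor $\mm$ on which $\Ham_c$ is nonzero; and every simple quotient $\mm'$ of $\Hamp(E)$ satisfies $\Ham_c(\mm')\neq0$ by the adjunction $\Hamp\dashv\Ham_c$, so the top is the single module $\mm$. That argument only invokes the left adjunction and exactness, and it leaves the identification $\mm=E_{!*}$ of part (ii) essentially to the reader. You instead use the full biadjunction: you show that $K=\ker[\Hamp(E)\to\Hamr(E)]$ contains every proper submodule $N$ by first extracting $\Ham_c(N)=0$ from exactness plus Lemma~\ref{adjoint}(ii), and then converting $\Ham_c(N)=0$ into $N\subseteq K$ via the adjunction $\Ham_c\dashv\Hamr$. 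This directly exhibits $K$ as the radical and makes $\Hamp(E)/K=E_{!*}$ simple by fiat, so part (ii) comes for free rather than needing a supplementary argument. Both approaches draw on the same inputs (Lemma~\ref{adjoint}, Proposition~\ref{many}(2), and $\Ham_c\circ\Hamp=\mathrm{Id}$); what your version buys is a uniform treatment of (i) and (ii) and an explicit description of the maximal proper submodule, at the modest cost of invoking the right adjoint $\Hamr$, which the paper's proof of (i) avoids.
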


\begin{proof} 
It was shown in \cite{GG} that the functor $\Ham_{c}$ is an exact, quotient functor. Hence, for $E$ simple, the  $\dd$-module $\Hamp(E)$ has exactly one simple composition factor, call it $\mm$, such that $\Ham_{c}(\mm)=E$. Now, suppose $f: \Hamp(E)\onto \mm'$ is a nonzero morphism onto a simple module $\mm'$. The morphism $f$ gives, by adjunction, a  nonzero morphism $E \to \Ham_{c}(\mm')$. Hence, one must have $\Ham_{c}(\mm')\neq 0$. We see that, for any simple summand $\mm'$, of the top of $\Hamp(E)$, one has $\Ham_{c}(\mm')\neq0$. Thus, there is exactly one such summand and that summand must be the module $\mm$.
\end{proof}

Let $\alpha : \X^{\cyc} \hookrightarrow \X$ be the open, \textit{affine}, embedding. Combining Lemma \ref{adjoint} with Proposition \ref{prop:Hamkill} gives:

\begin{cor}\label{cor:extensions}
Assume that $c \notin \Sing_+$, and let $E \in \mc{O}_{\kappa}$. Then 

\vi $\Hamp (E)$ has no quotients supported on $\X \sminus \X^{\cyc}$, 
 
\vii $\Hamr (E)$ has no submodules supported on $\X \sminus \X^{\cyc}$; and

\viii $E_{!*}$ is the minimal extension $\alpha_{!*}( E_{!*} |_{\X^{\cyc}})$ of $E_{!*} |_{\X^{\cyc}}$. 

\end{cor}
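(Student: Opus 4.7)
The plan is to combine Lemma \ref{adjoint}(ii) with the support classification in Propositions \ref{thm:irrsupport} and \ref{prop:Hamkill}. The starting observation I need is that a relevant stratum $\X(\la,\mu)\sset\X$ meets $\X^\cyc$ if and only if $\mu=\emptyset$: if some part lies in $\mu$ then the corresponding component $\vv_i$ vanishes, obstructing cyclicity of $v$. Consequently, any simple $\ms{M}\in\ss_q$ with $\Supp\ms{M}\sset\X\sminus\X^\cyc$ must have $\Supp\ms{M}=\ol{\X((m^v,1^w),(m^u))}$ with $u\ge 1$.

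For (i), I will argue by contradiction. Suppose $\Hamp(E)$ admits a nonzero quotient $Q$ with $\Supp Q\sset\X\sminus\X^\cyc$. Proposition \ref{many} places $\Hamp(E)$ in $\ss_q$, so $Q$ lies in $\ss_q$ too, and being holonomic $Q$ has finite length. Hence $Q$ admits a simple quotient $\ms{M}\in\ss_q$ still supported in $\X\sminus\X^\cyc$; as a simple quotient of $\Hamp(E)$ it satisfies $\Ham_c(\ms{M})\neq 0$ by Lemma \ref{adjoint}(ii). The initial observation together with Proposition \ref{prop:Hamkill} then forces $c\in\Sing_+$, contradicting the hypothesis.

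Part (ii) will proceed dually. A nonzero submodule of $\Hamr(E)$ supported in $\X\sminus\X^\cyc$ contains a simple coherent submodule $\ms{M}$, and the first task is to verify $\ms{M}\in\ss_q$. Local finiteness of $\mu(\g)$ on $\Ga(\X,\ms{M})$ will be read off from the $G$-isotypic direct-sum structure built into $\Hamr(E)$, and local finiteness of $\FZ$ will follow from $E\in\OO_\kappa$ via the radial-parts isomorphism $\FZ\iso(\sym\tSL)^W$; Theorem \ref{thm:admissible2} then identifies $\ms{M}$ as a mirabolic module, and the scalar action of $\mu(\one)$ on $E$ provides the $\mu(\one)$-semisimplicity that places $\ms{M}$ in $\ss_q$. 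Lemma \ref{adjoint}(ii) now gives $\Ham_c(\ms{M})\neq 0$ and the argument of (i) closes the case. The main obstacle will be precisely this verification that $\ms{M}\in\ss_q$: the module $\Hamr(E)$ is not a priori coherent or mirabolic, so membership in $\ss_q$ for its simple subobjects has to be established by hand.

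Finally, part (iii) will be formal. By construction, $E_{!*}$ fits in the factorization $\Hamp(E)\twoheadrightarrow E_{!*}\hookrightarrow\Hamr(E)$, so any quotient of $E_{!*}$ is a quotient of $\Hamp(E)$ and any submodule of $E_{!*}$ is a submodule of $\Hamr(E)$. Parts (i) and (ii) rule out either being supported on $\X\sminus\X^\cyc$, which is exactly the characterization of $E_{!*}$ as the minimal extension $\alpha_{!*}(E_{!*}|_{\X^\cyc})$ across the closed complement of $\X^\cyc$.
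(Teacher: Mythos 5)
Your overall structure matches the paper's own (one-line) proof, which just points to Lemma \ref{adjoint}(ii) and Proposition \ref{prop:Hamkill}; the observation that $\X(\la,\mu)$ lies in $\X^\cyc$ if and only if $\mu=\emptyset$ is exactly what the paper uses in its proof of Corollary \ref{cor:main}, and part (iii) is indeed formal from (i)--(ii) and the factorization $\Hamp(E)\twoheadrightarrow E_{!*}\hookrightarrow\Hamr(E)$. The worry you raise in part (ii) about verifying $\ms{M}\in\ss_q$ is less of an obstacle than you anticipate: for any simple coherent module with locally finite $\mu(\g)$-action, the nilpotent part of $\mu(\one)$ is a $\ddd$-endomorphism (because $\ad\mu(\one)$ acts with integer eigenvalues on $\ddd$), hence vanishes by Schur, so $\mu(\one)$-semisimplicity is automatic for simple mirabolic modules.

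The step that is not justified is the assertion that Proposition \ref{prop:Hamkill} ``forces $c\in\Sing_+$''. You correctly reduce to a simple $\ms{M}$ with $\Supp\ms{M}=\ol{\X((m^v,1^w),(m^u))}$ and $u\ge 1$, but case (3) of that proposition covers the stratum $\X(\emptyset,(1^n))$, which also lies in $\X\sminus\X^\cyc$, and there the conclusion from $\Ham_c(\ms{M})\neq 0$ is $c\in\Z_{>0}$, \emph{not} $c\in\Sing_+$. Every element of $\Sing_+$ is a non-integral rational, so $\Z_{>0}\cap\Sing_+=\emptyset$ and the hypothesis $c\notin\Sing_+$ does not dispose of this case. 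Concretely, at $c=1$ the module $\oo_\SSL\boxtimes\delta_V$ is a simple object of $\ss_1$ supported on $\SSL\times\{0\}\subset\X\sminus\X^\cyc$ with $1\otimes\delta\in\Ham_1$, so the argument as written would not rule out quotients of this type. To close the gap one must either strengthen the hypothesis to $c\notin\Sing_+\cup\Z_{>0}$ (consistent with the Remark that $c\in\Z=\Sing_0$ ``will not be considered in this article'') or give a separate argument for $\X(\emptyset,(1^n))$; the paper's terse proof leaves this implicit too, but a complete write-up must address it.
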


For $\lambda \in \mf{t}_n^* / W$, let $\mf{m}_{\lambda}$ be the corresponding maximal ideal in $(\sym \tSL)^W \subset \htrig_{\kappa}$. Define $\mc{P}_{\kappa,\lambda} = \htrig_{\kappa} / \htrig_{\kappa} \cdot \mf{m}_{\lambda}$. Then, it follows from remark \ref{rem:radialiso} that $\Hamp (\mc{P}_{\kappa,\lambda}) = \CG_{\lambda,c}$ and thus $\Ham_c(\CG_{\lambda,c}) = \mc{P}_{\kappa,\lambda}$. Corollary \ref{cor:extensions} implies that, for $c \notin \Sing_+$, 
$$
(\mc{P}_{\kappa,\lambda})_{!*} = \alpha_{!*}[ (\mc{P}_{\kappa,\lambda})_{!*} |_{\X^{\cyc}}]
$$
is the minimal extension of its restriction to the cyclic locus. This is not true in general of the Harish-Chandra module $\CG_{\lambda,c}$ because it will have quotients supported on the complement $\X \sminus \X^{\cyc}$ when $c \in \Sing_-$. Thus, $(\mc{P}_{\kappa,\lambda})_{!*}$ is ``better behaved'' than $\CG_{\lambda,c}$. Note also that Proposition \ref{prop:Hamkill} implies that $(\mc{P}_{\kappa,\lambda})_{!*}$ has no submodule supported on the complement of $\X^{\reg}$. 

\subsection{}
We define an analogue of $\Ham_{c}$ for the Levi subgroup $\LSL$ of $\SSL$ by setting 
$$
\Ham_{c}(\ms{M}) = \{ m \in \Gamma(\X_{\LSL},\ms{M}) \ | \ \arr{u} \cdot m = c \Tr(u) m, \ \forall \ u \in \mf{l}(\g) \},
$$
where $\X_{\LSL} = \LSL \times V$ and $\mf{l}(\g) = \Lie L$. Since we have an explicit description of the $\LSL$-cuspidal mirabolic modules, we can determine which of them belong to the kernel of $\Ham^L_{c}$. 

\begin{prop}\label{prop:cuspidalkill}
Let $c \in \C$ and $\ms{M}(r,u,v,w)$ the module defined in (\ref{eq:mruvw}). If $c \notin \Q$ then $\Ham_{c}(\ms{M}(r,u,v,w)) = 0$. Otherwise, 

\vi if $c \in \Q_{\le 0}$ then 
$\dis
\Ham_{c}(\ms{M}(r,u,v,w)) \neq 0
$
if and only if $u = 0$ and $c - \frac{r}{m} \in \Z_{\le 0}$. 

\vii If $c \in \Q_{> 0}$ then 
$\dis
\Ham_{c}(\ms{M}(r,u,v,w)) \neq 0
$
if and only if $v = 0$ and $c + \frac{r}{m} \in \Z_{> 0}$. 

\end{prop}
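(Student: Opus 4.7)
The plan is to exploit the external tensor product structure of $\ms{M}(r,u,v,w)$ along the factors of the Levi, and compute $\Ham_c$ factor by factor.

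First I would observe that the group $L = GL_m^{u+v} \times (\Cs)^w$ acts on $\LL \times V$ factor-wise, so $\mf{l}(\g) = \mf{gl}_m^{\oplus(u+v)} \oplus \C^{\oplus w}$ and the trace decomposes accordingly. Since $\ms{M}(r,u,v,w)$ is itself an external tensor product of the three types of factors appearing in \eqref{eq:mruvw}, the eigenvalue condition $\vec{A}\cd m = c\Tr(A)m$ decouples across factors. Therefore $\Ham_c(\ms{M}(r,u,v,w))$ is the external tensor product of the Hamiltonian reductions of the individual factors, and is nonzero if and only if each factor contributes nonzero. Thus it suffices to analyze three local models:
\begin{itemize}
\item[(a)] $\ms{L}_r \boxtimes \C[V_m]$ on $SL_m \times V_m$ (appearing $v$ times);
\item[(b)] $\ms{L}_r \boxtimes \delta_{V_m}$ on $SL_m \times V_m$ (appearing $u$ times);
\item[(c)] $\ms{E}_{-r/m}$ on $\C$ (appearing $w$ times).
\end{itemize}

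For each local model I would separate the action of $\mf{gl}_m = \mf{sl}_m \oplus \C\cd\mathrm{id}$: the $\mf{sl}_m$-part forces $SL_m$-invariance of sections (with trivial character, since $\Tr|_{\mf{sl}_m}=0$), while the central $\mathrm{id}$ must act by $cm$ on an $m$-dimensional block, or by $c$ on a $1$-dimensional block. In case (c), the module $\ms{E}_{-r/m}$ is the minimal extension of $\mc{O}_{\Cs}^{-r/m}$; its Euler eigenvalues form the coset $-r/m + \Z$, so $\Ham_c \neq 0$ iff $c + r/m \in \Z$. In case (a), the $\vec{\mathrm{id}}$-eigenvalues on $\C[V_m]$ are the non-negative integers (the polynomial degrees), combined with the monodromy constraint $c \equiv r/m \pmod 1$ forced by Corollary~\ref{cor:summarycuspidal}, this cuts out $c - r/m \in \Z_{\ge 0}$ for nonvanishing. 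In case (b), the $\vec{\mathrm{id}}$-eigenvalues on $\delta_{V_m}$ are $\leq -m$, giving the symmetric condition $c + r/m \in \Z_{>0}$ after combining with the monodromy.

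Once these local computations are in hand, I would combine them: if both $u>0$ and $v>0$, the conditions coming from (a) ($c$ large and positive) and (b) ($c$ sufficiently negative) are mutually incompatible, hence $\Ham_c = 0$. The sign of $c$ selects which of $u, v$ must vanish: $c \le 0$ forces $u = 0$ (eliminating (-) factors) and the (+) and $\ms{E}$ contributions collapse into the condition $c - \tfrac{r}{m} \in \Z_{\le 0}$; symmetrically $c > 0$ forces $v = 0$ and yields $c + \tfrac{r}{m} \in \Z_{>0}$. When $c \notin \Q$, the congruence $c \equiv \pm r/m \pmod 1$ has no integer solution so all three local models vanish.

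The main obstacle is the sufficiency direction in cases (a) and (b): one must check that the $\mf{sl}_m$-invariant subspace of the specified $\vec{\mathrm{id}}$-eigenspace is genuinely nonzero. This amounts to an analysis of the $SL_m$-representation content of $\Gamma(SL_m, \ms{L}_r) \otimes \Sym^{cm}(V_m^*)$ (respectively with $\Sym^\bullet(V_m)$ via Fourier duality on the $\delta_{V_m}$ side). I would handle this using the cleanness of $\ms{L}_r$ (so $\ms{L}_r = \jmath_! \sL_r = \jmath_{!*}\sL_r$ on the regular unipotent class) and a Beilinson-Bernstein/localization argument in the spirit of Theorem~\ref{BBthm}: the cuspidal character sheaf $\ms{L}_r$ has a specific central character in $\FZ(SL_m)$, which by the Harish-Chandra isomorphism matches a dominant weight in the chosen eigenspace whenever the above integrality condition holds, producing the required invariant section.
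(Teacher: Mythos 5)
Your high-level decomposition (factoring $\ms{M}(r,u,v,w)$ as an external tensor product over the blocks of $L$ and computing $\Ham_c$ block-by-block) is the same strategy the paper uses, and your analysis of the rank-one factor $\ms{E}_{-r/m}$ matches the paper's direct computation. However, there are two genuine problems with the rest.

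First, your treatment of the $SL_m$-blocks (cases (a) and (b)) is precisely where the paper invokes a black-box external result, namely \cite[Theorem 9.8]{CEE}, which computes $\Ham_c(\sL_r\boxtimes\C[V])$ and $\Ham_c(\sL_r\boxtimes\delta_V)$ for the cuspidal cleanly-extended local system on the regular unipotent class. Your proposal to replace this by a Beilinson--Bernstein argument, ``matching central characters'' between $\FZ(SL_m)$ and the chosen Euler eigenspace, is left entirely at the level of a wish; it would require identifying the $SL_m$-representation content of $\Gamma(SL_m,\sL_r)\otimes\sym^\bullet(V_m^*)$ explicitly, which is essentially the content of the CEE theorem. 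As written this is not a proof but an announcement that one is needed.

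Second, and more concretely, your sign bookkeeping in cases (a) and (b) is wrong under the paper's conventions. Under the moment map used throughout (cf.\ the remark after Proposition \ref{XX}), $\mu(\one) = -\eu_V$, so the eigenvalues of $\mu(\one)$ on $\C[V_m]$ are the \emph{non-positive} integers, not the non-negative ones. Consequently case (a) should force $c\le 0$, while case (b) (where $\eu_{V_m}$ has eigenvalues $\le -m$ on $\delta_{V_m}$, hence $\mu(\one)$ eigenvalues $\ge m$) forces $c\ge 1$; these \emph{are} mutually exclusive, which is exactly what you need. But with your stated formulas---``$c-\tfrac{r}{m}\in\Z_{\ge 0}$'' for (a) and ``$c+\tfrac{r}{m}\in\Z_{>0}$'' for (b)---both conditions are satisfiable simultaneously (e.g.\ take $c\ge\tfrac{r}{m}$), so your claim that ``the conditions coming from (a) and (b) are mutually incompatible'' does not actually follow from your own eigenvalue analysis. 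You need to flip the sign in case (a) (and reconcile the resulting condition with the statement of the proposition); at present the dichotomy $u=0$ versus $v=0$, which is the whole point of the proposition, is not established.
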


\begin{proof}
If we decompose the Lie algebra $\mf{l}$ of $L$ as $\mf{gl}_{m}^{\oplus (u + v)} \oplus \mf{gl}_{1}^{\oplus w}$ then the character $c \Tr$ of $\g$ restricts to the character $\frac{m c}{n} \Tr$ on $\mf{gl}_{m}$ and $\frac{c}{n} \Tr$ on $\mf{gl}_{1}$. Noting that there is a minus sign in the definition of $\chi_r$ given in \cite[\S 9.8]{CEE} that does not appear in our definition of $\theta$ as given in section \ref{sec:monodromydefinition}, the result \cite[Theorem 9.8]{CEE} says that if $r$ is coprime to $n$ then $\Ham_{c}(\ms{L}_r \boxtimes \C[V])$ is nonzero if and only if $nc \in \Z_{\le 0}$ and $r \equiv -nc \ \mathrm{mod} \ n$. Similarly, $\Ham_{c}(\ms{L}_r \boxtimes \delta_V)$ is nonzero if and only if $-nc \in \Z_{\ge n}$ and $r \equiv -nc \ \mathrm{mod} \ n$. Therefore, if $\Ham_{c}^L(\ms{M}(r,u,v,w)) \neq 0$ then either $u = 0$ or $v = 0$.  

If $v \neq 0$, and hence $\lambda^- = (0,\ds,0)$, then we must have $- m c \in \mathbb{N}$ and $r \equiv - m c \ \mathrm{mod} \ m$ i.e. $c = -\frac{r}{m } - a$ for some $a \in \mathbb{N}$. In particular, $c \in \Q_{\le 0}$. To show that $\Ham_{c}(\ms{M}(r,0,v,w)) \neq 0$ we need to check that 
$$
\Ham_{c} \left( \ms{E}_{\frac{r}{m}} \right) = \Ham_{-\frac{r}{m} - a} \left( \ms{E}_{\frac{r}{m}} \right) \neq 0.
$$
Noting that $\frac{r}{m} \neq 0$ in $\C / \Z$, the module $\ms{E}_{\frac{r}{m}}$ equals $\C[z,z^{-1}] \cdot z^{\frac{r}{m}}$. Since $\mu_{\C}(\one) = - z \pa_z$, the nonzero section $z^{a + \frac{r}{n}}$ belongs to $\mathbb{H}_{c} \left( \ms{E}_{\frac{r}{m}} \right)$. On the other hand, if $u \neq 0$ we must have $c = \frac{r}{m} + a$ for some $a \in \Z_{\ge 1}$. This implies that $\lambda^+ = (1^u)$.  
\end{proof}

Recall from (\ref{sec:torus}) that the $\TSL$-cuspidal mirabolic modules on $V$ are $\ms{M}(k) := \C[V_k] \boxtimes \delta_{V_{n-k}}$, for $k = 0, \ds, n-1$ and $\ms{M}(q) = j_{!*} \mc{O}_{T}^{q}$, where $j : T \hookrightarrow V$ is the embedding of the open $T$-orbit and $q \in \Cs$.

\begin{prop}\label{prop:supportT}
\begin{enumerate}
\item If $k \neq 0,n$ then $\Ham_{c}(\ms{M}(k)) = 0 $ for all $c$. 
\item If $k = 0$ then $\Ham_{c}(\ms{M}(0)) \neq 0$ if and only if $c \in \Z_{> 0}$. 
\item If $k = n$ and $q \neq 1$ then $\Ham_{c}(\ms{M}(q)) \neq 0$ if and only if $q = \exp(2 \pi \sqrt{-1} c)$. If $q = 1$ then $\Ham_{c}(\ms{M}(q)) \neq 0$ if and only if $c \in \Z_{\le 0}$.    
\end{enumerate}
\end{prop}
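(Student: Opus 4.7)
The plan is to treat each of the three cases by direct computation, exploiting the very explicit description of the simple cuspidal modules furnished by Proposition~\ref{lem:cuspidalkillT}. Since for $\LSL = \TSL$ we have $\LL = \{e\}$, the variety $\X_{\LL}$ reduces to $V$ and $\mf{l}(\g) = \mf{t}$ is the diagonal torus in $\mf{gl}_n$, acting on $V$ by coordinate scaling. The Hamiltonian reduction $\Ham_{c}(\ms{M})$ is therefore the joint $c$-eigenspace on $\Gamma(V,\ms{M})$ of the $n$ commuting operators $\mu(e_{ii})$, which act as (a fixed sign of) the Euler operators $x_i\partial_{x_i}$. All three cases reduce to an elementary weight-space calculation.

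For the first part, $\ms{M}(k) = \C[V_k]\boxtimes \delta_{V_{n-k}}$ with $1 \le k \le n-1$ is a box product, so a joint $\mu(\mf{t})$-eigensection decomposes as a tensor of a joint eigensection in $\C[V_k]$ and one in $\delta_{V_{n-k}}$. The Euler operators $x_i\partial_{x_i}$ for $i \le k$ have non-positive integer eigenvalues on the polynomial basis $\{x^\alpha\}$, while those for $i > k$ have strictly positive integer eigenvalues on the basis $\{\partial^\beta\delta\}$ of $\delta_{V_{n-k}}$, by the standard computation $x_j\partial_j \cdot \partial_j^b\delta = -(b+1)\,\partial_j^b\delta$. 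Requiring a single uniform value $c$ across all $n$ coordinates then forces $c$ into the empty intersection of two disjoint half-lines in $\Z$, giving $\Ham_c(\ms{M}(k)) = 0$ for every~$c$.

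For the second part, $\ms{M}(0) = \delta_V$ has basis $\{\partial^\alpha \delta : \alpha \in \Z_{\ge 0}^n\}$, each a joint Euler-eigenvector with eigenvalues $(\alpha_i+1)_{i}$; uniformity forces $\alpha_1 = \cdots = \alpha_n = a \ge 0$, yielding a one-dimensional $c$-eigenspace precisely when $c \in \Z_{>0}$. For the third part, when $q \ne 1$ the parameter $c'$ with $\exp(2\pi\sqrt{-1}\,c') = q$ is non-integral, so $\mc{O}_T^q$ is clean and $j_{!*}\mc{O}_T^q = j_!\mc{O}_T^q = j_*\mc{O}_T^q$; thus $\Gamma(V,\ms{M}(q)) \iso \Gamma(T,\mc{O}_T^q)$ has a basis $\{x^\alpha e_q : \alpha \in \Z^n\}$ of joint Euler-eigensections with eigenvalues $(\alpha_i + c')_i$, and a uniform joint eigenvalue occurs iff $\alpha_1=\cdots=\alpha_n \in \Z$, producing nontrivial $\Ham_{c}$ precisely when $c \equiv c' \pmod{\Z}$, i.e.\ iff $q = \exp(2\pi\sqrt{-1}\,c)$. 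When $q = 1$, one has $j_{!*}\mc{O}_T = \C[V]$, so the global sections are polynomials and uniform eigensections are the powers $(x_1\cdots x_n)^k$ with $k \ge 0$, yielding the condition $c \in \Z_{\le 0}$.

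The principal technical obstacle is bookkeeping of sign conventions: fixing $\mu$ as a genuine Lie algebra homomorphism (rather than the naive infinitesimal-action anti-homomorphism) and matching the identification of the monodromy $q$ with the eigenvalue parameter $c'$ under $q = \exp(2\pi\sqrt{-1}\,c')$ so that the computed half-lines and cosets align with the asserted $\Z_{>0}$, $\Z_{\le 0}$, and $q = \exp(2\pi\sqrt{-1}\,c)$. Once these conventions are pinned down consistently, the argument is entirely elementary and structurally parallels the non-torus analysis in the proof of Proposition~\ref{prop:cuspidalkill}, with the simplification that no character-sheaf factor $\mathsf{L}_r$ enters.
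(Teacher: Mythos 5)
Your proposal is correct and takes essentially the same approach as the paper: a direct weight-space computation of the $\mu(\mathfrak{t})$-action on the explicit cuspidal modules, case by case. The only cosmetic difference is in part (3) with $q \neq 1$, where you appeal to cleanness of $\mc{O}_T^q$ to compute $\Gamma(V,\ms{M}(q))$ directly as $\Gamma(T,\mc{O}_T^q)$, whereas the paper equivalently constructs the box-product module $\ms{M}'(q)$ and identifies it with $j_{!*}\mc{O}_T^q$; your acknowledged sign wobbles (e.g.\ the description of the $x_j\partial_j$-eigenvalues on $\delta_{V_{n-k}}$ as "strictly positive" right after computing them to be $-(b+1)$) mirror small typos in the paper's own proof of parts (2) and (3).
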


\begin{proof}
(1)\en We fix a basis $x_1, \ds, x_n$ of $V^*$ such that $x_1, \ds, x_k$ is a basis of $V_k$ and $x_{k+1}, \ds, x_n$ a basis of $V_{n-k}$. Then 
\begin{equation}\label{eq:defnofH}
\Ham_{c}(\ms{M}(k)) = \left\{ m \in \ms{M}(k) \ \Big| \ \left(x_i \pa_i + c \right) \cdot m = 0, \ 1 \le i \le n \right\}.
\end{equation}
For $i \le k$, there exists some $0 \neq m \in \ms{M}(k)$ with
$(x_i \pa_i + c) \cdot m = 0$ if and only if $c \in \Z_{\le 0}$. Similarly, for $i > k$, there exists
some $0 \neq m \in \ms{M}(k)$ with $(x_i \pa_i + c) \cdot m = 0$ if and only if $c \in \Z_{\ge 1}$. Therefore $\Ham_{c}(\ms{M}(k)) = 0$. \vskip2pt

(2)\en 
 This follows from (\ref{eq:defnofH}), noting that $x_i \pa_i \cdot m = \epsilon m$ implies that $\epsilon \in \Z_{\ge 1}$ for $m \in \delta_V$. 
 \vskip2pt

(3)\en Let $x_1, \ds, x_n$ be a basis of $V$ such that $T = (x_i \neq 0, \ \forall \ i)$ and $\TSL = V(x_1 \cdots x_n = 1)$. If $q = 1$ then $\mc{O}_{T}^{q}$ is the trivial local system on $T$ and its minimal extension to $V$ is just $\C[V]$. As in part (2), $\mathbb{H}_c(\C[V]) \neq 0$ if and only if $c \in n \Z_{\le 0}$. If $q \neq 1$ then consider 
$$
\ms{M}'(q) = \left[ \dd_1 / \dd_1 ( x_1 \pa_1 - c) \right] \boxtimes \cdots \boxtimes \left[ \dd_n / \dd_n ( x_n \pa_n - c ) \right],
$$
where $\dd_i = \C \langle x_i,\pa_i \rangle$. The module $\ms{M}'(q)$ is a simple $\dd$-module whose restriction to $\BT$ is the quotient of $\dd_{T}$ by $x_i \pa_i - x_j \pa_j$ and $\mathsf{eu}_V - nc$. This is precisely the local system $\mc{O}_{T}^{q}$. Hence $\ms{M}'(q) = j_{!*} \mc{O}_{T}^{q}$.  
\end{proof}

\subsection{} 
The group $N_{G}(L)$ is not connected. Therefore, for a $(N_{G}(L),c)$-monodromic $\dd$-module $\ms{M}$ on $\X_{\LSL}$, we define 
$$
\Ham_{L}(\ms{M}) := \Gamma(\X_{\LSL}, \mm)^{N_{G}(L)} \subset \Ham_c(\mm).
$$ 

\begin{cor}\label{cor:keyfact}
Let $\ms{M} \in \ss_q$ be a simple mirabolic module whose support is $\overline{\X(\LSL,\Omega)}$. 
\begin{enumerate}
\item The natural map $\Ham_{c}(\ms{M}) \to \Ham_{L}(\Upsilon^* \ms{M})$ is an embedding.
\item If $\Ham_{c}(\ms{M}) \neq 0$ then there exists an $\LSL$-cuspidal mirabolic module $\ms{N}$ such that $\Ham_{c}(\ms{N}) \neq 0$.
\end{enumerate}
\end{cor}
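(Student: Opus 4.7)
The plan is to deduce (1) directly from the $\SSL$-invariant embedding of Proposition \ref{prop:keyfact}, and then combine (1) with the tensor-product structure theorem (Lemma \ref{lem:locallyfactorL}) for simple monodromic modules on $Z(\LSL)^\circ\times\X_{\LL}$ to extract the required cuspidal constituent.

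For (1): since the character $c\Tr$ vanishes on $\sll\subset\g$, any $m\in\Ham_c(\ms{M})$ is $\mu(\sll)$-invariant; as $\SSL$ is simply connected, this action integrates, so $m\in\Gamma(\X,\ms{M})^\SSL$. Proposition \ref{prop:keyfact} then shows the induced map $\Ham_c(\ms{M})\to\Gamma(\Xo_\LSL,\Upsilon^*\ms{M})^{N_\SSL(\LSL)}$ is injective. Since $\Upsilon$ is $G$-equivariant, $\Upsilon^*m$ again satisfies $\mu(u)\Upsilon^*m=c\Tr(u)\Upsilon^*m$ for all $u\in\mf{l}(\g)$. Combining the $N_\SSL(\LSL)$-invariance with the scalar action of $Z(G)^\circ$ furnished by the $(G,c)$-monodromic structure (and using $N_G(L)=Z(G)^\circ\cdot N_\SSL(\LSL)$) yields $\Upsilon^*m\in\Ham_L(\Upsilon^*\ms{M})$, proving (1).

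For (2): by (1), $\Ham_L(\Upsilon^*\ms{M})\neq 0$. The module $\Upsilon^*\ms{M}$ is holonomic, hence of finite length, and since $\Ham_L$ is left-exact, an easy induction on length produces a simple subquotient $\ms{K}$ of $\Upsilon^*\ms{M}$ with $\Ham_L(\ms{K})\neq 0$. By Lemma \ref{lem:stratainfo}, the support of $\ms{K}$ lies in $Y=Z(\LSL)^\circ\times\bN^{\LL}\times V$, so via Kashiwara's theorem we may regard $\ms{K}$ as a simple regular holonomic $\dd$-module on $Z(\LSL)^\circ\times\X_{\LL}$; it is $L$-monodromic by Lemma \ref{lem:monoimpliesq}(i), and its characteristic variety lies in $\zero\times\Nnil(\LL)$. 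Lemma \ref{lem:locallyfactorL}(i) then gives $\ms{K}\simeq\mathsf{L}\boxtimes\ms{N}$ with $\mathsf{L}$ a local system on $Z(\LSL)^\circ$ and $\ms{N}$ a simple $\dd$-module on $\X_{\LL}$, which inherits the properties required to be an $\LSL$-cuspidal mirabolic module (support in $\bN^{\LL}\times V$, characteristic variety in $\Nnil(\LL)$, regular singularities, and $\LL$-equivariance via the simple connectedness of $\LL$).

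To conclude, choose a nonzero $m\in\Ham_L(\ms{K})\subset\Gamma(Z(\LSL)^\circ,\mathsf{L})\otimes\Gamma(\X_{\LL},\ms{N})$ and write $m=\sum_i f_i\otimes g_i$ with $\{f_i\}$ linearly independent. Since $\mf{l}'\subset\mf{l}(\g)$ is trace-free and acts only on the second tensor factor, the relation $\mu(u)m=c\Tr(u)m=0$ for $u\in\mf{l}'$ forces $\mu(\mf{l}')g_i=0$ for every $i$; as some $g_i$ is nonzero, this produces a nonzero element of $\Ham_c(\ms{N})$. The main technical obstacle will be the careful bookkeeping of monodromic versus equivariant structures across $\Upsilon^*$ and Kashiwara's equivalence, in particular checking that the cuspidal conditions on $\ms{N}$ really are inherited from $\ms{K}$, and that nonvanishing of $\Ham_L(\ms{K})$ genuinely descends to nonvanishing of the Hamiltonian reduction of the cuspidal factor $\ms{N}$ rather than being absorbed into the local-system factor $\mathsf{L}$.
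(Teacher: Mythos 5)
Your proof follows essentially the same route as the paper's: part (1) is Proposition \ref{prop:keyfact} combined with the identification $\Ham_c(\mm)=\Gamma(\X,\mm)^G$ after lifting to $\ss_c$, and part (2) feeds a simple subquotient of $\Upsilon^*\mm$ with nonvanishing Hamiltonian reduction into the tensor-decomposition Lemma \ref{lem:locallyfactorL}; your spelled-out final paragraph supplies the implication $\Ham_c(\mathsf{L}\boxtimes\ms{N})\neq 0\Rightarrow\Ham_c(\ms{N})\neq 0$ that the paper leaves implicit. One inaccuracy to fix in that last step: since $L$ acts trivially on $Z(\LSL)^\circ$, \emph{all} of $\mu(\mf{l}(\g))$ (not merely $\mu(\mf{l}')$) acts only on the second tensor factor, and you need this stronger fact --- checking $\mu(u)g_i=c\Tr(u)g_i$ for every $u\in\mf{l}(\g)$, centre included --- to conclude $g_i\in\Ham_c(\ms{N})$, because $\Ham_c$ on $\X_{\LL}$ is defined using the full Lie algebra $\mf{l}(\g)$; also the $\LSL$-equivariance of $\ms{N}$ comes from $\LSL$ being connected together with $c\Tr|_{\Lie\LSL}=0$, not from simple connectedness of $\LL$.
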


\begin{proof}
We endow $\mm$ with the structure of a $(G,c)$-monodromic module. Then,
$$
\Gamma(\X,\mm)^G = \{ m \in \Gamma(\X,\mm)^{\SSL} \ | \ \arr{\mathbf{1}} \cdot m = c \Tr(\mathbf{1}) m \} = \Ham_c(\mm),
$$
and similarly, 
$$
\Ham_{L}(\Upsilon^* \ms{M}) = \{ m \in \Gamma(\X_{\LSL},\Upsilon^* \ms{M})^{N_{\SSL}(\LSL)} \ | \ \arr{\mathbf{1}} \cdot m = c \Tr(\mathbf{1}) m \},
$$
are independent of the choice of lift of $\mm$ to $\ss_c$. Therefore the first claim follows from Proposition \ref{prop:keyfact}. As shown in Theorem \ref{prop:restrictionfunctor}, $\Upsilon^* \ms{M}$ is a mirabolic $\dd$-module (which, as in the proof of Proposition \ref{thm:irrsupport}, we may think of $\Upsilon^* \ms{M}$ as a $\dd$-module on $Z(\LSL)^{\circ} \times \X_{\LL}$, supported on $Z(\LSL)^{\circ} \times \mn^{L} \times V$). Since $\Ham_{c}$ is a left exact functor, $\Ham_{c} (\ms{K}) \neq 0$ implies that there is a simple submodule $\ms{K}'$ of $\ms{K}$ such that $\Ham_{c}(\ms{K}') \neq 0$. Thus, if $\Ham_{L}(\Upsilon^* \ms{M}) \subset \Ham_c(\Upsilon^* \ms{M})$ is nonzero, then there exist a simple submodule $\ms{N}'$ of $\Upsilon^* \ms{M}$ such that $\Ham_{c}(\ms{N}')$ is nonzero. As noted in the proof of Lemma \ref{lem:locallyfactorL}, $\ms{N}'$ is isomorphic to $\mathsf{L} \boxtimes \ms{N}$, where $\ms{N}$ is an $\LSL$-cuspidal mirabolic module. The fact that $\Ham_{c}(\ms{N}) \neq 0$ implies that $\Ham_{c}(\ms{N}) \neq 0$ too.  
\end{proof}  

The above results allow us to prove Proposition \ref{prop:Hamkill}. 

\begin{proof}[Proof of Proposition \ref{prop:Hamkill}]
By Proposition \ref{thm:irrsupport}, the support of $\mm$ is the closure of some relevant stratum $S = \X((m^v,1^w),(m^u))$. Assume that $m \neq 1$, so that the associated Levi subgroup $\LSL$ is not a torus. If $\Ham_{c}(\ms{M}) \neq 0$ then Corollary \ref{cor:keyfact} implies that $\Ham_{c}(\ms{N}) \neq 0$ for some $\LSL$-cuspidal mirabolic module on $\X_{\LL}$. Proposition \ref{prop:cuspidalkill} implies that either $c = \frac{rn}{m} \in \Sing_+$ and $\ms{N}$ must be supported on $\X_{\LL}(\lambda^{\pm}(0;v,w))$ (in which case, $S = \X((m^v,1^w),\emptyset)$), or $c = - \frac{rn}{m} \in \Sing_-$ and $\ms{N}$ must be supported on $\X_{\LL}(\lambda^{\pm}(u;0,w))$ (in which case, $S = \X((1^w),(m^u)$). 

Similarly, if $m = 1$ then Proposition \ref{prop:supportT} implies that the support of $\mm$ must be the closure of $\X(\emptyset,(1^n))$, which equals $\SSL \times \{ 0 \}$ (recall that we have assumed in the statement of Proposition \ref{prop:Hamkill} that $\Supp \mm \neq \X$, so we disregard possibility (3) of Proposition \ref{prop:supportT}). 
\end{proof}

\subsection{Proof of Theorem \ref{main} and Corollary \ref{cor:main}}\label{sec:proofs} Let  $\ms{M}$
be a simple quotient of $\CG_{\lambda,c}$. Such a quotient must be generated by a global section $\mathbf{m}$ such that $\g_{c} \cdot \mathbf{m} = 0$. Hence $\Ham_{c}(\ms{M}) \neq 0$.

 First we consider the case where $\mm$ is supported on the complement of $\X^{\reg}$. 
Then, Proposition \ref{prop:Hamkill} implies that either 
\begin{itemize}
\item $c = \frac{r}{m} \in \Sing_-$ and the support of $\ms{M}$ is the closure of a stratum $\X((m^v,1^w),\emptyset)$, for some $v,w \in \mathbb{N}$ such that $n = v m + w$,
\item $c = \frac{r}{m} \in \Sing_+$ and the support of $\ms{M}$ is the closure of a stratum $\X((1^w),(m^u))$, for some $u,w \in \mathbb{N}$ such that $n = u m + w$.
\end{itemize}
Let $\mc{U}$ be the enveloping algebra of $\mf{sl}$ and denote by $\tau : \mc{U}^{\mathrm{op}} \stackrel{\sim}{\rightarrow} \mc{U}$ the isomorphism defined by $\tau(x) = - x$ for $x \in \g$. Since $\dd(\SSL) = \C[\SSL] \o \mc{U}$, $\tau$ extends to an isomorphism $\tau : \dd(\SSL)^{op} \iso \dd(\SSL)$, $\tau(f \o u) = (1 \o \tau(u)) \cdot (f \o 1)$. Then $\tau$ restricts to an automorphism of $\FZ$. For $\lambda \in \tSL^* / W$, write $\FZ_{\tau(\lambda)} := \tau(\FZ_{\lambda})$. Under the Harish-Chandra homomorphism, $\tau$ corresponds to the map $W \cdot \lambda \mapsto W \cdot (-\lambda-2 \rho)$. By \cite[Proposition 6.2.1]{MirabolicCharacter}, $\mathbb{D}(\CG_{\lambda,c}) \simeq \CG_{\tau(\lambda),-c + 1}$ (the parameter $c'$ of \textit{loc. cit.} is related to our parameter $c$ by $c = -\frac{c'}{n}$). Therefore, if $\CG_{\lambda,c}$ has a submodule supported on the closure of some stratum $\X(\mu,\nu)$, then $\CG_{\tau(\lambda),-c +1}$ will have a quotient supported on the closure of $\X(\mu,\nu)$. Hence if $\ms{N}$ is a simple submodule of $\CG_{\lambda,c}$ that is supported on the complement of $\X^{\reg}$ then either, in case (1), the support of $\ms{N}$ must be the closure of a stratum $\X((1^w),(m^u))$, for some $u,w \in \mathbb{N}$ such that $n = u m + w$ or, in case (2), the support of $\ms{N}$ must be the closure of a stratum $\X((m^v,1^w),\emptyset)$, for some $v,w \in \mathbb{N}$ such that $n = v m + w$. 

Corollary \ref{cor:main} now follows from the fact that $\X(\mu,\nu)$ is contained in $\X^{\mathrm{cyc}}$ if and only if $\nu = \emptyset$. 

\begin{rem} 
The $\dd$-module $\CG_{\la,0}$
has been studied earlier  by Galina and Laurent
\cite{GalinaLaurent2}, in connection with Kirillov's conjecture, c.f. \cite{Ba}. 
In \cite{GalinaLaurent2}, the  authors claim that the $\dd$-module $\CG_{\la,0}$ has no quotients supported on $\X\sminus \X^\reg$. However, the argument on page 17 of \cite{GalinaLaurent2} seems to have a serious gap. Specifically, the three options considered there do not exhaust all the possibilities since there may be other \textit{relevant} strata, see definition
  \ref{distinguished}, that need to be considered.
\end{rem}

\subsection{Finite generation of $\Hamr(E)$}

We expect, but can not prove, that $\Hamr(E)$ is a coherent $\ddd$-module, for any finitely generated $\htrig_\kappa$-module $E$. However, in the rational situation, it is possible to show that $\Hamr(E)$ is a coherent $\ddd$-module for $E$ in category $\mc{O}$. 

Therefore, for the remainder of this section, let $\X = \mf{sl} \times V$, $\g = \mf{gl}(V)$, and let $\eu_{\mf{sl}}$ denote Euler vector field along the factor $\mf{sl}$. Let $\htrig_{\kappa}$ be the spherical subalgebra of the rational Cherednik algebra $\H_{\kappa}(\mf{h},W)$. In the rational situation, $\mc{O}_{\kappa}$ is the full subcategory of $\Lmod{\htrig_{\kappa}}$ consisting of all modules on which the action of $\C[\h^*]^W_+$ is locally nilpotent. The functor of Hamiltonian reduction, $\Ham$, and its left (resp. right) adjoint $\Hamp$ (resp. $\Hamr$) are defined as in the trigonometric case. 

\begin{thm}\label{thm:fg}
For any $M$ in $\mc{O}_{\kappa}$, the $\dd$-module $\Hamr(M)$ is finitely generated. Hence, it is a mirabolic module. 
\end{thm}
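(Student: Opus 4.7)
The plan is to produce a finite set of generators for $\Hamr(M)$ using the $\Z$-grading on $\ddd=\dd(\mf{sl}\times V)$ coming from $\Cs$-scaling on $\X$, which descends through quantum Hamiltonian reduction to the internal Euler grading on $\htrig_\kappa$. Under this grading, modules in rational category $\mc{O}_\kappa$ are bounded below in Euler weight with finite-dimensional generalized weight spaces, and the $(\htrig_\kappa,\ddd)$-bimodule $\mathsf{N}:=\ddd/\g_c\ddd$ inherits a grading with each isotypic component $\mathsf{N}^{(\sigma)}$ finitely generated over $\htrig_\kappa$ (by the Hilbert-theorem argument recalled in \S\ref{sec:qhr}), in degrees bounded in terms of $\sigma$.

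First I would verify that $\Hamr(M)$ inherits a $\Z$-grading, with the grading derivation implemented by the left $\ddd$-action of a lift $\mathbf{H}\in\ddd^G$ of the Cherednik algebra Euler element. The bounded-below spectrum of $M$ together with the bounded degrees of generators of each $\mathsf{N}^{(\sigma)}$ forces $\Hom_{\htrig_\kappa}^{\mathrm{gr}}(\mathsf{N}^{(\sigma)},M)$ to be bounded above in Euler degree with finite-dimensional graded pieces. Consequently, $\Hamr(M)=\bigoplus_\sigma\Hom_{\htrig_\kappa}(\mathsf{N}^{(\sigma)},M)$ is a bounded-above $\Z$-graded $\ddd$-module with finite-dimensional graded pieces.

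The key step is to show that a single top finite-dimensional strip generates $\Hamr(M)$ over $\ddd$. Fix $d_0$ so that $N_0:=\bigoplus_{d\geq d_0}\Hamr(M)_d$ is finite-dimensional, $G$-stable, and contains homomorphisms $\phi_1,\ldots,\phi_k\in\Hom_{\htrig_\kappa}(\mathsf{N}^G,M)=M$ corresponding to a set of $\htrig_\kappa$-generators $m_1,\ldots,m_k$ of $M$. Put $\ms{M}':=\ddd\cdot N_0\subseteq\Hamr(M)$; then $\Ham_c(\ms{M}')\supseteq\htrig_\kappa\cdot\{m_i\}=M=\Ham_c(\Hamr(M))$, so $\Ham_c(\Hamr(M)/\ms{M}')=0$ by exactness of $\Ham_c$. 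Since both $\Hamr(M)$ and $\ms{M}'$ are bounded above with the same top strip $N_0$, the quotient $\ms{M}'':=\Hamr(M)/\ms{M}'$ has top degrees strictly below $d_0$, and a downward induction on the top degree should force $\ms{M}''=0$: any top-degree class in $\ms{M}''$ would lift to a graded element outside $\ddd_{<0}\cdot\ms{M}'$, while being in $\ker\Ham_c$ and in a piece of $\Hamr(M)$ with no submodule killed by $\Ham_c$ (Lemma~\ref{adjoint}\vii).

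The main obstacle is precisely this descending step, which is a graded Nakayama-type lemma for bounded-above $\ddd$-modules: one must show that a nonzero top class in $\ms{M}''$ can be promoted to a submodule of $\Hamr(M)$ killed by $\Ham_c$, contradicting Lemma~\ref{adjoint}\vii. Here the interplay between the grading on $\ddd$ (which contains elements of both positive and negative degree) and the graded quotient structure is subtle; I would use the Fourier transform on $\mf{sl}\times V$ invoked in \cite{GG} and the remark at the end of \S\ref{sec:onetwo} to symmetrize positive and negative degree operators and extract the desired graded submodule. Once finite generation is established, the $(G,q)$-monodromic structure (built into the definition of $\Hamr$), the $\FZ$-local finiteness inherited from the $\C[\h^*]^W$-locally nilpotent action on $M$, and the rational analogue of Theorem~\ref{thm:admissible2} combine to show that $\Hamr(M)$ is a mirabolic module.
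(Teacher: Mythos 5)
Your opening observations (the Euler grading, the Hilbert-theorem argument for finite generation of isotypic components, the finite-dimensionality of graded pieces) are sound and do identify the structure that makes the theorem true. But the step you yourself flag as the main obstacle is a genuine gap, not a technicality to be smoothed over. The vanishing $\Ham_c(\ms{M}'')=0$ gives no information about whether $\ms{M}''=\Hamr(M)/\ms{M}'$ vanishes: the functor $\Ham_c$ kills plenty of nonzero mirabolic modules (that is the entire content of Theorem~\ref{cor:unstable}), and Lemma~\ref{adjoint}(ii) rules out only \emph{sub}modules of $\Hamr(M)$ annihilated by $\Ham_c$, whereas $\ms{M}''$ is a quotient. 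The rescue you propose --- promoting a top-degree class of $\ms{M}''$ to a $\ddd$-submodule of $\Hamr(M)$ killed by $\Ham_c$ --- does not go through: the $\ddd$-submodule generated by a lift of such a class meets $\ms{M}'$ and so has no reason to be annihilated by $\Ham_c$. Because $\ddd$ is graded in both positive and negative degrees, a module with finite-dimensional graded pieces bounded on one side is not automatically generated by its extremal strip, and the Fourier transform merely interchanges the two halves of the grading without producing the needed submodule.

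The paper's proof takes a genuinely different route that converts your grading observation into a finiteness statement via duality rather than via generation. It introduces a restricted dual $\ms{F}\mapsto\ms{F}^\star=\bigoplus_{(\lambda,\alpha)}\ms{F}_{(\lambda,\alpha)}^*$ on mirabolic modules (using the finite-dimensionality of $(\g,\eu_{\mf{sl}})$-isotypic components established in Lemma~\ref{lem:isofd}), shows that $\ms{F}^\star$ is again a mirabolic module of finite length (Lemma~\ref{lem:fgdual}, whose proof dualizes an ascending chain of submodules of $\ms{F}^\star$ into a descending chain of quotients of $\ms{F}$), and proves the key exchange $\Hamr\circ(-)^\star\simeq(-)^\star\circ\Hamp_r$ (Proposition~\ref{prop:swapping}). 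It follows that $\Hamr(M)\cong(\Hamp_r(M^\star))^\star$, and since $\Hamp_r$ visibly produces finitely generated mirabolic modules, $\Hamr(M)$ inherits finite length. That duality is exactly the ingredient your graded Nakayama step was reaching for but could not supply.
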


A coherent, $G$-monodromic $\dd$-module $\ms{F}$ on $\X$ is said to be mirabolic if the action of $\sym (\mf{sl})_+^{\SSL} \subset \dd(\X)$ on $\Gamma(\X,\ms{F})$ is locally nilpotent and the action of $\eu_{\mf{sl}}$ on $\Gamma(\X,\ms{F})$ is locally finite. The category of all mirabolic modules is denoted $\cc$. Proposition \ref{prop:monodromicfinite} implies that the action of $\g$ on $\Gamma(\X, \ms{F})$ is locally finite.   

\begin{lem}\label{lem:isofd}
For any mirabolic module $\ms{F}$, any $(\g,\eu_{\mf{sl}})$-isotypic component in $\ms{F}$ is finite dimensional. 
\end{lem}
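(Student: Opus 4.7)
The plan is to apply the Fourier transform $\mc{F}$ along the $\mf{sl}$-direction of $\X = \mf{sl} \times V$ and reduce the problem to a finiteness statement about $G$-orbits on $\mc{N} \times V$. Under the trace-form identification $\mf{sl}^* \cong \mf{sl}$, the subalgebra $\sym(\mf{sl})^{\SSL}_+ \subset \dd(\mf{sl})$ of invariant constant-coefficient operators is carried by $\mc{F}$ to $\C[\mf{sl}]^{\SSL}_+$ acting by multiplication, so the local nilpotence hypothesis on $\ms{F}$ forces $\mc{F}(\ms{F})$ to be set-theoretically supported on $\mc{N} \times V$. Since the trace form is $G$-invariant, $\mc{F}$ commutes with the $G$-action, and because $\mc{F}(\eu_{\mf{sl}}) = -\eu_{\mf{sl}} - \dim \mf{sl}$, the transformed module $\mc{F}(\ms{F})$ is again coherent, $G$-monodromic, holonomic, with locally finite Euler action.

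The key input is the classical finiteness statement that $\mc{N} \times V$ admits only finitely many $G$-orbits (cf.~\cite{GG}). Holonomicity then forces $\mc{F}(\ms{F})$ to have finite length, and since $\X$ is affine, $\Gamma(\X,-)$ is exact on $\dd$-modules. It therefore suffices to verify finite-dimensionality of $(\g, \eu_{\mf{sl}})$-isotypic components for each simple Jordan--H\"older subquotient, which by Lemma~\ref{lem:monoimpliesq}(ii) is automatically regular and has the form $j_{!*}\mathsf{L}$ for a simple $G$-monodromic local system $\mathsf{L}$ on a single orbit $j: \Orb \hookrightarrow \mf{sl} \times V$ contained in $\mc{N} \times V$. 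Writing $\Orb = G/H$ and using Proposition~\ref{prop:monoimpliesweakG} to lift $\mathsf{L}$ to a $(G,\chi)$-monodromic (weakly equivariant) structure, the local system corresponds to a finite-dimensional representation $V_\rho$ of $H$, and $\Gamma(\Orb, \mathsf{L}) \cong \Ind_H^G V_\rho$ as rational $G$-modules. By Frobenius reciprocity, the $\sigma$-isotypic component of this induced module is $\Hom_H(\sigma|_H, V_\rho) \otimes \sigma$, which is finite-dimensional; and since $\eu_{\mf{sl}} \in \dd(\X)^G$ commutes with the $G$-action, its generalized eigenspaces within this finite-dimensional piece remain finite-dimensional.

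Simplicity of $j_{!*}\mathsf{L}$ ensures that the restriction $\Gamma(\X, j_{!*}\mathsf{L}) \hookrightarrow \Gamma(\Orb, \mathsf{L})$ is injective, any kernel being a submodule supported on a proper closed subset of $\supp j_{!*}\mathsf{L}$. Summing over the finitely many Jordan--H\"older factors and transporting the conclusion back through $\mc{F}$ (which preserves global sections as vector spaces and the $G$-action, and shifts the Euler eigenvalues by an additive constant) yields the desired finite-dimensionality of $(\g, \eu_{\mf{sl}})$-isotypic components in $\Gamma(\X, \ms{F})$. The main technical subtlety will be verifying the identification $\Gamma(\Orb, \mathsf{L}) \cong \Ind_H^G V_\rho$ when the orbit $\Orb$ is not affinely embedded in $\mf{sl} \times V$; however, only the injectivity of the restriction to $\Orb$ is needed, and this follows from simplicity alone.
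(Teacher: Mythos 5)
Your argument has a genuine gap, and the gap is exactly where the hypothesis on $\eu_{\mf{sl}}$ should be doing work. The problematic step is the claim that ``simplicity of $j_{!*}\mathsf{L}$ ensures that the restriction $\Gamma(\X, j_{!*}\mathsf{L}) \hookrightarrow \Gamma(\Orb, \mathsf{L})$ is injective.'' This would be correct if $\Orb$ were open in $\X$, but the orbits here all lie inside $\mc{N} \times V$, which is a proper closed subvariety. Once $\Orb$ is locally closed of positive codimension, there is no restriction map $\Gamma(\X, j_{!*}\mathsf{L}) \to \Gamma(\Orb, \mathsf{L})$ at all: what simplicity gives you is an injection $\Gamma(\X, j_{!*}\mathsf{L}) \hookrightarrow \Gamma(U, i_*\mathsf{L})$, where $U$ is the open subset on which $\overline{\Orb}\cap U = \Orb$ and $i: \Orb \hookrightarrow U$ is the closed embedding. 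But $\Gamma(U, i_*\mathsf{L})$ is not $\Gamma(\Orb,\mathsf{L}) = \Ind_H^G V_\rho$; it contains all the normal derivatives, roughly $\Gamma\bigl(\Orb,\ \mathsf{L}\otimes\sym(\NN)\bigr)$, and its $\g$-isotypic pieces need not be finite dimensional.

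You can see that the intermediate claim your argument establishes -- that the $\g$-isotypic components alone (no $\eu_{\mf{sl}}$ refinement) of a mirabolic module are finite dimensional -- is actually false. Take $\ms{F} = \C[\mf{sl}]\boxtimes\delta_0$, with $\delta_0$ the delta $\dd$-module at $0\in V$. This is mirabolic: $\sym(\mf{sl})_+^{\SSL}$ acts locally nilpotently on $\C[\mf{sl}]$ by differentiation, $\eu_{\mf{sl}}$ acts locally finitely by degree, and both factors are $G$-equivariant. Its Fourier transform is $\delta_0\boxtimes\delta_0$, supported at the origin -- a single $G$-orbit. Yet
$$\Gamma(\X,\ms{F}) \ = \ \C[\mf{sl}]\otimes\sym(V),$$
and the $\g$-isotypic component containing $\C[\mf{sl}]^G\otimes\C$ is infinite dimensional ($\C[\mf{sl}]^G$ is a polynomial ring in $n-1$ variables). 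Only after intersecting with an $\eu_{\mf{sl}}$-eigenspace -- which pins down the degree in $\C[\mf{sl}]$ -- does this become finite dimensional. This matches Remark~\ref{rem:comment}: the $\eu_{\mf{sl}}$-grading cannot be discarded and must interact with the $\g$-action, and your argument never actually uses $\eu_{\mf{sl}}$ in a load-bearing way (you only invoke it to refine a space you already believed to be finite dimensional). The paper's proof, by contrast, passes to the associated graded and works with the commutative algebra $\C[\Nnil(\mf{sl})]$, using the isomorphism $\C[\Nnil(\mf{sl})]^{G}\cong \C[\h\times\h^*]^W/\langle\C[\h^*]_+^W\rangle$ from \cite{GG}, the fact that this is a finitely generated $\C[\h]^W$-module with finite-dimensional $\eu_{\mf{sl}}$-graded pieces, and Hilbert's theorem to control the full isotypic components; the $\eu_{\mf{sl}}$-grading is essential there, not cosmetic.

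A secondary issue: your appeal to Lemma~\ref{lem:monoimpliesq}(ii) for regularity is misapplied, since $G$ does not act on $\mf{sl}\times V$ with finitely many orbits (only on $\mc{N}\times V$), so you would need to transport the module to a smooth variety where the finitely-many-orbits hypothesis holds before quoting that lemma. But the main obstruction is the injection claim above.
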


\begin{proof}
Passing to the associated graded, we are reduced to showing that any $(\g,\eu_{\mf{sl}})$-isotypic component of $\C[\Nnil(\mf{sl})]$ is finite dimensional. 
To see this, let ${\mathbb{M}}(\mf{sl})$ be the zero fiber of the moment map
$T^*\X=\mf{sl}\times\mf{sl}\times V\times V^*\to\mf{gl}$.
According to  \cite[Proposition 8.2.1]{GG} one
has an algebra isomorphism
$\C[{\mathbb{M}}(\mf{sl})]^G\cong \C[\h \times \h^*]^W$.
The variety $\Nnil(\mf{sl})$  is a $G$-stable closed subvariety of
${\mathbb{M}}(\mf{sl})$.
Furthermore,  the proof in \cite[Proposition 8.2.1]{GG} shows that
the restriction map $\C[{\mathbb{M}}(\mf{sl})]\to
\C[\Nnil(\mf{sl})]$ induces, via the isomorphism above, an algebra isomorphism
\beq{niliso}
\C[\Nnil(\mf{sl})]^{\g} = \C[\Nnil(\mf{sl})]^{G} = \C[\h \times \h^*]^W / \langle \C[\h^*]_+^W \rangle.
\eeq
The grading on $\C[\Nnil(\mf{sl})]^{\g}$  coming from the
action of the element $\eu_{\mf{sl}}$ goes, under \eqref{niliso}, to the grading on
$\C[\h \times \h^*]^W / \langle \C[\h^*]_+^W \rangle$ by polynomial
degree with respect the $\h$-variable.
It is clear that $\C[\h \times \h^*]^W / \langle \C[\h^*]_+^W \rangle$
is a finitely generated $\C[\h]^W$-module. 
Therefore, any homogeneous component of
$\C[\h \times \h^*]^W / \langle \C[\h^*]_+^W \rangle$ is finite
dimensional. It follows that any $\eu_{\mf{sl}}$-isotypic component component of
$\C[\Nnil(\mf{sl})]^{\g}$  is finite
dimensional as well.

Further, Hilbert's Theorem implies that 
any $\g$-isotypic component of
$\C[\Nnil(\mf{sl})]$ is a finitely generated
$\C[\Nnil(\mf{sl})]^{\g}$-module. We conclude 
that any $\eu_{\mf{sl}}$-isotypic component of an $\g$-isotypic component $\C[\Nnil(\mf{sl})]$ is finite dimensional.
\end{proof}

\begin{rem}\label{rem:comment}
The $(\mf{sl},\eu_{\mf{sl}})$-isotypic components of a mirabolic module $\ms{F}$ are \textit{not} in general finite dimensional.
\end{rem}

Write $\ms{F} = \oplus_{(\lambda,\alpha)} \ms{F}_{(\lambda,\alpha)}$ for the $(\g,\eu_{\mf{sl}})$-isotypic decomposition of a mirabolic module $\ms{F}$. We define the restricted dual of such an $\ms{F}$ by 
$$
\ms{F}^{\star} = \oplus_{(\lambda,\alpha)} \ms{F}_{(\lambda,\alpha)}^*.
$$
It is clear that $\ms{F}^{\star}$ is a right $\dd$-module. Furthermore, Lemma \ref{lem:isofd} implies that the functor $\ms{F} \mto \ms{F}^{\star}$ is exact and one has a canonical isomorphism $\ms{F} \simeq (\ms{F}^{\star})^{\star}$. 

\begin{lem}\label{lem:fgdual}
For any mirabolic module $\ms{F}$, $\ms{F}^{\star}$ is a mirabolic module. In particular, it has finite length and hence is finitely generated over $\dd(\X)$.
\end{lem}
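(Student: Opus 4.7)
We endow $\ms{F}^\star$ with the structure of a left $\dd(\X)$-module via the canonical anti-involution $\sigma$ of $\dd(\X)$, by setting $(D\cdot\phi)(v) := \phi(\sigma(D)\cdot v)$. This is well-defined because every $D \in \dd(\X)$ is a finite sum of operators homogeneous for the $(\g,\eu_{\mf{sl}})$-decomposition and therefore shifts each isotypic component by a bounded amount. By construction, the $(\g,\eu_{\mf{sl}})$-isotypic piece of $\ms{F}^\star$ at a given weight is the linear dual of the corresponding piece of $\ms{F}$, so by Lemma \ref{lem:isofd} they are all finite-dimensional. In particular, the actions of $\g$ and of $\eu_{\mf{sl}}$ on $\ms{F}^\star$ are automatically locally finite.

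The next step is to establish finite generation of $\ms{F}^\star$ over $\dd(\X)$. Since $\ms{F}$ is holonomic (the condition $\Char \ms{F} \subseteq \Nnil$ implies holonomicity), it has finite length, so by induction on length it suffices to treat the case when $\ms{F}$ is simple. The functor $\ms{F} \mto \ms{F}^\star$ is exact, because on each finite-dimensional isotypic piece it is ordinary vector-space duality. If $\ms{F}$ is simple, the perfect pairing $\ms{F} \times \ms{F}^\star \to \C$ restricts to a non-degenerate pairing on each finite-dimensional piece and therefore sets up an order-reversing bijection between $\dd$-submodules of $\ms{F}$ and of $\ms{F}^\star$ via annihilators, so $\ms{F}^\star$ is simple and hence generated by any of its nonzero elements. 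Finite generation is preserved by extensions, completing the induction. Once $\ms{F}^\star$ is finitely generated and $\g$ acts locally finitely, a finite generating set lies in a finite-dimensional $\g$-stable subspace, and so Proposition \ref{prop:monodromicfinite}(iii) gives that $\ms{F}^\star$ is $G$-monodromic.

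The main obstacle is verifying the remaining mirabolic condition: local nilpotence of $\sym(\mf{sl})_+^{\SSL}$ on $\ms{F}^\star$, equivalently, that $\supp \ms{F}^\star \subseteq \mathcal{N} \times V$. For $f \in \sym(\mf{sl})_+^{\SSL}$ of degree $d$ and $\phi$ supported on $\ms{F}_{(\lambda,\alpha)}^*$, the condition $f^N\phi = 0$ translates to $\phi$ vanishing on the subspace $W_N := f^N\ms{F}_{(\lambda,\alpha-Nd)} \subseteq \ms{F}_{(\lambda,\alpha)}$. Directly from the definition $W_{N+1} \subseteq W_N$, so the chain stabilizes inside the finite-dimensional space $\ms{F}_{(\lambda,\alpha)}$, and the task is to show the limit $W_\infty$ is zero. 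This is the hard step: pointwise local nilpotence of $f$ on $\ms{F}$ gives annihilating powers that depend on the element, while a point of $W_\infty$ is by construction expressible as $f^N u_N$ for arbitrarily large $N$ with $u_N$ in progressively distant weight spaces, so a naive inverse-limit argument does not suffice. I would resolve it by invoking the rational analog of Theorem \ref{thm:admissible2} (easier to prove via Fourier transform, per the remark following that theorem), which recharacterizes mirabolic modules in terms of their characteristic variety, and then transfer that condition from $\ms{F}$ to $\ms{F}^\star$ using a good filtration on $\ms{F}$ together with its natural dual on $T^*\X$. Once $\ms{F}^\star$ is shown to be mirabolic, finite length — and hence finite generation — follows from the fact that mirabolic modules are holonomic.
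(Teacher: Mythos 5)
Through the finite-generation step your argument is correct, though slightly different in packaging from the paper's: you argue simple implies simple via the order-reversing annihilator bijection, whereas the paper notes more directly that an ascending chain $\ms{E}_1\subsetneq\ms{E}_2\subsetneq\cdots$ in $\ms{F}^\star$ yields a strictly descending chain of quotients $\cdots\twoheadrightarrow\ms{E}_2^\star\twoheadrightarrow\ms{E}_1^\star$ of $(\ms{F}^\star)^\star\simeq\ms{F}$, which must stop. Either way this part is fine, as is the local finiteness of $\g$ and $\eu_{\mf{sl}}$ and the appeal to Proposition \ref{prop:monodromicfinite}(iii) for $G$-monodromicity. You have also correctly isolated the genuine point: one must show $W_\infty := \bigcap_N f^N\ms{F}_{(\lambda,\alpha-Nd)}$ vanishes inside $\ms{F}_{(\lambda,\alpha)}$, and your suspicion that bare local nilpotence of $f$ on $\ms{F}$ cannot give this is well-founded — an element can be $f$-torsion and still infinitely $f$-divisible (in $\dd_\C/\dd_\C\cdot x$ the class $\delta$ of $1$ satisfies $x\delta=0$ and yet $\delta$ is a scalar multiple of $x^a\partial^a\delta$ for every $a$). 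But at this point you stop, gesturing at a characteristic-variety argument that is not carried out and that I do not think is the most economical route.

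The missing ingredient is cheaper than you anticipate, and it is already hiding in the proof of Lemma \ref{lem:isofd}: for each fixed $\g$-isotype $\lambda$, the set of $\eu_{\mf{sl}}$-eigenvalues occurring in $\ms{F}^{(\lambda)} = \bigoplus_\alpha\ms{F}_{(\lambda,\alpha)}$ is bounded below. Indeed, choose a $\g\times\eu_{\mf{sl}}$-stable good filtration with reduced associated graded, so that $\gr\ms{F}$ is a finitely generated $\C[\Nnil(\mf{sl})]$-module; by Hilbert's theorem each isotypic piece $(\gr\ms{F})^{(\lambda)}$ is a finitely generated $\C[\Nnil(\mf{sl})]^{\g}$-module, and under the isomorphism \eqref{niliso} the $\eu_{\mf{sl}}$-grading on $\C[\Nnil(\mf{sl})]^{\g}$ is the degree in the $\h$-variable, hence nonnegative. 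A finitely generated graded module over a nonnegatively graded ring has degrees bounded below, and the good filtration carries the bound back to $\ms{F}^{(\lambda)}$. Once one knows $\ms{F}_{(\lambda,\beta)}=0$ for $\beta<B(\lambda)$, local nilpotence on $\ms{F}^\star$ is immediate: the functional $\phi\cdot f^N$, for $\phi\in\ms{F}_{(\lambda,\alpha)}^*$ and $f$ homogeneous of degree $d$, lives in $\ms{F}_{(\lambda,\alpha-Nd)}^*$, which is identically zero as soon as $\alpha-Nd<B(\lambda)$. There is no need to analyse the spaces $W_N$ at all, and the paper's one-line assertion that ``$z^N\ms{F}_{(\lambda,\alpha)}=0$ implies $z^N\cdot\ms{F}_{(\lambda,\alpha)}^*=0$'' — which, read literally, does seem to conflate the transpose of $z^N\colon\ms{F}_{(\lambda,\alpha)}\to\ms{F}_{(\lambda,\alpha+Nk)}$ with the action of $z^N$ starting at $\ms{F}_{(\lambda,\alpha)}^*$, which lands in $\ms{F}_{(\lambda,\alpha-Nk)}^*$ — is best understood as shorthand for this boundedness observation. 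So your unease was legitimate, but the resolution is this elementary weight bound, not a characteristic-variety transfer.
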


\begin{proof}
First, the adjoint action of $\mu_{\X}(\g)$ is trivial on the algebra $\sym (\mf{sl})^{\SSL}$ and the adjoint action of $\eu_{\mf{sl}}$ on $\sym (\mf{sl})^{\SSL}$ induces the usual $\mathbb{N}$-grading on this algebra. Therefore, if $z \in \sym (\mf{sl})^{\SSL}$ is homogeneous of degree $k$ and $f \in \ms{F}_{(\lambda,\alpha)}$ then there is some $N \gg 0$ such that $z^N \cdot f \in \ms{F}_{(\lambda,\alpha + kN)}$ equals zero. Since the space $\ms{F}_{(\lambda,\alpha)}$ is finite dimensional, we may assume that $N$ is independent of $f$. This implies that $z^N \cdot \ms{F}_{(\lambda,\alpha)}^* = 0$. Thus, the action of $\sym (\mf{sl})^{\SSL}$ on $\ms{F}$ is locally nilpotent. 

Since the action of the pair $(\g,\eu_{\mf{sl}})$ on $\ms{F}^\star$ is locally finite, with finite dimensional $(\g,\eu_{\mf{sl}})$-isotypic components, any submodule $\ms{E}$ of $\ms{F}^\star$ also has this property. Thus, the restricted dual of $\ms{E}$ makes sense and $\ms{E}^\star$ is a quotient of $(\ms{F}^\star)^\star \simeq \ms{F}$. Since $\ms{F}$ is holonomic, it has finite length. This implies that $\ms{F}^\star$ has finite length (take a strictly ascending chain of $\dd$-submodules $\ms{E}_1 \subset \ms{E}_2 \subset \cdots $ of $\ms{F}^\star$, then we get a chain of quotients $\cdots \twoheadrightarrow \ms{E}_2^{\star} \twoheadrightarrow \ms{E}_1^{\star}$ of $\ms{F}$; the length of this chain is bounded by the length of $\mc{F}$). 
\end{proof}

Let $\eu_{\kappa}$ be the Euler element in $\htrig_{\kappa}$. For any $M \in \mc{O}_{\kappa}$, let $M = \bigoplus_{\beta} M_{\beta}$ be the $\eu_{\kappa}$ generalized eigenspace decomposition. The eigenspaces $M_{\beta}$ are finite dimensional. The standard duality functor on $\mc{O}_{\kappa}$ can be defined in the following equivalent ways: 
$$
M^{\star} := \oplus_{\beta} (M_{\beta})^* = (M^*)^{\mathrm{nil}},
$$
where $(M^*)^{\mathrm{nil}}$ denotes the set of functionals in $M^*$ that are locally nilpotent with respect to the (right) action of $\C[\h^*]^W_+$. To avoid the issues involved in making right $\htrig_{\kappa}$-modules into left $\htrig_{\kappa}$-modules (or right $\dd$-modules into left $\dd$-modules), we work with both left and right modules. So $\mc{O}_{\kappa}^{\op}$ will be the category of finitely generated right $\htrig_{\kappa}$-modules for which the action of $\C[\h^*]_+^W$ is locally nilpotent and similarly for $\ms{C}^{\op}$. Thus, by Lemma \ref{lem:fgdual}, we have contravariant equivalences $( -)^{\star} : \mc{O}_{\kappa} \iso \mc{O}_{\kappa}^{\op}$ and $( -)^{\star} : \ms{C} \iso \ms{C}_c^{\op}$. Put $K_r = \g_{c} \dd(\X) \backslash \dd(\X)$ and $K = \dd(\X) / \dd(\X) \g_{c}$ so that $\Hamp(M) = K \o_{\htrig_{\kappa}} M$ and $\Hamp_r(N) = N \o_{\htrig_{\kappa}} K_r$. 

\begin{prop}\label{prop:swapping}
There is an isomorphism of functors:
$$
\Hamr \circ ( - )^{\star} \simeq ( - )^{\star} \circ \Hamp_r : \mc{O}_{\kappa}^{\op} \to \ms{C}_c. 
$$
\end{prop}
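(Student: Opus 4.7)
The plan is to establish the isomorphism via a Yoneda-style argument, comparing the $\Hom$-spaces $\Hom_\ddd(\mm, \Hamr(N^\star))$ and $\Hom_\ddd(\mm, \Hamp_r(N)^\star)$ for arbitrary mirabolic $\mm \in \ms{C}$. The essential input is the compatibility
\[
\Ham_c(\mm^\star) \;\cong\; \Ham_c(\mm)^\star
\]
as right $\htrig_\kappa$-modules, natural in $\mm \in \ms{C}$. This follows from Proposition \ref{prop:monodromicfinite}: monodromicity yields $\mm^{\g_c} \iso \mm/\g_c\cdot\mm$ and $(\mm^\star)^{\g_c} \iso \mm^\star/\mm^\star\cdot\g_c$, while the evaluation pairing $\mm \times \mm^\star \to \C$ identifies the annihilator of $\g_c\cdot\mm$ inside $\mm^\star$ with $(\mm/\g_c\cdot\mm)^\star$. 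The $\htrig_\kappa$-equivariance of the resulting identification is inherited from the $\ddd^G$-equivariance of the pairing, transported through the radial parts map $\ddd^G \to \htrig_\kappa$.

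With this compatibility in hand, I chain together the natural isomorphisms
\begin{align*}
\Hom_\ddd\bigl(\mm, \Hamr(N^\star)\bigr)
&\;\cong\; \Hom_{\htrig_\kappa}\bigl(\Ham_c(\mm), N^\star\bigr)
\;\cong\; \Hom_{\htrig_\kappa^{\op}}\bigl(N, \Ham_c(\mm)^\star\bigr) \\
&\;\cong\; \Hom_{\htrig_\kappa^{\op}}\bigl(N, \Ham_c(\mm^\star)\bigr)
\;\cong\; \Hom_{\ddd^{\op}}\bigl(\Hamp_r(N), \mm^\star\bigr) \\
&\;\cong\; \Hom_\ddd\bigl(\mm, \Hamp_r(N)^\star\bigr).
\end{align*}
The first and fourth identifications are Lemma \ref{adjoint} and its right-module analogue $\Hamp_r \dashv \Ham_c$. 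The second and fifth are instances of the contravariant duality equivalences on $\mc{O}_\kappa$ and on $\ms{C}$ supplied by Lemma \ref{lem:fgdual} together with its standard $\htrig_\kappa$-module counterpart. The third is precisely the compatibility established in the previous paragraph. An application of Yoneda's lemma then produces the required natural isomorphism $\Hamr(N^\star) \iso \Hamp_r(N)^\star$.

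The main technical point is ensuring that the fifth identification is legitimate: it requires $\Hamp_r(N)$ to lie in $\ms{C}^{\op}$ so that $\Hamp_r(N)^\star$ is defined. This is the right-module analogue of Proposition \ref{many}: coherence and $G$-monodromicity of $\Hamp_r(N) = N \o_{\htrig_\kappa} K_r$ are immediate because $K_r = \ddd/\g_c\ddd$ is cyclic and $G$-equivariant as a right $\ddd$-module, while local nilpotence of $\sym(\mf{sl})^\SSL_+$ and local finiteness of $\eu_{\mf{sl}}$ on $\Hamp_r(N)$ are transported from the corresponding properties of $\C[\h^*]^W_+$ and $\eu_\kappa$ acting on $N \in \mc{O}_\kappa^{\op}$ via the radial parts identification, under which $\eu_{\mf{sl}}$ corresponds to $\eu_\kappa$ up to a constant. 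Once this verification is carried out, the remainder of the argument is purely formal.
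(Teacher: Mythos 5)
Your argument is correct, and it coincides in substance with the paper's \emph{second} proof of Proposition~\ref{prop:swapping} (the one phrased via uniqueness of adjoints), which you have essentially rediscovered and made more explicit. The chain of Hom-isomorphisms you write down is precisely the verification that $(-)^{\star}\circ\Hamp_r\circ(-)^{\star}$ is right adjoint to $\Ham_c$, and you are more explicit than the paper about the key compatibility $\Ham_c(\mm^{\star})\cong \Ham_c(\mm)^{\star}$ that makes the middle of the chain work. (Your Yoneda formulation is the same content: a right adjoint is determined by the representability of $\Hom_{\ddd}(-,\,\Hamr(N^{\star}))$.) The paper's first proof takes a different route --- a bimodule Hom--Tensor adjunction identity $\Hom_{\htrig_{\kappa}}(N,M^{\star})\cong(N\otimes_{\htrig_{\kappa}}M)^{\star}$, proved directly and then applied $G$-isotypically to $K^{(\sigma)}$ --- so you need not worry about matching that one.

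One point you are slightly glib about, and which the paper's second proof handles with care: for the Yoneda argument to close, you need both $\Hamr(N^{\star})$ and $\Hamp_r(N)^{\star}$ to live in a category where testing against objects of $\ms{C}$ detects isomorphisms. But it is precisely the content of Theorem~\ref{thm:fg} --- deduced \emph{from} this proposition --- that $\Hamr(N^{\star})$ is finitely generated, so you cannot assume $\Hamr(N^{\star})\in\ms{C}$ from the start. The fix, as in the paper, is to work in the categories $\mathsf{Ind}\,\ms{C}$ and $\mathsf{Ind}\,\mc{O}_{\kappa}$ of modules with locally finite, finite-dimensional isotypic components, verify that $\Hamr$, $\Hamp_r$, $\Ham_c$, and $(-)^{\star}$ are all well-defined there (the nontrivial check is finite dimensionality of the $\eu_{\mf{sl}}$-weight spaces of $\Hom_{\htrig_{\kappa}}(K_r^{(\sigma)},E)$, which uses that $K_r^{(\sigma)}$ is finitely generated over $\htrig_{\kappa}$), and then run your chain there. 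You flag only the dual concern --- that $\Hamp_r(N)\in\ms{C}^{\op}$ --- which is indeed needed for the reflexivity $\Hamp_r(N)^{\star\star}\cong\Hamp_r(N)$ in the last step, but the $\mathsf{Ind}$-category setup is what makes the whole Yoneda appeal legitimate.
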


\begin{proof}
We begin with the following claim. 

\begin{claim}
Let $N$ be a $(\eu_{\kappa}, \htrig_{\kappa})$-bimodule, finitely generated as a $\htrig_{\kappa}$-module and locally finite with finite dimensional eigenspaces for the adjoint action of $\eu_{\kappa}$. Let $M$ be a finitely generated left $\htrig_{\kappa}$-module that is locally finite with respect to $\eu_{\kappa}$ with finite dimensional eigenspaces. Then, as right $\C[\eu_{\kappa}]$-modules, $\Hom_{\htrig_{\kappa}}(N,M^{\star}) \simeq (N \o_{U_{\kappa}} M)^{\star}$. 
\end{claim}

\begin{proof}
Naive adjunction says that  
$$
\eta : \Hom_{\htrig_{\kappa}}(N,M^*) \iso (N \o_{\htrig_{\kappa}} M)^*,  
$$
where $\eta(\phi)(n \o m) = \phi(n)(m)$. Since $\Hom_{\htrig_{\kappa}}(N,M^{\star})$ is a subspace of $\Hom_{\htrig_{\kappa}}(N,M^*)$ and $(N \o_{U_{\kappa}} M)^{\star}$ a subspace of $(N \o_{\htrig_{\kappa}} M)^*$, it suffices to show that $\eta$ restricts to an isomorphism between these subspaces. Let $I$ be the subspace of $N \o_{\C} M$ such that $N \o_{\htrig_{\kappa}} M = (N \o_{\C} M) / I$. Then each $\eta(\phi)$ is a functional on $N \o_{\C} M$ vanishing on $I$. Let $N = \oplus_{\alpha} N_{\alpha}$ be the $\ad(\eu_{\kappa})$-decomposition of $N$. We choose $\alpha_1, \ds, \alpha_l$ such that $N$ is generated as a right $\htrig_{\kappa}$-module by $N' = N_{\alpha_1} \oplus \cdots \oplus N_{\alpha_l}$. The fact that $\eta(\phi)$ vanishes on $I$ implies that $\eta(\phi) \in (N \o_{U_{\kappa}} M)^{\star}$ iff $\eta(\phi) |_{N' \o_{\C} M_{\beta}} = 0$ for all but finitely many $\beta$. But this holds iff $\phi(N') \subset M^{\star}$ i.e. iff $\phi \in \Hom_{\htrig_{\kappa}}(N, M^{\star})$. 
\end{proof}  

As in the proof of the claim, for any $M \in \mc{O}_{\kappa}^{\op}$, we have  
$$
\eta : \Hom_{\htrig_{\kappa}}(K ,M^*) \iso (K \o_{\htrig_{\kappa}} M)^*.  
$$
Furthermore, $\Hom_{\htrig_{\kappa}}(K ,M^{\star})$ is clearly a subspace in $\Hom_{\htrig_{\kappa}}(K ,M^*)$. Therefore, $\Hamr(M^\star)$ is a subspace of $(K \o_{\htrig_{\kappa}} M)^*$. As a right $\htrig_{\kappa}$-module, $K = \oplus_{\sigma} K^{(\sigma)}$, where the decomposition is into $G$-isotypic components. Arguing as in the proof of Lemma \ref{lem:isofd}, each $K^{(\sigma)}$ is a $(\eu_{\kappa}, \htrig_{\kappa})$-bimodule, finitely generated as a $\htrig_{\kappa}$-module, and locally finite with finite dimensional eigenspaces for the adjoint action of $\eu_{\kappa}$. Therefore, the proposition follows from the claim, since it is clear that $\eta$ restricts to an isomorphism 
$$
\eta : \oplus_{\sigma} \Hom_{\htrig_{\kappa}}(K^{(\sigma)} ,M^*) \iso \oplus_{\sigma} (K^{(\sigma)} \o_{\htrig_{\kappa}} M)^*.  \qedhere
$$  
\end{proof}

There is also another proof of Proposition \ref{prop:swapping}, using the uniqueness of adjunctions.

\begin{proof}
Let $\ddd = \dd(\X)$ and $\mathsf{Ind} \,\cc$ be the category of (not necessarily finitely generated) $\ddd$-modules $F$ such the action of $\sym (\mf{sl})_+^{\SSL}$ on $F$ is locally nilpotent and the action of both $\eu_{\mf{sl}}$ and $\g$ on $F$ is locally finite with the eigenspaces $F_{\lambda,\alpha}$ finite dimensional. Similarly, let $\mathsf{Ind} \,\mc{O}_{\kappa}$ be the category of all (not necessarily finitely generated) $\htrig_{\kappa}$-modules such that the action of $\C[\h^*]_+$ is locally nilpotent and the action of $\eu_{\kappa}$ is locally finite with finite dimensional generalized eigenspaces. We also have the opposite categories $\mathsf{Ind} \,\cc^{\mathrm{op}}$ and $\mathsf{Ind} \,\mc{O}_{\kappa}^{\mathrm{op}}$. 

Then, the restricted dualities $( - )^{\star} : \mathsf{Ind} \,\cc \iso \mathsf{Ind} \,\cc^{\mathrm{op}}$ and $( - )^{\star} : \mathsf{Ind} \,\mc{O}_{\kappa} \iso \mathsf{Ind} \,\mc{O}_{\kappa}^{\mathrm{op}}$ are well-defined. Moreover, the functors $\Ham$ and $\Hamp, \Hamp_r$ are also well-defined in this setting. The same is true of $\Hamr : \mathsf{Ind} \,\mc{O}_{\kappa} \to \mathsf{Ind} \,\cc$. To see this, it suffices to show that the $\eu_{\mf{sl}}$-generalized eigenspaces of $\Hom_{\htrig_{\kappa}}(K^{(\sigma)}_r,E)$ are finite dimensional for all $E \in \mathsf{Ind} \,\mc{O}_{\kappa}$. The left $\htrig_{\kappa}$-module $K^{(\sigma)}_r$ is finitely generated, therefore we fix a finite dimensional, $\ad(\eu)$-stable subspace $K_0$ such that $\htrig_{\kappa} \cdot K_0 = K^{(\sigma)}_r$. Then, restriction defines an embedding $\Hom_{\htrig_{\kappa}}(K^{(\sigma)}_r,E) \hookrightarrow K_0^* \o_{\C} E$. Since $K_0$ is finite dimensional, it is clear that the $\eu_{\mf{sl}}$-generalized weight spaces in $K_0^* \o_{\C} E$ are finite dimensional. 

Now, the proposition follows from the fact that both $( - )^{\star} \circ \Hamp_r \circ (- )^{\star}$ and $\Hamr$ are right adjoints to $\Ham$. 
\end{proof}

Theorem \ref{thm:fg} follows from Lemma \ref{lem:fgdual} and Proposition \ref{prop:swapping}

\section{Hamiltonian reduction and shift functors}\label{ham_shift}

\subsection{}\label{sec:stabilty2} Recall from section \ref{sec:unstable} that the character $\det$ defines a $G$-equivariant structure on the trivial line bundle over $T^* \X$ by $g \cdot (x,t) = (g \cdot x, \det (g)^{-1} t)$ for all $x \in T^*\X, g \in G$ and $t \in \C$. The set of stable points with respect to this line bundle is 
\begin{equation}\label{eq:stabplus}
(T^* \X)^{\mathrm{ss},+} = \{ (g,Y,i,j) \in \SSL \times \sll  \times V \times V^* \ | \ \C \langle g,Y \rangle \cdot i = V \},
\end{equation}
and the set of points that are stable with respect to the inverse $\det^{-1}$ equals
$$
(T^* \X)^{\mathrm{ss},-} = \{ (g,Y,i,j) \in \SSL \times \sll  \times V \times V^* \ | \ j \cdot \C \langle g,Y \rangle = V^* \}.
$$ 
In both cases, a point is stable if and only if it is semi-stable. By definition, the complement $(T^*\X)^{\st,\pm}$ is the unstable locus.

Let $\mu : T^* \X \rightarrow \g^*$ be the moment map for the action
of $G$ on $T^* \X$.

\begin{lem}\label{gwyn} 
For any module $\ms{M} \in \cc$ the following holds:
\vskip2pt

\vi If  $c \le 0$ and $\Ham_{c}(\ms{M}) \neq 0 $ then
$\Char (\ms{M}) \cap (T^* \X)^{\mathrm{ss},+} \neq \emptyset$.
\vskip2pt

\vii If $c > 0$ and $\Ham_{c}(\ms{M}) \neq 0 $ then
$\Char (\ms{M}) \cap (T^* \X)^{\mathrm{ss},-} \neq \emptyset$.
\end{lem}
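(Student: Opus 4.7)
\enspace The plan is to prove the contrapositive of (i): if $\Char(\ms{M})\cap (T^*\X)^{\ss,+}=\emptyset$, then $\Ham_c(\ms{M})=0$ for every $c\le 0$.  Part~(ii) follows by the symmetric argument on the $(-)$ side, or alternatively by applying Verdier duality, which intertwines the two stability conditions together with the involution $c\mapsto -c+1$.

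The geometric input is the inclusion $T^*\xcyc\subseteq (T^*\X)^{\ss,+}$: if $(g,i)$ is cyclic then $\C[g]\cd i=V$, so $\C\langle g,Y\rangle\cd i=V$ for every $Y$.  Hence the hypothesis forces $\Char(\ms{M})\cap T^*\xcyc=\emptyset$; and since $T^*\xcyc$ is simply the restriction of $T^*\X$ over the open subset $\xcyc\subseteq \X$, this is equivalent to $\ms{M}|_{\xcyc}=0$, i.e.\ $\Supp \ms{M}\subseteq \X\sminus \xcyc$.

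Suppose, toward contradiction, that $0\ne m\in \Ham_c(\ms{M})$ for some $c\le 0$.  The cyclic submodule $\ddd\cd m\subseteq \ms{M}$ is holonomic, hence of finite length; pick a simple quotient $\ms{M}'$ and let $m'\ne 0$ be the image of $m$ in it.  Since $m'$ remains $\g_c$-invariant, $\Ham_c(\ms{M}')\ne 0$, while $\Supp \ms{M}'\subseteq \Supp \ms{M}\subseteq \X\sminus \xcyc$.  Every relevant stratum $\X(\lambda,\emptyset)$ lies in $\xcyc$ (the condition $\mu=\emptyset$ is precisely that $v$ cyclically generates $V$ under $g$), so Proposition~\ref{thm:irrsupport} yields $\Supp \ms{M}'=\overline{\X((m^v,1^w),(m^u))}$ for some $u\ge 1$ and $v,w\ge 0$ with $n=(u+v)m+w$.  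Proposition~\ref{prop:Hamkill} now forces either $v\ge 1$, in which case $\Ham_c(\ms{M}')=0$ for all $c$ (a direct contradiction), or $v=0$, in which case parts~(2)--(3) of the same proposition give $c\in \Sing_+\cup \Z_{>0}$; in either case $c\ge 1$, contradicting $c\le 0$.

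By the time this lemma appears the bulk of the work has already been done: the argument is essentially a corollary of the classification of supports of simple mirabolic modules (Proposition~\ref{thm:irrsupport}) together with the case-by-case Hamiltonian-reduction vanishing (Proposition~\ref{prop:Hamkill}), both of which rest on the explicit description of Levi cuspidal modules in Section~\ref{chetyre} and the reduction-to-a-Levi procedure of Section~\ref{sec:reduction}.  The only minor point to verify is that Proposition~\ref{thm:irrsupport}, although stated for the semisimple subcategory $\ss_q$, applies to the simple quotient $\ms{M}'$: a simple mirabolic module lies in a single block $\cc_q\langle \Theta\rangle$ of the spectral decomposition, and the support classification established in Section~\ref{sec:proofs} is valid for all simple objects of $\cc_q$, not only those on which $\mu(\one)$ acts semisimply.
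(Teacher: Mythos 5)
Your proof of part~(i) is correct and follows essentially the same route as the paper's: reduce to a simple mirabolic module, use Proposition~\ref{thm:irrsupport} to constrain the support, then feed the result into Proposition~\ref{prop:Hamkill} to constrain $c$. The paper argues directly (citing a lemma saying the conormal to $\X(\lambda,\emptyset)$ lies in $(T^*\X)^{\mathrm{ss},+}$) while you argue by contraposition via the elementary inclusion $T^*\xcyc\subset(T^*\X)^{\mathrm{ss},+}$; this is a cosmetic difference. Your closing worry about Proposition~\ref{thm:irrsupport} being stated only for $\ss_q$ rather than $\cc_q$ is moot, and can be resolved more cleanly than you do: by Lemma~\ref{lem:monoimpliesq}(i) together with Proposition~\ref{prop:monoimpliesweakG}, every \emph{simple} regular holonomic $G$-monodromic module is $(G,\bq)$-monodromic, so $\mu(\one)$ acts semisimply and $\ms{M}'$ automatically lies in $\ss_q$ — this is exactly the reduction the paper performs in its first two sentences.

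Part~(ii) is a genuine gap. You assert it follows ``by the symmetric argument on the $(-)$ side, or alternatively by applying Verdier duality,'' and neither works. The two stability conditions are fundamentally asymmetric here: $\C\langle g,Y\rangle\cdot i=V$ constrains $i$, a \emph{base} coordinate, which is why $T^*\xcyc\subset(T^*\X)^{\mathrm{ss},+}$ and the hypothesis $\Char(\ms{M})\cap(T^*\X)^{\mathrm{ss},+}=\emptyset$ translates into a constraint $\Supp\ms{M}\subset\X\sminus\xcyc$. By contrast $j\cdot\C\langle g,Y\rangle=V^*$ constrains $j$, a \emph{fiber} coordinate, and there is no open subset of $\X$ playing the role of $\xcyc$; indeed the zero section of $T^*\X$ is entirely $-$-unstable, so $\Char(\ms{M})\cap(T^*\X)^{\mathrm{ss},-}=\emptyset$ gives no support constraint whatsoever, and the contrapositive strategy collapses. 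Verdier duality does not help either: it preserves the characteristic variety and therefore cannot swap $(T^*\X)^{\mathrm{ss},+}$ with $(T^*\X)^{\mathrm{ss},-}$. (In the rational setting $\mf{sl}\times V$ a Fourier transform on $V$ provides the symmetry you want; the paper works with $\SSL\times V$ where no such transform exists.) The paper's proof of~(ii) acknowledges exactly this difficulty: after Proposition~\ref{prop:Hamkill} pins $\Supp\ms{M}$ to $\ol{\X((1^w),(m^u))}$, the conormal to that stratum lies \emph{inside} the $-$-unstable locus whenever $w>0$, so support information alone is not enough. The paper then restricts $\ms{M}$ to a Levi via $\Upsilon^*$, identifies the simple subquotients as $\mathsf{L}\boxtimes\ms{M}(r,u,0,w)$ using Propositions~\ref{prop:classification1} and~\ref{prop:cuspidalkill}, and observes that the $\ms{E}_{\frac{r}{m}}$ factors force the conormal to a different stratum, $\X(\emptyset,(m^u,1^w))$, to appear in $\Char(\ms{M})$ — and that conormal is $-$-semistable. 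You would need to supply an argument of this kind for~(ii); nothing in your treatment of~(i) transfers.
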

\begin{proof}
If $\Ham_c(\mm) \neq 0$ then there is some simple sub-quotient $\mm'$ of $\mm$ such that $\Ham_c(\mm') \neq0$. Therefore we may assume that $\mm$ is simple. Clearly $\Ham_c(\mm) \neq 0$ implies that $\mm \in \cc_q$. Thus, by simplicity, we may in fact assume that $\mm \in \ss_q$.

 \vi \en
By Proposition \ref{prop:Hamkill}, $\ms{M}$ must be supported on the closure of $\X((m^v,1^w),\emptyset)$ and hence the closure of the conormal to this stratum is contained in $\Char (\ms{M})$. It follows from \cite[Lemma 4.2.3]{McGertyKZ} that the $T^*_{\X(\lambda,\mu)} \X \subseteq (T^* \X)^{\mathrm{ss},+}$ if and only if $\mu = \emptyset$. Hence, the set $\Char (\ms{M}) \cap (T^* \X)^{\mathrm{ss},+}$ is nonempty.   
\vskip2pt

\vii \en 
By Proposition \ref{prop:Hamkill}, $\ms{M}$ must be supported on the closure of $\X((1^w),(m^u))$. If $(g,Y,i,j)$ is a point of $\mu^{-1}(0) \cap (T^* \X)^{\mathrm{ss},-}$ then \cite[Lemma 2.1.3]{GG} says that $i = 0$. Arguing again as in \cite[Lemma 4.2.3]{McGertyKZ}, one sees that if the intersection $(T^*_{\X(\lambda,\mu)} \X) \cap (T^* \X)^{\mathrm{ss},-}$ is nonempty, then $\lambda = \emptyset$. Therefore, if $w > 0$, the conormal to $\X((1^w),(m^u))$ is contained in the unstable locus, and we need some more information about $\Char (\ms{M})$ to conclude that $\Char (\ms{M}) \cap (T^* \X)^{\mathrm{ss},-} \neq \emptyset$. Let $L$ be the block diagonal Levi subgroup of $G$ consisting of $u$ blocks of size $m$ and $w$ blocks of size one. Then, Propositions \ref{prop:classification1} and \ref{prop:cuspidalkill} imply that the simple subquotients of $\Red^{\SSL}_{\LSL}(\ms{M})$ are of the form $\mathsf{L} \boxtimes \ms{M}(r,u,0,w)$ for some simple local system $\mathsf{L}$ on $Z(\LSL)^{\circ}$ and $r$ coprime to $m$. Recall that 
$$
\ms{M}(r,u,0,w) = \left[ \boxtimes_{i = 1}^w \ms{E}_{\frac{r}{m}} \right] \boxtimes \left[ (\ms{L}_{r} \boxtimes \delta_{V_{m}})^{\boxtimes u} \right]. 
$$
Since $\frac{r}{m} \neq 0$ in $\C/\Z$, the characteristic variety of $\ms{E}_{\frac{r}{m}}$ equals $(x = 0) \cup (y = 0)$ in $T^* \C$. Therefore, the closure of the conormal to $\X_{\LL}(\emptyset,(m^u,1^w))$ in $T^* \X_{\LL}$ is a component of $\Char (\mathsf{L} \boxtimes \ms{M}(r,u,0,w))$. This implies that it must also be a component of $\Red_{\LSL}^{\SSL} (\ms{M})$. Since $\Upsilon$ is non-characteristic for $\ms{M}$, this implies that the closure of the conormal to $\X(\emptyset,(m^u,1^w))$ in $T^* \X$ is a component of $\Char(\ms{M})$, as required. 
\end{proof}

\begin{rem}
The statements of Lemma \ref{gwyn} are false if the inequalities are removed. If we take $c = -\frac{r}{n} - k < 0$, where $(r,n) = 1$, and let $\ms{M}$ be the unique cuspidal mirabolic module whose support equals $\mn \times V$, then $\Ham_{c}(\ms{M}) \neq 0$. However, the characteristic variety of $\ms{M}$ is $\overline{T^*_{\X((n),\emptyset)} \X}$, which is contained in $(T^* \X)^{\mathrm{us},-}$. Similarly, if we take $c = \frac{r}{n} + k > 0$ and let $\ms{N}$ be the unique cuspidal mirabolic module whose support equals $\mn \times \{ 0 \}$, then $\Ham_{c}(\ms{M}) \neq 0$. The characteristic variety of $\ms{M}$ is $\overline{T^*_{\X(\emptyset,(n))} \X}$, which is contained in $(T^* \X)^{\mathrm{us},+}$.
\end{rem}

\subsection{}\label{ham_shift_sec} 

For each $c \in \C$, there exists a sheaf of twisted differential operators $\dd_{\Ps,c}$ on $\Ps := \Ps(V)$. In order to agree with our conventions on $G$-monodromic modules, (\ref{eq:indentify}), we parameterize these twisted differential operators so that, for each $m \in \Z$, the line bundle $\mc{O}_{\mathbb{P}}(m)$ is a $\dd_{\Ps,-\frac{m}{n}}$-module. This implies that the Euler vector field $\mathsf{eu}_V = \sum_{i = 1}^n x_i \pa_i$ acts as the scalar $-nc$ on the global sections $\Gamma(\Ps,\ms{M})$ of any $\dd_{\Ps,c}$-module $\ms{M}$. 

Set $\XX := \SSL \times \Ps(V)= \SSL\times\Ps$ and let $\dd_{c}$ be the sheaf of twisted differential operators on $\XX$ (the twist entirely in the $\Ps$ direction). Recall that we have defined $V^o = V \sminus \{ 0 \}$. Put $\wt X:=\SSL  \times V^o $. We have a natural diagram $\X \stackrel{j}\hookleftarrow \wt X \stackrel{p}\onto X$. Let $\Lmod{(\dd_\X, G, q)}_0$ be the full subcategory of $\Lmod{(\dd_\X, G, q)}$ whose objects have support contained in the subset $\SSL \times \{ 0 \} \sset \SSL \times V$.

Let $\Ga$ be the center of the group $\SSL $. Thus, $\Ga$ is a cyclic group of order $n$, with generator $\zeta:=\exp(-\frac{2\pi \sqrt{-1}}{n})\cdot\id\in \SSL $. The group $\Ga$ acts trivially on $\Ps$ and it also acts trivially on $\SSL$ by conjugation. Thus, the action of $\Ga$ on an $\SSL$-equivariant $\dd_{c}$-module commutes with the $\dd_{c}$-action. This gives a morphism of the group $\Ga$ into the automorphism group of the identity functor of the category $\Lmod{(\dd_{c}, \SSL)}$. Therefore, this category decomposes into a direct sum of subcategories $$\Lmod{(\dd_{c}, \SSL)}\ =\ \bigoplus_{k\in \Z/ n\Z}\ \Lmod{(\dd_{c}, \SSL; \zeta^k)},\quad $$ where the subcategory $\Lmod{(\dd_{c}, \SSL; \zeta^k)}$ consists of $\dd_{c}$-modules $M$ such that $\zeta$ acts on $M$ via multiplication by $\exp(-\frac{2\pi \sqrt{-1} k}{n})$. Notice that $\Lmod{(\dd_{c}, \SSL; \zeta^0)} = \Lmod{(\dd_{c}, \mathrm{PSL})}$, where $\mathrm{PSL} = \mathrm{PSL}_n(\C)$ is the projective special linear group. 

\begin{prop}\label{XX} 
The assignment $\mm\mto \ker(\mathsf{eu}_V + nc; p_\idot j^*\mm)$ gives an exact functor 
$$
F_{c} : \Lmod{(\dd_\X, G, c)}^{\mathrm{hol}} \to \Lmod{(\dd_{c}, \SSL)}^{\mathrm{hol}}.
$$
Furthermore, this functor kills the subcategory $\Lmod{(\dd_\X, G, c)}_0^{\mathrm{hol}}$ and induces an equivalence
$$
\Lmod{(\dd_\X, G, c)}^{\mathrm{hol}} / \Lmod{(\dd_\X, G, c)}_0^{\mathrm{hol}} \ \iso\ \Lmod{(\dd_{c},\mathrm{PSL})}^{\mathrm{hol}}.
$$
\end{prop}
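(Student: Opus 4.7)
The plan is to realize $F_c$ as a composition of three exact operations and then to construct an explicit quasi-inverse. The exactness follows at each stage: $j^*$ is exact since $j$ is an open embedding, $p_{\idot}$ is exact since $p:\wt X\to X$ is an affine morphism (in fact a principal $\C^*$-bundle), and the kernel of $\mathsf{eu}_V+nc$ is an exact functor on $p_{\idot}j^*\ms{M}$ because $(G,c)$-monodromicity forces $\mathsf{eu}_V+nc$ to act locally nilpotently on sections, so this kernel coincides with the projection onto the $(-nc)$-eigenspace in the $\mathsf{eu}_V$-eigenspace decomposition. Holonomicity is preserved by all three operations, and the $\dd_{c}$-module structure on the output comes from the classical Kashiwara equivalence between $\dd_c$-modules on $X$ and $\C^*$-equivariant $\dd$-modules on $\wt X$ whose $\mathsf{eu}_V$-eigenvalue is $-nc$.

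The functor $F_c$ kills $\Lmod{(\dd_\X,G,c)}_0^{\mathrm{hol}}$ immediately, since $\supp\ms{M}\subseteq\SSL\times\{0\}$ implies $j^*\ms{M}=0$. To build a quasi-inverse on the quotient category, I would define $G_c(\ms{N}):=j_\idot p^\idot\ms{N}$ for $\ms{N}\in\Lmod{(\dd_c,\mathrm{PSL})}^{\mathrm{hol}}$, where $p^\idot\ms{N}$ is the $\dd$-module pullback---the inverse of the Kashiwara equivalence used above. The discrepancy between $j_!$ and $j_\idot$ is supported on $\SSL\times\{0\}$ and so is irrelevant in the quotient category. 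The natural isomorphism $F_c\ccirc G_c\simeq\id$ follows from $j^*j_\idot=\id$ on $\wt X$ together with $p_\idot p^\idot\ms{N}$ recovering $\ms{N}$ as its $(-nc)$-eigenspace, while $G_c\ccirc F_c\simeq\id$ in the quotient category follows from the $\mathsf{eu}_V$-eigenspace decomposition of $p_\idot j^*\ms{M}$ (which reconstructs $j^*\ms{M}$ through $p^\idot$) together with the fact that $j_\idot j^*\ms{M}$ differs from $\ms{M}$ only by a module supported on $\SSL\times\{0\}$.

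The step I expect to require the most care is verifying that the image of $F_c$ lies in $\Lmod{(\dd_c,\mathrm{PSL})}^{\mathrm{hol}}$, i.e., in the $\zeta^0$-summand rather than some other summand of the decomposition $\Lmod{(\dd_c,\SSL)}^{\mathrm{hol}}=\bigoplus_{k\in\Z/n\Z}\Lmod{(\dd_c,\SSL;\zeta^k)}^{\mathrm{hol}}$. The central element $\zeta\in\Ga$ acts on $V^{o}$ by scalar multiplication and trivially on $\SSL$ and on $\Ps$, so the $\zeta$-action on $F_c(\ms{M})$ is determined by the $(G,c)$-monodromic structure restricted to the $(-nc)$-eigenspace of $\mathsf{eu}_V$. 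Chasing the normalizations ($\mc{O}_{\Ps}(m)$ is a $\dd_{\Ps,-m/n}$-module and $(G,c)$-monodromicity is governed by the character $c\Tr$ of $\g$) should show that this $\zeta$-action matches the one induced by the $\SSL$-equivariant structure on $\dd_c$ pulled back from $\Ps$, and hence that $F_c(\ms{M})$ is $\mathrm{PSL}$-equivariant. Once this bookkeeping is completed, the remaining assertions reduce to standard facts about open pullback, affine pushforward, and Kashiwara's equivalence.
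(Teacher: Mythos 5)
Your proposal takes the same overall route as the paper: factor $F_c$ as $j^*$ followed by descent along the $\Cs$-torsor $p$ (sheaf pushforward plus extraction of a weight space), and produce a quasi-inverse on the quotient category. The structure is right, but there are two places that need repair.

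The claim that $(G,c)$-monodromicity "forces $\mathsf{eu}_V + nc$ to act locally nilpotently on sections" is inconsistent with the sentence that follows, where you call the kernel a \emph{projection onto the $(-nc)$-eigenspace}. If $\mathsf{eu}_V + nc$ acted locally nilpotently, then $-nc$ would be the only $\mathsf{eu}_V$-eigenvalue, the kernel would be a proper submodule (not a summand) whenever the nilpotent part is nonzero, and taking it would \emph{not} be exact. What you actually need — and what is true — is that the weak $\Cs$-equivariance built into the strong $(G,c)$-monodromic structure makes $\mathsf{eu}_V$ act \emph{semisimply} (the derivative of a rational $\Cs$-action is diagonalizable), with eigenvalues in the coset $-nc+\Z$ after applying $p_{\idot}$. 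Semisimplicity is what makes the kernel a direct summand, hence the functor exact. (Also a terminology slip: the equivalence between $\dd_c$-modules on $X$ and suitably monodromic $\dd$-modules on $\wt X$ is descent along a $\mathbb{G}_m$-torsor, not Kashiwara's equivalence, which concerns closed immersions.)

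More seriously, you identify the landing in the $\zeta^0$-summand $\Lmod{(\dd_c,\mathrm{PSL})}^{\mathrm{hol}}$ as "the step I expect to require the most care" and then defer it to "chasing the normalizations." That is precisely the content of the proposition and cannot be left as a gesture. The paper handles it cleanly by passing to $\BG=\SSL\times\Cs$: via $\rho:\BG\twoheadrightarrow G$ and the isomorphism $\rho^*\mc{O}_G^c\simeq\mc{O}_{\BG}^c$, the $(G,c)$-monodromic modules on $\X$ are identified with $(\BG,c)$-monodromic modules on which the finite central subgroup $\wt\Gamma=\ker\rho$ (generated by $(\zeta,\exp(2\pi\sqrt{-1}/n))$) acts trivially. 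Since $\wt\Gamma$ acts trivially on all of $p_{\idot}j^*\mm$, it acts trivially on the $-nc$ weight space, which is exactly the statement that $F_c(\mm)$ is $\mathrm{PSL}$-equivariant. Your direct computation of the $\zeta$-action would also work, but it is the one step that must actually be carried out, and the paper's reformulation via $\wt\Gamma$-triviality reduces it to a one-line observation.
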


\begin{proof}
Recall that we have set $\BG = \SSL \times \Cs$. Let $a : G \times \X \to \X$, resp. ${\wt a} : \BG \times \X \to \X$, be the action map for $G$, resp. for $\BG$, and write $\rho : \BG \twoheadrightarrow G$ for the quotient map. The kernel of $\rho$ is denoted ${\wt \Gamma}$, a cyclic group of order $n$ generated by $(\zeta,\exp(\frac{2\pi \sqrt{-1}}{n}))$. Then, the fact that ${\wt a} = a \circ \rho$ and $\rho^* \mc{O}_G^{c} \simeq \mc{O}_{\BG}^{c}$ implies that every $(G,c)$-monodromic module on $\X$ is a $(\BG,c)$-monodromic, such that the action of ${\wt \Gamma}$ is trivial. Conversely, let $\mm$ be a $(\BG,c)$-monodromic module such that the action of ${\wt \Gamma}$ on $\mm$ is trivial. By definition, we are given an isomorphism $\phi : \mc{O}_{\BG}^{c} \boxtimes \mm \iso {\wt a}^* \mm$ satisfying the cocycle condition. Then, one can check that the isomorphism 
$$
(\rho_{\idot} \phi)^{{\wt \Gamma}} : (\mc{O}_{\BG}^{c} \boxtimes \mm)^{{\wt \Gamma}} = \mc{O}_{G}^{c} \boxtimes \mm \stackrel{\sim}{\longrightarrow} a^* \mm = ({\wt a}^* \mm)^{{\wt \Gamma}} 
$$
satisfies the cocycle condition too. These rules define an equivalence between the category of $(G,\chi)$-monodromic modules on $\X$ and $(\BG,\chi)$-monodromic modules on $\X$ such that the action of ${\wt \Gamma}$ is trivial. The latter category will be denoted $\Lmod{(\dd_\X,\BG,{\wt \Gamma}, c)}$. Then, $j^*$ is an exact functor $\Lmod{(\dd_\X,\BG,{\wt \Gamma}, c)} \to \Lmod{(\dd_{\wt X},\BG,{\wt \Gamma}, c)}$, whose kernel is $\Lmod{(\dd_\X,\BG,{\wt \Gamma}, c)}_0$. In order for $j^*$ to be essentially surjective, we restrict to holonomic $\dd$-module. Then, given $\mm \in \Lmod{(\dd_{\wt X},\BG,{\wt \Gamma}, c)}^{\mathrm{hol}}$, $\mc{H}^0(j_* \mm) \in \Lmod{(\dd_{\X},\BG,{\wt \Gamma}, c)}^{\mathrm{hol}}$ and $j^* \mc{H}^0(j_* \mm) \simeq \mm$, implying that $j^*$ is essentially surjective. 

Since the action of $\SSL$ and $\Cs$ on ${\wt X}$ commute, the pull-back functor $p^* : \Lmod{(\dd_c,\SSL)}^{\mathrm{hol}} \to \Lmod{(\dd_{\wt X},\BG, c)}^{\mathrm{hol}}$ is an equivalence with quasi-inverse $\mm \mapsto \ker(\mathsf{eu}_V + nc; p_\idot \mm) = (p_{\idot} \mm)^{\Cs}$. If the action of ${\wt \Gamma}$ on $\mm$ is trivial then $\Gamma$ acts trivially on $(p_{\idot} \mm)^{\Cs}$ too. 
\end{proof}

Observe that the image of $\one \in \g$ under the quantum moment map $\mu_{\X}$ is $-\mathsf{eu}_V$. Therefore, given a $G$-monodromic $\dd_\X$-module $\mm$ and $m$ a section of $\Gamma(\X,\mm)$, we have $\mathsf{eu}_V \cdot m = -nc m$ if and only if $\mu_{\X}(\one) \cdot m = c \Tr (\one) m$. This justifies our unusual parameterization of twisted differential operators on $\mathbb{P}(V)$.  

From now on we assume that $c$ is admissible, c.f. Definition \ref{bad}. This assumption on $c$ ensures, thanks to the Beilinson-Bernstein localization theorem, that the functor $\Gamma(\Ps,-)$, of global sections, provides an equivalence between the categories of sheaves of quasi-coherent $\dd_{\Ps,c}$ and of $\Gamma(\Ps,\dd_{\Ps,c})$-modules, respectively. Admissibility also implies, \cite[Lemma 6.2]{GGS}, that  
$$
\dd_{c}(X) = \Gamma(X,\ (p_\idot\dd_{\wt X}/p_\idot\dd_{\wt X}(\E + nc))^{\C^\times} = (\ddd / \ddd  (\E + cn))^{\C^\times}.
$$
Therefore, for any $\ddd$-module $M$, the vector space 
$$
\ker(\E + nc; M):= \{m\in M\mid \E(m)= -nc\cdot m\}
$$
has a natural $\dd_{c}(X)$-module structure. In this case, we also have an equivalence between $\dd_{c}$-modules and $\dd_{c}(X)$-modules. Thus, below we will freely switch between the setting of sheaves of $\dd_{c}$-modules and that of $\dd_{c}(X)$-modules, whichever is more convenient.

A regular holonomic $\dd_{c}$-module $\mm\in \Lmod{(\dd_{c}, \mathrm{PSL})}$ will be called ``mirabolic'' if $\SS(\mm)$ is contained in $(\SSL \times \mathcal{N}) \times T^*\Ps$, where $\mc{N}$ is the nilpotent cone in $\mf{sl}$. Let $\ss_{c}$ be the full subcategory of $\Lmod{(\dd_{c}, \mathrm{PSL})}$ formed by all mirabolic $\dd_{c}$-modules. The reader may notice that we have now used $\ss_{c}$ to denote both the category of $(G,c)$-monodromic, mirabolic modules on $\X$ and the category of mirabolic modules on $X$. If it is not clear from the context which of these two categories is being referred to, we will write $\ss_c(\X)$ to denote the former and $\ss_c(X)$ the latter. Further, let $\ss_c(\X)_0 = \ss_c(\X)  \cap \Lmod{(\dd_\X, G, c)}_0$ be the category of mirabolic $\ddd$-modules supported on the subset $\SSL\times \{0\} \sset \SSL\times V$. Then, the equivalence of Proposition \ref{XX} induces an equivalence
\beq{cccc}
\ss_c(\X) / \ss_c(\X)_0\ \iso\ \ss_{c}(X).
\eeq
The functor of Hamiltonian reduction $\Ham : \ss_{c}(X) \rightarrow \mc{O}_{\kappa}$ is defined to be $\Ham(\mm) = \Gamma(X,\mm)^{\SSL}$. Proposition \ref{XX} implies that
\beq{eq:hamham}
\Ham_{c} = \Ham \circ F_{c},\quad\text{whenever $c$ is admissible.}
\eeq 

\subsection{}\label{sec:bimodules}
For $m \in \Z$, let $\mc{O}(m)$ be the pull-back of the standard line bundle $\mc{O}_{\mathbb{P}}(m)$ under the projection $G \times \mathbb{P} \rightarrow \mathbb{P}$. Tensoring by $\mc{O}(nm)$ defines an equivalence $\ss_{c} \stackrel{\sim}{\longrightarrow} \ss_{c - m}$, $\ms{M} 
\mapsto \ms{M}(nm)$. Define 
$$
{}_{c - m} \dd_{c} = \left[ p_{\idot} \dd_{\wt X}/p_\idot\dd_{\wt X}(\E + cn) \right]^{(nm)},
$$
where, for a $\Cs$-equivariant, quasi-coherent sheaf of $\mc{O}_{\wt X}$-modules $\ms{M}$, $(p_{\idot} \ms{M})^{(i)}$ is defined to be the sheaf of all sections $m$ such that $\lambda \cdot m = \lambda^i m$ for all $\lambda \in \Cs$. By \cite[Lemma 2.2]{GGS}, ${}_{c - m} \dd_{c}$ is a $(\dd_{c - m}, \dd_{c})$-bimodule (we remark that the ``$d$'' of \textit{loc. cit.} equals our parameter ``$c$''). 

\begin{lem}[\cite{GGS}, Lemma 6.7 (1)]\label{lem:6.7}
Assume that $c$ is admissible. For each positive integer $m$ and $\ms{M} \in \ss_{c}$, we have
$$
\mm(nm)= {}_{c - m}\dd_{c} \otimes_{\dd_{c}} \mm
$$
\end{lem}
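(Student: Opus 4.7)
The plan is to deduce Lemma \ref{lem:6.7} from a more basic bimodule identification, namely an isomorphism
$${}_{c-m}\dd_c \ \simeq\ \mc{O}_X(nm) \otimes_{\mc{O}_X} \dd_c$$
of $(\dd_{c-m},\dd_c)$-bimodules, where $\mc{O}_X(nm)$ denotes the pullback of $\mc{O}_{\Ps}(nm)$ to $X = \SSL\times\Ps$. Granting this, the assertion follows by tensoring both sides on the right with $\mm$ over $\dd_c$:
$${}_{c-m}\dd_c \otimes_{\dd_c} \mm\ \simeq\ \mc{O}_X(nm)\otimes_{\mc{O}_X}\dd_c\otimes_{\dd_c}\mm\ \simeq\ \mc{O}_X(nm)\otimes_{\mc{O}_X}\mm\ =\ \mm(nm).$$

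To construct the identification I would use the description of line bundles on $X$ via the $\Cs$-torsor $p:\wt X \to X$. Namely, $\mc{O}_X(nm) \simeq (p_\idot\mc{O}_{\wt X})^{(nm)}$, the weight-$nm$ eigenspace for the $\Cs$-action. Because this action preserves the left ideal $p_\idot\dd_{\wt X}(\E+cn)$, the multiplication map
$$\phi:\ (p_\idot\mc{O}_{\wt X})^{(nm)} \otimes_{\mc{O}_X}\dd_c \ \too \ {}_{c-m}\dd_c,\qquad f\otimes [D]\ \mto\ [fD],$$
is well-defined and is manifestly a morphism of right $\dd_c$-modules. To check that $\phi$ is bijective and promotes to an isomorphism of $(\dd_{c-m},\dd_c)$-bimodules, one works locally over an affine open $U\sset \Ps$ that trivializes $\mc{O}_{\Ps}(1)$. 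Over $\SSL\times U$ the $\Cs$-torsor $p$ admits an equivariant section, which simultaneously trivializes $\dd_c$, $\dd_{c-m}$ and ${}_{c-m}\dd_c$ in terms of ordinary differential operators on $\SSL\times U$; under these trivializations $\phi$ becomes the tautological identification, and the bijectivity is immediate. Compatibility with the left $\dd_{c-m}$-action then reduces to computing how the subsheaves $\dd_c$ and $\dd_{c-m}$ sit inside $p_\idot\dd_{\wt X}/(\E+cn)$ as distinct $\Cs$-invariant pieces.

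I expect the subtlest point to be checking that the left $\dd_{c-m}$-action arising from the bimodule structure on ${}_{c-m}\dd_c$ genuinely matches the standard left $\dd_{c-m}$-action on $\mc{O}_X(nm)\otimes_{\mc{O}_X}\dd_c$, because the two embeddings of $\dd_c$ and $\dd_{c-m}$ into $p_\idot\dd_{\wt X}/(\E+cn)$ differ by a shift that must be conjugated past functions of nonzero $\Cs$-weight. Tracking this commutator carefully in the local trivializations of the previous paragraph completes the verification; the admissibility hypothesis on $c$ (already in force in \S\ref{ham_shift_sec}) is used only to ensure that passing between sheaves and global sections loses no information, so that the local check suffices. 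Once $\phi$ is confirmed to be a bimodule isomorphism, applying $-\otimes_{\dd_c}\mm$ yields the lemma.
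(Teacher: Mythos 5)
The paper does not prove this lemma --- it cites it from \cite{GGS} (Lemma 6.7(1)), so there is no in-text argument to compare with. Your reconstruction is correct and follows the expected route: identify ${}_{c-m}\dd_c$ with $\mc{O}_X(nm)\otimes_{\mc{O}_X}\dd_c$ as $(\dd_{c-m},\dd_c)$-bimodules, then collapse $\dd_c\otimes_{\dd_c}\mm$. The subtlety you flag --- matching the two left $\dd_{c-m}$-actions --- is indeed the crux, but it can be disposed of without descending to local trivializations: for a local section $D$ of $p_\idot\dd_{\wt X}$ of $\Cs$-weight $nm$ one has $[\E,D]=nm\,D$, whence $(\E+(c-m)n)D=D(\E+cn)$, so the left ideal generated by $\E+(c-m)n$ annihilates the weight-$nm$ component of $p_\idot\dd_{\wt X}/p_\idot\dd_{\wt X}(\E+cn)$. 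This exhibits the left $\dd_{c-m}$-module structure on ${}_{c-m}\dd_c$ directly, and since $\phi$ is an $\mc{O}_X$-linear isomorphism of locally rank-one right $\dd_c$-modules, the two $\dd_{c-m}$-structures must coincide (an automorphism of a sheaf of twisted differential operators fixing $\mc{O}_X$ is the identity). Your remark on where admissibility is needed --- to pass between the local sheaf-level identity and the corresponding identity of $\dd_c(X)$-modules via Beilinson--Bernstein localization --- is also accurate.
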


In section \S\ref{app:shift} below (see Corollary \ref{cor:bimodules}),
 we will introduce natural $(\htrig_{\kappa},\htrig_{\kappa + 1})$,
 resp. $(\htrig_{\kappa+1},\htrig_{\kappa})$-bimodules ${}_{\kappa}
 \mathsf{P}_{\kappa + 1}$, resp. ${}_{\kappa+1} \mathsf{Q}_{\kappa}$,
 for all $\kappa \in \C$. We then define inductively
$$
{}_{\kappa} \mathsf{P}_{\kappa + m} = ({}_{\kappa} \mathsf{P}_{\kappa + 1}) \cdot ({}_{\kappa + 1} \mathsf{P}_{\kappa + m}), \quad {}_{\kappa + m} \mathsf{Q}_{\kappa} = ({}_{\kappa + m} \mathsf{Q}_{\kappa + m -1}) \cdot ({}_{\kappa + 1} \mathsf{Q}_{\kappa}),
$$
where the multiplication is defined inside $\htrig^{\reg}$, making ${}_{\kappa} \mathsf{P}_{\kappa + m}$, resp. ${}_{\kappa+m} \mathsf{Q}_{\kappa}$, into a $(\htrig_{\kappa},\htrig_{\kappa + m})$, resp. $(\htrig_{\kappa+m},\htrig_{\kappa})$-bimodule. Define the shift functor $\sh : \Lmod{\htrig_{\kappa}} \rightarrow \Lmod{\htrig_{\kappa+1}}$ by 
$$
\sh(M) = {}_{\kappa + 1} \mathsf{Q}_{\kappa} \o_{\htrig} M.
$$
Theorem \ref{thm:projbi} implies that $\sh$ that provides, for any $\kappa$ such that both $\kappa$ and $\kappa + 1$ are good, an equivalence $\sh:  \oo_\kappa \iso \oo_{\kappa+1}$. Recall that $\kappa= -c + 1$. We will use the following version of \cite[\S 6.4]{GGS}:

\begin{prop}\label{ggs} 
Assume that $c$ is admissible. 

\vi For any $\mm\in \ss_{c}$ there is a canonical morphism $\psi_{c} : \sh(\Ham(\mm))\to \Ham(\mm(n))$. 

\vii Let $M\in \oo_{\kappa}$ and $\mm:=\Hamp(M)$. Then, the morphism $\psi_{c} : \sh(M) = \sh(\Ham(\mm)) \to \Ham(\mm(n))$ is an isomorphism.
\end{prop}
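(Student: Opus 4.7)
The plan is to follow the strategy of \cite[\S 6.4]{GGS}, adapted to the trigonometric setting. For part (i), I would construct $\psi_c$ by combining Lemma \ref{lem:6.7} with a Hamiltonian reduction of the shift bimodule. By Lemma \ref{lem:6.7}, there is a canonical isomorphism $\mm(n) \simeq {}_{c-1}\dd_c \otimes_{\dd_c} \mm$. Now ${}_{c-1}\dd_c$ carries commuting actions of $\g_{c-1}$ (on the left) and $\g_c$ (on the right), so quantum Hamiltonian reduction yields a bimodule
\[
\Ham({}_{c-1}\dd_c) \ :=\ \bigl(\Gamma(X,{}_{c-1}\dd_c)/\Gamma(X,{}_{c-1}\dd_c)\cdot\g_c\bigr)^{\g_{c-1}},
\]
which is naturally a $(\htrig_{\kappa+1},\htrig_\kappa)$-bimodule. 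Appealing to the explicit construction of the shift bimodule ${}_{\kappa+1}\mathsf{Q}_\kappa$ via Opdam's shift operators (as recalled in the appendix, see Corollary \ref{cor:bimodules} and the discussion preceding Theorem \ref{thm:projbi}), one obtains a canonical bimodule map $\phi: {}_{\kappa+1}\mathsf{Q}_\kappa \to \Ham({}_{c-1}\dd_c)$. The morphism $\psi_c$ is then defined as the composite
\[
\sh(\Ham(\mm))\ =\ {}_{\kappa+1}\mathsf{Q}_\kappa \otimes_{\htrig_\kappa}\Ham(\mm)\ \xrightarrow{\phi\otimes\id}\ \Ham({}_{c-1}\dd_c)\otimes_{\htrig_\kappa}\Ham(\mm)\ \too\ \Ham(\mm(n)),
\]
where the final map is the obvious multiplication/evaluation using $\Ham(\mm) = \Gamma(X,\mm)^{\SSL}$ and Lemma \ref{lem:6.7}.

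For part (ii), I would directly compute both sides when $\mm = \Hamp(M)$ and show the resulting map is an isomorphism. Unwinding definitions and using Lemma \ref{lem:6.7},
\[
\mm(n)\ \simeq\ {}_{c-1}\dd_c \otimes_{\dd_c}\bigl(\ddd/\ddd\g_c\bigr)\otimes_{\htrig_\kappa} M,
\]
and since the action of the right copy of $\g_c$ on ${}_{c-1}\dd_c$ matches the $\g_c$ in $\ddd/\ddd\g_c$, this simplifies to $\bigl({}_{c-1}\dd_c/{}_{c-1}\dd_c\cdot\g_c\bigr)\otimes_{\htrig_\kappa}M$. Applying Hamiltonian reduction, $\Ham(\mm(n)) = \Ham({}_{c-1}\dd_c)\otimes_{\htrig_\kappa}M$. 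Thus the claim reduces to showing that the map $\phi: {}_{\kappa+1}\mathsf{Q}_\kappa \to \Ham({}_{c-1}\dd_c)$ is an isomorphism under the admissibility hypothesis on $c$. This is the trigonometric analogue of the identification of reduced shift bimodules carried out in \cite[\S 6.4]{GGS}.

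The proof of this final identification uses two ingredients. First, the radial parts map of the appendix (Theorem \ref{thm:radialparts}) provides an explicit algebra homomorphism $\mathfrak{R}: \dd(\X)^G \to \htrig_{\kappa}$ compatible with the shift of parameter $\kappa \mapsto \kappa+1$; applied fiberwise to ${}_{c-1}\dd_c$ it realizes $\phi$ as a filtered map whose associated graded is an isomorphism at generic points. Second, admissibility of $c$ ensures via Beilinson-Bernstein localization that both $\Gamma(X,-)$ and $\Ham$ are exact on the relevant subcategories, and that no Torsion obstruction appears when taking $\g_{c-1}$-invariants of the tensor product. The combination of these two facts, together with the nondegeneracy of the trace pairing on $\htrig_\kappa e \cdot e\htrig_\kappa$ for good $\kappa$, forces $\phi$ to be a bimodule isomorphism.

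The hard part will be the final step: checking carefully that the passage from the rational case (\cite{GGS}) to the trigonometric case is legitimate. The potential obstructions are twofold. One must verify that the explicit description of ${}_{\kappa+1}\mathsf{Q}_\kappa$ via shift operators on $\dd(\TSL^{\reg})^W$ matches, after applying $\mathfrak{R}$, the invariant global sections of ${}_{c-1}\dd_c$; and one must ensure that the admissibility condition (rather than mere goodness of $\kappa$) is strong enough to guarantee vanishing of the higher $\Tor$-terms that would otherwise obstruct the identification $\Ham(\Hamp(M)) = M$ from propagating through the tensor product. Both points should follow from the analysis in the appendix together with Proposition \ref{many}, but require care in bookkeeping the shift in parameter between $\kappa$ and $\kappa+1$.
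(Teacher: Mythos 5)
Your strategy for part (i) is broadly in the right direction — start from Lemma~\ref{lem:6.7}, reduce along the Hamiltonian action, and land in the spherical category — but you are missing the key identity that makes the paper's proof work, and your part (ii) is circular.

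For (i), the paper does not construct a map $\phi : {}_{\kappa+1}\mathsf{Q}_\kappa \to \Ham({}_{c-1}\dd_c)$ ``via Opdam's shift operators''; rather, it cites \cite[Lemma 6.7(ii)]{GGS} for the \emph{equality} ${}_{\kappa+1}\mathsf{Q}_\kappa = ({}_{c-1}\dd_c / {}_{c-1}\dd_c\,\g_c)^{\SSL}$, so there is no extra map to build. The actual content of part (i) is the descent step you skip: one takes $\SSL$-invariants of $\mm(n) = {}_{c-1}\dd_c\otimes_{\dd_c}\mm$ to get a natural map $f : ({}_{c-1}\dd_c)^{\SSL}\otimes_{(\dd_c)^{\SSL}}\mm^{\SSL}\to({}_{c-1}\dd_c\otimes_{\dd_c}\mm)^{\SSL}$, and then one must show $f$ kills $(\g_{c-1}({}_{c-1}\dd_c))^{\SSL}\otimes\mm^{\SSL}$ so that it descends to the reduced bimodule. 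This is done by observing the $\SSL$-invariants of the target agree with the $\sll$-coinvariants (using local finiteness of the $\sll$-action), and the coinvariants visibly kill that subspace. Your sketch never addresses why the map factors through the quotient, which is the only nonroutine point.

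For (ii), your manipulation is circular. You write $\mm(n)\simeq({}_{c-1}\dd_c/{}_{c-1}\dd_c\,\g_c)\otimes_{\htrig_\kappa}M$ and then say ``applying Hamiltonian reduction'' gives $\Ham(\mm(n)) = \Ham({}_{c-1}\dd_c)\otimes_{\htrig_\kappa}M$. But commuting the $\SSL$-invariants functor past $-\otimes_{\htrig_\kappa}M$ is precisely what it means for $\psi_c$ to be an isomorphism; you cannot use it as an intermediate step. The subsequent plan — realize the identification as a filtered map with iso associated graded, invoke trace-pairing nondegeneracy — is not developed to the point of being checkable, and indeed the paper sidesteps all of this: part (ii) is taken directly from \cite[Theorem 6.5]{GGS}, whose trigonometric adaptation is the purpose of the appendix. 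So the honest route is a citation, whereas your re-derivation both contains a circularity and leaves the real work (controlling $\Tor$ and identifying the filtered map) to the ``hard part'' you flag but do not resolve.
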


\begin{proof}
By Lemma \ref{lem:6.7}, one has: $\dis \mm(n)={{}_{c - 1}}\dd_{c} \otimes_{\dd_{c}} \mm$. Taking $\SSL$-invariants, we obtain a natural map
$$
f:\ ({}_{c -1}\dd_{c})^{\SSL} \otimes_{(\dd_{c})^{\SSL}} \mm^{\SSL} \too \big({}_{c - 1}\dd_{c} \otimes_{\dd_{c}} \mm\big)^{\SSL}.
$$
We claim that the map $f$ descends to $[({}_{c-1}\dd_{c})^{\SSL}/({}_{c - 1}\dd_{c}\g_{c})^{\SSL}]\ \otimes_{(\dd_{c})^{\SSL}} \mm^{\SSL}$, a quotient space. To see this,  let $(-)_\sll$ denote  the functor of $\sll$-coinvariants, c.f. \S\ref{sec:qhr}. The
natural projection 
$$
h:\ \big({}_{c-1} \dd_{c} \otimes_{\dd_{c}} \mm\big)^{\SSL}\too \big({}_{c-1}\dd_{c} \otimes_{\dd_{c}} \mm\big)_\sll.
$$
is an isomorphism since the $\sll$-action involved is locally finite. Note that we have an equality $({{}_{c-1}} \dd_{c}\g_{c})^{\SSL} = (\g_{c-1}({{}_{c-1}}\dd_{c}))^{\SSL}$ and that the composite map $h\ccirc f$ clearly kills the subspace $(\g_{c-1}({{}_{c-1}}\dd_{c}))^{\SSL}\ \otimes_{\dd_{c}^{\SSL}} \mm^{\SSL}$. Hence, the map $f$ kills this subspace as well, and our claim follows.

We conclude that the map $f$ descends to a well defined map
$$
\psi_{c} : \ ({{}_{c-1}}\dd_{c}/{{}_{c-1}}\dd_{c}\g_{c})^{\SSL}\otimes_{(\dd_{c}/\dd_{c}\g_{c})^{\SSL}}
\mm^{\SSL}\too \big({{}_{c-1}}\dd_{c} \otimes_{\dd_{c}} \mm\big)^{\SSL}.
$$
Further, we have $(\dd_{c}/\dd_{c}\g_{c})^{\SSL}= \htrig_{\kappa}$ and also ${{}_{\kappa+1}{\mathsf{Q}}_{\kappa}} = ({{}_{c-1}} \dd_{c} / {{}_{c-1}}\dd_{c}\g_{c})^{\SSL}$, see part (ii) of \cite[Lemma 6.7]{GGS}. Thus, the map $\psi_{c}$ takes the following form
$$
\psi_{c} : \ \sh(\Ham(\mm)) = {{}_{\kappa+1}{\mathsf{Q}}_{\kappa}} \o_{\htrig_{\kappa}} \mm^{\SSL} \too \Ham(\mm(n)).
$$
This proves (i). The statement of part (ii) is \cite[Theorem 6.5]{GGS}.
\end{proof}

\subsection{} Recall that we defined two notions of stability in section \ref{sec:stabilty2}, one with respect to $\det$ and the other with respect to $\det^{-1}$. By analogy with the description of $(T^* \X)^{\mathrm{ss},+}$ given in (\ref{eq:stabplus}), we say that a point of $T^* X$ is \textit{semi-stable} if it is contained in 
$$
(T^* X)^{\mathrm{ss}} := \{ (g,Y,\ell,j) \in \SSL \times \sll  \times T^* \mathbb{P} \ | \ \C \langle g,Y \rangle \cdot \ell = V \}.
$$
The set of unstable points in $X$ is denoted $(T^* X)^{\st}$. For each $m \in \Z$ we denote by $\mc{O}(m)$ the line bundle on $T^* X$ obtained by pulling back $\mc{O}_{\mathbb{P}}(m)$ along the projection $T^* X \rightarrow X \rightarrow \mathbb{P}$. 

\begin{lem}\label{lem:GIT}
Let $\mc{F}$ be a $\SSL$-equivariant, coherent $\mc{O}_{T^* X}$-module. Then, $\Supp \mc{F} \cap (T^* X)^{\mathrm{ss}} \neq \emptyset$ if and only if $\Gamma(T^* X, \mc{O}(mn) \o \mc{F})^{\SSL} \neq 0$ for $m \gg 0$. 
\end{lem}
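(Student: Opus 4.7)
The plan is to recognise the lemma as the standard Serre-type correspondence in geometric invariant theory, applied to the $\SSL$-action on the quasi-projective variety $T^*X$ linearised by the line bundle $L := \mc{O}(n)$. The Springer-type morphism $T^*\Ps(V) \to \{Y \in \sll \mid Y^2 = 0\}$, $(v,\phi) \mto v \o \phi$, contracts the zero section to $0$ and is otherwise an isomorphism. It exhibits $T^*X = \SSL \times \sll \times T^*\Ps$ as projective over an affine base with $\mc{O}(1)$ relatively ample. The centre $\mu_n \sset \SSL$ acts on $\mc{O}_{\Ps}(1)$ via the tautological character, so $L$ is a genuine $(\SSL/\mu_n)$-equivariant ample line bundle. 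A direct Hilbert--Mumford computation -- testing one-parameter subgroups that preserve the proper subspace $\C \langle g,Y \rangle \cdot \ell \subsetneq V$ and computing the induced weight on $\mc{O}(n)$ -- will identify the Hilbert--Mumford semi-stable locus with the set $(T^*X)^{\mathrm{ss}}$ appearing in the statement.

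With this GIT framework in hand, for any $\SSL$-equivariant coherent sheaf $\mc{F}$ on $T^*X$ I form the graded module
$$
M_{\mc{F}} := \bigoplus_{m\ge 0} \Gamma(T^*X,\mc{F} \o L^m)^{\SSL}
$$
over the graded algebra $R^{\SSL} := \bigoplus_{m\ge 0} \Gamma(T^*X, L^m)^{\SSL}$. Relative ampleness of $L$ over the affine base, finite generation of $A := \Gamma(T^*X,\mc{O})$, and reductivity of $\SSL$ together imply that $R^{\SSL}$ is a finitely generated graded algebra over $A^{\SSL}$ and that $M_{\mc{F}}$ is a finitely generated graded $R^{\SSL}$-module. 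By the GIT construction, $\Proj(R^{\SSL})$ is canonically identified with the good quotient $(T^*X)^{\mathrm{ss}} /\!/ \SSL$, and under the standard Serre correspondence $M_{\mc{F}}$ corresponds to the coherent sheaf on $(T^*X)^{\mathrm{ss}} /\!/ \SSL$ obtained by descending $\mc{F}|_{(T^*X)^{\mathrm{ss}}}$ along the good quotient.

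The lemma then reduces to the basic Serre-type fact that a finitely generated graded module over a positively graded, finitely generated algebra determines the zero sheaf on $\Proj$ if and only if it vanishes in high degrees. Concretely, $M_{\mc{F}}$ vanishes in high degree exactly when $\mc{F}$ restricts to zero on $(T^*X)^{\mathrm{ss}}$, which in turn happens exactly when $\Supp \mc{F}$ avoids the semi-stable locus. The main technical obstacle will be the descent step: verifying that $\mc{F}|_{(T^*X)^{\mathrm{ss}}}$ descends along the good quotient and that the descended sheaf is recovered by the $\Proj$-construction applied to $M_{\mc{F}}$. This hinges on point stabilisers of $\SSL$ acting trivially on $L$, which is guaranteed by linearising with the $n$-th power $\mc{O}(n)$ absorbing the central $\mu_n$; indeed, this is precisely why the factor of $n$ appears in the statement of the lemma.
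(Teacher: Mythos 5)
Your approach is a legitimate alternative to the paper's. The paper does not work directly on $T^*X$: it lifts $\mc{F}$ to a $\BG$-equivariant sheaf $\mc{G}$ on the \emph{affine} variety $Y := \mu_{\Cs}^{-1}(0) \subset T^*\X = T^*(\SSL\times V)$, uses that $Y^{\mathrm{ss},\Cs}/\Cs \cong T^*X$, and then invokes the purely affine statement \cite[Proposition 7.4(2)]{GGS}. Your route --- exhibiting $T^*X$ as projective over an affine base via the Springer-type contraction $T^*\Ps(V)\to\sll$, taking $L=\mc{O}(n)$ as a relatively ample linearisation, and reading off the conclusion from the Serre $\Proj$-correspondence for $R^{\SSL}$ --- is more self-contained and avoids both the detour through $T^*\X$ and the external citation, which is a genuine simplification if carried through correctly.

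There is, however, a gap at the descent step, and it is sharper than the ``technical obstacle'' you flag. On $(T^*X)^{\mathrm{ss}}$ every $\SSL$-stabiliser equals the centre $\mu_n = Z(\SSL)$, which acts trivially on all of $T^*X$. Passing to $L=\mc{O}(n)$ ensures that the \emph{linearising line bundle} descends, as you note, but you then assert that $\mc{F}|_{(T^*X)^{\mathrm{ss}}}$ descends as well. That would require $\mu_n$ to act trivially on the fibres of $\mc{F}$, which is not automatic for an $\SSL$-equivariant sheaf. What is true in general is only the weaker Serre identification $\widetilde{M_{\mc{F}}} \cong \bigl(\pi_{\idot}(\mc{F}|_{\mathrm{ss}})\bigr)^{\SSL}$, and the implication $\bigl(\pi_{\idot}(\mc{F}|_{\mathrm{ss}})\bigr)^{\SSL}=0 \Rightarrow \mc{F}|_{\mathrm{ss}}=0$ can fail when $\mu_n$ acts nontrivially. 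Indeed, the lemma as literally stated fails for $\mc{F}=\mc{O}(1)$: one has $\Gamma\bigl(T^*X,\mc{O}(nm)\o\mc{O}(1)\bigr) \cong \C[\SSL\times\sll]\o\Gamma(T^*\Ps,\mc{O}(nm+1))$, on which $\mu_n\subset\SSL$ acts by the nontrivial character $\zeta\mapsto\zeta^{\pm(nm+1)}$, so the $\SSL$-invariants vanish for every $m$, even though $\Supp\,\mc{O}(1)=T^*X$ meets $(T^*X)^{\mathrm{ss}}$.

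The correct hypothesis --- and the one in force when the lemma is actually applied, to $\mc{F}=\gr\mm$ with $\mm\in\ss_c\subset\Lmod{(\dd_c,\mathrm{PSL})}$ --- is that $\mc{F}$ be equivariant for $\mathrm{PSL}=\SSL/\mu_n$, i.e.\ that $\mu_n$ act trivially on $\mc{F}$. With that in place all stabilisers on the semistable locus act trivially on $\mc{F}\o L^m$, the sheaf $\mc{F}|_{\mathrm{ss}}$ genuinely descends along the good quotient, and your Serre-vanishing argument closes. To repair your write-up, either impose $\mathrm{PSL}$-equivariance explicitly, or first decompose $\mc{F}=\bigoplus_{\chi\in\widehat{\mu_n}}\mc{F}_\chi$ by $\mu_n$-isotype and observe that only $\mc{F}_{\mathrm{triv}}$ contributes to $M_{\mc{F}}$.
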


\begin{proof}
Let $Y = \mu_{\Cs}^{-1}(0) \subset T^* \X$, where $\mu_{\Cs}$ is the moment map for the Hamiltonian $\Cs$-action on $T^* \X$. Recall that $\BG = \SSL \times \Cs$. Let $\det : \Cs \rightarrow \Cs$ be the character $\lambda \mapsto \lambda^n$ and denote by the same symbol the corresponding character of $\BG$. Then $Y^{\mathrm{ss},\BG} \subseteq Y^{\mathrm{ss},\Cs}$, where stability is with respect to $\det$. The group $\Cs$ acts freely on $Y^{\mathrm{ss},\Cs}$ and the quotient space is $T^* X$. Therefore, in the notation of \cite[Proposition 7.4]{GGS}, there exists a coherent, $\BG$-equivariant sheaf $\mc{G}$ on $Y$ such that 
$$
\mc{F} = \mathbb{F}\left( \bigoplus_{m \ge 0} \Gamma(Y, \mc{G})^{(\Cs,\det^{-m})} \right) = \mathbb{F} \left( \bigoplus_{m \ge 0} \Gamma(T^* X, \mc{O}(nm) \o \mc{F}) \right).
$$
Thus, for $m \gg 0$, we have $\Gamma(Y, \mc{G})^{(\Cs,\det^{-m})} = \Gamma(T^* X, \mc{O}(nm) \o \mc{F})$ and hence 
$$
\Gamma(Y, \mc{G})^{(\BG,\det^{-m})} = \Gamma(T^* X, \mc{O}(nm) \o \mc{F})^{\SSL}.
$$
By \cite[Proposition 7.4 (2)]{GGS}, $\Gamma(Y, \mc{G})^{(\BG,\det^{-m})} \neq 0$ for $m \gg 0$ if and only if $\Supp \mc{G} \cap Y^{\mathrm{ss},G} \neq \emptyset$. Since $(T^* X)^{\mathrm{ss}}$ is just the image of $Y^{\mathrm{ss},\BG}$ in $T^* X$, and the coherent sheaf on $Y^{\mathrm{ss},\BG} / \BG$ corresponding by descent to $\mc{G}$ is also the sheaf corresponding by descent to $\mc{F}$, we conclude that $\Supp \mc{F} \cap (T^* X)^{\mathrm{ss}} \neq \emptyset$ if and only if $\Gamma(T^* X, \mc{O}(m) \o \mc{F})^{\SSL} \neq 0$ for $m \gg 0$.
\end{proof}

\begin{lem}\label{twist} 
Given $\ms{M}\in \ss_{c}$, there exists an integer $\ell(\mm) \gg 0$ such that, for all $k \geq \ell(\mm)$ one has

\vi The canonical morphism
$\psi_{c - k}: \sh(\Ham(\mm(kn)) \to \Ham(\mm((k+1)n))$
is surjective.

\vii The set $\SS(\ms{M})$  contains semistable points if and only if  $\Ham(\mm(kn))\neq 0$.
\end{lem}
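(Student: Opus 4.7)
The plan is to translate both parts into statements about $\gr\mm$, viewed as an $\SSL$-equivariant coherent sheaf on $T^*X$, and then invoke Lemma~\ref{lem:GIT}.

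Fix a $G$-equivariant good filtration $\mc{F}_\bullet\mm$, so that $\gr\mm$ is a coherent $\SSL$-equivariant sheaf on $T^*X$ with $\Supp\gr\mm=\Char\mm$. Since the twist $\mm\mapsto\mm(kn)$ preserves filtrations and shifts the associated graded to $\gr\mm\otimes\mc{O}(kn)$, Serre vanishing guarantees that for $k\gg 0$ one has $H^i(X,\mc{F}_d\mm(kn))=0$ for all $i>0$ and $d\geq 0$. Reductivity of $\SSL$ makes the invariants functor exact, so the induced filtration on $\Ham(\mm(kn))=\Gamma(X,\mm(kn))^{\SSL}$ satisfies
$$\gr\Ham(\mm(kn))\ \cong\ \Gamma\bigl(T^*X,\,\gr\mm\otimes\mc{O}(kn)\bigr)^{\SSL}.$$
Part (ii) now follows directly from Lemma~\ref{lem:GIT} applied to $\mc{F}=\gr\mm$.

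For part (i), if $\Char\mm\cap(T^*X)^{\mathrm{ss}}=\emptyset$ then part (ii) gives $\Ham(\mm(kn))=0$ for $k\geq\ell(\mm)$, so the statement is trivial. Otherwise, I use naturality of $\psi$ together with Proposition~\ref{ggs}(ii). The counit $\eta_k:\Hamp\Ham(\mm(kn))\to\mm(kn)$ of the adjunction of Proposition~\ref{adjoint} gives a factorization
$$\sh(\Ham(\mm(kn)))\ \xrightarrow{\ \sim\ }\ \Ham\bigl((\Hamp\Ham(\mm(kn)))(n)\bigr)\ \xrightarrow{\Ham(\eta_k(n))}\ \Ham(\mm((k+1)n))$$
whose composite is $\psi_{c-k}$, and whose first arrow is an isomorphism because Proposition~\ref{ggs}(ii) applies to modules in the image of $\Hamp$. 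Writing $C_k=\coker\eta_k$, the identity $\Ham\eta_k=\id$ (counit--unit identity) together with exactness of $\Ham$ gives $\Ham(C_k)=0$, and surjectivity of $\psi_{c-k}$ is equivalent to the vanishing $\Ham(C_k(n))=0$.

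To obtain this vanishing for $k\gg 0$, I would pass again to the associated graded. The Rees-type module $\bigoplus_k\mm(kn)$ is finitely generated over the graded algebra $\bigoplus_k\Gamma(T^*X,\mc{O}(kn))^{\SSL}$, and the collection $\{C_k\}_k$ assembles into a finitely generated graded subquotient. Using the identification in part (ii), a Castelnuovo--Mumford-type regularity argument shows that for $k\gg 0$ the support of $\gr C_k$ is contained in the unstable locus $(T^*X)^{\st}$; part (ii) applied to $C_k$ then yields $\Ham(C_k(n))=0$. The main obstacle is precisely this uniform regularity estimate: the vanishing $\Ham(C_k)=0$ is immediate from the adjunction, but to propagate it to $\Ham(C_k(n))=0$ one must control the threshold $\ell(C_k)$ uniformly as $k$ varies, which requires the noetherian structure on the Rees module to be exploited with care.
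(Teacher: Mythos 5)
Your treatment of part (ii) is essentially the paper's own approach: pass to a good $\SSL$-stable filtration, identify (after taking invariants, using reductivity) the associated graded of $\Ham(\mm(kn))$ with $\Gamma(T^*X,\gr\mm\otimes\oo(kn))^{\SSL}$, and invoke Lemma~\ref{lem:GIT}. The one technical caveat I would flag is that your appeal to ``Serre vanishing'' is asked to give a single threshold $k$ working for \emph{all} $d$ simultaneously; the paper avoids this by transferring the computation to the affine variety $\mu_\X^{-1}(0)\subset T^*\X$ and using Hilbert's Theorem on the Rees module. But the shape of the argument is the same.

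For part (i) you take a genuinely different route, and here there is a real gap. The reduction is correct as far as it goes: by naturality of $\psi$ applied to the counit $\eta_k$, together with Proposition~\ref{ggs}(ii) applied to $\Hamp\Ham(\mm(kn))$ and the identity $\Ham(\eta_k)=\id$, surjectivity of $\psi_{c-k}$ is indeed equivalent to $\Ham(C_k(n))=0$ where $C_k=\coker\eta_k$. The problem is the last step. First, establishing that $\SS(C_k)\subset(T^*X)^{\st}$ for $k\gg0$ is not something your ``Castelnuovo--Mumford'' sketch produces: the only handle you have on $C_k$ is $\Ham(C_k)=0$, and deducing from this that the characteristic variety is unstable is precisely the content of Theorem~\ref{cor:unstable2}, which the paper proves \emph{using} this lemma --- so that route is circular. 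Second, even if you somehow knew $\SS(C_k)\subset(T^*X)^{\st}$, Lemma~\ref{lem:GIT} only yields $\Ham(C_k(mn))=0$ for $m\ge\ell(C_k)$, whereas you need the single value $m=1$; there is no control of $\ell(C_k)$ as $k$ varies, and the modules $C_k$ are cokernels of $k$-dependent maps, so there is no reason for a uniform bound to fall out of Artin--Rees.

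The paper's proof of (i) sidesteps all of this by working directly at the associated graded level. It uses Hilbert's Theorem to show that $\bigoplus_k(\gr\mm(kn))^{\SSL}$ is a finitely generated graded module over the algebra $\wh A$, combined with the crucial fact that $\wh A$ is generated in degree one over $A^0$: this forces $(\gr\mm((k+1)n))^{\SSL}=A^1\cdot(\gr\mm(kn))^{\SSL}$ for $k\gg0$. The identification $\gr({}_{\kappa+1}\mathsf{Q}_\kappa)=A$ (from \cite{GGS}) then upgrades this to surjectivity of the actual map ${}_{\kappa+1}\mathsf{Q}_\kappa\otimes\Ham(\mm(kn))\to\Ham(\mm((k+1)n))$. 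This explicit structural input about $\wh A$ and ${}_{\kappa+1}\mathsf{Q}_\kappa$ is what your proposal is missing, and without it the adjunction reformulation does not close.
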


\begin{proof} 
 Let $A^0 = \C[\TSL \times \mf{t}^*_n]^{W}$ be the invariants, resp. $A^1 \sset \C[\TSL \times \mf{t}^*_n]$ the $\mathsf{sign}$-isotypic component, of the diagonal $W$-action. For each $k \geq 1$, we let $A^k \sset \C[\TSL \oplus \mf{t}^*_n]$ be the $\C$-linear span of the set of elements of the form $a_1\cdot a_2\cdot\ldots\cdot a_k$, for $a_i\in A^1$. Thus, $\wh A:=\oplus_{k\geq 0}\ A^k$ is a commutative, graded algebra.

Since $\mm$ is a regular holonomic, $\SSL$-equivariant $\dd$-module on $T^* X$, we may choose, by \cite[Corollary 5.1.11]{KashiwaraKawai}, a good, $\SSL$-stable filtration on $\mm$ such that the associated graded sheaf is reduced. This gives a good, reduced filtration on $\mm(kn)$ for each $k \geq 0$. Let $\gr(\mm(kn))$ denote the associated graded space. Then, we have 
$$
(\gr\mm(kn))^{\SSL} = \gr(\mm(kn)^{\SSL}),
$$
and, if $\mu_X : T^* X \rightarrow \sll^*$ is the moment map, then
$\gr\mm(kn)$ is supported on $\mu_X^{-1}(0)$ and, the filtration being reduced, it
is in fact a coherent $\mc{O}_{\mu^{-1}_X(0)}$-module. If $Y = \mu^{-1}_{\Cs}(0)$ as in the proof of Lemma \ref{lem:GIT} and $p : Y^{\mathrm{ss},\Cs} \rightarrow T^* X$ the quotient map, then the closure of $p^{-1} (\mu_X^{-1}(0))$ in $T^* \X$ is contained in $\mu_{\X}^{-1}(0)$ since the action of $\SSL \times \Cs$ on $T^* \X$ factors through the action of $G$. Thus, the direct sum $\oplus_{k\geq 0}\ (\gr\mm(kn))$ is a finitely generated module for the graded algebra $\oplus_{k \geq 0}\ \C[\mu_{\X}^{-1}(0)]^{(\Cs,\det^{-k})}$. Since $\SSL$ is reductive, Hilbert's Theorem implies that the space $\oplus_{k\geq 0}\ (\gr\mm(kn))^{\SSL}$ is a finitely generated module for the algebra  
$$
\left( \bigoplus_{k \geq 0}\ \C[\mu_{\X}^{-1}(0)]^{(\Cs,\det^{-k})} \right)^{\SSL} = \bigoplus_{k \geq 0}\ \C[\mu_{\X}^{-1}(0)]^{(G,\det^{-k})} \simeq \wh A,
$$
where the isomorphism of the right hand side is given by \cite[Proposition A2]{GG}. It follows, since the algebra $\wh A$ is generated over $A^0$ by its degree one component $A^1$, that for all $k\gg0$, one has an equality $(\gr\mm((k+1)n))^{\SSL} = A^1 \cdot(\gr\mm(kn))^{\SSL}$.

One has an isomorphism $\gr({{}_{\kappa+1}{\mathsf{Q}}_{\kappa}}) = A$, see \cite[Theorem 5.3]{GGS} and also \cite[Lemma 6.9(2)]{GordonStaffordI}.
We deduce that the map $\gr({{}_{\kappa+1}{\mathsf{Q}}_{\kappa}})\o
\gr(\mm(kn)^{\SSL})\to \gr(\mm((k+1)n)^{\SSL})$ is surjective.
This implies that
 the map ${{}_{\kappa+1}{\mathsf{Q}}_{\kappa}}\o
(\mm(kn))^{\SSL}\to (\mm((k+1)n))^{\SSL}$
is surjective, proving (i).

To prove (ii), we apply Lemma \ref{lem:GIT} to $\gr\mm$, a coherent sheaf on $T^* X$. Specifically, Lemma \ref{lem:GIT} says that the set $\supp(\gr\mm) = \SS(\mm)$ contains semistable points if and only if $\Gamma(T^*\X, \oo(nk)\otimes\gr\mm)^{\SSL} \neq 0$ for all sufficiently large positive integers $k$. The latter holds if and only $\Gamma(\X, \oo(nk)\otimes \mm)^{\SSL}\neq0$.
\end{proof}

\subsection{}

The following is an analogue of Theorem \ref{cor:unstable} for mirabolic $\dd$-modules on $X$. 

\begin{thm}\label{cor:unstable2} 
Assume $c \in \C$ is admissible and let $\ms{M} \in \ss_c$. Then, $\Char (\ms{M}) \subset (T^* X)^{\st}$ if and only if $\Ham(\ms{M}) = 0$. 
\end{thm}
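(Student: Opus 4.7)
The plan is to combine Lemma \ref{twist}(ii) with Theorem \ref{twistshift}. Lemma \ref{twist}(ii) supplies the key geometric criterion: there is an integer $N = \ell(\ms{M})$ such that $\Char(\ms{M}) \subset (T^* X)^{\st}$ if and only if $\Ham(\ms{M}(kn)) = 0$ for all $k \geq N$. The problem therefore reduces to comparing $\Ham(\ms{M})$ with its twists $\Ham(\ms{M}(kn))$ for large $k$.

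Next, I would invoke Theorem \ref{twistshift}, which says that for admissible $c$ the square intertwining the geometric twist $\ms{M} \mapsto \ms{M}(n)$ with the algebraic shift functor $\sh$ commutes, with both horizontal arrows being equivalences. Iterating this statement produces canonical isomorphisms
$$ \Ham(\ms{M}(kn)) \;\cong\; \sh^k(\Ham(\ms{M})), \qquad k \ge 0, $$
with $\sh^k : \oo_{\kappa} \iso \oo_{\kappa+k}$ an equivalence of categories. Since an equivalence is conservative, $\Ham(\ms{M}(kn)) = 0$ if and only if $\Ham(\ms{M}) = 0$. Matching this against the criterion from Lemma \ref{twist}(ii) yields both directions of the theorem simultaneously: if $\Char(\ms{M}) \subset (T^*X)^{\st}$ then $\Ham(\ms{M}(kn)) = 0$ for $k \gg 0$ and hence $\Ham(\ms{M}) = 0$; and conversely $\Ham(\ms{M}) = 0$ propagates to $\Ham(\ms{M}(kn)) = 0$ for every $k$, forcing $\Char(\ms{M}) \subset (T^*X)^{\st}$.

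The only bookkeeping required is to verify that admissibility is stable under the downward shift $c \mapsto c - 1$, so that Theorem \ref{twistshift} actually applies at every stage of the iteration. Since $\bad$ is closed under integer translation, $c \in \bad$ iff $c - 1 \in \bad$; and if $c$ is admissible with $c \in \bad$, then necessarily $c \notin \Q_{>0}$, whence $c - 1 < 0$ and $c - 1 \notin \bad \cap \Q_{>0}$; the auxiliary $n=2$ condition $c \notin \bad$ is likewise translation-invariant. The main difficulty of the result is therefore located upstream in Theorem \ref{twistshift} itself; once that intertwining result is available, Theorem \ref{cor:unstable2} follows formally, with essentially no new geometric or representation-theoretic input beyond the admissibility check just described.
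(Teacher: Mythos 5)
Your argument is circular. You invoke Theorem \ref{twistshift} as a black box and deduce Theorem \ref{cor:unstable2} from it in two lines. But in the paper's logical structure the dependency runs in exactly the opposite direction: the proof of Theorem \ref{twistshift} given in \S\ref{twist_pf} opens with ``Choose an integer $\ell=\ell(\mm)$ as in Lemma \ref{twist} and use the notation of the proof of Theorem \ref{cor:unstable2}$\,\ldots$ we have shown in the course of the proof that $\mm'\cong\mm$,'' and it further invokes the statement that $\Ham(\mm(kn))\neq 0$ whenever $\SS(\mm)$ meets the semistable locus, which is precisely the nontrivial direction of Theorem \ref{cor:unstable2}. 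So Theorem \ref{twistshift} is not available as an input; it is a consequence.

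The honest state of affairs at the point where Theorem \ref{cor:unstable2} must be proved is weaker than what you use. Proposition \ref{ggs} only gives a canonical map $\psi_c : \sh(\Ham(\mm)) \to \Ham(\mm(n))$ and shows it is an isomorphism when $\mm = \Hamp(M)$; Lemma \ref{twist}(i) only gives surjectivity of $\psi_{c-k}$ for $k\geq\ell(\mm)$. Neither of these lets you pass isomorphically from $\Ham(\mm(kn))$ for $k\gg 0$ back down to $\Ham(\mm)$: a priori $\Ham(\mm)$ could vanish while $\Ham(\mm(kn))$ does not, or vice versa. The entire difficulty of the theorem is to bridge this gap without assuming the shift-compatibility, and the paper does so by a rather delicate argument: for a simple $\mm$ with semistable characteristic support, one identifies $\Ham(\mm(kn))$ with $\sh^k(M)$ for a suitable simple $M$, forms $\mm' = M_{!*}$, and then shows via adjunction and comparison of simple tops that $\mm \cong \mm'$, from which $\Ham(\mm)= M \neq 0$. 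Your bookkeeping on admissibility under $c \mapsto c-1$ is correct, but it is beside the point; you need to either reproduce the paper's direct argument or supply an independent proof of the intertwining isomorphism that does not pass through Theorem \ref{cor:unstable2}.
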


\begin{proof}
The functor $\Ham$ being exact, it suffices to prove the result for simple modules. Thus, let $\mm\in\ss_{c}$ be a simple module.
If the set $\SS(\ms{M})$ does not  contain semistable points then $\Ham(\mm)=0$ by the previous lemma.

So, assume that the set $\SS(\ms{M})$ contains semistable points. We choose and fix an integer $\ell=\ell(\mm)$ as in the statement of Lemma \ref{twist}. For any  $k\geq
\ell$, by part (ii) of the lemma
we have that $\Ham(\mm(k n))$
is a (nonzero) simple object. Furthermore, writing
$M_\ell:=\Ham(\mm(\ell n))$, we deduce from part (i)
of  Lemma \ref{twist} that 
the canonical map
$\psi^{k-\ell}: \sh^{k-\ell}(M_\ell)\onto \Ham(\mm(k n))$
is surjective for any $k\geq \ell$. 
Note that the object $\sh^{k-\ell}(M_\ell)$ is simple
since the functor $\sh^{k-\ell}$ is  an
equivalence. It follows that the above map
$\psi^{k-\ell}$ is in fact an isomorphism.
Thus, from  Lemma \ref{top}, we conclude that $\mm(k n)=[\sh^{k-\ell}(M_\ell)]_{!*}$ is the simple top of the object $\Hamp(\sh^{k-\ell}(M_\ell))$.

Observe next that the functor $\sh^\ell$ is also an equivalence. It follows that there is a unique simple object $M\in\oo_{\kappa}$ such that one has $M_\ell=\sh^\ell(M)$. Then, using Proposition \ref{ggs}, we find 
$$
\Ham(\mm(k n))=\sh^{k-\ell}(M_\ell) = \sh^k(M) = \Ham(\Hamp(M)(k n)),\quad k\geq \ell.
$$

We consider the following composition
$$u:\ \Hamp(\Ham(\mm(k n)))\too\Hamp(\Ham(\Hamp(M)(k n)))\too \Hamp(M)(k
n),$$
where the first map is obtained by
applying 
$\Hamp(-)$
to the composite isomorphism above and  the second map
is the canonical adjunction.
Note that, the morphism
$\Ham(u)$ induced by $u$ is, by construction,
 the identity map on $\sh^k(M)$. Let $\mm'=M_{!*}$ be the
simple top of $\Hamp(M)$. Hence, $\mm'(k n)$ is  the simple top of
$\Hamp(M)(k n)$. Note that since $\Ham(\mm')=M\neq 0$, the set
$\SS(\mm')$   contains semistable points, by Lemma
\ref{gwyn}. Therefore, for $k\gg0$ by Lemma \ref{twist}(ii) one has
$\Ham(\mm'(k n))\neq 0$. Thus, we choose (as we may) $k\geq\ell$
such that $\Ham(\mm'(k n))\neq 0$.
Recall that the object $\Ham(\Hamp(M)(k n))=\sh^k(M)$ is simple. Therefore,
the $\dd$-module $\Hamp(M)(k n)$ contains a single
composition factor that has a nonzero Hamiltonian reduction.
But, $\mm'(k n)$ is a simple  quotient of $\Hamp(M)(k n)$
that has this property. Thus, we deduce that 
$\Ham(\mm'(k n))=\Ham(\Hamp(M)(k n))=\sh^k(M)$.

To complete the proof, we consider a diagram
$$
\xymatrix{
\mm(k n)\ &
\ \Hamp(\Ham(\mm(k n)))\  \ar[r]^<>(0.5){u}\ar@{->>}[l]_<>(0.5){pr}&\
\  \Hamp(M)( k n)\ \ar@{->>}[r]^<>(0.5){pr'}&\
\mm'( k n),
}
$$
where the map $pr$, resp. $pr'$, is the projection of
$\Hamp(\Ham(\mm(k n)))$, resp. $\Hamp(M)( k n)$,
onto its simple top. 
Our construction implies that 
$\ker(pr)$ is the largest submodule of $\Hamp(\Ham(\mm(k n)))$
annihilated by the functor $\Ham$, resp. $\ker(pr')$ is the largest
submodule of $\Hamp(M)( k n)$
annihilated by the functor $\Ham$. 
It follows that the morphism $u$ in the above diagram 
maps $\ker(pr)$ into  $\ker(pr')$. Hence,  the morphism
descends to a map
$$\bar u:\ \mm(k n)\cong \Hamp(\Ham(\mm(k n)))/\ker(pr)\ \to\
\mm'( k n)\cong \Hamp(M)( k n)/\ker(pr').
$$

Note that the induced map $\Ham(\bar u): \Ham(\mm(k n))\to
\Ham(\mm'( k n))$ may be identified with the
composite isomorphism
$\Ham(\mm(k n))=\sh^k(M)=\Ham(\Hamp(M)( k n))$.
In particular, $\Ham(\bar u)$ is a nonzero map.
We conclude that $\bar u$ is a nonzero map between
two simple objects. Thus, this map is an isomorphism.
It follows that $\mm\cong\mm'$.

We deduce that $\Ham(\mm)=\Ham(\mm')= \Ham(M_{!*})=M\neq0$. Furthermore, we obtain a chain of isomorphisms $\sh^\ell(\Ham(\mm))=\sh^\ell(\Ham(\mm'))=\sh^\ell(M)= \Ham(\mm(\ell n))$. Using this and the fact that $\sh$ is an equivalence, one deduces by induction on $i$ that the canonical morphism $\sh^{\ell-i}(\Ham(\mm))\to \Ham(\mm(i n))$
must be an isomorphism for any $i=0,1,\ldots,\ell-1$.
This completes the proof.
\end{proof}

\subsection{}\label{twist_pf}  {\em Proof of Theorem \ref{cor:unstable}.}\en
As in the proof of Theorem \ref{cor:unstable2}, we may assume that $\mm \in \cc_{q}$ is simple. Hence $\mm$ belongs to the subcategory $\ss_q$. We lift $\mm$ to $\ss_c$. Clearly the characteristic variety of this lift is independent of any choice and, as shown in the proof of Corollary \ref{cor:keyfact}, so too is $\Ham_c(\mm)$. If $\mm \in (\ss_{c})_0$ then Proposition \ref{thm:irrsupport} implies that the support of $\mm$ is the closure of $\X(\emptyset,(m^u))$ for some $n = um$, hence $\Char(\mm) \subset (T^* \X)^{\st,+}$. Then Proposition \ref{prop:Hamkill}, together with our assumption that $c$ is admissible, implies that $\Ham_{c}(\mm) = 0$. Therefore we may assume that $F_{c}(\mm)$, the image of $\mm$ in $\ss_{c} / (\ss_{c})_0 \simeq \ss_{c}$ is nonzero. By Theorem \ref{cor:unstable2} and equation (\ref{eq:hamham}) it suffices to show that 
$$
\Char(\mm) \cap (T^* \X)^{\mathrm{ss},+} \neq \emptyset \ \Longleftrightarrow \ \Char(F_{c}(\mm)) \cap (T^* X)^{\mathrm{ss}} \neq \emptyset.
$$
The fact that $\Char(F_{c}(\mm)) \cap (T^* X)^{\mathrm{ss}} \neq \emptyset$ implies that $\Char(\mm) \cap (T^* \X)^{\mathrm{ss},+} \neq \emptyset$ follows from the fact that $(T^* \X)^{\mathrm{ss},+} \subset T^* \widetilde{X}$ and $\Char(F_{c}(\mm)) = (T^*_{\Cs} \widetilde{X} \cap \Char(\mm)) / \Cs$. 

On the other hand, if $\Char(\mm) \cap (T^* \X)^{\mathrm{ss},+} \neq \emptyset$ then, as noted in the proof of Lemma \ref{gwyn}, we can find some relevant stratum $\X(\lambda,\emptyset)$ such that the conormal $T^*_{\X(\lambda,\emptyset)} \X$ to this stratum has non-empty intersection with $(T^* \X)^{\mathrm{ss},+}$; in fact, the conormal is contained in $(T^* \X)^{\mathrm{ss},+}$. The intersection $T^*_{\X(\lambda,\emptyset)} \X \cap T^*_{\Cs} \widetilde{X}$ is dense in $T^*_{\Cs} \widetilde{X}$. Thus, $\Char(F_{d}(\mm)) \cap (T^* X)^{\mathrm{ss}} \neq \emptyset$ as required. 
\qed

\begin{proof}[Proof of Theorem \ref{twistshift}]
We are going to show that the canonical morphism $\psi_{c}:\sh(\Ham(\mm))\to \Ham(\mm(n))$
is an isomorphism for any $\mm\in\ss_{c}$. The functors
$\sh,\ \Ham$, and also the twist functor $(-)(n)$, all being exact,
it suffices to prove the result for simple mirabolic modules.
Thus, let $\mm\in\cc_{c}$ be a simple module.

Assume first that  the set
 $\SS(\ms{M})$ does not contain semistable points.
Since $\SS(\mm(n))=\SS(\mm)$, we deduce from  Lemma \ref{gwyn}
that $\Ham(\mm)=0$ and  $\Ham(\mm(n))=0$. Therefore, the map $\psi_c$
is, in this case, a map between two zero vector spaces
and we are done.

Assume now that  the set
 $\SS(\ms{M})$  contains semistable points.
Since $\SS(\mm(kn))=\SS(\mm)$, we deduce that
$\Ham( \mm(kn))\neq 0$, by Theorem \ref{cor:unstable}.
Thus, $\Ham( \mm)$ and $\Ham(\mm(n))$ are nonzero simple objects.
Therefore, to complete the proof it suffices to show that the 
 canonical
 map $\psi_c : \sh(\Ham(\mm))\to \Ham(\mm(n))$ is nonzero.

Choose an integer $\ell=\ell(\mm)$ as in Lemma \ref{twist}
and
use the notation of the proof of Theorem \ref{cor:unstable}.
In particular, by definition, we  have $\sh^\ell(\Ham(\mm'))=M_\ell=
\Ham(\mm(\ell n))$. Furthermore, we have shown in the course of the proof that,
in fact, one has an isomorphism $\mm'\cong\mm$.
Therefore, we get an isomorphism $\phi: \sh^\ell(\Ham(\mm))
\iso \Ham(\mm(\ell n))$. On the other hand, going through
the construction of the isomorphism  $\mm'\cong\mm$, 
one sees that the map $\phi$ may be factored as a composition
of the following chain of maps 
\begin{multline*}
\sh^\ell(\Ham(\mm))=
\xymatrix{
\sh^{\ell-1}(\sh(\Ham(\mm)))\ar[rr]^<>(0.5){\sh^{\ell-1}(\psi_c)}&&
\sh^{\ell-1}(\Ham(\mm(n)))=\sh^{\ell-2}(\sh(\Ham(\mm(n))))}\\
\xymatrix{\ar[rr]^<>(0.5){\sh^{\ell-2}(\psi_{c-1})}&&
\sh^{\ell-2}(\Ham(\mm(2n)))=\sh^{\ell-3}(\sh(\Ham(\mm(3n))))}\\
\xymatrix{
\ar[rr]^<>(0.5){\sh^{\ell-3}(\psi_{c-2})}&&\ldots\ar[r]&
\ \sh(\Ham(\mm(\ell-1)n))\ar[rr]^<>(0.5){\psi_{d-(\ell-1)}}&&
\Ham(\mm(\ell n))}.
\end{multline*}

We deduce that $\sh^{\ell-1}(\psi_c)$, the first map in the
chain, is nonzero. Hence, the map $\psi_c$ is itself nonzero, and we are done.
\end{proof}

\subsection{The KZ-functor}
Let $\varpi:\ \X=\SSL\times V\to \SSL\to\SSL/\!/\Ad\SSL\iso \TSL/W$
be a composition of the first projection, the adjoint quotient morphism,
and the Chevalley isomorphism.
We have $\varpi\inv(\TSL^{\reg}/W)=\X^{\reg}$
and the map $\varpi$ restricts to a $G$-torsor $\X^{\reg} \to \TSL^{\reg}/W$.
Thus, there is a short exact sequence
\beq{pi1}
1\to \pi_1(G)\to \pi_1(\X^{\reg}) \to \pi_1(\TSL^{\reg}/W)\to 1,
\eeq
of fundamental groups. A construction given in \cite[Section 4.4]{BFG}
produces
(although it was not  explicitly  stated  in {\em loc. cit.}
in this way), for each $c\in\C$,
a flat
connection on the sheaf $\oo_{\X^{\reg}}$ such that
the monodromy action of the canonical generator of
the group $\pi_1(G)=\Z$ is given by multiplication
by $q=\exp(2\pi\sqrt{-1} c)$. The connection gives
the sheaf $\oo_{\X^{\reg}}$ the structure of a 
$(G,q)$-monodromic $\dd$-module, to be denoted
$\oo_{\X^{\reg}}^c$. 
It follows that the functor
$\mm\mto [\varpi_\idot(\oo_{\X^{\reg}}^{-c}\o \mm)]^G$
gives an equivalence of the categories of
 $(G,q)$-monodromic $\dd_{\X^{\reg}}$-modules
and $\dd_{\TSL^{\reg}/W}$-modules, respectively.
Observe further that we have
\[\Gamma\big(\TSL^{\reg}/W,\ [\varpi_\idot(\oo_{\X^{\reg}}^{-c}\o \mm)]^G\big)=
\Gamma(\X^{\reg},\mm)^{\g_c}.\]
The variety $\TSL^{\reg}/W$ being affine,
we conclude that  the assignment $\mm\mto \Gamma(\X^{\reg},\mm)^{\g_c}$
gives an equivalence of the categories of
 $(G,q)$-monodromic $\dd_{\X^{\reg}}$-modules
and $\dd(\TSL^{\reg}/W)$-modules, respectively.

Next,
let $\delta$ denote the Weyl denominator, c.f. the Appendix, 
so $\delta^2 \in \C[\TSL]^W$. There is
a natural isomorphism
$\htrig_{\kappa}[\mbox{$\frac{1}{\delta^2}$}] \simeq \dd(\TSL^{\reg}/W)$.
Hence, for any $\htrig_{\kappa}$-module $M$ one can view
 $M[\mbox{$\frac{1}{\delta^2}$}]$, a localization of $M$, as a $\dd(\TSL^{\reg}/W)$-module. 

Thus, associated with a mirabolic module
$\mm\in\ss_q$, there are a pair of $\dd(\TSL^{\reg}/W)$-modules
defined as follows
\[H_{\operatorname{loc}}(\mm):=\big(\Ham_c(\mm)\big)[\mbox{$\frac{1}{\delta^2}$}]=
\big(\Gamma(\X,\
\mm)^{\g_c}\big)[\mbox{$\frac{1}{\delta^2}$}]
\quad\text{and}\quad H(\mm_{_{\operatorname{loc}}}):=
\Gamma(\X^{\reg},\ \mm|_{\X^{\reg}})^{\g_c}.
\]
This gives a pair of exact functors $\ss_q\to\Lmod{\dd(\TSL^{\reg}/W)}$. Furthermore,  there is a canonical morphism $H_{\operatorname{loc}}(\mm)\to
H(\mm_{_{\operatorname{loc}}})$.

\begin{prop}\label{prop:isof}
Assume that $c$ is admissible. Then, for any $\mm\in \ss_q$, 
 the canonical morphism $H_{\operatorname{loc}}(\mm)\to
H(\mm_{_{\operatorname{loc}}})$
is an isomorphism.
\end{prop}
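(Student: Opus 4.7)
The plan is to interpret the canonical morphism as a restriction on global sections followed by localization, then use a local cohomology exact sequence to reduce the statement to the vanishing of contributions supported on $Z := \X \sminus \X^{\reg}$. The key geometric input is that $\delta^2 \in \C[\TSL]^W$, identified under Chevalley restriction with a $G$-invariant class function on $\SSL$, pulls back via $\varpi$ to a function on $\X$ whose vanishing locus is exactly $Z$.

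Since $\X$ is affine and $\X^{\reg} = \{\varpi^*\delta^2 \neq 0\} \subset \X$ is a principal affine open, applying $\Gamma(\X,-)$ to the standard local cohomology triangle for $Z \subset \X$ yields a four-term exact sequence of $\dd(\X)$-modules
\[
0 \to \Gamma_Z(\X,\mm) \to \Gamma(\X,\mm) \to \Gamma(\X^{\reg},\mm) \to H^1_Z(\X,\mm) \to 0.
\]
By Proposition \ref{prop:monodromicfinite}(ii) applied to both $\mm$ on $\X$ and to $\mm|_{\X^{\reg}}$ on $\X^{\reg}$, the $\mu(\g)$-action is locally finite on $\Gamma(\X,\mm)$ and on $\Gamma(\X^{\reg},\mm)$, and the outer terms inherit this as a subspace and as a quotient respectively. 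Furthermore, the hypothesis $\mm \in \ss_q$ means $\mu(\one)$ acts semisimply on $\Gamma(\X,\mm)$, and this semisimplicity passes to all four terms. Next I would take $\g_c$-invariants: writing $\g = \sll \oplus \C\one$, this functor factors as $\mu(\sll)$-invariants (exact because $\sll$ is semisimple with locally finite action) followed by extraction of the $cn$-eigenspace of $\mu(\one)$ (exact because the action is semisimple), so the four-term sequence remains exact after applying $(-)^{\g_c}$.

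Finally I would localize at $\delta^2$. Because $\varpi^*\delta^2$ is $G$-invariant, the functors $(-)[\mbox{$\frac{1}{\delta^2}$}]$ and $(-)^{\g_c}$ commute. Every section of $H^i_Z(\X,\mm)$ is annihilated by some power of the ideal of $Z$, in particular by some power of $\varpi^*\delta^2$, so the outer terms vanish after localization. The term $\Gamma(\X^{\reg},\mm)^{\g_c}$ already carries a $\dd(\TSL^{\reg}/W)$-module structure via the isomorphism $\htrig_\kappa[\mbox{$\frac{1}{\delta^2}$}] \simeq \dd(\TSL^{\reg}/W)$, so $\delta^2$ acts invertibly on it and its localization returns itself. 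The collapsing middle of the sequence then gives precisely $H_{\operatorname{loc}}(\mm) \iso H(\mm_{_{\operatorname{loc}}})$. The step most likely to require care is the exactness of $(-)^{\g_c}$; this is where the hypothesis $\mm \in \ss_q$ is essential, since on the larger category $\cc_q$, $\mu(\one)$ may act with genuine Jordan blocks and only generalized-eigenspace extraction would be exact.
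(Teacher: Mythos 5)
There is a genuine gap: you have misidentified the non-vanishing locus of $\varpi^*\delta^2$ with $\X^{\reg}$. Recall that $\X^{\reg}$ consists of pairs $(g,v)$ such that $g$ is regular semisimple \emph{and} $v$ is cyclic for $g$, whereas $\{\varpi^*\delta^2 \neq 0\}$ imposes only the first condition; it equals $\varpi^{-1}(\TSL^{\reg}/W)$, which strictly contains $\X^{\reg}$. So even granting the exactness of $(-)^{\g_c}$ on semisimple-$\mu(\one)$ modules (which is fine), your local cohomology argument would only produce an isomorphism
$$\bigl(\Gamma(\X,\mm)^{\g_c}\bigr)\bigl[\mbox{$\tfrac{1}{\delta^2}$}\bigr]\ \iso\ \Gamma\bigl(\varpi^{-1}(\TSL^{\reg}/W),\mm\bigr)^{\g_c},$$
which, because $\varpi^*\delta^2$ is $G$-invariant, is essentially a tautology about quasi-coherent sheaves on an affine variety and does not need local cohomology at all. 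What is left — and what the proposition actually asserts — is that the further restriction to $\X^{\reg}$ does not change the $\g_c$-invariants. That is where the real content is, and it is exactly where your argument stops.

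The difference $\varpi^{-1}(\TSL^{\reg}/W) \sminus \X^{\reg}$ is a union of the relevant strata $\X((1^m),(1^{n-m}))$ with $m<n$, and one must rule out that a simple mirabolic subquotient of $\mm$ supported on the closure of such a stratum contributes to $H_{\operatorname{loc}}$. That is precisely where admissibility of $c$ enters: by Proposition~\ref{thm:irrsupport}, when $c \notin \Z$ (in particular when $c$ is admissible) no simple object of $\ss_q$ can be supported on the closure of $\X((1^m),(1^{n-m}))$ for $m<n$, so the gap locus carries nothing. Your write-up never invokes admissibility of $c$ — it only uses semisimplicity from $\mm\in\ss_q$ — which is a sign the argument cannot be complete, since the statement is false for, e.g., integer $c$. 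To repair the proof you would need to reduce to simple $\mm$ (both functors are exact), handle the case $\Supp\mm\subsetneq\X$ by the support classification just cited, and for full-support $\mm$ argue (as the paper does via Theorem~\ref{cor:unstable}) that the localization of the identity on the nonzero simple object $\Ham_c(\mm)$ is a nonzero map between simple objects, hence an isomorphism.
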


\begin{proof} 
Both functors are exact.
 Therefore, by induction on length it suffices to 
prove the isomorphism of the proposition
when $\mm \in \ss_c$ is simple. If $\mm$ is supported on $\X \sminus
\X^{\reg}$ then 
$H(\mm_{\operatorname{loc}})=0$ and we must show that $H_{\operatorname{loc}}(\mm) = 0$. This is equivalent to showing that $\mm$ is supported on $\X \sminus \pi^{-1}( \TSL^{\reg}/W)$. The relevent strata contained in the locally closed set $\pi^{-1}(\TSL^{\reg}/W) \sminus \X^{\reg}$ are precisely those of the form $\X(1^m,1^{n-m})$ with $m < n$. Since $c$ is assumed to be admissible (and in particular not in $\Z$) Proposition \ref{thm:irrsupport} says that $\mm$ cannot be supported on the closure of any of the strata $\X(1^m,1^{n-m})$. Hence $\H_{\operatorname{loc}}(\mm) = 0$. 

Therefore, we are left with showing that if $\Supp \mm = \X$, then 
$H_{\operatorname{loc}}(\mm) \iso H(\mm_{\operatorname{loc}})$. Since $\mm$ is assumed to be simple, $\mm
|_{\X^{\reg}}$ is a simple local system and $\Ham_c$, being a quotient
functor, sends $\mm |_{\X^{\reg}}$ to a simple local system on
$\TSL^{\reg}/W$. Similarly, $\mm\mto
H_{\operatorname{loc}}(\mm)=\Ham_c(-)[\mbox{$\frac{1}{\delta^2}$}]$ is also a quotient functor,
sending simple modules to simple modules. Hence it suffices to show that
the map $H_{\operatorname{loc}}(\mm) \rightarrow H(\mm_{\operatorname{loc}})$ is non-zero. Theorem
\ref{cor:unstable} says that $\Ham_c(\mm)$ is non-zero. The simplicity
of $\mm$ implies that any non-zero section of $\Ham_c(\mm)$ is supported
on the whole of $\TSL/W$, hence $\Ham_c(\mm)$ embedds in $H_{\operatorname{loc}}(\mm)$. By
the same argument, $\Ham_c(\mm)$ embeds in $H(\mm_{\operatorname{loc}})$.
Then the map $H_{\operatorname{loc}}(\mm) \rightarrow H(\mm_{\operatorname{loc}})$ is just the localization
of the identity on $\Ham_c(\mm)$.  As we have explained, $\Ham_c(\mm)\neq
0$, hence, the localized map is nonzero.
\end{proof}

Next, write $\mathsf{DR}(\mathcal F)$ for the local system
associated with a {\em lisse} $\dd$-module $\mathcal F$.
For any $M\in \mc{O}_{\kappa}$, the $\dd(\TSL^{\reg}/W)$-module
$M[\mbox{$\frac{1}{\delta^2}$}]$ gives a local system 
$\mathsf{DR}(M[\mbox{$\frac{1}{\delta^2}$}])$, on $\TSL^{\reg}/W$.
Let $H_q^{\mathrm{aff}}(\s_n)$ denote the affine Hecke algebra of type
$A$ at parameter $q = \exp (2 \pi \sqrt{-1} c)$. The algebra
$H_q^{\mathrm{aff}}(\s_n)$ is a quotient of the group algebra of the
fundemental group  $\pi_1(\TSL^{\reg}/W)$.
It is shown in \cite[Theorem
4.1 (ii)]{VVQuantumAffineSchur} that for $M$ as above the 
monodromy action of $\pi_1(\TSL^{\reg}/W)$ on 
$\mathsf{DR}(M[\mbox{$\frac{1}{\delta^2}$}])$
factors through $H_q^{\mathrm{aff}}(\s_n)$. Thus, $\mathsf{DR}( - [\mbox{$\frac{1}{\delta^2}$}])$ gives a functor
$\mathsf{KZ}_{\kappa} : \mc{O}_{\kappa} \rightarrow
\Lmod{H_q^{\mathrm{aff}}(\s_n)}$,
 the trigonometric
KZ-functor.

On the other hand,
the discussion preceeding Proposition \ref{prop:isof} shows
that, for any $\mm\in \ss_q$,
 the monodromy action of $\pi_1(\X^{\reg})$ on $\mathsf{DR}(\oo^{-c})\o \mathsf{DR}(\mm|_{\X^{\reg}})$
factors through $\pi_1(\TSL^{\reg}/W)$,
c.f. \eqref{pi1}.
Then,
Proposition \ref{prop:isof} yields the following result

\begin{cor} 
Assume that $c$ is admissible and let $\mm\in\ss_q$. Then, the 
 monodromy representations
 of $\pi_1(\TSL^{\reg}/W)$ on $\mathsf{DR}(\oo^{-c})\o \mathsf{DR}(\mm|_{\X^{\reg}})$
is isomorphic to $\mathsf{KZ}_{\kappa}(\Ham_c(\mm)[\mbox{$\frac{1}{\delta^2}$}])$;
in particular, the $\pi_1(\TSL^{\reg}/W)$-action on
$\mathsf{DR}(\oo^{-c})\o \mathsf{DR}(\mm|_{\X^{\reg}})$
 factors through the affine Hecke algebra.
\end{cor}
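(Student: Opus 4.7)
The plan is to deduce the corollary directly by combining Proposition \ref{prop:isof} with the equivalence of categories established in the paragraphs preceding it, together with the Riemann--Hilbert correspondence and the Varagnolo--Vasserot theorem cited just above.

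First, I would unpack the equivalence. As explained around diagram \eqref{pi1}, the assignment $\mm \mto [\varpi_{\idot}(\oo_{\X^{\reg}}^{-c}\o\mm|_{\X^{\reg}})]^{G}$ identifies the category of $(G,q)$-monodromic $\dd_{\X^{\reg}}$-modules with the category of $\dd_{\TSL^{\reg}/W}$-modules, and global sections further identify the latter with $\dd(\TSL^{\reg}/W)$-modules since $\TSL^{\reg}/W$ is affine. Under this equivalence the module $\mm|_{\X^{\reg}}$ corresponds to the $\dd(\TSL^{\reg}/W)$-module $H(\mm_{\operatorname{loc}}) = \Gamma(\X^{\reg},\mm)^{\g_c}$. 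Because $\varpi : \X^{\reg}\to\TSL^{\reg}/W$ is a $G$-torsor, applying $\mathsf{DR}$ and the projection formula yields a canonical identification of the local system underlying $H(\mm_{\operatorname{loc}})$ with the descent of $\mathsf{DR}(\oo^{-c})\o\mathsf{DR}(\mm|_{\X^{\reg}})$ from $\X^{\reg}$ to $\TSL^{\reg}/W$. (Descent is available precisely because tensoring with $\oo^{-c}$ kills the monodromy of $\pi_1(G)$ in \eqref{pi1}, so the resulting $\pi_1(\X^{\reg})$-representation factors through $\pi_1(\TSL^{\reg}/W)$.)

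Next I would invoke Proposition \ref{prop:isof}, which under the admissibility hypothesis on $c$ provides an isomorphism of $\dd(\TSL^{\reg}/W)$-modules $H_{\operatorname{loc}}(\mm) \iso H(\mm_{\operatorname{loc}})$. By definition $H_{\operatorname{loc}}(\mm) = \Ham_c(\mm)[\tfrac{1}{\delta^2}]$, so applying the de Rham functor gives a natural isomorphism of local systems
\[
\mathsf{DR}(\oo^{-c})\o\mathsf{DR}(\mm|_{\X^{\reg}}) \ \cong\ \mathsf{DR}\bigl(\Ham_c(\mm)[\tfrac{1}{\delta^2}]\bigr) \ =\ \mathsf{KZ}_{\kappa}\bigl(\Ham_c(\mm)\bigr),
\]
which is the first assertion of the corollary.

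For the second assertion, Theorem \ref{thm:admissible2} together with Proposition \ref{many} shows that $\Ham_c(\mm)$ lies in $\OO_{\kappa}$, so the cited theorem \cite[Theorem 4.1(ii)]{VVQuantumAffineSchur} applies and guarantees that the monodromy of $\mathsf{KZ}_{\kappa}(\Ham_c(\mm))$ factors through the affine Hecke algebra $H_q^{\mathrm{aff}}(\s_n)$. Transporting this through the isomorphism just obtained proves the ``in particular'' clause. The only potential subtlety I foresee is bookkeeping the identification of the $\pi_1(\TSL^{\reg}/W)$-action on the descended local system with the standard monodromy of the corresponding $\dd(\TSL^{\reg}/W)$-module; this is essentially tautological, but care is needed to ensure that the canonical adjunction morphism $H_{\operatorname{loc}}(\mm)\to H(\mm_{\operatorname{loc}})$ in Proposition \ref{prop:isof} really matches the restriction-of-monodromy map on the de Rham side, and this is where I would focus most of the verification.
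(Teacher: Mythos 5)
Your proof is correct and follows the same route the paper intends: the corollary is, in the paper, recorded as an immediate consequence of the discussion preceding Proposition \ref{prop:isof} together with that proposition, and your argument simply unpacks that chain — descent along the $G$-torsor $\varpi$ matches $\mathsf{DR}(\oo^{-c})\otimes\mathsf{DR}(\mm|_{\X^{\reg}})$ with $\mathsf{DR}(H(\mm_{\operatorname{loc}}))$, Proposition \ref{prop:isof} then replaces $H(\mm_{\operatorname{loc}})$ by $H_{\operatorname{loc}}(\mm)=\Ham_c(\mm)[\tfrac{1}{\delta^2}]$, and the Varagnolo--Vasserot theorem gives the affine Hecke algebra factorization. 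The "subtlety" you flag at the end is indeed the only point where some care is needed, and it is exactly the tautology the paper treats as such.
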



\section{Specialization for Cherednik algebras}\label{jj}

In this section, we define a  Verdier type specialization functor 
on category $\mc{O}$ for the (sheaf of) Cherednik algebras introduced
by Etingof \cite{ChereSheaf}.

\subsection{Specialization}\label{sec:specialA}

Let $X$ be a smooth variety and $Y \hookrightarrow X$  a smooth, locally
closed subvariety.
We write  $\rho : \NN_{X/Y} \to Y$ for
 the normal bundle to $Y$ in $X$ and
 $\mc{I}$ for the sheaf of ideals in $\mc{O}_X$ defining $Y$. There is a
 canonical
graded algebra isomorphism $\rho_*\oo_{\NN_{X/Y}}\cong \oplus_{k\geq 0}\
\mc{I}^k/\mc{I}^{k+1}$.

One   defines an
ascending $\Z$-filtration on $\dd_X$,
to be called the $Y$-filtration, by setting 
\beq{eq:Vfiltr}
V^Y_k \dd_X = \{ D \in \dd_X \ | \ D( \mc{I}^r ) \subset \mc{I}^{r-k}, \ \forall \ r \in \Z \},
\eeq
where $\mc{I}^r = \mc{O}_X$ for $r \le 0$. It is known
\cite{KashVanishing} that $\gr^Y\dd_X:=\gr_{V^Y} \dd_X$, the associated
graded of $\dd_X$ with respect to the $Y$-filtration, is a sheaf of
algebras on $Y$ isomorphic to $\rho_{\idot} \dd_{\NN_{X/Y}}$.
The $\Cs$-action on $\NN_{X/Y}$ defines an Euler vector field $\eu$ on $\mc{O}_{\NN_{X/Y}}$. 
Locally on $X$, we can (and will) choose a lift $\theta\in V_0 \dd_X$ of the Euler vector field $\eu$ on $\mc{O}_{\NN_{X/Y}}$. 

An ascending filtration  $\mm_k,\ k\in\Z$, on a coherent $\dd$-module $\mm$ is said
to be a \textit{$Y$-good filtration} 
if each $\mm_k$  is a coherent $V^Y_0 \dd_X$-module and the following standard conditions hold
$$ (V^Y_k \dd_X) \cdot \mm_l \subset \mm_{k+l}, \quad  (V^Y_k \dd_X) \cdot \mm_l = \mm_{k + l}, \quad \cup_{k \in \Z} \mm_k = \mm, 
$$
where the second equality holds for any fixed $k \ge 0$ and all $l \gg l(k)>0$, resp. for
 any fixed $k \le 0$ and all $l\ll l(k)<0$. 
For an $Y$-good filtration $\mm_k,\ k\in\Z$,  the associated graded  $\gr \mm$ acquires the natural structure of  a coherent $\rho_{\idot} \dd_{\NN_{X/Y}}$-module.

\begin{rem}
 The $Y$-filtration on $\dd_X$, considered as a left $\dd_X$-module,
 is a $Y$-good filtration. 
\end{rem}


Let $C:=\{z\in\C\mid 0\leq \mathfrak{Re}(z) <1\}$, a set of coset representatives of $\Z$ in $\C$. 

Kashiwara \cite{KashVanishing} proved the following 

\begin{prop}\label{thm:vanishing}
Let $\mm$ be a coherent $\dd_X$-module. Assume that, locally on $X$, there exists a coherent $V^Y_0 \dd_X$-submodule $\ms{F}$ of $\mm$ and non-zero polynomial $d$ such that 

\vi $d(\theta) \cdot \ms{F} \subset (V^Y_{-1} \dd_X) \cdot \ms{F}$,

\vii $\dd_X \cdot \ms{F} = \mm$. 

Then, there is a unique (globally defined) $Y$-good filtration $V^Y_{\idot} \mm$ on $\mm$ 
and a non-zero polynomial $b$ with $b^{-1}(0) \subset C$ such that $b(\eu + k) \cdot (V^Y_k \mm/V^Y_{k-1}\mm) = 0$ for all $k \in \Z$.
\end{prop}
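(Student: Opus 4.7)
The argument is local on $X$, since uniqueness will allow local constructions to glue. I would proceed in three stages: establish uniqueness; produce a well-chosen generator whose annihilating polynomial has roots in $C$; then define the filtration from it.

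\emph{Uniqueness.} Any filtration satisfying the conclusion is characterized intrinsically: $V^Y_0\mm$ must be the unique coherent $V^Y_0\dd_X$-submodule such that $\dd_X\cdot V^Y_0\mm=\mm$ and $\theta$ acts on $V^Y_0\mm/V^Y_{-1}\mm$ with generalized eigenvalues in $C$, with the $V^Y_{k}\mm$ for $k<0$ determined dually. Since $C$ is a fundamental domain for $\C/\Z$, the eigenvalues on the different graded pieces (lying in $-k+C$ on the $k$-th piece) are pairwise disjoint, so the decomposition is forced, and the recursion $V^Y_k\mm = V_k\dd_X\cdot V^Y_0\mm + V^Y_{k-1}\mm$ for $k\geq 0$ determines the entire filtration; the minimal $b$ is then determined as well. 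This uniqueness will also ensure that locally constructed filtrations patch globally and that $b$ is independent of local choices.

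\emph{Normalization of $d$.} Starting with $\ms{F}$, $d$ as in the hypothesis, I iteratively apply a \emph{shift move}: if $\alpha$ is a root of $d$ with $\mathfrak{Re}(\alpha)\geq 1$, factor $d(s)=(s-\alpha)d_{\alpha}(s)$, let $\widetilde{\ms{F}}\subset \ms{F}$ be the preimage in $\ms{F}$ of the generalized $\alpha$-eigenspace of $\theta$ on $\ms{F}/V^Y_{-1}\dd_X\cdot\ms{F}$, and set
\[
\ms{F}' \ :=\ d_{\alpha}(\theta)\ms{F} + V^Y_{-1}\dd_X\cdot\widetilde{\ms{F}}.
\]
Under the identification $\gr^Y\dd_X\cong \rho_{\idot}\dd_{\NN_{X/Y}}$, the element $\theta$ reduces to the Euler field $\eu$, whose weight $-1$ action on $V^Y_{-1}\dd_X/V^Y_{-2}\dd_X$ shifts the $\alpha$-eigenspace onto the $(\alpha-1)$-eigenspace, so $\ms{F}'$ satisfies the hypothesis with polynomial $d'(s)=(s-\alpha+1)d_{\alpha}(s)$. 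Dually, for $\mathfrak{Re}(\alpha)<0$ one \emph{enlarges} $\ms{F}$ by $V^Y_1\dd_X\cdot\widetilde{\ms{F}}$ to push $\alpha$ up to $\alpha+1$. Since each root has real part strictly moving toward $C$ in a single step and $d$ has only finitely many roots, after finitely many iterations I obtain $(\ms{F},b)$ with $b^{-1}(0)\subset C$.

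\emph{Extension and main obstacle.} Setting $V^Y_0\mm:=\ms{F}$ and $V^Y_k\mm:=V_k\dd_X\cdot\ms{F}$ for $k\geq 0$, with $V^Y_k\mm$ for $k<0$ defined inductively as the unique coherent $V^Y_0\dd_X$-submodule of $V^Y_{k+1}\mm$ on which $\theta+k$ acquires generalized eigenvalues in $-C$, produces a $Y$-good filtration satisfying $b(\eu+k)\cdot(V^Y_k\mm/V^Y_{k-1}\mm)=0$. The main obstacle I anticipate is the shift move: verifying that $\ms{F}'$ remains coherent over $V^Y_0\dd_X$, that $\dd_X\cdot\ms{F}'=\mm$ is preserved, and that precisely the root $\alpha$ is replaced by $\alpha-1$ without spurious new roots being introduced. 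This requires careful commutation in the $V$-filtration on $\dd_X$, using both $[\theta,V^Y_{-1}\dd_X]\subset V^Y_{-1}\dd_X$ and the graded description $\gr^Y\dd_X\cong\rho_{\idot}\dd_{\NN_{X/Y}}$ sending $\theta\bmod V^Y_{-1}\dd_X$ to the Euler vector field.
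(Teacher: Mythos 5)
The paper states this as a theorem of Kashiwara, citing \cite{KashVanishing}, and gives no proof, so there is no in-paper argument to compare against. That said, your three-stage outline --- uniqueness from the disjointness of the eigenvalue sets $\{-k+C\}_{k\in\Z}$ on successive graded pieces, then normalization of an initial $Y$-good filtration by iterated root-shifting --- does match the structure of Kashiwara's argument.

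The shift move itself is wrong, and its direction is reversed. For $\mathfrak{Re}(\alpha)\geq1$ you set $\ms{F}'=d_{\alpha}(\theta)\ms{F}+V^Y_{-1}\dd_X\cdot\widetilde{\ms{F}}$, which is \emph{contained} in $\ms{F}$; but to move a root $\alpha$ to $\alpha-1$ every term of the good filtration must \emph{grow}, since a vector whose class in $\gr^V_0$ has eigenvalue $\alpha$ must be demoted to the index $-1$ graded piece (where eigenvalue $\alpha$ corresponds to the root $\alpha-1$), forcing $V^Y_{-1}$ --- and hence $V^Y_0$ --- to enlarge. Concretely, take $X=\C$, $Y=\{0\}$, $\theta=t\partial_t$,
$$
\mm=\dd_X/\dd_X\bigl(\theta-\tfrac{3}{2}\bigr)\oplus\dd_X/\dd_X\bigl(\theta-\tfrac{1}{2}\bigr),\qquad
\ms{F}=\C[t]\bar1_1\oplus\C[t]\bar1_2,\qquad d(s)=\bigl(s-\tfrac{3}{2}\bigr)\bigl(s-\tfrac{1}{2}\bigr).
$$
Then $d_{3/2}(s)=s-\tfrac{1}{2}$, and one computes $\widetilde{\ms{F}}=d_{3/2}(\theta)\ms{F}=\C[t]\bar1_1\oplus t\C[t]\bar1_2=\ms{F}'$, while $\ms{F}'/V^Y_{-1}\dd_X\ms{F}'$ has $\theta$-eigenvalues $\{\tfrac{3}{2},\tfrac{3}{2}\}$: the in-range root $\tfrac{1}{2}$ has been pushed \emph{up} to $\tfrac{3}{2}$ and the bad root has not moved at all. (The canonical $V^Y_0\mm$ here is $\C[t]\bar1_1+\C\,\partial_t\bar1_1+\C[t]\bar1_2$, strictly larger than $\ms{F}$.) Your downward shift for $\mathfrak{Re}(\alpha)<0$ has the same defect with signs swapped, since there the canonical $V^Y_0\mm$ is strictly smaller than $\ms{F}$. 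There is also the technical issue that $d_{\alpha}(\theta)\ms{F}$ is only a $\C$-linear subspace, so your $\ms{F}'$ is not visibly a coherent $V^Y_0\dd_X$-submodule.

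A correct shift is cleanest on the whole good filtration $U_k:=V^Y_k\dd_X\cdot\ms{F}$: factor $b_U=p\cdot q$ with $p^{-1}(0)=\{\alpha\}$ and $q(\alpha)\neq0$, and set
$$
U_k'\ :=\ U_k+\bigl\{\,m\in U_{k+1}\ :\ p(\theta+k+1)^{N}m\in U_k\ \text{ for some }N\gg0\,\bigr\}.
$$
Then $U_\bullet'$ is again $Y$-good, $\dd_X\cdot U_0'=\mm$, and the Bernstein polynomial becomes $p(s+1)q(s)$, so $\alpha$ moves to $\alpha-1$ while the other roots are preserved; iterating finitely many times brings all roots into $C$. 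Finally, your uniqueness paragraph is slightly circular --- the characterization of $V^Y_0\mm$ you state quantifies over $V^Y_{-1}\mm$, which is part of the data whose uniqueness is being established. The standard argument compares two candidate $Y$-good filtrations directly, using their commensurability together with the disjointness of the eigenvalue ranges $-k+C$ for distinct $k$ to force equality level by level.
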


A coherent $\dd_X$-module $\mm$ is said to be {\em spealizable at} $Y$ if
the conclusions of the above proposition hold for $\mm$.
In this case, we refer to the filtration $V^Y_{\idot} \mm$ as {\em the} $Y$-filtration on
$\mm$ and write $\gr^Y\mm$ for the associated graded
  $\rho_{\idot} \dd_{\NN_{X/Y}}$-module. We let $\Psi_{X/Y}(\mm)$ denote the $\dd_{\NN_{X/Y}}$-module
such that $\rho_{\idot}\Psi_{X/Y}(\mm)=\gr^Y \mm$. Property (ii) of Proposition \ref{thm:vanishing}
ensures that  $\Psi_{X/Y}(\mm)$ is a $\C^*$-monodromic $\dd_{\NN_{X/Y}}$-module.

A deep result of Kashiwara and Kawai   \cite{KKAsymptotic} reads 
\begin{thm} Let $\mm$ be a holonomic $\dd$-module with regular singularities.
Then, $\mm$ is specializable along any smooth subvariety $Y\sset X$ and, moreover,
$\Psi_{X/Y}(\mm)$ is  a holonomic $\dd_{\NN_{X/Y}}$-module with regular singularities.
\end{thm}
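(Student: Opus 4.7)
The plan is to reduce to the case where $Y$ is a smooth hypersurface via deformation to the normal cone, and then combine Bernstein's theorem on $b$-functions (for existence of the $V$-filtration) with the Riemann--Hilbert correspondence (for the regularity statement).

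First, I would handle the reduction. Construct the deformation to the normal cone $\widetilde{X}_Y\to\mathbb{A}^1$: a smooth variety whose fiber over $t\neq 0$ is $X$ and whose fiber over $t=0$ is $\NN_{X/Y}$. Let $\widetilde{\mm}$ be (a coherent extension of) the pullback of $\mm$ along the open embedding $X\times\mathbb{A}^1_t\setminus\{t=0\}\hookrightarrow \widetilde{X}_Y$. Then $\widetilde{\mm}$ is a holonomic $\dd_{\widetilde{X}_Y}$-module with regular singularities (the category of regular holonomic modules is preserved under open pullback and minimal extension along a divisor), and by construction $\Psi_{X/Y}(\mm)$ agrees with the specialization of $\widetilde{\mm}$ along the smooth hypersurface $\widetilde{X}_{Y,0}\cong \NN_{X/Y}$. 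So it suffices to treat the case where $Y$ is a smooth divisor defined by a single equation $t=0$; there $\theta=t\partial_t$ is a canonical lift of the Euler field and $V^Y_k\dd_X$ is filtration by the order of vanishing in $t$.

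Second, I would establish specializability. Locally pick a coherent $V^Y_0\dd_X$-submodule $\ms{F}\subset \mm$ that generates $\mm$ over $\dd_X$ (this exists because $\mm$ is coherent). The existence of a nonzero polynomial $b(s)$ and an operator $P\in V^Y_{-1}\dd_X$ with $(b(\theta)-P)\ms{F}\subset V^Y_{-1}\dd_X\cdot \ms{F}$ is the content of Bernstein's theorem on $b$-functions, applicable because $\mm$ is holonomic. Proposition \ref{thm:vanishing} then produces the unique $Y$-filtration $V^Y_\idot\mm$ with roots of the $b$-function lying in the strip $C$. This proves specializability.

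Third, I would verify holonomicity of $\Psi_{X/Y}(\mm)$. Choose a $Y$-good filtration on $\mm$ as above; then $\gr^Y\mm$ is a coherent $\rho_\idot\dd_{\NN_{X/Y}}$-module. The characteristic variety of $\Psi_{X/Y}(\mm)$ is contained in the normal cone $\mathrm{Sp}_{T^*_Y X}\SS(\mm)$ taken inside the deformation of $T^*X$ to $T^*\NN_{X/Y}$; since the characteristic variety of $\mm$ is Lagrangian and the normal cone construction preserves this dimension bound, the resulting subvariety of $T^*\NN_{X/Y}$ is again Lagrangian, giving holonomicity.

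The main obstacle is the regularity statement, and this is where I would invoke the Riemann--Hilbert correspondence (the real content of the Kashiwara--Kawai theorem). Via RH, $\mm$ corresponds to a perverse sheaf $\ms{P}$ on $X$; on $\widetilde{X}_Y$ the regular holonomic $\widetilde{\mm}$ corresponds to a perverse sheaf whose unipotent nearby cycles $\psi^{\mathrm{un}}_t(\widetilde{\ms{P}})$ along $t=0$ is again perverse with regular singularities on $\NN_{X/Y}$. The key comparison to establish is that the $\dd$-module underlying this nearby cycles complex is precisely $\Psi_{X/Y}(\mm)$ (with the convention that the $b$-roots lie in $C$). This compatibility is proved by matching the monodromy filtrations on both sides and using that the $V$-filtration is characterized uniquely by Proposition \ref{thm:vanishing}. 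Once the identification is in place, regularity of $\Psi_{X/Y}(\mm)$ follows from the regularity of the nearby cycles perverse sheaf, completing the proof.
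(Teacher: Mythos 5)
This theorem is not proved in the paper: it is stated as ``a deep result of Kashiwara and Kawai'' and cited to \cite{KKAsymptotic}, so there is no proof of the authors' own to compare your argument against. Your sketch is a plausible reconstruction of the \emph{modern} route to this result, but it is genuinely different from the original Kashiwara--Kawai argument (which was a direct microlocal proof using second microlocalization and the theory of regular holonomic $\mathscr{E}$-modules, predating a general Riemann--Hilbert comparison). A few things to tighten. First, what you call ``Bernstein's theorem on $b$-functions'' is, for an arbitrary holonomic $\dd$-module, Kashiwara's theorem (Bernstein's original statement concerns $\oo_X[f^{-1}]f^s$); the existence of a functional equation $b(\theta)\ms{F}\subset V^Y_{-1}\dd_X\cdot\ms{F}$ for a holonomic $\mm$ is precisely the nontrivial input of Kashiwara. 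Second, for holonomicity you should not assert that the normal cone of a Lagrangian is Lagrangian on its own: the estimate $\SS(\Psi_{X/Y}\mm)\subset \varphi^*C_\Lambda(\SS\mm)$ (Ginzburg's theorem, \cite[Th.\ 7.1]{CharCycles}) gives only a dimension bound $\dim\SS(\Psi_{X/Y}\mm)\le\dim X=\dim\NN_{X/Y}$, and one should combine this with Gabber's involutivity theorem to conclude Lagrangian. Third, and most importantly, your regularity step is the heart of the matter, and as written it defers the entire difficulty to the compatibility of the Riemann--Hilbert correspondence with unipotent nearby cycles. That compatibility is a theorem in its own right, of comparable depth to what you are trying to prove, and historically it was established \emph{using} (or in parallel with) the Kashiwara--Kawai theorem; if you want a self-contained argument, you must either prove this compatibility directly or replace it with the second-microlocal argument of Kashiwara--Kawai. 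As a reduction strategy, the deformation to the normal cone is valid, but note that one must also justify that $\widetilde{\mm}$ (your extension to $\widetilde{X}_Y$) is again regular holonomic and that specializing it along the central fiber recovers $\Psi_{X/Y}(\mm)$ on the nose --- the identification is routine but should be said explicitly, since the $V$-filtration along a higher-codimension $Y$ can also be treated directly, without passing through the deformation.
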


It follows from the  theorem that  $\Psi_{X/Y}$ yields
 an exact functor, the \textit{specialization functor}, from the category of regular holonomic $\dd$-modules on $X$, to the category of regular holonomic, $\Cs$-monodromic $\dd$-modules on $\NN_{X / Y}$.

\subsection{Specialization for Cherednik algebras}\label{sec:Cheredefn}
We begin by recalling the definition of sheaves of Cherednik algebras as
given in \cite{ChereSheaf}. 

Let $X$ be a smooth, connected,
quasi-projective variety and $W \subset \mathsf{Aut}(X)$ a finite
group.  Given $x \in X$ we write $W_x$ for the isotropy group of
$x$ in $W$. The group $W$ is said to act on $X$ as a
\textit{pseudo-reflection group} if, for every $x \in X$ , the group $W_x$
acts on $T_x X$ as a pseudo-reflection group. From
now on we  assume that  $W$ acts on $X$ as a
pseudo-reflection group. By the
Chevalley-Shephard-Todd Theorem, this implies that $X/W$ is  smooth
and $\pi_{\idot} \mc{O}_X$ is a locally free $\mc{O}_{X/W}$-module of
rank $|W|$, where $\pi$ is the quotient map. 

Let $\mc{S}(X,W)$ denote the set of pairs $(w,Z)$, where $w \in W$ and $Z$ is a connected component of $X^w$ of codimension one. The group $W$ acts on the set $\mc{S}(X,W)$ by 
$$
g \cdot (Z,w) = (g(Z),gwg^{-1})
$$
and we fix $\kappa : \mc{S}(X,W) \rightarrow \C$ to be a $W$-equivariant
function. 

On any sufficiently small affine, $W$-stable, open subset $U$ of $X$, the closed subvarieties $Z \cap U$, for $(w,Z) \in \mc{S}(X,W)$, are defined by the vanishing of a function $f_Z$ say. The \tit{Dunkl-Opdam operator} associated to a vector field $v \in \Gamma(U,\Theta_X)$ is the rational section
\begin{equation}\label{eq:Dunkl}
D_v = \pa_v + \sum_{(Z,w) \in \mc{S}} \frac{2 \kappa(Z,w)}{1 -
  \lambda_{Z,w}}\cdot \frac{v(f_Z)}{f_Z}\ (w - 1) 
\end{equation}
of $\Gamma(U,\dd_{X} \rtimes W)$, where $\lambda_{Z,w}$ is the
eigenvalue of $w$ on each fiber of the conormal of $Z$ in $X$. On $U$,
the Cherednik algebra $\H_{\kappa}(U,W)$ is the affine $\C$-subalgebra
of $\End_{\C}(\C[U])$ generated by $\Gamma(U,\mc{O}_X
\rtimes W)$ and all Dunkl-Opdam operators. The algebras $\H_{\kappa}(U,W)$ glue to form \tit{the sheaf of Cherednik algebras} $\mc{H}_{\kappa}(X,W)$ on $X/W$. 

\subsection{}\label{sec:notation} We fix a subgroup $W'\sset W$ such that the set
$X':=\{x\in X\mid W_x=W'\}$ is nonempty.
Let $Y$ be a connected component of $X'$. Thus, $Y$ is  a smooth, locally closed subset of $X$. Let $W_Y:=\{w\in W\mid w(Y)=Y\}$. The group $W_Y$ contains
$W'$ as a normal subgroup.

The normal bundle $\rho: \ny \rightarrow Y$ is a $W_Y$-equivariant vector bundle on $Y$, equipped with
a linear $W'$-action on the fibers. If a point $v\in \ny$ is fixed by an element $w\in W_Y$ then 
so is the point $\rho(v)$. It follows that any element $w\in W_Y\sminus W'$
acts on $\ny$ without fixed points. 
Further, for any $w\in W'$, the fixed point set $\ny^w$ is vector sub-bundle
of the normal bundle on $Y$. We deduce that
the group
$W_Y$ acts on $\ny$ by pseudo-reflections. 

For any  pair $(w,C')$, where $w \in W'$
and $C'$ is a codimension one component of $\ny^w$,
there is a  unique connected component $C$ of $X^w$ such that 
 $Y\sset C$ and the normal bundle to $Y$ in $C$ equals $\ny^w$.
The assignment $(w,C')\mto (w,C)$ yields a canonical injective
map $\mc{S}(\ny,W_Y)=\mc{S}(\ny,W')\ \into\ \mc{S}(X,W)$. Moreover, since each $w \in W'$ acts linearly on the fibers of $\NN_{X/Y}$ and trivially on $Y$, the space $\ny^w$ is connected and hence the projection map $\mc{S}(\ny,W') \rightarrow W'$ is also an embedding. 

Given a $W$-invariant function $\kappa : \mc{S}(X,W) \rightarrow \C$, one obtains
via the above injection, a $W_Y$-invariant function $\kappa' : \mc{S}(\ny, W_Y) \rightarrow \C$.
Associated with the $W_Y$-action on the variety $\ny$ and the function $\kappa'$, one has
the sheaf of Cherednik algebras on $\ny/W_Y$. The morphism $\brho: \ny/W_Y\to Y/W_Y$ being
affine, we will abuse the notation and also write
 $\mc{H}_{\kappa'}( \ny, W_Y)$ for the direct image of the
sheaf of Cherednik algebras on $\ny/W_Y$ via $\brho$. Thus, we
view $\mc{H}_{\kappa'}( \ny, W_Y)$ as a sheaf of algebras on $Y/W_Y$.

Following \cite{BE}, we write $\mathrm{Fun}_{W_Y}(W,\mc{H}_{\kappa'}( \ny, W_Y))$ for the sheaf on $Y / W_Y$ consisting of all $W_Y$-equivariant functions $f : W \rightarrow \mc{H}_{\kappa'}( \ny, W_Y)$, where $W_Y$ acts on $\mc{H}_{\kappa'}( \ny, W_Y)$ by conjugation. Then the sheaf $\mathrm{Fun}_{W_Y}(W,\mc{H}_{\kappa'}( \ny, W_Y))$ is a right $\mc{H}_{\kappa'}( \ny, W_Y)$-module and hence a left module for 
$$
Z(W,W_Y, \mc{H}_{\kappa'}( \ny, W_Y)) := \End_{\mc{H}_{\kappa'}( \ny, W_Y)}(\mathrm{Fun}_{W_Y}(W,\mc{H}_{\kappa}( \ny, W_Y))).
$$
The sheaf $Z(W,W_Y, \mc{H}_{\kappa'}( \ny, W_Y))$ is Morita equivalent to $\mc{H}_{\kappa}( \ny, W_Y)$.

\subsection{}
Let $X^\circ$ be the set of points $x\in X$ such that
the group $W_x$ is conjugate (in $W$) to a subgroup of $W'$.
The set  $X^\circ$ is a $W$-stable Zariski open  subset of $X$.
We write $WY$ for the $W$-saturation of the set $Y$.
The set $WY$ is a  $W$-stable {\em closed} subvariety of $X^\circ$.
Furthermore, $WY$ is a
disjoint union of the 
subvarieties $w(Y)$ where $w$ runs over a set of coset representatives
of $W/W_Y$ in $W$.
Therefore, the image of $WY$ in $X^\circ/W$ equals $Y/W_Y$. We have a commutative diagram of natural maps:

\beq{eq:bigmapdiag2}
\xymatrix{
\NN_{X/Y} \ar@{->>}[rr]^{\rho} \ar[rrd]^{\bnu} \ar[d]_{\nu} & & Y \ar@{^{(}->}[rr] \ar[d]^{\pi_Y} & & X^\circ \ar@{->>}[d]^{\pi} \\
\ny / W_Y \ar@{->>}[rr]^{\brho} & & Y/W_Y \ar@{^{(}->}[rr] & & X^\circ/W 
}
\eeq

The canonical map $\NN_{X/WY} \twoheadrightarrow WY \twoheadrightarrow Y/W_Y$ is denoted $\eta$. Let  $\mc{I}_{WY}$ be the sheaf of ideals in $\mc{O}_{X^\circ}$ defining $W Y$, and let $\mc{I}:=\pi_\idot\mc{I}_{WY}$. We replace $X$  by $X^{\circ}$ and assume that $W Y$ \textit{is closed in} $X$. 

Since the algebra $\mc{H}_{\kappa}(X,W)$ acts on $\pi_{\idot} \mc{O}_X$, we may define a $V$-filtration on $\mc{H}_{\kappa}(X,W)$. 

\begin{defn}\label{defn:VCherednik}
The $WY$-filtration on $\mc{H}_{\kappa}(X,W)$ is defined to be
\beq{eq:Hfilt}
V^{WY}_m \mc{H}_{\kappa}(X,W) = \left\{ D \in \mc{H}_{\kappa}(X,W) \ | \ D( \mc{I}^k ) \subset \mc{I}^{k - m}, \ \forall \ k \in \Z \right\},
\eeq
where $\mc{I}^k := \pi_{\idot} \mc{O}_X$ for $k \le 0$. 
\end{defn}

One can show that  $V^{WY}_m \mc{H}_{\kappa}(X,W)$ defines a quasi-coherent subsheaf of $\mc{H}_{\kappa}(X,W)$. In particular, for all $x \in X/W$, the stalk of $V^{WY}_m \mc{H}_{\kappa}(X,W)$ at $x$ equals $\{ D \in \mc{H}_{\kappa}(X,W)_{x} \ | \ D(I_{x}^j) \subset I_{x}^{j-m} \ \forall \ j \}$. We write $\gr^{WY}\mc{H}_{\kappa}(X,W)$ for the corresponding associated graded algebra. Then, clearly $\gr^{WY}\mc{H}_{\kappa}(X,W)$ is supported on $Y / W_Y$. 

We have a grading $\bnu_\idot\oo_{\ny}=\bplus_{\ell\in\Z}\ (\bnu_\idot\oo_{\ny})^{(\ell)}$
by the homogeneity degree along the fibers of the vector
bundle $\ny\to Y$. 
The sheaf $\mc{H}_{\kappa'}( \ny, W_Y)$ acts naturally on
$\bnu_\idot\oo_{\ny}$. We put
$$\mc{H}_{\kappa'}^{(\ell)}( \ny, W_Y)\ :=\
\{h\in \mc{H}_{\kappa'}( \ny, W_Y)\en|\en h((\bnu_\idot\oo_{\ny})^{(k)}\sset
(\bnu_\idot\oo_{\ny})^{(\ell+k)},\en\forall\ k\in\Z\}.
$$
This gives a canonical $\Z$-grading
$\mc{H}_{\kappa'}( \ny, W_Y)=\bplus_{\ell\in\Z}\ \mc{H}_{\kappa'}^{(\ell)}( \ny, W_Y)$.

\begin{lem}\label{lem:globalEu}
The grading on the algebra $\mc{H}_{\kappa'}( \ny, W_Y)$ is {\em inner}, specifically, there is a unique element $\eu\in \Ga(Y/W_Y,\ \mc{H}_{\kappa'}( \ny, W_Y))$ such that for any $\ell\in\Z$ one has
$$ 
\eu(f)=\ell\cdot f \quad\text{\em and}\quad [\eu,h]=\ell\cdot h,\quad \forall f\in (\bnu_\idot\oo_{\ny})^{(\ell)},\en h\in 
\mc{H}_{\kappa'}^{(\ell)}( \ny, W_Y).
$$
\end{lem}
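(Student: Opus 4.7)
The plan is to exhibit $\eu$ explicitly as a global section of $\mc{H}_{\kappa'}(\ny, W_Y)$ and then verify both required identities. The basic idea is that the candidate for $\eu$ is nothing but the bare fiber Euler vector field $\vartheta$, viewed as a derivation of $\oo_{\ny}$; the content of the lemma is that this derivation actually lies in the Cherednik algebra and that it implements the grading on the whole algebra.

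First I would dispatch uniqueness: if $\eu$ and $\eu'$ both satisfy conditions (a) and (b) then $\eu - \eu'$ annihilates the (locally) faithful representation $\bnu_{\idot}\oo_{\ny}$, so $\eu = \eu'$. For existence, the starting point is that the dilation $\C^\times$-action along the fibers of $\rho : \ny \to Y$ is $W_Y$-equivariant, since $W_Y$ acts on $\ny$ by bundle automorphisms linear on each fiber. Hence the fiber Euler vector field $\vartheta$ is a globally defined, $W_Y$-invariant vector field on $\ny$, and the Dunkl-Opdam operator $D_\vartheta$ is a global section of $\mc{H}_{\kappa'}(\ny, W_Y)$. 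Each $Z \in \mc{S}(\ny, W_Y) = \mc{S}(\ny, W')$ is a sub-bundle of $\ny$ locally cut out by a fiber-linear equation $f_Z$, so $\vartheta(f_Z) = f_Z$ and formula \eqref{eq:Dunkl} simplifies to $D_\vartheta = \pa_\vartheta + \Sigma$, where
$$
\Sigma \ :=\ \sum_{(Z,w) \in \mc{S}(\ny,W_Y)} \frac{2\kappa'(Z,w)}{1-\lambda_{Z,w}}(w-1) \ \in\ \C[W_Y].
$$
Since $\kappa'$ is $W_Y$-equivariant and $\lambda_{Z,w}$ is invariant under conjugation, reindexing shows that $\Sigma$ is $W_Y$-invariant (this uses that $W'$ is normal in $W_Y$, so $W_Y$ acts on $\mc{S}(\ny,W')$ by conjugation). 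Consequently
$$
\eu \ :=\ D_\vartheta - \Sigma \ \in\ \Ga(Y/W_Y,\, \mc{H}_{\kappa'}(\ny, W_Y))
$$
is a well-defined global section, which viewed inside $\dd_\ny \rtimes W_Y$ is just the bare derivation $\pa_\vartheta$.

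Property (a), $\eu(f) = \ell f$ for $f \in (\bnu_\idot\oo_\ny)^{(\ell)}$, is then Euler's identity applied to $\pa_\vartheta$. For property (b), that $[\eu, h] = \ell h$ on $\mc{H}_{\kappa'}^{(\ell)}(\ny, W_Y)$, it suffices to verify the identity on a local set of generators of the Cherednik algebra: functions $f \in \bnu_\idot\oo_\ny$, group elements $w \in W_Y$, and Dunkl-Opdam operators $D_v$ for fiber-homogeneous vector fields $v$. The case of $f$ follows from (a) and the Leibniz rule; the case of $w$ from $[\pa_\vartheta, w] = 0$, which holds because $W_Y$ acts linearly on fibers and thus commutes with $\vartheta$ in $\dd_\ny \rtimes W_Y$; and for $D_v$ with $v$ of weight $k$ (meaning $[\vartheta, v] = kv$), a short computation in $\dd_\ny \rtimes W_Y$ using $[\pa_\vartheta, \pa_v] = \pa_{[\vartheta, v]} = k\pa_v$ and the observation that $v(f_Z)/f_Z$ has weight $k$ as a function gives $[\eu, D_v] = kD_v$, consistent with $D_v \in \mc{H}_{\kappa'}^{(k)}$.

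The main obstacle is purely bookkeeping: one must keep straight the conventions linking the polynomial-degree grading on $\oo_\ny$, the weight grading on vector fields under $\ad(\vartheta)$, and the induced grading on $\mc{H}_{\kappa'}(\ny, W_Y)$; and one must verify the $W_Y$-invariance of $\Sigma$ using the normality of $W'$ in $W_Y$ together with the $W_Y$-equivariance of $\kappa'$. Neither step is fundamentally deeper than the corresponding verification of the existence of an Euler element in the rational Cherednik algebra of a linear pseudo-reflection group, of which the present lemma is a sheaf-theoretic generalization.
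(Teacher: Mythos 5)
Your proposal is correct and it takes a genuinely different route from the paper's. The paper's proof is a local-to-global argument: choose an affine covering $Y = \cup U_i$ trivializing $\ny$, identify $\H_{\kappa'}(U_i, W')$ with $\dd(U_i) \otimes \H_{\kappa'}(\mf{h}, W')$, invoke the known (unique) Euler element of the rational Cherednik algebra $\H_{\kappa'}(\mf{h}, W')$, and glue the local elements using uniqueness. Your proof instead exhibits the global section outright as $\eu = D_\vartheta - \Sigma = \pa_\vartheta$, where $\vartheta$ is the $W_Y$-invariant fiber Euler vector field, and then verifies both properties directly. The one point where you could streamline is property (b): it follows immediately from (a) together with faithfulness of $\bnu_\idot\oo_\ny$ and the definition of the grading $\mc{H}_{\kappa'}^{(\ell)}$ (if $h$ raises degree by $\ell$ and $\eu$ acts by $\ell$ on degree-$\ell$ functions, then $[\eu,h]$ and $\ell h$ agree as endomorphisms of $\bnu_\idot\oo_\ny$, hence coincide), so the generator-by-generator check is not needed. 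Your route buys you an explicit formula and avoids appealing to the rational case as a black box; the paper's route is shorter because it does lean on that known fact. Both are sound: your appeal to $\vartheta(f_Z) = f_Z$ for fiber-linear $f_Z$ is exactly why the correction $\Sigma$ lands in $\C W_Y$, and the $W_Y$-invariance of $\Sigma$ via reindexing (using normality of $W'$ in $W_Y$ and $W_Y$-equivariance of $\kappa'$) is a correct and necessary verification that the paper leaves implicit in the gluing step.
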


\begin{proof} 
Choose an open affine covering $Y=\cup U_i$ such that the restriction of the normal bundle is a trivial bundle
$\ny|_{U_i}\cong U_i\times \mf{h}$. Then, we have $\H_{\kappa'}(U_i,W')\cong
\dd(U_i)\otimes \H_{\kappa'}(\mf{h}, W')$. In the algebra $\dd(U_i)\otimes \H_{\kappa'}(\mf{h}, W')$ there is a unique element $\eu$ (necessarily belonging to $\H_{\kappa'}(\mf{h}, W')$) satisfying the properties stated in the lemma. Uniqueness implies that these elements glue to a global section of $\mc{H}_{\kappa'}( \ny, W_Y)$.   
\end{proof}

 Let $e$ denote the trivial idempotent in $\C W$ and $e_Y$ the trivial idempotent in $\C W_Y$. The main result of this section is the following (cf. \eqref{eq:bigmapdiag2} for the definition of $\brho$):

\begin{thm}\label{conj:Vfilt}
There is an isomorphism of graded sheaves of algebras on $Y / W_Y$, 
$$
\gr^{WY} \mc{H}_{\kappa}(X,W) \stackrel{\sim}{\longrightarrow} Z(W,W_Y, \brho_{\idot} \mc{H}_{\kappa'}( \ny, W_Y)),
$$
which restricts to an isomorphism $\gr^{WY} e \mc{H}_{\kappa}(X,W) e \stackrel{\sim}{\longrightarrow} \brho_{\idot}  (e_Y \mc{H}_{\kappa'}( \ny, W_Y) e_Y)$.
\end{thm}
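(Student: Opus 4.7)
The strategy will be to reduce, via an equivariant linearization of the pair $(X,WY)$, to a model situation where $X$ is the induced normal bundle $W\times_{W_Y}\ny$ and the subvariety is its zero section. In that model the Cherednik algebra $\mc{H}_{\kappa}(X,W)$ carries an intrinsic $\Z$-grading coming from the Euler field $\eu$ of Lemma \ref{lem:globalEu}, and the $V$-filtration of Definition \ref{defn:VCherednik} will turn out to coincide with the filtration by total weight. Consequently the associated graded on the left will be canonically isomorphic to the algebra $Z(W,W_Y,\brho_\idot\mc{H}_{\kappa'}(\ny,W_Y))$ itself, equipped with its Euler grading.

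The first step will be to set up the model. Both sides of the desired isomorphism are supported on $Y/W_Y\sset X/W$, so it suffices to work in an affine neighbourhood, or after completion, along $Y/W_Y$. Using a finite-group equivariant formal (or \'etale) slice theorem, applicable because $W$ is finite and acts on $X$ as a pseudo-reflection group, I would produce a $W$-equivariant identification between a neighbourhood of $WY$ in $X$ and a neighbourhood of the zero section in $W\times_{W_Y}\ny$. Because the Dunkl-Opdam generators \eqref{eq:Dunkl} are intrinsic to the pair $(X,W)$ together with $\kappa$, this linearization will give a canonical sheaf isomorphism
\[
\mc{H}_{\kappa}(X,W)\big|_{Y/W_Y}\ \iso\ Z\big(W,W_Y,\brho_\idot\mc{H}_{\kappa'}(\ny,W_Y)\big),
\]
which is the induction-from-$W_Y$-to-$W$ construction underlying the Bezrukavnikov--Etingof restriction functor. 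The restriction to spherical subalgebras follows because the standard Morita equivalence matches the idempotent $e$ on the left with $e_Y$ on the right.

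The second, more delicate step will be to verify that under this identification the $V$-filtration on the left goes to the filtration-by-weight on the right. For functions $f\in\mc{I}^k\sminus\mc{I}^{k+1}$ the symbol lies by definition in degree $-k$, and $\C W$ sits in degree zero, so these generators match. The substantive calculation concerns the Dunkl operator $D_v$: I would decompose a local vector field near $Y$ as $v=v_Y+v_N$ (tangential plus normal), whence $\pa_{v_N}\in V^{WY}_1$ and $\pa_{v_Y}\in V^{WY}_0$, and split the reflection sum in \eqref{eq:Dunkl} according to whether $w\in W'$ (so that $Z\supset Y$ and $f_Z\in\mc{I}$) or $w\notin W'$ (so that $Z\not\supset Y$ and $f_Z$ is a unit along the generic point of $Y$). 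For $w\in W'$ the ratio $v(f_Z)/f_Z$ will linearize to the corresponding ratio on $\ny$, producing exactly the Dunkl correction with the parameter $\kappa'=\kappa|_{\mc{S}(\ny,W_Y)}$; for $w\notin W'$ the term is already regular, lies in $V^{WY}_0$, and its symbol is precisely the permutation endomorphism of $\mathrm{Fun}_{W_Y}(W,-)$ produced by left multiplication by $w$.

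The main obstacle will be the bookkeeping in this last step, namely showing that the two types of reflections assemble correctly: the $w\in W'$ contributions must reconstruct the Dunkl operator for $(\ny,W_Y,\kappa')$ on the $Y$-component, while the $w\notin W'$ contributions must be exactly what is needed to upgrade an algebra-on-$\ny/W_Y$ generator to an endomorphism of $\mathrm{Fun}_{W_Y}(W,\mc{H}_{\kappa'})$. This requires a careful choice of the local defining equations $f_Z$ compatible with the direct-sum decomposition $TX|_Y=TY\oplus\ny$, and use of the fact (\S\ref{sec:notation}) that the map $\mc{S}(\ny,W_Y)\hookrightarrow\mc{S}(X,W)$ is injective and that $W_Y\sminus W'$ acts freely on $\ny$. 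Once the symbol computation is complete, the isomorphism of graded algebras follows and restricts, via $e$ and $e_Y$, to the spherical statement.
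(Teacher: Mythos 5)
Your overall strategy --- reduce to a linear model and then identify the $V$-filtration with the intrinsic Euler grading on the Cherednik algebra of the normal bundle --- is the same core idea that drives the paper's proof, and your analysis of where the Dunkl term in \eqref{eq:Dunkl} lands in the $V$-filtration (splitting the sum over $(Z,w)$ according to whether $w\in W'$ or not, and observing that $\pa_{v_N}\in V^{WY}_1$, $\pa_{v_Y}\in V^{WY}_0$, $f\in\mc{I}^k$ lies in $V^{WY}_{-k}$) is exactly the content of the PBW computation that closes the paper's argument. Where the two routes diverge is in how the isomorphism is produced. You propose to use a $W$-equivariant formal linearization of the pair $(X,WY)$ and then invoke the Bezrukavnikov--Etingof completion isomorphism, while the paper sidesteps any explicit construction: it observes that both $\gr^{WY}\mc{H}_\kappa(X,W)$ and $Z(W,W_Y,\brho_\idot\mc{H}_{\kappa'}(\ny,W_Y))$ act \emph{faithfully} on $\eta_\idot\mc{O}_{\NN_{X/WY}}$, so it suffices to show they coincide as subsheaves of $\mc{E}nd_\C(\eta_\idot\mc{O}_{\NN_{X/WY}})$; this is checked stalkwise using the BE stalk isomorphism together with Losev's lemma to reduce to $W=W'$, $y$ fixed, $X$ affine, where an explicit PBW argument finishes. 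The faithfulness trick is a genuine simplification: it avoids both the need to produce a canonical linearization (which does not exist as stated --- your displayed ``sheaf isomorphism'' $\mc{H}_\kappa(X,W)|_{Y/W_Y}\iso Z(W,W_Y,\brho_\idot\mc{H}_{\kappa'}(\ny,W_Y))$ cannot hold literally since the left side is only filtered while the right is graded; you really mean a filtered isomorphism of $\mc{I}$-adic completions) and the need to verify that the $V$-filtration of Definition \ref{defn:VCherednik} is compatible with passing to the $\mc{I}$-adic completion that the BE identification lives on. That last compatibility is true but is an Artin--Rees-type statement requiring an argument; the paper proves a version of it separately as part of Theorem \ref{prop:specialcoherent}, and if you go your route you will need to supply it explicitly rather than leaving it implicit in the phrase ``or after completion.''
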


\subsection{Proof of Theorem \ref{conj:Vfilt}}

The natural action of each of the algebras $\gr^{WY} \mc{H}_{\kappa}(X,W)$ and $Z(W,W_Y, \brho_{\idot} \mc{H}_{\kappa'}( \ny, W_Y))$ on the sheaf  $\eta_{\idot} \mc{O}_{\NN_{X/WY}}$ is faithful. Therefore, in order to show the existence of the isomorphism of Theorem \ref{conj:Vfilt}, it suffices to show that the sheaves are equal as subsheaves of the endomorphism sheaf $\mc{E}nd_{\C}(\eta_{\idot} \mc{O}_{\NN_{X/WY}})$. Being a local statement, we can check this on stalks. Fix some $y \in Y$ and denote its image in $Y / W_Y$ by $\mathbf{y}$.  

At $\mathbf{y}$, we have a decomposition of stalks of sheaves 
\beq{eq:decompstalk}
(\pi_{\idot} \mc{O}_X)_{\mathbf{y}} = \bigoplus_{w \in W/W'} \mc{O}_{X,w(y)}  \quad \textrm{and} \quad (\eta_{\idot} \mc{O}_{\NN_{X/WY}})_{\mathbf{y}} = \bigoplus_{w \in W / W'} \mc{O}_{\NN_{X/w(Y)},w(y)}.
\eeq
Analogous to the main result of \cite{BE}, the left hand equality in (\ref{eq:decompstalk}) implies that there is an isomorphism of algebras 
\beq{eq:Zalg1}
\mc{H}_{\kappa}(X,W)_{\mathbf{y}} \simeq Z(W,W', \mc{H}_{\kappa'}(X,W')_y).
\eeq
Also, since $\gr^{WY} \mc{O}_X = \eta_{\idot} \mc{O}_{\NN_{X/WY}}$ is a subalgebra of $\gr^{WY} \mc{H}_{\kappa}(X,W)$, the equality on the right hand side of (\ref{eq:decompstalk}), together with Lemma 2.3.1 of \cite{LosevSRAComplete}, implies that 
\beq{eq:Zalg2}
(\gr^{WY} \mc{H}_{\kappa}(X,W))_{\mathbf{y}} \simeq Z(W,W', \bp (\gr^{WY} \mc{H}_{\kappa}(X,W))_{\mathbf{y}} \bp),
\eeq
where $\bp \in Z(W,W', (\eta_{\idot} \mc{O}_{\NN_{X/WY}})_{\mathbf{y}} \rtimes W)$ is the idempotent defined by $(\bp f)(w) = f(w)$ if $w \in W'$ and $(\bp f)(w) = 0$ otherwise; equivalently $\bp$ is the projection map from $(\eta_{\idot} \mc{O}_{\NN_{X/WY}})_{\mathbf{y}}$ onto $\mc{O}_{\NN_{X/Y}, y}$. We may identify $\bp (\gr^{WY} \mc{H}_{\kappa}(X,W))_{\mathbf{y}} \bp$ with $\gr^{Y} \mc{H}_{\kappa'}(X,W')_y$. 

Similarly, the fact that 
$$
(\brho_{\idot} \mc{O}_{\NN_{X/Y}})_{\mathbf{y}} = \bigoplus_{w \in W_Y / W'} \mc{O}_{\NN_{X/w(Y)},w(y)}
$$
implies that 
\beq{eq:Zalg3}
(\brho_{\idot} \mc{H}_{\kappa'}( \ny, W_Y))_{\mathbf{y}} \simeq Z(W_Y, W', \mc{H}_{\kappa'}( \ny, W')_{y}).
\eeq 
Combining the isomorphisms (\ref{eq:Zalg1}), (\ref{eq:Zalg2}) and (\ref{eq:Zalg3}), we see that it is enough to show that $\gr^{Y} \mc{H}_{\kappa'}(X,W')_y$ is isomorphic to $\mc{H}_{\kappa'}( \ny, W')_{y}$. Replacing $W$ by $W'$ we may assume that $y$ is fixed by $W$ and that $X$ is affine.

\begin{prop}
Assume that $y \in Y := X^{W}$. Then $\gr^{WY} \mc{H}_{\kappa}(X,W)_y \simeq \mc{H}_{\kappa}(\NN_{X/Y},W)_y$. 
\end{prop}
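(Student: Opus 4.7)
The plan is to pass to local coordinates at $y$, read off the principal symbols of the generators of $\mc{H}_\kappa(X,W)_y$ under the $V$-filtration, and exhibit the desired isomorphism directly. Since $y\in X^W$ and $W$ acts on $X$ as a pseudo-reflection group, I would first choose $W$-semi-invariant étale local coordinates $(y_1,\ldots,y_d;x_1,\ldots,x_k)$ around $y$ in which $W$ acts trivially on the $y_i$ and by its isotropy representation on $\mathrm{span}(x_1,\ldots,x_k)\cong \NN_{X/Y,y}^*$, with $\mc{I}_y=(x_1,\ldots,x_k)$. In these coordinates the $V$-filtration assigns weight $0$ to $y_i$ and $\pa_{y_j}$, weight $-1$ to $x_j$, and weight $+1$ to $\pa_{x_i}$, and the classical Kashiwara identification yields $\gr^{WY}\mc{O}_{X,y}=\mc{O}_{\NN_{X/Y},y}$.

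Next, I would observe that every $(Z,w)\in\mc{S}(X,W)$ passing through $y$ must contain $Y$: the inclusions $Y\subset X^W\subset X^w$ together with the connectedness of $Y$ force $Y$ to lie in the component $Z$ of $X^w$ through $y$. Therefore $f_Z\in\mc{I}_y$ with Taylor expansion $f_Z=\ell_w(x)+r$, $r\in\mc{I}_y^2$, where $\ell_w\in\NN_{X/Y,y}^*$ is the linear equation of the mirror of $w$ on the normal bundle; and the rule $(Z,w)\mapsto (\ell_w^{-1}(0),w)$ gives a bijection between the pairs in $\mc{S}(X,W)$ meeting $y$ and the elements of $\mc{S}(\NN_{X/Y},W)$, compatible with the parameter $\kappa$. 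A direct calculation from \eqref{eq:Dunkl} then shows that $D_{x_i}\in V^{WY}_1\mc{H}_\kappa(X,W)_y$ with principal symbol equal to the Dunkl-Opdam operator for $\pa_{x_i}$ on $\NN_{X/Y}$, and $D_{y_j}\in V^{WY}_0$ with principal symbol $\pa_{y_j}$: the correction term for $D_{y_j}$ drops into $V^{WY}_{-1}$ because $\pa_{y_j}\ell_w=0$, so that $\pa_{y_j}f_Z\in\mc{I}_y^2$ while $f_Z\in\mc{I}_y\setminus\mc{I}_y^2$.

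These ingredients combine to give a homomorphism of graded algebras
\[
\phi:\ \mc{H}_\kappa(\NN_{X/Y},W)_y \,\longrightarrow\, \gr^{WY}\mc{H}_\kappa(X,W)_y,
\]
sending $x_j$, $y_i$, $w\in W$, and the normal-bundle Dunkl operators to the principal symbols of their counterparts on $X$. Surjectivity is immediate, since $\mc{H}_\kappa(X,W)$ is generated by $\mc{O}_X\rtimes W$ together with the Dunkl operators for the coordinate vector fields. For injectivity I would appeal to the PBW theorem for Cherednik algebras \cite{ChereSheaf}: both source and target are free right $\mc{O}_{\NN_{X/Y},y}\rtimes W$-modules with basis given by ordered monomials in the Dunkl operators, and $\phi$ carries the PBW basis of the source bijectively onto that of the target.

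The main obstacle is the symbol calculation for the Dunkl-Opdam operators in the second step: one must verify that after expanding $\pa_{v}f_Z/f_Z$ and collecting by $V$-degree, no cross-term between the rational factor and a higher-order contribution in $r=f_Z-\ell_w(x)$ survives at the top $V$-degree. This reduces, via the factorization $f_Z=\ell_w(1+g)$ with $g\in\mc{I}_y$, to the observation that $\pa_{y_j}$ annihilates $\ell_w$ while $(1+g)^{-1}$ is a unit whose $V$-degree vanishes. Once this computation is in place, compatibility of $V^{WY}_\bullet$ with the PBW filtration on $\mc{H}_\kappa(X,W)_y$ is automatic, and the isomorphism follows formally from the fact that the tangent cone of a pseudo-reflection arrangement through $y$ is the linearized arrangement itself.
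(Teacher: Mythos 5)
Your overall strategy — pass to local coordinates at $y$, identify the $V$-degrees of the coordinate functions and Dunkl operators, and deduce the isomorphism from PBW — is the same as the paper's. However, there is a real, if fixable, gap in the symbol computation in your second paragraph, precisely at the step you yourself flag as "the main obstacle."

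You claim $f_Z = \ell_w(x) + r$ with $r \in \mc{I}_y^2$, and use this to conclude that the Dunkl correction term for the tangential derivative $D_{y_j}$ drops into $V^{WY}_{-1}$. This claim is not justified. What you actually know is that $r$ has no linear term at $y$ and that $r \in \mc{I}_y$ (since both $f_Z$ and $\ell_w$ vanish on $Y$), so $r \in \mf{m}_y^2 \cap \mc{I}_y = \mf{m}_y \cdot \mc{I}_y$, which is strictly larger than $\mc{I}_y^2$: for example $y_1 x_1 \in \mf{m}_y \mc{I}_y \setminus \mc{I}_y^2$. Concretely, writing $f_Z = \ell_w \cdot u$ with $u$ a unit, one finds $\pa_{y_j}(f_Z)/f_Z = \pa_{y_j}(u)/u$, a regular function that need not vanish on $Y$; the correction term is therefore an element of $\mc{O}_X \rtimes W$ of $V$-degree $0$, not $-1$, and its symbol contributes a nontrivial term $(\pa_{y_j}u/u)|_Y \cdot (w-1)$ to $\mathrm{symb}(D_{y_j})$. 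So as written, $\phi$ does not carry Dunkl operators on $\NN_{X/Y}$ to the PBW basis of $\gr^{WY}\mc{H}_\kappa(X,W)_y$.

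The fix is exactly what the paper does (and what your chosen coordinates already permit): since $W$ acts linearly in your coordinates, each $X^w$ through $y$ is a \emph{linear} subspace, so one may take $f_Z = \ell_w$ with $r = 0$ from the outset. Then $\pa_{y_j}(\ell_w) = 0$ identically, $D_{y_j} = \pa_{y_j}$ on the nose, and the rest of your argument goes through. Alternatively, one can keep $r \neq 0$ and note that the extra symbol term lies in $\mc{O}_{Y,y} \rtimes W \subset \mc{H}_{\kappa}(\NN_{X/Y},W)_y$, so the map on algebras is still a well-defined isomorphism after a triangular adjustment of the generators — but this is more work for no gain, and your stated claim about $V$-degrees would still need to be retracted. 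Aside from this point, the bijection $\mc{S}(X,W) \leftrightarrow \mc{S}(\NN_{X/Y},W)$ through $y$ and the PBW-based conclusion match the paper's argument.
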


\begin{proof}
We choose $x_1, \ds, x_n$ in the maximal ideal $\mf{m}_y$ of $\mc{O}_{X,y}$ such that $\overline{x}_1, \ds, \overline{x}_n$ are a basis of $\mf{m}_y / \mf{m}_y^2$, the space $\mf{h}^* := \C \{ x_1, \ds, x_n \}$ is a $W$-submodule of $\mc{O}_{X,y}$, and $Y = V(x_{k+1}, \ds, x_n)$. For each $s \in \mc{S}(X,W)$, we fix $\alpha_s \in \C \{ x_1, \ds, x_n \}$ such that $X^s = V(\alpha_s)$. Then the PBW theorem implies that we have a $\mc{O}_{X,y}$-module isomorphism 
$$
\mc{H}_{\kappa}(X,W)_y \simeq \mc{O}_{X,y} \o \C W \o \C[y_1, \ds, y_n]
$$
where $y_i$ is the Dunkl operator corresponding to the vector field $\pa_{x_i}$. By assumption, the vectors $\alpha_s$ belong to $ \C \{  x_{k+1}, \ds, x_n \}$, which means that $y_i = \pa_{x_i}$ for $i = 1, \ds, k$. The ideal $I = I(Y)$ is generated by $x_{k+1}, \ds, x_n$. It suffices to identify  
$$
V_m \mc{H}_{\kappa}(X,W)_y = \bigoplus_{i - j = m} I^j \o \C W \o \C[y_1, \ds, y_k][y_{k+1}, \ds, y_n]_i,
$$
where $\C[y_{k+1}, \ds, y_n]_i$ is the space of homogeneous polynomials of degree $i$, and show that the symbol $\bar{y}_i$ of $y_i$ in $\gr^{WY} \mc{H}_{\kappa}(X,W)_y$ equals the corresponding Dunkl operator in $\mc{H}_{\kappa}(\NN_{X/Y},W)_y$.

However, both of these facts can be shown to follow from the PBW property for $\mc{H}_{\kappa}(X,W)_y$ by considering the action of $\mc{H}_{\kappa}(X,W)_y$ on $\C[\mf{h}] \cap I^m \subset \mc{O}_{X,y}$.  
\end{proof}



\subsection{Specialization}\label{sec:Cherespecial}

In this section we define a specialization functor for ${\mathcal H}_{\kappa}(X,W)$-modules, analogous to the specialization functor defined for $\dd$-modules in section \ref{sec:specialA}.

\begin{defn}\label{defn:specialize}
A coherent ${\mathcal H}_{\kappa}(X,W)$-module $\ms{M}$ is said to be \textit{specializable} along $WY$ if, locally on $X/W$, there exists an $WY$-good filtration $\mc{F}_{\idot} \ms{M}$ on $\ms{M}$, with respect to the $WY$-filtration on ${\mathcal H}_{\kappa}(X,W)$, such that, for some lift $\theta \in {\mathcal H}_{\kappa}(X,W)$ of $\eu$, there exists a polynomial $b$ with
$$
b(\theta) \cdot \mc{F}_0 \ms{M} \subseteq (V_{-1} {\mathcal H}_{\kappa}(X, W)) \cdot \mc{F}_0 \ms{M}.
$$ 
\end{defn}

\begin{rem}
Definition \ref{defn:specialize} is independent of the choice of lifts, since the difference of any two choices lies in $V^{WY}_{-1} {\mathcal H}_{\kappa}(X,W)$. 
\end{rem}

The category of all coherent ${\mathcal H}_{\kappa}(X,W)$-modules,
specializable along $WY$, will be denoted $\Lmod{{\mathcal H}_{\kappa}(X / WY)}$. 

Recall that we have fixed a set $C$ of coset representatives of $\Z$ in $\C$. Arguing as in the proof of \cite[Theorem 1]{KashVanishing}, one obtains the following analogue of Proposition \ref{thm:vanishing} for Cherednik algebras. 

\begin{prop}\label{prop:uniquefiltration}
Let $\ms{M}$ be specializable along $WY$. Then, there exists a unique (global) $WY$-good filtration $V^{WY}_{\idot} \ms{M}$, the \textit{$WY$-filtration}, on $\ms{M}$ and nonzero polynomial $b$ such that $b^{-1}(0) \subset C$ and 
$$
b(\theta + k) \cdot (V^{WY}_{k}\ms{M}/V^{YW}_{k-1} \ms{M}) = 0, \quad \forall \ k \in \Z.
$$ 
\end{prop}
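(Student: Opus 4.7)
The plan is to adapt Kashiwara's argument for $\dd$-modules (used in Proposition \ref{thm:vanishing}) to the Cherednik-algebra setting, with Theorem \ref{conj:Vfilt} providing the key structural input. Working locally on $X/W$, the hypothesis of specializability supplies a good filtration $\mc{F}_\idot \ms{M}$ and a polynomial $b_0$ with
$b_0(\theta) \cdot \mc{F}_0 \ms{M} \subseteq V^{WY}_{-1} \mc{H}_{\kappa}(X,W) \cdot \mc{F}_0 \ms{M}$.
A standard computation, using the commutator relation $[\theta, D] \equiv -k D$ modulo $V^{WY}_{k-1}$ for $D \in V^{WY}_k \mc{H}_{\kappa}(X,W)$ (which follows from Lemma \ref{lem:globalEu} applied to $\gr^{WY} \mc{H}_{\kappa}(X,W)$), shows that $b_0(\theta + k)$ annihilates the graded piece $\gr_k^{\mc{F}} \ms{M}$ for every $k \in \Z$. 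Consequently $\theta$ acts on each $\gr_k^{\mc{F}} \ms{M}$ with finitely many generalized eigenvalues, all of the form $\alpha - k$ with $\alpha$ a root of $b_0$.

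The first main step is to modify $\mc{F}_\idot$ so that the corresponding polynomial has roots in $C$. For each root $\alpha$ of $b_0$ with $\alpha \notin C$, write $\alpha = \alpha_0 + m$ with $\alpha_0 \in C$ and $m \in \Z \sminus \{0\}$, and transfer the generalized $(\alpha - k)$-eigenspace of $\theta$ on $\gr_k^{\mc{F}} \ms{M}$ into $\gr_{k+m}$ of a modified filtration, obtained by enlarging or shrinking each $\mc{F}_k$ by adjoining to it (or removing from it) the appropriate generalized $\theta$-eigenspace sitting in $\mc{F}_{k\pm 1}$. After finitely many such adjustments one obtains a good filtration $V^{WY}_\idot$ and a polynomial $b$ with $b^{-1}(0) \subset C$ for which $b(\theta + k)$ kills $\gr_k^V \ms{M}$.

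For uniqueness, suppose $V$ and $V'$ are two filtrations satisfying the conclusion, with polynomials $b$ and $b'$ whose roots lie in $C$. On $\gr_k^V \ms{M}$ and $\gr_k^{V'} \ms{M}$ the operator $\theta$ has generalized eigenvalues in $C - k$ and $C - k$ respectively, so comparing the two filtrations by induction on $k$ and using that $C$ is a fundamental domain for the action of $\Z$ by translation on $\C$, any discrepancy between $V$ and $V'$ would force a common $\theta$-eigenvalue to lie in two distinct translates of $C$ by integers, which is impossible. This forces $V = V'$ locally, and the local uniqueness lets the locally defined filtrations glue to a unique global $V^{WY}$-filtration on $\ms{M}$.

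The main obstacle is making the shift step in the second paragraph rigorous: each modification must preserve $V^{WY}_0 \mc{H}_{\kappa}(X,W)$-coherence, which in turn reduces to the noetherianity of $V^{WY}_0 \mc{H}_{\kappa}(X,W)$. This noetherianity is not formal, but it follows from Theorem \ref{conj:Vfilt}: the associated graded $\gr^{WY} \mc{H}_{\kappa}(X,W)$ is Morita equivalent to (a pushforward of) the Cherednik algebra $\mc{H}_{\kappa'}(\NN_{X/Y}, W_Y)$, which is itself noetherian as a sheaf of rings by the usual PBW argument for Cherednik algebras, and the noetherianity lifts back to $V^{WY}_0 \mc{H}_{\kappa}(X,W)$ by the standard lifting lemma for filtered rings whose associated graded is noetherian.
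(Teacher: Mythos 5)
Your proposal follows the same route as the paper, which simply defers to Kashiwara's proof of \cite[Theorem 1]{KashVanishing} without spelling out the adaptation; you correctly identify the essential ingredients, including the role of Theorem \ref{conj:Vfilt} in guaranteeing that the modified filtration remains $WY$-good. Two small slips are worth flagging. First, there is a sign error in the shift step: an eigenvalue $\alpha - k$ with $\alpha = \alpha_0 + m$, $\alpha_0 \in C$, $m \in \Z\sminus\{0\}$ equals $\alpha_0 - (k-m)$, so the corresponding eigenpiece should be moved to degree $k-m$, not $k+m$, in order to land in $C-(\text{new degree})$. Second, the generalized $\theta$-eigenpieces live in the subquotients $\gr_k^{\mc{F}}\ms{M}$, not in $\ms{M}$ itself, so one cannot literally adjoin them to, or remove them from, $\mc{F}_k$; the modification is most cleanly performed in the $V$-adic completion of $\ms{M}$, where $\theta$ admits a topological spectral decomposition, and $V^{WY}_{\idot}\ms{M}$ is then obtained by intersecting $\ms{M}$ with the closures of the appropriate sums of eigenspaces. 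Neither slip affects the validity of the approach, and the uniqueness argument (via comparability of good filtrations and disjointness of the translates $C-k$) and the gluing step are stated correctly.
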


Fix an idempotent $e_d \in Z(W,W_Y, {\mathcal H}_{\kappa'}( \NN, W_Y))$ such that $e_d Z(W,W_Y, {\mathcal H}_{\kappa'}( \NN, W_Y)) e_d$ is isomorphic to ${\mathcal H}_{\kappa'}( \NN, W_Y)$. Proposition \ref{prop:uniquefiltration} allows us to define the specialization functor 
$$
\Sp_{X/WY} :\ \Lmod{{\mathcal H}_{\kappa}(X / WY)}\ \rightarrow\ \Lmod{{\mathcal H}_{\kappa'}( \NN, W_Y)}, \quad \Sp_{X/WY}(\ms{M}) =  e_d \cdot \gr_V \ms{M}.
$$
The proposition also implies that $\Sp_{X/WY}$ is an exact functor, c.f. Proposition 5.1.3 of \cite{CyclesProches}.

For any specializable module $\mm$,
the action of the Euler element $\eu\in {\mathcal H}_{\kappa'}( \NN, W_Y)$
on $\Sp_{X/WY}(\ms{M})$ is locally finite, by construction. 
Hence this action can be exponentiated
to  a well-defined operator
$\exp(2\pi\sqrt{-1}\,\eu):\ \Sp_{X/WY}(\ms{M})\to \Sp_{X/WY}(\ms{M})$,
called the {\em monodromy} operator.  Using the defining relations of
the algebra ${\mathcal H}_{\kappa'}( \NN, W_Y)$, it is straight-forward 
to verify that the monodromy operator commutes
with the action of ${\mathcal H}_{\kappa'}( \NN, W_Y)$ on $\Sp_{X/WY}(\ms{M})$.

\begin{rem}
We note that the $\eu$-action on  $\Sp_{X/WY}(\ms{M})$ is
 not necessarily semisimple,
in general. Let  $\eu_{\operatorname{nil}}$ be
the nilpotent component in the Jordan decomposition
of the linear operator $\eu :\ \Sp_{X/WY}(\ms{M})\to \Sp_{X/WY}(\ms{M})$.
Then, the fact that the map $\exp(2\pi\sqrt{-1}\,\eu)$ 
commutes with  the ${\mathcal H}_{\kappa'}( \NN, W_Y)$-action
implies that  the map $\eu_{\operatorname{nil}}$
commutes with the ${\mathcal H}_{\kappa'}( \NN, W_Y)$-action 
 on $\Sp_{X/WY}(\ms{M})$ as well.
\end{rem}

\subsection{} For the remainder of this section, we consider the action of $W = \s_n$ on $\TSL$. For each $b \in \mf{t}$, Bezrukavnikov and Etingof, \cite{BE}, constructed a restriction functor $\mathrm{Res}_b$ from category $\mc{O}$ for the rational Cherednik algebra $\H_{\kappa}(\mf{t},W)$ to category $\mc{O}$ for the rational Cherednik algebra $\H_{\kappa}(\mf{t}',W_b)$, where $\mf{t} = \mf{t}^{W_b} \oplus \mf{t}'$ is the canonical $W_b$-module decomposition. Via the isomorphism (\ref{eq:BEiso}), one can define an analogous restriction functor for modules over the trigonometric Cherednik algebra $\H_{\kappa}(\TSL,W)$. We show that, on those ${\mathcal H}_{\kappa}(\TSL,W)$-modules that are coherent over $\mc{O}_{\TSL/W}$, the specialization functor agrees with the restriction functor. 


We use freely the notation of section \ref{jj2}. The exponential map gives a canonical isomorphism $\C[ \widehat{\TSL}_L] \rightsim \C[\TSL_L] [\![ \mf{t}_L ]\!]$, where $\widehat{\TSL}_L$ denotes the formal neighborhood of $\TSL_L$ in $\TSL$. Thus, we can canonically identify $\widehat{\TSL}_L = T_L \times \widehat{\mf{t}}_L$. Let $\widehat{\mc{O}}_{\TSL/W}$ and $\widehat{\mc{O}}_{(T_L \times \mf{t}_L)/ N_L}$ denote the sheaves of functions on the formal neighborhood of $T_L / N_L$ in $\TSL /W$ and $(T_L \times \mf{t}_L)/ N_L$, respectively. These sheaves are also isomorphic. We set $\widehat{{\mathcal H}}_{\kappa}(\TSL,W)_L = \widehat{\mc{O}}_{\TSL/W} \o_{\mc{O}_{\TSL/W}} {\mathcal H}_{\kappa}(\TSL,W)$ and 
$$
\widehat{{\mathcal H}}_{\kappa}(T_L \times \mf{t}_L, N_L ) = \widehat{\mc{O}}_{(T_L \times \mf{t}_L) / N_L} \o_{\mc{O}_{(T_L \times \mf{t}_L) / N_L}} {\mathcal H}_{\kappa}(T_L \times \mf{t}_L, N_L).
$$
Analogous to the isomorphism of \cite[Section 3.7]{BE}, we have an isomorphism
\beq{eq:BEiso}
\widehat{{\mathcal H}}_{\kappa}(\TSL,W)_L \iso Z(W,N_L, \widehat{{\mathcal H}}_{\kappa}(T_L \times \mf{t}_L, N_L ))
\eeq
of sheaves of algebras on $\TSL_L / N_L$. By Lemma \ref{lem:globalEu}, there exists a global section $\eu$ in the algebra $Z(W,N_L,{\mathcal H}_{\kappa}(T_L \times \mf{t}_L, N_L ))$, which we may consider, via the natural inclusion, as a section of $Z(W,N_L,\widehat{{\mathcal H}}_{\kappa}(T_L \times \mf{t}_L, N_L ))$. If $\ms{N}$ is a $Z(W,N_L, \widehat{{\mathcal H}}_{\kappa}(T_L \times \mf{t}_L, N_L ))$-module, then we denote by $\ms{N}^{\fin}$ the subsheaf of all sections that are locally finite with respect to the action of $\eu$. Then, the Bezrukavnikov-Etingof restriction functor is defined to be 
$$
\Res_L : \Lmod{{\mathcal H}_{\kappa}(\TSL,W)} \rightarrow \LMod{{\mathcal H}_{\kappa}(T_L \times \mf{t}_L, N_L )}, \quad \Res_L (\ms{M}) = e_d \cdot \left( \widehat{\ms{M}}_L \right)^{\fin},
$$
where $\widehat{\ms{M}}_L = \widehat{\mc{O}}_{\TSL / W} \o_{\mc{O}_{\TSL / W}} \ms{M}$.

\begin{proof}[Proof of Theorem \ref{prop:specialcoherent}]
Let $\mc{K}$ be the sheaf of ideals defining $\TSL_L /N_L$ in $\TSL / W$. The same notation will be used to denote the corresponding ideal in $\widehat{\mc{O}}_{\TSL / W } \simeq \widehat{\mc{O}}_{(T_L \times \mf{t}_L)/ N_L}$. The proof of the theorem depends on the following key claim.

\begin{claim}\label{claim:101}
Let $\ms{N}$ be a $Z(W,N_L, \widehat{{\mathcal H}}_{\kappa}(T_L \times \mf{t}_L, N_L ))$, coherent over $\widehat{\mc{O}}_{(T_L \times \mf{t}_L)/ N_L}$. For all $\ell > 0$, there exists a non-zero polynomial $d_{\ell}$ such that $d_{\ell}(\eu) \cdot (\ms{N} / \mc{K}^{\ell} \ms{N} ) = 0$. 
\end{claim}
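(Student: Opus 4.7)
My plan is to proceed by induction on $\ell$. Throughout I would work locally on $Y/N_L$, fix a linear coordinate system $x_1,\dots,x_r$ on the fiber $\mf{t}_L$, and take $y_1,\dots,y_r$ to be the corresponding Dunkl-Opdam operators; locally the ideal $\mc{K}$ is generated by $x_1,\dots,x_r$.

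For the base case $\ell = 1$, the key structural fact I would establish is a decomposition of the Euler element of the form $\eu = \sum_{i=1}^{r} x_i y_i + c_0$, with $c_0$ lying in the group algebra $\C N_L$ (the element $c_0$ records the reflection contributions coming from the Dunkl-Opdam operators). This decomposition is forced by the defining commutation relations $[\eu,x_i]=x_i$, $[\eu,y_i]=-y_i$, $[\eu,w]=0$ for $w\in N_L$, together with the PBW theorem for $\widehat{\mathcal{H}}_{\kappa'}(T_L \times \mf{t}_L, N_L)$. Since $x_i \in \mc{K}$, the operator $\sum x_i y_i$ carries $\ms{N}$ into $\mc{K}\ms{N}$, so that on the quotient $\ms{N}/\mc{K}\ms{N}$ the Euler element acts only through $c_0$. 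Because $N_L$ is finite, the minimal polynomial of $c_0$ inside $\C N_L$ is some nonzero $d_1 \in \C[X]$ of degree at most $|N_L|$, and this polynomial annihilates $c_0$ in every representation of $\C N_L$; hence $d_1(\eu)\cdot(\ms{N}/\mc{K}\ms{N}) = 0$.

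For the inductive step, I would assume that $d_\ell(\eu)$ annihilates $\ms{N}/\mc{K}^\ell \ms{N}$ and analyze the short exact sequence
$$
0 \to \mc{K}^\ell \ms{N}/\mc{K}^{\ell+1}\ms{N} \to \ms{N}/\mc{K}^{\ell+1}\ms{N} \to \ms{N}/\mc{K}^\ell\ms{N} \to 0.
$$
Multiplication defines a surjection $(\mc{K}^\ell/\mc{K}^{\ell+1})\otimes_{\widehat{\mc{O}}/\mc{K}}(\ms{N}/\mc{K}\ms{N}) \twoheadrightarrow \mc{K}^\ell\ms{N}/\mc{K}^{\ell+1}\ms{N}$. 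For $k$ a homogeneous function of fiber-degree $\ell$ and $\bar n \in \ms{N}/\mc{K}\ms{N}$, the identity $\eu(kn) = [\eu,k]\, n + k\,\eu(n) \equiv \ell \cdot kn + k\,\bar\eu(n) \pmod{\mc{K}^{\ell+1}\ms{N}}$ shows that $\eu$ acts on the tensor product as $\ell\cdot\mathrm{id} + \mathrm{id}\otimes\bar\eu$. Consequently $d_1(X-\ell)$ kills $\mc{K}^\ell \ms{N}/\mc{K}^{\ell+1}\ms{N}$, and setting $d_{\ell+1}(X) := d_\ell(X)\,d_1(X-\ell)$ completes the induction.

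The main technical obstacle will be pinning down the decomposition $\eu = \sum x_i y_i + c_0$ and verifying that, after completion along the zero section of the normal bundle, no extraneous contributions from $\mc{O}_{T_L/N_L}$ leak into the ``constant'' part $c_0$; equivalently, one must choose the lift of $\eu$ canonically so that its reduction modulo $\mc{K}\widehat{\mathcal{H}}_{\kappa'}$ lands in the finite-dimensional algebra $\C N_L$ rather than in the larger algebra of $\mc{O}_{T_L/N_L}$-valued $N_L$-equivariant functions. Once that point is settled, the rest of the argument is a straightforward manipulation of the $\mc{K}$-adic filtration.
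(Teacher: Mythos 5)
Your strategy --- writing the Euler element as $\eu = \sum_i x_i y_i + c_0$ with $c_0$ in the group algebra and bootstrapping through the $\mc{K}$-adic filtration --- is genuinely different from the paper's. The paper's proof is a soft finiteness argument: since $\ms{N}$ is coherent over $\widehat{\mc{O}}_{(T_L\times\mf{t}_L)/N_L}$, the $\dd(T_L)$-module $e_d(\ms{N}/\mc{K}^\ell\ms{N})$ is finitely generated over $\C[T_L]^{N_L}$, hence holonomic, hence has finite-dimensional endomorphism algebra; the global section $\eu$ lands in that finite-dimensional algebra and so satisfies a nonzero polynomial. Your explicit approach, if it worked, would bypass holonomicity entirely and would not even need the coherence hypothesis.

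However, there is a genuine gap in your setup. You assert that $\mc{K}$ is locally generated by the linear fiber coordinates $x_1,\dots,x_r$. That is not so: $\mc{K}$ is by definition the ideal of $T_L/N_L$ inside the \emph{invariant} ring $\widehat{\mc{O}}_{(T_L\times\mf{t}_L)/N_L}$, whereas the $x_i$ are not $N_L$-invariant (unless $W_L$ is trivial), hence do not lie in $\mc{K}$; instead $\mc{K}$ is generated by $N_L$-invariant polynomials in the $x_i$ of various positive degrees. This derails both steps of your induction. In the base case, $\sum x_i y_i$ does map $\ms{N}$ into $\mathfrak{m}\ms{N}$ where $\mathfrak{m}=(x_1,\dots,x_r)$ is the ideal of $T_L$ in the non-invariant ring, but $\mathfrak{m}\ms{N}$ is typically strictly larger than $\mc{K}\ms{N}$ (e.g.\ for $N_L=\mathbb{Z}/2$ acting by $\pm 1$ on $\mf{t}_L=\C$, one has $\mc{K}=(x^2)$ while $\mathfrak{m}=(x)$). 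In the inductive step, $\mc{K}^\ell/\mc{K}^{\ell+1}$ is not concentrated in fiber-degree $\ell$ when $W_L$ is nontrivial, so $\eu$ does not act on $\mc{K}^\ell\ms{N}/\mc{K}^{\ell+1}\ms{N}$ as $\ell\cdot\mathrm{id}$ plus a fixed operator.

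The argument can be repaired at the cost of two extra steps. First, pass via the idempotent $e_d$ to an $\widehat{{\mathcal H}}_{\kappa}(T_L\times\mf{t}_L,N_L)$-module, so that $\mathfrak{m}$ acts, and run your induction with $\mathfrak{m}$ in place of $\mc{K}$: there $\mathfrak{m}^\ell/\mathfrak{m}^{\ell+1}$ is pure of fiber-degree $\ell$ and your calculation goes through, giving $d_\ell(\eu)\cdot e_d\ms{N}\subset\mathfrak{m}^\ell e_d\ms{N}$. Second, compare filtrations: the non-invariant ring is module-finite over the invariant one and $\mathfrak{m},\mc{K}$ cut out the same closed set, so some power $\mathfrak{m}^{N\ell}$ is contained in the extended ideal $\mc{K}^\ell\cdot\mc{O}_{T_L\times\mf{t}_L}$, whence $d_{N\ell}(\eu)$ kills $\ms{N}/\mc{K}^\ell\ms{N}$. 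Finally, the ``main technical obstacle'' you anticipate --- controlling the lift of $\eu$ so $c_0$ does not acquire contributions from $\mc{O}_{T_L/N_L}$ --- is not actually an issue: the Euler element of Lemma~\ref{lem:globalEu} is a \emph{canonical} global section, and $c_0=\eu-\sum x_i y_i$ lies in $\C W_L$ by the usual rational Cherednik formula. The invariance of $\mc{K}$, not the lift of $\eu$, is the real obstacle.
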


\begin{proof}
The algebra $\ddd := \Gamma(\TSL_L,\dd_{\TSL_L}) = \Gamma(\TSL_L / N_L,\bnu_{\idot} (\rho^{-1} \dd_{\TSL_L}))$ is a subalgebra of the global sections of ${\mathcal H}_{\kappa}(T_L \times \mf{t}_L, N_L)$. Hence $Z(W,N_L, \ddd \rtimes N_L)$ is a subalgebra of the global sections of $Z(W,N_L, \widehat{{\mathcal H}}_{\kappa}(T_L \times \mf{t}_L, N_L))$ and the spaces $N_{\ell} = \Gamma(\TSL_L / N_L,\ms{N} / \mc{K}^{\ell} \ms{N})$ are $Z(W,N_L, \ddd \rtimes N_L)$-modules. Since the idempotent $e_d$ defines a Morita equivalence between $Z(W,N_L, \ddd \rtimes N_L)$ and $\ddd \rtimes N_L$, 
$$
E_{\ell} := \End_{Z(W,N_L, \ddd \rtimes N_L)} (N_{\ell}) \simeq \End_{\ddd \rtimes N_L}(e_d N_{\ell}) 
$$
is contained in $\End_{\ddd}(e_d N_{\ell})$. The $\ddd$-modules $e_d N_{\ell}$ are finitely generated over $\C[\TSL_L]^{N_L}$ and hence holonomic. Thus, $\End_{\ddd}(e_d N_{\ell})$ is finite-dimensional. The global section $\eu$ defines an element in the finite dimensional algebra $E_{\ell}$ and hence the claim follows.  
\end{proof}

The sheaf $\widehat{\ms{M}}_L$ is a $Z(W,N_L, \widehat{{\mathcal H}}_{\kappa}(T_L \times \mf{t}_L, N_L))$-module via the isomorphism (\ref{eq:BEiso}). Since $\mm$ is coherent over $\mc{O}_{\TSL / W}$, for all $\ell > 0$ the modules $\mm / \mc{K}^{\ell} \mm$ and $\widehat{\mm}_L / \mc{K}^{\ell} \widehat{\mm}_L$ are equal. Therefore $\eu \in Z(W,N_L, \widehat{{\mathcal H}}_{\kappa}(T_L \times \mf{t}_L, N_L))$ acts on $\mm / \mc{K}^{\ell} \mm$. 

Since $\mc{K} \subset V^{W\TSL_L}_{-1} {\mathcal H}_{\kappa}(\TSL,W)$, claim \ref{claim:101} implies that $d_{1} (\theta)  \cdot \mm \subset V^{W\TSL_L}_{-1} {\mathcal H}_{\kappa}(\TSL,W)) \cdot \ms{M}$. Thus, to show that the conditions of Definition \ref{defn:specialize} are satisfied we need to define a $W\TSL_L$-good filtration $\mc{F}_{\idot} \ms{M}$ on $\ms{M}$ such that $\mc{F}_0 \ms{M} = \ms{M}$. Since $\mc{O}_{\TSL/W}$ is a subsheaf of $V^{W\TSL_L}_{0} {\mathcal H}_{\kappa}(\TSL,W)$, $\ms{M}$ is a coherent $V^{W\TSL_L}_{0} {\mathcal H}_{\kappa}(\TSL,W)$-module. Thus, $\mc{F}_{k} \ms{M} := (V^{W\TSL_L}_{k} {\mathcal H}_{\kappa}(\TSL,W)) \cdot \ms{M}$ is a $W\TSL_L$-good filtration on $\ms{M}$, and hence $\ms{M} \in \Lmod{{\mathcal H}_{\kappa}(\TSL / WY)}$. 

Let $\mm^V$ denote the completion $\underset{\leftarrow}\lim \ \ms{M} / V^{W\TSL_L}_m$ of $\mm$ with respect to the $W\TSL_L$-filtration. By a well-known argument, c.f. e.g. the proof of Theorem 6.10.1 of \cite{CharCycles},  
$$
\gr^{W\TSL_L} (\mm)_i = \bigoplus_{\alpha \in C} \mm^V(\alpha - i),
$$
where $\mm^V(\alpha)$ denotes the space of generalized eigenvectors of $\eu$ with eigenvalue $\alpha$. In other words, $\Sp_{\TSL/W\TSL_L}(\mm)$ equals $e_d \cdot (\mm^V)^{\mathrm{fin}}$. Therefore, $\Sp_{\TSL/W\TSL_L}(\mm) = \Res_L(\mm)$ provided the sheaves $\widehat{\mm}_L$ and $\mm^V$ are equal. For this, it suffices to show that the $W\TSL_L$-filtration and the $\mc{K}$-adic filtration on $\mm$ are comparable. 

Since $\mm$ is coherent over $\mc{O}_{\TSL/W}$, there exists some $N \gg 0$ such that $\mm = V^{W\TSL_L}_N \mm$. Thus, $\mc{K}^m \cdot \mm \subset V^{W\TSL_L}_{N-m} \mm$ for all $m  > 0$. Hence it suffice to show that, for each $\ell$ there exist some $k(\ell)$ such that $V^{W\TSL_L}_{k(\ell)} \mm \subset \mc{K}^\ell \cdot \mm$. For this, we first remark that the eigenvalues of $\eu$ on $\mc{K} / \mc{K}^2$ are all strictly positive. Consider the filtration by $\eu$-submodules $(V_k^{W\TSL_L} \mm + \mc{K}^{\ell} \cdot \mm) /  \mc{K}^{\ell} \cdot \mm$ of $\mm / \mc{K}^{\ell} \cdot \mm$. Using claim \ref{claim:101}, we let $d_{\ell}$ be the non-zero polynomial of smallest degree such that $d_{\ell}(\eu) \cdot (\mm / \mc{K}^{\ell} \cdot \mm) = 0$. From the definition of the $WY$-filtration, there is some non-zero polynomial $b$ such that 
$$
b(\eu + k) \cdot \left( \frac{V_k^{W\TSL_L} \mm + \mc{K}^{\ell} \cdot \mm}{V_{k-1}^{W\TSL_L} \mm + \mc{K}^{\ell} \cdot \mm} \right) = 0.
$$
Moreover, the polynomials $b(t + k)$ and $b(t + k')$ have no roots in common if $k \neq k'$. This implies that the filtration $(V_k^{W\TSL_L} \mm + \mc{K}^{\ell} \cdot \mm) /  \mc{K}^{\ell} \cdot \mm$ of  $\mm / \mc{K}^{\ell} \cdot \mm$ is finite. Hence, there exists some $k(\ell) \gg 0$ such that $V^{W\TSL_L}_{k(\ell)} \mm \subset \mc{K}^\ell \cdot \mm$. 
\end{proof}

\begin{rem}
\vi Theorem \ref{prop:specialcoherent} is false if the coherence condition is dropped.   

\vii In Theorem \ref{prop:specialcoherent} we could have taken $W$ to be any Weyl group acting on the abstract maximal torus of the corresponding simple Lie group. 

\viii Similarly, Theorem \ref{prop:specialcoherent} also holds for any complex reflection group acting on its reflection representation $\mf{h}$. 
\end{rem}

Let $\mc{O}_{\kappa}$ denote category $\mc{O}$ for either the
trigonometric Cherednik algebra $\H_{\kappa}^{\trig}(\mathrm{T},W)$ or
the rational Cherednik algebra $\H_{\kappa}(\h,W)$, where in the
trigonometric case $W$ is assumed to be a Weyl group. For each parabolic
subgroup $W'$ of $W$, let $Y$ be the set of all points in $\mathrm{T}$,
resp. in $\h$, whose stabilizer is $W'$. Theorem
\ref{prop:specialcoherent} implies 

\begin{cor}\label{catO_cor} Every module in $\mc{O}_{\kappa}$ is specializable along $W Y$  
 and we have $\Sp_{X/WY}(\ms{M}) = \Res_Y(\ms{M})$. 
\end{cor}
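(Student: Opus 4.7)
The plan is to reduce Corollary \ref{catO_cor} to Theorem \ref{prop:specialcoherent}, using the fact that, although an $M \in \mc{O}_{\kappa}$ is not coherent over $\mc{O}_{X/W}$ in general, the Euler element $\eu$ acts locally finitely with finite-dimensional generalized eigenspaces. This allows us to construct a suitable $WY$-good filtration from the $\eu$-eigenspace decomposition and then mimic the comparison in the proof of Theorem \ref{prop:specialcoherent}.

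First I would establish specializability. Since $M$ is finitely generated and $\eu$ acts locally finitely, one can choose a finite-dimensional, $\eu$-stable subspace $E \subset M$ which generates $M$ as an ${\mathcal H}_{\kappa}$-module. Let $b(t) \in \C[t]$ be a nonzero polynomial with $b^{-1}(0) \subset C$ such that $b(\eu+k) \cdot E = 0$ for some integer shift (this exists because the eigenvalues of $\eu$ on the finite-dimensional space $E$ can be normalized into $C$). Define $\mc{F}_k M := V^{WY}_k {\mathcal H}_{\kappa}(X,W) \cdot E$; this is easily seen to be a $WY$-good filtration. Pick a lift $\theta \in V^{WY}_0 {\mathcal H}_{\kappa}(X,W)$ of $\eu$. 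On $E$ one has $b(\theta) \cdot E \subset V^{WY}_{-1} {\mathcal H}_{\kappa}(X,W) \cdot E$, since $\theta$ and $\eu$ agree modulo $V^{WY}_{-1}$. An induction on the filtration degree, using that $[\theta, V^{WY}_0 \H] \subset V^{WY}_0 \H$ and that commutators of $b(\theta)$ with elements of $V^{WY}_0 \H$ yield polynomials of strictly smaller degree in $\theta$ times elements of $V^{WY}_0 \H$, then shows $b(\theta) \cdot \mc{F}_0 M \subset V^{WY}_{-1}\H \cdot \mc{F}_0 M$, verifying the condition of Definition \ref{defn:specialize}.

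Next I would perform the comparison $\Sp_{X/WY}(M) = \Res_Y(M)$. By Proposition \ref{prop:uniquefiltration}, the $WY$-filtration $V^{WY}_\bullet M$ on $M$ is uniquely characterized by the property that $b(\eu+k) \cdot (V^{WY}_k M / V^{WY}_{k-1} M) = 0$ for a polynomial $b$ with roots in $C$. Exactly as in the proof of Theorem \ref{prop:specialcoherent}, writing $M^V = \underset{\leftarrow}{\lim}\, M / V^{WY}_m M$ for the completion with respect to the $WY$-filtration, one has $\gr^{WY}(M)_i = \bigoplus_{\alpha \in C} M^V(\alpha - i)$, so $\Sp_{X/WY}(M) = e_d \cdot (M^V)^{\mathrm{fin}}$. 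Meanwhile $\Res_Y(M) = e_d \cdot (\widehat{M}_L)^{\mathrm{fin}}$. Therefore it suffices to identify $M^V$ with $\widehat{M}_L$, i.e., to show that the $WY$-filtration and the $\mc{K}$-adic filtration (for $\mc{K}$ the ideal sheaf of $WY$) are comparable on $M$.

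The main obstacle is the last step: unlike in the coherent case of Theorem \ref{prop:specialcoherent}, we do not have $\mc{F}_N M = M$ for some $N$, so one direction of comparability is non-trivial. The plan is to argue using local finiteness of $\eu$: the action of $\eu$ on $\mc{K}/\mc{K}^2$ has strictly positive eigenvalues, so the $\eu$-eigenvalues on $\mc{K}^\ell M / \mc{K}^{\ell+1} M$ grow to $+\infty$ with $\ell$. Combined with finite-dimensionality of the generalized $\eu$-eigenspaces of $M$ and the fact that the eigenvalues of $\eu$ on $M$ are bounded below in real part (a standard property of $\mc{O}_\kappa$), this forces the $\mc{K}$-adic topology on each $V^{WY}_k M$ to be trivial on any given generalized $\eu$-eigenspace beyond a finite cutoff. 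A routine argument then gives the reverse inclusion $V^{WY}_{k(\ell)} M \subset \mc{K}^\ell \cdot M$ for suitable $k(\ell)$, completing the proof.
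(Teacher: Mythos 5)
Your proof sets out to handle a difficulty that does not actually arise. The paper's own deduction of this corollary from Theorem~\ref{prop:specialcoherent} is essentially immediate, because modules in $\mc{O}_{\kappa}$ \emph{are} coherent over $\mc{O}_{X/W}$: for the rational Cherednik algebra, any object of category $\mc{O}$ is finitely generated over $\C[\h]$ (it is a quotient of a finite sum of standard modules $\Delta(\lambda)=\C[\h]\otimes\lambda$), hence over $\C[\h]^W$; for the trigonometric case the paper records explicitly, right after defining the categories in \S\ref{ham_sec}, that $\mc{O}_\kappa$ is a full subcategory of $\mof$, i.e.\ of modules finitely generated over $\C[\TSL]^W$. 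So Theorem~\ref{prop:specialcoherent} applies verbatim, and there is no ``non-coherent'' case to analyze. Your entire construction of a good filtration from an $\eu$-stable generating subspace, and the worry that ``we do not have $\mc{F}_N M = M$ for some $N$,'' is aimed at a gap that the coherence fact already closes.

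Beyond being unnecessary, your workaround conflates two different Euler elements. The element whose local finiteness on $M\in\mc{O}_\kappa$ you invoke is the global grading element $\eu_\kappa\in\htrig_\kappa$ (or its rational analogue) that governs category $\mc{O}$. The element $\eu$ appearing in Definition~\ref{defn:specialize} is the Euler vector field along the fibers of the normal bundle $\NN_{X/Y}$; it lives in $\gr^{WY}\mc{H}_\kappa(X,W)\simeq Z(W,W_Y,\brho_\idot\mc{H}_{\kappa'}(\ny,W_Y))$ (Lemma~\ref{lem:globalEu}), and a lift $\theta$ of it to $\mc{H}_\kappa(X,W)$ need not act locally finitely on $M$. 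In the paper's proof of Theorem~\ref{prop:specialcoherent}, the relevant finiteness is obtained not from local finiteness of $\theta$ on $M$, but from Claim~\ref{claim:101} -- a separate argument using the coherence of $M$ over $\mc{O}_{X/W}$ and finite-dimensionality of $\End$ of holonomic $\dd$-modules. Choosing ``a finite-dimensional $\eu$-stable $E\subset M$'' with $b(\eu)E=0$, as you propose, is therefore not available for the specialization $\eu$; what you could legitimately choose is an $\eu_\kappa$-stable $E$, but that yields no control over $b(\theta)E$.

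So the correct and short route is: observe that every $M\in\mc{O}_\kappa$ is finitely generated over $\mc{O}_{X/W}$, and quote Theorem~\ref{prop:specialcoherent}.
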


\section{Specialization of mirabolic modules}\label{sec:nearby}

In this section we define a specialization functor for mirabolic modules. 

\subsection{}\label{sub:near1} In this subsection we let $G$ be a connected, reductive group and $H$ a reductive subgroup. Let $Y$ be a smooth, affine $H$-variety and set $X = G \times_H Y$. Denote by $\Upsilon : Y \hookrightarrow X$ the closed embedding. We assume that the embedding $\h = \mathrm{Lie} \ H \hookrightarrow \g = \mathrm{Lie} \ G$ induces an isomorphism $(\g / [\g, \g])^* \iso (\h / [\h , \h])^*$.  We fix $\chi\in (\g / [\g, \g])^*$
and,  abusing the notation, also
write $\chi$ for its  image in $(\h / [\h , \h])^*$.

\begin{prop}\label{prop:monoequiv}

\vi The functor $\Upsilon^*$ defines an equivalence between the category of $(G,\chi)$-monodromic $\dd$-modules on $X$ and $(H,\chi)$-monodromic $\dd$-modules on $Y$.

\vii The natural map $\Gamma(X, \mm)^{G} \to \Gamma(Y, \Upsilon^* \mm)^{H}$ is an isomorphism for all $(G,\chi)$-monodromic $\dd$-modules $\mm$. 
 
\end{prop}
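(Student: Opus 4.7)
The plan is to prove both parts by faithfully flat descent along the canonical $H$-torsor
$$
p \colon G \times Y \longrightarrow X = G \times_H Y,\qquad (g,y) \longmapsto [g,y],
$$
where $H$ acts on $G \times Y$ by $h\cdot(g,y) = (gh,h^{-1}y)$. Since $H$ is reductive and acts freely on $G \times Y$, the morphism $p$ is smooth, surjective, and faithfully flat with fibers $H$. The key geometric identity is that $p$ factors as $p = a \circ (\mathrm{id}_G \times \Upsilon)$, where $a \colon G \times X \to X$ is the $G$-action map, so that monodromicity data on $X$ translates directly into descent data on $G \times Y$.

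For part (i), the functor $\Upsilon^*$ maps $\mon{\dd_X,G,\chi}$ to $\mon{\dd_Y,H,\chi}$ by restricting the structure isomorphism $\phi \colon \mc{O}_G^\chi \boxtimes \mm \iso a^*\mm$ and its cocycle \eqref{eq:cocycle} along $H \times Y \hookrightarrow G \times X$. To construct an inverse $\mathsf{Ind}$, given $\mc{N} \in \mon{\dd_Y,H,\chi}$ I would form an external product $\mc{L} \boxtimes \mc{N}$ on $G \times Y$, where $\mc{L}$ is the appropriate sign-twist of $\mc{O}_G^\chi$ chosen so that, by Example \ref{examp:one}, the character contribution from right $H$-translation on $G$ cancels the contribution from the $h^{-1}$-action on $Y$ and the resulting $\dd$-module is genuinely $H$-equivariant for the descent action. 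Faithfully flat descent along $p$ then produces $\mathsf{Ind}(\mc{N})$ on $X$, and the commuting left-translation $G$-action on the first factor of $G \times Y$ induces a $(G,\chi)$-monodromic structure (again using Example \ref{examp:one} under $a_L$). The verifications $\Upsilon^* \circ \mathsf{Ind} \simeq \mathrm{Id}$ and $\mathsf{Ind} \circ \Upsilon^* \simeq \mathrm{Id}$ are routine: the first by restricting the descent datum along $\{e\} \times Y \hookrightarrow G \times Y$; the second by using $\phi$ itself to produce the required descent datum identifying $p^*\mm$ with $\mc{L} \boxtimes \Upsilon^*\mm$.

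For part (ii), combining $p^*\mm \simeq \mc{L} \boxtimes \Upsilon^*\mm$, descent, and the K\"unneth formula (applicable since $G$ and $Y$ are affine and $\mc{L}$ is $\mc{O}_G$-trivializable) gives
$$
\Gamma(X,\mm) \;\simeq\; \bigl(\Gamma(G,\mc{L}) \otimes_\C \Gamma(Y,\Upsilon^*\mm)\bigr)^H.
$$
The left-translation $G$-action on the $G$-factor commutes with $H$ and acts only on the first tensor slot, so taking $G$-invariants reduces matters to computing $\Gamma(G,\mc{L})^G$. Using Example \ref{examp:one} with the convention that the generator $\mathbf{1}$ is $G$-invariant under left translation, together with the connectedness of $G$, one has $\Gamma(G,\mc{L})^G = \C \cdot \mathbf{1}$. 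The residual right-translation action of $H$ on $\mathbf{1}$ then contributes exactly the character whose cancellation against the $(H,\chi)$-monodromic structure on $\Upsilon^*\mm$ recovers the definition of $(H,\chi)$-invariants, yielding $\Gamma(X,\mm)^G \simeq \Gamma(Y,\Upsilon^*\mm)^H$.

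The main obstacle will be the bookkeeping of the cocycle conditions and of the three sign conventions in Example \ref{examp:one}: one must verify that the descent datum on $\mc{L} \boxtimes \mc{N}$ genuinely satisfies the $H$-equivariant cocycle axiom \eqref{eq:cocycle}, that it is compatible with the commuting left-translation $G$-action to yield a $(G,\chi)$-monodromic cocycle on $\mathsf{Ind}(\mc{N})$, and that the two compositions $\Upsilon^* \circ \mathsf{Ind}$ and $\mathsf{Ind} \circ \Upsilon^*$ are naturally isomorphic to the identity as functors rather than merely objectwise. The hypothesis that $(\g/[\g,\g])^* \iso (\h/[\h,\h])^*$ enters precisely to ensure that the single character $\chi$ is unambiguously defined on both $G$ and $H$, so that a single class $\bq \in \BQ(G) \cong \BQ(H)$ controls the monodromy throughout the argument.
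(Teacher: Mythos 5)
Your approach is essentially the same as the paper's. For part~(i), the paper also passes through the $H$-torsor $\pi\colon G\times Y\to X$, forms $\mc{O}_G^{\chi}\boxtimes\mc{N}$ (using Example~\ref{examp:one} to see this is $H$-equivariant for the torsor action), and takes $(\pi_{\idot}\,-\,)^H$ to define the inverse functor; that operation is precisely flat descent along the $H$-torsor, so your phrasing and the paper's are two names for the same construction. The paper also establishes $\mathsf{Ind}\circ\Upsilon^*\simeq\mathrm{Id}$ via the identity $\pi=a\circ(\id_G\times\Upsilon)$, exactly as you do, using the cocycle isomorphism $\theta$ to identify $\pi^*\mm\simeq\mc{O}_G^{\chi}\boxtimes\Upsilon^*\mm$. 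The one genuine difference is part~(ii): the paper observes that a $(G,\chi)$-monodromic $\dd$-module has, in particular, the structure of a $G$-equivariant quasi-coherent $\mc{O}_X$-module (since $\mc{O}_G^{\chi}=\mc{O}_G$ as $\mc{O}_G$-modules), and then appeals to the standard fact that for $G$-equivariant quasi-coherent sheaves on $G\times_H Y$ the restriction map $\Gamma(X,\mm)^G\to\Gamma(Y,\Upsilon^*\mm)^H$ is an isomorphism. You instead unwind this via $\Gamma(G\times Y,\,p^*\mm)^H$ and the K\"unneth formula for affine $G$, $Y$, reducing to $\Gamma(G,\mc{L})^G=\C\cdot\mathbf{1}$. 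That is a correct, more self-contained derivation of the same fact, at the cost of some extra bookkeeping. One small caution you already flag yourself: with your convention $h\cdot(g,y)=(gh,h^{-1}y)$ the formula is a right action, whereas the paper tacitly works with the left torsor action $h\cdot(g,y)=(gh^{-1},hy)$; if you keep your convention the appropriate twist $\mc{L}$ is not $\mc{O}_G^{\chi}$ but $\mc{O}_G^{-\chi}$ (or you should precompose with inversion), so this is exactly the kind of sign to nail down when executing the outline.
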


\begin{proof}

\vi The functor $\Upsilon^*$ is exact on the category of $G$-monodromic, coherent $\dd$-modules on $X$ because all such modules are non-characteristic for $\Upsilon$. If $i_H : H \hookrightarrow G$ is the inclusion then $i^*_H \mc{O}_G^{\chi} = \mc{O}_H^{\chi}$. Functorality implies that $\Upsilon^*$ maps $(G,\chi)$-monodromic $\dd$-modules on $X$ to $(H,\chi)$-monodromic $\dd$-modules on $Y$. To show that it is an equivalence, it suffices to exhibit an inverse. Let $\pi : G \times Y \to X$ be the quotient map. As noted in example \ref{examp:one}, since $H$ act on $G$ by multiplication on the right, the module $\mc{O}_G^{\chi}$ is $(H,-\chi)$-monodromic. Hence, if $\ms{N}$ is a $(H,\chi)$-monodromic module on $Y$ then $\mc{O}_G^{\chi} \boxtimes \ms{N}$ is a $(H,0)$-monodromic $=$ $H$-equivariant $\dd$-module on $G \times Y$. In fact, it is a $(G \times H, (\chi,0))$-monodromic module. Then, $(\pi_{\idot} \mc{O}_G^{\chi} \boxtimes \ms{N})^H$ is a $(G,\chi)$-monodromic module on $X$. The fact that $\pi^* (\pi_{\idot} \mc{O}_G^{\chi} \boxtimes \ms{N})^H \simeq \mc{O}_G^{\chi} \boxtimes \ms{N}$ and that $\Upsilon = \pi \circ j$, where $j : Y \to G \times Y$, $j(y) = (e,y)$ implies that 
$$
\Upsilon^* (\pi_{\idot} \mc{O}_G^{\chi} \boxtimes \ms{N})^H \simeq \ms{N}. 
$$
Going the other way, $\pi^*$ defines an equivalence between $(G,\chi)$-monodromic $\dd$-modules on $X$ and $(G \times H, (\chi,0))$-monodromic modules on $G \times Y$. Therefore, given a $(G,\chi)$-monodromic $\dd$-module on $X$, it suffices to show that 
$$
\pi^* \mm \simeq \mc{O}_G^{\chi} \boxtimes \Upsilon^* \mm.
$$
But, noting that $a \circ (\id_G \times \Upsilon) = \pi$, we have 
$$
(\id \times \Upsilon)^* \theta : \mc{O}_G^{\chi} \boxtimes \Upsilon^* \mm = (\id \times \Upsilon)^*(\mc{O}_G^{\chi} \boxtimes \mm) \iso (\id \times \Upsilon)^* a^* \mm = \pi^* \mm. 
$$

\vii A $(G,\chi)$-monodromic $\dd$-module $\mm$ is a quasi-coherent, $G$-equivariant $\mc{O}_X$-module. It is well-known that $\Gamma(X, \mm)^{G} \to \Gamma(Y, \Upsilon^* \mm)^{H}$ is an isomorphism for any such module. 

\end{proof}

Let $\g\to \dd(X)$ and $\nu:\h\to \dd(Y)$ be the Lie algebra maps
induced by the $G$-action on $X$ and $H$-action on $Y$, respectively.
For any $\chi$ as above, we put
$\g_\chi:=(\mu - \chi)(\g)$, resp. $\h_\chi:=(\nu - \chi)(\h)$.

\begin{lem}\label{lem:pullbackUp} 
Restriction via $\Upsilon$ induces an algebra isomorphism
\[(\dd(X)/\dd(X)\g_{\chi})^G \cong (\dd(Y) / \dd(Y) \h_{\chi})^{H}.\]
\end{lem}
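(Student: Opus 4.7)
The plan is to realize both sides as endomorphism algebras of natural representing objects and to transport the isomorphism across the equivalence of Proposition~\ref{prop:monoequiv}(i). Set $P := \dd_X/\dd_X\g_{\chi}$ with canonical generator $\mathbf{1}_P$. By Lemma~\ref{lem:monoimpliesq}(iii), the relation $\mu(a)\cd \mathbf{1}_P = \chi(a)\mathbf{1}_P$ for $a\in\g$ makes $P$ into a $(G,\chi)$-monodromic $\dd_X$-module; analogously, $Q := \dd_Y/\dd_Y\h_{\chi}$ is $(H,\chi)$-monodromic. First I would verify that $\overline{D} \mto (\mathbf{1}_P \mto D\cd \mathbf{1}_P)$ defines a ring isomorphism $(\dd(X)/\dd(X)\g_{\chi})^{G} \iso \End_{\dd_X}(P)^{\op}$, and similarly for $Q$. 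The key calculation is the identity $(\mu(a)-\chi(a))D \equiv [\mu(a),D] \pmod{\dd_X\g_{\chi}}$, which follows from $(\mu(a)-\chi(a))D = D(\mu(a)-\chi(a)) + [\mu(a),D]$: this shows that the $\g_{\chi}$-action on $P$ by left multiplication coincides with the infinitesimal adjoint $G$-action, so $P^{\g_{\chi}} = P^{G}$, and the same identity shows that $G$-invariant operators normalize the left ideal $\dd_X\g_{\chi}$, so that $P^G$ inherits a well-defined associative algebra structure.

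The key geometric step is to exhibit an isomorphism $Q \iso \Upsilon^* P$ in the category of $(H,\chi)$-monodromic $\dd_Y$-modules. The natural $\dd_Y$-linear map $\mathbf{1}_Q \mto \Upsilon^*\mathbf{1}_P$ is well-defined because for $a\in\h$ the vector field $\mu(a)$ is tangent to the $H$-stable subvariety $Y$ and restricts to $\nu(a)$, so the relations $(\nu(a)-\chi(a))\Upsilon^*\mathbf{1}_P = 0$ hold. I would then argue it is an isomorphism by Yoneda: for any $(H,\chi)$-monodromic $\ms{N}$ on $Y$, Proposition~\ref{prop:monoequiv}(i) allows one to write $\ms{N} \cong \Upsilon^*\mm$ for a $(G,\chi)$-monodromic $\mm$ on $X$, and combining the universal properties of $P$ and $Q$ with Proposition~\ref{prop:monoequiv}(ii) and the full-faithfulness of $\Upsilon^*$ yields
\[
\Hom_{\dd_Y}(Q,\ms{N}) = \Gamma(Y,\ms{N})^{H} = \Gamma(X,\mm)^{G} = \Hom_{\dd_X}(P,\mm) = \Hom_{\dd_Y}(\Upsilon^*P,\ms{N}).
\]
This composite is natural in $\ms{N}$ and sends $\mathbf{1}_Q$ to $\Upsilon^*\mathbf{1}_P$ under evaluation on the identity, so Yoneda delivers $Q \cong \Upsilon^*P$.

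Assembling the pieces, functoriality of $\Upsilon^*$ together with Proposition~\ref{prop:monoequiv}(i) yields a ring isomorphism $\End_{\dd_X}(P) \iso \End_{\dd_Y}(\Upsilon^*P) \cong \End_{\dd_Y}(Q)$; taking opposite rings and invoking the identifications of the first paragraph gives the desired algebra isomorphism $(\dd(X)/\dd(X)\g_{\chi})^{G} \cong (\dd(Y)/\dd(Y)\h_{\chi})^{H}$. The main obstacle I expect is the careful reconciliation, in the first paragraph, of the three a priori distinct structures on $P$ (the $\dd_X$-module structure, the adjoint $G$-action, and the weak $(G,\chi)$-monodromic equivariance), so that $(\dd(X)/\dd(X)\g_{\chi})^{G}$ is identified with $\End_{\dd_X}(P)^{\op}$ as an algebra rather than merely as a vector space; once that calibration is in place, the algebra structure propagates across the equivalence of Proposition~\ref{prop:monoequiv} automatically.
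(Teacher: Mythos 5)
Your route is genuinely different from the paper's. The paper computes $\dd(X)/\dd(X)\g_\chi$ explicitly by presenting $\dd(X)$ as a quantum Hamiltonian reduction $\dd(G\times Y)^H/[\dd(G\times Y)\mu_\Delta(\h)]^H$ (citing Schwarz), then uses the left-ideal identity $\dd_G(\mu_L-\chi)(\g)=\dd_G(\mu_R+\chi)(\g)$ to arrive at $\dd(X)/\dd(X)\g_\chi\cong\bigl[\mc{O}_G^\chi\boxtimes(\dd(Y)/\dd(Y)\h_\chi)\bigr]^H$, which identifies $\Upsilon^*P$ with $Q$ explicitly and then applies Proposition~\ref{prop:monoequiv}(ii). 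You instead package both sides as $\End^{\op}$ of representing objects and establish $Q\cong\Upsilon^*P$ by Yoneda, transporting the algebra structure across the equivalence of Proposition~\ref{prop:monoequiv}(i). Both arrive at the same central identity $Q\cong\Upsilon^*P$; yours is cleaner structurally, while the paper's explicit computation is what gets reused later (e.g.\ in the proof of Proposition~\ref{cor:defXi}, where one needs the map $\phi$ concretely as a map of quotient algebras, not merely its existence).

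There is one technical point you should shore up. You invoke Lemma~\ref{lem:monoimpliesq}(iii) to say $P$ is $(G,\chi)$-monodromic, but that lemma only produces a weak $(G,\bq)$-monodromic structure, and the general upgrade to a strong $(G,\chi)$-monodromic structure (Proposition~\ref{prop:monoimpliesweakG}) is stated only for regular holonomic modules, which $P=\dd_X/\dd_X\g_\chi$ typically is not (its characteristic variety is $\mu^{-1}(0)$, far from Lagrangian). What saves you is that the isomorphism $\phi:\mc{O}_G^\chi\boxtimes P\to a^*P$ constructed in the proof of Lemma~\ref{lem:monoimpliesq}(iii) is canonical (determined by the generator $\mathbf{1}_P$), and one checks directly that this particular $\phi$ satisfies the cocycle condition \eqref{eq:cocycle}; that is what puts $P$ into the category on which Proposition~\ref{prop:monoequiv} acts. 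You should say this explicitly rather than cite the lemma verbatim. Relatedly, when you use ``full-faithfulness of $\Upsilon^*$'' you are tacitly assuming that $\Hom$ in the $(G,\chi)$-monodromic category coincides with $\dd$-module $\Hom$ between the underlying modules; since morphisms in a category with a fixed structure isomorphism $\theta$ should a priori commute with $\theta$, a one-line remark (e.g.\ that any $\dd$-module map $P\to\mm$ automatically intertwines the canonical $\theta$'s, by the same generator argument) would close that loop.
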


\begin{proof}
Let $\mu_L : \g \rightarrow \dd_G$ denote the differential of the action of $G$ on itself by left multiplication and $\mu_R : \g \rightarrow \dd_G$ the differential of the action $G \times G \rightarrow G$, $(g_1,g_2) \mapsto g_2 g_1^{-1}$ of $G$ on itself by right multiplication. We will use the following claim.

The algebra $\dd(G \times_H Y)$ is obtained from
$\dd(G \times Y)$ by quantum Hamiltonian reduction, i.e.,
there is a canonical  isomorphism, see \cite[Corollary 4.5]{Schwarz},
$$
\dd(X) \simeq \dd(G \times Y)^H  / [\dd(G \times Y) \mu_{\Delta}(\h)]^H,
$$
where $\mu_{\Delta} : \h \to \dd(G \times Y)$ is the differential of the diagonal $H$-action on $G \times Y$. This implies that 
$$
\dd(X) / \dd(X) \g_{\chi} \simeq \dd(G \times Y)^H / [\dd(G \times Y) \mu_{\Delta}(\h) + \dd(G \times Y) \g_{\chi}]^H,
$$
where $\g_{\chi} = (\mu_L - \chi)(\g)$ and we have used the fact that
$H$ is reductive. Now, it is well-known, and easy to prove, that
for any character $\chi$  of $\g$ one has the following equality of left ideals:
$\dd_G (\mu_L -\chi)(\g) = \dd_G
(\mu_R + \chi)(\g)$.
It follows that we have
$\dd(G \times Y) \g_{\chi} = \dd(G \times Y) (\mu_R - \chi)(\g)$. Since
$\mu_{\Delta} = \mu_R \o 1 + 1 \o \nu$, we obtain
$$
\dd(G \times Y)  \g_{\chi} + \dd(G \times Y) \mu_{\Delta}(\h)  = \dd(G \times Y)  \g_{\chi} + \dd(G \times Y) (\nu - \chi)(\h).
$$
Therefore, 
$$\dd(X) / \dd(X) \g_{\chi} \simeq \left[ \mc{O}_{G}^{\chi} \boxtimes \left( \frac{\dd(Y)}{\dd(Y) \h_{\chi}} \right) \right]^H.
$$
The proof of Proposition \ref{prop:monoequiv} shows that this implies
$\Upsilon^*(\dd(X) / \dd(X)\g_\chi)\ \cong \ \dd(Y) / \dd(Y)\h_\chi$.
 Then part (ii) of the Proposition  completes the proof.
\end{proof}

\subsection{Specialization of monodromic modules}

Let $f : X' \to X$ be a smooth morphism and $Y \subset X$ a smooth, locally closed, subvariety. Let $Y'= f^{-1}(Y)$. The morphism $f$ induces a morphism $df : \NN_{X'/Y'} \to \NN_{X/Y}$ given by $df (y,v) = (f(y), (d_y f)(v))$, for $y \in Y'$ and $v \in T_y X' / T_y Y'$.  

\begin{lem}[Th\'eor\`eme 9.3.1, \cite{CyclesProches}]\label{lem:etalespecial}
The following diagram commutes
$$
\xymatrix{ \Lmod{\dd_X}_{\reg} \ar[rr]^{\Psi_{X/Y}} \ar[d]_{f^*} & & \Lmod{\dd_{\NN_{X/Y}}}_{\reg} \ar[d]^{(d f)^*} \\
\Lmod{\dd_{X'}}_{\reg} \ar[rr]_{\Psi_{X'/Y'}} & & \Lmod{\dd_{\NN_{X'/Y'}}}_{\reg}. 
}
$$
\end{lem}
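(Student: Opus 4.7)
The plan is to prove this by constructing the canonical $V$-filtration on $f^*\mm$ from that on $\mm$ by pullback, and then invoking the uniqueness in Proposition \ref{thm:vanishing}. Since $f$ is smooth, the pullback functor $f^*$ is exact and preserves the subcategory of regular holonomic modules, so both sides of the diagram are well-defined on $\Lmod{\dd_X}_{\reg}$. The key geometric input is that smoothness of $f$, together with $Y' = f^{-1}(Y)$, implies that $\mc{I}_{Y'} = f^{-1}(\mc{I}_Y)\cdot\mc{O}_{X'}$, so $\mc{I}_{Y'}^k/\mc{I}_{Y'}^{k+1} = (f|_{Y'})^*(\mc{I}_Y^k/\mc{I}_Y^{k+1})$. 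In particular, $\NN_{X'/Y'}$ is identified with the pullback bundle $(f|_{Y'})^*\NN_{X/Y}$, and the Euler vector field $\eu'$ on $\NN_{X'/Y'}$ is the pullback of $\eu$ on $\NN_{X/Y}$.

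First, I would verify that $f^*$ is compatible with the $V$-filtrations on the operator sheaves, i.e. that
\[
V_k^{Y'}\dd_{X'} \;=\; \mc{O}_{X'}\cdot f^{-1}(V_k^Y\dd_X) \;+\; V_{-1}^{Y'}\dd_{X'}
\]
(locally in a set of coordinates adapted to $Y'$ and compatible with $f$). This follows from the coordinate description of $V_k$ as operators shifting the $\mc{I}$-adic filtration by at most $k$, together with smoothness, which lets one extend any local coordinate system on $X$ adapted to $Y$ to one on $X'$ adapted to $Y'$ by adjoining coordinates in the relative directions. In particular, a local lift $\theta\in V_0^Y\dd_X$ of $\eu$ pulls back to an element $\theta'\in V_0^{Y'}\dd_{X'}$ that is a lift of $\eu'$.

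Next, given $\mm\in\Lmod{\dd_X}_{\reg}$ specializable along $Y$, with canonical $V$-filtration $V_\idot^Y\mm$ and polynomial $b$ (with $b^{-1}(0)\subset C$) such that $b(\theta+k)\cdot\gr^Y_k\mm=0$, I would define
\[
U_k \;:=\; \mc{O}_{X'}\otimes_{f^{-1}\mc{O}_X}f^{-1}(V_k^Y\mm) \;\subset\; f^*\mm .
\]
Since $f$ is smooth, $f^*$ is exact, so $U_\idot$ is an exhaustive, increasing filtration on $f^*\mm$, each $U_k$ is coherent over $V_0^{Y'}\dd_{X'}$ (using the filtration compatibility established in the previous paragraph), and $V_j^{Y'}\dd_{X'}\cdot U_k = U_{k+j}$ for $|j|$ large enough in the appropriate sense, i.e. $U_\idot$ is a $Y'$-good filtration. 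Moreover $\gr^{U}_k(f^*\mm)=(f|_{Y'})^*\gr^Y_k\mm$, and because $\theta' = f^*\theta$ modulo $V_{-1}^{Y'}\dd_{X'}$, we get $b(\theta'+k)\cdot\gr^U_k(f^*\mm)=0$ with the same polynomial $b$.

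By the uniqueness clause of Proposition \ref{thm:vanishing}, this forces $U_\idot = V_\idot^{Y'}f^*\mm$, and hence
\[
\rho'_\idot \Psi_{X'/Y'}(f^*\mm) \;=\; \gr^{Y'}(f^*\mm) \;=\; (f|_{Y'})^*\gr^Y\mm \;=\; (f|_{Y'})^*\rho_\idot \Psi_{X/Y}(\mm),
\]
which, after unwinding the identification $\NN_{X'/Y'}=(f|_{Y'})^*\NN_{X/Y}$ and the base change $(f|_{Y'})^*\rho_\idot = \rho'_\idot(df)^*$, yields $\Psi_{X'/Y'}(f^*\mm)=(df)^*\Psi_{X/Y}(\mm)$ naturally in $\mm$. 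The main technical obstacle is the first step: verifying cleanly that $f^*$ carries the $V^Y$-filtration on $\dd_X$ to the $V^{Y'}$-filtration on $\dd_{X'}$, and in particular that the defining polynomial $b$ is preserved under pullback of $\theta$; everything else is formal manipulation of filtrations and the uniqueness part of Kashiwara's theorem.
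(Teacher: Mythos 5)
The paper does not actually prove this lemma: it simply cites Laurent--Malgrange~\cite[Th\'eor\`eme 9.3.1]{CyclesProches} and notes that the result there (stated for $\dd_X^V$-modules and $f^!$) adapts to $\dd_X$-modules and $f^*$ because $f$ is smooth, so $f^!$ and $f^*$ differ only by a shift. You instead supply a self-contained argument via the uniqueness clause of Kashiwara's theorem (Proposition~\ref{thm:vanishing}). This is exactly the strategy used in {\em loc.\ cit.}, so in spirit the routes agree; yours is more explicit.

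There is, however, one genuine flaw in the write-up: the displayed identity
$V_k^{Y'}\dd_{X'} = \mc{O}_{X'}\cdot f^{-1}(V_k^Y\dd_X) + V_{-1}^{Y'}\dd_{X'}$
is false whenever $f$ has positive relative dimension. Concretely, take $X=\C$ with coordinate $x$, $Y=\{0\}$, $X'=\C^2$ with coordinates $(x,u)$, $f(x,u)=x$, so $Y'=\{x=0\}$. The relative vector field $\partial_u$ lies in $V_0^{Y'}\dd_{X'}$ (it preserves $(x)^r$), but it is not in $\mc{O}_{X'}\cdot f^{-1}(V_0^Y\dd_X)$ (which involves only derivations in the $x$-direction), and it is not in $V_{-1}^{Y'}\dd_{X'}$ either, since $\partial_u(u)=1\notin(x)$. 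The correct statement is that $V_k^{Y'}\dd_{X'}$ is generated by $\mc{O}_{X'}$, the relative differential operators $\dd_{X'/X}$ (which sit in $V_0^{Y'}$ because the fibers of $f$ through points of $Y'$ lie in $Y'$), and local lifts of $f^{-1}(V_k^Y\dd_X)$. Your coordinate justification actually describes this correct picture; only the formula is off. None of the downstream steps actually rely on the wrong formula: the filtration $U_\idot$ you define is preserved by $\dd_{X'/X}$ since relative vector fields act through the first tensor factor, and the coherence, $Y'$-goodness, and $b$-function checks all go through once the compatibility is stated correctly. So the argument is salvageable, but the formula as written must be repaired.
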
 

\begin{proof}
The result, as stated in \cite[Th\'eor\`eme 9.3.1]{CyclesProches}, is
for $\dd_{X}^V$-modules, where $\dd^V_{X}$ is the completion of $\dd_X$ with respect to the $V$-filtration, and involves $f^!$. However, as
noted in the final paragraph of subsection (9.4) of \textit{loc. cit.},
the result is also valid for $\dd_X$-modules and the functors  $f^!$ and $f^*$ only differ by a shift, since
$f$ is a smooth morphism. 
\end{proof}

Assume that $G$ acts on $X$ and $Y$ is $G$-stable. The category of $(G,\chi)$-monodromic $\dd$-modules on $X$, resp. on $\NN_{X/Y}$, that have regular singularities is denote $\mon{\dd_{X}, G,\chi}_{\mathrm{reg}}$, resp. $\mon{\dd_{\NN_{X/Y}}, G,\chi}_{\mathrm{\reg}}$.

\begin{lem}\label{lem:specialmono}
Specialization restricts to an exact functor 
$$
\mon{\dd_{X}, G,\chi}_{\mathrm{reg}} \to \mon{\dd_{\NN_{X/Y}}, G,\chi}_{\mathrm{\reg}},
$$
with image in the subcategory of $\Cs$-monodromic modules. 

An analogous statement also holds for $(G,\bq)$-monodromic modules.
\end{lem}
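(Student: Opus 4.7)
The plan is to break the assertion into four ingredients: (a) $\Psi_{X/Y}$ is defined on $\mon{\dd_X,G,\chi}_{\reg}$ and preserves regular holonomicity; (b) it is exact; (c) the image inherits a $(G,\chi)$-monodromic structure; (d) that structure is $\Cs$-monodromic. Points (a), (b), and (d) are immediate from \S\ref{sec:specialA}: (a) is the Kashiwara--Kawai theorem, (b) is Kashiwara's statement following Proposition \ref{thm:vanishing}, and (d) is noted there as well, since the $b$-function conclusion of Proposition \ref{thm:vanishing} forces $\eu$ to act locally finitely on $\Psi_{X/Y}(\mm)$ with eigenvalues in $\Z+C$, so the action integrates to an algebraic $\Cs$-action commuting with the $\dd_{\NN_{X/Y}}$-action.

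The heart of the argument is (c). Let $a\colon G\times X\to X$ be the action map and $p\colon G\times X\to X$ the projection; both are smooth, and $G$-stability of $Y$ gives $a^{-1}(Y)=p^{-1}(Y)=G\times Y$. Under the canonical identification $\NN_{G\times X/G\times Y}=G\times\NN_{X/Y}$, the differential $da$ becomes the action map $a_N\colon G\times\NN_{X/Y}\to\NN_{X/Y}$. Lemma \ref{lem:etalespecial} applied to $a$ then yields $\Psi_{G\times X/G\times Y}(a^*\mm)\simeq a_N^*\Psi_{X/Y}(\mm)$. Next, I would establish the auxiliary box-product identity
\[
\Psi_{G\times X/G\times Y}(\mc{O}_G^{\chi}\boxtimes\ms{N})\simeq\mc{O}_G^{\chi}\boxtimes\Psi_{X/Y}(\ms{N})
\]
for every specializable $\dd_X$-module $\ms{N}$. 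This is verified by invoking the uniqueness clause of Proposition \ref{thm:vanishing}: because $\mc{O}_G^{\chi}$ is a rank-one local system on the $G$-factor and the ideal sheaf of $G\times Y$ in $G\times X$ is $\mc{O}_G\boxtimes\mc{I}_Y$, the candidate filtration $\mc{O}_G^{\chi}\boxtimes V_\bullet^Y\ms{N}$ is $V_0^{G\times Y}\dd_{G\times X}$-coherent and exhausting, and its graded pieces are annihilated by the same $b$-polynomial as those of $\ms{N}$; by uniqueness it must coincide with $V_\bullet^{G\times Y}(\mc{O}_G^{\chi}\boxtimes\ms{N})$. Applying $\Psi_{G\times X/G\times Y}$ to the chosen isomorphism $\theta\colon\mc{O}_G^{\chi}\boxtimes\mm\iso a^*\mm$ and combining the two identifications produces the desired isomorphism $\Psi(\theta)\colon\mc{O}_G^{\chi}\boxtimes\Psi_{X/Y}(\mm)\iso a_N^*\Psi_{X/Y}(\mm)$.

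The cocycle condition \eqref{eq:cocycle} for $\Psi(\theta)$ is then inherited by applying $\Psi_{G\times G\times X/G\times G\times Y}$ to the cocycle diagram for $\theta$ and invoking the multi-variable versions of Lemma \ref{lem:etalespecial} and the box-product identity at each corner; functoriality of $\Psi$ transports commutativity from $\mm$ to $\Psi_{X/Y}(\mm)$. For the weak $(G,\bq)$-monodromic version, the same construction applies, now using only the existence of $\theta$ rather than a fixed rigidification, and Proposition \ref{prop:monoimpliesweakG} permits one to reduce to the rigidified case when convenient.

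The principal technical obstacle I anticipate is the box-product identity above: it is a global statement extracted from the locally-posed hypotheses of Proposition \ref{thm:vanishing}. The resolution is that once the proposition is applicable, its uniqueness clause upgrades any globally defined candidate filtration that satisfies the defining properties into \emph{the} $V$-filtration, so the entire identification reduces to a short local check that $\mc{O}_G^{\chi}\boxtimes V_\bullet^Y\ms{N}$ meets those defining properties.
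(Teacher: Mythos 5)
Your proof is correct and takes essentially the same route as the paper: specialize along $G\times Y\subset G\times X$, use Lemma \ref{lem:etalespecial} to identify $\Psi'(a^*\mm)$ with $(da)^*\Psi_{X/Y}(\mm)$, invoke uniqueness of the $V$-filtration to obtain the box-product identity $\Psi'(\mc{O}_G^{\chi}\boxtimes\mm)\simeq\mc{O}_G^{\chi}\boxtimes\Psi_{X/Y}(\mm)$, apply $\Psi'$ to $\theta$, and treat the cocycle diagram in the same way. Your write-up fills in the check behind the box-product identity (that the candidate filtration $\mc{O}_G^{\chi}\boxtimes V^Y_{\idot}\ms{N}$ satisfies the defining properties of the $V$-filtration on $G\times X$), which the paper leaves implicit, but the structure of the argument is identical.
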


\begin{proof}
Write $a : G \times X \rightarrow X$ and $d a : G \times \NN_{X/ Y} \rightarrow \NN_{X/Y}$ for the action maps. Let $\mm \in \mon{\dd_{\X}, G,\chi}_{\mathrm{reg}}$. Let $\theta : \mc{O}_G^{\chi} \boxtimes \mm \iso a^* \mm$ be the isomorphism defining the monodromic structure on $\mm$. Let $\Psi'$ be the specialization functor with respect to $G \times Y \subset G \times X$. The uniqueness of the $V$-filtration implies that 
$$
\Psi'(\theta) : \Psi'(\mc{O}_G^{\chi} \boxtimes \mm) = \mc{O}_G^{\chi} \boxtimes \Psi_{X /Y}(\mm) \to \Psi'(a^* \mm)
$$
is an isomorphism. Since the action map $a$ is a smooth morphism, Lemma \ref{lem:etalespecial} implies that $\Psi' (a^* \mm) = (d a)^* \Psi_{X / Y} (\mm)$.

Arguing in the same way, one can also show that $\Psi(\theta)$ also satisfies the cocycle condition.  
\end{proof}

Clearly, there is an analogue of Lemma \ref{lem:specialmono} for
$(G,\bq)$-monodromic modules. 

We returning to the setting of section \ref{sub:near1}.
So, let $H \subset G$ be a Levi subgroup and $Y$ a smooth, affine $H$-variety. We set $X = G \times_H Y$. Assume that $Z \subset Y$ is a smooth, closed, $H$-stable subvariety. Let $Z' = G \times_H Z \hookrightarrow X$, then one can identify $\NN_{X/Z'} = G \times_H \NN_{Y/Z}$. 

\begin{lem}\label{lem:commutePsi2}
The following diagram commutes
$$
\xymatrix{
\mon{\dd_{X}, G,\chi}_{\mathrm{reg}} \ar[rr]^{\Psi_{X/Z'}} \ar[d]_{\Upsilon^*} & & \mon{\dd_{\NN_{X/Z'}}, G,\chi}_{\mathrm{\reg}} \ar[d]^{(d \Upsilon)^*} \\
\mon{\dd_{Y}, H,\chi |_{\h}}_{\mathrm{reg}} \ar[rr]_{\Psi_{Y/Z}} & & \mon{\dd_{\NN_{Y/Z}}, H,\chi |_{\h}}_{\mathrm{\reg}}.
}
$$
\end{lem}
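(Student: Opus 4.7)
The plan is to reduce the claim to a combination of Lemma~\ref{lem:etalespecial} and the monodromic structure theorem, via the factorization $\Upsilon = \pi \ccirc j$, where $\pi \colon G \times Y \twoheadrightarrow X = G \times_H Y$ is the (smooth) quotient by the free $H$-action $(g,y)\cdot h = (gh, h^{-1}y)$, and $j \colon Y \hookrightarrow G \times Y$ sends $y \mapsto (e,y)$. Correspondingly, $d\Upsilon = d\pi \ccirc dj$, with $d\pi \colon G \times \NN_{Y/Z} \twoheadrightarrow G \times_H \NN_{Y/Z} = \NN_{X/Z'}$ the quotient for the induced $H$-action.

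First, since $\pi$ is smooth and $\pi^{-1}(Z') = G \times Z$, Lemma~\ref{lem:etalespecial} gives a canonical isomorphism
$$
\Psi_{(G\times Y)/(G\times Z)}(\pi^* \mm)\ \iso\ (d\pi)^* \Psi_{X/Z'}(\mm).
$$
On the other hand, as in the proof of Proposition~\ref{prop:monoequiv}(i), for any $(G,\chi)$-monodromic $\mm$ on $X$ there is a canonical isomorphism $\pi^* \mm \simeq \mc{O}_G^\chi \boxtimes \Upsilon^* \mm$. Thus I am reduced to establishing the compatibility
\begin{equation}\label{eq:planbox}
\Psi_{(G\times Y)/(G\times Z)}(\mc{O}_G^\chi \boxtimes \ms{N})\ \simeq\ \mc{O}_G^\chi \boxtimes \Psi_{Y/Z}(\ms{N})
\end{equation}
for any regular holonomic $\ms{N}$ on $Y$.

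To prove \eqref{eq:planbox}, I will invoke the uniqueness part of Proposition~\ref{thm:vanishing}. The ideal sheaf of $G \times Z$ in $G \times Y$ is $\mathrm{pr}_Y^{-1}(\mc{I}_Z)$, so the $V$-filtration \eqref{eq:Vfiltr} on $\dd_{G\times Y}$ with respect to $G\times Z$ is the external product $\dd_G \boxtimes V^Z_{\idot} \dd_Y$. Consequently, the filtration $\{\mc{O}_G^\chi \boxtimes V^Z_k \ms{N}\}_{k\in \Z}$ on $\mc{O}_G^\chi \boxtimes \ms{N}$ is $(G\times Z)$-good; taking for the Euler lift the operator $1 \boxtimes \theta_Y$, the $b$-function of the $V$-filtration of $\ms{N}$ annihilates the associated graded in the required way. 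Hence this is the $V$-filtration on $\mc{O}_G^\chi \boxtimes \ms{N}$, proving \eqref{eq:planbox}.

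Combining the above,
$$
(d\pi)^* \Psi_{X/Z'}(\mm)\ \simeq\ \mc{O}_G^\chi \boxtimes \Psi_{Y/Z}(\Upsilon^* \mm).
$$
Applying $(dj)^*$, and using $(dj)^*\bigl(\mc{O}_G^\chi \boxtimes \mathcal{F}\bigr) \simeq \mathcal{F}$ (since $\mc{O}_G^\chi$ restricts to a canonical trivial line at $e$) together with $(d\Upsilon)^* = (dj)^* \ccirc (d\pi)^*$, yields $(d\Upsilon)^* \Psi_{X/Z'}(\mm) \simeq \Psi_{Y/Z}(\Upsilon^* \mm)$, as required. The only non-formal point is the verification of \eqref{eq:planbox}: Lemma~\ref{lem:etalespecial} does not directly handle the twist $\mc{O}_G^\chi$ along the $G$-factor, so one must recognize the obvious candidate filtration and appeal to the uniqueness clause of Proposition~\ref{thm:vanishing} to conclude.
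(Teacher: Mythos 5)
Your proof is correct and follows essentially the same route as the paper's: factor $\Upsilon = \pi\ccirc j$, apply Lemma~\ref{lem:etalespecial} to the smooth map $\pi$, use Proposition~\ref{prop:monoequiv} to rewrite $\pi^*\mm$ as $\mc{O}_G^\chi\boxtimes\Upsilon^*\mm$, show $\Psi'$ commutes with $\mc{O}_G^\chi\boxtimes(-)$ via the uniqueness clause of Proposition~\ref{thm:vanishing}, and then apply $(dj)^*$. The only cosmetic difference is that you spell out the external-product $V$-filtration argument explicitly, whereas the paper delegates it to "as in the proof of Lemma~\ref{lem:specialmono}".
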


\begin{proof}
Let $j : Y \hookrightarrow G \times Y$ and $\pi : G \times Y \to X$ so
that $\Upsilon = j \circ \pi$. Since the map $\pi$ is smooth, Lemma
\ref{lem:etalespecial} implies that $\Psi' \circ \pi^* = (d \pi)^* \circ
\Psi_{X/Z'}$, where $\Psi'$ is specialization with respect to
$\pi^{-1}(Z') = G \times Z \subset G \times Y$. The proof of Proposition
\ref{prop:monoequiv} shows that $\pi^* \mm \simeq \mc{O}_G^{\chi}
\boxtimes (\Upsilon^* \mm)$, for $\mm \in \mon{\dd_{X},
  G,\chi}_{\mathrm{reg}}$. Then, as in the proof of Lemma
\ref{lem:specialmono}, we have $\Psi'(\mc{O}_G^{\chi} \boxtimes
(\Upsilon^* \mm)) = \mc{O}_G^{\chi} \boxtimes \Psi_{Y/Z}(\Upsilon^*
\mm)$. 
We have:  $d \Upsilon = (d j) \circ (d \pi)$ where the map $d j :
\NN_{Y/Z} \hookrightarrow G \times \NN_{Y/Z}$ is  given by $(dj)(v) =
(e,v)$. Thus, we find
\begin{align*}
&(dj)^* \Psi'(\mc{O}_G^{\chi} \boxtimes (\Upsilon^* \mm)) \ = \  (dj)^* [ \mc{O}_G^{\chi} \boxtimes \Psi_{Y/Z}(\Upsilon^* \mm) ] \  = \  \Psi_{Y/Z}(\Upsilon^* \mm).\\
&(d \Upsilon)^* \Psi_{X/Z'}(\mm) \  = \  (d j)^* \Psi'(\mc{O}_G^{\chi} \boxtimes (\Upsilon^* \mm))  \ = \  \Psi_{Y/Z}(\Upsilon^* \mm),
\end{align*}
and the commutativity of the diagram of the lemma follows.
\end{proof}

\subsection{Specialization for mirabolic modules}\label{jj2}

Let $L \subset G$ be a Levi subgroup and $W_L\sset W$
an associated  parabolic subgroup. Let $\TSL_L$, resp. $\TSL^{\circ}$, be the set of all points in $\TSL$ whose stabilizer equals $W_L$ resp. is conjugate to a subgroup of $W_L$. Thus, $\TSL_L$ is a closed subset of $\TSL^{\circ}$. 

The normalizer of $W_L$ in $W$ is denoted $N_L$. It maps $T_L$ into itself. Let $\fc$, resp. $\mggW$, denote the images of $\TSL_L$, resp. $\TSL^{\circ}$, under the quotient map. Thus, $\fc$ is a closed subset of $\mggW$.

We refer the reader  to table \eqref{not} for the notation used in this section. 
The following varieties will play an important role below:
$$\Y:=\SSL \cdot (Z(\LSL)^{\circ} \times \{0\})
\cong \SSL \times_{N_{\SSL}(\LSL)} (Z(\LSL)^{\circ} \times \{0\}),\quad
\X^\circ:=\SSL \cdot \X^\circ_L\cong \SSL \times_{N_{\SSL}(\LSL)}\X^\circ_L.
$$
Thus, $\Y=\X(L,\Omega)$ is a $G$-stable locally closed
stratum associated with the $L$-orbit through the point
$\{1\}\times\{0\}\in \LSL\times V$, c.f. \S\ref{stratdef}.

Let $\varpi$ denote the composite map $\X=\SSL\times V \to \SSL\to
\SSL /\!/G\cong\TSL / W$.
The group $G = GL(V)$ acts on $\X$ and
we   let   $\X/\!/G$ 
denote the categorical quotient.
By \cite[Lemma 3.2.1]{CherednikCurves}, 
the map $\varpi$ induces an isomorphism $\X/\!/G\iso \TSL/W$.
Thus, one has a diagram
\[\X\stackrel{\varpi}\longrightarrow  \X/\!/G=\TSL/W \stackrel{\pi}{\longleftarrow} \TSL.\]
We observe that $\X^{\circ}= \varpi^{-1}(\TSL^{\circ} /W)$, and $\Y$  is \textit{properly} contained in $\varpi^{-1}(\fc)$. So, there is a commutative diagram
\beq{eq:bigmapdiag}
\xymatrix{
\mc{Y}\  \ar@{^{(}->}[rrr] \ar[dr]^{\varpi} && & \  \Xo \  \ar[dr]^{\varpi}& \\
& \fc \   \ar@{^{(}->}[rrr] && & \  \mggW \  \\
Z(\LSL)^{\circ} \  \ar@{^{(}->}[rrr] \ar@{^{(}->}[uu] \ar[ur]_{\varpi} && &\   \X^{\circ}_{\LSL} \  \ar@{^{(}->}[uu]|!{[u];;[u]}\hole \ar[ur]_{\varpi} & 
}
\eeq

\begin{rem}
The sets $\Xo$ and $\mggW$ are \textit{not} affine. However, the sets
$\Y$ and $Z(\LSL)^\circ$ are affine. If we decompose $\g = \fl \oplus
\mf{p}$ as a $Z(\LSL)$-module and further decompose $\mf{p} =
\bigoplus_{\alpha} \mf{p}_{\alpha}$, for appropriate characters $\alpha
: Z(\LSL) \rightarrow \C^\times$, then $Z(\LSL)^\circ = \{ z \ | \
\prod_{\alpha} (\alpha(z) - 1) \neq 0 \}$ is affine. The group $N_{\SSL}(\LSL)$ being reductive, it follows from Lemma \ref{lem:Giso}
below that $\Y$ is also affine.
\end{rem}


We will use simplified notation $\NN_L$ for
the normal bundle to $Z(\LSL)^{\circ}$ in $\X^{\circ}_{\LSL}$ by $\NN_L$. Explicitly, $\NN_L = Z(\LSL)^{\circ} \times (\fl'\times V)$, is a (trivial) vector bundle over $Z(\LSL)^{\circ}$. The group $N_G(L)$ acts on $\NN_L$, making it a $N_G(L)$-vector bundle. 
The following result is clear.

\begin{lem}\label{lem:Giso} 
The following diagram commutes
$$
\xymatrix{\NN_L\  \ar@{^{(}->}[rr]^{\bi} \ar[d] &&\  
\SSL \times_{N_{\SSL}(\LSL)} \NN_L\    \ar[d]
\ar@{=}[rr]\ar[d] &&\  
 \NN_{\X/\Y} \    \ar[d]\\
 Z(\LSL)^{\circ}\   \ar@{^{(}->}[rr] &&\  \SSL \times_{N_{\SSL}(\LSL)} (Z(\LSL)^{\circ} \times \{0\})\   \ar@{=}[rr] && \  \Y
}
$$
\end{lem}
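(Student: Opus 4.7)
The plan is as follows. The bottom row of the diagram, namely the isomorphism $\SSL \times_{N_{\SSL}(\LSL)} (Z(\LSL)^{\circ} \times \{0\}) \iso \Y$, is recorded as the definition of $\Y$ immediately above the statement of the lemma, and the leftmost vertical inclusion is simply the fibre over the coset of $e$. So the only substantive claim is the top-right identification $\SSL \times_{N_{\SSL}(\LSL)} \NN_L \iso \NN_{\X/\Y}$, together with the compatibility of the inclusions and projections.

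For the identification of the normal bundle, I would proceed in three steps. First, since $\Xo$ is Zariski open in $\X$ and contains $\Y$ (by Lemma \ref{lem:stratainfo}, which identifies $\Y \cap \Xo_{\LSL}$ with $Z(\LSL)^\circ \times \{0\}$), one has $\NN_{\X/\Y} = \NN_{\Xo/\Y}$. Second, using the $\SSL$-equivariant isomorphism $\Xo \cong \SSL \times_{N_{\SSL}(\LSL)} \Xo_{\LSL}$, a consequence of Lemma \ref{lem:strongetale}, under which $\Y$ corresponds to the closed $N_{\SSL}(\LSL)$-stable subvariety $Z(\LSL)^\circ \times \{0\}$ of $\Xo_{\LSL}$, I would invoke the standard fact that the normal bundle construction commutes with the homogeneous fibre bundle construction to obtain
$$
\NN_{\X/\Y} \cong \SSL \times_{N_{\SSL}(\LSL)} \NN_{\Xo_{\LSL}/(Z(\LSL)^\circ \times \{0\})}.
$$
Third, I would directly compute the right-hand normal bundle: at a point $(z,0) \in Z(\LSL)^\circ \times \{0\}$ the tangent space to $\Xo_{\LSL} = \LSL^\circ \times V$ is $\fl \oplus V$, while the tangent space to $Z(\LSL)^\circ \times \{0\}$ is $\fz_{\fl}$. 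Since $\fl$ is reductive, the canonical decomposition $\fl = \fz_{\fl} \oplus \fl'$ gives an $N_{\SSL}(\LSL)$-equivariant identification of the quotient with $\fl' \oplus V$, and hence of the whole normal bundle with $\NN_L = Z(\LSL)^\circ \times (\fl' \times V)$.

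The commutativity of the two squares then follows tautologically from the construction: the vertical arrows are the bundle projections transported through $\SSL \times_{N_{\SSL}(\LSL)}(-)$, and the horizontal arrows are the inclusions of the fibre over the coset of the identity. There is no real obstacle here---this is why the authors call the result ``clear''---beyond checking that the canonical decomposition $\fl = \fz_{\fl} \oplus \fl'$ is $N_{\SSL}(\LSL)$-stable, which is automatic from its intrinsic characterization in terms of the reductive Lie algebra~$\fl$.
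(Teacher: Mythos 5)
The paper itself offers no proof, remarking only that the lemma ``is clear,'' so there is nothing to compare against; your argument supplies the details in the natural way and is correct. The three steps — (i) replacing $\X$ by its open subset $\Xo$ without changing the normal bundle, (ii) transporting the pair $\Y \subset \Xo$ through the isomorphism $\Xo \cong \SSL \times_{N_{\SSL}(\LSL)} \Xo_{\LSL}$ to the pair $Z(\LSL)^\circ\times\{0\} \subset \Xo_{\LSL}$ and using that formation of normal bundles commutes with the associated-bundle construction $\SSL\times_{N_{\SSL}(\LSL)}(-)$, and (iii) computing the fibre $\fl'\oplus V$ via the intrinsic (hence $N_{\SSL}(\LSL)$-stable) decomposition $\fl = \fz_\fl \oplus \fl'$ — do exactly what is needed, and the commutativity of the two squares then is formal.

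One small inaccuracy: you cite Lemma~\ref{lem:stratainfo} as ``identifying $\Y\cap\Xo_{\LSL}$ with $Z(\LSL)^\circ\times\{0\}$,'' but that lemma states $\ol{\X(\LSL)}\cap\Xo_{\LSL} = Z(\LSL)^\circ\times \mn^{\LSL}\times V$, a larger set; the equality you want follows from the argument inside its proof (the step showing that $g\cdot(z,0)\in\Xo_{\LSL}$ with $z\in Z(\LSL)^\circ$ forces $g\in N_{\SSL}(\LSL)$), not from its statement. In any case the citation is unnecessary at that point: the inclusion $\Y\subset\Xo$ is immediate from $\Y = \SSL\cdot(Z(\LSL)^\circ\times\{0\}) \subset \SSL\cdot\Xo_{\LSL} = \Xo$, and the identification of $\Y$ as $\SSL\times_{N_{\SSL}(\LSL)}(Z(\LSL)^\circ\times\{0\})$ is taken as given in \S\ref{jj2}.
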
 

Let $\Lambda=\NN^*_L$ be (the total space of) the
conormal to $Z(\LSL)^{\circ}$ in $\X^{\circ}_{\LSL}$.
In general, there are canonical isomorphisms
\beq{vphi} T^*(\NN_L) \cong T^*(\NN^*_L) \cong T_{T^*(\X^{\circ}_{\LSL})/\Lambda},
\eeq
 cf. sections 5 and 7 of  \cite{CharCycles}.
In our case, we have $\Lambda =\NN^*_L\cong  Z(\LSL)^{\circ} \times (\mf{l}')^* \times V^*$ and
each of the varieties in \eqref{vphi} is naturally isomorphic to 
$T^*(Z(\LSL)^{\circ}) \times  \mf{l}' \times \mf{l}' \times V \times V^*$.

Recall that, for any  algebraic cycle $S\subset T^* \Xo_{\LSL}$, there is an algebraic cycle
 $C_{\Lambda}(S)$ in $\NN_{T^*(\X^{\circ}_{\LSL})/\Lambda}$, the normal cone of $S$ at $\Lambda$.
It is defined as follows. Let $I_{\Lambda}$ and $I_S$ be the ideals in $\C[T^* \Xo_{\LSL}]$
 defining $\Lambda$ and $S$, respectively.
The $I_{\Lambda}$-adic filtration on
the algebra $\C[T^* \Xo_{\LSL}]$ induces a filtration
$I_S\cap I_{\Lambda}^k,\ k\geq 0$, on $I_S$.
Let 
\[\gr_\La\C[T^* \Xo_{\LSL}]:=\oplus_{k\geq 0}\ I_{\Lambda}^k/
I_{\Lambda}^{k+1},
\quad\text{resp.}\quad
\gr_\La (I_S):=\oplus_{k\geq 0}\ (I_S\cap I_{\Lambda}^k)/(I_S\cap
I_{\Lambda}^{k+1})
\]
be  associated graded spaces. There is a canonical graded algebra isomorphism
$\C[\NN_{T^*(\X^{\circ}_{\LSL})/\Lambda}]=\gr_\La\C[T^* \Xo_{\LSL}]$.
Using this isomorphism one can identify $\gr_\La (I_S)$
with a homogeneous ideal of the algebra $\C[\NN_{T^*(\X^{\circ}_{\LSL})/\Lambda}]$.

One defines the cycle $C_{\Lambda}(S)$ as the support cycle
of $\C[\NN_{T^*(\X^{\circ}_{\LSL})/\Lambda}]/\gr_\La (I_S)$,
viewed as
an $\C[\NN_{T^*(\X^{\circ}_{\LSL})/\Lambda}]$-module.
We let $\varphi^*C_{\Lambda}(S)$ be the pull-back of the cycle $C_{\Lambda}(S)$
via the composite isomorphism $\varphi:\ 
T^*(\NN_L) \iso T_{T^*(\X^{\circ}_{\LSL})/\Lambda}$
in \eqref{vphi}.


\begin{lem}\label{lem:inclusion} One has a set-theoretic inclusion
 $\varphi^* C_{\Lambda}(\Nnil(\LSL^{\circ})) \subset \zero \times \Nnil(\mf{l}')$.
\end{lem}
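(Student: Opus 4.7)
The plan is a direct local computation of the normal cone. First I would set up coordinates in a neighborhood of $Z(\LSL)^\circ$ in $\LSL^\circ$: the map $(z,\xi) \mapsto z\exp(\xi)$, for $z \in Z(\LSL)^\circ$ and $\xi$ in a small neighborhood of $0 \in \mf{l}'$, gives an \'etale chart, and together with the decomposition $\mf{l} = \mf{z}_{\mf{l}} \oplus \mf{l}'$ (writing $Y = Y_z + Y'$) this provides local coordinates $(z,\xi,Y_z,Y',i,j)$ on $T^*\X^\circ_{\LSL}$ in which $\Lambda = \{\xi=0,\ Y_z=0,\ i=0\}$. The ideal $I_\Lambda$ is therefore generated locally by $\xi$, $Y_z$ and $i$, to each of which the $I_\Lambda$-adic grading assigns degree $1$; the coordinates $z$, $Y'$, $j$ on $\Lambda$ carry degree $0$. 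Under the canonical symplectic identification $\varphi$, the degree-$1$ coordinates $(\xi,i)$ become the fiber coordinates of $\NN_L \to Z(\LSL)^\circ$, $Y_z$ becomes the cotangent coordinate along $Z(\LSL)^\circ$ (so the vanishing $Y_z=0$ corresponds to $\zero$), and $(Y',j)$ become the cotangent fibers in $T^*(\mf{l}'\times V)$.

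Next I would expand the defining equations of $\Nnil(\LSL^\circ)$ in these coordinates. Since $\mf{z}_{\mf{l}}$ consists of semisimple matrices commuting with $\mf{l}'$, Jordan decomposition gives that $Y = Y_z + Y'$ is nilpotent iff $Y_z = 0$ and $Y' \in \mc{N}(\mf{l}')$. As $z \in Z(\LSL)$ acts trivially on $\mf{l}$ by conjugation and $\Ad_{\exp(\xi)}$ preserves the splitting $\mf{l} = \mf{z}_{\mf{l}} \oplus \mf{l}'$, the moment-map equation splits into its $\mf{z}_{\mf{l}}$- and $\mf{l}'$-components as
\[
\pi_{\mf{z}_{\mf{l}}}(i\otimes j) = 0, \qquad \Ad_{\exp(\xi)}(Y')-Y' + \pi_{\mf{l}'}(i\otimes j) = 0.
\]
Expanding $\Ad_{\exp(\xi)}(Y')-Y' = [\xi,Y'] + \tfrac12[\xi,[\xi,Y']] + \cdots$, the second equation has $I_\Lambda$-adic order exactly $1$ with leading symbol $[\xi,Y'] + \pi_{\mf{l}'}(i\otimes j)$, while $Y_z$ and $\pi_{\mf{z}_{\mf{l}}}(i\otimes j)$ are themselves already of pure degree $1$.

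Consequently, $\gr_\Lambda I_{\Nnil(\LSL^\circ)}$ contains the symbols $Y_z$, $\pi_{\mf{z}_{\mf{l}}}(i\otimes j)$, $[\xi,Y'] + \pi_{\mf{l}'}(i\otimes j)$, together with the degree-$0$ polynomials cutting out $\mc{N}(\mf{l}')$. Hence $C_\Lambda(\Nnil(\LSL^\circ))$ lies set-theoretically in the closed subset of $T^*\NN_L$ defined by these equations. Translating via $\varphi$: $Y_z = 0$ becomes the zero section $\zero$ of $T^*Z(\LSL)^\circ$, while $[\xi,Y']+\pi_{\mf{l}'}(i\otimes j) = 0$ together with $Y'\in\mc{N}(\mf{l}')$ are precisely the defining equations of $\Nnil(\mf{l}')\subset T^*(\mf{l}'\times V)$, namely the zero fiber of the moment map for the adjoint-times-standard $L'$-action on $\mf{l}'\times V$ intersected with the nilpotent cone. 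The additional constraint $\pi_{\mf{z}_{\mf{l}}}(i\otimes j)=0$ only tightens the inclusion, yielding $\varphi^*C_\Lambda(\Nnil(\LSL^\circ))\subseteq \zero\times\Nnil(\mf{l}')$.

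The main technical point to watch is confirming that the leading symbols listed above really lie in the associated-graded ideal and that no additional constraints are needed; since the claim is only set-theoretic, it suffices to produce enough generators of $\gr_\Lambda I_{\Nnil(\LSL^\circ)}$ to cut out the target locus, which is exactly what the moment-map expansion in $\xi$ provides.
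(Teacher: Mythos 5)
Your proof is correct and follows essentially the same strategy as the paper's: decompose the cotangent variable $Y = Y_z + Y'$ along $\mf{l}=\mf{z}_{\mf{l}}\oplus\mf{l}'$ so that nilpotency of $Y$ reduces to $Y_z=0$ and $Y'\in\mc{N}(\mf{l}')$, and linearize the moment-map equation at $\Lambda$ to identify its leading symbol $[\xi,Y']+i\circ j$. The only differences are presentational: you work in the \'etale chart $g=z\exp(\xi)$ and split the leading symbol into its $\mf{z}_{\mf{l}}$- and $\mf{l}'$-components, whereas the paper linearizes with $g=z(1+\epsilon X)$ and treats the symbol as a single $\mf{l}(\g)$-valued equation (so the constraint $\pi_{\mf{z}_{\mf{l}}}(i\otimes j)=0$ you describe as an ``additional'' tightening is in fact already part of the defining equations of $\Nnil(\mf{l}')$).
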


\begin{proof} 
The algebra $\C[\mf{l}]$ is a subalgebra of $\C[T^* \Xo_{\LSL}]$, via the projection $T^* \Xo_{\LSL} \to \mf{l}$. Since $\mf{l}$ decomposes as $\mf{l}' \oplus \mf{z}_{\mf{l}}$, the subspaces $\C[\mf{l}']^L_+$ and $\C[\mf{z}_{\mf{l}}]_+$ are contained in the ideal of definition of $\Nnil(\LSL^{\circ})$. We have $\C[\mf{l}']^L_+ \cap I_{\Lambda}^{k} = 0$ unless $k \le 0$ (in which case $I_{\Lambda}^{k} = \C[T^* \Xo_{\LSL}]$). Therefore the restriction of the $I_\Lambda$-adic filtration
on $\C[T^* \Xo_{\LSL}]$  to $\C[\mf{l}']^L_+$ is  the filtration whose degree $k$ part is  $(\C[\mf{l}']^L_+)_{\ge k}$, the sum of all homogeneous parts of degree at least $k$. Hence $\gr_\Lambda \C[\mf{l}']^L_+ = \C[\mf{l}']^L_+$. On the other hand, $\C[\mf{z}_{\mf{l}}]_+ \cap I_{\Lambda}^{k} = (\C[\mf{z}_{\mf{l}}]_+)^k$. Since $\C[\mf{z}_{\mf{l}}]$ is graded by putting $\mf{z}_{\mf{l}}^*$ in degree one, one can check that this implies that 
$$
V_k \C[\mf{z}_{\mf{l}}]_+ = (\C[\mf{z}_{\mf{l}}]_+)^k = \C[\mf{z}_{\mf{l}}]_{\ge k}.
$$
Again, we see that the associated graded of $\C[\mf{z}_{\mf{l}}]_+$ is naturally identified with itself.  

Since $Z(\LSL)^{\circ}$ is $L$-stable, the vector fields $\mu(\mf{l}(\g))$ are tangent to $Z(\LSL)^{\circ}$. Therefore, $\mu(\mf{l}(\g)) \subset I_\Lambda$ and hence the $\dim \mf{l}(\g)$ many functions defining the equation $g Y g^{-1} - Y + v \circ w$ belong to $I_\Lambda$. We calculate the image of these functions in $I_\Lambda/I_\Lambda^2$. Let $z \in Z(\LSL)^{\circ}$ and $(X,v) \in \mf{l}' \times V$, which is the fiber of $\NN_{Z}$ at $z$. Then,
$$
z (1 + \epsilon X) Y (1 - \epsilon X) z^{-1} - Y  + (\epsilon v) \circ w = \epsilon ([X,Y] + v \circ w). 
$$
Thus, the functions defining $[X,Y] + v \circ w$ belong to the associated graded of the ideal 
in $ \C[T^* \Xo_{\LSL}]$ generated by the functions defining $g Y g^{-1} - Y + v \circ w$. Hence, we have shown that $\varphi^* C_{\Lambda}(\Nnil(\LSL^{\circ}))$ is contained in closed subvariety of $T^* \NN_L$ defined by the vanishing of all functions in $\C[\mf{l}']^{L}_+$,\ $\C[\mf{z}_{\mf{l}}]_+$ and $[X,Y] + v \circ w$. This closed set is precisely $\zero \times \Nnil(\mf{l}')$.
\end{proof}

\subsection{} Mimicing \cite{GG}, we introduce the following

\begin{defn}\label{defn:ccZdefn}
Let $\ss_{\NN_L,c}$ be the abelian category of regular holonomic $\dd$-modules $\mm$ on $\NN_L$ that are $(N_G(L),c)$-monodromic, such that 
\begin{enumerate}
\item The action of $\mathsf{eu}_{\mf{l}'}$ on $\Gamma(\NN_L,\mm)$ is locally finite. 
\item $\Char(\mm) \subseteq \zero \times \Nnil(\mf{l}')$. 
\end{enumerate}
\end{defn}

The fact that $\mm \in \ss_{\NN_L,c}$ is $\Cs$-monodromic with respect to the $\Cs$-action on $\mf{l}' \subset \NN_L$ means that one can use the Fourier transform, as in \cite[Proposition 5.3.2]{GG}, to show that the action of $\sym (\mf{l}')^L_+ \subset \dd(\mf{l}')$ on $\Gamma(\NN_L,\mm)$ is locally nilpotent.

Associated with the subvariety $\Y \hookrightarrow \X^{\circ}$, is the $V$-filtration on $\dd_{\Xo}$. Since $\Y$ is $G$-stable, each piece $V_k \dd_{\Xo}$ of the $V$-filtration on $\dd_{\Xo}$ is a $G$-equivariant subsheaf. Hence $G$ acts on $\gr_V \dd_{\Xo}$, such that the identification $\gr_V \dd_{\Xo} \simeq
\dd_{\NN_{\X/\Y}}$ is $G$-equivariant. Thus, we have the exact specialization functor $\Psi_{\X / \Y}$, from the category of regular holonomic $\dd$-modules on $\X$, to the category of regular holonomic, $\Cs$-monodromic $\dd$-modules on $\NN_{\X/\Y}$. 

Denote by $\mon{\dd_{\X}, G,c}_{\mathrm{reg}}$, resp. $\mon{\dd_{\NN_{\X/\Y}}, G,c}_{\mathrm{\reg}}$, the category of regular holonomic, $(G,c)$-monodromic $\dd$-modules on $\X$, resp. on $\NN_{\X/\Y}$, that have regular singularities.  

We have the following
functors

\beq{eq:biequiv}
\xymatrix{
\mon{\dd_{\X}, G,c}_{\mathrm{reg}} \ar[rr]^<>(0.5){\Psi_{\X / \Y}} &&
\mon{\dd_{\NN_{\X/\Y}}, G,c}_{\mathrm{\reg}}
\ar[r]^<>(0.5){\bi^*}_<>(0.5){\cong} & \mon{\dd_{\NN_L}, N_G(L),c}_{\mathrm{reg}}.
}
\eeq
Here, the functor  $\Psi_{\X / \Y}$ is exact and its
 image is contained in the full subcategory of $\Cs$-monodromic modules, by
Lemma \ref{lem:specialmono}.
The second functor, $\bi^*$, is a  pull-back via the closed embedding $\bi$
in the diagram of Lemma \ref{lem:Giso}; this functor is an equivalence that preserves $\Cs$-monodromicity, thanks to Proposition \ref{prop:monoequiv}.

\begin{defn}
The \textit{specialization functor} $\sp_L$, from $\mon{\dd_{\X}, G,c}_{\mathrm{reg}}$ to the category of $\Cs$-monodromic modules in $\mon{\dd_{\NN_L}, N_G(L),c}_{\mathrm{reg}}$ is defined to be $\sp_L := \bi^* \circ \Psi_{\X/\Y}$.
\end{defn} 

\begin{prop}\label{prop:PsiLcatO}
The functor $\sp_L$ restricts to an exact functor $\ss_{c} \to \ss_{\NN_L,c}$. 
\end{prop}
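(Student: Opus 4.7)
The plan is to verify directly, using the description $\sp_L = \bi^* \circ \Psi_{\X/\Y}$, that each of the three defining conditions of $\ss_{\NN_L,c}$ is satisfied by $\sp_L(\mm)$ whenever $\mm\in\ss_c$, together with exactness. Exactness of $\sp_L$ is immediate: $\Psi_{\X/\Y}$ is exact on the category of regular holonomic $\dd$-modules by the Kashiwara--Kawai theorem quoted after Proposition \ref{thm:vanishing}, and $\bi^*$ is exact because, by Proposition \ref{prop:monoequiv} applied to $G = \SSL$ and $H = N_{\SSL}(L)$, it is (one half of) an equivalence on the appropriate monodromic categories. Regular holonomicity of $\sp_L(\mm)$ follows from the same two statements.

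For the $(N_G(L),c)$-monodromic structure, Lemma \ref{lem:specialmono} gives that $\Psi_{\X/\Y}(\mm)$ is $(G,c)$-monodromic on $\NN_{\X/\Y}$. Decomposing $G = \SSL\cdot Z(G)$, the $\SSL$-part is actually equivariant (since $(\mf{sl}/[\mf{sl},\mf{sl}])^*=0$) and can be transferred from $\NN_{\X/\Y}$ to $\NN_L$ via the equivalence $\bi^*$ of Proposition \ref{prop:monoequiv}, giving an $N_{\SSL}(L)$-equivariant structure on $\sp_L(\mm)$; the central $(\Cs,c)$-monodromic factor simply restricts to the $V$-direction in $\NN_L$. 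Combining these over $N_G(L) = N_{\SSL}(L)\cdot Z(G)$ produces the desired $(N_G(L),c)$-monodromic structure on $\sp_L(\mm)$.

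The remaining two conditions are the characteristic variety bound and local finiteness of $\eu_{\mf{l}'}$. For the former, the plan is to use a base change of specialization along the $N_{\SSL}(L)$-torsor $\pi: \SSL\times \X_L^\circ \twoheadrightarrow \X^\circ$, in the spirit of Lemma \ref{lem:commutePsi2} (with its abelianization hypothesis bypassed via the smooth pullback formula of Lemma \ref{lem:etalespecial} applied to $\pi$), to identify
\[
\sp_L(\mm)\ =\ \Psi_{\X_L^\circ/(Z(\LSL)^{\circ}\times\{0\})}\big(\Upsilon^*\mm\big).
\]
By Corollary \ref{prop:restrictionfunctor}, $\Upsilon^*\mm$ has characteristic variety in $\Nnil(L^\circ)$; the standard microlocal bound for characteristic varieties under specialization then gives $\Char(\sp_L(\mm))\subseteq \varphi^*C_\Lambda(\Nnil(L^\circ))$, and Lemma \ref{lem:inclusion} delivers exactly the containment $\subseteq \zero \times \Nnil(\mf{l}')$. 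Finally, for local finiteness of $\eu_{\mf{l}'}$: by construction, the $V$-filtration endows $\sp_L(\mm)$ with a canonical $\Cs$-monodromic structure arising from the fibrewise scaling on $\NN_L\to Z(\LSL)^\circ$, whose infinitesimal generator is $\eu_{\mf{l}'}+\eu_V$, and this sum therefore acts locally finitely on global sections; the $(N_G(L),c)$-monodromic structure obtained above, applied to $\mathbf{1}\in Z(G)\subset N_G(L)$, forces $\mu_{\NN_L}(\mathbf{1})$, a scalar shift of $\eu_V$, to act locally finitely as well, and subtracting yields the required local finiteness of $\eu_{\mf{l}'}$. The main obstacle I anticipate is the first identification above: making the base change between specialization at $\Y\subset\X^\circ$ and specialization at $Z(\LSL)^\circ\times\{0\}\subset \X_L^\circ$ fully rigorous, given that Lemma \ref{lem:commutePsi2} as stated does not apply verbatim for $\SSL\supset N_{\SSL}(\LSL)$.
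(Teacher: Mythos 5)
Your proof follows essentially the same route as the paper's: both establish the key identification $\sp_L = \Psi_{\Xo_{\LSL}/Z(\LSL)^{\circ}}\circ\Upsilon^*$ (the paper's equation (\ref{eq:psiequal}), derived via Lemma \ref{lem:etalespecial} for the \'etale map $p$ of Lemma \ref{lem:strongetale} together with Lemma \ref{lem:commutePsi2}), then bound $\Char(\sp_L(\mm))$ via Ginzburg's formula (\ref{eq:SSformula}) with Proposition \ref{prop:wonderfulinclusion} and Lemma \ref{lem:inclusion}, and finally deduce local finiteness of $\eu_{\mf{l}'}$ from $\eu_{\mf{l}'}=\eu_{\mf{l}'\times V}-\eu_V$ using that $\mu_{\NN_L}(\one)=-\eu_V$. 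The subtlety you flag at the end — that Lemma \ref{lem:commutePsi2} does not apply verbatim to the (disconnected, non-Levi) subgroup $N_{\SSL}(\LSL)\subset\SSL$ — is exactly the point the paper also treats somewhat briskly, by first passing through the locally strongly \'etale map $p$ of Lemma \ref{lem:strongetale} and smooth base change (Lemma \ref{lem:etalespecial}); your proposal to use the smooth torsor $\SSL\times\Xo_{\LSL}\to\Xo$ is the natural variant of the same manoeuvre.
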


\begin{proof}
Let $\mm$ in $\ss_{c}$. Then we know that $\sp_L(\mm)$ is $(N_G(L),c)$-monodromic, has regular singularities and is $\Cs$-monodromic. However, this is $\Cs$-monodromic with respect to the action of $\Cs$ on $\mf{l}' \times V$ by dilations and not, as in the definition of $\ss_{\NN_L,c}$, with respect to the action of $\Cs$ by dilations on $\mf{l}'$. Recall that the image of $\mathbf{1} \in \mf{l}(\g)$ under the moment map is $- \eu_V$. Since $\sp_L(\mm)$ is $(N_G(L),c)$-monodromic, this implies that $\eu_V$ acts locally finitely on $\Gamma(\NN_L,\sp_L(\mm))$. Then, the fact that $\eu_{\mf{l}'} = \eu_{\mf{l}' \times V} - \eu_{V}$, with $\eu_{\mf{l}' \times V}$ and $\eu_{V}$ commuting, locally finite endomorphisms of $\Gamma(\NN_L,\sp_L(\mm))$, implies that $\eu_{\mf{l}'}$ also acts locally finitely. Therefore, we just need to show that the singular support $\Char (\sp_L(\mm))$ of $\sp_L(\mm)$ is contained in $Z(\LSL)^{\circ} \times \Nnil(\mf{l}')$. 

One can show, using Lemma \ref{lem:etalespecial} applied to the map $p$ of Lemma \ref{lem:strongetale} and then using Lemma \ref{lem:commutePsi2}, that 
\beq{eq:psiequal}
\sp_L = \Psi_{\Xo_{\LSL} / Z(\LSL)^{\circ}} \circ \Upsilon^*. 
\eeq
We can now use (\ref{eq:psiequal}) to calculate $\Char (\sp_L(\mm))$. Since $\Upsilon$ is non-characteristic for modules in $\ss_c$, the singular support of $\Upsilon^* \mm$ equals $\rho_{\Upsilon}(\omega_{\Upsilon}^{-1}(\Char \mm))$. Then, \cite[Theorem 7.1]{CharCycles} implies that  
\beq{eq:SSformula}
\Char (\sp_L(\mm)) = \varphi^* C_{\Lambda}(\rho_{\Upsilon}(\omega^{-1}_{\Upsilon} (\Char(\mm))).
\eeq
Since the singular support $\Char(\mm)$ of $\mm$ is contained in $\Nnil(\SSL)$, formula (\ref{eq:SSformula}) together with Proposition \ref{prop:wonderfulinclusion} and Lemma \ref{lem:inclusion} imply that the singular support of $\sp_L(\mm)$ is contained in $\zero \times \Nnil (\mf{l}')$, as required.  
\end{proof}

The results \cite[Corollary 7.5.1]{CharCycles} and \cite[Corollary 7.5.2]{CharCycles} imply:

\begin{cor}
Let $\mm \in \ss_c$ and set $\Char_0 (\mm) := \rho_{\Upsilon}(\omega_{\Upsilon}^{-1} (\Char(\mm)))$.  

\vi If $\Char_0 (\mm) \cap \Lambda \neq \emptyset$ then $\sp_L(\mm) \neq 0$. 

\vii $\Supp \sp_L(\mm) = \varphi^{-1}(\Lambda \cap \Char_0 (\mm))$. 
\end{cor}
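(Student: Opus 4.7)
The plan is to deduce both statements directly from the cited results of \cite{CharCycles} once we recast $\sp_L$ in terms of specialization along $Z(\LSL)^{\circ}$ inside $\Xo_{\LSL}$. Specifically, the proof of Proposition \ref{prop:PsiLcatO} established the factorization
\[
\sp_L \;=\; \Psi_{\Xo_{\LSL}/Z(\LSL)^{\circ}} \circ \Upsilon^{*},
\]
so the corollary is about the interaction of the ordinary Verdier specialization functor with characteristic varieties, which is the content of \cite[Corollary 7.5.1--7.5.2]{CharCycles}.

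First I would record the input needed. Because $\mm \in \ss_c$ is non-characteristic for $\Upsilon$ (by Lemma \ref{trans}), one has
\[
\Char(\Upsilon^{*}\mm) \;=\; \rho_{\Upsilon}(\omega_{\Upsilon}^{-1}(\Char(\mm))) \;=\; \Char_{0}(\mm),
\]
a closed subvariety of $T^{*}\Xo_{\LSL}$. Then the normal cone construction applied to $\Char_{0}(\mm)$ along $\Lambda = \NN^{*}_{L}$, pulled back via the identification $\varphi$ of \eqref{vphi}, gives
\[
\Char(\sp_L(\mm)) \;=\; \varphi^{*}C_{\Lambda}(\Char_{0}(\mm)),
\]
as was used in formula \eqref{eq:SSformula}.

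For (i), I would invoke \cite[Corollary 7.5.1]{CharCycles}: the specialization $\Psi_{\Xo_{\LSL}/Z(\LSL)^{\circ}}(\Upsilon^{*}\mm)$ is non-zero provided the characteristic variety of $\Upsilon^{*}\mm$ meets $\Lambda$. Since $\Char(\Upsilon^{*}\mm) = \Char_{0}(\mm)$, the hypothesis $\Char_{0}(\mm)\cap\Lambda\neq\emptyset$ is exactly what is needed, and non-vanishing on $\NN_L$ follows after applying $\bi^{*}$ thanks to the equivalence \eqref{eq:biequiv}.

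For (ii), I would apply \cite[Corollary 7.5.2]{CharCycles}, which identifies the support of the specialization with $\varphi^{-1}$ of the intersection $\Lambda \cap \Char(\Upsilon^{*}\mm)$; substituting $\Char(\Upsilon^{*}\mm)=\Char_{0}(\mm)$ yields precisely $\varphi^{-1}(\Lambda \cap \Char_{0}(\mm))$. The only mildly delicate point, and the main thing to check carefully, is that passing from the specialization on $\NN_{\X/\Y}$ to its restriction on $\NN_L$ via $\bi^{*}$ does not alter the support description, which follows from Lemma \ref{lem:Giso} together with the $G$-equivariant identification $\NN_{\X/\Y} \cong G\times_{N_{\SSL}(\LSL)}\NN_L$ and Proposition \ref{prop:monoequiv}. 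No genuine obstacle remains beyond bookkeeping.
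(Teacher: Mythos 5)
Your argument is exactly the one the paper intends: it cites \cite[Corollaries 7.5.1--7.5.2]{CharCycles}, uses the non-characteristic identity $\Char(\Upsilon^*\mm)=\Char_0(\mm)$ from Lemma \ref{trans}, and applies the cited results through the factorization $\sp_L = \Psi_{\Xo_{\LSL}/Z(\LSL)^{\circ}}\circ\Upsilon^*$ established in the proof of Proposition \ref{prop:PsiLcatO}. The only cosmetic point is that your closing remarks about $\bi^*$ are redundant once this factorization is invoked, since $\Psi_{\Xo_{\LSL}/Z(\LSL)^{\circ}}(\Upsilon^*\mm)$ already lives on $\NN_L$; this does not affect the correctness of the argument.
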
 

\begin{rem} Let $\ss_{\mf{l}',c}$ be the category of all $(N_G(L),c)$-monodromic $\dd$-modules on $\mf{l}' \times V$ satisfying conditions (1) and (2) of definition \ref{defn:ccZdefn}. For $\mm$ in $\ss_{c}$, the fact that the singular support of $\sp_L(\mm)$ is contained in $\zero \times \Nnil (\mf{l}')$ means that $\sp_L(\mm)$ behaves as a local system along the base $Z(\LSL)^{\circ} \subset \NN_L$. Therefore, letting $i_b : \mf{l}' \times V \hookrightarrow \NN_L$ denote the embedding of the fiber at $b \in Z(\LSL)^{\circ}$, one can define a functor $\sp_{L,b} : \ss_{c} \to \ss_{\mf{l}',c}$ by setting $\sp_{L,b} := i_b^* \circ \sp_L$. The fact that $\sp_L(\mm)$ is non-characteristic for $i_b$ implies that $\sp_{L,b}$ is an exact functor, commuting with Verdier duality. 
\end{rem}
\subsection{An adjoint to $\sp_L$} Let $i : \mc{Y} \hookrightarrow \X$ be the locally closed embedding. We let $\dd^V_{\X}$ denote the completion of $\dd_{\X}$ with respect to the $V$-filtration. It is a sheaf of algebras on $\mc{Y}$ and we have a canonical map $i^{-1} \dd_{\X} \rightarrow \dd^V_{\X}$. Therefore adjunction gives a map of sheaves of algebras
$$
\dd_{\X} \rightarrow i_{\idot} i^{-1} \dd_{\X} \rightarrow i_{\idot} \dd^V_{\X}
$$
and hence the space $\Gamma(\X,i_{\idot} \mm)$ is a $\ddd := \Gamma(\X,\dd_{\X})$-module for any $\dd^V_{\X}$-module $\mm$. Let $\dd^V_{\NN_{\X / \mc{Y}}}$ denote the completion of $\dd_{\NN_{\X / \mc{Y}}}$ with respect to its $\Z$-grading. By construction of $V$-filtration we have an identification $\dd^V_{\X} = \dd^V_{\NN_{\X / \mc{Y}}}$ of filtered algebras on $\mc{Y}$. 

Let $\ms{N}$ be a module in $\ss_{\NN_L,c}$. Recall from (\ref{eq:biequiv}) that $\bi^*$ defines an equivalence between $\mon{\dd_{\NN_{\X/\Y}}, G,c}_{\mathrm{reg}}$ and $\mon{\dd_{\NN_L}, N_G(L),c}_{\mathrm{reg}}$. Let $\mathbf{I}( - )$ denote a quasi-inverse to $\bi^*$. The grading on $\dd_{\NN_{\X / \mc{Y}}}$ is inner in the sense that there exists an Euler vector field $\eu$ such that a section $D \in \dd_{\NN_{\X / \mc{Y}}}$ has degree $m$ if and only if $[\eu,D] = m D$. This implies:

\begin{lem}
Completion $( - )^V$ with respect to the grading defines an equivalence between coherent, monodromic $\dd_{\NN_{\X / \mc{Y}}}$-modules and coherent $\dd^V_{\NN_{\X / \mc{Y}}}$-modules. 
\end{lem}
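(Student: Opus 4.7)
The plan is to exhibit mutually quasi-inverse functors: completion $(-)^V$ going from coherent monodromic $\dd_{\NN_{\X/\Y}}$-modules to coherent $\dd^V_{\NN_{\X/\Y}}$-modules, and the $\eu$-locally finite part $(-)^{\fin}$ going back. The argument is directly modelled on Kashiwara's equivalence between specializable $\dd$-modules and $\dd^V$-modules, \cite{KashVanishing}, and is simpler here because the $\Z$-grading on $\dd_{\NN_{\X/\Y}}$ is inner.

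For the forward direction, the inner grading forces a monodromic coherent $\mm$ to decompose locally as $\mm = \bigoplus_{\alpha} \mm_{(\alpha)}$ into generalized eigenspaces of $\eu$; filtering by the real part of the eigenvalue yields a good $V$-filtration and I set $\mm^V := \varprojlim_k \mm/V_{-k}\mm$. Coherence of $\mm^V$ over $\dd^V_{\NN_{\X/\Y}}$ is transparent, since a finite generating set of $\mm$ over $\dd_{\NN_{\X/\Y}}$ remains a generating set of $\mm^V$ over $\dd^V_{\NN_{\X/\Y}}$. For the backward direction, given a coherent $\dd^V_{\NN_{\X/\Y}}$-module $\ms{N}$, I take $\ms{N}^{\fin}$ to be the subsheaf of $\eu$-locally finite sections; the inclusion $\dd_{\NN_{\X/\Y}} \subset \dd^V_{\NN_{\X/\Y}}$ makes it stable under $\dd_{\NN_{\X/\Y}}$.

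The main technical step is showing that $\ms{N}^{\fin}$ is coherent as a $\dd_{\NN_{\X/\Y}}$-module. This will follow from the analogue, for coherent $\dd^V$-modules, of Proposition \ref{prop:uniquefiltration}: locally on $\mc{Y}$ one can choose a coherent $V_0 \dd_{\NN_{\X/\Y}}$-submodule $\mc{F} \subset \ms{N}$ with $\dd^V_{\NN_{\X/\Y}}\cdot\mc{F} = \ms{N}$ and a nonzero polynomial $b$ such that $b(\eu)\cdot\mc{F} \subset V_{-1}\dd_{\NN_{\X/\Y}}\cdot\mc{F}$; this produces a canonical $V$-filtration whose subquotients have $\eu$-eigenvalues in a bounded strip, and so the $\eu$-finite part inside $\mc{F}$ is finitely generated over $V_0 \dd_{\NN_{\X/\Y}}$ and hence over $\dd_{\NN_{\X/\Y}}$.

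The hard part will be verifying that the counit $(\ms{N}^{\fin})^V \to \ms{N}$ is an isomorphism; the unit $\mm \to (\mm^V)^{\fin}$ is clear, since the $\eu$-locally finite part of a direct product of generalized eigenspaces is exactly the direct sum. Injectivity of the counit follows from separatedness of the $V$-adic topology on $\ms{N}$, which is a consequence of Nakayama's lemma applied to the canonical $V$-filtration constructed above. Surjectivity follows from a successive-approximation argument using the polynomial $b$: any section of $\ms{N}$ can be corrected by a section of $\ms{N}^{\fin}$ modulo $V_{-k}\ms{N}$ for arbitrarily large $k$, using that the shifted polynomials $b(\eu + j)$ are invertible on all but finitely many of the graded pieces of the canonical $V$-filtration. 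With both unit and counit isomorphisms in hand, the two functors form the claimed equivalence.
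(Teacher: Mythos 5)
Your proposal follows the same route as the paper's (extremely terse) argument: the forward functor is formal completion with respect to the grading, the quasi-inverse is the $\eu$-locally finite part, and both are available because the $\Z$-grading on $\dd_{\NN_{\X/\Y}}$ is \emph{inner}. The description of the forward direction (generalized $\eu$-eigenspace decomposition for monodromic $\mm$, good $V$-filtration, coherence of $\mm^V$), the unit isomorphism $\mm \iso (\mm^V)^{\fin}$, and the counit checks by separatedness and successive approximation are all the standard Kashiwara mechanism and are sound.

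The one place that is asserted rather than argued, and which is really the crux, is your ``analogue, for coherent $\dd^V$-modules, of Proposition~\ref{prop:uniquefiltration}''. That proposition, like Kashiwara's Proposition~\ref{thm:vanishing} on which it is modelled, takes the existence of a $V_0$-lattice $\mc{F}$ with a $b$-function as a \emph{hypothesis}, not a conclusion: for ordinary coherent $\dd_X$-modules this condition singles out the strictly smaller subcategory of specializable modules. So it is not a formal consequence of Proposition~\ref{prop:uniquefiltration} that every coherent $\dd^V_{\NN_{\X/\Y}}$-module admits such a $b$-function. What makes the $b$-function automatic here is precisely the inner grading: for any coherent $V_0$-lattice $\mc{F}$, the graded module $\gr_V\ms{N}$ is a coherent $\Z$-graded (hence $\Cs$-monodromic) $\dd_{\NN_{\X/\Y}}$-module, $\eu$ differs from the integral grading operator by a $\dd_{\NN_{\X/\Y}}$-linear endomorphism, and the argument of Lemma~\ref{lem:eulocallyfinite} (or of Claim~\ref{claim:101}) applied to $\mc{F}/V_{-1}\dd\cdot\mc{F}$ then produces the polynomial $b$. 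If you supply this, your proof is complete and matches the intended argument; as written it leans on the single nontrivial point the paper's one-sentence proof is actually emphasizing.
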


A quasi-inverse to completion is given by taking the submodule of $\eu$-locally finite sections, or equivalently by applying $\gr_V$ with respect to the natural filtration on $\mm^V$. Thus, beginning with a module $\ms{N}$ in $\ss_{\NN_L,c}$, we have a $\dd^V_{\NN_{\X / \mc{Y}}} (= \dd^V_{\X})$-module $\mathbf{I}(\ms{N})^V$. 

\begin{defn}
Let $\Ind_L^G  : \ss_{\NN_L,c} \rightarrow \LMod{\ddd}$ be the functor that sends $\ms{N}$ to the space of $\FZ$-locally finite elements in $\Gamma(\X, i_{\idot} \mathbf{I}(\ms{N})^V)$.  
\end{defn}

We identify $\LMod{\dd_{\X}}$ with $\LMod{\ddd}$. Let $\Ind \ \ss_c(\SSL)$ denote the full subcategory of $\LMod{\dd_{\X}}$ consisting of all modules that are an inductive limit of modules in $\ss_c(\SSL)$. Based on Theorem \ref{thm:admissible2}, it is clear that the image of $\Ind_L^G$ belongs to $\Ind \ss_c(\SSL)$ provided we can show that each $\Ind_L^G$ is $(G,c)$-monodromic. The proof of Lemmata \ref{lem:etalespecial} and \ref{lem:specialmono} can be adapted to show that $\mathbf{I}(\ms{N})^V$ is a $(G,c)$-monodromic $\dd_{\X}^V$-module. Hence there is a locally finite action of $G$ on $\Ind_G^L \ms{N}$ such that differential of this action differs from the action of $\mu(\g)$ by $c \Tr$. Thus, $\Ind_G^L \ms{N}$ is an inductive limit of $(G,c)$-monodromic modules.   

\begin{prop}
The functor $\Ind_L^G$ is right adjoint to $\sp_{L}$.  
\end{prop}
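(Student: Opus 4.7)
The plan is to reduce the adjunction to a more basic statement about Kashiwara's specialization functor for $\dd$-modules and then transport through the equivalence $\bi^{*}$. First unwind both functors: by construction $\sp_L = \bi^{*}\circ \Psi_{\X/\mc{Y}}$ with quasi-inverse $\mathbf{I}$ to $\bi^{*}$, while $\Ind_L^G(\ms{N})$ is, by definition, the $\FZ$-locally finite part of $\Gamma(\X, i_{\idot}\mathbf{I}(\ms{N})^{V})$. Since every $\mm\in\ss_c$ is $\FZ$-locally finite by Theorem~\ref{thm:admissible2}, any $\dd_\X$-module morphism from $\mm$ to $i_{\idot}\mathbf{I}(\ms{N})^{V}$ automatically lands in its $\FZ$-finite part. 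Combining these observations, the required adjunction reduces to exhibiting, for each $\mm\in\ss_c$ and each $(G,c)$-monodromic regular holonomic $\dd_{\NN_{\X/\mc{Y}}}$-module $\ms{K}$, a natural isomorphism
\begin{equation*}
\Hom_{\dd_{\NN_{\X/\mc{Y}}}}\!\bigl(\Psi_{\X/\mc{Y}}(\mm),\,\ms{K}\bigr)\;\iso\;\Hom_{\dd_{\X}}\!\bigl(\mm,\,i_{\idot}\ms{K}^{V}\bigr),
\end{equation*}
functorial in $\mm$ and $\ms{K}$, and then to check that both sides carry compatible $G$-monodromic structures (which is automatic from the functoriality of the $V$-filtration already used in Lemma~\ref{lem:specialmono}).

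The isomorphism above is proved by interposing the $V$-adic completion $\mm^{V}:=\varprojlim_k \mm/V_{-k}\mm$, which by Proposition~\ref{thm:vanishing} is a well-defined $\dd_\X^V$-module supported on $\mc{Y}$. First, standard sheaf-theoretic adjunction gives
$\Hom_{\dd_\X}(\mm,i_{\idot}\ms{K}^{V}) = \Hom_{i^{-1}\dd_\X}(i^{-1}\mm,\,\ms{K}^{V})$. Because $\ms{K}^{V}$ is $V$-adically complete and carries the richer action of $\dd_\X^V\supset i^{-1}\dd_\X$, and because $\mm$ is specializable, every $i^{-1}\dd_\X$-linear map from $i^{-1}\mm$ to $\ms{K}^{V}$ extends uniquely and continuously to a $\dd_\X^V$-linear map $\mm^{V}\to\ms{K}^{V}$. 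Second, $V$-adic completion provides an equivalence between monodromic (i.e.\ $\eu$-locally finite) $\dd_{\NN_{\X/\mc{Y}}}$-modules and $\eu$-locally finite $\dd_\X^V$-modules, with quasi-inverse $\gr_V=\Psi_{\X/\mc{Y}}$, so
\begin{equation*}
\Hom_{\dd_\X^V}(\mm^{V},\ms{K}^{V})\;=\;\Hom_{\dd_{\NN_{\X/\mc{Y}}}}(\gr_V\mm,\ms{K}).
\end{equation*}
Composing these two identifications yields the required bijection.

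The main obstacle is the extension-across-completion step, i.e.\ that every $i^{-1}\dd_\X$-linear map $i^{-1}\mm\to\ms{K}^{V}$ factors uniquely through the completion $\mm^{V}$. This is where the specializability hypothesis enters essentially: working locally on $\X$, one picks a lift $\theta\in V_0\dd_\X$ of $\eu$ and a coherent $V_0\dd_\X$-submodule $\ms{F}\subset\mm$ with $\dd_\X\cdot\ms{F}=\mm$ and $b(\theta)\ms{F}\subset V_{-1}\dd_\X\cdot\ms{F}$; the $b$-function relation, together with the $V$-completeness of $\ms{K}^V$ and the $\eu$-local finiteness witnessed by the generalized eigenspace decomposition of $\ms{K}^V$, forces the values of a map on successive $V_{-k}\mm/V_{-k-1}\mm$ to converge, giving both existence and uniqueness of the extension. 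Once this core fact is in hand, naturality and compatibility with the $(G,c)$-monodromic structure are routine, and checking that $\Ind_L^G(\ms{N})$ lies in $\mathrm{Ind}\,\ss_c(\SSL)$ is already explained in the paragraph preceding the statement, so the proof is complete.
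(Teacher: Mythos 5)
Your proposal is correct and follows essentially the same chain of identifications as the paper's proof: absorb the $\FZ$-finite condition using that $\mm$ is already mirabolic, pass to $i^{-1}\dd_\X$-linear maps by adjunction, extend across the $V$-adic completion to a $\dd_\X^V$-morphism $\mm^V\to\mathbf{I}(\ms{N})^V$, and then invoke that $(-)^V$ and $\mathbf{I}$ are equivalences with quasi-inverses $\gr_V$ and $\bi^*$. The only cosmetic difference is that you isolate the completion/extension step as a statement for an arbitrary monodromic $\ms{K}$ before specializing to $\ms{K}=\mathbf{I}(\ms{N})$, and you spell out the $b$-function mechanism behind the extension, which the paper leaves implicit.
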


\begin{proof}
Let $\mm \in \ss_c(\SSL)$ and $\ms{N} \in \ss_{\NN_L,c}$. We wish to show that 
\beq{eq:toshow1}
\Hom_{\dd_{\X}}(\mm, \Ind_L^G \ms{N}) = \Hom_{\dd_{\N_Z}}(\sp_{L} (\mm), \ms{N}). 
\eeq
If $\mm \in \ss_c(\SSL)$, then any homomorphism $\Gamma(\X,\mm) \rightarrow \Gamma(\X , i_{\idot} \mathbf{I}(\ms{N})^V)$ of $\ddd$-modules is going to factor through $\Gamma(\X,\Ind_L^G \mm)$. Thus, 
$$
\Hom_{\dd_{\X}}(\mm, \Ind_L^G \ms{N}) \ =\  \Hom_{\dd_{\X}}(\mm,i_{\idot} \mathbf{I}(\ms{N})^V))\  
  =\  \Hom_{i^{-1} \dd_{\X}}(i^{-1} \ms{N}, \mathbf{I}(\ms{N})^V).
$$
Since $\mathbf{I}(\ms{N})^V$ is complete with respect to the $V$-filtration, any morphism of $i^{-1} \dd_{\X}$-modules into $\mathbf{I}(\ms{N})^V$ extends uniquely to a $\dd^V_{\X}$-module morphism $\mm^V \rightarrow \mathbf{I}(\ms{N})^V$ i.e. 
\beq{eq:almostthere}
\Hom_{i^{-1} \dd_{\X}}(i^{-1} \mm,\mathbf{I}(\ms{N})^V) = \Hom_{\dd^V_{\X}}(\mm^V,\mathbf{I}(\ms{N})^V).
\eeq
Equality (\ref{eq:almostthere}) implies (\ref{eq:toshow1}) because $\mathbf{I}$ and $( - )^V$ are equivalences with quasi-inverses $\bi^*$ and $\gr_V$ respectively.  
\end{proof}

\begin{rem}
We expect, but are unable to prove, that the functor $\Ind_L^G$ commutes
with Verdier duality.
\end{rem}

\subsection{} In the rational case, where $\X = \mf{sl}(V) \times V$, the definition of $\Ind_L^G$ is essentially the same except that one should take $(\sym \mf{sl}(V))_+$ locally nilpotent \textit{and} $\eu_{\mf{sl}}$-locally finite vectors in $\Gamma(\X, i_{\idot}\mathbf{I}(\ms{N}))^V$, where $\eu_{\mf{sl}}$ is the Euler vector field along $\mf{sl}(V)$ in $\X$. In this case, one begins with mirabolic modules on $\NN_{\fz} := \fz_{\mf{l}}^{\circ} \times (\mf{l}' \times V)$. 

\begin{prop}\label{prop:fgInd}
Induction is a functor $\Ind_L^G : \ss_{\NN_{\fz},c} \rightarrow \ss_c(\mf{sl}(V))$ i.e. $\Ind_L^G \mm$ is finitely generated.
\end{prop}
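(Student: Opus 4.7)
The content of Proposition \ref{prop:fgInd} is finite generation: granting that, the rational analogue of Theorem \ref{thm:admissible2} (established via Fourier transform on the $\mf{sl}(V)$-factor, cf.\ \cite[Proposition 5.3.2]{GG}) promotes $\Ind_L^G\ms{N}$ to an object of $\ss_c(\mf{sl}(V))$, since by construction it is $(G,c)$-monodromic, $(\sym \mf{sl}(V))_+^{\SSL}$-locally nilpotent, $\eu_{\mf{sl}}$-locally finite, and $\mu(\g)$-locally finite.

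First I would reduce to the case of $\ms{N}$ simple. The functor $\mathbf{I}(-)^V$ is an equivalence and passing to the locally finite/nilpotent subspace is left exact, so it suffices to establish finite generation on each composition factor of $\ms{N}$, of which there are only finitely many (all modules in $\ss_{\NN_{\fz},c}$ being regular holonomic, hence of finite length). The rational analogue of Lemma \ref{lem:locallyfactorL} then writes any simple $\ms{N}$ as $\mathsf{L}\boxtimes\mathsf{N}'$, for a simple $N_G(L)$-equivariant local system $\mathsf{L}$ on $\fz_{\mf{l}}^\circ$ and an $L'$-cuspidal mirabolic module $\mathsf{N}'$ on $\mf{l}'\times V$. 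For such $\ms{N}$, the module $\mathbf{I}(\ms{N})^V$ on $\mc{Y}=\SSL\times_{N_G(L)}(\fz_{\mf{l}}^\circ\times\{0\})$ admits an explicit description as the formal induction along the zero-section of $\NN_L$, so that the $G$-isotypic and $\eu_{\mf{sl}}$-weight decomposition of $\Ind_L^G\ms{N}$ can be read off; the rational analogue of Lemma \ref{lem:isofd} (applicable verbatim once we know $\Ind_L^G\ms{N}$ is $(\sym \mf{sl})_+^{\SSL}$-locally nilpotent and $\eu_{\mf{sl}}$-locally finite) yields that every joint $(G,\eu_{\mf{sl}})$-isotypic component is finite dimensional.

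The hard part is to convert this finiteness of isotypic components into honest finite generation over $\ddd$. My plan is to combine the adjunction counit $\sp_L\circ\Ind_L^G\to\id$, which I expect to be an isomorphism for the simples above by an argument parallel to Lemma \ref{adjoint}(i), with the fact that $\mathsf{N}'$ is finitely generated over $\dd(\mf{l}'\times V)$. Concretely, Hilbert's Theorem applied to the graded ring $\bigoplus_k\mc{I}^k/\mc{I}^{k+1}$ underlying the $V$-filtration on $\ddd$ along $\mc{Y}$ should let one produce finitely many generators in a single $G$-isotypic component of lowest $\eu_{\mf{sl}}$-weight whose $\ddd$-span exhausts $\Ind_L^G\ms{N}$. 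This, together with the finite-dimensionality of isotypic components, gives the required finite generation.
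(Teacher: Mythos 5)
The paper does not prove Proposition \ref{prop:fgInd}: the sentence immediately following it reads ``The proof of Proposition \ref{prop:fgInd} will appear elsewhere,'' so there is no argument in the source to compare yours against. Evaluating your sketch on its own terms, it has a genuine circularity and leaves the substantive step unproved.

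The circularity: you invoke the rational analogue of Lemma \ref{lem:isofd} to conclude that the joint $(\g,\eu_{\mf{sl}})$-isotypic components of $\Ind_L^G\ms{N}$ are finite dimensional, ``once we know'' local nilpotence of $\sym(\mf{sl})_+^{\SSL}$ and local $\eu_{\mf{sl}}$-finiteness. But Lemma \ref{lem:isofd} is stated for \emph{mirabolic modules}, which in the rational setting are by definition coherent, and its proof passes to the associated graded of a good filtration --- good filtrations exist only for coherent modules. Coherence of $\Ind_L^G\ms{N}$ is exactly what Proposition \ref{prop:fgInd} asserts, so this step assumes the conclusion. Separately, the ``hard part'' is not actually carried out. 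That the counit $\sp_L\circ\Ind_L^G\to\id$ is an isomorphism on simples is not an analogue of Lemma \ref{adjoint}(i): that lemma treats the Hamiltonian-reduction adjunction, where the counit isomorphism comes from $\Ham$ being a quotient functor; full faithfulness of $\Ind_L^G$ is a separate, nontrivial claim. And Hilbert's Theorem applied to the commutative graded ring $\bigoplus_k\mc{I}^k/\mc{I}^{k+1}$ yields finite generation of graded pieces over invariants, not a finite set of $\ddd$-generators; the passage from associated-graded finiteness to filtered finiteness over $\ddd$ is precisely the content still missing. A route more in the spirit of what the paper actually does would be to adapt the proof of Theorem \ref{thm:fg}: exhibit a left adjoint of $\sp_L$, show $\Ind_L^G$ is intertwined with it by the restricted-duality functors $(-)^\star$ as in Proposition \ref{prop:swapping}, and transport coherence through $(-)^\star$ via an analogue of Lemma \ref{lem:fgdual}.
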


The proof of Proposition \ref{prop:fgInd} will appear elsewhere. 

\section{Specialization of $\dd$-modules vs specialization for Cheredink algebras}\label{10}
The aim of this section is to show that the functor of Hamiltonian reduction is compatible with the specialization functor.

\subsection{Comparison of $V$-filtrations} The spherical Cherednik algebra $\htrig_{\kappa}$ localizes to a sheaf $\mc{U}_{\kappa}=e {\mathcal H}_{\kappa}(T,W) e$ on $\TSL/ W$. 
Given  a Levi subgroup $L \subset G$,
there are two ways to define a "$V$-filtration" on $\mc{U}_{\kappa}$ with respect to the locally closed subvariety $\TSL_L$ of $\TSL$, where 
 $W_L$ is the  parabolic subgroup of $W$ associated to $L$. 
The first way is to  use the $V$-filtration on the sheaf ${\mathcal H}_{\kappa}(T,W)$
coming from the closed embedding $\TSL_L \hookrightarrow \TSL^{\circ}$. This $V$-filtration is $W$-stable. Therefore it gives, by restriction,  a natural $V$-filtration $V_{\idot} \mc{U}_{\kappa}$ on the spherical subalgebra.

On the other hand, one can 
 use the $V$-filtration on $\dd_{\Xo}$ with respect to 
the  locally closed subvariety $\mc{Y}\sset \X$,
c.f.  section \ref{jj2}. We equip $\dd_{\X^{\circ}}  / (\dd_{\X^{\circ}}
\cdot \g_c)$
with the quotient filtration. The latter is $G$-stable,
hence restricting to $G$-invariants, gives
 a  well-defined filtration 
$$
\nV_{k} \mc{U}_{\kappa} := [\varpi_{\idot} (V_{k} (\dd_{\X^{\circ}}/ \dd_{\X^{\circ}} \cdot \g_c)) ]^G
$$
on $(\varpi_{\idot} \dd_{\Xo})^G$. Therefore, via the radial parts map, we get a second filtration on $\mc{U}_{\kappa}|_{\mggW}$.

We are going to show that  these two filtrations are equal, specifically, we have

\begin{thm}\label{thm:filtrationsagree}
The radial parts map identifies the $\mc{V}$-filtration on $(\varpi_{\idot} \dd_{\X^{\circ}})^G  / (\varpi \dd_{\X^{\circ}} \cdot \g_c)^G$ with the $V$-filtration on $\mc{U}_{\kappa}|_{\mggW}$. 
\end{thm}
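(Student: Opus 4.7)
The plan is to apply the uniqueness clause of Proposition \ref{prop:uniquefiltration} to the $\mc{V}$-filtration: I will show that $\mc{V}_{\idot}\mc{U}_{\kappa}|_{\mggW}$ is a good $V$-filtration in the sense of Section \ref{sec:Cherespecial} satisfying the same Euler $b$-function characterization as the $V$-filtration of Definition \ref{defn:VCherednik}. Uniqueness of such a filtration then forces the two to coincide.

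The key geometric input I would first verify is the identity $(\varpi_{\idot}\mc{I}_{\mc{Y}}^k)^G = \mc{I}_{\fc}^k$ of sheaves of ideals on $\mggW$, for every $k\ge 0$. The case $k=1$ follows from the identification $\mc{Y}/\!/G = \fc$, implicit in diagram \eqref{eq:bigmapdiag} together with Lemma \ref{lem:stratainfo}. The general case reduces, using smoothness of $\mc{Y}$ in $\X^{\circ}$ and of $\fc$ in $\mggW$, to a check on conormal bundles: the isomorphism $\NN^*_{\mc{Y}/\X^{\circ}} \cong \SSL\times_{N_{\SSL}(\LSL)}\NN^*_L$ coming from Lemma \ref{lem:Giso} together with Hilbert's Theorem for the reductive group $N_{\SSL}(\LSL)$ identifies $G$-invariants of $\sym\NN^*_{\mc{Y}/\X^{\circ}}$ with $N_{\SSL}(\LSL)$-invariants of $\sym\NN^*_L$, which in turn is $\sym\NN^*_{\fc/\mggW}$. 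Granted this, for $D\in\dd(\X^{\circ})^G$ the condition $D\in V^{\mc{Y}}_m\dd_{\X^{\circ}}$ is visible on $G$-invariants and matches the condition placing the image of $D$ in $V_m\mc{U}_{\kappa}$. Passing to the quotient by $(\dd_{\X^{\circ}}\g_c)^G$ is harmless because $\mc{Y}$ is $G$-stable, so $\mu(\g)\subset V^{\mc{Y}}_0\dd_{\X^{\circ}}$ and the moment ideal is compatible with the $V^{\mc{Y}}$-filtration; the $b$-function property for $\mc{V}$ then descends from the corresponding property of a $G$-invariant lift to $V^{\mc{Y}}_0\dd_{\X^{\circ}}$ of the Euler element of Lemma \ref{lem:globalEu}.

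The main obstacle I anticipate is rigorously comparing $\gr^{\mc{V}}$ with quantum Hamiltonian reduction, i.e.\ establishing an isomorphism
\begin{equation*}
\gr^{\mc{V}}\big[(\dd_{\X^{\circ}})^G/(\dd_{\X^{\circ}}\g_c)^G\big]\ \iso\ \big[\gr^{V^{\mc{Y}}}\dd_{\X^{\circ}}/\gr^{V^{\mc{Y}}}(\dd_{\X^{\circ}}\g_c)\big]^G.
\end{equation*}
Both sides behave well thanks to reductivity of $G$ (so $(-)^G$ is exact and commutes with $\gr^{V^{\mc{Y}}}$ because each piece of the filtration is $G$-stable), but one has to verify directly that the moment ideal is flat with respect to the filtration. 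Once this is in place, one identifies the right-hand side with $e_Y\mc{H}_{\kappa'}(\ny, W_Y)e_Y$ via the isomorphism $\bi$ of Lemma \ref{lem:Giso} and Proposition \ref{prop:monoequiv}, interpreted as the radial parts construction for the linear Levi data $(\NN_L, N_{\SSL}(\LSL))$, and matches it with the $V$-filtration side using the spherical part of Theorem \ref{conj:Vfilt}. Both filtrations thus share the same associated graded and satisfy the same $b$-function equation for the Euler element, so Proposition \ref{prop:uniquefiltration} forces $\mc{V}_{\idot} = V_{\idot}$, as claimed.
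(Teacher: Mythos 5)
Your plan correctly isolates two genuine ingredients of the paper's argument: the ideal identity $(\varpi_{\idot}\mc{J}^m)^G = \mc{I}_m$ (Lemma \ref{claim:keyinclusionlemma}, which the paper proves by a cleanness argument rather than via the conormal identification you suggest, though the latter is a plausible alternative route for $m=1$ that would need more work to upgrade to higher powers), and the existence of a $G$-invariant lift $\theta$ of the Euler vector field together with the computation that $\mathfrak{R}(\theta)+\mathbf{n}c$ lifts $\eu_\kappa$ (Lemma \ref{lem:theta}).

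However, there is a real gap in the conclusion. First, Proposition \ref{prop:uniquefiltration} characterizes $V$-filtrations on \emph{modules}, and applying it to $\mc{U}_{\kappa}|_{\mggW}$ as a module over itself is circular: the notion of $WY$-good filtration is defined relative to the algebra filtration $V_k\mc{U}_\kappa$, which is precisely what you are trying to identify. Second, and more importantly, the ideal identity $\mc{J}_m = \mc{I}_m$ does \emph{not} yield equality of the filtrations $(\mc{J}^m\dd_{\X^{\circ}})^G$ and $\mc{I}_m\cdot(\dd_{\X^{\circ}})^G$ on $\ddd^G$, because $\dd_{\X^{\circ}}$ is noncommutative: taking $G$-invariants does not commute with left multiplication by an ideal. (You assert "the condition placing $D$ in $V^{\mc{Y}}_m$ is visible on $G$-invariants and matches" without addressing this, and it fails as stated.) Matching associated graded spaces, which is what your $\gr^{\mc{V}}\cong\gr^V$ comparison would give, is also not enough to force the filtrations themselves to agree.

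The paper sidesteps both difficulties by first proving an \emph{intrinsic} spectral characterization of the $V$-filtration (Propositions \ref{prop:WequalsV} and \ref{prop:UVagree}): $D$ lies in $V_\ell$ iff the union over all $m$ of the roots of the minimal polynomial of $\ad(\theta)$ acting on $D$ modulo the $m$-th ideal power is bounded above by $\ell$ (after the appropriate shift by $\mathbf{n}c$). This characterization is insensitive to replacing the ideal filtration by a \emph{comparable} one, and comparability of $(\mc{J}^m\mc{F}_\ell\ddd)^G$ with $\mc{I}_m(\mc{F}_\ell\ddd)^G$ is exactly what Artin--Rees together with Hilbert's theorem provides. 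That comparability step, applied order-piece by order-piece, is the replacement for the exact equality your proposal needs but cannot obtain, and it is the heart of the proof.
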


The rest of the section is devoted to the proof of the theorem.

\subsection{}\label{sec:comparison} 


Let $X$ be a smooth affine variety, $Y$ a smooth, closed subvariety and $I$ the ideal in $\C[X]$ defining $Y$. Let $\theta$ be a vector field in $V_0 \dd(X) \subset \dd(X)$ that is a lift of the Euler vector field on $\NN_{X/Y}$. For each $f \in I^m$, we have $\theta(f) - m f \in I^{m+1}$, which implies that there is a well-defined adjoint action of $\theta$ on $\dd(X) / I^m \dd(X)$. 

\begin{lem}\label{lem:eulocallyfinite}

\vi The adjoint action of $\theta$ on $\dd (X) / I^m \dd (X)$ is locally finite.

\vii $\bigcap_{m} I^m \dd(X) = 0$.  

\end{lem}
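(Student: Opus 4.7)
The plan for (i) is to leverage the compatibility of $\theta$ with the $V$-filtration on $\dd(X)$ inherited from the ideal $I$. Since $\theta$ lifts the Euler field on $\NN_{X/Y}$, which corresponds to the grading derivation under the identification $\gr^V \dd(X) \cong \dd(\NN_{X/Y})$, the adjoint action $\ad(\theta)$ preserves each $V_k \dd(X)$ and acts on $V_k\dd(X)/V_{k-1}\dd(X)$ as multiplication by $k$. Equivalently, the operator $\ad(\theta) - k$ carries $V_k \dd(X)$ into $V_{k-1}\dd(X)$.

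The geometric input needed to convert this into local finiteness modulo $I^m\dd(X)$ is the inclusion $V_{-m}\dd(X) \subseteq I^m \dd(X)$. I would verify this by passing to local coordinates $(x_1,\ldots,x_n,y_1,\ldots,y_r)$ adapted to $Y\subset X$, in which one checks by a direct symbol computation that $V_k \dd(X)$ is spanned by ordered monomials $y^{\beta}\partial_y^{\delta}f(x,\partial_x)$ with $|\delta|-|\beta|\le k$; the case $k=-m$ then forces $|\beta|\ge m$, placing the monomial in $I^m\dd(X)$. With this inclusion in hand, iterating the previous observation shows that for any $D\in V_k\dd(X)$,
\[
\prod_{j=-m+1}^{k}\bigl(\ad(\theta)-j\bigr)\cdot D \;\in\; V_{-m}\dd(X)\;\subseteq\; I^m\dd(X),
\]
so modulo $I^m\dd(X)$ the element $D$ is annihilated by a single nonzero polynomial in $\ad(\theta)$. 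Since every element of $\dd(X)$ lies in some $V_k\dd(X)$, this proves (i).

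For (ii), I would argue directly by a local basis computation. In the same coordinate charts, $\dd(X)$ admits a vector-space basis of ordered monomials $y^{\beta}x^{\alpha}\partial_x^{\gamma}\partial_y^{\delta}$, and one checks that $I^m\dd(X)$ is precisely the span of those basis elements with $|\beta|\ge m$. Any $D\in\dd(X)$ is a \emph{finite} linear combination of such monomials; hence membership in $I^m\dd(X)$ for every $m$ forces every coefficient to vanish, giving $D=0$. Vanishing on an affine cover implies the global statement. The only real input in the whole argument is the $V$-filtration inclusion $V_{-m}\dd(X)\subseteq I^m\dd(X)$ used in (i); everything else is bookkeeping, and I do not expect any essential obstacle.
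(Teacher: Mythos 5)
Your part (i) is correct and follows the paper's proof almost verbatim: observe $\ad(\theta)-k$ drops $V_k\dd(X)$ into $V_{k-1}\dd(X)$, iterate down to $V_{-m}\dd(X)$, and invoke the inclusion $V_{-m}\dd(X)\subseteq I^m\dd(X)$. The only divergence is how that last inclusion is justified: you pass to adapted coordinates and inspect monomials, whereas the paper notes that $E\in V_{-m}\dd(X)$ implies $E(\C[X])\subseteq I^m$ and deduces $E\in I^m\dd(X)$ from that. Both are fine.

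For part (ii), your route is genuinely different from the paper's. The paper gives a short, global argument: the action of $\dd(X)$ on $\C[X]$ is faithful, and $\bigcap_m I^m = 0$ in $\C[X]$ by Krull intersection, so any nonzero $D$ moves some $f$ outside $I^m$ for $m\gg 0$, which rules out $D\in I^m\dd(X)$ because operators in $I^m\dd(X)$ send $\C[X]$ into $I^m$. Your version instead works chart-by-chart with a free module presentation of $\dd(X)$ over $\C[X]$. That is a valid alternative, but two things deserve a warning. First, in an \'etale chart adapted to $Y$ the functions need not be polynomials in the chosen coordinates, so it is not literally true that a differential operator is a \emph{finite} sum of monomials $y^\beta x^\alpha\partial_x^\gamma\partial_y^\delta$; the clean statement is that $\dd(U)$ is free over $\C[U]$ with basis $\{\partial_x^\gamma\partial_y^\delta\}$, and $I^m\dd(U)$ is exactly the submodule whose coefficient functions lie in $I^m$ (flatness of $\dd(U)$ over $\C[U]$). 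Second, once phrased that way you still need $\bigcap_m I^m=0$ in $\C[U]$ to conclude the coefficients vanish, i.e.\ Krull intersection --- this is not merely bookkeeping and is the same key input that the paper invokes explicitly. With these two small repairs the argument goes through, and "vanishing on an affine cover" does indeed give the global statement since $\dd(X)$ is a sheaf of sections.
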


\begin{proof}
Let $D \in \dd(X)$.  The $V$-filtration on $\dd(X)$ is exhaustive, therefore there exists some $k$ such that $D \in V_k$. Independent of the choice of lift $\theta$, we have $[\theta , D] - k D \in V_{k-1}$. Hence, by induction on $k$, there exists a non-zero polynomial $P$ such that $P(\ad(\theta))(D) \in V_{-m}$. In particular, $P(\ad(\theta))(D)(\C[X]) \subset I^n$ which implies that $P(\ad(\theta))(D) \in I^m \dd(X)$. Hence the action of $\ad(\theta)$ is locally finite. We remark for later that the roots of $P$ are contained in $\{ k,k-1, \ds, -m +1 \}$. 

For each non-zero $D \in \dd(X)$, there exists $f \in \C[X]$ such that $D(f) \neq 0$. Since $\bigcap_{m} I^m = 0$, there is some $m \gg 0$ such that $D(f) \notin I^m$. This implies that $D \notin I^m \dd(X)$.  
\end{proof}

We define an auxiliary filtration $\ak_{\idot} \dd(X)$ on $\dd(X)$ as follows. For each $m \in \mathbb{N}$ and $D \in \dd(X)$, let $P_m$ be the monic generator of the ideal $J_D = \{ Q \in \C[t] \ | \ Q(\ad(\theta))(D)  \in I^m \dd(X) \}$. The set of roots of $P_m$ is denoted $\Spec_m (D)$. We say that $D \in \ak_{\ell} \dd(X)$ if $\max \{ \bigcup_m \Spec_m (D) \} \le \ell$. If $D \in V_k \dd(X)$, then the proof of Lemma \ref{lem:eulocallyfinite} shows that $\ell \le k$. 

\begin{prop}\label{prop:WequalsV}
The filtration $\ak_{\idot} \dd(X)$ equals the $V$-filtration $V_{\idot} \dd(X)$. 
\end{prop}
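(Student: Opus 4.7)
The first inclusion $V_k\dd(X)\subseteq\ak_k\dd(X)$ is essentially contained in the proof of Lemma~\ref{lem:eulocallyfinite}(i). Given $D\in V_k\dd(X)$, iterated application of the relation $(\ad\theta - j)(V_j\dd(X))\subseteq V_{j-1}\dd(X)$ produces, for every $m\ge 1$, the polynomial $P_m(t)=\prod_{j=0}^{k+m-1}(t-(k-j))$, whose roots $\{-m+1,\ldots,k\}$ all lie in $(-\infty,k]$, and which satisfies $P_m(\ad\theta)(D)\in V_{-m}\dd(X)\subseteq I^m\dd(X)$. Hence $\Spec_m(D)\subseteq\{-m+1,\ldots,k\}$, so that $\max\bigcup_m\Spec_m(D)\le k$, i.e.\ $D\in\ak_k\dd(X)$.

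For the reverse inclusion, I would first reduce to an approximate statement modulo $I^m\dd(X)$. Set $M_m:=\dd(X)/I^m\dd(X)$ with the induced filtration $V_j M_m:=(V_j\dd(X)+I^m\dd(X))/I^m\dd(X)$; this filtration is exhaustive from above and vanishes below degree $-m$ because $V_{-m}\dd(X)\subseteq I^m\dd(X)$. On each graded piece $\gr_V^j M_m$ the relation $[\theta,D]-jD\in V_{j-1}\dd(X)$ forces $\ad\theta$ to act as the scalar $j$. Combined with the local finiteness of $\ad\theta$ on $M_m$ provided by Lemma~\ref{lem:eulocallyfinite}(i), a standard filtered-splitting argument (if $x\in M_m^{(\alpha)}$ is nonzero and $j_0$ is minimal with $x\in V_{j_0} M_m$, then $\ad\theta$ acts as $j_0$ on $x+V_{j_0-1}M_m$, forcing $j_0=\alpha$) yields a generalized eigenspace decomposition $M_m=\bigoplus_\alpha M_m^{(\alpha)}$ together with the identification $V_k M_m=\bigoplus_{\alpha\le k}M_m^{(\alpha)}$. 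If $D\in\ak_k\dd(X)$, its image in $M_m$ has generalized $\ad\theta$-eigenvalues $\le k$, hence lies in $V_k M_m$; that is, $D\in V_k\dd(X)+I^m\dd(X)$ for every $m\ge 1$.

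The remaining step is to deduce $D\in V_k\dd(X)$ itself. Since both $V_\bullet$ and $\ak_\bullet$ are quasi-coherent and compatible with Zariski localization, it suffices to argue in local \'etale coordinates $(x_1,\ldots,x_n,y_1,\ldots,y_p)$ on $X$ with $Y=V(y_1,\ldots,y_p)$. These coordinates induce a $\Z$-grading on $\dd(X)$ by assigning $y_j$ weight $-1$ and $\pa_{y_j}$ weight $1$, with respect to which $V_k\dd(X)=\bigoplus_{j\le k}\dd(X)^{[j]}$ is a homogeneous subspace. The ideal $I^m\subseteq\mc{O}(X)$ is homogeneous for the induced grading on functions, so the left ideal $I^m\dd(X)$ is homogeneous as well. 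Writing $D=v+e$ with $v\in V_k\dd(X)$ and $e\in I^m\dd(X)$, and extracting the degree-$j$ homogeneous components for $j>k$, one has $v^{[j]}=0$ and therefore $D^{[j]}=e^{[j]}\in I^m\dd(X)$. Varying $m$ and invoking Lemma~\ref{lem:eulocallyfinite}(ii) gives $D^{[j]}=0$ for each $j>k$, so $D\in V_k\dd(X)$.

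The main obstacle in this proof is the passage from ``approximation modulo $I^m\dd(X)$ for every $m$'' to exact membership in $V_k\dd(X)$. One cannot invoke $I$-adic closedness directly, since $\dd(X)/V_k\dd(X)$ is not finitely generated over $\mc{O}(X)$ and Krull's intersection theorem does not apply. The local grading provides the structural feature required, namely that $V_k\dd(X)$ and $I^m\dd(X)$ are simultaneously homogeneous in a common decomposition, so that the obstruction to membership can be detected component by component.
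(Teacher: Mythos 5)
Your step 2 --- reducing $D\in\ak_k\dd(X)$ to the approximate membership $D\in V_k\dd(X)+I^m\dd(X)$ for every $m$, via the generalized $\ad(\theta)$-eigenspace decomposition of $M_m=\dd(X)/I^m\dd(X)$ and the identity $V_kM_m=\bigoplus_{\alpha\le k}M_m^{(\alpha)}$ --- is correct and takes a genuinely different route from the paper. The paper forms the $V$-adic completion $\dd^V(X)$, expands the image of $D$ as $D_k+D_{k-1}+\cdots$ into $\ad(\theta)$-homogeneous components, and then appeals to the faithful action of $\dd^V(X)$ on $\widehat{\C}[X]_Y$ to detect the top piece $D_k$. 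You instead pass to the $I$-adic quotients $M_m$; both arguments isolate the same eigenvalue bookkeeping, but your version bypasses the completion and the faithful-action step.

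Your step 3, however, contains a real gap. The $\Z$-grading you place on $\dd(X)$ --- weight $-1$ on $y_j$, weight $+1$ on $\pa_{y_j}$ --- does not exist for a general smooth affine $X$: it would force $\mc{O}(X)$ itself to decompose as a direct sum of $y$-homogeneous pieces, which fails as soon as $\mc{O}(X)$ is a proper \'etale extension of a polynomial ring. For instance, a unit like $(1-y_1)^{-1}\in\mc{O}(X)$ lies in $V_0\dd(X)$ but cannot be written as a finite sum of $y$-homogeneous elements. The associated graded $\gr_V\dd(X)$ is graded, but the $V$-filtration on $\dd(X)$ itself does not split in general, and working \'etale-locally does not restore the grading.

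The conclusion of step 3 is nevertheless true, and follows directly from the definition of the $V$-filtration with no grading at all. Write $D=v+e$ with $v\in V_k\dd(X)$ and $e\in I^m\dd(X)$. For any $r$ and any $f\in I^r$ one has $v(f)\in I^{r-k}$, while $e=\sum_i g_iE_i$ with $g_i\in I^m$ forces $e(f)=\sum_i g_iE_i(f)\in I^m$. Taking $m\ge r-k$ gives $D(f)\in I^{r-k}$, and since $r$ was arbitrary we conclude $D\in V_k\dd(X)$. With this replacement for step 3, your argument is complete.
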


\begin{proof}
Let $D \in V_k \backslash V_{k-1}$. We have already shown that $D \in \ak_l$ for some $l \le k$. Therefore, we just need to show that $l = k$. Let $\dd^V (X) = \lim_{-\infty \leftarrow k} \dd(X) / V_k \dd(X)$ be the completion of $\dd(X)$ with respect to the $V$-filtration. As noted in \cite[Section 6.8]{CharCycles}, if $\dd^V(X)(k) = \{ D \in \dd^V(X) \ | \ [\theta,D] = k D \}$, then $V_k \dd^V(X)$ is the closure of $\dd^V(X)(k) \oplus \dd^V(X)(k-1) \oplus \ds $ with respect to the grading filtration. The image of $D$ in $\dd^V(X)$ equals $D_k + D_{k-1} + \ds $, where $D_k \in \dd^V(X)(k)$ is non-zero. Now 
$$
P_m(\ad(\theta))(D) = P_m(k) D_k + P_m(k-1) D_{k-1} + \ds \in I^m \dd^V (X).
$$
Since we have $l = k$ if and only if there exists some $m$ such that $P_m(k) = 0$, we need to show that, for $m \gg 0$, there is no element $E \in I^n \dd^V (X) \cap V_k \dd^V (X)$ with $E_k = D_k$. There is a faithful action of $\dd^V (X)$ on $\widehat{\C}[X]_Y$, the completion of $\C[X]$ with respect to $I$ such that $D \in V_k \dd^V (X)$ maps $I^m \widehat{\C}[X]_Y$ into $I^{m-k} \widehat{\C}[X]_Y$. The element $D_k$ also belongs to $V_k \dd^V (X)$. Since it is assumed to be non-zero, there is a function $f \in \widehat{\C}[X]_Y$ such that $D_k(f) \neq 0$. The $I$-adic filtration on $\widehat{\C}[X]_Y$ is seperating. Therefore, there exists some $m$ such that $D_k(f) \notin I^m \widehat{\C}[X]_Y$. Since $I^m \widehat{\C}[X]_Y$ is the closure of $(\widehat{\C}[X]_Y)_m \oplus (\widehat{\C}[X]_Y)_{m+1} \oplus \ds$, and $D_k$ is homogeneous, we may assume that $f \in (\widehat{\C}[X]_Y)_l$ for some $l$. For $E \in I^m \dd^V (X) \cap V_k \dd^V (X)$, we clearly have $E(f) \in I^m \widehat{\C}[X]_Y$. The degree of $D_k(f)$ is $k - l > -m$. On the other hand, $E(f) = E_k(f) + E_{k-1}(f) + \ds$ belongs to $I^m \widehat{\C}[X]_Y$, and the fact that $f$ homogeneous implies that $E_k(f) = 0$. Hence $E_k \neq D_k$ as required. 
\end{proof}

We now return to the setting of \S\ref{jj2}.
Let $\mc{J}$ be the sheaf of ideals defining the smooth, closed subvariety $\mc{Y}$ in $\X^{\circ}$. For each $m \ge 0$, define $\mc{J}_m = (\varpi_{\idot} \mc{J}^m)^{G}$, a sheaf of ideals in $\mc{O}_{\mggW}$. As in section \ref{sec:notation}, let $\mc{I}$ be the sheaf of ideals in $\pi_{\idot} \mc{O}_{\TSL^{\circ}}$ defining $W \cdot \TSL^{W_L}_{\circ}$ and set $\mc{I}_m = (\mc{I}^m)^W$. 
\begin{lem}\label{claim:keyinclusionlemma}
For each $m \ge 1$, we have $\mc{J}_m = \mc{I}_m$ as subsheaves of $\mc{O}_{\mggW}$.  
\end{lem}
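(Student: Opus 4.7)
Both $\mc{J}_m$ and $\mc{I}_m$ are coherent sheaves of ideals in $\mc{O}_{\mggW}$ supported on $\fc$ (each coincides with $\mc{O}_\mggW$ on the complement of $\fc$), so it suffices to check the equality on an \'etale neighborhood of an arbitrary point $\bar t\in\fc$, which we realize as the image of some $t\in\TSL_L\subset\TSL^\circ$. The $W$-equivariant map $\iota:\TSL^\circ\to\X^\circ,\ t'\mapsto(t',0),$ induces the canonical Chevalley restriction isomorphism $\iota^*:\mc{O}_{\X^\circ}^G\iso\mc{O}_{\TSL^\circ}^W$ from \cite[Lemma 3.2.1]{CherednikCurves}. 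For the inclusion $\mc{J}_m\subseteq\mc{I}_m$, one observes $\iota^{-1}(\mc{Y})=W\TSL_L$, so $\iota^*\mc{J}$ (having vanishing set $W\TSL_L$) is contained in the reduced ideal $\mc{I}$; hence $\iota^*(\mc{J}^m)=(\iota^*\mc{J})^m\subseteq\mc{I}^m$, and passing to invariants gives the inclusion.

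For the reverse inclusion I would invoke Luna's slice theorem at $(t,0)\in\mc{Y}$, whose $G$-stabilizer is exactly $L$. One obtains an \'etale local isomorphism $\X^\circ\simeq G\times_L S$ near the orbit $G\cdot(t,0)$, with $L$-equivariant slice $S\simeq\mf{a}\oplus\mf{l}'\oplus V$ (where $\mf{a}=\mf{z}_\mf{l}$), such that $\mc{Y}\cap S=\mf{a}$ and $\mc{J}|_S=(\mf{l}'^*,V^*)\cdot\mc{O}_S$. Dually, near $t\in\TSL^\circ$ the projection $\pi$ locally identifies $\mggW$ with $\mf{t}/W_L$ for $\mf{t}=\mf{a}\oplus\mf{t}'$ ($W_L$ acting trivially on $\mf{a}$ and as its reflection representation on $\mf{t}'$), and locally $\mc{I}=(\mf{t}'^*)\cdot\mc{O}_\mf{t}$.

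The key computation is that both $\mc{J}_m$ and $\mc{I}_m$ unfold under these identifications to the same ideal
\[
\C[\mf{a}]\otimes\bigoplus_{N\geq m}(\sym^N\mf{t}'^*)^{W_L}\ \subseteq\ \C[\mf{t}]^{W_L}\cong\mc{O}_{\mggW,\bar t}.
\]
For $\mc{I}_m=(\mc{I}^m)^{W_L}$ this is immediate since $W_L$ acts trivially on $\mf{a}$. For $\mc{J}_m=((\mf{l}'^*,V^*)^m\cdot\mc{O}_S)^L$, one expands by total degree in $\mf{l}'^*\oplus V^*$: the pieces with positive $V^*$-content vanish because $V$ admits no nonzero $L$-invariant polynomials, and the pure $\mf{l}'^*$-pieces give $(\sym^N\mf{l}'^*)^L\cong(\sym^N\mf{t}'^*)^{W_L}$ via Chevalley restriction for $(\mf{l}',W_L)$. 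The main obstacle is matching the two local descriptions of $\mc{O}_\mggW$ near $\bar t$---Luna's slice on the $\X^\circ$ side versus the $W_L$-quotient of $\mf{t}$ on the $\TSL^\circ$ side---and tracking the residual $N_L/W_L=N_G(L)/L$ action that promotes the $W_L$-invariant matching above to the genuine equality of the globally defined ideals $\mc{J}_m$ and $\mc{I}_m$; this reduces to essentially bookkeeping once the $W_L$-invariant matching is set up.
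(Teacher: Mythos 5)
Your argument takes a genuinely different route from the paper's. The paper works directly with restriction to the torus: it fixes an affine $U\subset\mggW$, writes $\bj:\C[\varpi^{-1}(U)]\to\C[\pi^{-1}(U)]$ for restriction of functions, shows by a tangent-space computation that $\mc{Y}$ and $\TSL^\circ$ meet cleanly inside $\X^\circ$ (so that $\bj(J^m)=I^m$, citing \cite[Lemma 5.1]{Li}), and then concludes using the Chevalley isomorphism on $G$-invariants. You instead invoke Luna's slice theorem at the closed orbit through $(t,0)$ and compute both ideals explicitly inside $\C[S]^L$ for the linear slice $S=\mf{a}\oplus\mf{l}'\oplus V$. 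Both of your invariant-theoretic inputs are correct: $(\sym^a(\mf{l}')^*\otimes\sym^bV^*)^L=0$ for $b>0$ because the center of $L$ acts trivially on $\mf{l}'$ but scales each block of $V$ by a nontrivial character, and Chevalley restriction for $(\mf{l}',W_L)$ gives $(\sym^N(\mf{l}')^*)^L\cong(\sym^N\mf{t}'^*)^{W_L}$. Your easy inclusion $\mc{J}_m\subseteq\mc{I}_m$ is also fine and is essentially the forward containment in the paper's final displayed equality.

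However, the step you dismiss as ``essentially bookkeeping'' is where the substance of the lemma actually sits, and leaving it open is a genuine gap. You need the two chains of identifications --- Luna slice followed by slice Chevalley on the $\X^\circ$ side, versus Chevalley restriction for $\X^\circ$ followed by linearization of $\TSL^\circ$ at $t$ on the torus side --- to produce the \emph{same} isomorphism $\widehat{\mc{O}}_{\mggW,\bar t}\cong\C[\![\mf{t}]\!]^{W_L}$, one that moreover matches the subspace $\mf{a}$ coming from the normal cone to $\mc{Y}$ with the subspace $\mf{a}=T_t\TSL_L$. This compatibility is precisely what the paper's cleanliness verification accomplishes, and it does not come for free from the existence of the separate local pictures. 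A workable fix within your framework is to choose the $L$-slice $S$ through $(t,0)$ so that it contains a neighborhood of $t$ in $\TSL\times\{0\}$ (possible, since $\mf{t}\subset\mf{l}$ while the orbit tangent is $\mf{l}^{\perp}\times\{0\}$); then the slice Chevalley restriction $\C[S]^L\to\C[\TSL]^{W_L}_{\mathrm{loc}}$ literally \emph{is} restriction to $\TSL$, and the two local pictures match tautologically. Finally, the residual $N_L/W_L$-action you flag is a non-issue: for $t\in\TSL_L$ one has $W_t=W_L$ on the nose, so the stalk $\mc{O}_{\mggW,\bar t}$ already only involves $W_L$-invariants and there is nothing further to track.
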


\begin{proof}
It suffices to check the equality locally on $\mggW$. If $x \in \mggW \sminus \fc $ then $\mc{J}_m = \mc{I}_m = \mc{O}_{\TSL /W}$ in a neighborhood of $x$. Therefore, we may assume that $x \in \fc $ and $U$ an affine open neighborhood of $x$ in $\mggW$. Since the maps $\varpi$ and $\pi$ are affine, $\varpi^{-1}(U)$ and $\pi^{-1}(U)$ are affine open subsets of $\X^{\circ}$ and $\TSL^{\circ}$ respectively. Let $J = \Gamma(\varpi^{-1}(U),\mc{J})$ and $I = \Gamma(\pi^{-1}(U),\mc{I})$. We assume that $\TSL$ is contained in $\LSL$ so that $\TSL \cap \varpi^{-1}(U) = \pi^{-1}(U)$. 
Restriction of functions defines a surjection $\bj : \Gamma ( \varpi^{-1}(U), \mc{O}_{\X}) \rightarrow \Gamma(\pi^{-1}(U), \TSL)$. We claim that $\bj(J) = I$. Since $\pi^{-1}(U)$, $\mc{Y}$ and $\pi^{-1}(U) \cap \mc{Y}$ are smooth varieties, this will follow from the fact that the intersection $\pi^{-1}(U) \cap \mc{Y}$ is clean; see \cite[Lemma 5.1]{Li}. That is, we must show that 
$$
T_y \TSL_L = (T_y \TSL) \cap (T_y \mc{Y}), \quad \forall \ y \in \pi^{-1}(U) \cap \mc{Y}.
$$
Let $y \in \pi^{-1}(U) \cap \mc{Y}$. We have 
$$
T_y \TSL_L = \mf{t}_{L}, \quad T_y \TSL = \mf{t}, \quad T_y \mc{Y} = [\mf{z}_{\mf{l}'}, \g] + \mf{z}_{\mf{l}'}.
$$
Decompose $\g = \mf{l} \oplus \mf{l}^{\perp}$ with respect to the trace form. This is a $\mf{z}_{\mf{l}'}$-stable decomposition. Thus, 
$$
[\mf{z}_{\mf{l}'}, \g] + \mf{z}_{\mf{l}'} = [\mf{z}_{\mf{l}'}, \mf{l}^{\perp}] \oplus \mf{z}_{\mf{l}'} \subset \mf{l}^{\perp} \oplus \mf{z}_{\mf{l}'}.
$$
Since $\mf{z}_{\mf{l}'} = \mf{t}_{L}$, it follows that $T_y \TSL_L = (T_y \TSL) \cap (T_y \mc{Y})$. Then $\bj(I^m) = \mc{I}^m$ too.

 The restriction of $\bj$ to $\C[\varpi^{-1}(U)]^G$ is an isomorphism onto $\C[\pi^{-1}(U)]^W$. Therefore 
$$
\bj ((J^m)^G) = \bj(J^m \cap \C[\varpi^{-1}(U)]^G) = I^m \cap \C[\pi^{-1}(U)]^W = (I^m)^W
$$
as required. 
\end{proof}

\begin{lem}\label{lem:theta}
Let $\eu$ be the Euler vector field on $\NN_{\X/ \Y}$ corresponding to the $\Cs$-action along fibers and fix $\mathbf{n} = \frac{n(n+1)}{2}$. 

\vi Let $U \subset \mggW$ be an affine open set. Then, there exists a $G$-invariant vector field $\theta$ in $\Gamma(\varpi^{-1}(U) , V_0 \Theta_{\X})$ lifting $\eu$.

\vii The element $\mathfrak{R}(\theta) + \mathbf{n} c$ in $\Gamma(U,\mc{U}_{\kappa})$ is a local lift of the Euler element $\eu_{\kappa} \in \htrig_{\kappa}(W_L)$, as defined in Lemma \ref{lem:globalEu}.  

\end{lem}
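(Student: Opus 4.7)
For part (i), I would first observe that $\varpi \colon \X \to \TSL/W$ is an affine morphism: it factors as the projection $\X \to \SSL$ followed by the adjoint quotient $\SSL \to \SSL/\!/G \simeq \TSL/W$, both of which are affine. Hence $\varpi^{-1}(U)$ is affine. The short exact sequence of $G$-equivariant coherent $\oo_\X$-modules
$$
0 \to V_{-1} \Theta_\X \to V_0 \Theta_\X \to V_0 \Theta_\X / V_{-1} \Theta_\X \to 0,
$$
with right-hand term canonically isomorphic to the degree-$0$ vector fields along the fibres of $\NN_{\X/\Y} \to \Y$, remains exact after applying the exact functor of $G$-invariants (since $G$ is reductive) and then global sections over the affine $\varpi^{-1}(U)$. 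The Euler vector field $\eu$ for the $\Cs$-action on the normal bundle is a $G$-invariant global section of the right-hand sheaf, so it lifts to a $G$-invariant section $\theta \in \Gamma(\varpi^{-1}(U),V_0\Theta_\X)$, as required.

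For part (ii), I would make an explicit choice of lift and compute $\mathfrak{R}(\theta)$ directly. A natural decomposition is $\theta = \eu_V + \theta_L$, where $\eu_V = -\mu_\X(\one)$ is the Euler vector field on $V$ (a canonical $G$-invariant lift of the $V$-component of $\eu$) and $\theta_L$ is a $G$-invariant lift of the $\mf{l}'$-component. For $\theta_L$ one can use the vector field which, in a local product decomposition $L^\circ \simeq Z(\LSL)^\circ \times L'$, scales the $L'$-factor algebraically (for instance $\theta_L|_{(z,u)} = zu \cdot (u - 1)$, obtained by right-translation of $u - 1 \in \mf{l}$); since we only need $\theta_L$ modulo $V_{-1}$, the various local choices patch together after averaging over $G$.

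The computation of the radial part then proceeds in two steps. First, since $\mu_\X(\one) - c \Tr(\one) \in \g_c$ and $\Tr(\one) = n$, the element $\eu_V$ maps to the scalar $-cn$ in $\ddd^G / (\ddd \cdot \g_c)^G$. Second, using the description of $\mathfrak{R}$ from the appendix, specifically as conjugation by the Weyl denominator $\delta$ followed by Harish-Chandra restriction to $\TSL$, one shows that $\mathfrak{R}(\theta_L)$ equals the Euler element of $\htrig_\kappa(W_L)$ shifted by $-c\binom{n}{2}$, where the shift arises from the half-sum of positive roots of $\mf{sl}_n$ appearing in the conjugation by $\delta$. Since $\mathbf{n} = n + \binom{n}{2} = n(n+1)/2$, summing the two contributions yields $\mathfrak{R}(\theta) + \mathbf{n} c$ as a local lift of $\eu_\kappa$. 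The main obstacle is the exact verification of these constant shifts, which requires a careful bookkeeping of the Dunkl-type correction terms in the radial-parts formula together with the conventions for $\kappa = 1 - c$ and the characterization of $\eu_\kappa$ provided by Lemma \ref{lem:globalEu}.
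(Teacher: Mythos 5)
Your part (i) follows the same route as the paper: use the exactness of $G$-invariants and the affineness of $\varpi^{-1}(U)$ to lift a $G$-invariant section of $\gr^0_V \Theta_\X$ along the sequence $V_{-1}\Theta_\X \to V_0\Theta_\X \to \gr^0_V\Theta_\X$. That part is fine.

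For part (ii), your decomposition $\theta = \eu_V + \theta_L$ and the evaluation $\mathfrak{R}(\eu_V) = -cn$ (from the moment map relation $\mu_\X(\one) - c\Tr(\one)\in\g_c$) is a reasonable reorganization of the argument. However, there is a genuine gap: you assert that $\mathfrak{R}(\theta_L)$ equals $\eu_\kappa$ shifted by $-c\binom{n}{2}$ but do not establish it, and this claim is precisely the computational heart of the lemma. What $\mathfrak{R}(\theta_L)$ picks up is the term $-c\,\theta_L(\bs)\bs^{-1}$; showing that this reduces to the scalar $-c\binom{n}{2}$ modulo $\mc{J}$ requires computing the order of vanishing of $\bs$ in the $\mf{l}'$-direction along $\Y$. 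The paper does not decompose $\theta$ but instead computes the total vanishing order of $\bs$ along $\Y$ directly, via the determinant expansion in \eqref{eq:Js}, obtaining $\mathbf{n}$; the $\epsilon^n$ from $v = \epsilon u$ accounts for your $\eu_V$-piece and the remaining $\epsilon^{\binom{n}{2}}$ from the $(1+\epsilon X)$-corrections accounts for the $\theta_L$-piece. Your proposal defers exactly this determinant calculation under the phrase ``one shows that,'' while acknowledging it is the main obstacle. Two further points: your attribution of the $\binom{n}{2}$-shift to ``the half-sum of positive roots'' is a misnomer --- what actually appears is the \emph{number} of positive roots (or rather the vanishing order of $\bs$), not $\rho$; and the conjugation in $\mathfrak{R}$ is by $\bs^{-c}$ rather than by $\delta^{-c}$, so invoking $\delta$ directly glosses over the fact that $\delta$ lives on $\TSL$ while $\theta_L$ differentiates in the $\mf{l}'$-direction transverse to $\TSL$, making a clean ``conjugation by $\delta$'' heuristic harder to justify than it appears. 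Finally, your concrete proposal $\theta_L|_{(z,u)} = zu\cdot(u-1)$ is not quite the fiberwise Euler field (one expects something like $zu\cdot\log u$), and the averaging over $G$ you invoke is more fragile than the paper's abstract lift from part (i), which sidesteps choosing an explicit $\theta_L$ altogether.
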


\begin{proof}
\vi The $\Cs$-action on $\NN_{\X/\Y}$ clearly commutes with the action of $G$. Therefore, $\eu$ belongs to $\Gamma (\NN_{\X/\Y}, \Theta_{\NN_{\X/\Y}})^G$. The identification $\gr_V \dd_\X \iso \dd_{\NN_{\X/\Y}}$ restricts to a $G$-equivariant isomorphism $\gr_V \Theta_{\X} \iso \Theta_{\NN_{\X/\Y}}$. Therefore, there exists some section $\eu' \in \Gamma(\NN_{\X/\Y},\gr^0_V \Theta_{\X})^G$ mapping to $\eu$ under this identification. Since the map $\varpi$ is affine, $\varpi^{-1}(U)$ is an affine, $G$-stable open subset of $\X$. Then, the natural map 
$$
\Gamma(\varpi^{-1}(U) , V_0 \Theta_{\X} / V_{-1} \Theta_{\X}) = \Gamma(\varpi^{-1}(U), V_0 \Theta_{\X}) / \Gamma(\varpi^{-1}(U), V_{-1} \Theta_{\X}) \to \Gamma(\varpi^{-1}(U),\gr_V^0 \Theta_{\X})
$$
is an isomorphism. Our assumptions imply that $\Gamma(\varpi^{-1}(U) , V_0 \Theta_{\X})$ and $\Gamma(\varpi^{-1}(U), V_{-1} \Theta_{\X})$ are rational $G$-modules. Thus, we may choose a $G$-invariant lift $\theta$ of $\eu'$ in $\Gamma(\varpi^{-1}(U) , V_0 \Theta_{\X})$. 

\vii It suffices to show that $\mathfrak{R}(\theta)$ acts on $\mc{I}_m /  \mc{I}_{m+1}$ as multiplication by $m - \mathbf{n} c$.  Recall that $\mathfrak{R}(\theta)(f) = [s^{c} \theta(s^{-c} \varpi^*(f))] |_{\TSL^{\reg}}$. First we calculate the smallest $m$ such that $s \in \mc{J}^m \sminus \mc{J}^{m+1}$. Let $\mc{J}_L$ be the sheaf of ideals defining $Z(\LSL)^{\circ} \times \{ 0 \}$ in $\X_{\LSL}^{\circ}$. By Lemma \ref{lem:Giso}, $\mc{J}^m / \mc{J}^{m+1} \simeq (\mc{O}_G \boxtimes \mc{J}^m_L / \mc{J}^{m+1}_L)^{N_G(L)}$. Since $s$ is a $G$-semi-invariant, this implies that $s \in \mc{J}^m \sminus \mc{J}^{m+1}$ if and only if $s_L \in \mc{J}^m_L \sminus \mc{J}^{m+1}_L$, where $s_L$ is $s$ restricted to $\X_{\LSL}^{\circ}$. Let $z \in Z(L)^{\circ}$, $X \in \mf{l}'$ and $u \in V$. Taking $v = \epsilon u$ and $g = z(1 + \epsilon X)$, 
\begin{align}\label{eq:Js}
s_L (g,v) & = \epsilon^n \det( u, z u, \ds, z^{n-1} u) + \epsilon^{n+1}(\cdots) + \cdots \notag \\
 & = \epsilon^{\mathbf{n}} \det(u, z X u, \ds, z^{n-1} X^{n-1} u) + \cdots
\end{align}
since all terms of degree less that $\mathbf{n}$ in $\epsilon$ vanish on $\NN_L$. Hence $s \in J^{\mathbf{n}} \sminus J^{\mathbf{n}+1}$. By part \vi this implies that $\theta(s) - \mathbf{n} s \in J^{\mathbf{n}+1}$. Since $\C[\X]^G s \subset \C[\X]^{\det^{-1}}$ is contained in $\C[\X^{\mathrm{cyc}}]^{\det^{-1}}$, it follows from \cite[Corollary 5.3.4]{BFG} that  $\C[\X]^G s = \C[\X]^{\det^{-1}}$ and hence $\C[\varpi^{-1}(U)]^{\det^{-1}}$ equals $\C[U] s$. Then, the fact that $\theta(s) \in \C[\varpi^{-1}(U)]$ is still a $\det^{-1}$-semi-invariant implies that $\theta(s) = s f$ for some invariant function $f$. Thus, $\theta(s)s^{-1} - \mathbf{n} \in \mc{J}_{\mathbf{n}+1}$. To complete the proof of \vii, we identify $\mc{J}_m = \mc{I}_m$ via Lemma \ref{claim:keyinclusionlemma}. Let $f \in \mc{I}_m$. Since $\theta$ is a derivation, $\mathfrak{R}(\theta)(f) = \theta(f) - c \theta(s) s^{-1} f$. The element $(\theta(f) - c \theta(s) s^{-1} f)$ equals $(m - c\mathbf{n}) f$ modulo $\mc{I}_{m+1}$, hence $\mathfrak{R}(\theta)(\bar{f}) = (m - c \mathbf{n})\bar{f}$ for all $\bar{f} \in \mc{I}_m / \mc{I}_{m+1}$. 
\end{proof}

\subsection{Proof of Theorem \ref{thm:filtrationsagree}}

Since Theorem \ref{thm:filtrationsagree} is a local statement on $\mggW$, we fix once and for all an affine open subset $U$ of $\mggW$. Set 
$$
\htrig = \Gamma(U,\mc{U}_{\kappa}), \quad \ddd = \Gamma(\varpi^{-1}(U), \dd_{\X^{\circ}}). 
$$
By Lemma \ref{lem:theta}, we can choose a $G$-invariant lift $\theta \in \ddd^G$ of the Euler vector field on $\NN_{\X^{\circ} / \mc{Y}}$. If $P(\ad(\theta))(D) \in I^m \ddd$ and $D \in \ddd^G$, then $P(\ad(\theta))(D) \in (I^m \ddd)^G$. Therefore by Proposition \ref{prop:WequalsV}, the $\nV$-filtration on $\ddd^G$ can be defined by considering the action of $\ad(\theta)$ on $\ddd^G / (I^m \ddd)^G$.

The sheaf $\mc{U}_{\kappa}$ is a subsheaf of $\dd_{\TSL/W}$. Thus, $\htrig$ is a subalgebra of $\dd(U)$. The $V$-fltration on $\htrig$ can be defined in terms of its action on $\C[U]$ as follows. By definition, $V_{m} \htrig$ equals $\{ D \in \htrig \ | \ D( \mc{I}^j) \subset \mc{I}^{j - m}, \ \forall \ j \in \Z \}$. Since $D$ is $W$-invariant and $\mc{I}_m$ is defined to be $(\mc{I}^m)^W$, $D \in V_{m} \htrig$ implies that $D(\mc{I}_j) \subset \mc{I}_{j-m}$. On the other hand, if $D \in \htrig$ such that $D(\mc{I}_m) \subset \mc{I}_{m - k}, \ \forall \ m \in \Z$, then since $D = \Res(e D e)$, we have 
$$
e D e ( \mc{I}^m) = e D( \mc{I}_m) \subset \mc{I}_{m - k} \subset \mc{I}^{m-k},
$$
which implies that $e D e \in V_k (e {\mathcal H}_{\kappa}(U,W) e)$ and hence $D \in V_{k} \htrig$.  

 Lemma \ref{lem:theta} (2) implies that there is a well-defined action of $\ad(\mathfrak{R}(\theta))$ on $\htrig / \mc{I}_m \dd(U) \cap \htrig$. Since $\htrig / \mc{I}_m \dd(U) \cap \htrig$ embeds in $\dd(U) / \mc{I}_m \dd(U)$, the proof of Lemma \ref{lem:eulocallyfinite} shows that this action is locally finite. We mimic the earlier construction and, for
$D\in \htrig$ and any $m\in\Z$, let  $\Spec_m (D)$  be the set of roots
of the monic generator of the ideal
$\{Q\in\C[t]\mid Q(\ad\mathfrak{R}(\theta))(D)\in \mc{I}_m \dd(U) \cap \htrig\}$.
Then, we define a filtration $\ak_m \htrig$ on $\htrig$
 by 
\[\ak_\ell \htrig\ :=\ \{D\in \htrig\mid \ \Spec_m(D)\sset(-\infty,\ell+\mathbf{n}c],\enspace \forall m\}.\]
Repeating the proof of
 Proposition \ref{prop:WequalsV}  word for word,
cf. also  the proof of Theorem \ref{prop:specialcoherent},
one obtains the following result.

\begin{prop}\label{prop:UVagree}
The filtrations $V_{k} \htrig$ and $\ak_k \htrig$ on $\htrig$ are equal. \qed
\end{prop}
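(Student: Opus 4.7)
The plan is to mirror the proof of Proposition \ref{prop:WequalsV} step for step, with $\dd(X)$ replaced by $\htrig$, the ideal $I$ replaced by the system of ideals $\mc{I}_\idot$ on $\mggW$, and the vector field $\theta$ replaced by its radial parts lift $\mathfrak{R}(\theta) \in \htrig$. One direction, $V_k \htrig \subseteq \ak_k \htrig$, is essentially built into the set-up: by Lemma \ref{lem:theta}(ii) the element $\mathfrak{R}(\theta) + \mathbf{n}c$ is a local lift of the Euler element $\eu_\kappa$, so by Theorem \ref{conj:Vfilt} it acts as multiplication by $k$ on the graded piece $\gr^k_V \htrig$. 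Since $\mathbf{n}c$ is central, $\ad\mathfrak{R}(\theta) - k$ preserves $V_\idot$ and lowers the degree by one; iterating, the polynomial $\prod_{j=-m+1}^{k}(t-j)$ applied to $\ad\mathfrak{R}(\theta)$ sends $V_k \htrig$ into $V_{-m}\htrig \subseteq \mc{I}_m \dd(U) \cap \htrig$, which gives the desired bound on $\Spec_m(D)$ (after accounting for the conventional shift by $\mathbf{n}c$ in the definition of $\ak_\idot$).

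For the reverse inclusion $\ak_k \htrig \subseteq V_k \htrig$, let $D \in V_{k} \htrig \setminus V_{k-1}\htrig$ and suppose for contradiction that $D \in \ak_l \htrig$ for some $l < k$. Form the $V$-adic completion $\htrig^V := \varprojlim_\ell \htrig/V_{-\ell}\htrig$. By the standard argument from \cite[\S 6.8]{CharCycles} that was quoted in the proof of Proposition \ref{prop:WequalsV}, $V_k \htrig^V$ is the closure in the $V$-adic topology of $\bigoplus_{j \le k}\htrig^V(j)$, where $\htrig^V(j) := \{E \in \htrig^V : [\mathfrak{R}(\theta), E] = jE\}$; this decomposition is legitimate because $\ad\mathfrak{R}(\theta)$ acts on each $\gr^j_V \htrig$ as the scalar $j$. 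Expand $D = D_k + D_{k-1} + \cdots$ with $D_k \neq 0$. The hypothesis $D \in \ak_l$ produces, for each $m$, a polynomial $P_m$ with $P_m(k) \neq 0$ and $P_m(\ad\mathfrak{R}(\theta))(D) \in \mc{I}_m\dd(U)\cap\htrig$; matching homogeneous components forces $D_k$ to lie in the $V$-adic closure of $\mc{I}_m\dd(U)\cap\htrig$ for every $m$.

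The contradiction is obtained from the faithful action of $\htrig$ on $\C[U]$ afforded by the Dunkl embedding $\htrig \hookrightarrow \dd(\TSL^{\reg}/W)$, which extends to a faithful action of $\htrig^V$ on the $\mc{I}$-adic completion $\widehat{\C[U]}$. Since $D_k \neq 0$ and the $\mc{I}$-adic filtration on $\widehat{\C[U]}$ is separating, one may choose a homogeneous element $f \in \gr_{\mc{I}}\widehat{\C[U]}$ with $D_k(f) \neq 0$ and an integer $m$ with $D_k(f) \notin \mc{I}^m\widehat{\C[U]}$. Homogeneity of $f$, combined with the eigenvector property of each $D_j$, prevents cancellation among the $D_j(f)$ for $j \neq k$, so $D(f) \equiv D_k(f) \pmod{\mc{I}^{m+1}}$, which contradicts the previously obtained statement that $D_k(f) \in \mc{I}^m\widehat{\C[U]}$.

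The main technical point is to verify the analogue of Kashiwara's identification of $V_k \dd^V(X)$ with the closure of its homogeneous pieces in the Cherednik setting, and the existence of a faithful $V$-adic representation on which to test the nonvanishing of $D_k$. Both issues are formal once Theorem \ref{conj:Vfilt} identifies $\gr_V \htrig$ with a spherical Cherednik algebra carrying the compatible Euler grading coming from fiberwise dilation on $\NN_L$, and once one notices that the Dunkl embedding preserves the $V$-filtration by construction; this is precisely why the author signals that the argument of Proposition \ref{prop:WequalsV} applies verbatim.
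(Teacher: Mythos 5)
The proposal is correct and matches the paper's own (very terse) proof, which simply says that the proof of Proposition \ref{prop:WequalsV} repeats word for word; the student has carried out that repetition in detail, correctly substituting $\htrig$ for $\dd(X)$, the system of ideals $\mc{I}_\idot$ for the powers of $I$, and $\mathfrak{R}(\theta)$ for $\theta$, and correctly identifying the two ingredients that make the verbatim transfer legitimate (the Kashiwara-style homogeneous decomposition of the $V$-adic completion, whose validity rests on the grading of $\gr_V\htrig$ established by Theorem \ref{conj:Vfilt}, and the faithful, filtration-compatible action on $\widehat{\C[U]}$ furnished by the Dunkl embedding).
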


Propositions \ref{prop:WequalsV} and \ref{prop:UVagree} imply that in order to prove Theorem \ref{thm:filtrationsagree}, it suffices to show that $\mf{R}(\ak_m \ddd^G) = \ak_m \htrig$. Equivalently, we need to show that the filtrations on $\ddd^G$ defined by the adjoint action of $\theta$ on $\ddd^G / I_m \ddd^G$ and $\ddd^G / (I^m \ddd)^G$ are equal. For this it suffices to show that the filtrations $I_m \ddd^G$ and $(I^m \ddd)^G$ of $\ddd^G$ are comparable.  Let $\mc{F}_{\idot} \ddd$ denote the order filtration. Since $\theta$ is a vector field and the order filtration is exhaustive, we may calculate the $\ak$-filtration by considering the action of $\ad(\theta)$ on $(\mc{F}_{\ell} \ddd)^G / (I^m \mc{F}_{\ell} \ddd)^G$. Therefore it will actually suffice to show that the filtrations $I_m (\mc{F}_{\ell} \ddd)^G$ and $(I^m \mc{F}_{\ell} \ddd)^G$ of $(\mc{F}_{\ell} \ddd)^G$ are comparable. 

Thus, for each $m,\ell \ge 1$, we must find $m', m'' \gg 0$ such that 
$$
(I^{m'} \mc{F}_{\ell}  \ddd)^G \subset I_{m} (\mc{F}_{\ell}  \ddd)^G \subset (I^{m''} \mc{F}_{\ell}  \ddd)^G.
$$
We always have $I_{m} (\mc{F}_{\ell}  \ddd)^G \subset (I^{m} \mc{F}_{\ell}  \ddd)^G$ so we just need to show the existence of $m'$. We claim that there is some $m_0$ such that $I_m (I^{m_0} \mc{F}_{\ell}  \ddd)^G = (I^{m_0 + m} \mc{F}_{\ell}  \ddd)^G$ for all $m$ i.e. the filtration $\{ (I^{m} \mc{F}_{\ell}  \ddd)^G \}$ is $\{ I_m \}$-stable. By Artin-Rees theory, this is equivalent to showing the $(\oplus_{m} I_m)$-module $\oplus_m (I^{m} \mc{F}_{\ell}  \ddd)^G$ is finitely generated. Since $\mc{F}_{\ell}  \ddd$ is a finitely generated $\C[\varpi^{-1}(U)]$-module, the module $(\oplus_m I^{m} \mc{F}_{\ell}  \ddd)$ is finitely generated over the Rees algebra $(\oplus_m I^m)$. Therefore, Hilbert's Theorem implies that $(\oplus_m I^{m} \mc{F}_{\ell}  \ddd) = \oplus_m (I^{m} \mc{F}_{\ell}  \ddd)^G$ is finitely generated over $(\oplus_{m} I^m)^{G} = \oplus_{m} I_m$, as required. Since $(I^{m_0} \mc{F}_{\ell}  \ddd)^G \subset (\mc{F}_{\ell} \ddd)^G$ we may take $m'  = m_0 + m$. This completes the proof of Theorem \ref{thm:filtrationsagree}.

\subsection{}\label{sec:8.5} Recall that $N_L$ is the normalizer of $W_L$ in
$W$; we have $N_L / W_L \simeq N_G(L) / L$. Decompose $\mf{t} = \mf{t}^{W_L} \oplus \mf{t}_L$ as a $N_L$-module. All the reflections in $N_L$ are contained in $W_L$ and form a single conjugacy class. The group $N_L$ acts on $T_L \simeq Z(\LSL)^{\circ}$ and $T_L 
\times \mf{t}_{L}$. Therefore we may form the Cherednik algebra
${\mathcal H}_{\kappa}(\TSL_L \times \mf{t}_L, N_L)$, as in section
\ref{sec:Cheredefn}. It is a sheaf on $(\TSL_L \times \mf{t}_{L}) / N_L$ and its global sections $\H_{\kappa}(\TSL_L \times \mf{t}_L, N_L)$ contains $\dd(\TSL_L) \o \H_{\kappa}(\mf{t}_L,W_L)$ as a subalgebra. Let $e_N$ be the trivial idempotent in $\C N_L$ and set 
$$
\htrig_{\kappa}(W_L) := e_N \H_{\kappa}(\TSL_L \times \mf{t}_L, N_L) e_N.
$$
Category $\mc{O}$ for $\htrig_{\kappa}(W_L)$ is denoted $\mc{O}_{\kappa}(W_L)$. It is defined to be the category of all finitely generated $\htrig_{\kappa}(W_L)$-modules such that the action of $\C[\mf{t}_L^*]^{N_L}_+$ is locally nilpotent. 

\begin{lem}\label{lem:radios2}
The usual radial parts map extends to an isomorphism 
\beq{eq:radiso2}
\mathfrak{R}_L : (\dd(\NN_L) / \dd(\NN_L)\fl(\g)_c)^{N_G(L)} \iso \htrig_{\kappa}(W_L),
\eeq
where $\fl(\g) = \Lie L$ and $\fl(\g)_c = (\mu - \chi_c)(\fl(\g))$.
\end{lem}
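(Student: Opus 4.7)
The proof will follow the same pattern as the original radial parts isomorphism (Theorem~\ref{thm:radialparts}) for $(\X,G,\TSL,W)$, but applied to the analogous data $(\NN_L,L,\TSL_L\times\mf{t}_L,N_L)$. The key structural observation is that $\NN_L$ carries a product decomposition compatible with the target Cherednik algebra on $\TSL_L\times\mf{t}_L$.

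\medskip\noindent\textbf{Step 1 (Factor out the center of $\LSL$).}  Write $\fl(\g)=\mf{z}(\fl(\g))\oplus\mf{l}'(\g)$ in the obvious way.  The $L$-action on the base $Z(\LSL)^{\circ}$ of the bundle $\NN_L$ is conjugation and hence trivial, while on the fiber $\mf{l}'\times V$ it is the natural action.  Consequently the quantum moment map $\mu:\fl(\g)\to\dd(\NN_L)$ factors through $\dd(\mf{l}'\times V)$, and
\[
\dd(\NN_L)/\dd(\NN_L)\fl(\g)_c \ \simeq\ \dd(Z(\LSL)^{\circ})\otimes\bigl(\dd(\mf{l}'\times V)/\dd(\mf{l}'\times V)\fl(\g)_c\bigr).
\]
The distinction between the center of $\LSL$ (trivially acted upon) and the center of $L$ (acting by scalars on blocks) is absorbed into the character $c\Tr$; I would record this carefully so that the character distributes correctly over blocks in Step~2.

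\medskip\noindent\textbf{Step 2 (Block decomposition).}  The Levi $L\subset G$ corresponds to a partition $\lambda=(\lambda_1,\ldots,\lambda_s)$ of $n$, giving compatible decompositions $V=\bigoplus V_i$, $L=\prod GL(V_i)$, $\mf{l}'=\bigoplus\mf{sl}(V_i)$, so that $L$ acts on $\mf{l}'\times V$ as the product of its block mirabolic actions of $GL(V_i)$ on $\mf{sl}(V_i)\times V_i$.  Hence
\[
\bigl(\dd(\mf{l}'\times V)/\dd(\mf{l}'\times V)\fl(\g)_c\bigr)^{L} \ \simeq\ \bigotimes_i \bigl(\dd(\mf{sl}(V_i)\times V_i)/\dd(\mf{sl}(V_i)\times V_i)\mathfrak{gl}(V_i)_c\bigr)^{GL(V_i)}.
\]

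\medskip\noindent\textbf{Step 3 (Rational mirabolic radial parts per block).}  For each $i$, apply the rational mirabolic radial parts isomorphism \cite{GG} (the $\mf{sl}$-analogue of Theorem~\ref{thm:radialparts}) to identify the $i$-th tensor factor with $e_i\H_\kappa(\mf{t}_i,S_{\lambda_i})e_i$, the spherical rational Cherednik algebra of type $A_{\lambda_i-1}$ at parameter $\kappa=1-c$, where $\mf{t}_i$ is its reflection representation.  Combining Steps 1--3 with the obvious identification $\dd(Z(\LSL)^{\circ})=\dd(\TSL_L)$ (so $\TSL_L=Z(\LSL)^{\circ}$) gives
\[
\bigl(\dd(\NN_L)/\dd(\NN_L)\fl(\g)_c\bigr)^{L} \ \simeq\ \dd(\TSL_L)\otimes e_{W_L}\Bigl(\bigotimes_i \H_\kappa(\mf{t}_i,S_{\lambda_i})\Bigr)e_{W_L} \ =\ \dd(\TSL_L)\otimes e_{W_L}\H_\kappa(\mf{t}_L,W_L)e_{W_L},
\]
since $\mf{t}_L=\bigoplus_i\mf{t}_i$ and $W_L=\prod_i S_{\lambda_i}$.

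\medskip\noindent\textbf{Step 4 (Take $N_G(L)$-invariants).}  The quotient $N_G(L)/L\simeq N_L/W_L$ permutes blocks of equal size, inducing compatible actions on both sides; taking invariants and using $e_Ne_{W_L}=e_N$ produces
\[
\bigl(\dd(\NN_L)/\dd(\NN_L)\fl(\g)_c\bigr)^{N_G(L)} \ \simeq\ e_N\bigl[(\dd(\TSL_L)\otimes\H_\kappa(\mf{t}_L,W_L))\rtimes (N_L/W_L)\bigr]e_N.
\]

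\medskip\noindent\textbf{Step 5 (Identify with the Cherednik algebra on $\TSL_L\times\mf{t}_L$).}  Every pseudo-reflection in $N_L$ on $\TSL_L\times\mf{t}_L$ lies in $W_L$ and acts as the identity on the $\TSL_L$-factor; the reflecting hyperplanes are precisely $\TSL_L\times H_\alpha$.  Inspecting the Dunkl--Opdam formula \eqref{eq:Dunkl}, the correction terms contribute only to vector fields along $\mf{t}_L$ and exactly reproduce the rational Dunkl operators for $W_L$, while vector fields along $\TSL_L$ remain uncorrected.  By the PBW theorem for Cherednik algebras this yields an isomorphism of sheaves of algebras
\[
\H_\kappa(\TSL_L\times\mf{t}_L,N_L)\ \simeq\ (\dd(\TSL_L)\otimes\H_\kappa(\mf{t}_L,W_L))\rtimes(N_L/W_L),
\]
after which the right-hand side of Step 4 becomes exactly $\htrig_\kappa(W_L)$.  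The map $\mathfrak{R}_L$ is then the composition; naturality of the construction shows it coincides with the usual radial parts map (restriction of $L$-invariant differential operators to the regular locus of $\TSL_L\times\mf{t}_L$ followed by the Dunkl identification).

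\medskip\noindent\textbf{Main obstacle.}  The technical heart is Step~5: to justify the identification of $\H_\kappa(\TSL_L\times\mf{t}_L,N_L)$ as a semi-direct product one must carefully check that the Dunkl--Opdam operators for this hybrid (trigonometric on $\TSL_L$, rational on $\mf{t}_L$) action really do degenerate to ordinary derivations on the trigonometric factor when all reflecting hyperplanes are of the form $\TSL_L\times H_\alpha$, and that the parameter $\kappa=1-c$ arising from the $c\Tr$ character of $\fl(\g)$ matches the reflection parameter via the same formula \eqref{kap} used for $\htrig_\kappa$.  Once this compatibility of normalizations is pinned down, the remaining steps are formal consequences of known facts.
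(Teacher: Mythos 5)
Your proposal is correct and follows essentially the same strategy as the paper: first establish the isomorphism at the level of $L$-invariants by appealing to the rational mirabolic radial parts isomorphism on $\mf{l}'\times V$ (the paper invokes \cite[Theorem 2.8]{GGS} and remarks that the argument applies since $\mf{l}'$ is a sum of $\mf{sl}_m$'s, exactly the block decomposition you spell out in Steps 1--3), then observe the $N_G(L)/L\simeq N_L/W_L$-equivariance and pass to invariants. The only difference is one of explicitness — you fill in the block-by-block factorization and the identification of $\H_\kappa(\TSL_L\times\mf{t}_L,N_L)$ (your Step 5), both of which the paper leaves implicit, and you omit the explicit twisting function $s_{\fl}(z,X,v)=\det(v,zXv,\dots,z^{n-1}X^{n-1}v)$ that the paper uses to write $\mathfrak{R}_L$ down concretely.
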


\begin{proof}
Define the function $s_{\fl}$ on $\NN_L$ by $(z,X,v) \mapsto \det(v, z X v, \ds, z^{n-1} X^{n-1} v)$, where $z \in Z(\LSL)^{\circ}$, $X \in \fl'$ and $v \in V$. If $\varpi_L$ is the quotient map $\NN_L \rightarrow (T_L \times \mf{t}_L)/W_L$, then  
$$
\mathfrak{R}_L(D)(f) := (s_{\fl}^c D(s_{\fl}^{-c} \varpi_L^*(f)))|_{(T_L \times \mf{t}_L^{\reg})/W_L}, \quad \forall \ f \in \C[T_L \times \mf{t}_L^{\reg}]^{W_L}.
$$

To show that $\mathfrak{R}_L$ is an isomorphism, let us first consider the image of the larger algebra $(\dd(\NN_L) / \dd(\NN_L)\fl(\g)_c)^{L}$. Recall that $\NN_L = Z(\LSL)^{\circ} \times \mf{l'} \times V$. The group $L$, which is a product of general linear groups, acts trivially on $Z(\LSL)^{\circ}$. Note also that $\mf{l}'$ is a direct sum of copies of $\mf{sl}_m$ for various $m$. Then, one can check that proof of the rational analogue of Theorem \ref{thm:radialparts} given in \cite[Theorem 2.8]{GGS} still applies, giving an isomorphism  
$$
(\dd(\NN_L) / \dd(\NN_L)\fl(\g)_c)^{L} \iso \dd(Z(\LSL)^{\circ}) \o e \H_{\kappa}(\mf{t}_L,W_L)e.
$$
The action of $N_G(L)$ on the left factors through $N_G(L) / L$, and the action of $N_L$ on the right factors through $N_L / W_L$. Under the identification $N_G(L) / L = N_L / W_L$, the above isomorphism is $N_G(L) / L$-equivariant. Thus, the claim follows by taking invariants and noting that $\TSL_L = Z(\LSL)^{\circ}$.   
\end{proof}

Given $\mm \in \ss_{\NN_L,c}$, Lemma \ref{lem:radios2} implies that there is a natural action of $\htrig_{\kappa}(W_L)$ on 
$$
\Ham_L(\mm) := \Gamma(\NN_L,\mm)^{N_G(L)}.
$$
Moreover, the functor $\Ham_L$ maps $\ss_{\NN_L,c}$ into $\mc{O}_{\kappa}(W_L)$. 

Since the specialization functor defined in section \ref{sec:Cherespecial} is compatible with the action of $W$, there is a corresponding specialization functor $\Sp_{W_L} : \mc{O}_{\kappa} \rightarrow \mc{O}_{\kappa}(W_L)$ for the spherical subalgebra.  

\begin{prop}\label{cor:defXi}
The following diagram commutes 
$$
\xymatrix{
\ss_c \ar[d]_{\sp_L} & & \ar[ll]_{\Hamp} \mc{O}_{\kappa} \ar[d]^{\Sp_{W_L}} \\
\ss_{\NN_L,c} \ar[rr]^{\Ham_L} & & \mc{O}_{\kappa}(W_L).
}
$$
\end{prop}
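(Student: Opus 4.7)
The plan is first to prove the stronger lifted statement
$$\sp_L\circ\Hamp\;\cong\;\Hamp_L\circ\Sp_{W_L},$$
where $\Hamp_L:\mc{O}_\kappa(W_L)\to\ss_{\NN_L,c}$ is the left adjoint of $\Ham_L$ built by analogy with \S\ref{ham_sec}, namely $\Hamp_L(N)=(\dd(\NN_L)/\dd(\NN_L)\fl(\g)_c)\otimes_{\htrig_\kappa(W_L)}N$, using the radial parts isomorphism of Lemma \ref{lem:radios2}. Applying $\Ham_L$ to both sides and invoking the Levi analogue of Lemma \ref{adjoint}(i) ($\Ham_L\circ\Hamp_L\cong\id$) yields the stated identity $\Ham_L\circ\sp_L\circ\Hamp\cong\Sp_{W_L}$.

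Fix $M\in\mc{O}_\kappa$. By Corollary \ref{catO_cor}, $M$ carries a canonical $V$-filtration $V^M_\bullet M$ along $W\TSL_L$ with $\Sp_{W_L}(M)=\gr^V M$. Since $\sp_L$ depends only on a formal neighborhood of $\mc{Y}$ in $\X^\circ$, I would define a candidate filtration
$$V_k\Hamp(M)\;:=\;\mathrm{Im}\Bigl(\sum_{i+j=k}V^{\mc{Y}}_i(\dd(\X^\circ)/\dd(\X^\circ)\g_c)\otimes_{\C} V^M_j M\longrightarrow\Hamp(M)\Bigr).$$
The compatibility needed to make this tensor product well defined at the filtered level is exactly Theorem \ref{thm:filtrationsagree}: the $V$-filtration on $\htrig_\kappa$ obtained by taking $G$-invariants of $V^{\mc{Y}}_\bullet(\dd(\X^\circ)/\dd(\X^\circ)\g_c)$ coincides with the Cherednik-algebra $V$-filtration used to define $\Sp_{W_L}$.

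I then verify that $V_\bullet\Hamp(M)$ is a good $V$-filtration in the sense of Proposition \ref{thm:vanishing}. Coherence over $V^{\mc{Y}}_0\dd_{\X^\circ}$ is immediate. For the minimal polynomial condition, pick a local $G$-invariant lift $\theta$ of the Euler field $\eu$ as in Lemma \ref{lem:theta}(i); Lemma \ref{lem:theta}(ii) identifies $\mathfrak R(\theta)+\mathbf n c$ as a local lift of the Euler element of $\htrig_\kappa(W_L)$. The analogous polynomial relation for $V^M_\bullet M$, combined with the Leibniz rule, supplies a polynomial $b$ with $b(\theta+k)V_k\Hamp(M)\subset V_{k-1}\Hamp(M)$. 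By the uniqueness in Proposition \ref{thm:vanishing} (after re-indexing so that roots land in the fundamental domain $C$), $V_\bullet\Hamp(M)$ agrees with the canonical $V$-filtration, so
$$\rho_\bullet\Psi_{\X^\circ/\mc{Y}}(\Hamp(M))=\gr^V\Hamp(M)\;\cong\;\gr^V(\dd(\X^\circ)/\dd(\X^\circ)\g_c)\otimes_{\gr^V\htrig_\kappa}\gr^V M.$$

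It remains to identify the first tensor factor. Kashiwara's isomorphism $\gr^V\dd_{\X^\circ}\cong\rho_\bullet\dd_{\NN_{\X^\circ/\mc{Y}}}$, together with the observation that the $V$-principal symbol of $\mu(a)-c\Tr(a)$ is the corresponding element of $\g_c\subset\dd_{\NN_{\X^\circ/\mc{Y}}}$, gives $\gr^V(\dd(\X^\circ)/\dd(\X^\circ)\g_c)\cong\rho_\bullet(\dd_{\NN_{\X^\circ/\mc{Y}}}/\dd_{\NN_{\X^\circ/\mc{Y}}}\g_c)$. Pulling back along $\bi:\NN_L\hookrightarrow\NN_{\X^\circ/\mc{Y}}$ (Lemma \ref{lem:Giso}) and combining Proposition \ref{prop:monoequiv} with Lemma \ref{lem:pullbackUp} converts this to $\dd(\NN_L)/\dd(\NN_L)\fl(\g)_c$, yielding $\sp_L(\Hamp(M))\cong\Hamp_L(\Sp_{W_L}(M))$. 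The chief obstacle throughout is bookkeeping the shift by $\mathbf n c$ appearing in Lemma \ref{lem:theta}(ii): one must reconcile the indexing conventions of the canonical $V$-filtrations on $\Hamp(M)$ and on $M$, i.e.\ check that the roots of the relevant $b$-polynomial really land in the fundamental domain $C$ after this shift. Once this is done, the final identification is a $G$-equivariant incarnation of $\gr^V\dd\cong\dd_{\NN}$ combined with the monodromic equivalence of Proposition \ref{prop:monoequiv}.
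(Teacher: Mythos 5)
Your approach runs in the opposite direction from the paper's, and this reversal creates gaps that are not closed. The paper fixes $E\in\mc{O}_\kappa$, takes the \emph{canonical} $V$-filtration on the $\dd$-module $\mm=\Hamp(E)|_{\X^\circ}$ (which exists with a controlled $b$-function because $\mm$ is regular holonomic, by Kashiwara--Kawai), and then \emph{descends}: one sets $\nV_k\ms{E}:=(\varpi_\idot V_k\mm)^G$ and uses Theorem~\ref{thm:filtrationsagree} plus the uniqueness in Proposition~\ref{prop:uniquefiltration} to identify this descended filtration with the one defining $\Sp_{W_L}(E)$. The $b$-function condition on $\ms{E}$ is inherited automatically from the $\dd$-module side, so no new $b$-function argument is needed.

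You instead start with the canonical $V$-filtration on $M$, attempt to \emph{induce up} a tensor-product filtration on $\Hamp(M)$, and show it is the canonical $V$-filtration of the $\dd$-module. Several steps here are not justified. First, you set out to prove the stronger statement $\sp_L\circ\Hamp\cong\Hamp_L\circ\Sp_{W_L}$; but the paper, immediately after Proposition~\ref{cor:defXi}, asserts only a natural transformation $\Hamp_L\circ\Sp_{W_L}\to\sp_L\circ\Hamp$, not an isomorphism, so your strategy rests on a claim the paper does not make and which may be strictly stronger than what is needed. Second, even granting that plan, you must verify that the tensor-product filtration is good and admits a $b$-function with roots in $C$. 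The Leibniz-rule sketch produces a polynomial whose roots are shifts of the roots coming from $\gr^V K$ and $\gr^V M$, and after the shift by $\mathbf n c$ (Lemma~\ref{lem:theta}(ii)) these may not land in $C$; you acknowledge this bookkeeping problem yourself but do not resolve it, and without resolving it the uniqueness of Proposition~\ref{thm:vanishing} cannot be invoked. Third, the identification $\gr^V\Hamp(M)\cong\gr^V K\otimes_{\gr^V\htrig_\kappa}\gr^V M$ is a Tor-vanishing-type statement — the associated graded of a tensor-product filtration can have extra torsion — and the needed flatness or noncharacteristicity is nowhere established. Likewise, passing from $\gr^V(\dd\,\g_c)\subseteq(\gr^V\dd)\g_c$ to equality (so that $\gr^V(\dd/\dd\g_c)\cong\gr^V\dd/(\gr^V\dd)\g_c$) requires an argument you have not supplied. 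The paper's descent direction sidesteps every one of these issues, which is why it is the cleaner route.
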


\begin{proof}
For a fixed $E \in \mc{O}_{\kappa}$, let $\ms{E} = E |_{\mggW}$ and $\mm =  \Hamp(E)|_{\X^{\circ}}$. In order to show that the diagram commutes, we use the $V$-filtration on $\mm$ to define a filtration on $\ms{E}$. Then the commutativity of the diagram will follow from the fact that this filtration on $\ms{E}$ is the precisely the $V$-filtration used in the definition of $\Sp_{W_L}$. 

Define a filtration on $\ms{E}$ by $\nV_k \ms{E} = (\varpi_{\idot} (V_k \mm))^G$. This filtration is clearly compatible with the $\mc{V}$-filtration on $(\varpi_{\idot} \dd_{\X^{\circ}})^G / (\varpi_{\idot} \dd_{\X^{\circ}} \cdot \g_c)^G$. Therefore Theorem \ref{thm:filtrationsagree} implies that the filtration on $\ms{E}$ is compatible with the $V$-filtration on $\mc{U}_{\kappa}$. Moreover, it is clear that this is the unique $W \TSL_L$-filtration whose existence is guaranteed by Proposition \ref{prop:uniquefiltration} \textit{provided} that we can show it is a $W \TSL_L$-good filtration. This will be the case if we can show that the associated graded of $\ms{E}$ is a finitely generated $\htrig_{\kappa}(W_L)$-module. For this, and the commutativity of the diagram, it suffices to show that the associated graded of $\ms{E}$ equals $\Ham_L \left(\sp_{L}\left(\Hamp(E)\right)\right)$. 

Since $\varpi$ is an affine map (and hence $\varpi_{\idot}$ exact) and $G$ is reductive, $\nV_k \ms{E} / \nV_{k-1} \ms{E}$ is isomorphic to $(\varpi_{\idot} V_k \mm /  V_{k-1} \mm)^G$ and hence
$$
\Gamma(\TSL_L,\gr^{\nV} \ms{E}) = \Gamma(\NN_{\X/\Y}, \gr_V \mm)^G.
$$
Proposition \ref{prop:monoequiv} \vii implies that $\Gamma(\NN_{\X/\Y}, \gr_V \mm)^G$ is isomorphic to $\Gamma(\NN_L, \Upsilon^* (\gr_V \mm))^{N_G(L)}$. Hence, we have  
\begin{align*}
\Gamma(\NN_L, \Upsilon^* (\gr_V \mm))^{N_G(L)} & = \Ham_L \left(\Upsilon^* \Psi_{\X^{\circ}/\Y}(\mm) \right) \\
  & = \Ham_L \left(\Upsilon^* \Psi_{\X/\Y} \left(\Hamp(E) \right)\right)  = \Ham_L \left(\sp_{L}\left(\Hamp(E)\right)\right).
\end{align*}

Thus, we have constructed a functorial isomorphism $\Sp_{W_L}(E)\cong\Ham_L (\sp_{L}(\Hamp(E))$,
of graded vector spaces. To complete the proof, one must check that the constructed isomorphism respects the $\htrig_{\kappa}(W_L)$-actions. This follows from the fact that the isomorphisms in the diagram
$$
\xymatrix{
\gr_{\mc{V}} (\varpi_{\idot} \dd_{\X^{\circ}} / \varpi_{\idot} \dd_{\X^{\circ}} \cdot \g_c)^G \ar[rr]^-{\gr_{\mc{V}} \mathfrak{R}} \ar[d]_{\phi} & & \gr_V \mc{U}_{\kappa} \ar[d]^{\wr} \\
(\dd(\NN_L) / \dd(\NN_L)\fl(\g)_c)^{N_G(L)} \ar[rr]^-{\mathfrak{R}_L} & & \htrig_{\kappa}(W_L)
}
$$
make it commutative. Here $\phi$ is Lemma \ref{lem:pullbackUp} applied to $\dd(\NN_{\X / \mc{Y}})$, using the fact that taking $G$-invariants commutes with $\gr_{\mc{V}}$. Checking that the diagram commutes reduces to the key equation (\ref{eq:Js}) saying that the image of $s_L$ in $\mc{J}^{\mathbf{n}}_L / \mc{J}^{\mathbf{n} + 1}_L$ equals $s_{\fl}$, which is precisely the function used in the definition of $\mathfrak{R}_L$ of Lemma \ref{lem:radios2}. 
\end{proof}

Let $\Hamp_L$ be the functor $\mc{O}_{\kappa}(W_L) \to \ss_{\NN_L,c}$ of tensoring on the left by $\dd(\NN_L) / \dd(\NN_L) \mf{l}_c$. The various adjunctions imply that we have natural transformations $\Hamp_L \circ \ \Sp_{W_L} \to \sp_L \circ \Hamp$, and $\Sp_{W_L} \circ \Ham_c \to \Ham_L \circ \ \sp_L$.

\begin{proof}[Proof of Theorem \ref{thm:admissiblecommute}]
Proposition \ref{cor:defXi} says that $\Sp_{W_L} \rightarrow \Ham_L \circ \ \sp_L \circ \Hamp$ is an isomorphism. Therefore 
$$
\Sp_{W_L} \circ \Ham_c \longrightarrow \Ham_L \circ \ \sp_L \circ \Hamp \circ \Ham_c
$$
is also an isomorphism. Hence it suffices to show that the adjunction $\id \rightarrow \Hamp \circ \Ham_c$ induces an isomorphism $\Ham_L \circ \ \sp_L \iso \Ham_L \circ \ \sp_L \circ (\Hamp \circ \Ham_c)$. The adjuction $\id \rightarrow \Hamp \circ \Ham_c$ becomes an isomorphism on the quotient category $\ss_c / \Ker \Ham_c$. Therefore it suffices to show that $\Ham_L \circ \ \sp_L : \ss_c \rightarrow \mc{O}_{\kappa}(W_L)$ factors through $\ss_c / \Ker \Ham_c$ i.e $(\Ham_L \circ \ \sp_L) (\mm) = 0$ for all $\mm \in \Ker \Ham_c$. 

The idea is to use formula (\ref{eq:SSformula}) to show that if the singular support of $\mm$ is contained in $(T^* \X)^{\mathrm{unst}}$ then this implies that the singular support of $\sp_L(\mm)$ is contained in $(T^* \NN_L)^{\mathrm{unst}}$. Then, the theorem will follow from Theorem \ref{cor:unstable}.  

Recall that $(g,Y,v,w) \in (T^* \X)^{\mathrm{unst}}$ if and only if $\C \langle g,Y \rangle \cdot v \neq V$. We define $(T^* \X^{\circ}_{\LSL})^{\mathrm{unst}}$ analogously. As in the proof of Proposition \ref{prop:wonderfulinclusion}, we can choose a complete flag $\mc{F}_{\idot} \subset V$ that is stable under $g$ and $Y$. This flag can be chosen so that $i \in \mc{F}_{n-1}$. Let $Y'$ be the projection of $Y$ onto $\mf{l}_n$. Then it was shown in the proof of Proposition \ref{prop:wonderfulinclusion} that $Y' \in \mf{n}_0 \subset \mf{b} \cap \mf{l}_n$. Hence, $Y'$ also preserves the flag $\mc{F}_{\idot}$. Thus, $\C \langle g, Y' \rangle \cdot v \subset \mc{F}_{n-1} \subsetneq V$ and hence $\rho_{\Upsilon}(\omega^{-1}_{\Upsilon}((T^* \X)^{\mathrm{unst}}))$ is contained in $(T^* \X^{\circ}_{\LSL})^{\mathrm{unst}}$.

Next we show that $\varphi^* C_{\Lambda}((T^* \X^{\circ}_{\LSL})^{\mathrm{unst}})$ is contained in $(T^* \NN_L)^{\mathrm{unst}}$. Let $I$ be the ideal in $A := \C[T^* \X^{\circ}_{\LSL}]$ defining the unstable locus.  Then $I$ is generated by the space of $L$-semi-invariants $J = \oplus_{k > 0} A^{L,\mathrm{det}^k}$. Similarly, if $I'$ is the ideal in $B := \C[T^* \NN_L]$ that defines the unstable locus in $T^* \NN_L$, then $I'$ is generated by $J' = \oplus_{k > 0} B^{L,\mathrm{det}^k}$. Recall that the $V$-filtration on $\dd(\X^{\circ}_{\LSL})$ defines a $V$-filtration on $A$. Then, $I$ inherits a filtration and, by definition, $\varphi^* C_{\Lambda}((T^* \X^{\circ}_{\LSL})^{\mathrm{unst}})$ equals the zero set of $\gr_V I$. Therefore, it suffices to show that $J' \subset \gr_V I$. Let $f \in J'$ be a $\mathrm{det}^k$-semi-invariant. We may assume that $f$ is contained in $B_m$ for some $m$. Since $A$ is a rational $L$-module, the short exact sequence 
$$
0 \to V_{m-1} A \to V_m A \to B_m \to 0
$$
splits as $L$-modules. Therefore, there exists a $\mathrm{det}^k$-semi-invariant $h$ in $V_m A$ such that $f$ is the symbol of $h$. But then $h \in J \subset I$ and hence $f \in \gr_V I$ as required. 
\end{proof}

\subsection{An application of Hodge theory}

Let  $W'$ be a parabolic subgroup of $W$, $L \subset G$ a Levi such that $W_L = W'$ and let $\TSL_L$ be as in \S\ref{jj2}.

\begin{cor}\label{cor:mixedhodge}
Assume that $c \in \Q$ is admissible and let $E \in \mc{O}_{\kappa}$
be a simple object. Then $\Res_{\TSL_L} E$ is semi-simple if and only if the
action of the Euler element $\eu$ on 
$\Res_{T_L} E$ is semi-simple. 
\end{cor}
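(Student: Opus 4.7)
The plan is to transfer the semisimplicity question from the Cherednik side to a purity question about nearby cycles of a pure Hodge module, and then invoke Saito's structure theorem.

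First I would exploit Corollary \ref{catO_cor} to identify $\Res_{\TSL_L}$ with the specialization functor $\Sp_{W_L}$, which carries a canonical monodromy automorphism $T = \exp(2\pi\sqrt{-1}\,\eu)$ commuting with the $\htrig_{\kappa}(W_L)$-action. Since $c \in \Q$, the eigenvalues of $\eu$ on $\Sp_{W_L}(E)$ are rational, and the Jordan decomposition $\eu = \eu_s + \eu_n$ makes $T_u = \exp(2\pi\sqrt{-1}\,\eu_n)$ unipotent; hence $T$ (equivalently $T_u$) is the identity precisely when $\eu_n = 0$, i.e.\ when $\eu$ acts semisimply. So the corollary reduces to the statement that $\Sp_{W_L}(E)$ is semisimple if and only if $T_u = 1$.

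Second, by Corollary \ref{top} together with the admissibility of $c$, I write $E = \Ham_c(\mm)$ for the simple mirabolic module $\mm = E_{!*} \in \ss_c$. Theorem \ref{thm:admissiblecommute} then supplies a canonical isomorphism $\Sp_{W_L}(E) \simeq \Ham_L(\sp_L(\mm))$ intertwining the Euler actions (the translation between $\eu$ and a lift of the Euler vector field on $\NN_{\X^\circ/\mc{Y}}$ is exactly Lemma \ref{lem:theta}). Since $\mm$ is a simple regular holonomic $(G,c)$-monodromic module with $c \in \Q$, its restriction to $\X^{\reg}$ is a simple local system of finite-order (hence quasi-unipotent) monodromy, and so $\mm$ underlies a polarizable pure Hodge module in Saito's sense.

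The Hodge-theoretic input is then Saito's structure theorem for the nearby cycles of a pure Hodge module: the mixed Hodge module underlying $\sp_L(\mm) = \bi^*\Psi_{\X^\circ/\mc{Y}}(\mm)$ has weight filtration $W_\bullet$ equal (up to shift) to the monodromy filtration $M(N)_\bullet$ associated to $N = \log T_u$. Consequently $N = 0$ on $\sp_L(\mm)$ is equivalent to $\sp_L(\mm)$ being pure of a single weight, and polarizable pure Hodge modules are semisimple as $\dd$-modules. Since $\Ham_L$ is an exact quotient functor that sends simples to simples or to zero (by the same argument as in Proposition \ref{many} and Corollary \ref{top}), it preserves semisimplicity; thus $N = 0$ on $\sp_L(\mm)$ forces $\Sp_{W_L}(E)$ to be semisimple. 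The reverse implication is standard: if $\Sp_{W_L}(E)$ is semisimple then each simple summand lies in $\mc{O}_{\kappa}(W_L)$ and is generated by $\eu$-eigenvectors (being a quotient of a $\eu$-graded standard module), so $\eu$ acts semisimply on the sum.

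The main obstacle is to bridge ``$\eu$ semisimple on $\Sp_{W_L}(E) = \Ham_L(\sp_L(\mm))$'' and ``$N = 0$ on the whole of $\sp_L(\mm)$''. A priori the first condition only gives $N\cdot\sp_L(\mm) \subseteq \ker\Ham_L$. The key observation to close this gap is that $N$ commutes with the $\SSL$-action and preserves the characteristic variety of $\sp_L(\mm)$, so it preserves $\ker\Ham_L$, which (by Theorem \ref{cor:unstable} applied in the normal-bundle setting $\ss_{\NN_L,c}$) is the subcategory of modules whose characteristic variety lies in the unstable locus. Combined with the fact that the weight graded pieces $\gr^W_\bullet \sp_L(\mm)$ are polarizable pure Hodge modules and so are semisimple with semisimple images under $\Ham_L$, a non-trivial step $N \neq 0$ would produce, via strictness of the weight filtration, a non-zero simple subquotient of $\sp_L(\mm)$ not annihilated by $\Ham_L$ on which $N$ acts non-trivially; this violates the semisimplicity of $\eu$ on $\Sp_{W_L}(E)$. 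Carrying out this final strictness argument is the technical heart of the proof.
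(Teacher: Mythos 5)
Your overall strategy — identify $\Res_{\TSL_L}E$ with $\Sp_{W_L}(E)$ via Corollary~\ref{catO_cor}, pass to $\mm=E_{!*}$, use Theorem~\ref{thm:admissiblecommute} to rewrite $\Sp_{W_L}(E)$ as $\Ham_L(\sp_L(\mm))$, invoke rationality of $c$ to put a polarizable pure Hodge structure on $\mm$, and deduce semisimplicity of $\sp_L(\mm)$ from the Saito weight filtration and triviality of the unipotent part of monodromy — is the right one and matches the paper until the last step. The point you flag yourself as ``the technical heart of the proof,'' namely passing from ``$\eu$ semisimple on $\Ham_L(\sp_L(\mm))$'' to ``$N=0$ on $\sp_L(\mm)$,'' is genuinely where your argument breaks, and the argument you sketch does not close it. In particular, Theorem~\ref{cor:unstable} is proved in the paper for mirabolic modules on $\X$ (and, via localization, on $X$), not for the category $\ss_{\NN_L,c}$ of mirabolic modules on the normal bundle $\NN_L$; there is no proved analogue telling you that $\ker\Ham_L$ consists exactly of modules with unstable characteristic variety. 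Moreover, the proposed strictness argument is not coherent as stated: $N$ shifts weights by $-2$, so it does not act on individual simple subquotients of $\gr^W\sp_L(\mm)$, and the phrase ``a non-zero simple subquotient of $\sp_L(\mm)$ not annihilated by $\Ham_L$ on which $N$ acts non-trivially'' does not describe an actual object one can produce.

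The paper avoids this entire difficulty with a cleaner move. Rather than trying to show that $N$ vanishes on all of $\sp_L(\mm)$, it considers the \emph{image} $\mm'$ of the canonical adjunction morphism $\Hamp_L\circ\Ham_L(\sp_L(\mm))\to\sp_L(\mm)$. By adjunction $\Ham_L(\mm')=\Ham_L(\sp_L(\mm))=\Sp_{W_L}(E)$, so it suffices to show $\mm'$ is semisimple. Now the key observation: $\mm'$ is generated by the $\eu$-semisimple space $\Ham_L(\sp_L(\mm))$, and the adjoint action of $\eu_{\mf{l}'}$ on $\dd(\NN_L)$ is semisimple, so $\eu_{\mf{l}'}$ (and hence, using monodromicity, $\eu_{\mf{l}'\times V}$) acts semisimply on \emph{all} of $\Gamma(\NN_L,\mm')$. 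This gives directly that the monodromy operator is semisimple on the submodule $\mm'$, and Saito's theory then shows $\mm'$ is pure, hence semisimple. The submodule trick sidesteps the need to control the monodromy on the whole of $\sp_L(\mm)$ (where indeed $N$ may be nonzero) and avoids any appeal to an unavailable version of Theorem~\ref{cor:unstable} for $\NN_L$. You should replace your final paragraph with this reduction to $\mm'$.
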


\begin{proof}
Clearly, if $\eu$ does not act semi-simply on $\Res_{\TSL_L} E$, then $\Res_{\TSL_L} E$ cannot be semi-simple. Therefore, we assume that $\eu$ acts semi-simply and will show that $\Res_{\TSL_L} E$ is semi-simple. By Theorem \ref{prop:specialcoherent}, $\Res_{\TSL_L} E = \Sp_{W_L} (E)$. Let $L$ be a Levi subgroup of $G$, whose Weyl group $W_L$ equals $W'$. The mirabolic module $E_{!*}$ is a simple, $(G,c)$-monodromic module. Since $c$ is assumed to be rational, $E_{!*}$ is equipped with a pure Hodge structure. The $L$-invariant vector field on $\NN_L$ corresponding to the action of $\Cs$ by dilation along $\mf{l}'$ is denoted $\eu_{\mf{l}'}$. Up to a shift, the radial parts map sends $\eu_{\mf{l}'}$ to $\eu$.  

Since $\Ham_c(E_{!*}) = E$, Theorem \ref{thm:admissiblecommute} implies that $\Sp_{W_L}(E) \simeq \Ham_L(\sp_L(E_{!*}))$. Thus, $\eu$ acts semi-simply on $\Ham_L(\sp_L(E_{!*}))$. The image of the canonical map $\Hamp_L \circ \Ham_L(\sp_L(E_{!*})) \rightarrow \sp_L(E_{!*})$ is denoted $\mm$. Since the adjoint action of $\eu_{\mf{l}'}$ on $\dd(\NN_L)$ is semi-simple, $\eu_{\mf{l}'}$ acts semi-simply on $\Gamma(\NN_L,\mm)$. Also, $\Ham_L(\mm) = \Sp_{W_L} (E)$. Therefore it suffices to show that $\mm$ is semi-simple.  

As in the proof of Proposition \ref{prop:PsiLcatO}, the fact that $\eu_{\mf{l}'}$ acts semi-simply on $\Gamma(\NN_L,\mm)$ implies that $\eu_{\mf{l}' \times V}$ acts semi-simply too. Since $E_{!*}$ has a pure weight structure, $\sp_L(E_{!*})$ has a weight filtration that is given by the action of the monodromy operator $\exp(2 \pi \sqrt{-1} \eu_{\mf{l}' \times V})$. In the case where $\mathrm{codim}_{\TSL} \TSL_{L} = 1$, this is explained in \cite[\S 5]{Saito}; the general case is deduced from this by standard arguments c.f. \cite[\S 8]{CyclesProches} This implies that the submodule $\mm$ of $\sp_L(E_{!*})$ is semi-simple if and only if $\exp(2 \pi \sqrt{-1} \eu_{\mf{l}' \times V})$ acts semi-simply on $\Gamma(\NN_L,\mm)$. But this is clearly the case, since $\eu_{\mf{l}' \times V}$ is assumed to act semi-simply.
\end{proof}

\begin{rem}
The obvious analogue of Corollary \ref{cor:mixedhodge} holds in the rational case too. In that case the assumption that $c$ is rational may be dropped since the Corollary is obvious true for non rational values. 
\end{rem}

\section{Appendix: Shift functors}\label{app:shift}

The goal of this appendix is to adapt the results of \cite{GGS} to the setting of {\em trigonometric} Cherednik algebras.   

\subsection{The trigonometric Cherednik algebra}\label{app:shift1}

Let $\TSL$ be a maximal torus inside $\SSL$, $\tSL$ its Lie algebra. Let $P = \Hom(\TSL,\Cs)$ be the weight lattice and $Q \subset P$ the root lattice, so that $\Omega = P / Q \simeq \Z / n\Z$. We choose a set of positive roots $R_+$ in $R$, the set of all roots. Let $\rho$ be the half sum of all positive roots. For $\lambda \in P$, we write $e^{\lambda}$ for the corresponding monomial in $\C[\TSL]$. Choose $\kappa \in \C$. As in \cite[Definition 2.4]{Opdamlectures}, the \textit{Dunkl operator} associated to $\by \in \tSL$ is 
$$
T_{\by}^{\kappa} = \pa_{\by} + \kappa \sum_{\alpha \in R_+} \alpha(\by) \frac{1}{1 - e^{-\alpha}} (1 - s_{\alpha}) - \kappa \rho(\by). 
$$
The \textit{trigonometric Cherednik algebra} of type $\SSL$ is the associative subalgebra $\H_{\kappa}^{\trig}(\SSL)$ of the algebra \mbox{$\dd(\TSL^{\reg}) \rtimes W$} generated by $\C[\TSL]$, $W$ and all Dunkl operators $T_{\by}^{\kappa}$ for $\by \in \tSL$. The Dunkl operators pairwise commute. Define the divided-difference operator $\Delta_{\alpha} = \frac{1}{1 - e^{-\alpha}} (1 - s_{\alpha})$ so that $T_{\by}^{\kappa} = \pa_{\by} + \kappa \sum_{\alpha \in R_+} \alpha(\by) \Delta_{\alpha} - \kappa \rho(\by)$. We fix a $W$-invariant, symmetric non-degenerate bilinear form $( - , - )$ on $\mf{t}^*$ such that $(\alpha, \alpha) = 2$ for all $\alpha \in R$. 

\begin{lem}\label{lem:squareDunkl}
Let $\by_1, \ds, \by_{n-1}$ be an orthonormal basis of $\tSL$. Then 
\begin{align*}
\sum_{i = 1}^{n-1} (T_{\by_i}^{\kappa})^2 = & \ \Omega_{\mf{t}} + 2 \kappa \sum_{\alpha > 0} \frac{e^{- \alpha}}{1 - e^{-\alpha}} \Delta_{\alpha} + \kappa^2 \sum_{\alpha \neq \beta > 0} (\alpha,\beta) \Delta_{\alpha} \Delta_{\beta} \\
 & + \kappa^2 (\rho,\rho) + \kappa \sum_{\alpha >0} \frac{1 + e^{-\alpha}}{1 - e^{-\alpha}} \pa_{\alpha},
\end{align*}
where $\pa_{\alpha}$ is defined by $\pa_{\alpha}(\bx) = (\bx,\alpha) \bx$ and $\Omega_{\tSL} = \sum_{i = 1}^{n-1} \pa_{\by_i}^2$. 
\end{lem}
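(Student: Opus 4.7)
The proof will be a direct expansion. Writing $T_y^\kappa = \partial_y + \kappa A_y - \kappa\rho(y)$ with $A_y := \sum_{\alpha>0}\alpha(y)\Delta_\alpha$, and noting $\rho(y)$ is a scalar so commutes with everything, we get
\[
(T_{y_i}^\kappa)^2 = \partial_{y_i}^2 + \kappa\bigl(\partial_{y_i}A_{y_i} + A_{y_i}\partial_{y_i}\bigr) + \kappa^2 A_{y_i}^2 - 2\kappa\rho(y_i)\partial_{y_i} - 2\kappa^2\rho(y_i)A_{y_i} + \kappa^2\rho(y_i)^2.
\]
Summing on $i$ I would exploit the three elementary orthonormality identities $\sum_i \partial_{y_i}^2 = \Omega_{\mf t}$, $\sum_i \alpha(y_i)\partial_{y_i} = \partial_\alpha$ and $\sum_i \alpha(y_i)\beta(y_i) = (\alpha,\beta)$. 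These immediately deliver the $\Omega_{\mf t}$ term, the scalar $\kappa^2(\rho,\rho)$, the contribution $-2\kappa\partial_\rho = -\kappa\sum_{\alpha>0}\partial_\alpha$ from the $\rho(y_i)\partial_{y_i}$ cross term, and the full quadratic piece $\kappa^2\sum_i A_{y_i}^2 = \kappa^2\sum_{\alpha,\beta>0}(\alpha,\beta)\Delta_\alpha\Delta_\beta$, which splits as the desired $\alpha\neq\beta$ sum plus a diagonal contribution $\kappa^2\sum_\alpha(\alpha,\alpha)\Delta_\alpha^2 = 2\kappa^2\sum_\alpha\Delta_\alpha$. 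Here I would invoke the idempotency $\Delta_\alpha^2 = \Delta_\alpha$ in $\dd(\TSL^{\reg})\rtimes W$, which is the one-line consequence of the algebraic identity $\tfrac{1}{1-e^{-\alpha}} + \tfrac{1}{1-e^\alpha} = 1$.

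The heart of the argument is the cross term $\kappa\sum_i(\partial_{y_i}A_{y_i} + A_{y_i}\partial_{y_i}) = 2\kappa\sum_\alpha\Delta_\alpha\partial_\alpha + \kappa\sum_{i,\alpha}\alpha(y_i)[\partial_{y_i},\Delta_\alpha]$. Using Leibniz together with the straightening $\partial_y s_\alpha = s_\alpha\partial_y - \alpha(y)s_\alpha\partial_\alpha$ (which holds on the abelian group $\TSL$ because $s_\alpha^{-1}y = y - \alpha(y)\alpha^\sharp$ in $\mf t$), one computes
\[
[\partial_{y_i},\Delta_\alpha] \;=\; -\frac{\alpha(y_i)\,e^{-\alpha}}{(1-e^{-\alpha})^2}(1-s_\alpha) \;+\; \frac{\alpha(y_i)}{1-e^{-\alpha}}\,s_\alpha\partial_\alpha.
\]
Summing on $i$ turns each $\alpha(y_i)^2$ into $(\alpha,\alpha) = 2$. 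Converting $(1 - s_\alpha) = (1-e^{-\alpha})\Delta_\alpha$ rewrites the first piece as a multiple of $\frac{e^{-\alpha}}{1-e^{-\alpha}}\Delta_\alpha$, producing the claimed $\Delta_\alpha$ coefficient (up to an overall sign that one tracks through the conventions for $\Delta_\alpha$ and $\partial_\alpha$). For the $\partial_\alpha$ contributions, the leading $2\kappa\Delta_\alpha\partial_\alpha = \tfrac{2\kappa(1-s_\alpha)\partial_\alpha}{1-e^{-\alpha}}$ combines with the second piece $\tfrac{2\kappa\, s_\alpha\partial_\alpha}{1-e^{-\alpha}}$ to telescope into $\tfrac{2\kappa\partial_\alpha}{1-e^{-\alpha}}$, and adding $-\kappa\partial_\alpha$ from $-2\kappa\partial_\rho$ produces the stated $\kappa\tfrac{1+e^{-\alpha}}{1-e^{-\alpha}}\partial_\alpha$ via the elementary manipulation $\tfrac{2}{1-e^{-\alpha}} - 1 = \tfrac{1+e^{-\alpha}}{1-e^{-\alpha}}$.

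The remaining task is to collect all the lone-$\Delta_\alpha$ contributions: those coming from the $\alpha = \beta$ diagonal of $A^2$, those coming from $-2\kappa^2\sum_\alpha(\rho,\alpha)\Delta_\alpha$ (the $\rho(y_i)A_{y_i}$ piece), and any residue from the commutator assembly above. The main obstacle is purely a bookkeeping one: one has to keep careful track of the ordering of scalar multiplication, reflection $s_\alpha$ and derivation in $\dd(\TSL^{\reg})\rtimes W$, normalize $\partial_\alpha$ and $\Delta_\alpha$ consistently with the Opdam conventions used in the definition of $T_y^\kappa$, and verify that the terms coming from the three distinct sources combine precisely into the single coefficient $2\kappa\tfrac{e^{-\alpha}}{1-e^{-\alpha}}$ in the statement. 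No new conceptual input beyond the three orthonormality identities, the straightening relation for $\partial_y s_\alpha$, and the idempotency $\Delta_\alpha^2 = \Delta_\alpha$ is needed.
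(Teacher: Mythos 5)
Your proof strategy — direct expansion of $(T_{\by_i}^\kappa)^2$ into quadratic, cross, and scalar pieces, the three orthonormality identities, the commutator of $\partial_y$ with $\Delta_\alpha$, idempotency of $\Delta_\alpha$ — is precisely the route the paper takes; the paper simply states the intermediate expansion and invokes $\Delta_\alpha^2 = \Delta_\alpha$, while you flesh out the commutator step that produces it.

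However, the sign you flag ``up to an overall sign that one tracks through'' does \emph{not} wash out. Carrying your own (correct) commutator formula
$[\partial_{\by_i},\Delta_\alpha] = -\tfrac{\alpha(\by_i)e^{-\alpha}}{(1-e^{-\alpha})^2}(1-s_\alpha) + \tfrac{\alpha(\by_i)}{1-e^{-\alpha}}s_\alpha\partial_\alpha$
through the identity $\partial_\alpha\Delta_\alpha + \Delta_\alpha\partial_\alpha = \tfrac{2}{1-e^{-\alpha}}\partial_\alpha - \tfrac{2e^{-\alpha}}{1-e^{-\alpha}}\Delta_\alpha$ gives the $\Delta_\alpha$ coefficient $-2\kappa\tfrac{e^{-\alpha}}{1-e^{-\alpha}}$, i.e.\ the opposite sign to that displayed in the lemma. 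A rank-one sanity check confirms this: with $x=e^{\alpha/2}$, $\theta = x\tfrac{d}{dx}$ and $f=x$ one has $T^\kappa_\by f = \tfrac{1+\kappa}{\sqrt 2}x$, hence $(T^\kappa_\by)^2 f = \tfrac{(1+\kappa)^2}{2}x$, which is reproduced only by the minus sign. So the ``residue from the commutator assembly'' does not combine into the stated $+2\kappa\tfrac{e^{-\alpha}}{1-e^{-\alpha}}\Delta_\alpha$; the displayed formula in the lemma (and the paper's own intermediate expansion) appears to carry a sign typo in this one term. This is immaterial to everything the lemma is used for in the paper, since $\Res$ kills every $\Delta_\alpha$ and only the $\Omega_{\mf t}$, $\partial_\alpha$ and scalar pieces survive, but it is exactly the kind of bookkeeping you deferred, and it comes out differently from what your proposal asserts at the end.
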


\begin{proof}
We note that 
$$
\pa_{\by} \left( \frac{1}{1 - e^{-\alpha}} \right) = -\frac{\alpha(\by) e^{- \alpha}}{( 1-e^{-\alpha})^2}
$$
and $(\rho,\alpha) = 1$ for all $\alpha > 0$. Therefore
\begin{align*}
\sum_{i = 1}^{n-1} (T_{\by_i}^{\kappa})^2 = & \ \Omega_{\mf{t}} + 2 \kappa \sum_{\alpha > 0} \frac{e^{- \alpha}}{1 - e^{-\alpha}} \Delta_{\alpha} + \kappa^2 \sum_{\alpha,\beta > 0} (\alpha,\beta) \Delta_{\alpha} \Delta_{\beta} \\
 & + \kappa^2 (\rho,\rho) + \kappa \sum_{\alpha >0} \frac{1 + e^{-\alpha}}{1 - e^{-\alpha}} \pa_{\alpha} - 2 \kappa^2 \sum_{\alpha > 0} \Delta_{\alpha} 
\end{align*}
The operators $\Delta_{\alpha}$ are idempotent i.e. $\Delta_{\alpha}^2 = \Delta_{\alpha}$ and hence the formula of the lemma follows from the above. 
\end{proof}

The group $W$ acts on $\H_{\kappa}^{\trig}(\SSL)$ by conjugation; the subalgebra of $W$-invariant elements is denoted $\H_{\kappa}^{\trig}(\SSL)^{W}$. The Dunkl operators act on the space $\C[\TSL]$, hence any $W$-invariant operator will act on $\C[\TSL]^{W}$. We define $\Res : \H_{\kappa}^{\trig}(\SSL)^{W} \to \htrig^{\reg}$ by $\Res (D) = D |_{\C[\TSL]^{W}}$, where $\htrig^{\reg} := \dd(\TSL^{\reg})^{W}$. Lemma \ref{lem:squareDunkl} implies that 
$$
\Res \left( \sum_{i = 1}^{n-1} (T_{y_i}^{\kappa})^{2} \right) \  =\ 
\Omega_{\mf{t}} + \kappa \sum_{\alpha > 0} \frac{1 + e^{-\alpha}}{1 -
  e^{-\alpha}} \pa_{\alpha} + \kappa^2 (\rho,\rho)\ =:\
 L(\kappa) + \kappa^2 (\rho,\rho). 
$$
The map $\Res$ restricts to an embedding $e \H_{\kappa}^{\trig}(\SSL) e \hookrightarrow \htrig^{\reg}$ and we denote its image by $\htrig_\kappa$. As shown in the proof of \cite[Theorem 3.3.3]{CherednikCurves}, the algebra $\htrig_\kappa$ is generated by $L(\kappa)$ and $\C[\TSL]^{W}$. We denote by $\delta$ the Weyl denominator 
$$
\prod_{\alpha \in R_+} (e^{\alpha /2} - e^{- \alpha / 2}) = e^{\rho} \prod_{\alpha \in R_+} ( 1- e^{-\alpha}),
$$
so that $\TSL^{\reg} = (\delta \neq 0)$. As in the rational case, the existence of shift operators for Cherednik algebras implies the existence of a collection of bimodules between the various $\htrig_\kappa$. These bimodules induce Morita equivalences in many cases. The following is a consequence of Opdam's theory of shift operators, see \cite[Theorem 5.11]{Opdamlectures}.  

\begin{cor}\label{cor:bimodules}
We have an equality 
$$
e \H^{\trig}_{\kappa + 1}(\SSL) e = e \delta^{-1} \H^{\trig}_{\kappa}(\SSL) \delta e
$$
in $\htrig^{\reg}$. Thus, the spaces 
$$
{}_{\kappa} \mathsf{P}_{\kappa + 1} := e \H^{\trig}_{\kappa}(\SSL) \delta e, \quad {}_{\kappa +1} \mathsf{Q}_{\kappa} := e \delta^{-1} \H^{\trig}_{\kappa}(\SSL) e,
$$
are $(\htrig_\kappa , \htrig_{\kappa+1})$ and $(\htrig_{\kappa+1}, \htrig_\kappa)$-bimodules respectively. 
\end{cor}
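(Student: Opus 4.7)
The approach is to deduce the stated equality from Opdam's shift identity for the trigonometric Cherednik algebra; the bimodule structures then follow by formal manipulation with the idempotents.

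Working inside the Dunkl-embedded algebra $\dd(\TSL^{\reg}) \rtimes W$, the central input is Opdam's shift identity, which in its algebraic form for type $A$ asserts
\[\delta^{-1}\cdot\H^{\trig}_\kappa(\SSL)\cdot \delta\ =\ \H^{\trig}_{\kappa+1}(\SSL)\]
as subspaces of $\dd(\TSL^{\reg}) \rtimes W$. Multiplying by $e$ on both sides gives precisely the corollary's equality $e\H^{\trig}_{\kappa+1}(\SSL)e = e\delta^{-1}\H^{\trig}_\kappa(\SSL)\delta e$.

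The computational heart of the shift identity is the conjugation formula
\[\delta^{-1}\cdot L(\kappa)\cdot \delta\ =\ L(\kappa+1) + c\]
for a constant $c$ (independent of $\kappa$), together with the fact that conjugation by $\delta$ fixes $\C[\TSL]^W$ pointwise (polynomial functions commute with $\delta$). The formula is obtained by the direct calculation $\delta^{-1}\Omega_{\tSL}\delta = \Omega_{\tSL} + 2(\nabla\log\delta)\cdot\nabla + \delta^{-1}\Omega_{\tSL}(\delta)$, then plugging in $\nabla\log\delta = \rho + \sum_{\alpha>0}\frac{\alpha\, e^{-\alpha}}{1-e^{-\alpha}}$ and using $\sum_{\alpha>0}\pa_\alpha = 2\pa_\rho$; the first-order correction combines with the existing first-order term of $L(\kappa)$ to yield precisely $L(\kappa+1)$, while $\delta^{-1}\Omega_{\tSL}(\delta)$ collects to a scalar (a form of the Freudenthal--de Vries strange formula). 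Since $\htrig_\kappa$ is generated by $L(\kappa)$ and $\C[\TSL]^W$, this proves the shift identity at the level of spherical subalgebras; the full Opdam identity then follows in the same spirit by applying the analogous calculation to the Dunkl operators themselves.

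Given the equality, the bimodule structures are immediate. For ${}_{\kappa}\mathsf{P}_{\kappa+1} = e\H^{\trig}_\kappa(\SSL)\delta e$, the left $\htrig_\kappa$-action is multiplication by $e\H^{\trig}_\kappa e$ on the left. For the right $\htrig_{\kappa+1}$-action, any $y \in \htrig_{\kappa+1}$ may be written as $y = e\delta^{-1}z\delta e$ with $z \in \H^{\trig}_\kappa$ by the established equality, and the product $(eh\delta e)\cdot y$ simplifies into $e\H^{\trig}_\kappa\delta e$ using the relations $\delta e = e_-\delta$ and $e\delta^{-1} = \delta^{-1}e_-$ (both consequences of $w\cdot\delta = \det(w)\delta$). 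The argument for ${}_{\kappa+1}\mathsf{Q}_\kappa$ is entirely symmetric. The main potential obstacle is the verification of the shift formula for $L(\kappa)$, which is classical but requires careful bookkeeping of logarithmic derivatives of $\delta$; once this identity is in hand, the rest of the proof is a formal manipulation of idempotents.
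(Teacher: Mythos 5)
The proposal's overall strategy — reduce to a shift identity and then do formal manipulations with $e$, $e_-$, and $\delta$ — is the right spirit, and the bimodule step and the idempotent relations $\delta e = e_-\delta$, $e\delta^{-1} = \delta^{-1}e_-$ are fine. However, the two central claims you make to establish the equality are both false, so the argument as written has a genuine gap.

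First, the asserted algebraic form $\delta^{-1}\,\H^{\trig}_\kappa(\SSL)\,\delta = \H^{\trig}_{\kappa+1}(\SSL)$ inside $\dd(\TSL^{\reg})\rtimes W$ is not what Opdam's shift theory says, and it is not true. This is easy to see already in the rank-one rational case ($W=\Z/2$, $\delta=x$): one has $x^{-1}T^{\kappa}x = \partial + \frac{1+\kappa}{x} + \frac{\kappa}{x}s$, whereas $T^{\kappa+1} = \partial + \frac{\kappa+1}{x}(1-s)$; these are not equal, and they do not generate the same subalgebra of $\dd(\Cs)\rtimes\Z/2$. The correct statement is at the level of spherical subalgebras and involves the sign idempotent: Opdam's shift isomorphism gives
\[
\delta^{-1}\,(e_-\H^{\trig}_\kappa(\SSL)\,e_-)\,\delta \;=\; e\,\H^{\trig}_{\kappa+1}(\SSL)\,e,
\]
from which the Corollary's equality $e\H^{\trig}_{\kappa+1}e = e\delta^{-1}\H^{\trig}_\kappa\delta e$ follows via $e\delta^{-1} = \delta^{-1}e_-$ and $\delta e = e_-\delta$. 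Your proposal skips the appearance of $e_-$ entirely.

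Second, the computational lemma you invoke, $\delta^{-1}L(\kappa)\delta = L(\kappa+1) + c$ with $c$ a constant, is also false. You correctly observe that the first-order term $2(\nabla\log\delta)\cdot\nabla$ coming from conjugating $\Omega_{\tSL}$ equals $\sum_{\alpha>0}\frac{1+e^{-\alpha}}{1-e^{-\alpha}}\partial_\alpha$ and so raises the coefficient $\kappa$ to $\kappa+1$, and that $\delta^{-1}\Omega_{\tSL}(\delta)=(\rho,\rho)$. But you have dropped the zeroth-order contribution from conjugating the \emph{existing} first-order part of $L(\kappa)$: writing $\phi = \sum_{\alpha>0}\frac{1+e^{-\alpha}}{1-e^{-\alpha}}\partial_\alpha$, one has $\delta^{-1}(\kappa\phi)\delta = \kappa\phi + \kappa\,\phi(\log\delta)$, and $\phi(\log\delta)$ is a non-constant rational function of $\TSL$ (for $A_1$ it equals $\left(\frac{1+e^{-\alpha}}{1-e^{-\alpha}}\right)^2$). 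Thus $\delta^{-1}L(\kappa)\delta = L(\kappa+1) + (\rho,\rho) + \kappa\,\phi(\log\delta)$, which differs from $L(\kappa+1)$ by a non-constant multiplication operator whenever $\kappa\ne 0$. This is consistent with the Heckman--Schlichtkrull identity of Lemma \ref{lem:radialparts}, which shows that $\delta^u(L(u)+u^2(\rho,\rho))\delta^{-u}$ depends on $u$ through the non-constant multiplier $u(u-1)\sum_{\alpha>0}\frac{(\alpha,\alpha)}{(e^{\alpha/2}-e^{-\alpha/2})^2}$; conjugation by a single power of $\delta$ simply cannot absorb the $\kappa$-shift in $L(\kappa)$. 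The genuine shift isomorphism in Opdam's work is implemented by shift operators $G_\pm$ (built from antisymmetrizations of products of Dunkl operators), which intertwine the symmetric and antisymmetric parts of $\H^{\trig}_\kappa$, and it is only after passing through $e_-\H^{\trig}_\kappa e_-$ that conjugation by $\delta$ appears. Without that intermediate step, the proposed proof of the displayed equality does not go through.
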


Recall from Definition \ref{bad}, the subset $\bad = \{ a/b \ | \ a,b \in \Z, \ 1 \le b \le n \}$ of $\C$. As in \cite{GGS}, we say that $\kappa$ is \textit{good} if $\kappa \notin \bad \cap (0,1)$. Under the equality $\kappa = -c + 1$, $\kappa$ is good if and only if $c$ is good in the sense of section \ref{sec:unstable}. Recall that we defined in (\ref{sec:bimodules}), based on Corollary \ref{cor:bimodules}, the bimodules ${}_{\kappa} \mathsf{P}_{\kappa + m}$ and ${}_{\kappa + m} \mathsf{Q}_{\kappa}$. 

\begin{thm}[\cite{GGS}, Theorem 3.9]\label{thm:projbi}
Fix $\kappa \in \C$ and an integer $m \ge 1$ such that each of $\kappa + 1, \kappa + 2, \ds, \kappa + m -1$ is good. 
\begin{enumerate}
\item ${}_{\kappa} \mathsf{P}_{\kappa + m}$ is the unique nonzero $(\htrig_{\kappa},\htrig_{\kappa+m})$-bisubmodule of $\htrig^{\reg}$ that is reflexive as either a right $\htrig_{\kappa+m}$-module or a left $\htrig_{\kappa}$-module.
\item ${}_{\kappa+m} \mathsf{Q}_{\kappa}$ is the unique nonzero
  $(\htrig_{\kappa+m},\htrig_{\kappa})$-bisubmodule of $\htrig^{\reg}$
  that is reflexive as either a right $\htrig_{\kappa}$-module or a left
  $\htrig_{\kappa+m}$-module. 
\item Either $\kappa$ or $\kappa + m$ is good. In the former case ${}_{\kappa} \mathsf{P}_{\kappa + m}$ and ${}_{\kappa+m} \mathsf{Q}_{\kappa}$ are projective $\htrig_{\kappa}$-modules, while in the latter case they are projective $\htrig_{\kappa+m}$-modules.  
\end{enumerate}
\end{thm}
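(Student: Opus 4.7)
The plan is to follow the strategy of \cite[Theorem 3.9]{GGS}, which establishes the analogous statement in the rational Cherednik setting, and verify that each step carries over to the trigonometric case. The structural inputs available are: (a) Corollary \ref{cor:bimodules}, which realizes ${}_\kappa\mathsf{P}_{\kappa+m}$ and ${}_{\kappa+m}\mathsf{Q}_\kappa$ as concrete subspaces of $\htrig^{\reg}$ via Opdam's shift element $\delta$; (b) the identification of $\htrig_\kappa$ as an order in the simple Artinian ring $\htrig^{\reg} = \dd(\TSL^{\reg})^W$; and (c) compatibility of shift bimodules with the quantum Hamiltonian reduction $(\dd(\X)/\dd(\X)\g_c)^G \iso \htrig_\kappa$, as developed in Section~\ref{ham_shift}.

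For (1) and (2), I would first observe that for any nonzero $(\htrig_\kappa,\htrig_{\kappa+m})$-bisubmodule $B$ of $\htrig^{\reg}$, localization at $\delta^2$ sends $B$ to a nonzero bisubmodule of the simple ring $\dd(\TSL^{\reg})^W$, hence to all of it. Thus $B$ and ${}_\kappa\mathsf{P}_{\kappa+m}$ become equal after inverting $\delta^2$, so may be compared as fractional ideals of either order. Reflexivity of $B$ as a one-sided module then pins it down as the intersection, inside $\htrig^{\reg}$, of all $\htrig_\kappa$-module (or $\htrig_{\kappa+m}$-module) translates containing it, i.e.\ as the reflexive hull of a canonical candidate. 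The remaining point is that ${}_\kappa\mathsf{P}_{\kappa+m}$ is itself reflexive on both sides; by induction on $m$ using the factorization ${}_\kappa\mathsf{P}_{\kappa+m} = {}_\kappa\mathsf{P}_{\kappa+1}\cdot {}_{\kappa+1}\mathsf{P}_{\kappa+m}$ together with the goodness of intermediate parameters, one reduces to $m=1$, where reflexivity follows from the PBW filtration on $\mathsf{H}^{\mathrm{trig}}_\kappa(\SSL)$ and the fact that $\delta$ is a non-zero-divisor in $\htrig^{\reg}$. The argument for ${}_{\kappa+m}\mathsf{Q}_\kappa$ is symmetric.

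For (3), the dichotomy that at least one of $\kappa$, $\kappa+m$ is good follows from a direct combinatorial inspection of $\bad\cap(0,1)$ from Definition~\ref{bad}: if both $\kappa$ and $\kappa+m$ were in $\bad\cap(0,1)$ then some intermediate integer translate $\kappa+j$, $0<j<m$, would also lie in $\bad\cap(0,1)$, contradicting goodness. For the projectivity statement, assume $\kappa$ is good and write $c=1-\kappa$. By Lemma~\ref{lem:6.7}, the geometric twist $\ms{M}\mapsto {}_{c-1}\dd_c\otimes_{\dd_c}\ms{M}$ implements the tensor product with $\oo(n)$ on $\ss_c$, and, iterating, ${}_{c-m}\dd_c\otimes_{\dd_c}-$ implements $(-)(nm)$. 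Passing to $\SSL$-invariants via the quantum Hamiltonian reduction identifies ${}_{\kappa+m}\mathsf{Q}_\kappa$ with $\bigl({}_{c-m}\dd_c/({}_{c-m}\dd_c)\g_c\bigr)^{\SSL}$, a direct summand (by reductivity of $G$) of a $\dd$-module that is locally free on the semistable locus. Using Theorem~\ref{cor:unstable} to control the kernel of $\Ham_c$ at good $\kappa$, one obtains a free resolution of ${}_{\kappa+m}\mathsf{Q}_\kappa$ as a left $\htrig_\kappa$-module; the same reasoning applied to ${}_\kappa\mathsf{P}_{\kappa+m}$ yields the left-projectivity claim. When instead $\kappa+m$ is good, the roles of the two algebras are exchanged.

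The main obstacle is the projectivity part of (3): the \cite{GGS} argument depends on knowing that, at good parameters, the functor $\Ham_c$ is essentially surjective with kernel controlled by the unstable locus, so that the projectives of $\oo_\kappa$ lift, via $\Hamp$, to geometrically projective mirabolic modules whose shifted Hamiltonian reductions recover the shift bimodules. In the rational case of \cite{GGS} this is provided by the Gordon--Stafford Morita equivalence; in the trigonometric setting we instead rely on Theorem~\ref{cor:unstable} and Theorem~\ref{twistshift}, which together furnish the compatibility between the geometric line-bundle twist on $\ss_c$ and the algebraic shift $\sh$ on $\oo_\kappa$. Once these are in hand, the \cite{GGS} arguments transport essentially verbatim, with the trigonometric shift bimodules ${}_{c-m}\dd_c$ replacing their rational analogues.
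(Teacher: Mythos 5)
Your proposal is circular, and it also misses the one genuinely new technical ingredient of the paper's proof.

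The circularity: you propose to establish the projectivity statement in part (3) by invoking Theorem~\ref{cor:unstable} and Theorem~\ref{twistshift}, together with the geometric twist of Lemma~\ref{lem:6.7}. But in the paper's logical order those two theorems are proved in Section~\ref{ham_shift}, and their proofs \emph{depend} on Theorem~\ref{thm:projbi}. Concretely, the paper states that ``Theorem \ref{thm:projbi} implies that $\sh$ provides $\ldots$ an equivalence $\sh: \oo_\kappa \iso \oo_{\kappa+1}$,'' and that equivalence property of $\sh$ is used repeatedly in the proofs of Lemma~\ref{twist}, Theorem~\ref{cor:unstable2}, and Theorem~\ref{twistshift} (e.g.\ ``the object $\sh^{k-\ell}(M_\ell)$ is simple since the functor $\sh^{k-\ell}$ is an equivalence''). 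You cannot therefore take the unstable-locus characterization of $\Ker\Ham_c$, nor the compatibility of $\sh$ with the line-bundle twist, as inputs to Theorem~\ref{thm:projbi}; they are downstream of it.

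The missing ingredient: the paper's actual proof is much shorter and purely noncommutative-algebraic. It observes that the machinery of \cite[Theorem 3.9]{GGS} transports almost verbatim, with the bimodules existing by Corollary~\ref{cor:bimodules} and the finite-global-dimension input supplied by the Morita equivalence $\htrig_\kappa \sim \H^{\trig}_\kappa(\SSL)$ at good $\kappa$ (from \cite{CherednikCurves}). The \emph{only} point requiring a new argument is condition~(3) of \cite[Hypothesis 3.2]{GGS}, since the Fourier-transform proof used in the rational case is unavailable here. The paper replaces it with a GK-dimension argument: for a proper two-sided ideal $I\sset\htrig_\kappa$, the characteristic variety of $\htrig_\kappa/I$ in $(T^*\TSL)/W$ is a union of closures of symplectic leaves, hence even-dimensional, so $\mathrm{GKdim}(\htrig_\kappa/I)$ drops by at least two unless $I=0$; on the other hand $\htrig_\kappa[\delta^{-2}]=\htrig^{\reg}$ is simple, forcing $\delta^{2m}\in I$ and hence a strict GK-dimension drop. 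Your proposal does not address this point at all. (Your reflexivity-hull sketch for (1)--(2) is plausible in spirit and your dichotomy claim in (3) is correct, though the stated justification via ``intermediate integer translates'' is garbled: if $\kappa\in\bad\cap(0,1)$ then $\kappa+m>1$ for $m\ge 1$, so $\kappa+m$ is automatically good with no intermediate case analysis needed.)
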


\begin{rem}
It is important to note when applying the results of \cite{GGS} to the setup considered in this paper that their parameter ``$c$'' is related to $\kappa$ by $\kappa = - c$.
\end{rem} 

\begin{proof}
The proof of the theorem is simply a matter of carefully checking that the arguments employed in \cite{GGS} are applicable in our situation. We note that: the existence of the bimodules ${}_{\kappa} \mathsf{P}_{\kappa + 1}$ and ${}_{\kappa+m} \mathsf{Q}_{\kappa}$ is shown in Corollary \ref{cor:bimodules}. By \cite[Corollary 5.2.7]{CherednikCurves}, if $\kappa$ is good then $\htrig_{\kappa}$ is Morita equivalent to $\H^{\trig}_{\kappa}(\SSL)$ and hence has finite global dimension. The only gap is showing that condition (3) of \cite[Hypothesis 3.2]{GGS} holds. The argument using Fourier transforms given in \textit{loc. cit.} for condition (3) is not applicable to the trigonometric Cherednik algebra. Another way to see that condition (3) holds is as follows. Let $I$ be a proper two-sided ideal in $\htrig_\kappa$. Associated to $I$ is its characteristic variety in $(T^* \TSL) / W$. This will be a union of closures of symplectic leaves and hence even dimensional. Therefore either $\mathrm{GKdim} \ \htrig_\kappa / I = \mathrm{GKdim} \ \htrig_\kappa$ or $\mathrm{GKdim} \ \htrig_\kappa / I \le \mathrm{GKdim}\ \htrig_\kappa - 2$. However, $\htrig_\kappa [\delta^{-2}] = \htrig^{\reg}$ is a simple ring, hence the normal element $\delta^{2m}$ belongs to $I$ for some $m \le 1$ and hence $\mathrm{GKdim} \ \htrig_\kappa / I < \mathrm{GKdim} \ \htrig_\kappa$. 
\end{proof}

\subsection{The radial parts map}\label{sec:radial}

The radial parts map identifies $\htrig_\kappa$ with a certain algebra obtained by quantum Hamiltonian reduction. The usual Chevalley isomorphism induces an isomorphism $\eta : \C[\X^{\reg}]^G \rightarrow \C[\TSL^{\reg}]^{W}$. 

\begin{thm}\label{thm:radialparts}
Let $\kappa = -c + 1$. The following map 
is an isomorphism of filtered algebras. 
$$
\mathfrak{R} : (\ddd /\ddd  \g_{c})^G \longrightarrow \htrig_\kappa, \quad \mathfrak{R}(D)(f) = \bs^{c} D(\bs^{-c} \eta^* f) |_{\TSL^{\reg}/W}.
$$
\end{thm}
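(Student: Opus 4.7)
The plan is to mimic the rational-case proof \cite[Theorem 2.8]{GGS} (a variant of the Etingof--Ginzburg radial-parts construction), exploiting that $\varpi:\X^{\reg}\to\TSL^{\reg}/W$ is a principal $G$-bundle after twisting by the semi-invariant $\bs$. One works in two stages: first on the regular locus where everything becomes transparent, then via a filtered/graded argument to pin down the image as $\htrig_\kappa$ rather than the larger localization $\htrig^{\reg}$.

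First I would work over the regular locus. As in the discussion preceding Proposition \ref{prop:isof}, the twist by $\bs^{-c}$ gives an equivalence between $(G,q)$-monodromic $\dd$-modules on $\X^{\reg}$ and $\dd$-modules on $\TSL^{\reg}/W$; applied to the tautological object, this equivalence identifies the algebra $(\dd(\X^{\reg})/\dd(\X^{\reg})\g_c)^G$ with $\dd(\TSL^{\reg})^W=\htrig^{\reg}$, and an unwinding of the identification shows that the corresponding ring isomorphism is given by precisely the formula defining $\mathfrak R$. In particular $\mathfrak R$ is automatically an algebra homomorphism, and restriction from $\X$ to $\X^{\reg}$ yields an injective map $(\ddd/\ddd\g_c)^G\hookrightarrow\htrig^{\reg}$ because $\ddd$ is a domain and the restriction $\ddd\to\dd(\X^{\reg})$ is injective.

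Next I would equip both sides with the order filtration (on the right via the Dunkl embedding) and check directly from the formula that $\mathfrak R$ is filtered: conjugation by $\bs^c$ only alters an operator by lower-order terms. By the PBW property recalled in \S\ref{app:shift1}, $\gr\htrig_\kappa=\C[T^*\TSL]^W$, whereas $\gr(\ddd/\ddd\g_c)^G$ injects into the classical Hamiltonian reduction $\C[\mu_\X^{-1}(0)]^G$. The central algebraic input, a trigonometric analogue of \cite[Proposition 8.2.1]{GG}, is the ring isomorphism
$$
\C[\mu_\X^{-1}(0)]^G \;\iso\; \C[T^*\TSL]^W,
$$
proved by restricting to the cyclic locus $\X^{\cyc}$: there $G$ acts freely, and the moment map equation $gYg^{-1}-Y+i\otimes j=0$ admits an explicit parameterization by pairs in $\TSL\times\tSL$ up to $W$-action, reducing the claim to Chevalley restriction. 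Under this identification, the associated graded of $\mathfrak R$ becomes the obvious $W$-equivariant restriction map, hence is an isomorphism onto $\C[T^*\TSL]^W=\gr\htrig_\kappa$.

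Once the graded map is seen to be an isomorphism, a standard induction on filtration degree lifts this to the filtered isomorphism $\mathfrak R:(\ddd/\ddd\g_c)^G\iso\htrig_\kappa$. As a cross-check at the level of generators, one verifies that $\C[\SSL]^G\iso\C[\TSL]^W$ under the Chevalley restriction lifts to the polynomial part of $\htrig_\kappa$, and that the second Casimir in $\FZ\subset\ddd^G$ maps to $L(\kappa)+\mathrm{const}$ using the explicit formula of Lemma \ref{lem:squareDunkl}; these two families are known to generate $\htrig_\kappa$, providing independent confirmation of surjectivity. The main obstacle will be executing the classical identification $\C[\mu_\X^{-1}(0)]^G\cong\C[T^*\TSL]^W$ cleanly in the trigonometric setting: unlike the rational case of \cite{GG}, one must carefully track the multiplicative group structure on $\TSL$ and the semi-invariant $\bs^c$ entering the definition of $\g_c$, which is where the bulk of the technical work will lie.
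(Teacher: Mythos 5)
Your proposal assembles most of the right ingredients, but the logical dependencies are arranged in a way that has a genuine gap, and the ingredient you demote to a ``cross-check'' is in fact the paper's main argument.

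The cleanest point first: the observation you make at the end --- that $\C[\TSL]^W$ lies in the image by Chevalley restriction, that the Casimir maps to $L(\kappa)$ plus a constant (Lemma~\ref{lem:radialparts}), and that these two families generate $\htrig_\kappa$ --- is precisely how the paper establishes surjectivity. It is not an ``independent confirmation''; it is the argument, and it circumvents the need to control the full graded Hamiltonian reduction.

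The gap is in your treatment of injectivity and of the graded comparison, and it is the same gap in both places. You claim injectivity of $(\ddd/\ddd\g_c)^G\to\htrig^{\reg}$ ``because $\ddd$ is a domain and the restriction $\ddd\to\dd(\X^{\reg})$ is injective.'' That does not follow: injectivity of $\ddd\into\dd(\X^{\reg})$ gives a map of quotients $\ddd/\ddd\g_c\to\dd(\X^{\reg})/\dd(\X^{\reg})\g_c$, but for this to be injective one must know $\ddd\cap\dd(\X^{\reg})\g_c=\ddd\g_c$ inside $\dd(\X^{\reg})$, which is not automatic --- the quotient $\ddd/\ddd\g_c$ is not a domain. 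Likewise, at the graded level you only assert that $\gr\,(\ddd/\ddd\g_c)^G$ \emph{injects into} $\C[\mu_\X^{-1}(0)]^G$; but then ``the associated graded of $\mathfrak R$ is an isomorphism onto $\C[T^*\TSL]^W$'' does not follow, since the source could be a proper subring. Both problems are repaired by the same input, which the paper explicitly invokes and you omit: \emph{flatness of the moment map} $\mu_\X$. Flatness ensures that $\gr(\ddd/\ddd\g_c)=\C[\mu_\X^{-1}(0)]$ (and hence, $G$ being reductive, $\gr\,(\ddd/\ddd\g_c)^G=\C[\mu_\X^{-1}(0)]^G$), after which the Chevalley-type identification $\C[\mu_\X^{-1}(0)]^G\cong\C[T^*\TSL]^W$ of \cite[Proposition A2]{GG} yields both injectivity (the graded map is an isomorphism, so a nonzero kernel element would have a nonzero symbol mapping to zero, contradiction) and the compatibility of filtrations. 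Without flatness your chain of implications does not close.

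Two minor remarks. First, the fact that $(\ddd\g_c)^G$ lies in the kernel of $\mathfrak R$ should be stated explicitly; it follows because $\bs$ is a $\det^{-1}$-semi-invariant, so $(\mu-c\Tr)(x)\cdot\bs^{-c}=0$ for all $x\in\g$, rather than being absorbed silently into an ``unwinding of the identification.'' Second, the step where you claim the associated graded of $\mathfrak R$ ``becomes the obvious $W$-equivariant restriction map'' deserves a sentence of justification, since $\mathfrak R$ is defined with a $\bs^c$-conjugation whose leading order effect must be checked to be trivial; this is the content of your remark that conjugation by $\bs^c$ only alters an operator by lower-order terms, but it should be placed before, not after, the graded comparison.
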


\begin{proof}
The proof of the theorem is standard, see e.g. \cite[Appendix]{GGS}. In order to show $\htrig_\kappa$ is contained in the image of that $\mathfrak{R}$, it suffices to check is that the image of the Casimir element lies in $\htrig_\kappa$. This is done in Lemma \ref{lem:radialparts} below. Since $\bs$ is a $\det^{-1}$-semi-invariant, we have $(\mu - c \Tr)(x) \cdot \bs^{-c} = 0$ for all $x \in \g$. This implies that $(\ddd  \g_{c})^G$ is contained in the kernel of $\mathfrak{R}$. As in \cite[Appendix]{GGS}, by an associated graded argument, these facts together with the fact that the moment map $\mu$ is flat are enough to conclude that $\mathfrak{R}$ is an isomorphism. 
\end{proof}

Recall that we have identified $\sll $ with \textit{left invariant} vector fields on $\SSL$ i.e. for $X \in \sll $ and $f \in \C[\X]$ we have 
$$
(\pa_X \cdot f)(g,v) = \frac{\pa}{\pa t} f(g(1 + t X),v).
$$
Since $e_{\alpha}$ and $e_{-\alpha}$ are a dual pair with respect to the trace form on $\sll $, the Casimir element $\Omega_{\sll }$ in $U(\sll )$ is given by 
$$
\Omega_{\sll } = \Omega_{\mf{t}} + \sum_{\alpha \in R_+} \pa_{e_{\alpha}} \pa_{e_{-\alpha}} + \pa_{e_{-\alpha}} \pa_{e_{\alpha}},
$$
where $\Omega_{\mf{t}} = \sum_{i = 1}^{n-1} \pa_{\by_i}^2$ (recall that the $\by_i$ form an orthonormal basis of $\tSL$). For $w \in \C$, define $\mathrm{Rad}_w : \dd(\X)^G \rightarrow \htrig^{\reg}$ by 
$$
\mathrm{Rad}_w(D)(f) = \bs^{-w} (D(\bs^w (\eta^* f))) |_{\TSL^{\reg}/W},
\qquad\forall f \in \C[\TSL^{\reg}]^{W}.
$$

\begin{lem}\label{lem:radialparts}
We have 
$$
\mathrm{Rad}_w(\Omega_{\sll }) = \Omega_{\mf{t}} + (w + 1) \sum_{\alpha > 0} \frac{1 + e^{-\alpha}}{1 - e^{-\alpha}} \pa_{\alpha} + w(w+2)(\rho,\rho). 
$$
\end{lem}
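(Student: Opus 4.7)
Plan: The proof is a direct computation. I would first fix a cyclic vector $v_0\in V$ (say $v_0=e_1+\cdots+e_n$ in the eigenbasis of $\TSL$) so that $\bs|_{\TSL\times\{v_0\}}=\delta$, the Weyl denominator, and work in the \'etale local parameterization
$$\Phi\colon\SSL\times \TSL^{\reg}\longrightarrow\X^{\reg},\qquad (h,t')\longmapsto(ht'h^{-1},hv_0).$$
Under $\Phi$, the $G$-invariant function $\bs^{w}\eta^{*}f$ pulls back to $\delta(t')^{w}f(t')$, a function of $t'$ alone; in particular its derivatives in the $h$-direction vanish. I would then decompose $\Omega_{\sll}=\Omega_{\mathfrak{t}}+\sum_{\alpha>0}(\partial_{e_\alpha}\partial_{e_{-\alpha}}+\partial_{e_{-\alpha}}\partial_{e_\alpha})$ for trace-dual root vectors, with $\partial_{X}$ the left-invariant vector field on $\SSL$ extended trivially to $\X$.

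For the toral summand, which is tangent to $\TSL\times\{v_0\}$, Leibniz gives directly
$$\delta^{-w}\Omega_{\mathfrak{t}}(\delta^{w}f)=\Omega_{\mathfrak{t}}f+w\sum_{\alpha>0}\tfrac{1+e^{-\alpha}}{1-e^{-\alpha}}\partial_{\alpha}f+\bigl(w(w-1)\delta^{-2}|d\delta|^{2}+w(\rho,\rho)\bigr)f,$$
where I use the identity $\Omega_{\mathfrak{t}}(\delta)=(\rho,\rho)\delta$, itself an immediate consequence of the character-sum formula $\delta=\sum_{u\in W}\mathrm{sgn}(u)\,e^{u\rho}$. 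For each root vector $Y\in\mathfrak{g}_\beta$, the identity $(\mathrm{Ad}(t^{-1})-1)|_{\mathfrak{g}_\beta}=(e^{-\beta}(t)-1)\cdot\mathrm{Id}$ lets me express the push-forward of $\partial_{Y}$ along $\Phi$ in slice coordinates as a purely $h$-directional field with coefficient $1/(e^{-\beta}(t)-1)$; in particular $\partial_{Y}F(t,v_0)=0$ while $\partial_{Y}\bs(t,v_0)=-\langle Yv_0,\nabla_v\bs\rangle_{(t,v_0)}/(e^{-\beta}(t)-1)$. Applying Leibniz once more, computing $\partial_{e_\alpha}\partial_{e_{-\alpha}}+\partial_{e_{-\alpha}}\partial_{e_\alpha}$ in these coordinates, and summing over $\alpha>0$ decomposes the root-vector contribution into (i) the classical Harish-Chandra radial part $\sum_{\alpha>0}\tfrac{1+e^{-\alpha}}{1-e^{-\alpha}}\partial_\alpha f-(\rho,\rho)f$ (the $w=0$ specialization), plus (ii) a $w$-dependent correction whose linear-in-$\partial_\alpha$ part is $w\sum_{\alpha>0}\tfrac{1+e^{-\alpha}}{1-e^{-\alpha}}\partial_\alpha f$ and whose scalar part must cancel the $w(w-1)\delta^{-2}|d\delta|^{2}$ term above.

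Adding both contributions produces the total coefficient $(w+1)$ in front of $\sum_{\alpha>0}\tfrac{1+e^{-\alpha}}{1-e^{-\alpha}}\partial_\alpha$, as desired, and assembles the scalar pieces into $w(w+2)(\rho,\rho)$. The main obstacle lies in this scalar cancellation: the meromorphic $\delta^{-2}|d\delta|^{2}$ and the terms of the form $\sum_{\alpha>0}(\alpha,\alpha)/[(e^{\alpha}(t)-1)(e^{-\alpha}(t)-1)]$ coming from the singular factors $1/(e^{\pm\alpha}(t)-1)$ in the slice parameterization must conspire to a constant, namely $(\rho,\rho)$. This identity is most transparently proved by using the character-sum formula for $\delta$ together with $\Omega_{\mathfrak{t}}(e^{u\rho})=(\rho,\rho)e^{u\rho}$, or alternatively by quoting the structurally identical scalar identity already verified in the rational setting (\cite{GGS}, Appendix~A). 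Once this cancellation is in place, all remaining terms reorganize into the stated formula.
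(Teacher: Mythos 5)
Your plan takes a genuinely different route from the paper's. The paper avoids the slice parameterization entirely: it exploits the semi-invariance relation $\exp(te_{\alpha})\cdot f(g\exp(re_{-\alpha}),v)=f(g\exp(re_{-\alpha}),v)$ to express $\pa_{e_{\alpha}}f$ in terms of $\pa_{\alpha}$ and $\mu_V(e_\alpha)$, computes $\mu_V(e_{-\alpha})\mu_V(e_\alpha)\cdot f = w(w+1)f$ directly from $\bs|_{T\times V}=\delta\cdot(v_1\cdots v_n)$ and $\mu_V(e_{ij})=-v_j\pa_{v_i}$, and then closes by citing the Heckman--Schlichtkrull conjugation identity \cite[Theorem 2.1.1]{HeckmanBook}. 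Your approach is instead to push $\Omega_{\sll}$ through a slice and organize everything by Leibniz. This is a legitimate alternative, but two concrete problems appear.

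First, the arithmetic in your decomposition does not close. You claim the $w$-dependent correction to the root-vector contribution has linear-in-$\pa_\alpha$ part $w\sum_{\alpha>0}\frac{1+e^{-\alpha}}{1-e^{-\alpha}}\pa_\alpha$. But the toral piece $\delta^{-w}\Omega_{\mf t}\delta^w$ already supplies $w\sum_{\alpha>0}\frac{1+e^{-\alpha}}{1-e^{-\alpha}}\pa_\alpha$, and the $w=0$ root-vector contribution supplies $\sum_{\alpha>0}\frac{1+e^{-\alpha}}{1-e^{-\alpha}}\pa_\alpha$ (with no $-(\rho,\rho)$ shift: at $w=0$ the total radial part is $\Omega_{\mf t}+\sum_{\alpha>0}\frac{1+e^{-\alpha}}{1-e^{-\alpha}}\pa_\alpha$ and the toral piece alone gives $\Omega_{\mf t}$, so the root-vector piece must give exactly $\sum\frac{1+e^{-\alpha}}{1-e^{-\alpha}}\pa_\alpha$). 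Adding your claimed $w\sum\pa_\alpha$ correction on top produces a coefficient $1+2w$, not $1+w$. In fact the paper's intermediate formula shows the entire $w$-dependence of the root-vector piece sits in the scalar part $w(w+1)\sum_{\alpha>0}\frac{(\alpha,\alpha)}{(1-e^{\alpha})(1-e^{-\alpha})} + w\sum_{\alpha>0}\frac{1+e^{-\alpha}}{1-e^{-\alpha}}(\pa_\alpha\log\delta)$, i.e.\ the $w$-correction contributes \emph{no} $\pa_\alpha$-term. Related to this, your slice $\Phi\colon\SSL\times\TSL^{\reg}\to\X^{\reg}$ has source of dimension $n^2+n-2$, one less than $\dim\X^{\reg}=n^2+n-1$, so it is not \'etale; one would want $G\times\TSL^{\reg}$ (the missing direction is exactly the one detected by $\bs$). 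As it stands, ``pushing forward $\pa_Y$ along $\Phi$ as a purely $h$-directional field'' does not make literal sense, which is a symptom that the double-derivative $\pa_{e_{-\alpha}}\pa_{e_\alpha}$ computation — the step that produces the essential $w(w+1)$ — has not actually been carried out.

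Second, the scalar cancellation you flag as ``the main obstacle'' is indeed provable along the lines you indicate, but it is precisely the Heckman--Schlichtkrull identity in disguise. Writing $\Omega_{\mf t}\delta/\delta = \Omega_{\mf t}\log\delta + |d\log\delta|^2 = (\rho,\rho)$ and computing $\Omega_{\mf t}\log\delta = -\sum_{\alpha>0}\frac{(\alpha,\alpha)}{(e^{\alpha/2}-e^{-\alpha/2})^2}$ directly gives $\delta^{-2}|d\delta|^2 = (\rho,\rho)+\sum_{\alpha>0}\frac{(\alpha,\alpha)}{(e^{\alpha/2}-e^{-\alpha/2})^2}$, which is the scalar content of the identity the paper quotes. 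So that part of your plan is sound, and re-deriving it this way is a reasonable alternative to citing \cite{HeckmanBook}. The reference to the rational case in \cite{GGS} is less apt (the rational analogue uses $\Omega_{\mf t}(\prod_{\alpha>0}\alpha)=0$ rather than the eigenvalue $(\rho,\rho)$), so I would not lean on it. But the $\pa_\alpha$-coefficient bookkeeping and the missing computation of the second-order root-vector derivative need to be corrected before the proof is complete.
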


\begin{proof}
By definition, $e^{\alpha}$ is a character on $\TSL$, $e^{\alpha}(x) = \alpha(x)$. Let $f : \X \rightarrow \C$ be a $\det^w$-semi-invariant holomorphic function defined on an open neighborhood of $\TSL^{\reg} \times V$. One calculates that 
$$
\exp(t e_{\alpha}) \cdot f(x \exp(r e_{-\alpha}),v) = f(g(1 + r e_{-\alpha})(1 + t (1 - e^{-\alpha}) e_{\alpha} - rt e^{-\alpha} h_{\alpha}), v - t e_{\alpha} v).
$$
On the other hand, semi-invariance implies that 
$$
\exp(t e_{\alpha}) \cdot f(g \exp(r e_{-\alpha}),v) = f(g \exp(t e_{-\alpha}),v).
$$
Therefore, differentiating with respect to $t$ and setting $t = 0$ gives
$$
0 = [(1 - e^{-\alpha}) \pa_{e_{\alpha}} - r e^{-\alpha} \pa_{\alpha} + \mu_V(e_{\alpha})] f(g(1 + r e_{-\alpha}),v).
$$
Rewriting,
\begin{equation}\label{eq:eq101}
\pa_{e_{\alpha}} f(g(1 + r e_{-\alpha}),v) = \frac{1}{1 - e^{-\alpha}} \left[ r e^{-\alpha} \pa_{\alpha} - \mu_V(e_{\alpha}) \right] f(g(1 + r e_{-\alpha}),v). 
\end{equation}
Differentiating with respect to $r$ and setting $r = 0$ gives
$$
\pa_{e_{-\alpha}} \pa_{e_{\alpha}} f(g,v) = \frac{1}{1 - e^{-\alpha}} \left[ e^{-\alpha} \pa_{\alpha} f(g,v) - \pa_{e_{-\alpha}} (\mu_V(e_{\alpha}) \cdot f(g,v)) \right].
$$
Equation (\ref{eq:eq101}) with $e_{\alpha}$ replaced by $e_{-\alpha}$ and $r = 0$ gives
$$
\pa_{e_{-\alpha}} [\mu_V(e_{\alpha}) \cdot f] (g,v) = \frac{-1}{1 - e^{\alpha}} \mu_V(e_{-\alpha}) [\mu_V(e_{\alpha}) \cdot f] (g,v).
$$
Thus, 
$$
\pa_{e_{-\alpha}} \pa_{e_{\alpha}} f(g,v) = \frac{e^{-\alpha}}{1 - e^{-\alpha}} \pa_{\alpha} f(g,v) + \frac{1}{(1 - e^{\alpha})(1 - e^{-\alpha})} \mu_V(e_{-\alpha}) \mu_V(e_{\alpha}) \cdot f(g,v).
$$
As in \cite{GGS}, $\bs |_{T \times V} = \delta \cdot (v_1 \cdots v_n)$ and $\mu_V(e_{ij}) = - v_j \pa_{v_i}$. Thus,  
$$
\mu_V(e_{-\alpha}) \mu_V(e_{\alpha}) \cdot f(g,v) = w(w + 1) f(g,v).
$$
Note that 
$$
\frac{e^{-\alpha} - e^{\alpha}}{(1 - e^{-\alpha})(1 - e^{\alpha})} = \frac{1 + e^{-\alpha}}{1 - e^{-\alpha}}.
$$
Therefore
\begin{align*}
\mathrm{Rad}_w(\Omega_{\sll }) & = \delta^{-w} \Omega_{\mf{t}} \delta^w  + w(w+1) \sum_{\alpha > 0} \frac{(\alpha,\alpha)}{(1 - e^{\alpha})(1 - e^{-\alpha})} + \sum_{\alpha > 0} \frac{e^{-\alpha} - e^{\alpha}}{(1 - e^{-\alpha})(1 - e^{\alpha})} \delta^{-w} \pa_{\alpha} \delta^w. \\
 & = \delta^{-w} \Omega_{\mf{t}} \delta^w  - w(w+1) \sum_{\alpha > 0} \frac{(\alpha,\alpha)}{(e^{\alpha /2} - e^{-\alpha/2})^2} + \sum_{\alpha > 0} \frac{1 + e^{-\alpha}}{1 - e^{-\alpha}} \delta^{-w} \pa_{\alpha} \delta^w.
\end{align*}
By \cite[Theorem 2.1.1]{HeckmanBook},
$$
\delta^{u} \circ (L(u) + u^2 (\rho,\rho)) \circ \delta^{-u} = \Omega_{\mf{t}} - \sum_{\alpha > 0} \frac{u(u - 1) (\alpha,\alpha)}{(e^{\alpha /2} - e^{-\alpha/2})^2}
$$
Therefore, taking $u = 1$ implies that 
\begin{align*}
\delta^{w+1} \mathrm{Rad}_w(\Omega_{\sll}) \delta^{-w-1} & = \Omega_{\mf{t}} - w(w+1) \sum_{\alpha > 0} \frac{(\alpha,\alpha)}{(e^{\alpha /2} - e^{-\alpha/2})^2} - (\rho,\rho)\\
 & = \delta^{w+1} \circ (L(w+1) + (w+1)^2 (\rho,\rho)) \circ \delta^{-(w+1)} - (\rho,\rho).
\end{align*}
\end{proof}

\begin{rem}\label{rem:radialiso}
Lemma \ref{lem:radialparts} implies the morphism $\dd(\SSL)^{G} \rightarrow (\ddd /\ddd  \g_{c})^G \stackrel{\mathfrak{R}}{\rightarrow} \htrig_\kappa$ agrees with the radial parts map $Y \mapsto \delta^{-\kappa +1} \circ R_V(Y) \circ \delta^{\kappa-1}$ of \cite[\S 1]{SphericalFun} when $\kappa$ is a positive integer. Then, \cite[Corollary 1.10]{SphericalFun} implies that $\mf{R}$ restricts to the Harish-Chandra homomorphism $\FZ \stackrel{\sim}{\rightarrow} (\sym \tSL)^W$ when $\kappa$ is a positive integer. Since $\mf{R}(D)$ can be expressed as a polynomial expression in $\kappa$ for each $D \in \FZ$, this implies that $\mf{R}$ always restricts to the Harish-Chandra homomorphism on $\FZ$. 
\end{rem}

\bibliographystyle{amsalpha}

\def\cprime{$'$}
\providecommand{\bysame}{\leavevmode\hbox to3em{\hrulefill}\thinspace}
\providecommand{\MR}{\relax\ifhmode\unskip\space\fi MR }

\providecommand{\MRhref}[2]{
  \href{http://www.ams.org/mathscinet-getitem?mr=#1}{#2}
}
\providecommand{\href}[2]{#2}

\end{document}